\numberwithin{equation}{section}
\newcommand{\ie}{{\em i.e.}~}
\newcommand{\dd}{\mathrm{d}}
\newcommand{\R}{\mathbb{R}}
\newcommand{\ind}[1]{\mathds{1}_{\{#1\}}}
\newcommand{\sgn}{\mathrm{sgn}}
\newcommand{\uto}{\uparrow}
\newcommand{\dto}{\downarrow}
\newcommand{\Diff}{\mathrm{D}}
\newcommand{\ui}{\underline{i}} 
\newcommand{\oi}{\overline{i}} 
\newcommand{\uk}{\underline{k}} 
\newcommand{\ok}{\overline{k}} 
\newcommand{\uv}{\underline{v}} 
\newcommand{\ov}{\overline{v}} 
\newcommand{\ulambda}{\underline{\lambda}} 
\newcommand{\olambda}{\overline{\lambda}} 
\newcommand{\uu}{\underline{u}} 
\newcommand{\ou}{\overline{u}} 
\newcommand{\bu}{\mathbf{u}}
\newcommand{\bv}{\mathbf{v}}
\newcommand{\blambda}{\boldsymbol{\lambda}}
\newcommand{\bvarphi}{\boldsymbol{\varphi}}
\newcommand{\bm}{\mathbf{m}}
\newcommand{\bS}{\mathbf{S}}
\newcommand{\bX}{\mathbf{X}}
\newcommand{\bbX}{\mathbb{X}}
\newcommand{\bbv}{\mathbbm{v}}
\newcommand{\bU}{\mathbf{U}}
\newcommand{\rx}{\mathrm{x}}
\newcommand{\ry}{\mathrm{y}}
\newcommand{\rblambda}{\bar{\lambda}}
\newcommand{\rbmu}{\bar{\mu}}
\newcommand{\Ls}{\mathrm{L}} 
\newcommand{\Cs}{\mathrm{C}} 
\newcommand{\Ps}{\mathrm{P}} 
\newcommand{\Ws}{\mathrm{W}} 
\newcommand{\Ms}{\mathbf{M}} 
\newcommand{\dP}{\mathcal{P}} 
\newcommand{\Unif}{\mathrm{U}} 
\newcommand{\TV}{\mathrm{TV}} 
\newcommand{\ConstUSH}{L_{\mathrm{USH}}} 
\newcommand{\ConstBound}[1]{L_{\mathrm{C},#1}} 
\newcommand{\ConstLip}{L_{\mathrm{LC}}} 
\newcommand{\ConstStab}{\mathcal{L}} 
\newcommand{\Ratio}{\Theta} 
\newcommand{\ConstRar}{\rho} 
\newcommand{\x}{\mathbf{x}}
\newcommand{\y}{\mathbf{y}}
\newcommand{\z}{\mathbf{z}}
\newcommand{\Part}{{P_n^d}} 
\newcommand{\Dn}{D_n} 
\newcommand{\Dnd}{\Dn^d} 
\newcommand{\Dndeux}{D_{2n}} 
\newcommand{\Dndeuxd}{\Dndeux^d} 
\newcommand{\Drnd}{\mathcal{D}} 
\newcommand{\barB}{\bar{B}} 
\newcommand{\Rb}{\mathrm{R}} 
\newcommand{\Nb}{\mathrm{N}} 
\newcommand{\Mb}{\mathrm{M}} 
\newcommand{\Ibinter}{\mathrm{I}^{\mathrm{coll}}} 
\newcommand{\Ibself}{\mathrm{I}^{\mathrm{self}}} 
\newcommand{\Classe}{\mathrm{C}} 
\newcommand{\classe}{\mathfrak{c}} 
\newcommand{\Tbmax}{\mathrm{T}^{\mathrm{max}}} 
\newcommand{\Tmax}{T^{\mathrm{max}}} 
\newcommand{\Good}{\mathcal{G}} 
\newcommand{\clu}{\mathrm{clu}} 
\newcommand{\type}{\mathrm{type}} 
\newcommand{\tPhi}{\tilde{\Phi}} 
\newcommand{\ttinter}{\tilde{\tau}^{\mathrm{coll}}} 
\newcommand{\tinter}{\tau^{\mathrm{coll}}} 
\newcommand{\Xiinter}{\Xi^{\mathrm{coll}}} 
\newcommand{\xiinter}{\xi^{\mathrm{coll}}} 
\newcommand{\Ttau}{\mathcal{T}} 
\newcommand{\dxi}{\delta_{\xi}} 
\newcommand{\dtau}{\delta_{\tau}} 
\newcommand{\barlambda}{\bar{\lambda}}
\newcommand{\bblambda}{\bar{\boldsymbol{\lambda}}} 
\newcommand{\barmu}{\bar{\mu}}
\newcommand{\bbmu}{\bar{\boldsymbol{\mu}}} 
\newcommand{\tlambda}{\tilde{\lambda}} 
\newcommand{\tblambda}{\tilde{\boldsymbol{\lambda}}} 
\newcommand{\bmu}{\bar{\mu}} 
\newcommand{\totmass}{\mathcal{E}} 
\newcommand{\Mnu}{\mathcal{M}} 
\newcommand{\trans}{\tau} 
\newcommand{\lto}[1]{\stackrel{#1}{\to}} 
\renewcommand{\bar}[1]{\overline{#1}}
\newcommand{\sk}{\vskip 3mm}
\newtheorem{defi}{Definition}[subsection]
\newtheorem{lem}[defi]{Lemma}
\newtheorem{prop}[defi]{Proposition}
\newtheorem{theo}[defi]{Theorem}
\newtheorem{cor}[defi]{Corollary}
\newtheoremstyle{myremark}{}{}{}{0pt}{\bfseries}{.}{ }{}
\theoremstyle{myremark}
\newtheorem{rk}[defi]{Remark}
\title[Wasserstein stable semigroups solving diagonal hyperbolic systems with large data]{A multitype sticky particle construction of Wasserstein stable semigroups solving one-dimensional diagonal hyperbolic systems with large monotonic data}
\author{Benjamin Jourdain}
\address{{\bf Benjamin Jourdain}\newline
{\rm \indent Université Paris-Est, CERMICS (ENPC), INRIA, F-77455 Marne-la-Vallée.}}
\email{\href{mailto:jourdain@cermics.enpc.fr}{jourdain@cermics.enpc.fr}}
\author{Julien Reygner}
\address{{\bf Julien Reygner}\newline
{\rm \indent Sorbonne Universités, UPMC Univ Paris 06, UMR 7599, LPMA, F-75005 Paris.\newline
\indent Université Paris-Est, CERMICS (ENPC), F-77455 Marne-la-Vallée.}}
\curraddr{{\rm Laboratoire de Physique, \'Ecole Normale Sup\'erieure de Lyon, 46 all\'ee d'Italie, F-69364 Lyon.}}
\email{\href{mailto:julien.reygner@polytechnique.org}{julien.reygner@polytechnique.org}}
\thanks{This research benefited from the support of the French National Research Agency (ANR) under the program ANR-12-BLAN Stab.}
\keywords{Hyperbolic systems; nonlinear semigroups; large data; Wasserstein distance; Sticky Particle Dynamics}
\subjclass[2010]{35L45; 60H30; 82C21}
\def\part{\@startsection{part}{1}%
\z@{.7\linespacing\@plus\linespacing}{.5\linespacing}%
{\large\normalfont\bfseries}}
\def\section{\@startsection{section}{1}%
\z@{.7\linespacing\@plus\linespacing}{.5\linespacing}%
{\normalfont\bfseries\centering}}
\def\@settitle{\begin{center}%
  \baselineskip14\p@\relax
    \bfseries
    \LARGE\@title
  \end{center}%
}
\def\@setauthors{%
  \begingroup
  \trivlist
  \centering\footnotesize \@topsep30\p@\relax
  \advance\@topsep by -\baselineskip
  \item\relax
  \andify\authors
  \def\\{\protect\linebreak}%
 {\Large\authors}%
  \endtrivlist
  \endgroup
}
\def\maketitle{\par
  \@topnum\z@ 
  \@setcopyright
  \thispagestyle{firstpage}
  \ifx\@empty\shortauthors \let\shortauthors\shorttitle
  \else \andify\shortauthors
  \fi
  \@maketitle@hook
  \begingroup
  \@maketitle
  \toks@\@xp{\shortauthors}\@temptokena\@xp{\shorttitle}%
  \toks4{\def\\{ \ignorespaces}}
  \edef\@tempa{%
    \@nx\markboth{\the\toks4
      \@nx{\the\toks@}}{\the\@temptokena}}%
  \@tempa
  \endgroup
  \c@footnote\z@
  \def\do##1{\let##1\relax}%
  \do\maketitle \do\@maketitle \do\title \do\@xtitle \do\@title
  \do\author \do\@xauthor \do\address \do\@xaddress
  \do\email \do\@xemail \do\curraddr \do\@xcurraddr
  \do\commby \do\@commby
  \do\dedicatory \do\@dedicatory \do\thanks \do\thankses
  \do\keywords \do\@keywords \do\subjclass \do\@subjclass
}
\begin{document}

\begin{abstract}
  This article is dedicated to the study of diagonal hyperbolic systems in one space dimension, with cumulative distribution functions, or more generally nonconstant monotonic bounded functions, as initial data. Under a uniform strict hyperbolicity assumption on the characteristic fields, we construct a multitype version of the sticky particle dynamics and obtain existence of global weak solutions by compactness. 
  
  We then derive a $\Ls^p$ stability estimate on the particle system uniform in the number of particles. This allows to construct nonlinear semigroups solving the system in the sense of Bianchini and Bressan [Ann. of Math. (2), 2005]. We also obtain that these semigroup solutions satisfy a stability estimate in Wasserstein distances of all orders, which encompasses the classical $\Ls^1$ estimate and generalises to diagonal systems the results by Bolley, Brenier and Loeper [J. Hyperbolic Differ. Equ., 2005] in the scalar case. 
  
  Our results are obtained without any smallness assumption on the variation of the data, and only require the characteristic fields to be Lipschitz continuous and the system to be uniformly strictly hyperbolic.
\end{abstract}

\maketitle

\setcounter{tocdepth}{2}
\tableofcontents



\section{Introduction}\label{s:intro}


\subsection{Hyperbolic systems}\label{ss:hypsys} A one-dimensional system of conservation laws is a differential equation of the form
\begin{equation}\label{eq:syst:cl}
  \partial_t \bu + \partial_x (f(\bu)) = 0, \qquad t \geq 0, \quad x \in \R,
\end{equation}
where $\bu = (u^1, \ldots, u^d) : [0,+\infty) \times \R \to \R^d$ is the vector of {\em conserved quantities}, and $f : \R^d \to \R^d$ is the {\em flux} function. When both $f$ and $\bu$ are smooth, it rewrites in the nonconservative form
\begin{equation}\label{eq:syst:nc}
  \partial_t \bu + A(\bu) \partial_x \bu = 0,
\end{equation}
where $A(\bu) = \Diff f (\bu)$ is the Jacobian matrix of the flux function. If, for all $\bu$, the matrix $A(\bu)$ is diagonalisable and has real eigenvalues $\lambda^1(\bu) \geq \lambda^2(\bu) \geq \cdots \geq \lambda^d(\bu)$, the system is called {\em hyperbolic} and the functions $\lambda^1, \ldots, \lambda^d$ are its {\em characteristic fields}. Hyperbolic systems naturally arise in continuum physics~\cite{daf10} and are the object of an intense mathematical research~\cite{serre, serre2, bressan, raviart}.

A system of the form~\eqref{eq:syst:cl} or~\eqref{eq:syst:nc} is strictly hyperbolic if $\lambda^1(\bu) > \lambda^2(\bu) > \cdots > \lambda^d(\bu)$ for all $\bu$. Global weak existence results for the strictly hyperbolic one-dimensional Cauchy problem
\begin{equation}\label{eq:syst:clCau}
  \left\{\begin{aligned}
    & \partial_t \bu + \partial_x (f(\bu)) = 0,\\
    & \bu(0,x) = \bu_0(x),
  \end{aligned}\right.
\end{equation}
go back to Glimm~\cite{glimm65}, under the assumption previously introduced by Lax~\cite{lax57} that the characteristic fields $\lambda^1, \ldots, \lambda^d$ be either genuinely nonlinear, or linearly degenerate. Under the same assumption, an alternative method to construct global weak solutions to the Cauchy problem~\eqref{eq:syst:clCau} is the Front Tracking approximation, which was introduced by Dafermos~\cite{daf72} in the scalar case $d=1$ and then extended to systems of conservation laws by DiPerna~\cite{diperna76}, see also~\cite{bressan92, risebro93, baijen98}. A version of this method that does not refer to any genuine nonlinearity nor linear degenerescence assumption on the characteristic fields was later introduced by Ancona and Marson~\cite{ancona}. Both the Glimm scheme and the Front Tracking method provide existence for initial data $\bu_0 = (u^1_0, \ldots, u^d_0)$ belonging to the class of functions with bounded variation (BV), and having a small total variation. On the other hand, the vanishing viscosity approach~\cite{bianbres00, bianbres} provides $\Ls^1$ stable semigroups defined on a set of BV functions containing functions with sufficiently small total variation, that yield weak solutions to the system~\eqref{eq:syst:nc}. The convergence of the vanishing viscosity approach, as well as the uniqueness of $\Ls^1$ stable semigroup solutions to~\eqref{eq:syst:nc}, were proved by Bianchini and Bressan~\cite{bianbres}. The Bianchini-Bressan solution was also proven to be the limit of Glimm and Front Tracking approximations~\cite{bianbres, ancona}.

Outside of the BV setting, the theory of systems of conservation laws with $\Ls^{\infty}$ initial data was developed by DiPerna~\cite{diperna83}. By compensated compactness, under weak structural conditions, it was first proved that systems of $d=2$ equations in conservative form admit global entropy solutions for $\Ls^{\infty}$ initial data. Uniqueness for such systems starting from initial data with large variation was obtained by Bressan and Colombo~\cite{brecol95:uniq} under a stability assumption on the flux function. For $d \geq 3$ equations, unless the system is in the Temple class~\cite{baibre,bianch00} or has coinciding shocks and rarefaction curves~\cite{bianch01}, no existence, uniqueness nor stability theory is available without a smallness assumption on the variation of the initial data. 


\subsection{Diagonal systems}\label{ss:diagsys} For a strictly hyperbolic system of the form~\eqref{eq:syst:nc}, let $l^1(\bu), \ldots, l^d(\bu)$ and $r^1(\bu), \ldots, r^d(\bu)$ refer to the respective left- and right-eigenvectors of the matrix $A(\bu)$. Following~\cite{serrerich}, the system~\eqref{eq:syst:nc} is {\em diagonalisable} if and only if the Frobenius condition 
\begin{equation*}
  \forall \gamma, \gamma', \gamma'' \in \{1, \ldots, d\} \quad\text{with $\gamma', \gamma'' \not= \gamma$}, \qquad l^{\gamma}\cdot\{r^{\gamma'}, r^{\gamma''}\} = 0,
\end{equation*}
is satisfied, where $\{r,r'\}=\Diff r r' - \Diff r' r$ refers to the Poisson bracket. Up to a change of variable, the system then reduces to the diagonal form
\begin{equation}\label{eq:syst:diag}
  \forall \gamma \in \{1, \ldots, d\}, \qquad \partial_t u^{\gamma} + \lambda^{\gamma}(\bu) \partial_x u^{\gamma} = 0.
\end{equation}
According to~\cite[Theorem~12.1.1]{serre2}, the diagonal system \eqref{eq:syst:diag}, when strictly hyperbolic, admits a conservative form 
\begin{equation*}
  \partial_t (g(\bu))+\partial_x (h(\bu))=0
\end{equation*}
if and only if, for all $\gamma,\gamma',\gamma''\in\{1,\ldots,d\}$ distinct, 
\begin{equation*}
  \forall \bu \in [0,1]^d, \qquad \partial_{u^{\gamma''}}\left(\frac{\partial_{u^\gamma}\lambda^{\gamma'}(\bu)}{\lambda^\gamma(\bu)-\lambda^{\gamma'}(\bu)}\right) =\partial_{u^{\gamma}}\left(\frac{\partial_{u^{\gamma''}}\lambda^{\gamma'}(\bu)}{\lambda^{\gamma''}(\bu)-\lambda^{\gamma'}(\bu)}\right).
\end{equation*}
The system is then called a {\em rich} system. Any diagonal strictly hyperbolic system of $d=2$ equations is clearly rich. On the other hand, any strictly hyperbolic system in conservative form $\partial_t \bv+\partial_x(f(\bv))=0$ composed of $d=2$ equations may be diagonalised by choosing $u^1(\bv)$ and $u^2(\bv)$ two Riemann invariants respectively associated with the first and second fields of eigenvectors of the Jacobian matrix $\Diff f(\bv)$.

This article is dedicated to the study of the Cauchy problem for the diagonal system~\eqref{eq:syst:diag} where, for all $\gamma \in \{1, \ldots, d\}$, $u^{\gamma}_0$ is a nonconstant, monotonic and bounded function on $\R$. Such initial data can be interpreted as cumulative distribution functions of bounded measures of constant sign, and up to rescaling, there is no loss of generality in assuming that these measures are probability measures. Diagonal systems with monotonic data have attracted a particular attention on account of their appearance in the dynamics of dislocation densities or in isentropic gas dynamics. We refer to the works by El Hajj and Monneau~\cite{ehm09, ehm12}, whose existence, uniqueness, regularity and stability results are discussed in~\S\ref{sss:ehm09} and~\S\ref{sss:ehm12} below.


\subsection{Main results and outline of the article} In this article, we consider the diagonal Cauchy problem
\begin{equation}\label{eq:syst}
  \forall \gamma \in \{1, \ldots, d\}, \qquad \left\{\begin{aligned}
    & \partial_t u^{\gamma} + \lambda^{\gamma}(\bu) \partial_x u^{\gamma} = 0,\\
    & u^{\gamma}(0,x) = u^{\gamma}_0(x),
  \end{aligned}\right.
\end{equation}
where $\bu = (u^1, \ldots, u^d) : [0,+\infty) \times \R \to [0,1]^d$, the characteristic functions $\lambda^1, \ldots, \lambda^d$ are defined on $[0,1]^d$ and we assume that there exist probability measures $m^1, \ldots, m^d$ on the real line such that
\begin{equation*}
  \forall \gamma \in \{1, \ldots, d\}, \qquad u^{\gamma}_0 = H*m^{\gamma},
\end{equation*}
where $H*\cdot$ refers to the convolution with the Heaviside function $H$. In other words, for all $\gamma \in \{1, \ldots, d\}$, $u^{\gamma}_0$ is the cumulative distribution function of $m^{\gamma}$.

In the scalar case $d=1$, the conservative form of~\eqref{eq:syst} is the {\em scalar conservation law}
\begin{equation}\label{eq:scalarcl}
  \left\{\begin{aligned}
    & \partial_t u + \partial_x \left(\Lambda(u)\right) = 0,\\
    & u(0,x) = u_0(x),
  \end{aligned}\right.
\end{equation}
with $\Lambda'=\lambda$ and $u_0 = H*m$, where $m$ is a probability measure on $\R$. Brenier and Grenier~\cite{bregre} proved that the entropy solution of~\eqref{eq:scalarcl} describes the large-scale behaviour of the {\em Sticky Particle Dynamics}, under which finitely many particles evolve on the real line by sticking together at collisions with preservation of the total mass and momentum. We also refer to~\cite{jourdain:sticky} for a proof of the large-scale limit in a more general framework. Independently of this representation, stability estimates in Wasserstein distance for the entropy solution of~\eqref{eq:scalarcl} were derived by Bolley, Brenier and Loeper~\cite{bolbreloe}.

\sk
In the present article, we introduce a multitype version of the Sticky Particle Dynamics, where particles have a {\em type} $\gamma \in \{1, \ldots, d\}$ and only stick with particles of the same type. Using this {\em Multitype Sticky Particle Dynamics}, we obtain the following three main results, under the generical assumption that the system~\eqref{eq:syst} be {\em uniformly} strictly hyperbolic.

Theorem~\ref{theo:existence} asserts the existence of a global weak solution for the Cauchy problem~\eqref{eq:syst}. More precisely, we show that the large-scale behaviour of the Multitype Sticky Particle Dynamics is described by functions $\bu : [0,+\infty) \times \R \to [0,1]^d$ solving the Cauchy problem~\eqref{eq:syst} in an appropriate sense, to which we refer as a {\em probabilistic solution}. We use a tightness argument for the particle system, which does not allow to identify its possibly multiple large-scale limits. 

Theorem~\ref{theo:stabMSPD} is a stability result on the Multitype Sticky Particle Dynamics. We carry out a detailed pathwise analysis of the evolution of the dynamics with two different initial configurations and thereby obtain $\Ls^p$ stability estimates, for all $p \in [1,+\infty]$. The important point here is that our stability constants are uniform with respect to the number of particles, which allows us to pass to the large-scale limit in these estimates. 

Theorem~\ref{theo:sg} combines the two previous results and finally asserts that our solutions are nonlinear semigroups, stable in Wasserstein distances of all orders (order $1$ corresponds to the usual $\Ls^1$ stability), which generalises the results of~\cite{bolbreloe} to the diagonal system~\eqref{eq:syst}. Besides, these solutions satisfy the uniqueness conditions of Bianchini and Bressan~\cite{bianbres} corresponding to our definition of probabilistic solutions. This allows us to identify all the large-scale limits of the Multitype Sticky Particle Dynamics and to finally obtain a complete convergence result for the particle system.

\sk
Our approximation procedure can be compared with the Glimm scheme or the Front Tracking method, as opposed to the vanishing viscosity approach, in the sense that it consists in constructing a piecewise constant solution to the hyperbolic system with initial data given by a discretisation of $u^1_0, \ldots, u^d_0$. Besides, similarly to~\cite{brecol95:sg, bcp00}, our stability estimates are obtained by taking the limit of uniform discrete stability estimates. 

Working with cumulative distribution functions allows us to employ classical tools from probability theory, and to some extent, from optimal transport. As an example, we shall use weak convergence and tightness of probability measures in place of the usual Helly Theorem in order to construct weak solutions. Likewise, stability estimates in Wasserstein distance shall naturally arise from discrete $\Ls^p$ estimates on our particle system when described by the increasing order of the positions.

A striking remark is that the diagonal structure of the system~\eqref{eq:syst} combined with the monotonicity of the initial data permits to obtain global existence, uniqueness and stability results without any smallness assumption on the variation of the initial data. This is done at the price of assuming that the strict hyperbolicity of the system holds uniformly on $[0,1]^d$. Let us also mention that our results involve no such condition as genuine nonlinearity or linear degenerescence of the characteristic fields.

\sk
The main definitions and results of the article are summarised and discussed in Section~\ref{s:main}. Then the article is divided into two parts. Part~\ref{part:1} is dedicated to the introduction of the Multitype Sticky Particle Dynamics and to the proof of Theorem~\ref{theo:existence}, our global weak existence result. We also describe a few properties of those solutions to the system~\eqref{eq:syst} that are obtained by Theorem~\ref{theo:existence}. Part~\ref{part:2} is concerned with stability results and contains the proof of the discrete stability estimates of Theorem~\ref{theo:stabMSPD}, as well as the construction of semigroup solutions given by Theorem~\ref{theo:sg}. Some technical proofs are postponed to an Appendix section, where a list of notations is also provided.


\subsection{Notations and conventions}\label{ss:intro:not} We shall use the following notations and conventions throughout the article. A complete notation index is provided in Appendix~\ref{app:notations}.


\subsubsection{Bold symbols} Generically, bold symbols, such as $\bu$ in~\eqref{eq:syst}, refer to objects of size $d$. Their coordinates, such as $u^1, \ldots, u^d$, are written with thin characters, and labelled with a Greek letter superscript. This letter is usually $\gamma \in \{1, \ldots, d\}$ or $\alpha, \beta$ when two distinct coordinates are at stake, in which case we take the convention that $\alpha < \beta$. 


\subsubsection{Algebraic notations} For all $x,y \in \R$, we let $x \wedge y := \min\{x,y\}$ and $x \vee y := \max\{x,y\}$. The integer part of $x \in [0,+\infty)$ is denoted by $\lfloor x \rfloor$. Given two sets $A$ and $B$, the union set $A \cup B$ shall be denoted by $A \sqcup B$ whenever $A \cap B = \emptyset$.


\subsubsection{Set of probability measures}\label{sss:not:prob} Given a metric space $E$, the set of Borel probability measures on $E$ is denoted by $\Ps(E)$. It is endowed with the topology of weak convergence, which is defined with respect to the set of continuous and bounded functions from $E$ to $\R$. 

Given two metric spaces $E$, $F$, a measurable function $g : E \to F$, and $\mu \in \Ps(E)$, the {\em image} (or {\em pushforward measure}) of $\mu$ by the function $g$, denoted by $\mu \circ g^{-1} \in \Ps(F)$, is defined by $(\mu \circ g^{-1})(B) = \mu(g^{-1}(B))$ for all Borel sets $B \subset F$.


\subsubsection{Function spaces} Given an interval $I \subset \R$, we denote by $\Cs(I,\R)$ (resp. $\Cs(I,\R^d)$) the set of continuous functions on $I$ with values in $\R$ (resp. $\R^d$). We similarly denote by $\Cs^{1,0}_{\mathrm{c}}([0,+\infty) \times \R, \R)$ (resp. $\Cs^{1,0}_{\mathrm{c}}([0,+\infty) \times \R, \R^d)$) the set of functions of $(t,x) \in [0,+\infty)\times\R$ with values in $\R$ (resp. $\R^d$) having compact support and a continuous time derivative (resp. of which each coordinate has a continuous time derivative). We finally denote by $\Cs^{1,1}_{\mathrm{c}}([0,+\infty) \times \R, \R) \subset \Cs^{1,0}_{\mathrm{c}}([0,+\infty) \times \R, \R)$ the subset of functions with a continuous space derivative.

The set of locally integrable functions on $\R$ with respect to the Lebesgue measure is denoted $\Ls^1_{\mathrm{loc}}(\R)$. Given a probability measure $m \in \Ps(\R)$, we denote by $\Ls^1(m)$ the set of integrable functions with respect to $m$.


\subsubsection{Probability measures on the space of sample-paths}\label{sss:not:probC} Given an interval $I \subset \R$, we endow the sets $\Cs(I,\R)$ and $\Cs(I,\R^d)$ with the topology of the uniform convergence if $I$ is compact, and of the locally uniform convergence otherwise. Both these topologies can be metrised.

The set of Borel probability measures on $\Cs([0,+\infty),\R^d)$ is denoted
\begin{equation*}
  \Ms := \Ps(\Cs([0,+\infty),\R^d)).
\end{equation*}
For all $\upmu \in \Ms$, we denote by $\upmu^{\gamma}_t$ the marginal distribution of the $\gamma$-th coordinate at time $t \geq 0$ under $\upmu$; that is to say, $\upmu_t^{\gamma} := \upmu \circ (\pi_t^{\gamma})^{-1}$, where 
\begin{equation*}
  \pi_t^{\gamma} : \left\{\begin{array}{ccc}
    \Cs([0,+\infty),\R^d) & \to & \R\\
    (X^1(s), \ldots, X^d(s))_{s \geq 0} & \mapsto & X^{\gamma}(t)
  \end{array}\right.
\end{equation*}
is the usual projection operator. Since $\pi_t^{\gamma}$ is continuous, the Mapping Theorem~\cite[Theorem~2.7, p.~21]{billingsley} implies that the mapping $\upmu \mapsto \upmu_t^{\gamma}$ is continuous for the topology of the weak convergence on $\Ms$ and $\Ps(\R)$.


\section{Main definitions and results}\label{s:main}

This section contains the main definitions and results of the article. The various assumptions we shall make on the characteristic fields $\lambda^1, \ldots, \lambda^d$ are gathered in Subsection~\ref{ss:ass}. A short presentation of the Multitype Sticky Particle Dynamics is given in Subsection~\ref{ss:mspd:intro}. Cumulative distribution functions play a crucial role in our work, therefore basic definitions and properties are recalled in Subsection~\ref{ss:CDF}. 

The notion of {\em probabilistic solution} to the Cauchy problem~\eqref{eq:syst} is defined in Subsection~\ref{ss:probsol}, where the weak existence result of Theorem~\ref{theo:existence} is stated. The discrete uniform stability estimates of Theorem~\ref{theo:stabMSPD} are stated in Subsection~\ref{ss:discrstab}, while our main Theorem~\ref{theo:sg} is detailed in Subsection~\ref{ss:sg}. 


\subsection{Assumptions on the characteristic fields}\label{ss:ass} Our results are stated under various assumptions on the function 
\begin{equation*}
  \blambda = (\lambda^1, \ldots, \lambda^d) : [0,1]^d \to \R^d,
\end{equation*}
that we now list.

\sk
We first introduce continuity conditions.
\begin{enumerate}[label=(C), ref=C]
  \item\label{ass:C} Continuity: for all $\gamma \in \{1, \ldots, d\}$, the function $\lambda^{\gamma}$ is continuous on $[0,1]^d$.
\end{enumerate}
Under Assumption~\eqref{ass:C}, the functions $\lambda^1, \ldots, \lambda^d$ are bounded and we define the family of finite constants $\ConstBound{p}$, $p \in [1,+\infty]$, by
\begin{equation}\label{eq:ConstBound}
  \forall p \in [1, +\infty), \quad \ConstBound{p} := \left(\sum_{\gamma=1}^d \sup_{\bu \in [0,1]^d} |\lambda^{\gamma}(\bu)|^p\right)^{1/p}, \qquad \ConstBound{\infty} := \sup_{1 \leq \gamma \leq d} \sup_{\bu \in [0,1]^d} |\lambda^{\gamma}(\bu)|.
\end{equation}

\begin{enumerate}[label=(LC), ref=LC]
  \item\label{ass:LC} Lipschitz Continuity: there exists $\ConstLip \in [0,+\infty)$ such that
  \begin{equation*}
    \forall \gamma \in \{1, \ldots, d\}, \quad \forall \bu,\bv \in [0,1]^d, \qquad |\lambda^{\gamma}(\bu) - \lambda^{\gamma}(\bv)| \leq \ConstLip\sum_{\gamma'=1}^d |u^{\gamma'}-v^{\gamma'}|.
  \end{equation*}
\end{enumerate}
Of course, Assumption~\eqref{ass:LC} is stronger than Assumption~\eqref{ass:C}.

\sk
The following Uniform Strict Hyperbolicity condition is crucial in this article, since it enables us to define the Multitype Sticky Particle Dynamics.
\begin{enumerate}[label=(USH), ref=USH]
  \item\label{ass:USH} Uniform Strict Hyperbolicity: there exists $\ConstUSH \in (0,+\infty)$ such that
  \begin{equation*}
    \forall \gamma \in \{1, \ldots, d-1\}, \qquad \inf_{\bu \in [0,1]^d} \lambda^{\gamma}(\bu) - \sup_{\bu \in [0,1]^d} \lambda^{\gamma+1}(\bu) \geq \ConstUSH.
  \end{equation*}
\end{enumerate}
Note that, under Assumptions~\eqref{ass:C} and~\eqref{ass:USH}, the triangle inequality implies that $\ConstUSH \leq \ConstBound{1}\wedge 2\ConstBound{\infty}$.


\subsection{The Multitype Sticky Particle Dynamics}\label{ss:mspd:intro} The precise construction of the Multitype Sticky Particle Dynamics (MSPD) is detailed in Section~\ref{s:mspd}. In this subsection, we only give a formal description of the MSPD and introduce the notations that will be necessary to state the $\Ls^p$ stability estimates of Theorem~\ref{theo:stabMSPD}.

The MSPD describes the evolution of $d \times n$ particles on the real line. For all $\gamma \in \{1, \ldots, d\}$ and $k \in \{1, \ldots, n\}$, the $k$-th particle of type $\gamma$ is labelled by the symbol $\gamma:k$, and we shall denote by 
\begin{equation*}
  \Part := \{ \gamma:k, \gamma \in \{1, \dots, d\}, k \in \{1, \ldots, n\}\}
\end{equation*}
the set of all such symbols.

Let us define the polyhedron $\Dn \subset \R^n$ by 
\begin{equation*}
  \Dn := \{(x_1, \ldots, x_n) \in \R^n : x_1 \leq \cdots \leq x_n\}.
\end{equation*}
The configuration space for the Multitype Sticky Particle Dynamics (MSPD) is the Cartesian product $\Dnd$, a typical element of which is denoted
\begin{equation*}
  \x = (x^{\gamma}_k)_{\gamma:k \in \Part},
\end{equation*}
so that in the configuration $\x$, the position of the particle $\gamma:k$ is $x^{\gamma}_k$.

In a configuration $\x \in \Dnd$, the {\em rank} of the particle $\gamma:k$ among the system of particles of type $\gamma' \in \{1, \ldots, d\}$ is the number of particles of type $\gamma'$ located on the left of $\gamma:k$ (\ie which position is lower than $x^{\gamma}_k$). Informally, the MSPD started at the configuration $\x$ is defined as follows:
\begin{itemize}
  \item the mass of each particle is $1/n$, and the initial velocity of a particle is determined by its rank among each system of particles of a given type,
  \item particles travel at constant velocity until they collide with other particles,
  \item when two particles of the same type collide, they stick together into a cluster, and the velocity of the cluster is determined by the conservation of mass and momentum,
  \item when two clusters of different types collide, the velocities of every particle is updated with respect to its rank in each system after the collision.
\end{itemize}
The initial velocity of the particle $\gamma:k$ as a function of its rank among each system is given under Assumption~\eqref{ass:C} by an appropriate discretisation of the function $\lambda^{\gamma}$ appearing in~\eqref{eq:syst}, see~\eqref{eq:vitesses} in Section~\ref{s:mspd}. Under the further Assumption~\eqref{ass:USH}, we show that the dynamics described above is well defined at all times and for all initial configurations. Denoting by $\Phi(\x;t) = (\Phi^{\gamma}_k(\x;t))_{\gamma:k \in \Part}$ the positions of the particles at time $t \geq 0$ in the MSPD started at the configuration $\x$, we thus define a flow $(\Phi(\cdot;t))_{t \geq 0}$ in $\Dnd$. A typical trajectory of the MSPD is plotted on Figure~\ref{fig:MSPD}.

\begin{figure}[ht]
  \begin{center}
    \vskip 2cm
    \includegraphics[clip=true,trim=0cm 0cm 1cm 0cm,angle=90,origin=c,width=.5\textwidth]{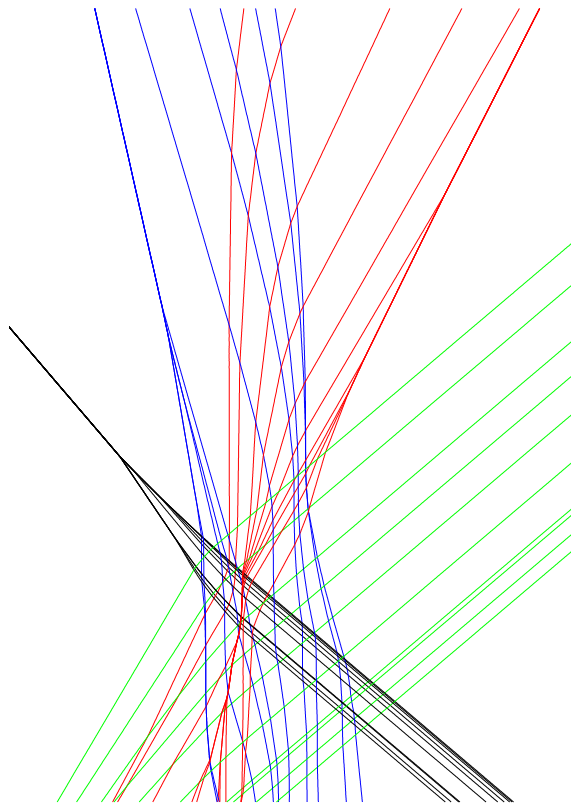}
  \end{center}
  \caption{A typical trajectory of the Multitype Sticky Particle Dynamics with $d=4$ types and $n=10$ particles per type. The horizontal coordinate refers to the physical positions of the particles, while the vertical coordinate describes the time. Each color is associated with a type of particle. Particles of the same type stick together at collisions, and the velocities may be modified at collisions with clusters of different types.}
  \label{fig:MSPD}
\end{figure}

\begin{rk}
  In the scalar case $d=1$, the MSPD reduces to the Sticky Particle Dynamics introduced by Brenier and Grenier~\cite{bregre} in the context of the study of general scalar conservation laws. The construction of such an adhesion dynamics in the physics literature is due to Zel'dovich~\cite{zeldo} and is related to the modeling of large-scale structure in the universe, as well as elementary models in turbulence~\cite{vergassola}. In particular, it played an important role in the mathematical understanding of the behaviour of pressureless gases~\cite{bouchut, grenier, eryksin, bregan}; in this direction, we highlight the recent work by Natile and Savaré~\cite{natsav} which relies on similar Wasserstein estimates as ours.
\end{rk}

\begin{rk}
  In the scalar case $d=1$, the viscous version
  \begin{equation*}
    \left\{\begin{aligned}
      & \partial_t u + \partial_x(\Lambda(u)) = \epsilon \partial_x^2 u,\\
      & u(0,x) = u_0(x),
    \end{aligned}\right.
  \end{equation*}
  of the scalar conservation law~\eqref{eq:scalarcl} is known to describe the large-scale limit of systems of rank-based interacting diffusions~\cite{bostal, bostalMC, jourdain:signed}. In~\cite{jourey:snoise}, it was proved that, when $\epsilon$ vanishes, such systems of diffusions converge to the Sticky Particle Dynamics, the large-scale limit of which is described by the entropy solution to the corresponding inviscid conservation law~\cite{bregre, jourdain:sticky}. Theoretical and numerical approximation procedures of the conservation law~\eqref{eq:scalarcl} based on this probabilistic representation and combining the small-noise and large-scale limits where constructed in~\cite{jourdain:characteristics, jourou}, where fractional diffusions are also considered.

  As far as the case $d \geq 2$ is concerned, a multitype system of rank-based interacting diffusions was introduced in~\cite[Chapitre~7]{reygner:phd} in order to approximate the solution to the parabolic system
  \begin{equation*}
    \forall \gamma \in \{1, \ldots, d\}, \qquad \left\{\begin{aligned}
      & \partial_t u^{\gamma} + \lambda^{\gamma}(\bu)\partial_x u^{\gamma} = \epsilon \partial^2_x u^{\gamma},\\
      & u^{\gamma}(0,x) = u^{\gamma}_0(x).
    \end{aligned}\right.
  \end{equation*}
  Using the arguments introduced in~\cite{jourey:snoise}, the MSPD can be shown to describe the small-noise limit of this system. \end{rk}


\subsection{Cumulative distribution functions}\label{ss:CDF} In this subsection, we give a few definitions and introduce some notations related to cumulative distribution functions (CDFs).

\begin{defi}[Cumulative distribution function]
  A cumulative distribution function on the real line is a nondecreasing and right continuous function $F : \R \to [0,1]$ such that
  \begin{equation*}
    \lim_{x \to -\infty} F(x) = 0, \qquad \lim_{x \to +\infty} F(x) = 1.
  \end{equation*}
\end{defi}

It is an elementary result of measure theory~\cite[Theorem~(4.3), p.~5]{revuz} that a function $F$ is a CDF on the real line if and only if there exists a probability measure $m \in \Ps(\R)$ such that, for all $x \in \R$, $F(x) = m((-\infty,x])$. In this case, $F$ is said to be {\em the CDF of $m$}, and we denote $F = H*m$, where $H$ refers to the Heaviside function $H(x) := \ind{x \geq 0}$.

CDFs are generically discontinuous and therefore can have {\em jumps}, defined as follows.
\begin{defi}[Jumps]
  Let $F$ be a CDF on the real line. For all $x \in \R$, the {\em jump} of $F$ at $x$ is defined by
  \begin{equation*}
    \Delta F(x) := F(x) - F(x^-),
  \end{equation*}
  where
  \begin{equation*}
    F(x^-) := \lim_{y \uto x} F(y).
  \end{equation*}
\end{defi}

Certainly, for all $x \in \R$, $\Delta F(x) = m(\{x\})$, and whenever the latter quantity is positive, then $x$ is called an {\em atom} of $m$. Note that the set of atoms of $m$ is at most countable, therefore $\dd x$-almost everywhere, $\Delta F(x)=0$.

\sk
If $F$ is the CDF of $m$, then, for all $f \in \Ls^1(m)$, the {\em expectation} of $f$ under $m$ is indifferently denoted
\begin{equation*}
  \int_{x \in \R} f(x)m(\dd x) = \int_{x \in \R} f(x)\dd F(x).
\end{equation*}
The expectation of $f$ under $m$ can also be expressed in terms of the {\em pseudo-inverse} of $F$, defined as follows.
\begin{defi}[Pseudo-inverse]
  Let $F$ be a CDF on the real line. The {\em pseudo-inverse} of $F$ is the function $F^{-1} : (0,1) \to \R$ defined by
  \begin{equation}\label{eq:pseudoinv}
    F^{-1}(v) := \inf\{x \in \R : F(x) \geq v\}.
  \end{equation}
\end{defi}

The following properties of the pseudo-inverse are straightforward.
\begin{lem}[Properties of the pseudo-inverse]\label{lem:pseudoinv}
  Let $F$ be a CDF on the real line. 
  \begin{enumerate}[label=(\roman*), ref=\roman*]
    \item\label{it:pseudoinv:0} The function $F^{-1} : (0,1) \to \R$ is nondecreasing, left continuous with right limits. It is countinuous outside of the countable set $\{v\in (0,1):\exists x<y\in\R,\;F(x)=F(y)=v\}$
    \item\label{it:pseudoinv:1} For all $v \in (0,1)$, $F(F^{-1}(v)^-) \leq v \leq F(F^{-1}(v))$.
    \item\label{it:pseudoinv:2} For all $x \in \R$, for all $v \in (0,1)$, $F^{-1}(v) \leq x$ if and only if $v \leq F(x)$.
  \end{enumerate}
\end{lem}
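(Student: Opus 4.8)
The plan is to prove part~(iii) first, since it is the main tool for parts~(i) and~(ii). Preliminarily, for each $v \in (0,1)$ the set $S_v := \{x \in \R : F(x) \geq v\}$ is nonempty because $F(x) \to 1$ as $x \to +\infty$, bounded from below because $F(x) \to 0$ as $x \to -\infty$, and a half-line by monotonicity of $F$; hence $F^{-1}(v) = \inf S_v$ in~\eqref{eq:pseudoinv} is well defined and finite. For part~(iii): if $v \leq F(x)$ then $x \in S_v$, so $F^{-1}(v) \leq x$ by definition of the infimum. Conversely, I would first establish $v \leq F(F^{-1}(v))$: given $\varepsilon > 0$, right continuity of $F$ yields $\delta > 0$ such that $F(y) < F(F^{-1}(v)) + \varepsilon$ whenever $F^{-1}(v) \leq y < F^{-1}(v) + \delta$, and since $S_v$ contains such a $y$ by definition of the infimum, we get $v \leq F(y) < F(F^{-1}(v)) + \varepsilon$; letting $\varepsilon \downarrow 0$ proves the claim. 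Then $F^{-1}(v) \leq x$ combined with monotonicity of $F$ gives $F(x) \geq F(F^{-1}(v)) \geq v$.

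Part~(ii) follows quickly: the bound $v \leq F(F^{-1}(v))$ has just been proved, and for the other half, every $y < F^{-1}(v)$ lies outside $S_v$, so $F(y) < v$; letting $y \uparrow F^{-1}(v)$ yields $F(F^{-1}(v)^-) \leq v$. For part~(i), $F^{-1}$ is nondecreasing because $v \leq v'$ implies $S_{v'} \subseteq S_v$; being monotone, it automatically admits left and right limits everywhere, so only left continuity and the localisation of jumps remain to be checked. For left continuity at $v_0 \in (0,1)$, set $\ell := \lim_{v \uparrow v_0} F^{-1}(v) \leq F^{-1}(v_0)$; if the inequality were strict, choose $x$ with $\ell < x < F^{-1}(v_0)$, so that $F^{-1}(v) \leq \ell < x$ for all $v < v_0$, hence $v \leq F(x)$ by part~(iii), hence $v_0 \leq F(x)$, hence $F^{-1}(v_0) \leq x$ again by part~(iii), contradicting $x < F^{-1}(v_0)$.

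Since $F^{-1}$ is left continuous, it is discontinuous at $v_0$ exactly when $F^{-1}(v_0) < F^{-1}(v_0^+)$. If this happens, picking $a < b$ strictly between $F^{-1}(v_0)$ and $F^{-1}(v_0^+)$ and applying part~(iii) gives, on the one hand, $v_0 \leq F(a)$ (from $F^{-1}(v_0) < a$), and on the other hand $F(b) < v$ for every $v > v_0$ (from $b < F^{-1}(v)$), so $F(b) \leq v_0$; monotonicity then forces $F(a) = F(b) = v_0$ with $a < b$. Conversely, if $F(x) = F(y) = v_0$ with $x < y$, then $F \equiv v_0$ on $[x,y]$, so $F^{-1}(v_0) \leq x$ while $F^{-1}(v) \geq y$ for every $v > v_0$, producing a jump at $v_0$. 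Thus the set of discontinuities of $F^{-1}$ is exactly $\{v_0 \in (0,1) : \exists x < y,\ F(x) = F(y) = v_0\}$, and this set is countable: to each of its elements $v_0$ one associates a rational number lying in the nondegenerate interval on which $F \equiv v_0$, and distinct values of $v_0$ yield disjoint such intervals, so this association is an injection into $\mathbb{Q}$.

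All the steps are elementary; the point requiring the most care is the last one, namely matching the jump set of $F^{-1}$ with the countable set of plateau levels of $F$ — that is, handling correctly the strict versus non-strict inequalities and the open versus closed half-lines when transferring information between $F$ and $F^{-1}$ through part~(iii).
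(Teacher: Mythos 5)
The paper states this lemma without a proof, labelling its properties ``straightforward,'' so there is no paper argument to compare against. Your proof is correct and complete: you sensibly establish part~(iii) first as the key equivalence, derive~(ii) from it, and then exploit~(iii) systematically to prove left continuity and to characterise the jump set of $F^{-1}$ as exactly the set of plateau levels of $F$, with the countability following from the injection of the corresponding disjoint nondegenerate plateau intervals into $\mathbb{Q}$. The only place one has to be careful — matching strict against non-strict inequalities when moving between the open condition $b < F^{-1}(v_0^+) = \inf_{v>v_0} F^{-1}(v)$ and the conclusion $F(b) \leq v_0$ — is handled correctly.
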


The expectation of $f$ under $m$ satisfies the following {\em change of variable formula}~\cite[Proposition~(4.9), p.~8]{revuz}.
\begin{lem}[Change of variable formula]\label{lem:CDFm1}
  Let $F$ be the CDF of the probability measure $m$ on $\R$. Then, for all $f \in \Ls^1(m)$,
  \begin{equation*}
    \int_{x \in \R} f(x) \dd F(x) = \int_{v=0}^1 f(F^{-1}(v))\dd v.
  \end{equation*}
\end{lem}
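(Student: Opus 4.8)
The plan is to realise $m$ as the image of the Lebesgue measure on $(0,1)$ under the pseudo-inverse $F^{-1}$, and then to deduce the claimed identity from the standard change-of-variables formula for images of measures.

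First I would introduce the uniform probability measure $\Unif$ on $(0,1)$, that is, the restriction of the Lebesgue measure to $(0,1)$. By point~(\ref{it:pseudoinv:0}) of Lemma~\ref{lem:pseudoinv}, the map $F^{-1} : (0,1) \to \R$ is nondecreasing, hence Borel measurable, so the image measure $\Unif \circ (F^{-1})^{-1}$ is a well-defined element of $\Ps(\R)$.

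Next I would compute the CDF of $\Unif \circ (F^{-1})^{-1}$. Fix $x \in \R$; by point~(\ref{it:pseudoinv:2}) of Lemma~\ref{lem:pseudoinv}, for $v \in (0,1)$ one has $F^{-1}(v) \leq x$ if and only if $v \leq F(x)$, and since $F(x) \in [0,1]$ this yields
\[
  \bigl(\Unif \circ (F^{-1})^{-1}\bigr)\bigl((-\infty,x]\bigr) = \Unif\bigl(\{v \in (0,1) : v \leq F(x)\}\bigr) = F(x).
\]
Thus $\Unif \circ (F^{-1})^{-1}$ and $m$ share the same CDF, namely $F$, and by the elementary uniqueness statement recalled after the definition of a CDF (a probability measure on $\R$ is determined by its CDF), we conclude $\Unif \circ (F^{-1})^{-1} = m$.

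Finally I would invoke the transfer theorem: for every Borel set $B \subset \R$ the identity $\int_{\R} \ind{x \in B}\, m(\dd x) = \int_0^1 \ind{F^{-1}(v) \in B}\,\dd v$ holds by the very definition of the image measure; extending it by linearity to simple functions, then by monotone convergence to nonnegative measurable functions, and then to $f \in \Ls^1(m)$ through the decomposition $f = f^+ - f^-$ (the $\Ls^1(m)$ integrability of $f$ being equivalent, by the nonnegative case applied to $|f|$, to the integrability of $v \mapsto f(F^{-1}(v))$ on $(0,1)$), gives exactly
\[
  \int_{x \in \R} f(x)\,\dd F(x) = \int_{x \in \R} f(x)\, m(\dd x) = \int_{v \in (0,1)} f\bigl(F^{-1}(v)\bigr)\, \Unif(\dd v) = \int_{v=0}^1 f\bigl(F^{-1}(v)\bigr)\,\dd v.
\]
The only step specific to this lemma is the computation of the set $\{v \in (0,1) : F^{-1}(v) \leq x\}$, which is immediate from Lemma~\ref{lem:pseudoinv}; everything else is classical measure theory, so I do not anticipate any genuine obstacle. (Alternatively, the statement can simply be quoted from \cite[Proposition~(4.9), p.~8]{revuz}.)
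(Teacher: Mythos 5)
Your proposal is correct and matches the paper's intent: the paper simply cites \cite[Proposition~(4.9), p.~8]{revuz} and then notes the equivalent formulation $m = \Unif \circ (F^{-1})^{-1}$ in the remark immediately after the lemma, which is exactly the identity you establish (via point~(\ref{it:pseudoinv:2}) of Lemma~\ref{lem:pseudoinv}) before invoking the transfer theorem. Nothing to add.
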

Let us point out the fact that, with the notations introduced in Subsection~\ref{ss:intro:not} above, a reformulation of Lemma~\ref{lem:CDFm1} is $m = \Unif \circ (F^{-1})^{-1}$, where $\Unif$ refers to the Lebesgue measure on $[0,1]$.

\begin{lem}[Weak convergence and CDFs]\label{lem:cvCDF}
  Let $(m_n)_{n \geq 1}$ be a sequence of probability measures on $\R$ and $m \in \Ps(\R)$. Let $F_n := H*m_n$ and $F := H*m$. Then $m_n$ converges weakly to $m$ if and only if, for all $x \in \R$ such that $\Delta F(x) = 0$, $F_n(x)$ converges to $F(x)$. In this case, $F_n^{-1}(v)$ converges to $F^{-1}(v)$ at all continuity points $v$ of $F^{-1}$, therefore $\dd v$-almost everywhere in $(0,1)$.
\end{lem}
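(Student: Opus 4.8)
The plan is to establish the statement about pseudo-inverses first, as it is the technical heart of the lemma, and then to deduce both implications of the equivalence from it together with elementary approximation arguments.

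\textbf{The direct implication.} Assuming $m_n \to m$ weakly, I would fix $x \in \R$ with $\Delta F(x) = 0$ and, for each $\epsilon > 0$, choose continuous functions $\varphi_\epsilon, \psi_\epsilon : \R \to [0,1]$ (\eg piecewise affine) such that $\ind{y \leq x - \epsilon} \leq \varphi_\epsilon(y) \leq \ind{y \leq x} \leq \psi_\epsilon(y) \leq \ind{y \leq x + \epsilon}$ for all $y \in \R$. Integrating against $m_n$ and $m$ and invoking the definition of weak convergence gives $F(x-\epsilon) \leq \int \varphi_\epsilon \dd m = \lim_n \int \varphi_\epsilon \dd m_n \leq \liminf_n F_n(x)$ and, symmetrically, $\limsup_n F_n(x) \leq \int \psi_\epsilon \dd m \leq F(x+\epsilon)$. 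Letting $\epsilon \dto 0$ and using the right continuity of $F$ together with $F(x^-) = F(x)$ then yields $F_n(x) \to F(x)$.

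\textbf{Convergence of the pseudo-inverses.} Next I would assume that $F_n(x) \to F(x)$ at every continuity point $x$ of $F$; since $F$ has at most countably many jumps, this holds on a dense set $D \subseteq \R$. Fixing $v \in (0,1)$ a continuity point of $F^{-1}$ and $\epsilon > 0$, I would show $|F_n^{-1}(v) - F^{-1}(v)| < \epsilon$ for $n$ large. For the lower bound: pick $x \in D$ with $F^{-1}(v) - \epsilon < x < F^{-1}(v)$; Lemma~\ref{lem:pseudoinv}(\ref{it:pseudoinv:2}) gives $F(x) < v$, so $F_n(x) < v$ eventually, and Lemma~\ref{lem:pseudoinv}(\ref{it:pseudoinv:2}) applied to $F_n$ gives $F_n^{-1}(v) > x > F^{-1}(v) - \epsilon$. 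For the upper bound: using the continuity of $F^{-1}$ at $v$ and its monotonicity, pick $v' \in (v,1)$ with $F^{-1}(v') < F^{-1}(v) + \epsilon$, then $x \in D$ with $F^{-1}(v') < x < F^{-1}(v) + \epsilon$; Lemma~\ref{lem:pseudoinv}(\ref{it:pseudoinv:2}) gives $v' \leq F(x)$, so $F_n(x) \geq v$ eventually, hence $F_n^{-1}(v) \leq x < F^{-1}(v) + \epsilon$. By Lemma~\ref{lem:pseudoinv}(\ref{it:pseudoinv:0}) the discontinuity set of $F^{-1}$ is countable, so $F_n^{-1} \to F^{-1}$ holds $\dd v$-almost everywhere on $(0,1)$, which is precisely the final assertion of the lemma.

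\textbf{The converse implication.} With the same hypothesis on $F_n$, I would take $g : \R \to \R$ continuous and bounded; by the previous step $g(F_n^{-1}(v)) \to g(F^{-1}(v))$ for $\dd v$-almost every $v$, with $|g(F_n^{-1}(v))| \leq \|g\|_\infty$. Dominated convergence together with the change of variable formula of Lemma~\ref{lem:CDFm1} then gives $\int g \dd m_n = \int_0^1 g(F_n^{-1}(v))\dd v \to \int_0^1 g(F^{-1}(v))\dd v = \int g \dd m$, and since $g$ is arbitrary, $m_n \to m$ weakly.

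\textbf{Expected main obstacle.} The one delicate point is the two-sided bound in the second step: the lower bound needs only the density of $D$ and the monotonicity of $F^{-1}$, whereas the upper bound genuinely relies on the continuity of $F^{-1}$ at $v$ (equivalently, the absence of a flat stretch of $F$ at level $v$), which is exactly why the convergence $F_n^{-1}(v) \to F^{-1}(v)$ may fail at the countably many jump points of $F^{-1}$. One should also keep in mind that convergence of $F_n$ merely at the continuity points of $F$ suffices here precisely because these points form a dense set.
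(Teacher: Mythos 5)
Your proof is correct. Note first that the paper does not supply its own proof of this lemma: it cites \cite[Theorem~2.2, p.~86]{durrett} for the equivalence and \cite[Theorem~2.1, p.~85]{durrett} for the almost-everywhere convergence of pseudo-inverses, so a comparison ``against the paper's proof'' really means a comparison against the cited reference. Your argument is the standard one and matches that reference in spirit. In particular your treatment of the converse implication --- establishing $F_n^{-1}(v)\to F^{-1}(v)$ off a countable set, then applying dominated convergence through the change-of-variable formula of Lemma~\ref{lem:CDFm1} --- is exactly the one-dimensional Skorokhod Representation argument that the paper's concluding remark alludes to, and it is a slightly more economical route than a direct Portmanteau-type argument since it yields the pseudo-inverse claim and the converse simultaneously. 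The sandwiching by continuous functions in the forward direction, the density of $D=\{x:\Delta F(x)=0\}$, the use of Lemma~\ref{lem:pseudoinv}\eqref{it:pseudoinv:2} in both directions, and your observation that the lower bound on $F_n^{-1}(v)$ needs only monotonicity while the upper bound genuinely requires continuity of $F^{-1}$ at $v$, are all accurate. I see no gap.
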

The equivalence between weak convergence and convergence of the CDF outside of the atoms of the limit is a classical result, see for instance~\cite[Theorem~2.2, p.~86]{durrett}. The almost everywhere convergence of pseudo-inverses is often used as a proof of the Skorokhod Representation Theorem on the real line, see~\cite[Theorem~2.1, p.~85]{durrett}.

\sk
We finally introduce a few notations for functions $u : [0,+\infty) \times \R \to [0,1]$ such that, for all $t \geq 0$, $u(t,\cdot)$ is a CDF on the real line. For such a function, for all $t \geq 0$,
\begin{itemize}
  \item the jump of $u(t,\cdot)$ at $x \in \R$ is denoted by $\Delta_x u(t,x)$ and worth $\Delta_x u(t,x):=u(t,x)-u(t,x^-)$, where $u(t,x^-):=\lim_{y \uto x} u(t,y)$,
  \item if $m \in \Ps(\R)$ is such that $u(t,\cdot) = H*m$, then for all $f \in \Ls^1(m)$, the expectation of $f$ under $m$ is denoted
  \begin{equation*}
    \int_{x \in \R} f(x) m(\dd x) = \int_{x \in \R} f(x) \dd_x u(t,x),
  \end{equation*}
  and we have
  \begin{equation*}
    \int_{x \in \R} f(x) \dd_x u(t,x) = \int_{v=0}^1 f\left(u(t,\cdot)^{-1}(v)\right) \dd v,
  \end{equation*}
  where $u(t,\cdot)^{-1}(v)$ refers to the pseudo-inverse of the CDF $u(t,\cdot)$.
\end{itemize}


\subsection{Probabilistic solutions to the system~\texorpdfstring{\eqref{eq:syst}}{}}\label{ss:probsol} In this subsection, we introduce the notion of a {\em probabilistic solution} to the Cauchy problem~\eqref{eq:syst}. Probabilistic solutions have to be thought of as {\em weak solutions} $\bu=(u^1, \ldots, u^d)$ of~\eqref{eq:syst} having the property that $u^{\gamma}(t,\cdot)$ remains a CDF on the real line at all times. Since such functions can be discontinuous, we need to take a convention to define the product $\lambda^{\gamma}(\bu)\partial_x u^{\gamma}$. This task is carried out in~\S\ref{sss:probsol}. The existence of probabilistic solutions, based on an approximation procedure by the vector of empirical CDFs of the MSPD, is stated in~\S\ref{sss:pfexist}. A description of arbitrary probabilistic solutions in terms of trajectories in $\R^d$ is discussed in~\S\ref{sss:introtraj}, and the continuity of solutions obtained at~\S\ref{sss:pfexist} under diagonal monotonicity conditions on the characteristic fields is investigated in~\S\ref{sss:rar:intro}. Finally, the links between our results and those of~\cite{ehm09} are discussed in~\S\ref{sss:ehm09}.


\subsubsection{Definition of probabilistic solutions}\label{sss:probsol} The main difficulty in defining a notion of solution to the system~\eqref{eq:syst} is to make sense of the product $\lambda^{\gamma}(\bu)\partial_x u^{\gamma}$. Indeed, since we expect $u^{\gamma}(t, \cdot)$ to be a CDF on the real line for all $t \geq 0$, the function $\lambda^{\gamma}(\bu)$ is generically discontinuous at the atoms of the measure $\partial_x u^{\gamma}$, and therefore this product cannot be defined in the distributional sense. Although there has been several works~\cite{dallefmur, boujam98} dedicated to the problem of giving a suitable definition to the product between a discontinuous function and a Radon measure in the context of transport equations, we shall use the particular connection between $\lambda^{\gamma}(\bu)$  and $\partial_x u^{\gamma}$ in order to provide a definition such that, in the scalar case, the product $\lambda(u)\partial_x u$ coincide with the conservative form $\partial_x(\Lambda(u))$, see Remark~\ref{rk:scalarcl} below.

Let $\bu = (u^1, \ldots, u^d) : [0,+\infty) \times \R \to [0,1]^d$ be a measurable function such that, for all $\gamma \in \{1, \ldots, d\}$, for all $t \geq 0$, the function $u^{\gamma}(t,\cdot)$ is a CDF on the real line. For all $\gamma \in \{1, \ldots, d\}$, let us define the function $\lambda^{\gamma}\{\bu\} : [0,+\infty) \times \R \to \R$ by
\begin{equation}\label{eq:dlambda}
  \lambda^{\gamma}\{\bu\}(t,x) := \int_{\theta=0}^1 \lambda^{\gamma}\left(u^1(t,x), \ldots, (1-\theta)u^{\gamma}(t,x^-)+\theta u^{\gamma}(t,x), \ldots, u^d(t,x)\right) \dd \theta,
\end{equation}
which will play the role of a substitute for $\lambda^{\gamma}(\bu(t,x))$ in~\eqref{eq:syst}. Note that the function $\lambda^{\gamma}\{\bu\}$ can be rewritten under the more explicit form
\begin{equation*}
  \lambda^{\gamma}\{\bu\}(t,x) = \lambda^{\gamma}(\bu(t,x))
\end{equation*}
if $\Delta_x u^{\gamma}(t,x) = 0$, and
\begin{equation*}
    \lambda^{\gamma}\{\bu\}(t,x) = \frac{1}{\Delta_x u^{\gamma}(t,x)} \int_{w=u^{\gamma}(t,x^-)}^{u^{\gamma}(t,x)} \lambda^{\gamma}\left(u^1(t,x), \ldots, u^{\gamma-1}(t,x), w, u^{\gamma+1}(t,x), \ldots, u^d(t,x)\right) \dd w
\end{equation*}
otherwise. 

\sk
We are now ready to introduce our notion of probabilistic solution.

\begin{defi}[Probabilistic solution to~\eqref{eq:syst}]\label{defi:sol}
  Under Assumption~\eqref{ass:C}, a {\em probabilistic solution} to the hyperbolic system~\eqref{eq:syst} is a measurable function
  \begin{equation*}
    \bu = (u^1, \ldots, u^d) : [0,+\infty) \times \R \to [0,1]^d,
  \end{equation*}
  such that:
  \begin{enumerate}[label=(\roman*), ref=\roman*]
    \item\label{it:sol:1} for all $t \geq 0$, for all $\gamma \in \{1, \ldots, d\}$, $u^{\gamma}(t, \cdot)$ is a CDF on the real line,
    \item\label{it:sol:3} for all test functions $\bvarphi = (\varphi^1, \ldots, \varphi^d) \in \Cs^{1,0}_{\mathrm{c}}([0,+\infty)\times\R, \R^d)$,
    \begin{equation*}
      \begin{aligned}
        & \sum_{\gamma=1}^d \left(\int_{t=0}^{+\infty} \int_{x \in \R} \partial_t\varphi^{\gamma}(t,x) u^{\gamma}(t,x) \dd x\dd t + \int_{x \in \R} \varphi^{\gamma}(0,x) u_0^{\gamma}(x) \dd x \right)\\
        & \qquad = \sum_{\gamma=1}^d \int_{t=0}^{+\infty} \int_{x \in \R} \varphi^{\gamma}(t,x) \lambda^{\gamma}\{\bu\}(t,x) \dd_x u^{\gamma}(t,x) \dd t,
      \end{aligned}
    \end{equation*}
    where $\lambda^{\gamma}\{\bu\}$ is defined by~\eqref{eq:dlambda} above.
  \end{enumerate}
\end{defi}

\begin{rk}
  In the point~\eqref{it:sol:3} of Definition~\ref{defi:sol}, the integral term
  \begin{equation*}
    \int_{x \in \R} \varphi^{\gamma}(t,x) \lambda^{\gamma}\{\bu\}(t,x) \dd_x u^{\gamma}(t,x)
  \end{equation*}
  has to be understood as the expectation of the bounded measurable function $\varphi^{\gamma}(t,\cdot) \lambda^{\gamma}\{\bu\}(t,\cdot)$ under the probability measure with CDF $u^{\gamma}(t,\cdot)$. In addition, the point~\eqref{it:sol:3} only makes sense if the function
  \begin{equation*}
    t \mapsto \int_{x \in \R} \varphi^{\gamma}(t,x) \lambda^{\gamma}\{\bu\}(t,x) \dd_x u^{\gamma}(t,x)
  \end{equation*}
  is measurable on $[0,+\infty)$. This property is obtained by first applying the change of variable formula of Lemma~\ref{lem:CDFm1} to rewrite
  \begin{equation*}
    \int_{x \in \R} \varphi^{\gamma}(t,x) \lambda^{\gamma}\{\bu\}(t,x) \dd_x u^{\gamma}(t,x) = \int_{v=0}^1 \varphi^{\gamma}\left(t,u^{\gamma}(t,\cdot)^{-1}(v)\right) \lambda^{\gamma}\{\bu\}\left(t,u^{\gamma}(t,\cdot)^{-1}(v)\right)\dd v.
  \end{equation*}
  Now it is easily checked that the function 
  \begin{equation*}
    (t,v) \mapsto \varphi^{\gamma}\left(t,u^{\gamma}(t,\cdot)^{-1}(v)\right) \lambda^{\gamma}\{\bu\}\left(t,u^{\gamma}(t,\cdot)^{-1}(v)\right)
  \end{equation*}
  is measurable and bounded on the product space $[0,+\infty) \times (0,1)$, so that the conclusion follows from the Fubini Theorem.
\end{rk}

\begin{rk}\label{rk:scalarcl}
  In the scalar case $d=1$, and with the definition of $\lambda\{u\}$ above, we have
  \begin{equation}\label{eq:ippscal}
    \partial_x (\Lambda(u(t,x))) = \lambda\{u\}(t,x) \dd_x u(t,x),
  \end{equation}
  in the distributional sense, where we recall that $\Lambda$ is the antiderivative of $\lambda$ (this is a consequence of the chain rule formula for functions of finite variation~\cite[Proposition~(4.6), p.~6]{revuz}). As a consequence, a probabilistic solution in the sense of Definition~\ref{defi:sol} is nothing but a weak solution to the scalar conservation law~\eqref{eq:scalarcl}, which remains a CDF at all times.
\end{rk}


\subsubsection{Existence of probabilistic solutions}\label{sss:pfexist} We first define the empirical distribution and the vector of empirical CDFs of the MSPD. 

\begin{defi}[Empirical distribution and vector of empirical CDFs of the MSPD]\label{defi:muMSPD}
  Under Assumptions~\eqref{ass:C} and~\eqref{ass:USH}, for all $\x \in \Dnd$, the {\em empirical distribution of the MSPD started at $\x$} is the probability measure
  \begin{equation*}
    \upmu[\x] := \frac{1}{n} \sum_{k=1}^n \delta_{(\Phi_k^1(\x;t), \ldots, \Phi_k^d(\x;t))_{t \geq 0}} \in \Ms.
  \end{equation*}
  
  The vector $\bu[\x] = (u^1[\x], \ldots, u^d[\x])$ of {\em empirical CDFs} of the MSPD started at $\x$ is defined by, for all $\gamma \in \{1, \ldots, d\}$,
\begin{equation}\label{eq:bun}
  \forall (t,x) \in [0,+\infty) \times \R, \qquad u^{\gamma}[\x](t,x) := H*\upmu^{\gamma}_t[\x](x) = \frac{1}{n} \sum_{k=1}^n \ind{\Phi_k^{\gamma}(\x;t) \leq x},
\end{equation}
and we also let
\begin{equation}\label{eq:bun0}
  \forall x \in \R, \qquad u^{\gamma}_0[\x](x) := \frac{1}{n} \sum_{k=1}^n \ind{x^{\gamma}_k \leq x}.
\end{equation}
\end{defi}

With these definitions, we check in Section~\ref{s:existence} that that, for all $\x \in \Dnd$, the MSPD started as $\x$ satisfies the characteristic equation
\begin{equation}\label{eq:diffmspd:intro}
  \forall \gamma:k \in \Dnd, \qquad \dot{\Phi}_k^{\gamma}(\x;t) = \lambda^{\gamma}\{\bu[\x]\}(t, \Phi_k^{\gamma}(\x;t)), \qquad \text{$\dd t$-almost everywhere.}
\end{equation}
We then prove that this implies that $\bu[\x]$ is an {\em exact} probabilistic solution to the system~\eqref{eq:syst}, but with {\em discrete} initial data $(u^1_0[\x], \ldots, u^d_0[\x])$, see Proposition~\ref{prop:MSPDsol}. Taking a sequence $(\x(n))_{n \geq 1}$ of initial conditions such that $u^{\gamma}_0[\x(n)]$ approximates the initial data $u^{\gamma}_0$ of~\eqref{eq:syst}, we combine a tightness argument for the sequence of empirical distributions of the MSPD in the space of sample-paths with a closedness property of the set of probabilistic solutions to obtain the following existence theorem.

\begin{theo}[Convergence of the MSPD]\label{theo:existence}
  Let Assumptions~\eqref{ass:C} and~\eqref{ass:USH} hold, and let us fix $\bm = (m^1, \ldots, m^d) \in \Ps(\R)^d$. Let $(\x(n))_{n \geq 1}$ be a sequence of configurations such that, for all $n \geq 1$, $\x(n) \in \Dnd$, and assume that, for all $\gamma \in \{1, \ldots, d\}$, the sequence of empirical measures
  \begin{equation*}
    \frac{1}{n} \sum_{k=1}^n \delta_{x^{\gamma}_k(n)} \in \Ps(\R)
  \end{equation*}
  converges weakly to $m^{\gamma}$.
  
  Then from any subsequence of $(\upmu[\x(n)])_{n \geq 1}$, one can extract a further subsequence $(\upmu[\x(n_{\ell})])_{\ell \geq 1}$ weakly converging to some $\bar{\upmu} \in \Ms$, and such that the function $\bu = (u^1, \ldots, u^d) : [0,+\infty) \times \R \to [0,1]^d$ defined by
  \begin{equation*}
    \forall \gamma \in \{1, \ldots, d\}, \quad \forall (t,x) \in [0,+\infty) \times \R, \qquad u^{\gamma}(t,x) := H*\bar{\upmu}^{\gamma}_t(x),
  \end{equation*}
  is a probabilistic solution to the system~\eqref{eq:syst} with initial data $(u_0^1, \ldots, u_0^d)$ defined by $u^{\gamma}_0 := H*m^{\gamma}$, for all $\gamma \in \{1, \ldots, d\}$.  
\end{theo}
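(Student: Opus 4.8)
The plan is to combine three ingredients: (1) a uniform-in-$n$ tightness estimate for the sequence $(\upmu[\x(n)])_{n\ge1}$ in $\Ms = \Ps(\Cs([0,+\infty),\R^d))$, (2) the fact (from Proposition~\ref{prop:MSPDsol}, which I may assume) that each $\bu[\x(n)]$ is an exact probabilistic solution with discrete initial data, and (3) a closedness property of the set of probabilistic solutions under the relevant convergence. First I would establish tightness. By the standard criterion (e.g.\ \cite[Theorem~7.3, p.~82]{billingsley}) it suffices to control the modulus of continuity of trajectories; but under Assumption~\eqref{ass:USH} (hence \eqref{ass:C}) every particle moves at a velocity bounded in absolute value by $\ConstBound{\infty}$ at all times, since the discrete velocities \eqref{eq:vitesses} are values of the $\lambda^{\gamma}$ and clustering only takes convex combinations of velocities of the same type. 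Therefore every sample path of the MSPD is $\ConstBound{\infty}$-Lipschitz in time, and the only randomness in the starting point is the empirical law of $(x^{\gamma}_k(n))$; since these empirical measures converge weakly, they are tight, and the set of $\ConstBound{\infty}$-Lipschitz paths starting from a tight set of initial points is relatively compact in $\Cs([0,+\infty),\R^d)$ by Arzel\`a--Ascoli. This gives tightness of $(\upmu[\x(n)])_{n\ge1}$, so along any subsequence one can extract a further subsequence $(\upmu[\x(n_\ell)])_{\ell\ge1}$ converging weakly to some $\bar\upmu\in\Ms$; by Prokhorov this limit is a genuine element of $\Ms$.

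Next I would pass to the limit in the weak formulation. Fix $\bvarphi\in\Cs^{1,0}_{\mathrm c}([0,+\infty)\times\R,\R^d)$. The left-hand side of point~\eqref{it:sol:3} of Definition~\ref{defi:sol}, evaluated at $\bu[\x(n_\ell)]$, involves only the marginals $u^{\gamma}[\x(n_\ell)](t,x) = H*\upmu^{\gamma}_t[\x(n_\ell)](x)$ integrated against $\partial_t\varphi^{\gamma}$ over the compact support, plus the initial term $\int \varphi^{\gamma}(0,x)u^{\gamma}_0[\x(n_\ell)](x)\,\dd x$; by the Mapping Theorem (cited at the end of \S\ref{sss:not:probC}) weak convergence $\upmu[\x(n_\ell)]\to\bar\upmu$ yields $\upmu^{\gamma}_t[\x(n_\ell)]\to\bar\upmu^{\gamma}_t$ weakly for each $t$, hence pointwise convergence of the CDFs off the at most countable atom set of $\bar\upmu^{\gamma}_t$, hence (dominated convergence, the integrands bounded by $\|\partial_t\varphi^{\gamma}\|_\infty$ on a fixed compact) convergence of the left-hand side to the corresponding expression for $\bu$. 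The initial term converges because the empirical measures converge weakly to $m^{\gamma}$ and $\varphi^{\gamma}(0,\cdot)$ is bounded continuous with compact support, so $\int\varphi^{\gamma}(0,x)u^{\gamma}_0[\x(n_\ell)](x)\,\dd x\to\int\varphi^{\gamma}(0,x)u^{\gamma}_0(x)\,\dd x$ after an integration by parts transferring the difference onto $\partial_x\varphi^{\gamma}$ (or directly, writing $\int \varphi^\gamma(0,x) u_0^\gamma(x)\,\dd x = -\int \Psi^\gamma(x)\,m^\gamma(\dd x)$ for $\Psi^\gamma$ an antiderivative of $\varphi^\gamma(0,\cdot)$ vanishing at $-\infty$).

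The remaining and genuinely delicate point is the right-hand side, $\int_0^{+\infty}\!\!\int_{\R}\varphi^{\gamma}(t,x)\,\lambda^{\gamma}\{\bu[\x(n_\ell)]\}(t,x)\,\dd_x u^{\gamma}[\x(n_\ell)](t,x)\,\dd t$. Here the integrand $\lambda^{\gamma}\{\bu\}$ depends on $\bu$ through the nonlinear, and generally discontinuous-in-its-arguments, functional \eqref{eq:dlambda}, so one cannot simply invoke weak convergence of marginals. The strategy I would follow is: rewrite this term via Lemma~\ref{lem:CDFm1} as $\int_0^{+\infty}\!\!\int_0^1 \varphi^{\gamma}(t,X^{\gamma}_\ell(t,v))\,\lambda^{\gamma}\{\bu[\x(n_\ell)]\}(t,X^{\gamma}_\ell(t,v))\,\dd v\,\dd t$ with $X^{\gamma}_\ell(t,\cdot)=u^{\gamma}[\x(n_\ell)](t,\cdot)^{-1}$; use the characteristic equation \eqref{eq:diffmspd:intro} to identify $\lambda^{\gamma}\{\bu[\x(n_\ell)]\}(t,\Phi^{\gamma}_k(\x(n_\ell);t)) = \dot\Phi^{\gamma}_k(\x(n_\ell);t)$ for $\dd t$-a.e.\ $t$; and then recognise the whole right-hand side as an expectation under $\upmu[\x(n_\ell)]$ of a functional of the path that is continuous in a suitable sense. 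Concretely, for each particle the quantity $\int_0^{+\infty}\varphi^{\gamma}(t,\Phi^{\gamma}_k(t))\dot\Phi^{\gamma}_k(t)\,\dd t = \int_{\R}\varphi^{\gamma}(t,\cdot)\,$-type chain-rule expression, and the sum over $k$ with weight $1/n$ is exactly $\int_{\Cs}\bigl(\int_0^{+\infty}\varphi^{\gamma}(t,X^{\gamma}(t))\,\dd X^{\gamma}(t)\bigr)\upmu[\x(n_\ell)](\dd X)$, a functional of $X$ that is bounded and continuous for the uniform-on-compacts topology (using that paths are $\ConstBound{\infty}$-Lipschitz, so $\dd X^{\gamma}(t)$ integrates against the compactly supported $\varphi^{\gamma}$ in a stable way — a Riemann--Stieltjes/dominated-convergence argument). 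Weak convergence $\upmu[\x(n_\ell)]\to\bar\upmu$ then passes the limit through, giving the same functional of $\bar\upmu$; finally, undoing the identification and the change of variables for the limiting $\bu$ (which requires knowing that $\bar\upmu$, as a limit of empirical distributions of the MSPD, still has its marginals' pseudo-inverses related to the paths in the expected way — this is where one uses that the map $X\mapsto\bigl(\int\varphi^\gamma(t,X^\gamma(t))\,\dd X^\gamma(t)\bigr)$ reconstructs $\int\varphi^\gamma(t,x)\lambda^\gamma\{\bu\}(t,x)\,\dd_x u^\gamma(t,x)$ whenever $\bu(t,\cdot)=H*\bar\upmu_t$) yields point~\eqref{it:sol:3} for $\bu$. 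The main obstacle is precisely this last identification: showing that in the limit the formal product is still correctly represented by $\lambda^{\gamma}\{\bu\}$, i.e.\ that no mass is lost at jumps and the convex-combination average in \eqref{eq:dlambda} emerges from the clustering mechanism in the limit; this is handled by the reformulation in terms of path integrals $\int\varphi^\gamma\,\dd X^\gamma$, which are manifestly stable under weak convergence and, by the chain rule for finite-variation functions (as in Remark~\ref{rk:scalarcl}), manifestly equal to the desired integral against $\lambda^{\gamma}\{\bu\}$. Properties~\eqref{it:sol:1} and measurability of $\bu$ are then immediate, since each $u^{\gamma}(t,\cdot)=H*\bar\upmu^{\gamma}_t$ is by construction a CDF and $(t,x)\mapsto H*\bar\upmu^{\gamma}_t(x)$ is jointly measurable by Fubini together with the weak continuity of $t\mapsto\bar\upmu^{\gamma}_t$.
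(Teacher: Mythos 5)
Your tightness and left-hand-side arguments are sound, but your treatment of the nonlinear term is circular and does not close the proof. The issue is in the final ``undoing the identification'' step. You rewrite $\mathrm{RHS}(\bu[\x(n_\ell)])$, via the characteristic equation~\eqref{eq:diffmspd:intro}, as the $\upmu[\x(n_\ell)]$-expectation of the path functional $X\mapsto\int_0^{+\infty}\varphi^\gamma(t,X^\gamma(t))\,\dd X^\gamma(t)$, and then pass to the limit. That step is fine: after integrating by parts against $\psi^\gamma(t,x):=\int_x^{+\infty}\varphi^\gamma(t,y)\,\dd y$, the path functional is $\psi^\gamma(0,X^\gamma(0))+\int_0^T\partial_t\psi^\gamma(t,X^\gamma(t))\,\dd t$, which is indeed a bounded continuous functional on $\Cs([0,+\infty),\R^d)$, so $E_{\upmu[\x(n_\ell)]}[\cdot]\to E_{\bar\upmu}[\cdot]$. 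But if you now compute $E_{\bar\upmu}[\int\varphi^\gamma\,\dd X^\gamma]$ using exactly the same integration by parts and Fubini, you find that it equals $\int_\R\varphi^\gamma(0,x)u_0^\gamma(x)\,\dd x+\int_0^{+\infty}\int_\R\partial_t\varphi^\gamma(t,x)u^\gamma(t,x)\,\dd x\,\dd t$ --- which is $\mathrm{LHS}(\bu)$, not $\mathrm{RHS}(\bu)$. In other words, the chain of equalities you produce is $\mathrm{LHS}(\bu)=\lim_\ell\mathrm{LHS}(\bu[\x(n_\ell)])=\lim_\ell\mathrm{RHS}(\bu[\x(n_\ell)])=E_{\bar\upmu}[\int\varphi^\gamma\,\dd X^\gamma]=\mathrm{LHS}(\bu)$, a tautology that says nothing about $\mathrm{RHS}(\bu)$. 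The ``reconstruction'' claim you invoke --- that $E_{\bar\upmu}[\int\varphi^\gamma(t,X^\gamma(t))\,\dd X^\gamma(t)]=\int\int\varphi^\gamma\lambda^\gamma\{\bu\}\,\dd_x u^\gamma\,\dd t$ whenever $\bu(t,\cdot)=H*\bar\upmu_t$ --- is precisely the statement one needs to prove, and it is \emph{not} a consequence of the scalar chain rule of Remark~\ref{rk:scalarcl}: that remark covers only the case $d=1$, where the product has a conservative antiderivative. For $d\ge2$ the product $\lambda^\gamma\{\bu\}\,\dd_x u^\gamma$ is not a space derivative of anything, and the path-Stieltjes integral depends on $\dot X^\gamma(t)$, hence on the full law $\bar\upmu$ on path space, not merely on the one-time marginals $u^\gamma(t,\cdot)$. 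Knowing that the marginals converge does not tell you the velocity field of the limit; indeed, the paper leaves open in Section~\ref{s:traj} whether the limit satisfies the characteristic equation $\dot X^\gamma_v(t)=\lambda^\gamma\{\bu\}(t,X^\gamma_v(t))$ at all (cf.\ Proposition~\ref{prop:vitesses} and the discussion after it).

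What you are missing is the crucial second half of Proposition~\ref{prop:tightness}: not only is the sequence tight, but for any subsequential limit $\bar\upmu$, $\dd t$-almost everywhere, the measures $\bar\upmu^\gamma_t$ and $\bar\upmu^{\gamma'}_t$ have distinct atoms for $\gamma\neq\gamma'$ (a consequence of the uniform hyperbolicity Assumption~\eqref{ass:USH}, because particles of different types cross transversally and hence spend negligible time at the same location). The paper then proves $\mathrm{RHS}(\bu_n)\to\mathrm{RHS}(\bu)$ directly, bypassing any velocity identification: via the extended change-of-variable formula of Lemma~\ref{lem:ellsol}, $\mathrm{RHS}(\bu_n)$ is rewritten as $\int_0^{+\infty}\int_0^1\varphi^\gamma(t,u_n^\gamma(t,\cdot)^{-1}(v))\,\lambda^\gamma\bigl(u_n^1(t,u_n^\gamma(t,\cdot)^{-1}(v)),\ldots,v,\ldots,u_n^d(t,u_n^\gamma(t,\cdot)^{-1}(v))\bigr)\,\dd v\,\dd t$, in which the middle argument is the free variable $v$ rather than a CDF value. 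The distinct-atoms property, through Lemma~\ref{lem:FnGn}, then gives $\dd v$-a.e.\ convergence of the compositions $u_n^{\gamma'}(t,u_n^\gamma(t,\cdot)^{-1}(v))$ for $\gamma'\neq\gamma$, and dominated convergence finishes. This is the closedness statement Proposition~\ref{prop:closedness}, which together with Propositions~\ref{prop:MSPDsol} and~\ref{prop:tightness} yields the theorem. Your outline correctly identifies the three ingredients but replaces the third by an argument that does not pin down $\mathrm{RHS}(\bu)$; you should instead establish and use the distinct-atoms condition.
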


The tightness argument is explicited in Proposition~\ref{prop:tightness}, while the closedness property is detailed in Proposition~\ref{prop:closedness}. Of course, the probabilistic solutions that we obtain here may depend on the choice of the subsequence $(\upmu[\x(n_{\ell})])_{\ell \geq 1}$, and in the absence of a uniqueness property, nothing enables us to identify the corresponding limits. This uniqueness property is recovered by supplementing the definition of a probabilistic solution with further conditions, that are adapted from Bianchini and Bressan~\cite{bianbres}, see Subsection~\ref{ss:sg} below.

\sk
Combining the continuity of the mapping $\upmu \mapsto \upmu_t^{\gamma}$ with Lemma~\ref{lem:cvCDF}, we rewrite the result of Theorem~\ref{theo:existence} in terms of convergence of the vector of empirical CDFs of the MSPD as follows.

\begin{cor}[Convergence of the vector of empirical CDFs]\label{cor:cvCDFs}
  Under the assumptions of Theorem~\ref{theo:existence} and along the sequence $(n_{\ell})_{\ell \geq 1}$ provided by the latter, we have 
  \begin{equation*}
    \lim_{\ell \to +\infty} u^{\gamma}[\x(n_{\ell})](t,x) = u^{\gamma}(t,x),
  \end{equation*}
  for all $t \geq 0$, for all $\gamma \in \{1, \ldots, d\}$, and for all $x \in \R$ such that $\Delta_x u^{\gamma}(t,x) = 0$.
\end{cor}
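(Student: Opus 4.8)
The plan is to obtain the corollary by composing two ingredients that are already in place: the continuity of the marginalisation map $\upmu \mapsto \upmu_t^\gamma$ and the dictionary between weak convergence of probability measures on $\R$ and pointwise convergence of their cumulative distribution functions. First I would fix $t \geq 0$ and $\gamma \in \{1,\ldots,d\}$ and recall that, by Theorem~\ref{theo:existence}, the subsequence $(\upmu[\x(n_\ell)])_{\ell \geq 1}$ converges weakly to $\bar\upmu$ in $\Ms$. Since the projection $\pi_t^\gamma$ is continuous, the Mapping Theorem (as recorded in~\S\ref{sss:not:probC}) makes $\upmu \mapsto \upmu_t^\gamma$ continuous from $\Ms$ to $\Ps(\R)$ for the weak topologies; hence $\upmu_t^\gamma[\x(n_\ell)] \to \bar\upmu_t^\gamma$ weakly in $\Ps(\R)$ as $\ell \to +\infty$.

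Next I would apply Lemma~\ref{lem:cvCDF} to the sequence $m_\ell := \upmu_t^\gamma[\x(n_\ell)]$ and the limit $m := \bar\upmu_t^\gamma$. These are genuine elements of $\Ps(\R)$, the latter being the pushforward of $\bar\upmu \in \Ms$ by $\pi_t^\gamma$, and their CDFs are exactly $H*m_\ell = u^\gamma[\x(n_\ell)](t,\cdot)$ by~\eqref{eq:bun} and $H*m = u^\gamma(t,\cdot)$ by the definition of $\bu$ in Theorem~\ref{theo:existence}. The weak convergence $m_\ell \to m$ having just been established, Lemma~\ref{lem:cvCDF} gives that $u^\gamma[\x(n_\ell)](t,x) \to u^\gamma(t,x)$ at every $x$ with $\Delta F(x) = 0$, i.e.\ with $\Delta_x u^\gamma(t,x) = 0$. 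Letting $t \geq 0$ and $\gamma$ range over their possible values then yields the stated convergence.

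I do not expect any genuine obstacle here: this is a purely formal chaining of the continuity of marginals with the CDF/weak-convergence correspondence, and no new estimate on the MSPD is needed. The only subtlety worth flagging is that the exceptional set on which convergence is not claimed is dictated by the atoms of the \emph{limit} function $u^\gamma(t,\cdot)$, which is precisely what the hypothesis $\Delta_x u^\gamma(t,x) = 0$ isolates; incidentally, the same application of Lemma~\ref{lem:cvCDF} also yields the $\dd v$-almost everywhere convergence of the pseudo-inverses $u^\gamma[\x(n_\ell)](t,\cdot)^{-1}(v) \to u^\gamma(t,\cdot)^{-1}(v)$, should that be needed elsewhere.
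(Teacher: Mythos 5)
Your proposal is correct and matches the paper's own (implicit) argument exactly: the paper introduces the corollary with the sentence "Combining the continuity of the mapping $\upmu \mapsto \upmu_t^{\gamma}$ with Lemma~\ref{lem:cvCDF}, we rewrite the result of Theorem~\ref{theo:existence}..." which is precisely your two-step chain of the Mapping Theorem and the weak-convergence/CDF dictionary. Nothing to add.
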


In particular, for all $t \geq 0$, for all $\gamma \in \{1, \ldots, d\}$, the convergence in Corollary~\ref{cor:cvCDFs} holds $\dd x$-almost everywhere. Besides, by Dini's Theorem, if $u^{\gamma}(t, \cdot)$ is continuous on $\R$, then this convergence holds uniformly on $\R$.


\subsubsection{Trajectories associated with probabilistic solutions}\label{sss:introtraj} The equation~\eqref{eq:diffmspd:intro} for the MSPD shows that the quantiles of the probabilistic solution $\bu[\x]$ play the role of characteristics for the system~\eqref{eq:syst} --- at least between collisions. In Section~\ref{s:traj}, we address the question of whether this fact can be generalised to any probabilistic solution $\bu$, and therefore try to describe the evolution of the {\em trajectories} $(\bX_v(t))_{t \geq 0}$ in $\R^d$ associated with $\bu$, defined for all $t \geq 0$ by $\bX_v(t) = (X_v^1(t), \ldots, X_v^d(t))$, with
\begin{equation*}
  X_v^{\gamma}(t) := u^{\gamma}(t,\cdot)^{-1}(v).
\end{equation*}

We first prove in Proposition~\ref{prop:traj} that, for all probabilistic solutions $\bu$ to~\eqref{eq:syst}, $\dd v$-almost everywhere, the process $(X^{\gamma}_v(t))_{t \geq 0}$ is Lipschitz continuous and that its velocity is bounded by the minimal and maximal values of the characteristic field $\lambda^{\gamma}$. This enables us to provide a {\em probabilistic representation} of $\bu$ as the flow of marginal distributions of some stochastic process $(\bbX(t))_{t \geq 0}$ taking its values in $\R^d$. In the scalar case and for system of pressureless gases, a similar representation was constructed by Dermoune~\cite{dermoune99, dermoune01}.

We then discuss conditions under which the trajectories $(\bX_v(t))_{t \geq 0}$ satisfy the characteristic equation~\eqref{eq:diffmspd:intro}. We prove in particular, in Proposition~\ref{prop:vitesses}, that an equivalent condition to this characteristic equation is that the function $\bu$ be a {\em renormalised solution} to~\eqref{eq:syst} in the sense of DiPerna and Lions~\cite{diperna}. However, the question of whether the solutions obtained by Theorem~\ref{theo:existence} are renormalised solutions in general is left open.


\subsubsection{Continuity of rarefaction coordinates}\label{sss:rar:intro} Section~\ref{s:rar} addresses the continuity of the probabilistic solutions to~\eqref{eq:syst} obtained by Theorem~\ref{theo:existence} when a characteristic field $\lambda^{\gamma}$ satisfies some diagonal monotonicity conditions. More precisely, under Assumption~\eqref{ass:LC}, we shall say that $\gamma \in \{1, \ldots, d\}$ is a {\em rarefaction coordinate} if $\partial_{u^{\gamma}} \lambda^{\gamma} \geq 0$, and a {\em strong rarefaction coordinate} if there exists $c>0$ such that $\partial_{u^{\gamma}} \lambda^{\gamma} \geq c$. Then we prove in Corollary~\ref{cor:modulus} and Proposition~\ref{prop:strongrar} the following continuity results: if $\bu$ is a probabilistic solution obtained by Theorem~\ref{theo:existence},
\begin{itemize}
  \item for all rarefaction coordinate $\gamma \in \{1, \ldots, d\}$, if $u^{\gamma}_0$ is continuous on $\R$ then $u^{\gamma}$ is continuous on $[0,+\infty) \times \R$,
  \item for all strong rarefaction coordinate $\gamma \in \{1, \ldots, d\}$, $u^{\gamma}$ is continuous on $(0,+\infty) \times \R$, and if $u^{\gamma}_0$ is continuous on $\R$ then $u^{\gamma}$ is continuous on $[0,+\infty) \times \R$.
\end{itemize}
Let us insist on the fact that, in the two statements, the condition implying the continuity of $u^{\gamma}$ does not depend on the monotonicity of the characteristic field $\lambda^{\gamma'}$, for $\gamma' \not= \gamma$.


\subsubsection{Comparison with~\cite{ehm09}}\label{sss:ehm09} The construction of the Multitype Sticky Particle Dynamics is made under Assumptions~\eqref{ass:LC} and~\eqref{ass:USH}, and the global existence result of weak solutions stated in Theorem~\ref{theo:existence} only requires these two conditions to hold. 

El Hajj and Monneau~\cite{ehm09} obtained global existence of continuous probabilistic solutions to~\eqref{eq:syst} when the probability measures $m^1,\ldots,m^d$ admit densities with respect to the Lebesgue measure in $\Ls\log \Ls(\R)$ (that is to say, for all $\gamma\in\{1,\ldots,d\}$, $t \mapsto \partial_x u^\gamma(t,\cdot)$ remains locally bounded in $\Ls\log \Ls(\R)$), without any strict hyperbolicity condition on the characteristic fields which, in turn, are supposed to be $\Cs^\infty$, globally Lipschitz continuous and such that the matrix $(\partial_{u^{\gamma'}}\lambda^\gamma(\bu)+\partial_{u^{\gamma}}\lambda^{\gamma'}(\bu))_{\gamma,\gamma'}$ is positive semidefinite for all $\bu\in[0,1]^d$. 

Notice that this last condition implies that $\partial_{u^{\gamma}}\lambda^\gamma(\bu)\geq 0$ for all $\gamma\in\{1,\ldots,d\}$ and $\bu\in[0,1]^d$ so that all coordinates are rarefaction coordinates as defined in~\S\ref{sss:rar:intro}. By Corollary~\ref{cor:modulus} in Section~\ref{s:rar}, continuity of each rarefaction coordinate $u^\gamma(t,x)$ of our probabilistic solution holds under mere continuity of the corresponding initial condition $x\mapsto u_0^\gamma(x)$ and by Proposition~\ref{prop:strongrar}, continuity of $(t,x)\mapsto u^\gamma(t,x)$ on $(0,+\infty)\times \R$ holds as soon as the characteristic field $\lambda^\gamma$ is increasing in its $\gamma$-th coordinate.


\subsection{Discrete stability estimates}\label{ss:discrstab} For all $p \in [1,+\infty]$, let us define the following (normalised) $\Ls^p$ distances on $\Dnd$.

\begin{defi}[$\Ls^p$ distances on $\Dnd$]\label{defi:Lspdist}
  For all $\x, \y \in \Dnd$, we define
  \begin{equation}\label{eq:L1distDnd}
    \begin{aligned}
      \forall p \in [1,+\infty), \quad &||\x-\y||_p := \left(\frac{1}{n}\sum_{\gamma:k \in \Part} |x_k^{\gamma}-y_k^{\gamma}|^p\right)^{1/p},\\
      & ||\x-\y||_{\infty} := \sup_{\gamma:k \in \Part} |x_k^{\gamma}-y_k^{\gamma}|.
    \end{aligned}
  \end{equation}
\end{defi}

Section~\ref{s:stab} is dedicated to the proof of the following uniform $\Ls^p$ stability estimates on the MSPD.

\begin{theo}[Uniform $\Ls^p$ stability estimates for the MSPD]\label{theo:stabMSPD}
  Under Assumptions~\eqref{ass:LC} and \eqref{ass:USH}, for all $p \in [1, +\infty]$, there exists $\ConstStab_p \in [1,+\infty)$ such that, for all $\x, \y \in \Dnd$, for all $s,t \geq 0$,
  \begin{equation*}
    ||\Phi(\x;s) - \Phi(\y;t)||_p \leq \ConstStab_p ||\x-\y||_p + |t-s| \ConstBound{p},
  \end{equation*}
  where we recall that $\ConstBound{p}$ is defined in~\eqref{eq:ConstBound}, while $\ConstStab_p$ is an explicit function of $d$, $\ConstLip$ and $\ConstUSH$ but does not depend on $n$, see~\eqref{eq:ConstStab} below.
\end{theo}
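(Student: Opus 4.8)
The plan is to reduce the two-parameter estimate to a one-parameter (equal-time) estimate plus a control of the time shift, and to prove the equal-time estimate by a pathwise Gronwall-type argument on the MSPD trajectories. First I would treat the time shift: by the triangle inequality,
\begin{equation*}
  ||\Phi(\x;s) - \Phi(\y;t)||_p \leq ||\Phi(\x;s) - \Phi(\y;s)||_p + ||\Phi(\y;s) - \Phi(\y;t)||_p.
\end{equation*}
The second term is handled directly: between collisions each particle of type $\gamma$ moves at a velocity which is one of the discretised values of $\lambda^{\gamma}$, hence bounded in absolute value by $\sup_{[0,1]^d}|\lambda^{\gamma}|$; since clusters move at mass-weighted averages of such velocities, the same bound persists through collisions, so $|\Phi_k^{\gamma}(\y;s)-\Phi_k^{\gamma}(\y;t)| \leq |t-s|\sup_{[0,1]^d}|\lambda^\gamma|$ for every particle, and summing the $p$-th powers (or taking the sup) gives exactly $|t-s|\ConstBound{p}$. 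It remains to prove $||\Phi(\x;s)-\Phi(\y;s)||_p \leq \ConstStab_p||\x-\y||_p$ uniformly in $n$ and $s$.

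For the equal-time estimate I would carry out a detailed pathwise comparison of the two dynamics started from $\x$ and $\y$, following the strategy announced in the introduction (``a detailed pathwise analysis of the evolution of the dynamics with two different initial configurations''). The key observations are: (i) between collision times the map $t \mapsto ||\Phi(\x;t)-\Phi(\y;t)||_p^p$ is differentiable, with derivative $\frac{p}{n}\sum_{\gamma:k}|x_k^\gamma(t)-y_k^\gamma(t)|^{p-1}\,\mathrm{sgn}(x_k^\gamma(t)-y_k^\gamma(t))(\dot x_k^\gamma(t)-\dot y_k^\gamma(t))$; (ii) collisions (both self-collisions within a type and collisions between clusters of different types) do not increase the distance — self-collisions replace a pair of positions by their average, which is a contraction in every $\ell^p$ norm when compared coordinatewise against the other configuration, after a suitable matching of labels via the increasing rearrangement, and cross-type collisions only relabel particles consistently on both sides; (iii) the velocity difference $\dot x_k^\gamma(t)-\dot y_k^\gamma(t)$ is controlled, via Assumption~\eqref{ass:LC}, by $\ConstLip$ times the sum over $\gamma'$ of the discrepancies between the two empirical CDFs of type $\gamma'$, which in turn — and this is where \eqref{ass:USH} enters — can be bounded in terms of $||\x(t)-\y(t)||_p$ because the uniform spectral gap $\ConstUSH$ prevents particles of consecutive types from lingering near each other and thereby keeps the number of ``rank changes'' between the two systems uniformly bounded in $n$. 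Combining (i)–(iii) yields a differential inequality of the form $\frac{\dd}{\dd t}||\x(t)-\y(t)||_p \leq C(d,\ConstLip,\ConstUSH)\,||\x(t)-\y(t)||_p$ on each interval between collisions, with no jump up at collisions, so Gronwall's lemma on $[0,s]$ gives the claimed bound with $\ConstStab_p = \exp(C(d,\ConstLip,\ConstUSH)\,s)$ — except that this would depend on $s$, which is not allowed.

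Hence the genuinely hard part, and the place where I expect the construction to be delicate, is removing the dependence on $s$ and on $n$ from $\ConstStab_p$. The resolution must exploit that the ordered structure of the MSPD makes the interaction one-directional in a strong sense: once one tracks, for each type $\gamma$, the signed mass by which the two empirical measures differ and how it is transported by the characteristic speeds, the uniform hyperbolicity gap $\ConstUSH$ forces these discrepancies to be ``flushed out'' at a definite rate rather than amplified, so that the accumulated effect over all time is a finite multiplicative constant depending only on $d$, $\ConstLip$ and $\ConstUSH$ (the ratio $\ConstLip/\ConstUSH$ and the dimension $d$ governing how many times a given pair of trajectories can cross). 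Concretely I would introduce a Lyapunov-type functional — a weighted combination of $||\x(t)-\y(t)||_p$ and a ``potential'' counting discordant pairs across types, weighted so that the gain from the hyperbolic separation dominates the Lipschitz coupling — show it is non-increasing up to the controlled contribution of $||\x(t)-\y(t)||_p$ itself, and read off the $n$- and $s$-independent constant $\ConstStab_p$ from the comparison of this functional with $||\x-\y||_p$ at time $0$; the explicit value is the one referred to as~\eqref{eq:ConstStab}. Finally, the endpoint case $p=\infty$ follows either by the same argument with sup in place of the $\ell^p$ sum, or by passing to the limit $p\to\infty$ in the finite-$p$ estimates once it is checked that $\ConstStab_p$ stays bounded.
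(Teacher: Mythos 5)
Your reduction of the two-parameter estimate to the equal-time estimate plus a time-shift term is exactly the paper's first step, and your treatment of $||\Phi(\y;s)-\Phi(\y;t)||_p$ via the uniform bound~\eqref{eq:typeencadrelambda} is correct. The rest of the proposal, however, does not close the gap it identifies, and the mechanism you sketch is not the one the paper uses.

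The differential-inequality/Gronwall route has a problem beyond the $s$-dependence you flag. In step~(iii) you claim that $\dot x_k^{\gamma}(t)-\dot y_k^{\gamma}(t)$ is controlled by $\ConstLip$ times a quantity that ``in turn can be bounded in terms of $||\x(t)-\y(t)||_p$.'' This is false as stated: the velocities $\tlambda^{\gamma}_k$ depend on the discrete rank functions $\omega^{\gamma'}_{\gamma:k}$, and a small $\Ls^p$ distance between positions does not control the difference of ranks. Two configurations can be $\epsilon$-close in $\Ls^p$ and yet have $O(n)$ pairs of particles whose relative order across types differs, producing an $O(1)$ velocity discrepancy for each of those particles. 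Consequently the claimed closed inequality $\frac{\dd}{\dd t}||\x(t)-\y(t)||_p\leq C||\x(t)-\y(t)||_p$ does not hold on intervals between collisions, so there is no Gronwall bound to begin with, let alone an $s$-independent one. Also, your description of cross-type collisions as ``only relabelling particles'' misstates the dynamics: in the MSPD faster clusters cross slower ones and every involved particle's velocity is recomputed from its updated ranks, which is precisely the destabilising event the stability argument must control.

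You correctly locate the real difficulty (uniformity in both $n$ and $s$) and correctly intuit that the uniform hyperbolicity gap bounds the number of cross-type crossings, but the ``Lyapunov-type functional'' you invoke is not constructed, and the paper's argument is of a genuinely different type. Rather than a continuous-time differential inequality, the paper works \emph{combinatorially} with the collision graph: for initial configurations $\x,\y$ that are ``locally homeomorphic'' (same set $\Rb$, same collision graph, collision intervals well separated — Condition~{\rm (\hyperref[cond:C]{LHM})}), it builds an auxiliary system $(e_m)$ indexed by a numbering of the collisions, propagates discrete estimates along type paths in the collision graph, and obtains an $n$- and $s$-independent bound on the total mass $\totmass_M$ by exploiting that each particle undergoes at most $n(d-1)$ cross-type crossings (giving $(1+\Ratio/n)^{n(d-1)}\leq e^{\Ratio(d-1)}$) and that the total number of collisions is at most $n^2d(d-1)/2$ (giving the $n^2$-cancellation in the double-exponential constant). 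This \emph{local} estimate is then globalised by an interpolation along paths in $\Dnd$, made possible by a geometric ``radial blow-up of singularities'' analysis and a nondegeneracy approximation of the characteristic fields. Finally the $p=1$ and $p=\infty$ bounds are interpolated to general $p$ via the Riesz–Thorin theorem, not by taking $p\to\infty$ limits. None of these ingredients appears in the proposal, and without something playing their role the proof is incomplete precisely at the point you yourself identify as the crux.
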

In the scalar case $d=1$, then $\ConstStab_p=1$ for all $p \in [1,+\infty]$. For $d \geq 2$, the value of $\ConstStab_p$ is given by the following formulas:\begin{equation}\label{eq:ConstStab}
  \begin{aligned}
    & \ConstStab_1 := \left(1+ 4\Ratio(d-1)\exp\left(\Ratio(d-1)\right)\right) \exp\left(2\Ratio^2d(d-1)\exp\left(\Ratio(d-1)\right)\right),\\
    & \ConstStab_{\infty} := (1+\Ratio d \ConstStab_1) \exp(\Ratio(d-1)),\\
    & \ConstStab_p := \ConstStab_1^{1/p} \ConstStab_{\infty}^{1-1/p}, \quad \forall p \in (1, +\infty),
  \end{aligned}
\end{equation}
where  $\Ratio := 3\ConstLip/\ConstUSH$.

Theorem~\ref{theo:stabMSPD} is the cornerstone of this article. Up to technical corrections, its proof is essentially divided into two main parts. First, we assume that the initial configurations $\x$ and $\y$ are {\em close} to each other, in the sense that the trajectories of the MSPD started at both $\x$ and $\y$ share the same topological features. This permits to reduce the derivation of the stability estimates above to a purely algebraic problem, which is solved by a careful but elementary analysis and thereby provides a {\em local} stability estimate. Second, we use the geometrical properties of the trajectories of the MSPD to construct a continuous path between arbitrary initial configurations $\x$ and $\y$, along which the local stability estimate can be integrated so as to obtain a {\em global} stability estimate. We note that the idea of such a decomposition into a first {\em local} step and a second {\em interpolation} step echoes the proofs of $\Ls^1$ stability estimates for hyperbolic systems by Bressan and Colombo~\cite{brecol95:sg} and Bressan, Crasta and Piccoli~\cite{bcp00}.


\subsection{Stability and semigroup properties}\label{ss:sg} Since the discrete stability estimates obtained in Theorem~\ref{theo:stabMSPD} are uniform in the number of particles, they are expected to be consistent with the large-scale limit and therefore yield stability estimates on the solutions to the system~\eqref{eq:syst} constructed in Theorem~\ref{theo:existence}. As we shall explain below, the natural distance to extend these stability estimates is the Wasserstein distance, that we define in~\S\ref{sss:wass}. 

As a consequence of these estimates, we show that our solutions are semigroups. This property enables us to use the Bianchini-Bressan uniqueness conditions~\cite{bianbres} to roughly identify all the semigroup solutions to~\eqref{eq:syst}. These results are summed up in Theorem~\ref{theo:sg} in~\S\ref{sss:sg}.


\subsubsection{The Wasserstein distance}\label{sss:wass} Our stability estimates are stated in Wasserstein distance, an introduction to which can be found in Rachev and Rüschendorf~\cite{rachrusch} or Villani~\cite{villani}.

\begin{defi}[Wasserstein distance]\label{defi:wass}
  Let $m, m' \in \Ps(\R)$. For all $p \in [1, +\infty)$, we define the {\em Wasserstein distance} of order $p$ between $m$ and $m'$ by
  \begin{equation*}
    \Ws_p(m,m') := \inf_{\mathfrak{m} <^m_{m'}} \left(\int_{(x,x') \in \R^2} |x-x'|^p \mathfrak{m}(\dd x\dd x')\right)^{1/p},
  \end{equation*}
  where the infimum runs over all the probability measures $\mathfrak{m} \in \Ps(\R^2)$ such that, for all Borel sets $A, A' \subset \R$,
  \begin{equation*}
    \mathfrak{m}(A \times \R) = m(A), \qquad \mathfrak{m}(\R \times A') = m'(A').
  \end{equation*}
  
  The Wasserstein distance of order $\infty$ is defined by
  \begin{equation*}
    \Ws_{\infty}(m,m') := \lim_{p \to +\infty} \Ws_p(m,m').
  \end{equation*}
\end{defi}
Note that we allow the Wasserstein distances to take the value $+\infty$, therefore they should rather be referred to as {\em pseudo-distances}~\cite{villani}. For the sake of simplicity, we shall keep the denomination {\em distance}. Besides, the existence of the limit in the definition of $\Ws_{\infty}(m,m')$ follows from Hölder's inequality, which ensures that $p \mapsto \Ws_p(m,m') \in [0,+\infty]$ is nondecreasing.

It is a peculiar feature of the one-dimensional case that the measure 
\begin{equation*}
  \mathfrak{m} = \Unif \circ \left((H*m)^{-1}, (H*m')^{-1}\right)^{-1},
\end{equation*}
where $\Unif$ refers to the Lebesgue measure on $[0,1]$, realises the infimum in Definition~\ref{defi:wass} for any choice of $p$ (see for instance~\cite[Theorem~3.1.2, p.~109]{rachrusch}). We deduce the following characterisation of the Wasserstein distance.

\begin{lem}[Optimal coupling]\label{lem:coupopt}
  Let $m, m' \in \Ps(\R)$ and denote $F := H*m$, $G:=H*m'$. Then, for all $p \in [1,+\infty)$,
  \begin{equation*}
    \Ws_p(m,m') = \left(\int_{v=0}^1 |F^{-1}(v)-G^{-1}(v)|^p\dd v\right)^{1/p},
  \end{equation*}
  while
  \begin{equation*}
    \Ws_{\infty}(m,m') = \sup_{v \in (0,1)} |F^{-1}(v)-G^{-1}(v)|.
  \end{equation*}
\end{lem}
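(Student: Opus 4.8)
The plan is to exploit the classical fact quoted just above the statement: in one dimension, the monotone (quantile) coupling $\mathfrak{m} = \Unif \circ ((H*m)^{-1},(H*m')^{-1})^{-1}$ is optimal for $\Ws_p$ for every $p \in [1,+\infty)$ simultaneously. Granting this, the case $p \in [1,+\infty)$ is essentially a change of variables. First I would compute, for the monotone coupling $\mathfrak{m}$, the integral $\int_{\R^2} |x-x'|^p \mathfrak{m}(\dd x\dd x')$. Since $\mathfrak{m}$ is by definition the pushforward of $\Unif$ on $[0,1]$ by the map $v \mapsto (F^{-1}(v), G^{-1}(v))$, the change of variable formula (Lemma~\ref{lem:CDFm1}, or rather its reformulation $m = \Unif \circ (F^{-1})^{-1}$ applied to the function $(x,x') \mapsto |x-x'|^p$ on $\R^2$) gives immediately
\begin{equation*}
  \int_{(x,x') \in \R^2} |x-x'|^p \mathfrak{m}(\dd x\dd x') = \int_{v=0}^1 |F^{-1}(v) - G^{-1}(v)|^p \dd v.
\end{equation*}
Taking the $1/p$-th power and using optimality of $\mathfrak{m}$ yields the first formula.

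Next I would treat $p = \infty$. By Definition~\ref{defi:wass}, $\Ws_\infty(m,m') = \lim_{p\to+\infty} \Ws_p(m,m')$, and by the first part this equals $\lim_{p\to+\infty} \|F^{-1}-G^{-1}\|_{\Ls^p((0,1),\dd v)}$. So the statement reduces to the standard fact that the $\Ls^p$ norm of a fixed bounded measurable function on the probability space $((0,1),\dd v)$ converges to its $\Ls^\infty$ norm (essential supremum) as $p \to \infty$. I would invoke this, but here one must be slightly careful: the function $v \mapsto |F^{-1}(v) - G^{-1}(v)|$ may be unbounded (if $m$ or $m'$ has unbounded support), in which case both sides are $+\infty$ and the identity still holds; and the essential supremum a priori differs from the genuine supremum $\sup_{v\in(0,1)}$ appearing in the claim. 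The fix is that, by Lemma~\ref{lem:pseudoinv}\eqref{it:pseudoinv:0}, $F^{-1}$ and $G^{-1}$ are left-continuous with right limits, hence the function $v \mapsto |F^{-1}(v)-G^{-1}(v)|$ is lower semicontinuous from at least one side at every point and has only countably many discontinuities; therefore its essential supremum coincides with its true supremum over $(0,1)$. (Alternatively one can simply redefine, or note that a left-continuous function with right limits attains its supremum on any sequence of continuity points, so $\mathrm{ess\,sup} = \sup$.)

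The main obstacle — if one insists on a self-contained argument rather than citing \cite{rachrusch} — is establishing the optimality of the monotone coupling for all $p$; but since the excerpt explicitly states this as a known one-dimensional fact with a reference, I would just cite it and the proof becomes short. The only genuinely nontrivial technical point left is then the $p=\infty$ endpoint and the $\mathrm{ess\,sup}$-versus-$\sup$ subtlety described above, together with checking that the value $+\infty$ is handled consistently on both sides (monotone convergence of $\int |F^{-1}-G^{-1}|^p\dd v$ is not automatic since the integrand need not be monotone in $p$ pointwise, but $p \mapsto \|F^{-1}-G^{-1}\|_{\Ls^p}$ is nondecreasing by Hölder, as already noted after Definition~\ref{defi:wass}, so the limit exists in $[0,+\infty]$ and equals the supremum over $p$, which one identifies with $\mathrm{ess\,sup}$ by the standard argument of bounding below by $\mu(\{|f|>a\})^{1/p} a$ for $a < \mathrm{ess\,sup}|f|$).
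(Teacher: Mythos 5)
Your proof follows the same route as the paper, which states the lemma as an immediate consequence of the optimality of the monotone coupling (cited from \cite{rachrusch}), the change of variable formula, and the definition of $\Ws_\infty$ as a limit; the paper does not even set off a proof environment for it. One small remark: your justification that the essential supremum of $|F^{-1}-G^{-1}|$ equals its true supremum is a bit imprecise (countably many discontinuities alone would not suffice, and a left-continuous function need not attain its supremum), but the conclusion is correct and follows cleanly from left-continuity alone — for any $a<\sup_{(0,1)}|F^{-1}-G^{-1}|$ there is $v_0$ with $|F^{-1}(v_0)-G^{-1}(v_0)|>a$, and left-continuity then puts a whole left-neighbourhood of $v_0$ inside $\{|F^{-1}-G^{-1}|>a\}$, so that set has positive Lebesgue measure.
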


Note that, in particular, 
\begin{equation}\label{eq:L1W1}
  \Ws_1(m,m') = ||F-G||_{\Ls^1(\R)}.
\end{equation}

\begin{rk}\label{rk:wassemp}
  In the case of empirical distributions, Lemma~\ref{lem:coupopt} provides a very convenient expression of the Wasserstein distances. More precisely, let $\rx = (x_1, \ldots, x_n)$ and $\rx' = (x'_1, \ldots, x'_n) \in \Dn$, and let us define
  \begin{equation*}
    m := \frac{1}{n} \sum_{k=1}^n \delta_{x_k}, \qquad m' := \frac{1}{n} \sum_{k=1}^n \delta_{x'_k}.
  \end{equation*}
  Then, for all $p \in [1, +\infty)$,
  \begin{equation*}
    \Ws_p(m,m') = \left(\frac{1}{n} \sum_{k=1}^n |x_k-x'_k|^p\right)^{1/p},
  \end{equation*}
  and
  \begin{equation*}
    \Ws_{\infty}(m,m') = \sup_{1 \leq k \leq n} |x_k-x'_k|.
  \end{equation*}
\end{rk}

The Cartesian product $\Ps(\R)^d$ is endowed with the family of distances $\Ws^{(d)}_p$, $p \in [1,+\infty]$, defined by, for all $\bm=(m^1, \ldots, m^d), \bm'=(m'^1, \ldots, m'^d) \in \Ps(\R)^d$,
\begin{equation}\label{eq:Wpd}
  \begin{aligned}
    \forall p \in [1,+\infty), \quad & \Ws^{(d)}_p(\bm, \bm') := \left(\sum_{\gamma=1}^d \Ws_p(m^{\gamma}, m'^{\gamma})^p\right)^{1/p},\\
    & \Ws^{(d)}_{\infty}(\bm, \bm') := \sup_{1 \leq \gamma \leq d} \Ws_{\infty}(m^{\gamma}, m'^{\gamma}).
  \end{aligned}
\end{equation}

Given $\x, \y \in \Dnd$, and letting
\begin{equation*}
  \bm := \left(\frac{1}{n} \sum_{k=1}^n \delta_{x_k^1}, \ldots, \frac{1}{n} \sum_{k=1}^n \delta_{x_k^d}\right), \qquad \bm' := \left(\frac{1}{n} \sum_{k=1}^n \delta_{y_k^1}, \ldots, \frac{1}{n} \sum_{k=1}^n \delta_{y_k^d}\right),
\end{equation*}
it is a straightforward consequence of Remark~\ref{rk:wassemp} that, for all $p \in [1,+\infty]$,
\begin{equation}\label{eq:LsWs}
  ||\x-\y||_p = \Ws^{(d)}_p(\bm, \bm').
\end{equation}


\subsubsection{Construction of a stable semigroup}\label{sss:sg} The existence result of Theorem~\ref{theo:existence} does not depend on the precise way in which the sequence $(\x(n))_{n \geq 1}$ approximates the initial data $(u^1_0, \ldots, u^d_0)$ of~\eqref{eq:syst}. In order to construct semigroup solutions, it is now necessary to specify how to discretise these data. To this aim, we introduce the following {\em discretisation operator} on $\Ps(\R)^d$.

\begin{defi}[Discretisation operator]\label{defi:chi}
  For all $n \geq 1$, we define the {\em discretisation operator} $\chi_n : \Ps(\R)^d \to \Dnd$ by, for all $\bm = (m^1, \ldots, m^d) \in \Ps(\R)^d$, $\chi_n \bm = \x$, where, for all $\gamma:k \in \Part$,
  \begin{equation*}
    x_k^{\gamma} := (n+1)\int_{w=(2k-1)/(2(n+1))}^{(2k+1)/(2(n+1))} (H*m^{\gamma})^{-1}(w)\dd w.
  \end{equation*}
\end{defi}

We can now state the main result of this work, which is based on the remark that, by~\eqref{eq:LsWs}, the discrete stability estimates of Theorem~\ref{theo:stabMSPD} naturally yield Wasserstein stability estimates for the solutions obtained as limits of the MSPD.

\begin{theo}[Convergence of the MSPD to a stable semigroup solution]\label{theo:sg}
  Let Assumptions~\eqref{ass:LC} and \eqref{ass:USH} hold. 
  
  There exists a family of operators $(\bar{\bS}_t)_{t \geq 0}$ on $\Ps(\R)^d$ having the following properties:
  \begin{enumerate}[label=(\roman*), ref=\roman*]
    \item\label{it:sg:sg} for all $s,t \geq 0$, for all $\bm \in \Ps(\R)^d$, $\bar{\bS}_{s+t}\bm = \bar{\bS}_s\bar{\bS}_t\bm$,
    \item\label{it:sg:stable} for all $s,t \geq 0$, for all $\bm, \bm' \in \Ps(\R)^d$, for all $p \in [1,+\infty]$,
    \begin{equation*}
      \Ws^{(d)}_p(\bar{\bS}_s\bm, \bar{\bS}_t\bm') \leq \ConstStab_p \Ws^{(d)}_p(\bm, \bm') + |t-s|\ConstBound{p},
    \end{equation*}
    where $\ConstBound{p}$ is defined in~\eqref{eq:ConstBound} and $\ConstStab_p$ is defined in~\eqref{eq:ConstStab};
  \end{enumerate}
  and such that, for all $\bm \in \Ps(\R)^d$, the function $\bu : [0,+\infty) \times \R \to [0,1]^d$ defined by
  \begin{equation*}
    \forall \gamma \in \{1, \ldots, d\}, \qquad u^{\gamma}(t,x) := H*(\bar{S}_t^{\gamma}\bm)(x),
  \end{equation*}
  satisfies:
  \begin{enumerate}[label=(\roman*), ref=\roman*, start=3]  
    \item\label{it:sg:cvmspd} the sequence of empirical distributions $\upmu[\chi_n\bm]$ converges weakly to the measure $\bar{\upmu}[\bm] \in \Ms$ defined as the image of the Lebesgue measure $\Unif$ on $[0,1]$ by the mapping
    \begin{equation*}
      v \mapsto \left(u^1(t,\cdot)^{-1}(v), \ldots, u^d(t,\cdot)^{-1}(v)\right)_{t \geq 0},
    \end{equation*}
    \item\label{it:sg:sol} the function $\bu$ is a probabilistic solution to the system~\eqref{eq:syst} with initial data $(u^1_0, \ldots, u^d_0)$ defined by $u^{\gamma}_0 = H*m^{\gamma}$.
  \end{enumerate}
\end{theo}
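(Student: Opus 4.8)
\textbf{Proof plan for Theorem~\ref{theo:sg}.}
The plan is to define the semigroup by combining the discretisation operator $\chi_n$, the MSPD flow $\Phi$, and a limiting procedure justified by the uniform stability of Theorem~\ref{theo:stabMSPD}. First I would observe that $\chi_n\bm \in \Dnd$ indeed defines a valid configuration, and that by construction the empirical CDF $u^{\gamma}_0[\chi_n\bm]$ approximates $u^{\gamma}_0 = H*m^{\gamma}$; more precisely, using Lemma~\ref{lem:coupopt} and the elementary estimate comparing the piecewise-constant quantile function of $\chi_n\bm$ with $(H*m^{\gamma})^{-1}$, one shows $\Ws^{(d)}_p\big(\bm, \bm[\chi_n\bm]\big) \to 0$ as $n \to +\infty$, where $\bm[\chi_n\bm]$ is the vector of empirical measures of $\chi_n\bm$. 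Consequently the hypotheses of Theorem~\ref{theo:existence} are met, so along subsequences the empirical distributions $\upmu[\chi_n\bm]$ converge to probabilistic solutions.

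Next I would \emph{upgrade} this subsequential convergence to a genuine limit. Fix $t \geq 0$ and consider the time-$t$ marginals $\upmu^{\gamma}_t[\chi_n\bm]$, which by Remark~\ref{rk:wassemp} and~\eqref{eq:LsWs} satisfy, for any $m, n$,
\begin{equation*}
  \Ws^{(d)}_p\big(\upmu_t[\chi_n\bm], \upmu_t[\chi_m\bm]\big) = ||\Phi(\chi_n\bm;t) \text{ vs } \Phi(\chi_m\bm;t)||_p \leq \ConstStab_p \, ||\chi_n\bm - \chi_m\bm||_{p}
\end{equation*}
once the two configurations are compared via their empirical measures; since $(\bm[\chi_n\bm])_n$ is Cauchy in $\Ws^{(d)}_p$ (it converges to $\bm$), the right-hand side tends to $0$. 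Hence $(\upmu^{\gamma}_t[\chi_n\bm])_n$ is Cauchy in $\Ws_p$, in particular in $\Ws_1$, so it converges to a limit which I call $\bar{S}^{\gamma}_t\bm$; this defines the operators $\bar{\bS}_t$ unambiguously and shows the full sequence $\upmu[\chi_n\bm]$ converges in the path space (tightness from Theorem~\ref{theo:existence} plus identification of all limit marginals forces convergence of the whole sequence), giving~\eqref{it:sg:cvmspd}. Property~\eqref{it:sg:sol} is then immediate: $\bu$ is one of the probabilistic solutions produced by Theorem~\ref{theo:existence}. The stability estimate~\eqref{it:sg:stable} follows by passing to the limit $n \to +\infty$ in the discrete estimate of Theorem~\ref{theo:stabMSPD} applied to $\x = \chi_n\bm$, $\y = \chi_n\bm'$, using~\eqref{eq:LsWs}, the triangle inequality, and the convergences $\bm[\chi_n\bm] \to \bm$, $\bm[\chi_n\bm'] \to \bm'$ in every $\Ws^{(d)}_p$.

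For the semigroup property~\eqref{it:sg:sg}, the natural route is to exploit a semigroup (or flow) property of the MSPD itself: $\Phi(\x; s+t) = \Phi(\Phi(\x;t); s)$ for $\x \in \Dnd$. The subtlety is that $\Phi(\chi_n\bm; t)$ is generally \emph{not} of the form $\chi_n\bm''$ for any $\bm''$, so one cannot directly iterate the construction; instead I would write, for the empirical measures, $\bm[\Phi(\chi_n\bm;t)] = \bm[\chi_n(\bar{\bS}_t^{(n)}\bm)]$ only approximately. The clean argument uses stability twice: compare $\upmu_{s+t}[\chi_n\bm]$ with $\upmu_s[\chi_n(\bm[\Phi(\chi_n\bm;t)])]$, bounding their $\Ws^{(d)}_1$-distance by $\ConstStab_1$ times the discretisation error $\Ws^{(d)}_1(\bm[\Phi(\chi_n\bm;t)], \bm[\chi_n(\bm[\Phi(\chi_n\bm;t)])])$, which vanishes as $n \to +\infty$; the first quantity converges to $\bar{\bS}_{s+t}\bm$ by~\eqref{it:sg:cvmspd}, while the second, since $\bm[\Phi(\chi_n\bm;t)] = \upmu_t[\chi_n\bm] \to \bar{\bS}_t\bm$, converges to $\bar{\bS}_s\bar{\bS}_t\bm$ by continuity of $\bar{\bS}_s$ in $\Ws^{(d)}_1$ (a consequence of~\eqref{it:sg:stable}). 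The main obstacle I anticipate is exactly this commutation of the three operations --- MSPD flow, discretisation, and limit --- in particular controlling the discretisation error uniformly enough and verifying that $\bar{\bS}_s$ is well-defined and Lipschitz on the (possibly non-atomic, general) measures $\bar{\bS}_t\bm$ that arise as intermediate data, rather than only on the explicit discretisations $\chi_n\bm$; this forces one to first establish~\eqref{it:sg:stable} for \emph{all} $\bm, \bm' \in \Ps(\R)^d$, which in turn requires the approximation $\bm[\chi_n\bm] \to \bm$ to hold for arbitrary $\bm$, the one genuinely analytic input of the proof.
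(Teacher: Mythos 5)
There is a genuine gap that makes the core of your argument fail: you claim that $\Ws^{(d)}_p(\bm, \bm[\chi_n\bm]) \to 0$ and, more importantly, that $(\bm[\chi_n\bm])_n$ is Cauchy in $\Ws^{(d)}_p$. This is false for general $\bm \in \Ps(\R)^d$. Since $\bm[\chi_n\bm]$ is finitely supported, $\Ws^{(d)}_p(\bm, \bm[\chi_n\bm]) = +\infty$ whenever $\bm$ has infinite $p$-th moment (and the theorem is stated for \emph{all} $\bm$, with the paper explicitly noting that both sides of the stability estimate can be infinite). The Cauchy argument then collapses: to compare $\upmu_t[\chi_n\bm]$ and $\upmu_t[\chi_m\bm]$ with $n \neq m$ you must first compare $\chi_n\bm$ and $\chi_m\bm$, which live in different configuration spaces $\Dnd$ and $D_m^d$, so Theorem~\ref{theo:stabMSPD} is not directly applicable; the only route is via the triangle inequality through $\bm$, which is blocked by the infinite distances. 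Lemma~\ref{lem:cvciwass} in the paper is careful to compare only \emph{differences} of discretisations $||\chi_n\bm - \chi_n\bm'||_p$ for the \emph{same} $n$, precisely to avoid comparing a measure to its own discretisation.

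A second, independent gap: even restricting to $\bm$ with finite moments, you would only obtain convergence of $\upmu[\chi_n\bm]$ along the extracted subsequence of Theorem~\ref{theo:existence}, and a Cauchy argument for the marginals $\upmu^\gamma_t[\chi_n\bm]$ does not by itself show the full path-space sequence $\upmu[\chi_n\bm]$ converges, nor that the resulting limit is independent of the extraction. The paper resolves this with considerably more machinery than your plan acknowledges: it first constructs $\bS_t$ only on a countable dense subset $\dP^0_{\bm^*}$ of a $\Ws_1$-stability class via tightness plus diagonal extraction (so for \emph{one} subsequence $(n_\ell)$ valid simultaneously for all reference data), extends by $\Ws_1$-Lipschitz continuity to all of $\dP_{\bm^*}$ (Proposition~\ref{prop:cSt}), and only then invokes the Bianchini-Bressan uniqueness conditions (Lemmas~\ref{lem:flat}, \ref{lem:shock}, \ref{lem:uniqsg}) to show the semigroup is independent of $(n_\ell)$, which is what finally upgrades subsequential to full convergence of $\upmu[\chi_n\bm]$ in~\eqref{it:sg:cvmspd}. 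Your proposal has no substitute for this uniqueness step; without it you cannot identify all limit points, and your semigroup $\bar{\bS}_t$ is not well defined.

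Your outline of the semigroup property~\eqref{it:sg:sg} is in the right spirit and matches the paper's Proposition~\ref{prop:cSt}~\eqref{it:cSt:3} — commuting the MSPD flow, the discretisation, and the limit, and controlling the discretisation error of the empirical measure $\bm[\Phi(\chi_n\bm;t)]$, which is finitely supported and hence unproblematic — but it is carried out in the paper using the modified distance $\tilde{\Ws}_1$ rather than $\Ws_p$ precisely to sidestep the infinite-distance issues you run into elsewhere.
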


The proof of Theorem~\ref{theo:sg} is detailed in Section~\ref{s:sg}. It works in two steps: we first use the stability estimates of Theorem~\ref{theo:stabMSPD} to prove that the solutions given by Theorem~\ref{theo:existence} with the sequence of initial configurations given by the discretisation operator are semigroups and satisfy the expected Wasserstein stability estimates. We then show that these semigroups are viscosity solutions in the sense of Bianchini and Bressan~\cite{bianbres}, which allows us to identify all the semigroup solutions and thus all the limits of the MSPD. We however prevent ourselves from calling our semigroup solution a viscosity solution, as we do not actually prove that it is indeed the vanishing viscosity limit of the solution to the system~\eqref{eq:syst} with viscosity.

Note that, in Theorem~\ref{theo:sg}, both sides of the inequality in \eqref{it:sg:stable} may be infinite. Let us also highlight the fact that, on account of~\eqref{eq:L1W1}, for $p=1$, the point~\eqref{it:sg:stable} rewrites as a classical $\Ls^1$ stability estimate
\begin{equation*}
  \sum_{\gamma=1}^d ||u^{\gamma}(s, \cdot) - v^{\gamma}(t, \cdot)||_{\Ls^1(\R)} \leq \ConstStab_1 \sum_{\gamma=1}^d ||u^{\gamma}(0, \cdot) - v^{\gamma}(0, \cdot)||_{\Ls^1(\R)} + |t-s| \ConstBound{1},
\end{equation*}
on the probabilistic solutions $\bu=(u^1, \ldots, u^d)$ and $\bv=(v^1, \ldots, v^d)$ to the hyperbolic system~\eqref{eq:syst} defined by
\begin{equation*}
  u^{\gamma}(t,x) := H*(\bar{S}_t^{\gamma}\bm)(x), \qquad v^{\gamma}(t,x) := H*(\bar{S}_t^{\gamma}\bm')(x).
\end{equation*}
We finally remark that the results of Sections~\ref{s:traj} and~\ref{s:rar}, namely the representation of the solutions in terms of trajectories, and the continuity properties of rarefaction coordinates, obviously apply to the probabilistic solutions to~\eqref{eq:syst} given by the semigroup $(\bar{\bS}_t)_{t \geq 0}$.


\subsubsection{Comparison with~\cite{ehm12}}\label{sss:ehm12} Besides Assumption~\eqref{ass:USH}, Theorems~\ref{theo:stabMSPD} and~\ref{theo:sg} are obtained under the sole Assumption~\eqref{ass:LC}. The assumptions made by El Hajj and Monneau in~\cite[Theorem~1.1]{ehm12} to obtain uniqueness and $\Ls^1$ stability of continuous vanishing viscosity solutions to~\eqref{eq:syst} under uniform strict hyperbolicity are more stringent: they assume moreover that the probability measures $m^1,\ldots,m^d$ admit densities in $\Ls\log \Ls(\R)$ and that $\partial_{u^{\gamma}}\lambda^\gamma(\bu)\geq 0$ for all $\gamma\in\{1,\ldots,d\}$ and $\bu\in[0,1]^d$. 

Under the assumption that the probability measures $m^1,\ldots,m^d$ admit bounded densities, they replace strict hyperbolicity by one of the following alternative conditions reinforcing the monotonicity of the characteristic fields $\lambda^\gamma$ in their $\gamma$-th coordinate: 
\begin{itemize}
   \item $\partial_{u^{\gamma'}}\lambda^\gamma(\bu)\geq 0$ for all $\bu\in[0,1]^d$ and $\gamma, \gamma' \in \{1, \ldots, d\}$ with $\gamma'\geq \gamma$,
   \item $\partial_{u^{\gamma'}}\lambda^\gamma(\bu)\leq 0$ for all $\bu\in[0,1]^d$ and $\gamma, \gamma' \in \{1, \ldots, d\}$ with $\gamma' \not= \gamma$, as well as positive semidefiniteness of the matrix $(\inf_{\bu\in[0,1]^d}\partial_{u^{\gamma'}}\lambda^\gamma(\bu)+\inf_{\bu\in[0,1]^d}\partial_{u^{\gamma}}\lambda^{\gamma'}(\bu))_{\gamma,\gamma'}$,
   \item $\partial_{u^{\gamma}}\lambda^\gamma(\bu)\geq \sum_{\gamma'\neq\gamma}(\partial_{u^{\gamma'}}\lambda^\gamma(\bu))^-$ for all $\gamma\in\{1,\ldots,d\}$ and $\bu\in[0,1]^d$, where $v^- = 0 \vee (-v)$ denotes the nonpositive part of $v$.
\end{itemize}

\part{Construction and properties of probabilistic solutions}\label{part:1}

\section{The Multitype Sticky Particle Dynamics}\label{s:mspd}

In this section, we give a formal construction of the Multitype Sticky Particle Dynamics (MSPD). We first recall some useful facts on the Sticky Particle Dynamics in Subsection~\ref{ss:spd}. The proper definition of the MSPD is given in Subsection~\ref{ss:mspd}, where a few elementary properties of this dynamics are also stated. 

\subsection{The Sticky Particle Dynamics}\label{ss:spd} In this subsection, we give a detailed introduction of the Sticky Particle Dynamics and state a few properties of this dynamics.


\subsubsection{Definition of the Sticky Particle Dynamics}\label{sss:spd} Let us fix $\rblambda = (\barlambda_1, \ldots, \barlambda_n) \in \R^n$. For all $\rx = (x_1, \ldots, x_n) \in \Dn$, the {\em Sticky Particle Dynamics started at $\rx$ with initial velocity vector $\rblambda$} is described as follows. 

First, the $k$-th particle has initial position $x_k$ and initial velocity $\barlambda_k$, while its {\em initial cluster} is determined by Definition~\ref{defi:icluSPD}.

\begin{defi}[Initial clusters]\label{defi:icluSPD}
  The {\em initial cluster} of the $k$-th particle in the Sticky Particle Dynamics started at $\rx$ with initial velocity $\rblambda$ is the largest set of consecutive indices $\{\uk, \ldots, \ok\} \subset \{1, \ldots, n\}$ such that:
  \begin{itemize}
    \item $\uk \leq k \leq \ok$,
    \item $x_{\uk} = \cdots = x_{\ok}$,
    \item either $\uk=\ok$, or for all $j \in \{\uk, \ldots, \ok-1\}$,
    \begin{equation}\label{eq:stab}
      \frac{1}{j-\uk+1} \sum_{k'=\uk}^j \barlambda_{k'} \geq \frac{1}{\ok-j} \sum_{k'=j+1}^{\ok} \barlambda_{k'}.
    \end{equation}
  \end{itemize}
\end{defi}

Clusters of particles travel at constant velocity between collisions, and stick together at collisions. The velocity of a cluster between two collisions is given by the average of the initial velocities of the particles composing the cluster. Denoting by 
\begin{equation*}
  \phi[\rblambda](\rx;t) = (\phi_1[\rblambda](\rx;t), \ldots, \phi_n[\rblambda](\rx;t)) \in \Dn
\end{equation*}
the positions of the particles at time $t \geq 0$, we obtain a continuous process $(\phi[\rblambda](\rx;t))_{t \geq 0}$ taking its values in $\Dn$, that we call the {\em Sticky Particle Dynamics started at $\rx$ with initial velocity vector $\rblambda$}. Clearly, this process has the flow property that, for all $s,t \geq 0$,
\begin{equation*}
  \phi[\rblambda](\rx;t+s) = \phi[\rblambda](\phi[\rblambda](\rx;t);s).
\end{equation*}

\begin{rk}
  It follows from a tedious but straightforward barycentric computation that if $\{\uk, \ldots, \ok\}$ and $\{\uk', \ldots, \ok'\}$ are two sets of consecutive indices in $\{1, \ldots, n\}$ satisfying the three conditions of Definition~\ref{defi:icluSPD}, then $\{\uk, \ldots, \ok\} \cup \{\uk', \ldots, \ok'\}$ also satisfies these conditions. Therefore there is no ambiguity in the definition of the initial cluster of the $k$-th particle.
\end{rk}

\begin{defi}[Clusters and their velocity]\label{defi:cluSPD} 
  We denote by $\clu_k[\rblambda](\rx;0)$ the initial cluster of the $k$-th particle, and for $t > 0$, we denote by $\clu_k[\rblambda](\rx;t)$ the largest set of indices $\{\uk, \ldots, \ok\}$ of the particles sharing the same position as the $k$-th particle at time $t$, that is, such that
  \begin{equation*}
    \phi_{\uk}[\rblambda](\rx;t) = \cdots = \phi_k[\rblambda](\rx;t) = \cdots = \phi_{\ok}[\rblambda](\rx;t).
  \end{equation*}
  
  For all $t \geq 0$, the set $\clu_k[\rblambda](\rx;t)$ is called the {\em cluster} at time $t$ of the $k$-th particle in the Sticky Particle Dynamics started at $\rx$ with initial velocity $\rblambda$.
  
  Finally, the {\em velocity} of the cluster of the $k$-th particle at time $t \geq 0$ is defined by
  \begin{equation*}
    v_k[\rblambda](\rx;t) = \frac{1}{|\clu_k[\rblambda](\rx;t)|} \sum_{k' \in \clu_k[\rblambda](\rx;t)} \barlambda_{k'},
  \end{equation*}
  where $|c|$ refers to the cardinality of the set $c$, so that 
  \begin{equation}\label{eq:vspd}
    \forall t \geq 0, \qquad \phi_k[\rblambda](\rx;t) = x_k + \int_{s=0}^t v_k[\rblambda](\rx;s)\dd s.
  \end{equation}
\end{defi}

\begin{rk}\label{rk:cluSPD}
  Definition~\ref{defi:cluSPD} can be completed by the following remarks.
  \begin{enumerate}[label=(\roman*), ref=\roman*]
    \item\label{it:rkcluSPD:1} As is shown in~\cite[Lemma~2.2]{bregre}, in the case $t>0$, the set $\clu_k[\rblambda](\rx;t)$ necessarily satisfies the condition~\eqref{eq:stab}. The latter is called the {\em stability condition}.
    \item\label{it:rkcluSPD:2} As a consequence of the definition of the velocity of a cluster, we have, for all $t \geq 0$,
    \begin{equation}\label{eq:encadrelambda}
      \forall k \in \{1, \ldots, n\}, \qquad \min_{1 \leq j \leq n} \barlambda_j \leq v_k[\rblambda](\rx;t) \leq \max_{1 \leq j \leq n} \barlambda_j.
    \end{equation}
    \item\label{it:rkcluSPD:3} For all $\rx \in \Dn$ and $s,t \geq 0$ such that $s \leq t$, for all $k \in \{1, \ldots, n\}$, 
    \begin{equation*}
      \clu_k[\rblambda](\rx;s) \subset \clu_k[\rblambda](\rx;t).
    \end{equation*}
  \end{enumerate}
\end{rk}

Let us give a representation of the process $(v_1[\rblambda](\rx;t), \ldots, v_n[\rblambda](\rx;t))_{t \geq 0}$, the proof of which can be found in~\cite[Lemma~3.4]{jourey:snoise}.

\begin{lem}[Representation of the velocities]\label{lem:cluSPD}
  For all $\rblambda \in \R^n$, for all $\rx \in \Dn$, there exist right continuous processes $(\gamma_1[\rblambda](\rx;t))_{t \geq 0}, \ldots, (\gamma_{n+1}[\rblambda](\rx;t))_{t \geq 0}$ with values in $\R$ such that, for all $t \geq 0$,
  \begin{itemize}
    \item $\gamma_1[\rblambda](\rx;t) = \gamma_{n+1}[\rblambda](\rx;t) = 0$,
    \item for all $k \in \{2, \ldots, n\}$, $\gamma_k[\rblambda](\rx;t) \geq 0$ and $\gamma_k[\rblambda](\rx;t)(\phi_k[\rblambda](\rx;t) - \phi_{k-1}[\rblambda](\rx;t)) = 0$,
  \end{itemize}
  and, for all $k \in \{1, \ldots, n\}$,
  \begin{equation*}
    v_k[\rblambda](\rx;t) = \barlambda_k + \gamma_k[\rblambda](\rx;t) - \gamma_{k+1}[\rblambda](\rx;t).
  \end{equation*}
\end{lem}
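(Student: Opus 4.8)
The plan is to define the auxiliary processes $\gamma_k$ explicitly, as partial sums of the discrepancies between the ``free'' velocities $\barlambda_j$ and the actual cluster velocities $v_j[\rblambda](\rx;t)$, and then to read off each claimed property from the cluster structure of the Sticky Particle Dynamics together with the stability condition~\eqref{eq:stab}. Fix $\rblambda \in \R^n$ and $\rx \in \Dn$. I first record that, for every $t \geq 0$, the clusters $\clu_1[\rblambda](\rx;t), \ldots, \clu_n[\rblambda](\rx;t)$ partition $\{1, \ldots, n\}$ into blocks of consecutive indices on each of which the velocity is constant and equal to the average of the corresponding $\barlambda_j$'s; in particular, for each cluster $c$ one has $\sum_{j \in c} v_j[\rblambda](\rx;t) = \sum_{j \in c} \barlambda_j$. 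I would then set, for $k \in \{1, \ldots, n+1\}$,
\begin{equation*}
  \gamma_k[\rblambda](\rx;t) := \sum_{j=1}^{k-1} \left( \barlambda_j - v_j[\rblambda](\rx;t) \right),
\end{equation*}
with the empty sum (case $k=1$) equal to $0$. The decomposition $v_k[\rblambda](\rx;t) = \barlambda_k + \gamma_k[\rblambda](\rx;t) - \gamma_{k+1}[\rblambda](\rx;t)$ is then immediate by telescoping, $\gamma_1[\rblambda](\rx;t) = 0$ by definition, and summing the discrepancies cluster by cluster gives $\gamma_{n+1}[\rblambda](\rx;t) = \sum_{j=1}^n \barlambda_j - \sum_{j=1}^n v_j[\rblambda](\rx;t) = 0$.

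Next I would establish the complementarity relation and the nonnegativity simultaneously. Let $k \in \{2, \ldots, n\}$ and let $c = \{\uk, \ldots, \ok\}$ be the cluster of the $k$-th particle at time $t$. Since the clusters tile $\{1, \ldots, n\}$, the set $\{1, \ldots, \uk-1\}$ is exactly a union of full clusters, each contributing $0$ to the defining sum, so $\gamma_k[\rblambda](\rx;t) = \sum_{j=\uk}^{k-1}(\barlambda_j - \bar v)$ where $\bar v$ is the velocity of $c$. If $\uk = k$ this sum is empty, hence $\gamma_k[\rblambda](\rx;t) = 0$; this covers the case $\phi_k[\rblambda](\rx;t) > \phi_{k-1}[\rblambda](\rx;t)$, in which $k-1$ and $k$ belong to distinct clusters so that necessarily $\uk = k$, and therefore $\gamma_k[\rblambda](\rx;t)\left(\phi_k[\rblambda](\rx;t) - \phi_{k-1}[\rblambda](\rx;t)\right) = 0$. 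If instead $\uk < k$, I would write $a := k - \uk \geq 1$, $b := \ok - k + 1 \geq 1$, $S_1 := \sum_{j=\uk}^{k-1} \barlambda_j$, $S_2 := \sum_{j=k}^{\ok} \barlambda_j$, so that $\bar v = (S_1+S_2)/(a+b)$ and a one-line computation gives $\gamma_k[\rblambda](\rx;t) = (bS_1 - aS_2)/(a+b)$. Applying the stability condition~\eqref{eq:stab} to the cluster $c$ at the split index $j = k-1$ (valid at $t=0$ by Definition~\ref{defi:icluSPD} and at $t>0$ by Remark~\ref{rk:cluSPD}\eqref{it:rkcluSPD:1}) yields $S_1/a \geq S_2/b$, i.e. $bS_1 \geq aS_2$, whence $\gamma_k[\rblambda](\rx;t) \geq 0$.

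Finally, for the right continuity in $t$: collisions only merge clusters, occur at finitely many times, and at a collision time the merged cluster is already the operative one, so each $t \mapsto v_k[\rblambda](\rx;t)$ is piecewise constant and right continuous, and hence so is each $\gamma_k[\rblambda](\rx;\cdot)$ as a finite sum of such functions. I expect the only genuinely delicate point to be the nonnegativity step, which relies on correctly matching the split index $j = k-1$ in~\eqref{eq:stab} with the barycentric identity for $\gamma_k$; everything else is bookkeeping with the cluster partition.
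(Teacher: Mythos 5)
Your construction is correct. Defining $\gamma_k[\rblambda](\rx;t) := \sum_{j=1}^{k-1}(\barlambda_j - v_j[\rblambda](\rx;t))$ immediately gives the telescoping identity and $\gamma_1 = 0$; $\gamma_{n+1} = 0$ because the whole index set is tiled by clusters on each of which $\sum v_j = \sum \barlambda_j$. Your treatment of nonnegativity and complementarity is right: the prefix $\{1,\ldots,\uk-1\}$ is a union of full clusters contributing zero, the barycentric identity $\gamma_k = (bS_1 - aS_2)/(a+b)$ is a correct one-line computation, and the stability condition~\eqref{eq:stab} at the split index $j=k-1$ (valid at $t=0$ by Definition~\ref{defi:icluSPD} and at $t>0$ by Remark~\ref{rk:cluSPD}\eqref{it:rkcluSPD:1}) gives $bS_1 \geq aS_2$. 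The right-continuity argument is also sound: clusters only merge, the merger is operative at the collision time itself, hence $t \mapsto v_k[\rblambda](\rx;t)$ is right continuous piecewise constant, and $\gamma_k$ inherits this.

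The paper does not reproduce a proof here but defers to an external reference, and the surrounding remark frames the $\gamma_k$ as reflection terms for a normally reflected ODE in the sense of Tanaka. Your argument is a self-contained, purely combinatorial alternative that sidesteps the reflected-equation machinery entirely: you build the candidate $\gamma_k$ explicitly as cumulative momentum discrepancies and then read each property off the cluster partition and the stability inequality. This is arguably the more economical route for this particular statement, since the Tanaka framework establishes existence and uniqueness of the reflected solution, whereas here you only need the representation and its sign/complementarity properties, which your direct construction delivers without appealing to any uniqueness theory.
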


\begin{rk}
  The processes $(\gamma_1[\rblambda](\rx;t))_{t \geq 0}, \ldots, (\gamma_{n+1}[\rblambda](\rx;t))_{t \geq 0}$ introduced in Lemma~\ref{lem:cluSPD} can be interpreted as Lagrange multipliers associated with the constraint that $\phi[\rblambda](\rx;t)$ remain in the polyhedron $\Dn$. More precisely, it is shown in~\cite[Lemma~3.4]{jourey:snoise} that the process $(\phi[\rblambda](\rx;t))_{t \geq 0}$ is the unique solution, in the sense of Tanaka~\cite{tanaka}, to the {\em normally reflected equation}
  \begin{equation*}
    \forall t \geq 0, \qquad \rx(t) = \rx + \rblambda t + \kappa(t)
  \end{equation*}
  at the boundary of $\Dn$, where $\kappa(t)$ is a reflection term, the total variation of which only grows when $\rx(t)$ is at the boundary of $\Dn$.
\end{rk}

We complete this paragraph with the following lemma, which will be useful in the sequel of the article.

\begin{lem}[Extension of the stability condition]\label{lem:extstab}
  Let $\uk, \ok \in \{1, \ldots, n\}$ such that $\uk < \ok$, and such that~\eqref{eq:stab} holds for all $j \in \{\uk, \ldots, \ok-1\}$. Then, for all $\uk', \ok'$ such that $\uk \leq \uk' < \ok' \leq \ok$, we have
  \begin{equation*}
    \frac{1}{\uk'-\uk+1} \sum_{k=\uk}^{\uk'} \barlambda_k \geq \frac{1}{\ok-\ok'+1} \sum_{k=\ok'}^{\ok} \barlambda_k.
  \end{equation*}
\end{lem}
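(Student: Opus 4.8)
The idea is to reformulate the stability condition~\eqref{eq:stab} in terms of the \emph{global} average $\bar\lambda := \frac{1}{\ok-\uk+1}\sum_{k=\uk}^{\ok}\barlambda_k$, and then simply evaluate the reformulated condition at two well-chosen indices. For $j\in\{\uk,\ldots,\ok-1\}$, write $A_j := \frac{1}{j-\uk+1}\sum_{k=\uk}^{j}\barlambda_k$ for the left-hand average and $B_j := \frac{1}{\ok-j}\sum_{k=j+1}^{\ok}\barlambda_k$ for the right-hand average appearing in~\eqref{eq:stab}. The first step is the elementary barycentric identity
\begin{equation*}
  \bar\lambda = \frac{(j-\uk+1)\,A_j + (\ok-j)\,B_j}{\ok-\uk+1},
\end{equation*}
which exhibits $\bar\lambda$ as a convex combination of $A_j$ and $B_j$ with strictly positive weights. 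Consequently the three inequalities $A_j\geq B_j$, $A_j\geq\bar\lambda$ and $\bar\lambda\geq B_j$ are pairwise equivalent, so the hypothesis that~\eqref{eq:stab} holds for all $j\in\{\uk,\ldots,\ok-1\}$ is equivalent to: $A_j\geq\bar\lambda$ for all such $j$ (equivalently $\bar\lambda\geq B_j$ for all such $j$).

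The second step is to apply this to $j=\uk'$ and $j=\ok'-1$. Since $\uk\leq\uk'<\ok'\leq\ok$, we have $\uk\leq\uk'\leq\ok-1$, so $j=\uk'$ is admissible and the reformulated condition gives $A_{\uk'}\geq\bar\lambda$, i.e. $\frac{1}{\uk'-\uk+1}\sum_{k=\uk}^{\uk'}\barlambda_k\geq\bar\lambda$. Likewise $\uk\leq\ok'-1\leq\ok-1$, so $j=\ok'-1$ is admissible and the reformulated condition gives $\bar\lambda\geq B_{\ok'-1}$, i.e. $\bar\lambda\geq\frac{1}{\ok-\ok'+1}\sum_{k=\ok'}^{\ok}\barlambda_k$. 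Chaining these two inequalities through $\bar\lambda$ yields exactly the claimed estimate.

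\textbf{Main obstacle.} There is no real difficulty here: the only point requiring a line of care is to check that the two indices $\uk'$ and $\ok'-1$ indeed lie in the range $\{\uk,\ldots,\ok-1\}$ on which the hypothesis is available, which follows immediately from the strict inequality $\uk'<\ok'$ together with $\uk\leq\uk'$ and $\ok'\leq\ok$. The barycentric identity and the resulting equivalence of the three inequalities is the conceptual heart of the argument, and it is a one-line computation.
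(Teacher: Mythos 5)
Your proof is correct, and it is a cleaner variant of the argument in the paper. Both proofs rest on evaluating the stability condition~\eqref{eq:stab} at the two indices $j=\uk'$ and $j=\ok'-1$ and chaining the results, but you differ in the choice of pivot. The paper splits into two cases, treats $\uk'=\ok'-1$ separately, and in the nondegenerate case introduces the \emph{middle} average $v_{\mathrm{mid}}$ of $\barlambda_{\uk'+1},\ldots,\barlambda_{\ok'-1}$; it then has to combine the two resulting convex-combination inequalities algebraically to eliminate $v_{\mathrm{mid}}$. You instead pivot through the \emph{global} average $\bar\lambda$, observing that since $\bar\lambda$ is a strict convex combination of $A_j$ and $B_j$ (with weights $(j-\uk+1)/(\ok-\uk+1)$ and $(\ok-j)/(\ok-\uk+1)$), the quantities $A_j-\bar\lambda$, $\bar\lambda-B_j$, $A_j-B_j$ all share the same sign, so that~\eqref{eq:stab} is equivalent to $A_j\geq\bar\lambda$ and to $\bar\lambda\geq B_j$. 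This reformulation buys you a uniform treatment with no case split and no elimination step: you simply read off $A_{\uk'}\geq\bar\lambda\geq B_{\ok'-1}$. The index checks $\uk\leq\uk'\leq\ok-1$ and $\uk\leq\ok'-1\leq\ok-1$ are handled exactly as you note, via $\uk\leq\uk'<\ok'\leq\ok$.
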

In other words, if one splits a cluster into several smaller clusters, then the leftmost and rightmost clusters tend to get closer to each other.
\begin{proof}
  If $\uk'=\ok'-1$, then the result is a straightforward application of the stability condition~\eqref{eq:stab} with $j=\uk'$. If $\uk'<\ok'-1$, then we define
  \begin{equation*}
    v_{\mathrm{left}} := \frac{1}{\uk'-\uk+1} \sum_{k=\uk}^{\uk'} \barlambda_k, \qquad v_{\mathrm{mid}} := \frac{1}{\ok'-\uk'-1} \sum_{k=\uk'+1}^{\ok'-1} \barlambda_k, \qquad v_{\mathrm{right}} := \frac{1}{\ok-\ok'+1} \sum_{k=\ok'}^{\ok} \barlambda_k.
  \end{equation*}
  Applying the stability condition~\eqref{eq:stab} with $j=\ok'-1$, we obtain
  \begin{equation*}
    (1-\rho_1) v_{\mathrm{left}} + \rho_1 v_{\mathrm{mid}} \geq v_{\mathrm{right}}, \qquad \rho_1 := \frac{\ok'-\uk'-1}{\ok'-\uk} \in (0,1);
  \end{equation*}
  and applying the stability condition~\eqref{eq:stab} with $j=\uk'$, we obtain
  \begin{equation*}
    v_{\mathrm{left}} \geq \rho_2 v_{\mathrm{mid}} + (1-\rho_2)v_{\mathrm{right}}, \qquad \rho_2 := \frac{\ok'-\uk'-1}{\ok-\uk'} \in (0,1).
  \end{equation*}
  We conclude that $v_{\mathrm{left}} \geq v_{\mathrm{right}}$.
\end{proof}


\subsubsection{Local Sticky Particle Dynamics}\label{sss:locspd} Let us fix $T > 0$, $\rx \in \Dn$, and take a set $K \subset \{1, \ldots, n\}$ having the property that
\begin{equation}\label{eq:locSPD}
  \forall k \in K, \qquad \clu_k[\rblambda](\rx;T) \subset K.
\end{equation}
In other words, $K$ is the union of a certain number of clusters at time $T$. By~\eqref{it:rkcluSPD:3} in Remark~\ref{rk:cluSPD}, for all $t \in [0,T]$, all the particles of $K$ belong to clusters contained in $K$. Writing $K = \{k_1, \ldots, k_{|K|}\}$, it is clear that the process 
\begin{equation*}
  (\phi_{k_1}[\rblambda](\rx;t), \ldots, \phi_{k_{|K|}}[\rblambda](\rx;t))_{t \geq 0}
\end{equation*}
follows the Sticky Particle Dynamics in $D_{|K|}$, with initial position vector $(x_{k_1}, \ldots, x_{k_{|K|}})$ and initial velocity vector $(\barlambda_{k_1}, \ldots, \barlambda_{k_{|K|}})$. This is a consequence of the fact that, in the Sticky Particle Dynamics, the interactions between particles are local: when some particles collide and stick together, this does not affect the motion of the other particles.

\begin{defi}[Local Sticky Particle Dynamics]\label{defi:locspd}
  As soon as $T>0$, $\rx \in \Dn$ and $K \subset \{1, \ldots, n\}$ satisfy the condition~\eqref{eq:locSPD}, the process $(\phi_{k_1}[\rblambda](\rx;t), \ldots, \phi_{k_{|K|}}[\rblambda](\rx;t))$ is said to follow the {\em Local Sticky Particle Dynamics} on $[0,T]$, in the set 
  \begin{equation*}
    D_K := \{(x_{k_1}, \ldots, x_{k_{|K|}}) \in \R^K : x_{k_1} \leq \cdots \leq x_{k_{|K|}}\},
  \end{equation*}
  with initial velocity vector $\rblambda_K := (\barlambda_{k_1}, \ldots, \barlambda_{k_{|K|}}) \in \R^K$. 
  
  For $0 \leq t_1 \leq t_2$, we shall also say that $(\phi_{k_1}[\rblambda](\rx;t), \ldots, \phi_{k_{|K|}}[\rblambda](\rx;t))$ follows the Local Sticky Particle Dynamics on $[t_1,t_2]$ if 
  \begin{equation*}
    \left(\phi_{k_1}[\rblambda](\phi[\barlambda](\rx;t_1);t-t_1), \ldots, \phi_{k_{|K|}}[\rblambda](\phi[\barlambda](\rx;t_1);t-t_1)\right)
  \end{equation*}
  follows the Local Sticky Particle Dynamics on $[0,t_2-t_1]$.
\end{defi}

For all $p \in [1, +\infty]$, we now give an estimation on the growth of the $\Ls^p$ distance between two realisations of the (Local) Sticky Particle Dynamics, with possibly distinct initial velocity vectors. 

\begin{prop}[$\Ls^p$ stability for the Local Sticky Particle Dynamics]\label{prop:contractspd}
  Let $\rx, \ry \in \Dn$ and $\rblambda, \rbmu \in \R^n$. Let $T>0$ and $K = \{k_1, \ldots, k_{|K|}\} \subset \{1, \ldots, n\}$ such that the processes 
  \begin{equation*}
    (\phi_{k_1}[\rblambda](\rx;t), \ldots, \phi_{k_{|K|}}[\rblambda](\rx;t))_{t \in [0,T]}
  \end{equation*}
  and 
  \begin{equation*}
    (\phi_{k_1}[\rbmu](\ry;t), \ldots, \phi_{k_{|K|}}[\rbmu](\ry;t))_{t \in [0,T]}
  \end{equation*}
  follow the Local Sticky Particle Dynamics on $[0,T]$, with respective initial velocity vectors $\rblambda_K$ and $\rbmu_K$ defined as above.
 
  \begin{enumerate}[label=(\roman*), ref=\roman*] 
    \item\label{it:contractspd:1} For all $t \in [0,T]$,
    \begin{equation*}
      \sum_{k \in K} |\phi_k[\rblambda](\rx;T) - \phi_k[\rbmu](\ry;T)| \leq \sum_{k \in K} |\phi_k[\rblambda](\rx;t) - \phi_k[\rbmu](\ry;t)| + (T-t) \sum_{k \in K}|\barlambda_k-\bmu_k|,
    \end{equation*}
    \item\label{it:contractspd:2} In the case $\rblambda = \rbmu$, then for all $t \in [0,T]$, for all $p \in [1,+\infty)$,
    \begin{equation*}
      \sum_{k \in K} |\phi_k[\rblambda](\rx;T) - \phi_k[\rblambda](\ry;T)|^p \leq \sum_{k \in K} |\phi_k[\rblambda](\rx;t) - \phi_k[\rblambda](\ry;t)|^p,
    \end{equation*}
    and
    \begin{equation*}
      \sup_{k \in K} |\phi_k[\rblambda](\rx;T) - \phi_k[\rblambda](\ry;T)| \leq \sup_{k \in K} |\phi_k[\rblambda](\rx;t) - \phi_k[\rblambda](\ry;t)|.
    \end{equation*}
  \end{enumerate}
\end{prop}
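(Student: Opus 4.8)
The plan is to differentiate the relevant $\Ls^p$ distance along the two realisations of the dynamics and to control the interaction terms by means of the Lagrange-multiplier representation of the velocities from Lemma~\ref{lem:cluSPD}. By Definition~\ref{defi:locspd}, after relabelling the particles of $K$ as $1,\ldots,N$ with $N:=|K|$, the processes $(\phi_k(t))_{t\in[0,T]}$ and $(\psi_k(t))_{t\in[0,T]}$, where I write $\phi_k(t):=\phi_{k}[\rblambda](\rx;t)$ and $\psi_k(t):=\phi_{k}[\rbmu](\ry;t)$, both follow a genuine Sticky Particle Dynamics in $D_N$, with respective velocity vectors $\rblambda_K$ and $\rbmu_K$. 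Hence Lemma~\ref{lem:cluSPD} applies to each of them and, together with~\eqref{eq:vspd}, provides nonnegative right continuous processes $\gamma_1,\ldots,\gamma_{N+1}$ and $\delta_1,\ldots,\delta_{N+1}$, vanishing at the indices $1$ and $N+1$, such that $\gamma_k(\phi_k-\phi_{k-1})=0$, $\delta_k(\psi_k-\psi_{k-1})=0$, and $\dot\phi_k=\barlambda_k+\gamma_k-\gamma_{k+1}$, $\dot\psi_k=\bmu_k+\delta_k-\delta_{k+1}$ for almost every $t$.

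Fix $p\in[1,+\infty)$ and set $g(t):=\sum_{k=1}^N|\phi_k(t)-\psi_k(t)|^p$. On $[0,T]$ the trajectories are Lipschitz and bounded, so $g$ is absolutely continuous; using that the derivative of a Lipschitz function vanishes almost everywhere on each of its level sets (which handles the non-differentiability of $x\mapsto|x|^p$ at the origin when $p=1$), one obtains for almost every $t$
\[
  \frac1p\,\dot g(t)=\sum_{k=1}^N w_k\Bigl((\barlambda_k-\bmu_k)+(\gamma_k-\delta_k)-(\gamma_{k+1}-\delta_{k+1})\Bigr),\qquad w_k:=|\phi_k-\psi_k|^{p-1}\sgn(\phi_k-\psi_k),
\]
with the convention $w_k=0$ when $\phi_k=\psi_k$. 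An Abel summation, using that $\gamma$ and $\delta$ vanish at the indices $1$ and $N+1$, rewrites the interaction contribution as $\sum_{k=2}^N(w_k-w_{k-1})(\gamma_k-\delta_k)$.

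The crux is to check that each summand $(w_k-w_{k-1})(\gamma_k-\delta_k)$ is nonpositive, which is a short case analysis on the signs of $\gamma_k$ and $\delta_k$: if both vanish, or both are positive -- in which case $\phi_k=\phi_{k-1}$ and $\psi_k=\psi_{k-1}$, so $w_k=w_{k-1}$ -- the term is zero; if $\gamma_k>0=\delta_k$ then $\phi_k=\phi_{k-1}$, and since $(\psi_1(t),\ldots,\psi_N(t))\in D_N$ gives $\psi_{k-1}\leq\psi_k$, one gets $\phi_{k-1}-\psi_{k-1}\geq\phi_k-\psi_k$, hence $w_{k-1}\geq w_k$ by monotonicity of $x\mapsto|x|^{p-1}\sgn(x)$, while $\gamma_k-\delta_k>0$, so the product is nonpositive; the case $\delta_k>0=\gamma_k$ is symmetric. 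It follows that $\dot g(t)\leq p\sum_{k=1}^N w_k(\barlambda_k-\bmu_k)$ for almost every $t$. For part~\eqref{it:contractspd:2}, the assumption $\rblambda=\rbmu$ kills the right-hand side, so $g$ is nonincreasing on $[0,T]$, which gives the first inequality; letting $p\to+\infty$ and using $g(t)^{1/p}\to\sup_k|\phi_k(t)-\psi_k(t)|$ gives the second. For part~\eqref{it:contractspd:1}, taking $p=1$ we have $w_k=\sgn(\phi_k-\psi_k)$, so $\dot g(t)\leq\sum_{k=1}^N|\barlambda_k-\bmu_k|$, and integrating from $t$ to $T$ yields the claim. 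The one genuinely delicate point -- and the one I expect to require the most care -- is the sign analysis of the reflection terms above, which rests on the complementarity conditions of Lemma~\ref{lem:cluSPD} and on the ordering of positions inside $D_N$.
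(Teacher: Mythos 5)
Your proof is correct and follows essentially the same route as the paper's: both invoke the Lagrange-multiplier representation of Lemma~\ref{lem:cluSPD}, perform an Abel summation of the reflection terms against the (sub)gradient weights $w_k$, and conclude by the same sign/monotonicity observation on $x\mapsto|x|^{p-1}\sgn(x)$ together with the complementarity conditions. The only cosmetic difference is that you combine the two reflection terms $\gamma_k-\delta_k$ and treat the four sign cases at once, whereas the paper handles the $\gamma$-term explicitly and dispatches the $\delta$-term by symmetry; and you argue via $\dot g$ while the paper writes the same identity in integral form.
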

\begin{proof}
  Without loss of generality, we assume that $K = \{1, \ldots, n\}$, so that $\rblambda_K = \rblambda$ and $\rbmu_K = \rbmu$. Now, by~\eqref{eq:vspd}, for all $p \in [1, +\infty)$,
  \begin{equation*}
    \begin{aligned}
      & \sum_{k=1}^n |\phi_k[\rblambda](\rx;T) - \phi_k[\rbmu](\ry;T)|^p = \sum_{k=1}^n |\phi_k[\rblambda](\rx;t) - \phi_k[\rbmu](\ry;t)|^p\\
      & + \sum_{k=1}^n \int_{s=t}^T p |\phi_k[\rblambda](\rx;s) - \phi_k[\rbmu](\ry;s)|^{p-2} (\phi_k[\rblambda](\rx;s) - \phi_k[\rbmu](\ry;s)) \left\{v_k[\rblambda](\rx;s) - v_k[\rbmu](\ry;s)\right\}\dd s,
    \end{aligned}
  \end{equation*}
  where we take the convention that $|z|^{p-2}z=0$ for $p \in [1,2]$.
  
  Using Lemma~\ref{lem:cluSPD}, we write, for all $k \in \{1, \ldots, n\}$,
  \begin{equation*}
    v_k[\rblambda](\rx;s) - v_k[\rbmu](\ry;s) = \barlambda_k - \bmu_k + \gamma_k[\rblambda](\rx;s) - \gamma_{k+1}[\rblambda](\rx;s) - \gamma_k[\rbmu](\ry;s) + \gamma_{k+1}[\rbmu](\ry;s).
  \end{equation*}
  We shall prove below that, for all $s \in (t,T]$,
  \begin{equation}\label{eq:pf:contractspd:1}
    \sum_{k=1}^n |\phi_k[\rblambda](\rx;s) - \phi_k[\rbmu](\ry;s)|^{p-2} (\phi_k[\rblambda](\rx;s) - \phi_k[\rbmu](\ry;s))\left\{\gamma_k[\rblambda](\rx;s) - \gamma_{k+1}[\rblambda](\rx;s)\right\} \leq 0;
  \end{equation}
  then, by symmetry, the contribution of $-\left\{\gamma_k[\rbmu](\ry;s) - \gamma_{k+1}[\rbmu](\ry;s)\right\}$ is also nonpositive, so that we obtain
  \begin{equation*}
    \begin{aligned}
      & \sum_{k=1}^n |\phi_k[\rblambda](\rx;T) - \phi_k[\rbmu](\ry;T)|^p \leq \sum_{k=1}^n |\phi_k[\rblambda](\rx;t) - \phi_k[\rbmu](\ry;t)|^p\\
      & + \sum_{k=1}^n \left\{\barlambda_k - \bmu_k\right\} \int_{s=t}^T p |\phi_k[\rblambda](\rx;s) - \phi_k[\rbmu](\ry;s)|^{p-2} (\phi_k[\rblambda](\rx;s) - \phi_k[\rbmu](\ry;s))\dd s,
    \end{aligned}
  \end{equation*}
  from which~\eqref{it:contractspd:1} and the first part of~\eqref{it:contractspd:2} easily follow. We derive the second part of~\eqref{it:contractspd:2} by letting $p$ grow to infinity after having taken the power $1/p$ of both sides of the inequality above.
  
  Let us now prove~\eqref{eq:pf:contractspd:1}. To this aim, we fix $s \in (t,T]$ and perform an Abel transform to write 
  \begin{equation*}
    \begin{aligned}
      & \sum_{k=1}^n |\phi_k[\rblambda](\rx;s) - \phi_k[\rbmu](\ry;s)|^{p-2} (\phi_k[\rblambda](\rx;s) - \phi_k[\rbmu](\ry;s))\left\{\gamma_k[\rblambda](\rx;s) - \gamma_{k+1}[\rblambda](\rx;s)\right\}\\
      & = \sum_{k=2}^n \gamma_k[\rblambda](\rx;s) \vartheta(\phi_{k-1}[\rblambda](\rx;s), \phi_{k-1}[\rbmu](\ry;s), \phi_k[\rblambda](\rx;s), \phi_k[\rbmu](\ry;s)),
    \end{aligned}
  \end{equation*}
  where
  \begin{equation*}
    \vartheta(\xi', \zeta', \xi, \zeta) := |\xi-\zeta|^{p-2}(\xi-\zeta) - |\xi'-\zeta'|^{p-2}(\xi'-\zeta'),
  \end{equation*}
  and we have applied Lemma~\ref{lem:cluSPD} to remove $\gamma_1[\rblambda](\rx;s)$ and $\gamma_{n+1}[\rblambda](\rx;s)$. Using Lemma~\ref{lem:cluSPD} again, we recall that $\gamma_k[\rblambda](\rx;s) \geq 0$ and if $\gamma_k[\rblambda](\rx;s) > 0$, then $\phi_{k-1}[\rblambda](\rx;s) = \phi_k[\rblambda](\rx;s)$, while we still have $\phi_{k-1}[\rbmu](\ry;s) \leq \phi_k[\rbmu](\ry;s)$. The conclusion of the proof now follows from the elementary observation that if $\xi'=\xi$ and $\zeta' \leq \zeta$, then $\vartheta(\xi', \zeta', \xi, \zeta) \leq 0$.
\end{proof}


\subsection{Definition of the MSPD}\label{ss:mspd} Let us now give a proper construction of the MSPD. First, in order to define the initial velocities of the particles, we encode the global ordering of a configuration $\x \in \Dnd$ in the set $\Rb(\x)$ defined by
\begin{equation*}
  \Rb(\x) := \{(\alpha:i, \beta:j) \in (\Part)^2 : \alpha < \beta, x_i^{\alpha} < x_j^{\beta}\},
\end{equation*}
and we let $\Nb(\x)$ refer to the cardinality of $\Rb(\x)$. 

Let us fix $\gamma:k \in \Part$ and, for all $\gamma' \not= \gamma$, define $\omega_{\gamma:k}^{\gamma'}(\x) \in [0,1]$ by
\begin{equation*}
  \omega^{\gamma'}_{\gamma:k}(\x) := \left\{\begin{aligned}
    & \frac{1}{n} \sum_{k'=1}^n \ind{(\gamma':k',\gamma:k) \in \Rb(\x)} & \text{if $\gamma' < \gamma$},\\
    & \frac{1}{n} \sum_{k'=1}^n \ind{(\gamma:k,\gamma':k') \not\in \Rb(\x)} & \text{if $\gamma' > \gamma$}. 
  \end{aligned}\right.
\end{equation*}

Under Assumption~\eqref{ass:C}, we can now define the initial velocity of the particle $\gamma:k$ in the MSPD started at $\x$ by
\begin{equation}\label{eq:vitesses}
  \tlambda_k^{\gamma}(\x) := n \int_{w=(k-1)/n}^{k/n} \lambda^{\gamma}\left(\omega_{\gamma:k}^1(\x), \ldots, \omega_{\gamma:k}^{\gamma-1}(\x), w, \omega_{\gamma:k}^{\gamma+1}(\x), \ldots, \omega_{\gamma:k}^d(\x)\right)\dd w,
\end{equation}
and we denote 
\begin{equation}\label{eq:tblambda}
  \tlambda^{\gamma}(\x) := (\tlambda_1^{\gamma}(\x), \ldots, \tlambda_n^{\gamma}(\x)) \in \R^n, \qquad \tblambda(\x) := (\tlambda^1(\x), \ldots, \tlambda^d(\x)) \in (\R^n)^d.
\end{equation}

For all $\x \in \Dnd$, we define the {\em Multitype Sticky Particle Dynamics started at $\x$}, and denote by $(\Phi(\x;t))_{t \geq 0}$, the continuous process taking its values in $\Dnd$ and constructed as follows: as long as there is no collision between particles of different types, each system evolves according to the Sticky Particle Dynamics with initial velocities given by~\eqref{eq:vitesses} above. When particles or clusters of different types collide, say at time $t^* > 0$, then the initial velocity of the particle $\gamma:k$ is updated to the value $\tlambda_k^{\gamma}(\Phi(\x;t^*))$.

Under Assumption~\eqref{ass:USH}, and whatever the composition of the clusters in each system, the velocity of a cluster of type $\alpha$ is always larger than the velocity of a cluster of type $\beta$ if $\alpha < \beta$. Therefore, the set $\Rb(\x)$ contains the pairs of particles $(\alpha:i, \beta:j)$ that will collide at a positive and finite time in the MSPD started at $\x$. At the first collision, say at time $t^* > 0$, between clusters of different types, then the fastest clusters cross the slowest clusters and the systems restart with initial velocities determined by the set $\Rb(\x)$ from which the pairs of particles $(\alpha:i, \beta:j)$ involved in the collision have been removed.

The outline of this subsection is as follows: in~\S\ref{sss:tspd}, we introduce and state a few properties of the {\em Typewise Sticky Particle Dynamics}, which simply describes the joint evolution of $d$ systems of sticky particles, that do not interact with each other. A proper construction of the actual MSPD is made in~\S\ref{sss:mspd}. Continuity properties of this dynamics are stated in~\S\ref{sss:mspdprop} and a peculiar formalism to describe collisions is introduced in~\S\ref{sss:mspdcoll}. Finally, we emphasise the fact that interactions remain local in the MSPD in~\S\ref{sss:mspdloc}. 


\subsubsection{The Typewise Sticky Particle Dynamics}\label{sss:tspd} This paragraph is dedicated to the study of the {\em Typewise Sticky Particle Dynamcis}, which is defined as follows.

\begin{defi}[Typewise Sticky Particle Dynamics]
  Let $\bblambda = (\rblambda^1, \ldots, \rblambda^d)$ be a family of $d$ vectors 
  \begin{equation*}
    \rblambda^{\gamma} = (\barlambda^{\gamma}_1, \ldots, \barlambda^{\gamma}_n) \in \R^n.
  \end{equation*}
  The {\em Typewise Sticky Particle Dynamics} with initial velocity vector $\bblambda$ is the flow $(\tPhi[\bblambda](\cdot;t))_{t \geq 0}$ defined on $\Dnd$ by, for all $\x=(\rx^1, \ldots, \rx^d) \in \Dnd$,
  \begin{equation*}
    \forall t \geq 0, \qquad \tPhi[\bblambda](\x; t) = (\phi[\rblambda^1](\rx^1;t), \ldots, \phi[\rblambda^d](\rx^d;t)).
  \end{equation*}
\end{defi}
In other words, $(\tPhi[\bblambda](\cdot;t))_{t \geq 0}$ describes the joint evolution of $d$ systems of $n$ particles, where the system of particles of type $\gamma$ follows the Sticky Particle Dynamics in $\Dn$ with initial position vector $\rx^{\gamma} := (x^{\gamma}_1, \ldots, x^{\gamma}_n) \in \Dn$ and initial velocity vector $\rblambda^{\gamma} \in \R^n$, independently of the other systems.

\sk
Applying~\eqref{it:contractspd:1} in Proposition~\ref{prop:contractspd} with $K=\{1, \ldots, n\}$ to each system already yields the following contraction property for the Typewise Sticky Particle Dynamics. Let us recall that $||\cdot||_1$ refers to the (normalised) $\Ls^1$ distance in $\Dnd$, see~\eqref{eq:L1distDnd}.

\begin{lem}[$\Ls^1$ contraction]\label{lem:contracttPhi}
  For all $\bblambda, \bbmu \in (\R^n)^d$, for all $\x, \y \in \Dnd$, for all $s,t \geq 0$ such that $s \leq t$,
  \begin{equation*}
    ||\tPhi[\bblambda](\x;t) - \tPhi[\bbmu](\y;t)||_1 \leq ||\tPhi[\bblambda](\x;s) - \tPhi[\bbmu](\y;s)||_1 + \frac{t-s}{n}\sum_{\gamma:k \in \Part} |\barlambda^{\gamma}_k - \barmu^{\gamma}_k|.
  \end{equation*}
\end{lem}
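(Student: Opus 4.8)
The plan is to reduce the statement to a direct application of part~\eqref{it:contractspd:1} of Proposition~\ref{prop:contractspd}, applied separately to each of the $d$ type-wise systems of sticky particles. Recall that, by definition of the Typewise Sticky Particle Dynamics, for each $\gamma \in \{1,\ldots,d\}$ the process $(\phi[\rblambda^\gamma](\rx^\gamma;t))_{t\geq 0}$ is an ordinary Sticky Particle Dynamics in $\Dn$, and likewise $(\phi[\rbmu^\gamma](\ry^\gamma;t))_{t\geq 0}$, with the same index set $K=\{1,\ldots,n\}$. Thus the hypotheses of Proposition~\ref{prop:contractspd} are met for each $\gamma$, with $\rx$ replaced by $\rx^\gamma$, $\ry$ by $\ry^\gamma$, $\rblambda$ by $\rblambda^\gamma$ and $\rbmu$ by $\rbmu^\gamma$, and with $T$ replaced by the value $t$ from the lemma statement and the intermediate time taken to be $s$.

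Concretely, first I would fix $\gamma$ and apply~\eqref{it:contractspd:1} with the substitutions above. Since there is no constraint on the relative sizes of $T$ and the intermediate time beyond $0 \le (\text{intermediate time}) \le T$, taking $T=t$ and the intermediate time to be $s$ (valid because $s \le t$) yields
\begin{equation*}
  \sum_{k=1}^n |\phi_k[\rblambda^\gamma](\rx^\gamma;t) - \phi_k[\rbmu^\gamma](\ry^\gamma;t)| \leq \sum_{k=1}^n |\phi_k[\rblambda^\gamma](\rx^\gamma;s) - \phi_k[\rbmu^\gamma](\ry^\gamma;s)| + (t-s) \sum_{k=1}^n |\barlambda^\gamma_k - \barmu^\gamma_k|.
\end{equation*}
Then I would sum this inequality over $\gamma \in \{1,\ldots,d\}$, identify $\phi_k[\rblambda^\gamma](\rx^\gamma;t)$ with the $\gamma{:}k$ coordinate of $\tPhi[\bblambda](\x;t)$ (and similarly for $\bbmu,\y$), and divide through by $n$. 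Recognising that $\frac1n\sum_{\gamma=1}^d\sum_{k=1}^n |\cdot|$ is precisely $||\cdot||_1$ on $\Dnd$ as defined in~\eqref{eq:L1distDnd}, one obtains exactly the claimed estimate.

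There is essentially no obstacle here: the only minor point requiring care is bookkeeping, namely that the double sum $\sum_{\gamma:k\in\Part}$ in the lemma is the same as $\sum_{\gamma=1}^d\sum_{k=1}^n$, and that the normalisation $1/n$ matches the one in Definition~\ref{defi:Lspdist}. The substance of the argument --- that the $\Ls^1$ distance between two Sticky Particle Dynamics grows at most at the rate given by the $\ell^1$ distance of the initial velocity vectors --- is already fully contained in Proposition~\ref{prop:contractspd}\eqref{it:contractspd:1}, whose proof exploits Lemma~\ref{lem:cluSPD} (the Lagrange-multiplier representation of cluster velocities) together with the Abel-transform sign computation. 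Since the $d$ systems in the Typewise dynamics evolve independently, no interaction terms appear and the sum over $\gamma$ is immediate.
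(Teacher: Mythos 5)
Your proof is correct and coincides with the paper's: the authors likewise obtain the lemma by applying Proposition~\ref{prop:contractspd}\eqref{it:contractspd:1} with $K=\{1,\ldots,n\}$ to each of the $d$ independent type-wise systems and summing. The bookkeeping points you flag (identifying the double sum and the $1/n$ normalisation) are exactly the only steps needed.
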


Let $\x \in \Dnd$. In order to define the MSPD started at $\x$ in~\S\ref{sss:mspd} below, we shall of course be concerned with the Typewise Sticky Particle Dynamics with initial velocity vector $\tblambda(\x)$ given by~\eqref{eq:tblambda}, up to the first collision between particles of different types. Therefore, we introduce the {\em collision time} $\ttinter_{\alpha:i, \beta:j}(\x)$ associated with a pair $(\alpha:i, \beta:j) \in \Rb(\x)$ as the time at which the particles $\alpha:i$ and $\beta:j$ collide in the Typewise Sticky Particle Dynamics started at $\x$. The following lemma is a straightforward consequence of Assumption~\eqref{ass:USH} combined with~\eqref{eq:vitesses}, and we do not give a proof.

\begin{lem}[Collision times]\label{lem:ttinter}
  Under Assumptions~\eqref{ass:C} and~\eqref{ass:USH}, let $\x \in \Dnd$ and $(\alpha:i, \beta:j) \in (\Part)^2$ such that $\alpha < \beta$.
  \begin{enumerate}[label=(\roman*), ref=\roman*]
    \item\label{it:ttinter:1} If $(\alpha:i, \beta:j) \not\in \Rb(\x)$, then, for all $t \geq 0$, 
    \begin{equation*}
      \tPhi_i^{\alpha}[\tblambda(\x)](\x;t) \geq  \tPhi_j^{\beta}[\tblambda(\x)](\x;t) + \ConstUSH t.
    \end{equation*}

    \item\label{it:ttinter:2} If $(\alpha:i, \beta:j) \in \Rb(\x)$, then there exists a unique $t =: \ttinter_{\alpha:i, \beta:j}(\x) > 0$ such that 
    \begin{equation*}
      \tPhi_i^{\alpha}[\tblambda(\x)](\x;t) = \tPhi_j^{\beta}[\tblambda(\x)](\x;t).
    \end{equation*}
    Then, for all $s \in [0,\ttinter_{\alpha:i, \beta:j}(\x)]$, 
    \begin{equation*}
      \tPhi_j^{\beta}[\tblambda(\x)](\x;s) - \tPhi_i^{\alpha}[\tblambda(\x)](\x;s) \geq \ConstUSH(\ttinter_{\alpha:i, \beta:j}(\x)-s),
    \end{equation*}
    while, for all $s \geq \ttinter_{\alpha:i, \beta:j}(\x)$, 
    \begin{equation*}
      \tPhi_i^{\alpha}[\tblambda(\x)](\x;s) - \tPhi_j^{\beta}[\tblambda(\x)](\x;s) \geq \ConstUSH(s-\ttinter_{\alpha:i, \beta:j}(\x)).
    \end{equation*}
  \end{enumerate}
\end{lem}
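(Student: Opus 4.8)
The whole statement follows from one observation: in the Typewise dynamics $\tPhi[\tblambda(\x)](\x;\cdot)$, a cluster of type $\alpha$ always travels strictly faster than a cluster of type $\beta$ whenever $\alpha<\beta$, with relative velocity at least $\ConstUSH$. To make this precise I would first note that, by the very definition~\eqref{eq:vitesses}, each scalar velocity $\tlambda_k^{\gamma}(\x)$ is an average of values of $\lambda^{\gamma}$ over $[0,1]^d$, so that
\[
  \inf_{\bu \in [0,1]^d} \lambda^{\gamma}(\bu) \leq \tlambda_k^{\gamma}(\x) \leq \sup_{\bu \in [0,1]^d} \lambda^{\gamma}(\bu), \qquad \forall \gamma:k \in \Part.
\]
Combining this with Remark~\ref{rk:cluSPD}\eqref{it:rkcluSPD:2} --- which bounds any cluster velocity in the Sticky Particle Dynamics by the minimal and maximal initial velocities of its constituents --- the cluster velocity of $\alpha:i$ stays $\geq \inf_{\bu}\lambda^{\alpha}(\bu)$ and that of $\beta:j$ stays $\leq \sup_{\bu}\lambda^{\beta}(\bu)$, at all times. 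Iterating Assumption~\eqref{ass:USH} over $\gamma=\alpha,\alpha+1,\dots,\beta-1$ and using $\alpha<\beta$ gives $\inf_{\bu}\lambda^{\alpha}(\bu)-\sup_{\bu}\lambda^{\beta}(\bu)\geq(\beta-\alpha)\ConstUSH\geq\ConstUSH$.

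Next I would set $g(t):=\tPhi_i^{\alpha}[\tblambda(\x)](\x;t)-\tPhi_j^{\beta}[\tblambda(\x)](\x;t)$. Applying the integral representation~\eqref{eq:vspd} to the type-$\alpha$ and type-$\beta$ systems separately, $g$ is Lipschitz continuous and, for $0\leq s\leq t$,
\[
  g(t)-g(s) = \int_{r=s}^{t}\bigl(v_i[\tlambda^{\alpha}(\x)](\cdot;r) - v_j[\tlambda^{\beta}(\x)](\cdot;r)\bigr)\dd r \geq \ConstUSH(t-s),
\]
by the previous paragraph. In particular $g$ is strictly increasing and $g(t)-g(s)\geq\ConstUSH(t-s)$ for all $t\geq s\geq 0$.

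It then remains to discuss the sign of $g(0)=x_i^{\alpha}-x_j^{\beta}$. If $(\alpha:i,\beta:j)\notin\Rb(\x)$, then since $\alpha<\beta$ the definition of $\Rb(\x)$ forces $x_i^{\alpha}\geq x_j^{\beta}$, i.e. $g(0)\geq 0$, whence $g(t)\geq\ConstUSH t$, which is exactly~\eqref{it:ttinter:1}. If $(\alpha:i,\beta:j)\in\Rb(\x)$, then $g(0)<0$; since $g$ is continuous and increases at rate at least $\ConstUSH$, it vanishes at exactly one point $t^{*}>0$, which we name $\ttinter_{\alpha:i,\beta:j}(\x)$, and there $\tPhi_i^{\alpha}=\tPhi_j^{\beta}$. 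The two inequalities in~\eqref{it:ttinter:2} then follow by reading $g(t^{*})-g(s)\geq\ConstUSH(t^{*}-s)$ for $s\leq t^{*}$ and $g(s)-g(t^{*})\geq\ConstUSH(s-t^{*})$ for $s\geq t^{*}$, with $g(t^{*})=0$. I do not anticipate any genuine difficulty here; the only point deserving a moment of care is the uniform-in-time velocity separation, and that rests entirely on the two ingredients above: averaged initial velocities stay in the range of $\lambda^{\gamma}$, and cluster velocities stay in the convex hull of the initial ones.
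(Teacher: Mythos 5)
Your proof is correct and is precisely the argument the authors have in mind: the paper states this lemma is ``a straightforward consequence of Assumption~\eqref{ass:USH} combined with~\eqref{eq:vitesses}'' and supplies no proof. Your write-up fills in that gap in the expected way — bound cluster velocities in the range of $\lambda^{\gamma}$ via~\eqref{eq:encadrelambda}, deduce $g(t)-g(s)\geq\ConstUSH(t-s)$ for $g(t)=\tPhi_i^{\alpha}[\tblambda(\x)](\x;t)-\tPhi_j^{\beta}[\tblambda(\x)](\x;t)$, and read off both assertions from the sign of $g(0)$.
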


For all $\x \in \Dnd$, we now define $t^*(\x)$ by
\begin{equation}\label{eq:tstar}
  t^*(\x) := \left\{\begin{aligned}
    & +\infty & \text{if $\Nb(\x) = 0$},\\
    & \min\{\ttinter_{\alpha:i, \beta:j}(\x), (\alpha:i, \beta:j) \in \Rb(\x)\} \in (0,+\infty) & \text{otherwise}.
  \end{aligned}\right.
\end{equation}
For all $\x \in \Dnd$ such that $\Nb(\x) \geq 1$, we let $\x^* := \tPhi[\tblambda(\x)](\x; t^*(\x))$. The following corollary of Lemma~\ref{lem:ttinter} is a straightforward consequence of the flow property and the continuity of the trajectories for the Typewise Sticky Particle Dynamics, therefore we do not give a proof.

\begin{cor}[Evolution up to $t^*(\x)$]\label{cor:ttinter}
  Under the assumptions of Lemma~\ref{lem:ttinter}, let $\x \in \Dnd$, $t < t^*(\x)$ and let us denote $\x' := \tPhi[\tblambda(\x)](\x;t)$. Then $\Rb(\x') = \Rb(\x)$, $\tblambda(\x') = \tblambda(\x)$ and $t^*(\x') = t^*(\x) - t$. In addition, if $\Nb(\x) \geq 1$, then $\x'^* = \x^*$ and $\Rb(\x^*)$ is a strict subset of $\Rb(\x)$, so that $\Nb(\x^*) < \Nb(\x)$.
\end{cor}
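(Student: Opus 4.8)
The plan is to deduce the whole statement from three ingredients: the flow property of the Typewise Sticky Particle Dynamics, Lemma~\ref{lem:ttinter} on collision times, and the elementary observation that the initial velocity vector $\tblambda(\x)$ depends on $\x$ \emph{only} through the order relation $\Rb(\x)$. This last point is read off the definitions: each $\omega^{\gamma'}_{\gamma:k}(\x)$ is an average of indicator functions of membership in $\Rb(\x)$, and then $\tlambda^{\gamma}_k(\x)$ in~\eqref{eq:vitesses} is a function of the $\omega^{\gamma'}_{\gamma:k}(\x)$; hence $\Rb(\x'')=\Rb(\x'''){\implies}\tblambda(\x'')=\tblambda(\x''')$.

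First I would prove $\Rb(\x')=\Rb(\x)$. Fix a pair $(\alpha:i,\beta:j)$ with $\alpha<\beta$. If $(\alpha:i,\beta:j)\notin\Rb(\x)$, part~\eqref{it:ttinter:1} of Lemma~\ref{lem:ttinter} gives $\tPhi_i^{\alpha}[\tblambda(\x)](\x;t)\geq\tPhi_j^{\beta}[\tblambda(\x)](\x;t)$, so $x'^{\alpha}_i\geq x'^{\beta}_j$ and $(\alpha:i,\beta:j)\notin\Rb(\x')$. If instead $(\alpha:i,\beta:j)\in\Rb(\x)$, then $\ttinter_{\alpha:i,\beta:j}(\x)\geq t^*(\x)>t$ by~\eqref{eq:tstar}, and part~\eqref{it:ttinter:2} applied at $s=t$ yields $\tPhi_j^{\beta}[\tblambda(\x)](\x;t)-\tPhi_i^{\alpha}[\tblambda(\x)](\x;t)\geq\ConstUSH(\ttinter_{\alpha:i,\beta:j}(\x)-t)>0$, so $(\alpha:i,\beta:j)\in\Rb(\x')$. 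Hence $\Rb(\x')=\Rb(\x)$, and consequently $\tblambda(\x')=\tblambda(\x)$ by the remark above.

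Next I would use the flow property. Since $\x'=\tPhi[\tblambda(\x)](\x;t)$ and $\tblambda(\x')=\tblambda(\x)$, for all $s\geq 0$ one has $\tPhi[\tblambda(\x')](\x';s)=\tPhi[\tblambda(\x)](\x;t+s)$. Transferring the defining equation of collision times: for $(\alpha:i,\beta:j)\in\Rb(\x)=\Rb(\x')$, the unique $s>0$ with $\tPhi_i^{\alpha}[\tblambda(\x')](\x';s)=\tPhi_j^{\beta}[\tblambda(\x')](\x';s)$ satisfies $t+s=\ttinter_{\alpha:i,\beta:j}(\x)$, whence $\ttinter_{\alpha:i,\beta:j}(\x')=\ttinter_{\alpha:i,\beta:j}(\x)-t>0$. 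Taking the minimum over $\Rb(\x')=\Rb(\x)$ gives $t^*(\x')=t^*(\x)-t$ (the case $\Nb(\x)=0$ being trivial with the convention $+\infty-t=+\infty$). If moreover $\Nb(\x)\geq 1$, then $\x'^*=\tPhi[\tblambda(\x')](\x';t^*(\x'))=\tPhi[\tblambda(\x)](\x;t+(t^*(\x)-t))=\x^*$. Finally, $\Rb(\x^*)\subseteq\Rb(\x)$ because a pair not in $\Rb(\x)$ stays out of $\Rb(\x^*)$ by part~\eqref{it:ttinter:1} evaluated at the finite time $s=t^*(\x)$; and the inclusion is strict because any pair attaining the minimum in~\eqref{eq:tstar} collides exactly at $t^*(\x)$, so it lies in $\Rb(\x)$ but not in $\Rb(\x^*)$, giving $\Nb(\x^*)<\Nb(\x)$.

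I do not expect a genuine obstacle here: the argument is pure bookkeeping with strict versus non-strict inequalities in the definition of $\Rb$, combined with the flow property. The one point that must be handled cleanly — and which makes everything work — is establishing that $\tblambda(\cdot)$ factors through $\Rb(\cdot)$, so that the velocity vector fed to the dynamics is literally unchanged on $[0,t]$ and the flow identity can be invoked verbatim.
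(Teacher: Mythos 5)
Your proof is correct, and it fills in exactly the argument the paper has in mind when it declines to give one: the paper states only that the corollary is "a straightforward consequence of the flow property and the continuity of the trajectories for the Typewise Sticky Particle Dynamics," and your argument runs on precisely those ingredients plus Lemma~\ref{lem:ttinter}. The one step you rightly single out and spell out — that $\tblambda(\cdot)$ factors through $\Rb(\cdot)$, which the paper leaves implicit — is indeed the hinge on which the flow identity $\tPhi[\tblambda(\x')](\x';s)=\tPhi[\tblambda(\x)](\x;t+s)$ turns, and all the remaining assertions (equality of collision times shifted by $t$, $\x'^*=\x^*$, strictness of $\Rb(\x^*)\subsetneq\Rb(\x)$ via a pair realising the minimum in~\eqref{eq:tstar}) follow cleanly.
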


\subsubsection{Construction of the MSPD}\label{sss:mspd} We are now ready to define the MSPD started at $\x \in \Dnd$.

\begin{defi}[Multitype Sticky Particle Dynamics]\label{defi:mspd}
  Under Assumptions~\eqref{ass:C} and~\eqref{ass:USH}, for all $\x \in \Dnd$, the {\em Multitype Sticky Particle Dynamics} started at $\x$ is the process $(\Phi(\x;t))_{t \geq 0}$, with values in $\Dnd$, defined by
  \begin{equation*}
    \forall t \geq 0, \qquad \Phi(\x;t) := \left\{\begin{aligned}
      & \tPhi[\tblambda(\x)](\x;t) & \text{if $t < t^*(\x)$},\\
      & \Phi(\x^*; t-t^*(\x)) & \text{if $t \geq t^*(\x)$}.
    \end{aligned}\right.
  \end{equation*}
\end{defi}
Since $\Nb(\x)$ is finite and Corollary~\ref{cor:ttinter} asserts that, for all $\x \in \Dnd$ such that $t^*(\x) < +\infty$, $\Nb(\x^*) < \Nb(\x)$, then the process $(\Phi(\x;t))_{t \geq 0}$ is well defined on $[0,+\infty)$.

\sk
Let us recall that, for the Sticky Particle Dynamics with initial position vector $\rx \in \Dn$ and initial velocity vector $\rblambda \in \R^n$, for all $k \in \{1, \ldots, n\}$, the process $(v_k[\rblambda](\rx;s))_{s \geq 0}$ satisfies
\begin{equation*}
  \forall t \geq 0, \qquad \phi_k[\rblambda](\rx;t) = x_k + \int_{s=0}^t v_k[\rblambda](\rx;s)\dd s,
\end{equation*}
see Definition~\ref{defi:cluSPD}. Now, for all $\x \in \Dnd$, for all $\gamma:k \in \Part$, we define the process $(v_k^{\gamma}(\x;s))_{s \geq 0}$ by 
\begin{equation}\label{eq:defvmspd}
  v_k^{\gamma}(\x; s) := \left\{\begin{aligned}
    & v_k[\tlambda^{\gamma}(\x)](\rx^{\gamma};s) & \text{if $s < t^*(\x)$},\\
    & v_k^{\gamma}(\x^*; s-t^*(\x)) & \text{if $s \geq t^*(\x)$},
  \end{aligned}\right.
\end{equation}
so that
\begin{equation*}
  \forall t \geq 0, \qquad \Phi_k^{\gamma}(\x;t) = x_k^{\gamma} + \int_{s=0}^t v_k^{\gamma}(\x;s) \dd s.
\end{equation*}
We easily deduce from this definition and~\eqref{eq:encadrelambda}-\eqref{eq:vitesses} that, for all $\x \in \Dnd$, for all $t \geq 0$,
\begin{equation}\label{eq:typeencadrelambda}
  \inf_{\bu \in [0,1]^d} \lambda^{\gamma}(\bu) \leq v_k^{\gamma}(\x; t) \leq \sup_{\bu \in [0,1]^d} \lambda^{\gamma}(\bu).
\end{equation}

We are now willing to define the {\em cluster} of a particle in the MSPD started at $\x$, similarly to Definition~\ref{defi:cluSPD} above. In this purpose, we first introduce the notion of {\em generical cluster}.

\begin{defi}[Generical clusters]
  A {\em generical cluster} is a pair $(\gamma, \{\uk, \ldots, \ok\})$, where $\gamma \in \{1, \ldots, d\}$ is the {\em type} of the generical cluster and $\{\uk, \ldots, \ok\}$ is a set of consecutive indices in $\{1, \ldots, n\}$. To refer to the generical cluster $c := (\gamma, \{\uk, \ldots, \ok\})$, we shall rather use the notation $c = \gamma : \uk \cdots \ok$.
\end{defi} 
Let us give a few rules to manipulate generical clusters.
\begin{itemize}
  \item The type of a generical cluster $c$ is denoted by $\type(c) \in \{1, \ldots, d\}$.
  \item The cardinality of a generical cluster $c = \gamma : \uk \cdots \ok$ is denoted by $|c|$ and worth $\ok-\uk+1$.
  \item For $\gamma':k' \in \Part$ and $c = \gamma : \uk \cdots \ok$, we shall write
  \begin{equation*}
    \gamma':k' \in c
  \end{equation*}
  if and only if $\gamma'=\gamma$ and $k' \in \{\uk, \ldots, \ok\}$. This set membership relation allows us to define the inclusion relation $a \subset b$ between generical clusters $a$ and $b$ as well as the union set $a \cup b$ and the Cartesian product $a \times b$ of two generical clusters $a$ and $b$.
  \item A generical cluster $\gamma:k \cdots k$ with a single element $\gamma:k$ shall rather be denoted by $\gamma:k$. It will always be clear from the context whether the notation $\gamma:k$ refers to a particle (that is, an element of $\Part$) or to a cluster containing a single particle.
\end{itemize}

We can now define the {\em cluster} of a particle in the MSPD started at $\x \in \Dnd$.

\begin{defi}[Cluster]
  The cluster of the particle $\gamma:k$ in the configuration $\Phi(\x;t)$ is the generical cluster defined by
  \begin{equation*}
    \clu_k^{\gamma}(\x; t) := \left\{\begin{aligned}
      & \gamma:\clu_k[\tlambda^{\gamma}(\x)](\rx^{\gamma};t) & \text{if $t < t^*(\x)$},\\
      & \clu_k^{\gamma}(\x^*; t-t^*(\x)) & \text{if $t \geq t^*(\x)$},
    \end{aligned}\right.
  \end{equation*}
  where we recall that $\clu_k[\tlambda^{\gamma}(\x)](\rx^{\gamma};t)$ was defined in Definition~\ref{defi:cluSPD}.
\end{defi}


\subsubsection{Continuity properties of the MSPD}\label{sss:mspdprop} In this paragraph, we state some continuity properties for the MSPD in Propositions~\ref{prop:mspd} and~\ref{prop:continuity}, the proofs of which are postponed to Subsection~\ref{app:pf:cont} in Appendix~\ref{app:proofs}.

\begin{prop}[Time continuity and flow]\label{prop:mspd}
  For all $\x \in \Dnd$, the process $(\Phi(\x;t))_{t \geq 0}$ has continuous trajectories in $\Dnd$. Besides, $(\Phi(\cdot;t))_{t \geq 0}$ defines a flow in $\Dnd$.
\end{prop}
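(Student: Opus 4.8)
The plan is to reduce everything to known facts about the Sticky Particle Dynamics and the Typewise Sticky Particle Dynamics, proceeding by induction on $\Nb(\x)$, the number of crossing pairs in the initial configuration $\x$.

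First I would establish time continuity. When $\Nb(\x)=0$ we have $\Phi(\x;t)=\tPhi[\tblambda(\x)](\x;t)$ for all $t\geq 0$, and continuity then follows from the continuity of the Sticky Particle Dynamics trajectories applied coordinate by coordinate (type by type). When $\Nb(\x)\geq 1$, on $[0,t^*(\x))$ the process again coincides with $\tPhi[\tblambda(\x)](\x;\cdot)$, hence is continuous there, and as $t\uto t^*(\x)$ it converges to $\x^* = \tPhi[\tblambda(\x)](\x;t^*(\x))$ by continuity of the Typewise dynamics (this uses that $t^*(\x)$ is finite, which is exactly Lemma~\ref{lem:ttinter}\eqref{it:ttinter:2} together with~\eqref{eq:tstar}). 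On $[t^*(\x),+\infty)$ the process equals $\Phi(\x^*;\cdot-t^*(\x))$, which is continuous by the induction hypothesis since $\Nb(\x^*)<\Nb(\x)$ by Corollary~\ref{cor:ttinter}. The two pieces agree at $t=t^*(\x)$ because $\Phi(\x^*;0)=\x^*$, so the concatenation is continuous. One also needs to check that $\Phi(\x;t)\in\Dnd$ for all $t$: within each type the order is preserved by the Sticky Particle Dynamics (particles in $\Dn$ stay in $\Dn$), and this is inherited through the recursive definition.

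Next, the flow property: I would show $\Phi(\x;s+t)=\Phi(\Phi(\x;s);t)$ for all $s,t\geq 0$, again by induction on $\Nb(\x)$. Fix $s\geq 0$ and set $\x' := \Phi(\x;s)$. If $s<t^*(\x)$, then by Corollary~\ref{cor:ttinter} we have $\x'=\tPhi[\tblambda(\x)](\x;s)$, $\tblambda(\x')=\tblambda(\x)$, $\Rb(\x')=\Rb(\x)$, $t^*(\x')=t^*(\x)-s$, and (if $\Nb(\x)\geq 1$) $\x'^*=\x^*$. For $t<t^*(\x')=t^*(\x)-s$ both sides equal $\tPhi[\tblambda(\x)](\x;s+t)$ by the flow property of the Typewise dynamics. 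For $t\geq t^*(\x)-s$, the left side is $\Phi(\x^*;s+t-t^*(\x))$ by definition, and the right side is $\Phi(\x'^*;t-t^*(\x'))=\Phi(\x^*;t-t^*(\x)+s)$; these coincide. If instead $s\geq t^*(\x)$, then $\Phi(\x;s)=\Phi(\x^*;s-t^*(\x))$, so $\Phi(\Phi(\x;s);t)=\Phi(\Phi(\x^*;s-t^*(\x));t)$, which equals $\Phi(\x^*;s-t^*(\x)+t)$ by the induction hypothesis (valid since $\Nb(\x^*)<\Nb(\x)$), and this in turn equals $\Phi(\x;s+t)$ by definition. The remaining degenerate case $\Nb(\x)=0$ is immediate since then $\Phi(\x;\cdot)$ is literally the Typewise flow, which has the flow property.

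The main obstacle I expect is bookkeeping rather than any deep difficulty: one must carefully verify that the recursive unfolding of Definition~\ref{defi:mspd} terminates and that the case analysis for the flow property (whether $s$, $t$, or $s+t$ falls before or after the relevant collision times) is exhaustive and consistent. The key structural inputs that make it go through cleanly are Corollary~\ref{cor:ttinter} — which guarantees $\Nb$ strictly decreases at each collision so the induction is well-founded, and which provides the compatibility identities $\tblambda(\x')=\tblambda(\x)$, $\x'^*=\x^*$, $t^*(\x')=t^*(\x)-s$ — and the flow property of the Typewise Sticky Particle Dynamics, itself a coordinatewise consequence of the flow property of the scalar Sticky Particle Dynamics recalled in~\S\ref{sss:spd}.
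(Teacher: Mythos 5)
Your proposal is correct and follows essentially the same strategy as the paper's proof: induction on $\Nb(\x)$, reduction to continuity and the flow property of the Typewise Sticky Particle Dynamics on $[0,t^*(\x))$, and the compatibility identities $\tblambda(\x')=\tblambda(\x)$, $t^*(\x')=t^*(\x)-s$, $\x'^*=\x^*$ from Corollary~\ref{cor:ttinter} to glue the pieces together across the first collision time. The only differences are cosmetic (you order the cases $s<t^*(\x)$ and $s\geq t^*(\x)$ the other way round, and you add the harmless remark that $\Phi(\x;t)$ stays in $\Dnd$); the substance matches the paper's argument step for step.
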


For $p \in [1, +\infty]$, we recall the Definition~\ref{defi:Lspdist} of the (normalised) $\Ls^p$ distance on $\Dnd$, and denote
\begin{equation*}
  B_p(\x, \delta) := \{\y \in \Dnd : ||\x-\y||_p < \delta\}, \qquad \barB_p(\x, \delta) := \{\y \in \Dnd : ||\x-\y||_p \leq \delta\}.
\end{equation*}

\begin{prop}[Continuity with respect to the initial configuration]\label{prop:continuity}
  Let $\x \in \Dnd$. Then, for all $\epsilon > 0$, there exists $\delta > 0$ such that, for all $\y \in \barB_1(\x, \delta)$,
  \begin{equation*}
    \sup_{t \geq 0} || \Phi(\x;t) - \Phi(\y;t) ||_1 \leq \epsilon.
  \end{equation*}
\end{prop}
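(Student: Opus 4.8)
\emph{Strategy.} I would argue by induction on $N := \Nb(\x)$, following the recursive definition of the MSPD (Definition~\ref{defi:mspd}). The two structural inputs are: (i) the velocity vector $\tblambda(\cdot)$ depends on a configuration only through the order set $\Rb(\cdot)$, which takes finitely many values, and for $\y$ sufficiently close to $\x$ one has $\Rb(\x) \subseteq \Rb(\y)$, the extra pairs being exactly the ties of $\x$ (pairs $(\alpha:i,\beta:j)$, $\alpha<\beta$, with $x^\alpha_i = x^\beta_j$) that become strictly ordered in $\y$; (ii) by Lemma~\ref{lem:ttinter} and~\eqref{eq:typeencadrelambda}, a pair starting at distance $O(||\x-\y||_1)$ collides within time $O(||\x-\y||_1)$, and more generally collision times depend continuously on the configuration --- for configurations with a common value of $\Rb$ one combines the quantitative separation of Lemma~\ref{lem:ttinter} with the fact that, by Lemma~\ref{lem:contracttPhi}, two Typewise dynamics with the same velocity vector stay within $||\x-\y||_1$ of each other for all time. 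The analytic mechanism is that, whenever $\Phi(\x;\cdot)$ and $\Phi(\y;\cdot)$ run the Typewise dynamics with the \emph{same} velocity vector, Lemma~\ref{lem:contracttPhi} prevents $||\cdot||_1$ from growing, while on the short windows where the velocity vectors disagree we fall back on the crude bound $||\Phi(\x;t)-\Phi(\y;t)||_1 \le ||\x-\y||_1 + 2 d\, \ConstBound{\infty}\, t$ coming from~\eqref{eq:typeencadrelambda}, the length of those windows being $O(||\x-\y||_1)$.

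\emph{Base case $N=0$.} Then $\Rb(\x)=\emptyset$ and $\Phi(\x;\cdot)$ is the Typewise dynamics with velocity $\tblambda(\x)$ for all times. For $\y$ close to $\x$, $\Rb(\y)$ consists only of flipped ties of $\x$, hence (by Lemma~\ref{lem:ttinter} and propagation of the $O(\delta)$ gaps through~\eqref{eq:typeencadrelambda}) every collision of $\Phi(\y;\cdot)$ occurs before a time $\tau = O(\delta)$, and after $\tau$ one has $\Rb(\Phi(\y;\tau)) = \emptyset$ again, so $\Phi(\y;\cdot)$ runs the Typewise dynamics with velocity $\tblambda(\Phi(\y;\tau)) = \tblambda(\x)$. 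A crude bound on $[0,\tau]$ followed by Lemma~\ref{lem:contracttPhi} on $[\tau,+\infty)$ gives $\sup_t ||\Phi(\x;t)-\Phi(\y;t)||_1 = O(\delta)$.

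\emph{Induction step.} Set $t_1 := t^*(\x) \in (0,+\infty)$ and $\x_1 := \x^*$, so $\Nb(\x_1) < N$; let $\delta_1$ be the radius provided by the induction hypothesis for $\x_1$ with target $\epsilon/2$. Given $\y$ close to $\x$, first let the tie-collisions of $\y$ resolve (time $O(\delta)$, as in the base case); the configuration reached has order set $\Rb(\x)$, and $||\cdot||_1$ has grown only by $O(\delta)$. From then on both $\Phi(\x;\cdot)$ and $\Phi(\y;\cdot)$ run the Typewise dynamics with the common velocity $\tblambda(\x)$, so by Lemma~\ref{lem:contracttPhi} they stay $O(\delta)$-close up to their first subsequent collisions, which occur within $O(\delta)$ of $t_1$ by continuity of collision times. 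By the flow property (Proposition~\ref{prop:mspd}) and continuity of collision times, the set of pairs that cross near $t_1$ in $\Phi(\y;\cdot)$ equals the set that crosses at $t_1$ in $\Phi(\x;\cdot)$; hence, once these crossings are over, $\y$ has reached at a time $\tilde t_1$ with $|\tilde t_1 - t_1| = O(\delta)$ a configuration $\tilde{\y}_1$ with $\Rb(\tilde{\y}_1) = \Rb(\x_1)$ and $||\tilde{\y}_1 - \x_1||_1 = O(\delta)$. Choosing $\delta$ small enough that $O(\delta) \le \min(\delta_1, \epsilon/2)$, the induction hypothesis applied to $\x_1$ and $\tilde{\y}_1$, combined with the Lipschitz-in-time estimate $||\Phi(\x;s) - \Phi(\x;s')||_1 \le \ConstBound{1}|s-s'|$ to absorb the shift $\tilde t_1 - t_1$ (using $\Phi(\tilde{\y}_1; \cdot) = \Phi(\y; \tilde t_1 + \cdot)$ and $\Phi(\x_1;\cdot) = \Phi(\x; t_1 + \cdot)$ by Proposition~\ref{prop:mspd}), yields $||\Phi(\x;t)-\Phi(\y;t)||_1 \le \epsilon/2 + O(\delta) \le \epsilon$ for $t \ge \tilde t_1$; on $[0,\tilde t_1]$ the bound $O(\delta) \le \epsilon$ was already obtained.

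\emph{Main obstacle.} The delicate point is the bookkeeping near collisions when $\x$ is non-generic: ties of $\x$ that open up in $\y$, and simultaneous collisions of $\x$ that split into several nearby collisions of $\y$. Both are harmless because the extra events occur in time windows of length $O(||\x-\y||_1)$ and do not change which pairs have crossed once the window closes; turning this into clean statements --- identifying $\Rb$ of the perturbed configuration with $\Rb$ of the reference one after each batch of collisions, and controlling all collision times quantitatively through Lemma~\ref{lem:ttinter} --- is the technical core of the argument, which is the reason the proof is deferred to the appendix.
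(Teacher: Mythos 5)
Your proposal is correct and mirrors the paper's proof: both argue by induction on $\Nb(\x)$, control the short windows near collisions using the bounded-velocity crude bound together with Lemma~\ref{lem:bart}/Lemma~\ref{lem:Ddroitnd}, apply Lemma~\ref{lem:contracttPhi} on the intervals where the two Typewise dynamics share the same velocity vector, and pass through the first collision via the flow property of Proposition~\ref{prop:mspd} to invoke the inductive hypothesis at $\x^*$. One small simplification in the paper's induction step: instead of tracking a shifted time $\tilde t_1$ for $\y$ and then gluing via a Lipschitz-in-time estimate, it applies the inductive hypothesis directly to the pair $\x^*$ and $\Phi(\y;t^*(\x))$ --- both flows evaluated at the \emph{same} reference time $t^*(\x)$ --- which avoids the time-synchronisation bookkeeping entirely.
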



\subsubsection{Collision times}\label{sss:mspdcoll} For all $\x \in \Dnd$, for all $(\alpha:i, \beta:j) \in (\Part)^2$ such that $\alpha < \beta$, let us define
\begin{equation*}
  \tinter_{\alpha:i, \beta:j}(\x) := \inf\{t \geq 0 : \Phi_i^{\alpha}(\x;t) \geq \Phi_j^{\beta}(\x;t)\}.
\end{equation*}
Certainly, Assumption~\eqref{ass:USH} ensures that $\tinter_{\alpha:i, \beta:j}(\x) < +\infty$; while $\tinter_{\alpha:i, \beta:j}(\x) > 0$ if and only if $(\alpha:i, \beta:j) \in \Rb(\x)$. Besides, it is easily checked that 
\begin{equation*}
  t^*(\x) = \left\{\begin{aligned}
    & +\infty & \text{if $\Nb(\x)=0$},\\
    & \min\{\tinter_{\alpha:i, \beta:j}(\x), (\alpha:i, \beta:j) \in \Rb(\x)\} & \text{if $\Nb(\x) \geq 1$}.
  \end{aligned}\right.
\end{equation*}

For all $(\alpha:i, \beta:j) \in \Rb(\x)$, $\tinter_{\alpha:i, \beta:j}(\x)$ is nothing but the time at which the particles $\alpha:i$ and $\beta:j$ collide in the MSPD started at $\x$. On the contrary, if $(\alpha:i, \beta:j) \not\in \Rb(\x)$, then $\tinter_{\alpha:i,\beta:j}(\x)=0$, which is somehow consistant with the intuitive idea that the collision between $\alpha:i$ and $\beta:j$ happened `before the origin of times', which we shall refer to as the {\em virtual past}.

Assumption~\eqref{ass:USH} implies that the collision times $\tinter_{\alpha:i,\beta:j}(\x)$ have properties similar to those described in Lemma~\ref{lem:ttinter} for the collision times $\ttinter_{\alpha:i,\beta:j}(\x)$ in the Typewise Sticky Particle Dynamics. As a consequence, we state the following lemma without a demonstration.

\begin{lem}[Collision times in the MSPD]\label{lem:tinter}
  Let $\x \in \Dnd$ and $(\alpha:i, \beta:j) \in \Rb(\x)$. Then $\tinter_{\alpha:i,\beta:j}(\x) > 0$, and:
  \begin{itemize}
    \item for all $s \in [0,\tinter_{\alpha:i,\beta:j}(\x)]$, $\Phi_j^{\beta}(\x;s) - \Phi_i^{\alpha}(\x;s) \geq \ConstUSH (\tinter_{\alpha:i,\beta:j}(\x)-s)$,
    \item for all $s \geq \tinter_{\alpha:i,\beta:j}(\x)$, $\Phi_i^{\alpha}(\x;s) - \Phi_j^{\beta}(\x;s) \geq \ConstUSH (s-\tinter_{\alpha:i,\beta:j}(\x)).$
  \end{itemize}
\end{lem}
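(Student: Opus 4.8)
The plan is to reduce the statement to a single quantitative fact: under Assumption~\eqref{ass:USH}, a particle of type $\alpha$ always travels strictly faster than a particle of type $\beta$ when $\alpha<\beta$, the velocity gap being at least $\ConstUSH$ at every time; the rest is an elementary integration along the trajectories. I would introduce the scalar function $g(t):=\Phi_i^{\alpha}(\x;t)-\Phi_j^{\beta}(\x;t)$ for $t\geq 0$. Using~\eqref{eq:defvmspd} and the representation $\Phi_k^{\gamma}(\x;t)=x_k^{\gamma}+\int_0^t v_k^{\gamma}(\x;s)\dd s$, the function $g$ is Lipschitz continuous on $[0,+\infty)$ with $g(t)=(x_i^{\alpha}-x_j^{\beta})+\int_0^t\bigl(v_i^{\alpha}(\x;s)-v_j^{\beta}(\x;s)\bigr)\dd s$.

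Next I would establish the velocity gap. By the a priori bound~\eqref{eq:typeencadrelambda}, at every time $s$ one has $v_i^{\alpha}(\x;s)\geq\inf_{\bu\in[0,1]^d}\lambda^{\alpha}(\bu)$ and $v_j^{\beta}(\x;s)\leq\sup_{\bu\in[0,1]^d}\lambda^{\beta}(\bu)$. Assumption~\eqref{ass:USH} forces $\sup_{\bu}\lambda^{\gamma+1}(\bu)<\inf_{\bu}\lambda^{\gamma}(\bu)\leq\sup_{\bu}\lambda^{\gamma}(\bu)$, so $\gamma\mapsto\sup_{\bu}\lambda^{\gamma}(\bu)$ is decreasing; since $\alpha+1\leq\beta$, this yields $\sup_{\bu}\lambda^{\beta}(\bu)\leq\sup_{\bu}\lambda^{\alpha+1}(\bu)$ and hence $v_i^{\alpha}(\x;s)-v_j^{\beta}(\x;s)\geq\inf_{\bu}\lambda^{\alpha}(\bu)-\sup_{\bu}\lambda^{\alpha+1}(\bu)\geq\ConstUSH$ for all $s$. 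Integrating, $g(t)-g(s)\geq\ConstUSH(t-s)$ for all $0\leq s\leq t$; in particular $g$ is strictly increasing.

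Then I would locate the collision time. Because $(\alpha:i,\beta:j)\in\Rb(\x)$ we have $g(0)=x_i^{\alpha}-x_j^{\beta}<0$, so by continuity $\tinter_{\alpha:i,\beta:j}(\x)=\inf\{t\geq 0:g(t)\geq 0\}>0$, and the linear lower bound on $g$ forces this infimum, call it $\tau$, to be finite ($\tau\leq -g(0)/\ConstUSH$); continuity of $g$ together with $g<0$ on $[0,\tau)$ gives $g(\tau)=0$. Applying $g(t)-g(s)\geq\ConstUSH(t-s)$ with $(s,t)=(s,\tau)$ for $s\in[0,\tau]$ gives $\Phi_j^{\beta}(\x;s)-\Phi_i^{\alpha}(\x;s)=-g(s)\geq\ConstUSH(\tau-s)$, and with $(s,t)=(\tau,s)$ for $s\geq\tau$ gives $\Phi_i^{\alpha}(\x;s)-\Phi_j^{\beta}(\x;s)=g(s)\geq\ConstUSH(s-\tau)$, which are exactly the two asserted estimates.

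I do not expect any serious obstacle. The only point deserving care is the a priori velocity bound~\eqref{eq:typeencadrelambda}: it comes from Remark~\ref{rk:cluSPD}\eqref{it:rkcluSPD:2} (a cluster velocity is an average of initial velocities, hence lies in the range of $\lambda^{\gamma}$ by~\eqref{eq:vitesses}) propagated through the recursive definition of $\Phi$ across the successive times $t^{*}$; since $\Nb(\x)$ is finite this recursion terminates after finitely many steps, so each $v_k^{\gamma}(\x;\cdot)$ is piecewise constant and the integral manipulations are legitimate. Alternatively one could organise the whole argument as an induction on $\Nb(\x)$, using Lemma~\ref{lem:ttinter} as the base case (no type-collision before $t^{*}(\x)$) and the flow property of Proposition~\ref{prop:mspd} for the induction step, but the direct argument above seems cleaner.
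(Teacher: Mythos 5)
Your proof is correct and supplies precisely the argument the paper regards as too routine to write out: the paper states Lemma~\ref{lem:tinter} "without a demonstration," remarking only that the conclusion follows from Assumption~\eqref{ass:USH} in the same way as Lemma~\ref{lem:ttinter}. Your reduction—introduce $g(t)=\Phi_i^{\alpha}(\x;t)-\Phi_j^{\beta}(\x;t)$, show $\dot g\geq\ConstUSH$ a.e.\ using~\eqref{eq:typeencadrelambda} and the strict decrease of $\gamma\mapsto\sup_{\bu}\lambda^{\gamma}(\bu)$ that USH forces, then integrate—is exactly the mechanism behind both lemmas, so this is the same approach.

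One remark on presentation: the monotonicity step deserves a word. USH directly gives $\inf_{\bu}\lambda^{\gamma}(\bu)-\sup_{\bu}\lambda^{\gamma+1}(\bu)\geq\ConstUSH$ only for \emph{consecutive} indices, so your observation that $\sup_{\bu}\lambda^{\gamma}(\bu)$ decreases in $\gamma$ (because $\sup\geq\inf>\sup$ for the next index) is exactly the bridge needed to handle general $\alpha<\beta$; it is worth keeping it explicit as you have. Your integral manipulations are legitimate since $v^{\gamma}_k(\x;\cdot)$ is right-continuous, piecewise constant, and changes value at only finitely many times on any bounded interval (collisions and self-interactions are finite in number for a fixed $n$), so $g$ is piecewise affine and absolutely continuous. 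The alternative you sketch—induction on $\Nb(\x)$ via Lemma~\ref{lem:ttinter} and the flow property—would also work, but your direct computation is cleaner and, as you say, avoids splitting into cases.
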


\subsubsection{Local interactions}\label{sss:mspdloc} We finally explain why the interactions in the MSPD remain local, in the sense of~\S\ref{sss:locspd}. Indeed, according to Definition~\ref{defi:mspd}, if $\Nb(\x) \geq 1$, then at the first instant $t^*(\x)$ of a collision between two particles of different types, the whole system restarts with new initial velocities determined by $\tblambda(\x^*)$. Therefore, the velocities of all the particles could be modified.

The following lemma ensures that only the velocities of the particles involved in a collision with particles of another type at time $t^*(\x)$ are actually modified. It is first useful to define the set 
\begin{equation}\label{eq:tau}
  \Ttau_{\gamma:k}(\x) := \{\tinter_{\alpha:i, \beta:j}(\x) : (\alpha:i, \beta:j) \in \Rb(\x), \gamma:k \in \{\alpha:i, \beta:j\}\}
\end{equation}
of instants at which the particle $\gamma:k$ collides with particles of different types in the MSPD started at $\x$. For all $T \geq 0$, we also let
\begin{equation}\label{eq:T-wedgetau}
  T^- \wedge \Ttau_{\gamma:k}(\x) := \left\{\begin{aligned}
    & \text{$0$ if the set $\Ttau_{\gamma:k}(\x) \cap [0,T)$ is empty},\\
    & \text{$\max(\Ttau_{\gamma:k}(\x) \cap [0,T))$ otherwise}.
  \end{aligned}\right.
\end{equation}
Note that $0 \leq T^- \wedge \Ttau_{\gamma:k}(\x) < T$.

\begin{lem}[Locality of the interactions in the MSPD]\label{lem:locintmspd}
  Let $\Ttau_{\gamma:k}(\x)$ be defined as above. 
  \begin{enumerate}[label=(\roman*), ref=\roman*]
    \item\label{it:locintmspd:1} For all $\gamma:k \in \Part$, if $t^*(\x) \not\in \Ttau_{\gamma:k}(\x)$, then 
    \begin{equation*}
      \tlambda_k^{\gamma}(\x^*) = \tlambda_k^{\gamma}(\x).
    \end{equation*}
    \item\label{it:locintmspd:2} For all $T > 0$, for all $\gamma \in \{1, \ldots, d\}$, if $K \subset \{1, \ldots, n\}$ is such that, for all $k \in K$,
    \begin{equation*}
      \clu_k^{\gamma}(\x;T) \subset \gamma:K
    \end{equation*}
    (with an obvious notation for $\gamma:K$), then the process $\{\Phi_k^{\gamma}(\x;t) : k \in K\}$ follows the Local Sticky Particle Dynamics, in the sense of Definition~\ref{defi:locspd}, on the interval $[t_0,T]$ with
    \begin{equation*}
      t_0 := \max_{k \in K}~ T^- \wedge \Ttau_{\gamma:k}(\x),
    \end{equation*}
    with initial velocity vector $\rblambda_K := (\barlambda_k)_{k \in K}$ defined by
    \begin{equation*}
      \forall k \in K, \qquad \barlambda_k := \tlambda_k^{\gamma}(\Phi(\x;t_0)).
    \end{equation*}
  \end{enumerate}
\end{lem}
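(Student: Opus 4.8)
\emph{Part~\eqref{it:locintmspd:1}.} The plan is to trace how $\tlambda_k^\gamma$ depends on the combinatorial ordering of a configuration. By~\eqref{eq:vitesses} and the definition of the weights $\omega_{\gamma:k}^{\gamma'}$, the velocity $\tlambda_k^\gamma(\x)$ is a function of $\x$ only through $(\omega_{\gamma:k}^{\gamma'}(\x))_{\gamma'\neq\gamma}$, and each $\omega_{\gamma:k}^{\gamma'}(\x)$ depends only on which pairs of $(\Part)^2$ containing $\gamma:k$ belong to $\Rb(\x)$. So it suffices to show that $\Rb(\x)$ and $\Rb(\x^*)$ agree on every pair containing $\gamma:k$, where $\x^*=\tPhi[\tblambda(\x)](\x;t^*(\x))$. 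By Lemma~\ref{lem:ttinter}, a pair $(\alpha:i,\beta:j)\notin\Rb(\x)$ (with $\alpha<\beta$) still satisfies $\tPhi_i^\alpha[\tblambda(\x)](\x;t^*(\x))\ge\tPhi_j^\beta[\tblambda(\x)](\x;t^*(\x))$ and hence stays out of $\Rb(\x^*)$; whereas a pair $(\alpha:i,\beta:j)\in\Rb(\x)$ leaves $\Rb$ between $\x$ and $\x^*$ exactly when $\ttinter_{\alpha:i,\beta:j}(\x)=t^*(\x)$, that is, exactly when the two particles collide at time $t^*(\x)$. The assumption $t^*(\x)\notin\Ttau_{\gamma:k}(\x)$ says precisely that none of these colliding pairs contains $\gamma:k$, so the relevant weights are unchanged and $\tlambda_k^\gamma(\x^*)=\tlambda_k^\gamma(\x)$.

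\emph{Part~\eqref{it:locintmspd:2}: set-up.} Iterating Definition~\ref{defi:mspd}, set $\x^{(0)}:=\x$, $\x^{(m+1)}:=(\x^{(m)})^*$, $s_0:=0$, $s_{m+1}:=s_m+t^*(\x^{(m)})$ as long as $t^*(\x^{(m)})<+\infty$ (only finitely many of these lie in $[0,T]$, since $\Nb(\x^{(m+1)})<\Nb(\x^{(m)})$). From the flow property of Proposition~\ref{prop:mspd} and the continuity of trajectories one gets $\Phi(\x;s_m)=\x^{(m)}$ and $\Phi(\x;t)=\tPhi[\tblambda(\x^{(m)})](\x^{(m)};t-s_m)$ for $t\in[s_m,s_{m+1}]$; so on each such interval the type-$\gamma$ coordinates follow the Sticky Particle Dynamics in $\Dn$ with the constant velocity vector $\tlambda^\gamma(\x^{(m)})=\tlambda^\gamma(\Phi(\x;s_m))$. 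Iterating Corollary~\ref{cor:ttinter} shows $\{s_m:m\ge1\}=\{\tinter_{\alpha:i,\beta:j}(\x):(\alpha:i,\beta:j)\in\Rb(\x)\}$, that $t_0=s_{m_0}$ for some $m_0$, and that $\tinter_{\alpha:i,\beta:j}(\x^{(m-1)})=\tinter_{\alpha:i,\beta:j}(\x)-s_{m-1}$ for every pair in $\Rb(\x^{(m-1)})$, so that $t^*(\x^{(m-1)})\in\Ttau_{\gamma:k}(\x^{(m-1)})$ if and only if $s_m\in\Ttau_{\gamma:k}(\x)$. Combined with Part~\eqref{it:locintmspd:1}, this gives the key fact that the velocity of a particle $\gamma:k$ changes from phase $m-1$ to phase $m$ only if $s_m\in\Ttau_{\gamma:k}(\x)$.

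\emph{Part~\eqref{it:locintmspd:2}: core argument.} The central claim is that no type-$\gamma$ cluster containing a particle of $K$ splits at any collision time lying in $(t_0,T)$. Within a phase, type-$\gamma$ clusters only grow (Remark~\ref{rk:cluSPD}\eqref{it:rkcluSPD:3}); and at a collision time $s_m$ the cluster $C$ carrying $\gamma:k$ just before $s_m$ satisfies the stability condition~\eqref{eq:stab} for $\tlambda^\gamma(\x^{(m-1)})$ (Remark~\ref{rk:cluSPD}\eqref{it:rkcluSPD:1}), so if it splits it must violate~\eqref{eq:stab} for $\tlambda^\gamma(\x^{(m)})$; then some $\gamma:k'$ with $k'\in C$ has $\tlambda_{k'}^\gamma(\x^{(m)})\neq\tlambda_{k'}^\gamma(\x^{(m-1)})$, hence $s_m\in\Ttau_{\gamma:k'}(\x)$ by the set-up step, and since all particles of $C$ share a common position at $s_m$ the same reasoning as in Lemma~\ref{lem:tinter} forces $s_m\in\Ttau_{\gamma:k''}(\x)$ for \emph{every} $\gamma:k''\in C$; for $k''\in K$ this is incompatible with $s_m\in(t_0,T)$ and $t_0=\max_{k''\in K}T^-\wedge\Ttau_{\gamma:k''}(\x)$. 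Consequently, for $k\in K$ the generical cluster $\clu_k^\gamma(\x;t)$ is nondecreasing (for inclusion) in $t$ over $[t_0,T)$, and the hypothesis $\clu_k^\gamma(\x;T)\subset\gamma:K$ propagates to the conclusion that on $[t_0,T]$ no $K$-particle ever shares its position with a particle outside $K$ (and that $K$ realises the condition~\eqref{eq:locSPD} at every phase endpoint lying in $[t_0,T]$). We may then apply the local reduction of~\S\ref{sss:locspd} to the type-$\gamma$ Sticky Particle Dynamics phase by phase on each interval $[s_m,s_{m+1}]\cap[t_0,T]$, $m\ge m_0$: on each of them $\{\Phi_k^\gamma(\x;\cdot):k\in K\}$ follows the Sticky Particle Dynamics in $D_K$ with velocity vector $(\tlambda_k^\gamma(\x^{(m)}))_{k\in K}$. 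By Part~\eqref{it:locintmspd:1} and the set-up step these velocity vectors all coincide, for $k\in K$, with $\rblambda_K=(\tlambda_k^\gamma(\Phi(\x;t_0)))_{k\in K}$; concatenating the successive pieces via the flow property of the Sticky Particle Dynamics gives exactly that $\{\Phi_k^\gamma(\x;\cdot):k\in K\}$ follows the Local Sticky Particle Dynamics on $[t_0,T]$ with initial velocity vector $\rblambda_K$.

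\emph{Main obstacle.} The delicate point is the core claim of Part~\eqref{it:locintmspd:2}: proving that when a type-$\gamma$ cluster collides with a cluster of another type, the collision instant is recorded in $\Ttau_{\gamma:k}(\x)$ for \emph{every} particle $\gamma:k$ of that cluster. This is exactly what makes the threshold $t_0$ the right one to freeze the combinatorics of the type-$\gamma$ subsystem restricted to $K$ and to keep $K$-particles from ever interacting with particles outside $K$ on $[t_0,T]$. Once this is in place, the remainder is bookkeeping with the recursive definition of the MSPD and with the locality of the scalar Sticky Particle Dynamics recalled in~\S\ref{sss:locspd}.
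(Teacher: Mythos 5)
Your proof is correct and follows essentially the same route as the paper's, but the paper's version of Part~\eqref{it:locintmspd:2} is compressed into a single sentence whereas you work out what lies behind it; this is worth spelling out.

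For Part~\eqref{it:locintmspd:1}, your argument matches the paper's: both reduce to showing that, for each $\gamma'\neq\gamma$, the rank $\omega_{\gamma:k}^{\gamma'}$ is unchanged between $\x$ and $\x^*$, and both derive this from the fact that a pair containing $\gamma:k$ enters or leaves $\Rb$ only when $\gamma:k$ crosses a particle of another type, which is excluded on $[0,t^*(\x)]$ when $t^*(\x)\notin\Ttau_{\gamma:k}(\x)$ (your use of Lemma~\ref{lem:ttinter} and the paper's direct comparison of positions at time $t^*(\x)$ say the same thing).

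For Part~\eqref{it:locintmspd:2}, the paper only asserts that the conclusion is ``an easy consequence of the choice of $t_0$,'' i.e.\ that no $K$-particle collides with another type on $(t_0,T)$. What you add, and what the paper leaves to the reader, is the \emph{collision-propagation} observation: if a type-$\gamma$ cluster $C$ collides with a cluster of another type at time $s_m$, then \emph{every} $\gamma:k''\in C$ has $s_m\in\Ttau_{\gamma:k''}(\x)$, because all of $C$ shares a position at $s_m$ and, by the strict monotonicity under Assumption~\eqref{ass:USH} that underlies Lemma~\ref{lem:tinter}, the collision time of $\gamma:k''$ with the relevant other-type particle is exactly $s_m$. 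This is precisely what makes $t_0$ the correct threshold, and your phase-by-phase concatenation argument correctly unpacks the local reduction of~\S\ref{sss:locspd}. One small caveat, which is also glossed over in the paper's own one-line proof: the step where you propagate the hypothesis $\clu_k^\gamma(\x;T)\subset\gamma:K$ backward via monotonicity on $[t_0,T)$ implicitly uses that no split occurs at $T$ itself (the MSPD cluster can strictly shrink at a collision time, and $T$ is not excluded from being one). In the paper's applications of this lemma, notably in the proof of Lemma~\ref{lem:prelim} where Condition~{\rm (\hyperref[cond:C]{LHM})}-\eqref{cond:C3b} forces the clusters to be constant up to and including $T$, this cannot happen; but it is worth being aware that the clean statement requires this extra (mild) fact.
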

\begin{proof}
  We first address~\eqref{it:locintmspd:1} and let $\gamma:k \in \Part$ such that $t^*(\x) \not\in \Ttau_{\gamma:k}(\x)$. Then, due to the definition of $\tlambda_k^{\gamma}(\x^*)$, it suffices to check that, for all $\gamma' \not= \gamma$, 
  \begin{equation*}
    \omega_{\gamma:k}^{\gamma'}(\x^*) = \omega_{\gamma:k}^{\gamma'}(\x).
  \end{equation*}
  We describe the case $\gamma' < \gamma$, the reverse case is symmetric. The equality above holds if and only if, for all $k' \in \{1, \ldots, n\}$, 
  \begin{equation*}
    (\gamma':k', \gamma:k) \in \Rb(\x) \qquad \text{if and only if} \qquad (\gamma':k', \gamma:k) \in \Rb(\x^*),
  \end{equation*}
  that is to say
  \begin{equation*}
    x_{k'}^{\gamma'} < x_k^{\gamma} \qquad \text{if and only if} \qquad \Phi_{k'}^{\gamma'}(\x;t^*(\x)) < \Phi_k^{\gamma}(\x;t^*(\x)),
  \end{equation*}
  which obviously holds true since $t^*(\x) \not\in \Ttau_{\gamma:k}(\x)$ implies that the particle $\gamma:k$ does not collide with any particle $\gamma':k'$ on $[0, t^*(\x)]$.
  
  The point~\eqref{it:locintmspd:2} is now an easy consequence of the choice of $t_0$, which ensures that, for all $k \in K$, the particle $\gamma:k$ does not collide with a particle of another type in the time interval $(t_0, T)$.
\end{proof}


\section{Construction of probabilistic solutions by approximation}\label{s:existence}

In this section, we detail the proof of Theorem~\ref{theo:existence}, which in particular provides existence of probabilistic solutions to~\eqref{eq:syst} under Assumptions~\eqref{ass:C} and~\eqref{ass:USH}. In Subsection~\ref{ss:closedness}, we first state a closedness property on the set of probabilistic solutions to~\eqref{eq:syst}. In Subsection~\ref{ss:MSPDsol}, we show that, for all $\x \in \Dnd$, the vector of empirical CDFs of the MSPD is an {\em exact} probabilistic solution to the system~\eqref{eq:syst}, but with {\em discrete} initial data induced by $\x$. Taking a sequence of initial configurations $(\x(n))_{n \geq 1}$ approximating the actual initial data $(u^1_0, \ldots, u^d_0)$ of the system~\eqref{eq:syst}, we finally combine the closedness property of Subsection~\ref{ss:closedness} with a tightness argument to complete the proof of Theorem~\ref{theo:existence} in Subsection~\ref{ss:pfexistence}.


\subsection{Closedness of the set of probabilistic solutions}\label{ss:closedness} This subsection contains the statement of Proposition~\ref{prop:closedness}, the proof of which is postponed to Section~\ref{app:pf:closedness} in Appendix~\ref{app:proofs}. 

\begin{prop}[Closedness of the set of probabilistic solutions]\label{prop:closedness}
  Under Assumption~\eqref{ass:C}, let $(\bu_n)_{n \geq 1}$ be a sequence of functions
  \begin{equation*}
    \bu_n = (u_n^1, \ldots, u_n^d) : [0,+\infty) \times \R \to [0,1]^d
  \end{equation*}
  such that:
  \begin{itemize}
    \item for all $n \geq 1$, the function $\bu_n$ is a probabilistic solution to the system~\eqref{eq:syst} with initial data $(u_{0,n}^1, \ldots, u_{0,n}^d)$,
    \item for all $t \geq 0$, for all $\gamma \in \{1, \ldots, d\}$, there exists a CDF $u^{\gamma}(t,\cdot)$ on the real line such that, for all $x \in \R$ for which $\Delta_x u^{\gamma}(t,x) = 0$, 
    \begin{equation*}
      \lim_{n \to +\infty} u_n^{\gamma}(t,x) = u^{\gamma}(t,x),
    \end{equation*}
    \item for all $\gamma, \gamma' \in \{1, \ldots, d\}$ such that $\gamma \not= \gamma'$, 
    \begin{equation}\label{eq:mutgammutgamp}
      \text{$\dd t$-almost everywhere,} \qquad \forall x \in \R, \quad \Delta_x u^{\gamma}(t,x)\Delta_x u^{\gamma'}(t,x) = 0.
    \end{equation}
  \end{itemize}
  Then the function $\bu = (u^1, \ldots, u^d) : [0,+\infty) \times \R \to [0,1]^d$ is a probabilistic solution to the system~\eqref{eq:syst} with initial data $(u_0^1, \ldots, u_0^d)$ defined by $u_0^{\gamma}(x) = u^{\gamma}(0,x)$.
\end{prop}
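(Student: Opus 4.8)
The goal is to pass to the limit in the weak formulation of Definition~\ref{defi:sol}, point~\eqref{it:sol:3}, written for each $\bu_n$. Fix a test function $\bvarphi = (\varphi^1,\ldots,\varphi^d) \in \Cs^{1,0}_{\mathrm c}([0,+\infty)\times\R,\R^d)$. For each $n$, the left-hand side of the weak formulation is a sum over $\gamma$ of $\int_{t}\int_x \partial_t\varphi^\gamma(t,x) u_n^\gamma(t,x)\,\dd x\,\dd t + \int_x \varphi^\gamma(0,x) u_{0,n}^\gamma(x)\,\dd x$, and the right-hand side is $\sum_\gamma \int_t\int_x \varphi^\gamma(t,x)\lambda^\gamma\{\bu_n\}(t,x)\,\dd_x u_n^\gamma(t,x)\,\dd t$. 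The first step is to record what kind of convergence is available: by hypothesis $u_n^\gamma(t,x)\to u^\gamma(t,x)$ at every $(t,x)$ with $\Delta_x u^\gamma(t,x)=0$, hence (since the atoms of $u^\gamma(t,\cdot)$ form a countable set) $\dd x$-a.e.\ for each fixed $t$, and since each $u_n^\gamma$ is $[0,1]$-valued, dominated convergence gives $\iint \partial_t\varphi^\gamma u_n^\gamma \to \iint \partial_t\varphi^\gamma u^\gamma$ over the compact support of $\varphi^\gamma$. For the initial term one must first check that $u_0^\gamma(x) := u^\gamma(0,x)$ is indeed a CDF and that $u_{0,n}^\gamma \to u_0^\gamma$ $\dd x$-a.e.; by Lemma~\ref{lem:cvCDF} the pointwise convergence of $u_n^\gamma(0,\cdot)$ outside the atoms of $u^\gamma(0,\cdot)$ is exactly weak convergence of the associated measures, so $u_{0,n}^\gamma\to u_0^\gamma$ at all continuity points, and dominated convergence again handles $\int\varphi^\gamma(0,\cdot)u_{0,n}^\gamma$.

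**The right-hand side.** The substantive work is the convergence of $\int_x \varphi^\gamma(t,x)\lambda^\gamma\{\bu_n\}(t,x)\,\dd_x u_n^\gamma(t,x)$. The cleanest route is to rewrite this via the change of variables of Lemma~\ref{lem:CDFm1} as
\begin{equation*}
  \int_{v=0}^1 \varphi^\gamma\bigl(t, u_n^\gamma(t,\cdot)^{-1}(v)\bigr)\,\lambda^\gamma\{\bu_n\}\bigl(t, u_n^\gamma(t,\cdot)^{-1}(v)\bigr)\,\dd v,
\end{equation*}
and to argue that, at a fixed time $t$ and for a.e.\ $v$, the integrand converges to the corresponding expression built from $\bu$. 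By Lemma~\ref{lem:cvCDF}, $u_n^\gamma(t,\cdot)^{-1}(v)\to u^\gamma(t,\cdot)^{-1}(v)$ at every continuity point $v$ of $u^\gamma(t,\cdot)^{-1}$, hence $\dd v$-a.e. The factor $\varphi^\gamma$ is continuous, so that factor converges. The delicate point is the convergence of $\lambda^\gamma\{\bu_n\}(t,\cdot)$ evaluated along these quantiles. Writing $x_n := u_n^\gamma(t,\cdot)^{-1}(v)$ and $x := u^\gamma(t,\cdot)^{-1}(v)$, one distinguishes whether $x$ is an atom of $u^\gamma(t,\cdot)$. If $\Delta_x u^\gamma(t,x)=0$, then $x$ is a continuity point, the value $v$ is interior to no flat stretch, $x_n\to x$, and one needs the joint convergence $u_n^{\gamma'}(t,x_n)\to u^{\gamma'}(t,x)$ for every $\gamma'$; using assumption~\eqref{eq:mutgammutgamp} ($\dd t$-a.e., $u^{\gamma'}(t,\cdot)$ has no atom at a point where $u^\gamma(t,\cdot)$ does, so here $u^{\gamma'}(t,\cdot)$ need not be continuous at $x$, but $u^\gamma(t,\cdot)$ is, which by monotonicity pins $u^{\gamma'}(t,x_n)$ between $u^{\gamma'}(t,x^-)$ and $u^{\gamma'}(t,x)$ only up to the jump of $u^{\gamma'}$ at $x$) — this is where some care with one-sided limits and the averaging built into $\lambda^\gamma\{\cdot\}$ is required. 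If instead $\Delta_x u^\gamma(t,x)>0$, then $v$ ranges over the nontrivial interval $(u^\gamma(t,x^-),u^\gamma(t,x))$, the quantiles $x_n$ accumulate near $x$, and the averaged definition~\eqref{eq:dlambda} of $\lambda^\gamma\{\bu\}(t,x)$ — an integral of $\lambda^\gamma$ over $w\in(u^\gamma(t,x^-),u^\gamma(t,x))$ — must be recovered as the limit; this is precisely the reason $\lambda^\gamma\{\cdot\}$ is defined by that $\theta$-average rather than by a pointwise value, and it is exactly the argument one expects to mirror the scalar case (Remark~\ref{rk:scalarcl}). Having established $\dd v$-a.e.\ convergence of the integrand, uniform boundedness (by $\|\varphi^\gamma\|_\infty \ConstBound\infty$) and dominated convergence give convergence of the inner integral for a.e.\ $t$; a further dominated convergence in $t$ over the compact time-support of $\varphi^\gamma$ closes the argument.

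**Main obstacle.** The technical heart is the atom case: showing $\lambda^\gamma\{\bu_n\}(t,\cdot)$, sampled along the quantile $u_n^\gamma(t,\cdot)^{-1}(v)$, converges to $\lambda^\gamma\{\bu\}(t,\cdot)$ sampled along $u^\gamma(t,\cdot)^{-1}(v)$, for a.e.\ $v$ — and doing so uniformly enough in $t$ to justify the double limit. The assumption~\eqref{eq:mutgammutgamp} is what makes this tractable: $\dd t$-a.e., at any mass location of one coordinate the others are continuous, so when $x$ is an atom of $u^\gamma(t,\cdot)$ the off-diagonal values $u^{\gamma'}(t,x_n)$ converge to $u^{\gamma'}(t,x)$ without ambiguity, and when $x$ is a continuity point of $u^\gamma(t,\cdot)$ only the diagonal averaging in~\eqref{eq:dlambda} can misbehave, which it does not because $\Delta_x u^\gamma(t,x)=0$ forces $\lambda^\gamma\{\bu\}(t,x)=\lambda^\gamma(\bu(t,x))$ there. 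A subsidiary point, easily dispatched, is the measurability in $t$ of all the integrals involved, handled exactly as in the Remark following Definition~\ref{defi:sol} via Fubini. I would also remark at the outset that it suffices to prove the identity for $\bvarphi$ with each $\varphi^\gamma$ of product form or simply to work coordinate by coordinate, since both sides of~\eqref{it:sol:3} split as sums over $\gamma$.
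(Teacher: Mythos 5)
Your overall strategy — pass to the limit in the weak formulation of Definition~\ref{defi:sol}, change variables to the quantile coordinate $v$, and invoke~\eqref{eq:mutgammutgamp} to control the off-diagonal coordinates via a composed-CDF convergence — coincides with the paper's, and your handling of the linear terms and of the initial datum is correct. But there is a genuine gap in your treatment of the nonlinear term at atoms of $u^\gamma(t,\cdot)$: the pointwise convergence you invoke there is not merely unproved, it is false.

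The problem is your choice of change of variable. Using the simple formula of Lemma~\ref{lem:CDFm1} you rewrite the nonlinear term as $\int_0^1 \varphi^\gamma\bigl(t, u_n^\gamma(t,\cdot)^{-1}(v)\bigr)\,\lambda^\gamma\{\bu_n\}\bigl(t, u_n^\gamma(t,\cdot)^{-1}(v)\bigr)\,\dd v$, which keeps the full $\theta$-average inside $\lambda^\gamma\{\bu_n\}$, and then aim for $\dd v$-a.e.\ convergence of the integrand. To see that this fails, take $d=1$ with $\lambda$ strictly increasing, a time $t$ at which $u(t,\cdot)$ has a single jump of mass $1/2$ at $x=0$, and approximating $u_n(t,\cdot)$ with two jumps of mass $1/4$ at $\pm 1/n$. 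For $v$ strictly between $u(t,0^-)$ and the midpoint of the jump, $u_n(t,\cdot)^{-1}(v) = -1/n \to 0$, but $\lambda\{u_n\}(t,-1/n)$ averages $\lambda$ only over the lower half of the jump interval, whereas the target $\lambda\{u\}(t,0)$ averages over the whole interval; these quantities stay apart for all $n$. So the integrand does not converge $\dd v$-a.e.\ on the jump interval — only its $v$-integral over that interval does. Your appeal to ``the argument one expects to mirror the scalar case'' cannot rescue this, because in the scalar case the paper does not argue pointwise convergence of the quantile integrand either; it passes to the conservative form $\partial_x\Lambda(u)$, which is unavailable in the vector case because of the off-diagonal coordinates.

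The device that makes the proof close is the extended change-of-variable formula of Lemma~\ref{lem:ellsol}:
$$\int_{x \in \R} \int_{\theta=0}^1 \ell\bigl((1-\theta)F(x^-)+\theta F(x), x\bigr) \dd \theta\,\dd F(x) = \int_{v=0}^1 \ell\bigl(v, F^{-1}(v)\bigr) \dd v.$$
Applying it with $\ell(w,x) = \varphi^\gamma(t,x)\,\lambda^\gamma\bigl(u_n^1(t,x), \ldots, w, \ldots, u_n^d(t,x)\bigr)$ and $F = u_n^\gamma(t,\cdot)$ dissolves the $\theta$-average: the $\gamma$-th argument of $\lambda^\gamma$ becomes literally $v$. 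In that form the integrand does converge $\dd v$-a.e.: the $\gamma$-th slot is held fixed, the quantiles converge by Lemma~\ref{lem:cvCDF}, and the off-diagonal compositions $u_n^{\gamma'}\bigl(t, u_n^\gamma(t,\cdot)^{-1}(v)\bigr)$ converge by Lemma~\ref{lem:FnGn}, which is exactly where~\eqref{eq:mutgammutgamp} enters. Without Lemma~\ref{lem:ellsol}, or some equivalent repackaging of the atom-interval averaging, the argument cannot be completed along the lines you propose.
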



\subsection{Empirical CDFs of the MSPD}\label{ss:MSPDsol} For all $\x \in \Dnd$, recall the Definition~\ref{defi:muMSPD} of the vector of empirical CDFs $\bu[\x]$ of the MSPD started at $\x$. Let us check that the trajectory $(\Phi_k^{\gamma}(\x;t))_{t \geq 0}$ is Lipschitz continuous, and satisfies the characteristic equation
\begin{equation}\label{eq:MSPD:diff}
  \forall \gamma:k \in \Part, \qquad \dot{\Phi}^{\gamma}_k(\x;t) = \lambda^{\gamma}\{\bu[\x]\}(t, \Phi_k^{\gamma}(\x;t)), \qquad \text{$\dd t$-almost everywhere}.
\end{equation}
To this aim, let us fix $t \geq 0$ outside of the finite set $\{\tinter_{\alpha:i, \beta:j}(\x), (\alpha:i, \beta:j) \in \Rb(\x)\}$. We claim that, for all $\gamma:k \in \Part$,
\begin{equation}\label{eq:pf:MSPDsol:1}
  \lambda^{\gamma}\{\bu[\x]\}(t, \Phi_k^{\gamma}(\x;t)) = v_k^{\gamma}(\x;t),
\end{equation}
where we recall the definition~\eqref{eq:defvmspd} of $v_k^{\gamma}(\x;t)$. Clearly,~\eqref{eq:pf:MSPDsol:1} implies the characteristic equation~\eqref{eq:MSPD:diff}. To obtain~\eqref{eq:pf:MSPDsol:1}, fix $\gamma:k \in \Part$ and write $x := \Phi_k^{\gamma}(\x,t)$, $\gamma:\uk \cdots \ok := \clu_k^{\gamma}(\x;t)$. Then
\begin{equation*}
  u^{\gamma}[\x](t,x^-) = \frac{\uk-1}{n}, \quad u^{\gamma}[\x](t,x) = \frac{\ok}{n} \quad \text{and} \quad \Delta_x u^{\gamma}[\x](t,x) = \frac{\ok-\uk+1}{n} > 0.
\end{equation*}
As a consequence,
\begin{equation*}
  \lambda^{\gamma}\{\bu[\x]\}(t,x) = \frac{n}{\ok-\uk+1} \int_{w=(\uk-1)/n}^{\ok/n} \lambda^{\gamma}\left(u^1[\x](t,x), \ldots, w, \ldots, u^d[\x](t,x)\right)\dd w.
\end{equation*}
The choice of $t$ implies that, for all $\gamma' \in \{1, \ldots, d\}$ such that $\gamma \not= \gamma'$,
\begin{equation*}
  \Delta_x u^{\gamma'}[\x](t,x) = 0,
\end{equation*}
therefore, for all $k' \in \{\uk, \ldots, \ok\}$,
\begin{equation*}
  u^{\gamma'}[\x](t,x) = \omega^{\gamma'}_{\gamma:k'}(\Phi(\x;t)).
\end{equation*}
As a conclusion,
\begin{equation*}
  \lambda^{\gamma}\{\bu[\x]\}(t,x) = \frac{1}{\ok-\uk+1} \sum_{k'=\uk}^{\ok} \tlambda_{k'}^{\gamma}(\Phi(\x;t)) = v_k^{\gamma}(\x;t),
\end{equation*}
hence~\eqref{eq:pf:MSPDsol:1}.

\sk
We deduce the following proposition.
\begin{prop}[The MSPD provides an exact solution to~\eqref{eq:syst}]\label{prop:MSPDsol}
  Under Assumptions~\eqref{ass:C} and~\eqref{ass:USH}, for all $\x \in \Dnd$, the vector of empirical CDFs $\bu[\x]$ defined by~\eqref{eq:bun} is a probabilistic solution to the system~\eqref{eq:syst}, with initial data $(u^1_0[\x], \ldots, u^d_0[\x])$ defined by~\eqref{eq:bun0}.
\end{prop}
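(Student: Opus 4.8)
The plan is to verify the two conditions of Definition~\ref{defi:sol} for $\bu[\x]$ with the discrete initial data $(u^1_0[\x],\ldots,u^d_0[\x])$. Condition~\eqref{it:sol:1} is immediate: for each $t\geq 0$ and each $\gamma\in\{1,\ldots,d\}$, the function $x\mapsto u^\gamma[\x](t,x)=\frac1n\sum_{k=1}^n\ind{\Phi_k^\gamma(\x;t)\leq x}$ is the empirical CDF of the positions $\Phi_1^\gamma(\x;t),\ldots,\Phi_n^\gamma(\x;t)$, hence a CDF on the real line, and similarly for $u_0^\gamma[\x]$. So the only real task is the weak formulation~\eqref{it:sol:3}, and the crucial ingredient --- the pointwise identity~\eqref{eq:pf:MSPDsol:1}, equivalently the characteristic equation~\eqref{eq:MSPD:diff} --- has already been established above.

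To prove~\eqref{it:sol:3}, I would fix $\gamma$ and a test function $\bvarphi=(\varphi^1,\ldots,\varphi^d)$, and argue particle by particle. For $k\in\{1,\ldots,n\}$, set
\begin{equation*}
  \Psi_k^\gamma(t):=\int_{x=\Phi_k^\gamma(\x;t)}^{+\infty}\varphi^\gamma(t,x)\,\dd x.
\end{equation*}
Since $\varphi^\gamma$ is continuous with compact support and has a continuous time derivative, and since $t\mapsto\Phi_k^\gamma(\x;t)=x_k^\gamma+\int_{s=0}^t v_k^\gamma(\x;s)\,\dd s$ is Lipschitz continuous by the velocity bound~\eqref{eq:typeencadrelambda}, the function $\Psi_k^\gamma$ is Lipschitz continuous on $[0,+\infty)$, vanishes for $t$ outside the time-support of $\varphi^\gamma$, and satisfies the Leibniz rule
\begin{equation*}
  \dot\Psi_k^\gamma(t)=\int_{x=\Phi_k^\gamma(\x;t)}^{+\infty}\partial_t\varphi^\gamma(t,x)\,\dd x-\dot\Phi_k^\gamma(\x;t)\,\varphi^\gamma\bigl(t,\Phi_k^\gamma(\x;t)\bigr)
\end{equation*}
for $\dd t$-almost every $t$. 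Integrating this over $t\in[0,+\infty)$, using $\Psi_k^\gamma(+\infty)=0$ and $\Psi_k^\gamma(0)=\int_{x_k^\gamma}^{+\infty}\varphi^\gamma(0,x)\,\dd x$, and inserting~\eqref{eq:MSPD:diff} (so that $\dot\Phi_k^\gamma(\x;t)=\lambda^\gamma\{\bu[\x]\}(t,\Phi_k^\gamma(\x;t))$ almost everywhere) gives, for each $k$,
\begin{equation*}
  \begin{aligned}
    -\int_{x=x_k^\gamma}^{+\infty}\varphi^\gamma(0,x)\,\dd x
    &=\int_{t=0}^{+\infty}\int_{x=\Phi_k^\gamma(\x;t)}^{+\infty}\partial_t\varphi^\gamma(t,x)\,\dd x\,\dd t\\
    &\qquad-\int_{t=0}^{+\infty}\varphi^\gamma\bigl(t,\Phi_k^\gamma(\x;t)\bigr)\,\lambda^\gamma\{\bu[\x]\}\bigl(t,\Phi_k^\gamma(\x;t)\bigr)\,\dd t.
  \end{aligned}
\end{equation*}

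It then suffices to average over $k\in\{1,\ldots,n\}$ and to recognise the empirical quantities: $\frac1n\sum_k\int_{\Phi_k^\gamma(\x;t)}^{+\infty}\partial_t\varphi^\gamma(t,x)\,\dd x=\int_\R\partial_t\varphi^\gamma(t,x)u^\gamma[\x](t,x)\,\dd x$, $\frac1n\sum_k\int_{x_k^\gamma}^{+\infty}\varphi^\gamma(0,x)\,\dd x=\int_\R\varphi^\gamma(0,x)u_0^\gamma[\x](x)\,\dd x$, and, since $\dd_x u^\gamma[\x](t,\cdot)=\frac1n\sum_k\delta_{\Phi_k^\gamma(\x;t)}$, $\frac1n\sum_k\varphi^\gamma(t,\Phi_k^\gamma(\x;t))\lambda^\gamma\{\bu[\x]\}(t,\Phi_k^\gamma(\x;t))=\int_\R\varphi^\gamma(t,x)\lambda^\gamma\{\bu[\x]\}(t,x)\,\dd_x u^\gamma[\x](t,x)$. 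Rearranging the resulting identity and summing over $\gamma=1,\ldots,d$ produces exactly~\eqref{it:sol:3}, which completes the argument. The only steps needing care are the interchanges of finite sum and integral --- harmless, since all integrands are bounded measurable functions supported in a compact subset of $[0,+\infty)\times\R$, the relevant measurability having been noted after Definition~\ref{defi:sol} --- and the almost-everywhere validity of the Leibniz rule together with~\eqref{eq:MSPD:diff}, which holds off the Lebesgue-null union of the finite collision set $\{\tinter_{\alpha:i,\beta:j}(\x)\}$ and the times at which some $\Phi_k^\gamma(\x;\cdot)$ fails to be differentiable. I do not expect any genuine obstacle: once~\eqref{eq:pf:MSPDsol:1} is in hand, the proposition is essentially a bookkeeping consequence of the fact that $u^\gamma[\x](t,\cdot)$ is an empirical CDF whose atoms travel along the characteristics of the system.
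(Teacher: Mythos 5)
Your proof is correct and follows essentially the same route as the paper: both integrate along the particle trajectories using the test-function primitive $\psi^\gamma(t,x)=\int_{y=x}^{+\infty}\varphi^\gamma(t,y)\,\dd y$ (your $\Psi_k^\gamma(t)$ is $\psi^\gamma(t,\Phi_k^\gamma(\x;t))$), insert the characteristic identity~\eqref{eq:pf:MSPDsol:1}, integrate over time, and then average over $k$ via Fubini to recover the empirical integrals. The one formal item the paper verifies separately that you attribute to the wrong remark is the joint measurability of $(t,x)\mapsto u^\gamma[\x](t,x)$ required by Definition~\ref{defi:sol} (the remark you cite concerns a different measurability question), but here it is immediate from the continuity of $(t,x)\mapsto\Phi_k^\gamma(\x;t)-x$, so this is a bookkeeping omission rather than a genuine gap.
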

\begin{proof}
  Let us fix $\x \in \Dnd$. By construction, for all $t \geq 0$, for all $\gamma \in \{1, \ldots, d\}$, $u^{\gamma}[\x](t,\cdot)$ is a CDF on the real line. In order to prove that it is a probabilistic solution to the system~\eqref{eq:syst}, we first check that, for all $\gamma \in \{1, \ldots, d\}$, the function $u^{\gamma}[\x]$ is measurable on $[0,+\infty) \times \R$. Then, we check that $\bu[\x]$ satisfies~\eqref{it:sol:3} in Definition~\ref{defi:sol}. 
  
  \sk
  \noindent{\em Proof of measurability.} Recall that $u^{\gamma}[\x](t,\cdot)$ writes $H*\upmu^{\gamma}_t[\x]$. In this definition, replace the Heaviside $H$ with its continuous approximation $H_l$ defined by, for all $l \geq 1$,
  \begin{equation*}
    H_l(x) = \left\{\begin{array}{ll}
      0 & \text{if $x \leq -1/l$},\\
      1+lx & \text{if $-1/l < x < 0$},\\
      1 & \text{if $x \geq 0$},
    \end{array}\right.
  \end{equation*}
  so as to define $u^{\gamma}_l[\x](t,\cdot) := H_l*\upmu^{\gamma}_t[\x]$. Then, on the one hand, for all $t \geq 0$, the function $x \mapsto u^{\gamma}_l[\x](t,x)$ is continuous and nondecreasing on $\R$, hence Dini's Theorem implies that $u^{\gamma}_l[\x]$ is continuous, and therefore measurable, on $[0,+\infty)\times\R$. On the other hand, $H_l(x)$ converges to $H(x)$ for all $x \in \R$, therefore $u^{\gamma}[\x]$ is the pointwise limit of $u^{\gamma}_l[\x]$, which completes the proof.
  
  \sk
  \noindent{\em Proof of~\eqref{it:sol:3} in Definition~\ref{defi:sol}.} Let us fix $\bvarphi = (\varphi^1, \ldots, \varphi^d) \in \Cs^{1,0}_{\mathrm{c}}([0,+\infty)\times\R, \R^d)$ and, for all $\gamma \in \{1, \ldots, d\}$, define $\psi^{\gamma}$ by
  \begin{equation*}
    \forall (t,x) \in [0,+\infty) \times \R, \qquad \psi^{\gamma}(t,x) := \int_{y=x}^{+\infty} \varphi^{\gamma}(t,y) \dd y.
  \end{equation*}
  Owing to~\eqref{eq:pf:MSPDsol:1}, the chain rule formula for functions of finite variation~\cite[Proposition~(4.6), p.~6]{revuz} yields, for all $T \geq 0$, for all $\gamma:k \in \Part$,
  \begin{equation*}
    \begin{aligned}
      \psi^{\gamma}(T, \Phi_k^{\gamma}(\x;T)) & = \psi^{\gamma}(0,x_k^{\gamma}) + \int_{t=0}^T \left( \partial_t\psi^{\gamma}(t, \Phi_k^{\gamma}(\x;t)) + \partial_x\psi^{\gamma}(t,  \Phi_k^{\gamma}(\x;t)) \lambda^{\gamma}\{\bu[\x]\}(t, \Phi_k^{\gamma}(\x;t)) \right) \dd t\\
      & = \psi^{\gamma}(0,x_k^{\gamma}) + \int_{t=0}^T \left( \partial_t\psi^{\gamma}(t,  \Phi_k^{\gamma}(\x;t)) - \varphi^{\gamma}(t,  \Phi_k^{\gamma}(\x;t)) \lambda^{\gamma}\{\bu[\x]\}(t, \Phi_k^{\gamma}(\x;t)) \right) \dd t.
    \end{aligned}
  \end{equation*}
  Since $\varphi^{\gamma}$ has a compact support, the left-hand side above vanishes when $T$ grows to infinity, and taking the average of both sides for $k \in \{1, \ldots, n\}$ yields
  \begin{equation*}
    0 = \int_{x \in \R} \psi^{\gamma}(0,x)\dd u^{\gamma}_0[\x](x) + \int_{t=0}^{+\infty} \int_{x \in \R} \left( \partial_t\psi^{\gamma}(t,x) - \varphi^{\gamma}(t, x) \lambda^{\gamma}\{\bu[\x]\}(t, x) \right) \dd_x u^{\gamma}[\x](t,x) \dd t.
  \end{equation*}
  By the Fubini Theorem,
  \begin{equation*}
    \int_{x \in \R} \psi^{\gamma}(0,x)\dd u^{\gamma}_0[\x](x) = \int_{(x,y) \in \R^2} \ind{x \leq y} \varphi^{\gamma}(0,y) \dd u^{\gamma}_0[\x](x)\dd y = \int_{y \in \R} \varphi^{\gamma}(0,y) u^{\gamma}_0[\x](y) \dd y,
  \end{equation*}
  and we similarly obtain, for all $t \geq 0$,
  \begin{equation*}
    \int_{x \in \R} \partial_t\psi^{\gamma}(t,x) \dd_x u^{\gamma}[\x](t,x) = \int_{y \in \R} \partial_t\varphi^{\gamma}(t,y) u^{\gamma}[\x](t,y) \dd y.
  \end{equation*}
  As a consequence,
  \begin{equation*}
    \begin{aligned}
      & \int_{t=0}^{+\infty} \int_{y \in \R} \partial_t\varphi^{\gamma}(t,y) u^{\gamma}[\x](t,y)\dd y \dd t +  \int_{y \in \R} \varphi^{\gamma}(0,y) u^{\gamma}_0[\x](y)\dd y\\
      & \qquad = \int_{t=0}^{+\infty} \int_{x \in \R} \varphi^{\gamma}(t, x) \lambda^{\gamma}\{\bu[\x]\}(t, x) \dd_x u^{\gamma}[\x](t,x) \dd t,
    \end{aligned}
  \end{equation*}
  and we complete the proof by taking the sum of both sides for $\gamma \in \{1, \ldots, d\}$.
\end{proof}

\begin{rk}
  Proposition~\ref{prop:MSPDsol} provides easy examples for which the uniqueness of probabilistic solutions to~\eqref{eq:syst} fails. Indeed, fix $\x \in \Dnd$ and define $\hat{\x} \in \Dndeuxd$ by, for all $\gamma \in \{1, \ldots, d\}$, for all $k \in \{1, \ldots, n\}$,
\begin{equation*}
  \hat{x}_{2k-1}^{\gamma} = \hat{x}_{2k}^{\gamma} := x_k^{\gamma}.
\end{equation*}
Then, for all $\gamma \in \{1, \ldots, d\}$, the empirical distributions
\begin{equation*}
  \upmu^{\gamma}_0[\x] := \frac{1}{n} \sum_{k=1}^n \delta_{x_k^{\gamma}} \qquad \text{and} \qquad \upmu^{\gamma}_0[\hat{\x}] := \frac{1}{2n} \sum_{k=1}^{2n} \delta_{\hat{x}_k^{\gamma}}
\end{equation*}
coincide in $\Ps(\R)$. As a consequence, by Proposition~\ref{prop:MSPDsol}, the vectors of empirical CDFs $\bu[\x]$ and $\bu[\hat{\x}]$ are probabilistic solutions to the system~\eqref{eq:syst} with {\em the same} initial data.

But let us assume that there exists $\gamma \in \{1, \ldots, d\}$ such that $u \mapsto \lambda^{\gamma}(u^1, \ldots, u^{\gamma-1}, u, u^{\gamma+1}, \ldots, u^d)$ be increasing, for all $(u^1, \ldots, u^{\gamma-1}, u^{\gamma+1}, \ldots, u^d) \in [0,1]^{d-1}$. Then, in the MSPD started at $\hat{\x}$, the particles of type $\gamma$ instantaneously drift away from each other. As a consequence, for all $t \in (0, t^*(\hat{\x}))$, the marginal distribution $\upmu^{\gamma}_t[\hat{\x}]$ has exactly $2n$ atoms, while the marginal distribution $\upmu^{\gamma}_t[\x]$ possesses at most $n$ atoms. Therefore, the corresponding solutions to the system~\eqref{eq:syst} do not coincide.
\end{rk}


\subsection{Proof of Theorem~\ref{theo:existence}}\label{ss:pfexistence} The proof of Theorem~\ref{theo:existence} is based on a tightness argument for the empirical distribution of the MSPD. We recall that a sequence of probability measures $(\mu_n)_{n \geq 1}$ on some metric space $E$ is said to be {\em tight} if, for all $\epsilon > 0$, there exists a compact subset $K$ of $E$ such that $\mu_n(K) \geq 1-\epsilon$ for all $n \geq 1$~\cite[p.~8]{billingsley}. If $(\mu_n)_{n \geq 1}$ is tight, then Prohorov's Theorem~\cite[Theorem~5.1, p.~59]{billingsley} asserts that from each subsequence of $(\mu_n)_{n \geq 1}$, one can extract a further subsequence weakly converging to some $\mu \in \Ps(E)$. Conversely, if $E$ is complete and separable, then any sequence of probability measures $(\mu_n)_{n \geq 1}$ on $E$ of which every subsequence contains a weakly converging further subsequence is tight~\cite[Theorem~5.2, p.~60]{billingsley}. We finally recall that the set $\Cs([0,+\infty), \R^d)$, endowed with the topology of the uniform convergence on the compact sets of $[0,+\infty)$, is complete and separable; this follows from a slight adaptation of~\cite[Example~1.3, p.~11]{billingsley}.

\begin{prop}[Convergence of the MSPD]\label{prop:tightness}
  Under the assumptions of Theorem~\ref{theo:existence}, the sequence $(\upmu[\x(n)])_{n \geq 1}$ is tight. Besides, if $\bar{\upmu} \in \Ms$ refers to the limit of a converging subsequence, then for all $\gamma, \gamma' \in \{1, \ldots, d\}$ such that $\gamma \not= \gamma'$, the marginal probability measures $\bar{\upmu}^{\gamma}_t$ and $\bar{\upmu}^{\gamma'}_t$ have distinct atoms $\dd t$-almost everywhere.
\end{prop}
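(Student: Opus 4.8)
The plan is to establish tightness via the standard compactness criterion on $\Cs([0,+\infty),\R^d)$, using the uniform velocity bounds already at our disposal. First I would note that, by~\eqref{eq:typeencadrelambda}, for every $\x\in\Dnd$, every $\gamma:k\in\Part$ and every $s\ge 0$ the velocity $v_k^{\gamma}(\x;s)$ lies in $[\inf_{\bu}\lambda^{\gamma}(\bu),\sup_{\bu}\lambda^{\gamma}(\bu)]$, hence each coordinate trajectory $t\mapsto\Phi_k^{\gamma}(\x;t)$ is $\ConstBound{\infty}$-Lipschitz. Since the empirical distribution $\upmu[\x(n)]$ is supported on the finite set of sample-paths $(\Phi_k^1(\x(n);\cdot),\ldots,\Phi_k^d(\x(n);\cdot))$, $k=1,\ldots,n$, every path in its support is $\ConstBound{\infty}$-Lipschitz in each coordinate; so the only obstruction to relative compactness of the supports is the spread of the initial positions. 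The weak convergence of $\frac1n\sum_k\delta_{x_k^{\gamma}(n)}$ to $m^{\gamma}$ implies this sequence is tight in $\Ps(\R)$, so for each $\epsilon>0$ there is $R>0$ with $\frac1n\#\{k:\exists\gamma,\ |x_k^{\gamma}(n)|>R\}\le\epsilon$ for all $n$. Then the set of $\R^d$-valued paths $\x(\cdot)$ that are $\ConstBound{\infty}$-Lipschitz in each coordinate and satisfy $|x^{\gamma}(0)|\le R$ for all $\gamma$ is, by Arzel\`a--Ascoli (coordinatewise, on each $[0,N]$), relatively compact in $\Cs([0,+\infty),\R^d)$; its closure $K_{\epsilon}$ is the desired compact set, and $\upmu[\x(n)](K_{\epsilon})\ge 1-\epsilon$ for all $n$. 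This gives tightness, and Prohorov's Theorem yields the extraction of weakly convergent subsequences.

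For the second assertion, let $\bar\upmu$ be the weak limit of some subsequence $(\upmu[\x(n_\ell)])_\ell$ and fix $\gamma\ne\gamma'$, say $\gamma<\gamma'$. By~\eqref{it:ttinter:1}--\eqref{it:ttinter:2} of Lemma~\ref{lem:ttinter} propagated through the recursive construction of the MSPD, or more directly by Lemma~\ref{lem:tinter} combined with Assumption~\eqref{ass:USH}, any two particles $\gamma:i$ and $\gamma':j$ are at the same position only at the single instant $\tinter_{\gamma:i,\gamma':j}(\x(n_\ell))$ (if they ever meet). Consequently, for fixed $\ell$, the set of times $t$ at which some path in the support of $\upmu[\x(n_\ell)]$ has coinciding $\gamma$- and $\gamma'$-coordinates is finite, hence Lebesgue-null; I would encode this by observing that
\begin{equation*}
  \int_{t=0}^{+\infty}\left(\int_{\Cs([0,+\infty),\R^d)}\ind{X^{\gamma}(t)=X^{\gamma'}(t)}\,\upmu[\x(n_\ell)](\dd X)\right)\dd t = 0.
\end{equation*}
The inner integral equals the $\bar\upmu$-measure-analogue quantity $\frac1n\#\{k:\Phi_k^{\gamma}=\Phi_k^{\gamma'}\text{ at }t\}$, which is bounded by $1$; the hard part is passing to the limit in $\ell$, since the indicator $\ind{X^{\gamma}(t)=X^{\gamma'}(t)}$ is not continuous for the topology of weak convergence. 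To handle this I would instead work with the difference process $D(t):=X^{\gamma}(t)-X^{\gamma'}(t)$ and exploit the quantitative separation in Lemma~\ref{lem:tinter}: on each side of the collision time the gap grows at least linearly with slope $\ConstUSH$, so $\int_0^T\ind{|D(t)|\le\eta}\,\dd t\le 2\eta/\ConstUSH$ uniformly over all paths in the support of any $\upmu[\x(n_\ell)]$. The function $X\mapsto\int_0^T g_\eta(D(t))\,\dd t$, with $g_\eta$ a continuous $[0,1]$-valued bump supported in $[-\eta,\eta]$ and equal to $1$ on $[-\eta/2,\eta/2]$, is bounded and continuous for locally uniform convergence; hence by weak convergence and Fubini,
\begin{equation*}
  \int_{t=0}^{T}\bar\upmu\left(\{X:|D(t)|\le\eta/2\}\right)\dd t \le \lim_{\ell\to+\infty}\int_{t=0}^{T}\upmu[\x(n_\ell)]\left(\{X:|D(t)|\le\eta\}\right)\dd t \le \frac{2\eta}{\ConstUSH}.
\end{equation*}
Letting $\eta\downarrow 0$ by dominated convergence gives $\int_0^T\bar\upmu(\{X:D(t)=0\})\,\dd t=0$ for every $T$, hence $\bar\upmu(\{X:X^{\gamma}(t)=X^{\gamma'}(t)\})=0$ for $\dd t$-a.e.\ $t$. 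Since $\bar\upmu^{\gamma}_t$ (resp.\ $\bar\upmu^{\gamma'}_t$) is the law of $X^{\gamma}(t)$ (resp.\ $X^{\gamma'}(t)$) under $\bar\upmu$, a common atom $x$ of both would force $\bar\upmu(\{X:X^{\gamma}(t)=x=X^{\gamma'}(t)\})\ge\bar\upmu^{\gamma}_t(\{x\})\,\bar\upmu^{\gamma'}_t(\{x\})$ to be positive only if the events were independent — which need not hold — so more carefully: for $\dd t$-a.e.\ $t$ the set $\{X:X^{\gamma}(t)=X^{\gamma'}(t)\}$ is $\bar\upmu$-null, and in particular for every $x$, $\bar\upmu^{\gamma}_t(\{x\})\wedge\bar\upmu^{\gamma'}_t(\{x\})=0$ is forced along a countable dense-in-atoms argument; I would phrase the final step as: if $x$ were a common atom then $\{X^{\gamma}(t)=x\}$ and $\{X^{\gamma'}(t)=x\}$ both have positive measure and their intersection is contained in $\{X^{\gamma}(t)=X^{\gamma'}(t)\}$, yet the countably many such $x$ exhaust, for a.e.\ $t$, a null set — contradiction once one checks the intersection is actually forced to be nonnull, which uses that the supports are empirical and hence $\{X^{\gamma}(t)=x\}=\{X^{\gamma'}(t)=x\}$ on that support.

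The main obstacle is this last measure-theoretic passage to the limit: the naive statement "common atoms disappear" is not preserved under weak limits of the indicator, and one genuinely needs the quantitative linear-separation estimate from Lemma~\ref{lem:tinter} (a consequence of~\eqref{ass:USH}) to smooth the indicator into a bounded continuous functional before applying the definition of weak convergence. Everything else — the Arzel\`a--Ascoli tightness and the Fubini bookkeeping — is routine.
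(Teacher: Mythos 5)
Your tightness argument is essentially the paper's argument, with the same two ingredients: tightness of initial positions from the weak convergence hypothesis, and a uniform Lipschitz-in-time bound on the paths; that part is fine.

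The second assertion has a genuine gap, one that you half-notice and then paper over. You bound
\begin{equation*}
  \int_{t=0}^{T}\bar\upmu\bigl(\{X:X^{\gamma}(t)=X^{\gamma'}(t)\}\bigr)\dd t,
\end{equation*}
i.e.\ the measure of the diagonal under the \emph{joint} law of $(X^{\gamma}(t),X^{\gamma'}(t))$ pushed forward from $\bar\upmu$. But the conclusion of the proposition concerns the \emph{product} measure $\bar\upmu^{\gamma}_t\otimes\bar\upmu^{\gamma'}_t$: two measures share an atom if and only if the product measure charges the diagonal. Making the joint diagonal null does not force the product diagonal to be null. A concrete counterexample: if half the paths have $X^{\gamma}\equiv 0$, $X^{\gamma'}\equiv 1$ and half have $X^{\gamma}\equiv 1$, $X^{\gamma'}\equiv 0$, then $\bar\upmu(\{X^{\gamma}(t)=X^{\gamma'}(t)\})=0$ for all $t$, yet both marginals have atoms at $0$ and at $1$. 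Your claim that ``on the support $\{X^{\gamma}(t)=x\}=\{X^{\gamma'}(t)=x\}$'' is simply false: for the empirical measure, the set of indices $k$ with $\Phi_k^{\gamma}(t)=x$ and the set with $\Phi_k^{\gamma'}(t)=x$ are unrelated, and the weak limit $\bar\upmu$ need not be atomic anyway.

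The fix, which is what the paper does, is to work with the product of the empirical marginals $\upmu_t^{\gamma}[\x(n_\ell)]\otimes\upmu_t^{\gamma'}[\x(n_\ell)]$ from the start. For this product the mass of $\{|x-x'|<\epsilon\}$ is $\frac{1}{n^2}\sum_{k}\sum_{k'}\ind{|\Phi_k^{\gamma}(t)-\Phi_{k'}^{\gamma'}(t)|<\epsilon}$, a double sum over \emph{all} pairs $(k,k')$, not just the diagonal $k=k'$ that your process $D(t)=X^{\gamma}(t)-X^{\gamma'}(t)$ sees. Lemma~\ref{lem:tinter} gives $\int_0^{+\infty}\ind{|\Phi_k^{\gamma}(t)-\Phi_{k'}^{\gamma'}(t)|<\epsilon}\dd t\le 2\epsilon/\ConstUSH$ for every pair $(k,k')$, so the Fubini bookkeeping still yields the uniform bound $2\epsilon/\ConstUSH$ for the product-measure quantity. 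The passage to the limit is then routine: weak convergence of the marginals gives weak convergence of the products, Portmanteau on the open set $\{|x-x'|<\epsilon\}$ and Fatou in $t$ give the bound on the limit, and $\epsilon\downarrow 0$ finishes. Your smoothing-by-bump-function device is not wrong in itself, but it is applied to the wrong object.
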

\begin{proof}
  Let us fix $T > 0$ and denote 
  \begin{equation*}
    \upmu_{[0,T]}[\x(n)] := \frac{1}{n} \sum_{k=1}^n \delta_{(\Phi_k^{\gamma}(\x(n);t))_{t \in [0,T]}} \in \Ps(\Cs([0,T],\R^d))
  \end{equation*}
  the empirical distribution of the restriction of the MSPD started at $\x(n)$ to $[0,T]$. We first prove that the sequence $(\upmu_{[0,T]}[\x(n)])_{n \geq 1}$ is tight on $\Cs([0,T],\R^d)$, using~\cite[Theorem~7.3, p.~82]{billingsley}, which is a consequence of the Arzelà-Ascoli Theorem. To apply this theorem, we need (i) to prove that the sequence of marginal distributions $\upmu_0[\x(n)] \in \Ps(\R^d)$ is tight, and (ii) to exhibit a uniform (in $n$) control on the modulus of continuity of the sample-paths of the MSPD started at $\x(n)$. 
  
  The point~(i) is obtained as follows: by the assumptions on the sequence $(\x(n))_{n \geq 1}$, the marginal distributions $\upmu^1_0[\x(n)], \ldots \upmu^d_0[\x(n)] \in \Ps(\R)$ of $\upmu_0[\x(n)] \in \Ps(\R^d)$ are weakly converging. Since $\R$ is complete and separable, we deduce that these marginal distributions are tight, which, by an easy adaptation of~\cite[Exercise~5.9, p.~65]{billingsley}, implies that the sequence $(\upmu_0[\x(n)])_{n \geq 1}$ itself is tight.
  
  The point~(ii) follows from the fact that, by~\eqref{eq:typeencadrelambda}, for all $n \geq 1$, for all $k \in \{1, \ldots, n\}$, the process
  \begin{equation*}
    \left(\Phi_k^1(\x(n);t), \ldots, \Phi_k^d(\x(n);t)\right)_{t \in [0,T]}
  \end{equation*}
  satisfies the Lipschitz continuity condition
  \begin{equation*}
    \sum_{\gamma=1}^d |\Phi_k^{\gamma}(\x(n);t) - \Phi_k^{\gamma}(\x(n);s)| \leq |t-s| \ConstBound{1},
  \end{equation*}
  with a constant $\ConstBound{1}$ that does not depend on $n$.

  Let us fix a subsequence of $(\upmu[\x(n)])_{n \geq 1}$, that we still index by $n$ for convenience. Then, by the argument above, the sequence $(\upmu_{[0,T]}[\x(n)])_{n \geq 1}$ is tight, and therefore, owing to the Prohorov Theorem, we can extract a further subsequence converging weakly to some probability measure $\bar{\upmu}_{[0,T]}$ on $\Cs([0,T], \R^d)$. Letting $T$ grow to infinity along some countable set and using a diagonal extraction procedure, we deduce that there exists an increasing sequence of integers $(n_{\ell})_{\ell \geq 1}$ and $\bar{\upmu} \in \Ms$ such that $\upmu[\x(n_{\ell})]$ converges weakly to $\bar{\upmu} \in \Ms$.
  
  \sk
  Let us now check that, for all $\gamma, \gamma' \in \{1, \ldots, d\}$ such that $\gamma \not= \gamma'$, $\dd t$-almost everywhere, the probability measures $\bar{\upmu}^{\gamma}_t$ and $\bar{\upmu}^{\gamma'}_t$ have distinct atoms. We note that this amounts to proving that
  \begin{equation*}
    \int_{t=0}^{+\infty} \bar{\upmu}^{\gamma}_t \otimes \bar{\upmu}^{\gamma'}_t(\{(x,x') \in \R^2 : x=x'\}) \dd t = 0,
  \end{equation*}
  where $\bar{\upmu}^{\gamma}_t \otimes \bar{\upmu}^{\gamma'}_t$ denotes the product measure of $\bar{\upmu}^{\gamma}_t$ and $\bar{\upmu}^{\gamma'}_t$ on $\R^2$. Following~\cite[(ii), Theorem~2.8, p.~23]{billingsley}, for all $t \geq 0$, the probability measure $\upmu_t^{\gamma}[\x(n_{\ell})] \otimes \upmu_t^{\gamma'}[\x(n_{\ell})]$ converges weakly to $\bar{\upmu}^{\gamma}_t \otimes \bar{\upmu}^{\gamma'}_t$ on $\R^2$. Hence, for all $\epsilon > 0$, the Portmanteau Theorem~\cite[(iv), Theorem~2.1, p.~16]{billingsley} yields
  \begin{equation*}
    \begin{aligned}
      & \bar{\upmu}^{\gamma}_t \otimes \bar{\upmu}^{\gamma'}_t(\{(x,x') \in \R^2 : |x-x'| < \epsilon\})\\
      & \qquad \leq \liminf_{\ell \to +\infty} \upmu_t^{\gamma}[\x(n_{\ell})] \otimes \upmu_t^{\gamma'}[\x(n_{\ell})](\{(x,x') \in \R^2 : |x-x'| < \epsilon\}),
    \end{aligned}
  \end{equation*}
  therefore by the Fatou lemma,
  \begin{equation*}
    \begin{aligned}
      & \int_{t=0}^{+\infty} \bar{\upmu}^{\gamma}_t \otimes \bar{\upmu}^{\gamma'}_t(\{(x,x') \in \R^2 : |x-x'| < \epsilon\})\dd t\\
      & \qquad \leq \liminf_{\ell \to +\infty} \int_{t=0}^{+\infty} \upmu_t^{\gamma}[\x(n_{\ell})] \otimes \upmu_t^{\gamma'}[\x(n_{\ell})](\{(x,x') \in \R^2 : |x-x'| < \epsilon\})\dd t.
    \end{aligned}
  \end{equation*}
  Now, for all $\ell \geq 1$, by the Fubini Theorem,
  \begin{equation*}
    \begin{aligned}
      & \int_{t=0}^{+\infty} \upmu_t^{\gamma}[\x(n_{\ell})] \otimes \upmu_t^{\gamma'}[\x(n_{\ell})](\{(x,x') \in \R^2 : |x-x'| < \epsilon\})\dd t\\
      & \qquad = \int_{(x,x') \in \R^2} \int_{t=0}^{+\infty} \dd t\ind{|x-x'| < \epsilon} \upmu_t^{\gamma}[\x(n_{\ell})](\dd x) \upmu_t^{\gamma'}[\x(n_{\ell})](\dd x')\\
      & \qquad = \frac{1}{n^2}\sum_{k=1}^n\sum_{k'=1}^n \int_{t=0}^{+\infty} \ind{|\Phi_k^{\gamma}(\x(n);t)-\Phi_{k'}^{\gamma'}(\x(n);t)| < \epsilon} \dd t.
    \end{aligned}
  \end{equation*}
  By Lemma~\ref{lem:tinter}, for all $\gamma:k, \gamma':k' \in \Part$ with $\gamma \not= \gamma'$,
  \begin{equation*}
    \int_{t=0}^{+\infty} \ind{|\Phi_k^{\gamma}(\x(n);t)-\Phi_{k'}^{\gamma'}(\x(n);t)| < \epsilon} \dd t \leq \frac{2\epsilon}{\ConstUSH}.
  \end{equation*}
  As a consequence,
  \begin{equation*}
    \begin{aligned}
      \int_{t=0}^{+\infty} \bar{\upmu}^{\gamma}_t \otimes \bar{\upmu}^{\gamma'}_t(\{(x,x') \in \R^2 : x=x'\})\dd t & \leq \int_{t=0}^{+\infty} \bar{\upmu}^{\gamma}_t \otimes \bar{\upmu}^{\gamma'}_t(\{(x,x') \in \R^2 : |x-x'| < \epsilon\})\dd t\\
      & \leq \frac{2\epsilon}{\ConstUSH},
    \end{aligned}
  \end{equation*}
  and we complete the proof by letting $\epsilon$ vanish.
\end{proof}

The proof of Theorem~\ref{theo:existence} finally comes as a straightforward consequence of Proposition~\ref{prop:tightness}.

\begin{proof}[Proof of Theorem~\ref{theo:existence}]
  Under the assumptions of Theorem~\ref{theo:existence}, let us fix a subsequence of $(\upmu[\x(n)])_{n \geq 1}$, and let $(\upmu[\x(n_{\ell})])_{\ell \geq 1}$ denote a further subsequence weakly converging to some $\bar{\upmu} \in \Ms$ as is given by Proposition~\ref{prop:tightness}. Define the function $\bu = (u^1, \ldots, u^d) : [0,+\infty) \times \R \to [0,1]^d$ by
  \begin{equation*}
    \forall t \geq 0, \quad \forall \gamma \in \{1, \ldots, d\}, \qquad u^{\gamma}(t,x) := H*\bar{\upmu}^{\gamma}_t(x).
  \end{equation*}
  We first note that, by Proposition~\ref{prop:MSPDsol}, for all $\ell \geq 1$, the function $\bu[\x(n_{\ell})]$ is a probabilistic solution to the system~\eqref{eq:syst}. Furthermore, by Lemma~\ref{lem:cvCDF}, we have, for all $\gamma \in \{1, \ldots, d\}$, for all $t \geq 0$, 
  \begin{equation*}
    \lim_{\ell \to +\infty} u^{\gamma}[\x(n_{\ell})](t,x) = u^{\gamma}(t,x)
  \end{equation*}
  for all $x \in \R$ such that $\Delta_x u^{\gamma}(t,x) = 0$. Finally, by the second part of Proposition~\ref{prop:tightness}, the function $\bu$ satifies~\eqref{eq:mutgammutgamp} in Proposition~\ref{prop:closedness}. 
  
  As a consequence, we can apply Proposition~\ref{prop:closedness} and conclude that $\bu$ is a probabilistic solution to the system~\eqref{eq:syst}, with initial data $(u_0^1, \ldots, u_0^d)$ defined by $u_0^{\gamma} = H*\bar{\upmu}^{\gamma}_0 = H*m^{\gamma}$. The proof of Theorem~\ref{theo:existence} is completed.
\end{proof}


\section{Trajectories associated with probabilistic solutions}\label{s:traj}

In Section~\ref{s:existence}, we checked that the MSPD satisfies the differential relation~\eqref{eq:MSPD:diff}. In other words, the MSPD behaves like what one would expect to be the characteristics associated with the system of transport equations
\begin{equation*}
  \forall \gamma \in \{1, \ldots, d\}, \qquad \left\{\begin{aligned}
    & \partial_t u^{\gamma} + \lambda^{\gamma}\{\bu\} \partial_x u^{\gamma} = 0,\\
    & u^{\gamma}(0,x) = u^{\gamma}_0(x).
  \end{aligned}\right.
\end{equation*}
However, the value of $\bu[\x](t,\Phi^{\gamma}_k(\x;t))$ is only constant between collisions of particles.

More generally, one may wonder whether such a description in terms of {\em trajectories} of a process $(\bX(t))_{t \geq 0}$ in $\R^d$, may be generalized to {\em any} probabilistic solution $\bu$ to~\eqref{eq:syst} and whether these trajectories satisfy a differential relation similar to~\eqref{eq:MSPD:diff}. In the MSPD, the positions of the particles are given by the quantiles of order $k/n$ of the empirical CDF, therefore it is natural to define, for all $v \in (0,1)$, the process $(\bX_v(t))_{t \geq 0}$ by 
\begin{equation}\label{eq:defbX}
  \forall t \geq 0, \qquad \bX_v(t) = (X^1_v(t), \ldots, X^d_v(t)) \in \R^d, \qquad X^{\gamma}_v(t) := u^{\gamma}(t,\cdot)^{-1}(v).
\end{equation}
In Subsection~\ref{ss:reptraj}, we show that, for all $\gamma \in \{1, \ldots, d\}$, $\dd v$-almost everywhere, the trajectory of $(X^{\gamma}_v(t))_{t \geq 0}$ is Lipschitz continuous, with Lipschitz constants given by the lower and upper bounds of $\lambda^{\gamma}$. This allows us to provide a {\em probabilistic representation} of the solution $\bu$ in terms of a stochastic process $(\bbX^1(t), \ldots, \bbX^d(t))_{t \geq 0}$. Then in Subsection~\ref{ss:veltraj}, we show that the process $(X^{\gamma}_v(t))_{t \geq 0}$ satisfies a differential relation similar to~\eqref{eq:MSPD:diff} if and only $\bu$ is a {\em renormalised solution to~\eqref{eq:syst} in the $\gamma$-th coordinate} in the sense of DiPerna and Lions~\cite{diperna}.


\subsection{Probabilistic representation of probabilistic solutions}\label{ss:reptraj} Throughout the subsection, we shall work under Assumption~\eqref{ass:C} and denote, for all $\gamma \in \{1, \ldots, d\}$,
\begin{equation*}
  \ulambda^{\gamma} := \inf_{\bu \in [0,1]^d} \lambda^{\gamma}(\bu), \qquad \olambda^{\gamma} := \sup_{\bu \in [0,1]^d} \lambda^{\gamma}(\bu).
\end{equation*}

\begin{prop}[Lipschitz continuity of trajectories]\label{prop:traj}
  Under Assumption~\eqref{ass:C}, let $\bu$ be a probabilistic solution to~\eqref{eq:syst} such that $t\mapsto \bu(t,\cdot)$ is continuous in $\Ls^1_{\mathrm{loc}}(\R)^d$, and let $(\bX(t))_{t \geq 0}$ be defined by~\eqref{eq:defbX}. Then, for all $\gamma \in \{1, \ldots, d\}$, $\dd v$-almost everywhere, the trajectory of $(X^{\gamma}_v(t))_{t \geq 0}$ is Lipschitz continuous and
  \begin{equation}\label{eq:liptraj}
    \ulambda^{\gamma} \leq \dot{X}^{\gamma}_v(t) \leq \olambda^{\gamma}, \qquad \text{$\dd t$-almost everywhere}.
  \end{equation}
\end{prop}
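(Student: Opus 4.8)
The plan is to work directly from the weak formulation in point~\eqref{it:sol:3} of Definition~\ref{defi:sol}, using test functions of product form. First I would fix $\gamma$ and a test function $\varphi \in \Cs^{1,0}_{\mathrm c}([0,+\infty)\times\R,\R)$, plug $\bvarphi = \varphi e_\gamma$ into the weak formulation, and integrate the space integral by parts. Concretely, for a smooth test function $\psi(t,x)$ with compact support, one should obtain an identity of the form
\begin{equation*}
  \int_{t=0}^{+\infty}\int_{x\in\R}\bigl(\partial_t\psi(t,x) + \lambda^\gamma\{\bu\}(t,x)\,\partial_x\psi(t,x)\bigr)\,\dd_x u^\gamma(t,x)\,\dd t + \int_{x\in\R}\psi(0,x)\,\dd_x u^\gamma_0(x) = 0,
\end{equation*}
obtained from point~\eqref{it:sol:3} by taking $\varphi = -\partial_x\psi$ and using $\int_{x\in\R}\partial_t\varphi(t,x)u^\gamma(t,x)\dd x = -\int_{x\in\R}\partial_t\psi(t,x)\dd_x u^\gamma(t,x)\dd t$ after a Fubini argument (as in the proof of Proposition~\ref{prop:MSPDsol}). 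Since $\lambda^\gamma\{\bu\}(t,x) \in [\ulambda^\gamma,\olambda^\gamma]$ pointwise by~\eqref{eq:dlambda} and the definition of $\ulambda^\gamma,\olambda^\gamma$, this says that the measure $\dd_x u^\gamma(t,x)\,\dd t$ is transported by a velocity field bounded between $\ulambda^\gamma$ and $\olambda^\gamma$.

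The key step is then to translate this transport statement, written in the space variable $x$, into a statement about the pseudo-inverses $X^\gamma_v(t) = u^\gamma(t,\cdot)^{-1}(v)$, written in the quantile variable $v$. The tool is Lemma~\ref{lem:CDFm1} (equivalently the identity $m = \Unif\circ(F^{-1})^{-1}$): for any bounded measurable $g$, $\int_{x\in\R}g(x)\dd_x u^\gamma(t,x) = \int_{v=0}^1 g(X^\gamma_v(t))\,\dd v$. Applying this with $g = \partial_t\psi(t,\cdot) + \lambda^\gamma\{\bu\}(t,\cdot)\partial_x\psi(t,\cdot)$ turns the identity above into
\begin{equation*}
  \int_{t=0}^{+\infty}\int_{v=0}^1\Bigl(\partial_t\psi(t,X^\gamma_v(t)) + b^\gamma_v(t)\,\partial_x\psi(t,X^\gamma_v(t))\Bigr)\dd v\,\dd t + \int_{v=0}^1\psi(0,X^\gamma_v(0))\,\dd v = 0,
\end{equation*}
where $b^\gamma_v(t) := \lambda^\gamma\{\bu\}(t,X^\gamma_v(t)) \in [\ulambda^\gamma,\olambda^\gamma]$. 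Using the continuity of $t\mapsto u^\gamma(t,\cdot)$ in $\Ls^1_{\mathrm{loc}}$ together with Lemma~\ref{lem:cvCDF}, one gets that $t\mapsto X^\gamma_v(t)$ is continuous for $\dd v$-a.e.\ $v$; then, for each such $v$, testing with $\psi(t,x) = \alpha(t)\beta(x)$ for a well-chosen dense family of $\beta$ (or better, comparing $X^\gamma_v(t)$ against the explicit supersolution $X^\gamma_v(0) + \olambda^\gamma t$ and subsolution $X^\gamma_v(0) + \ulambda^\gamma t$ via a Gronwall/comparison argument on the weak formulation) yields that $(X^\gamma_v(t))_{t\ge0}$ is absolutely continuous with $\ulambda^\gamma \le \dot X^\gamma_v(t) \le \olambda^\gamma$ for a.e.\ $t$, which is~\eqref{eq:liptraj}. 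Lipschitz continuity on all of $[0,+\infty)$ then follows by integration.

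The main obstacle I expect is the measurability and Fubini bookkeeping needed to pass rigorously from the space-variable weak formulation to the quantile-variable one: one must check that $(t,v)\mapsto X^\gamma_v(t)$ and $(t,v)\mapsto b^\gamma_v(t)$ are jointly measurable (which follows from monotonicity of $v\mapsto X^\gamma_v(t)$ and the remark after Definition~\ref{defi:sol}), and one must handle the fact that the pseudo-inverse can jump in $v$ on a countable set, so that the identity holds only $\dd v$-a.e. A secondary subtlety is that the weak formulation a priori only controls $\dd_x u^\gamma(t,\cdot)\dd t$, not $u^\gamma(t,\cdot)$ at a fixed time; the hypothesis that $t\mapsto \bu(t,\cdot)$ is continuous in $\Ls^1_{\mathrm{loc}}(\R)^d$ is exactly what lets us upgrade the a.e.-in-$t$ statement to a genuine pointwise Lipschitz bound on each trajectory. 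The inner comparison argument itself — bounding $X^\gamma_v(t)$ above and below by the linear-in-$t$ barriers — is a routine one-dimensional ODE comparison once the weak formulation has been rewritten in quantile variables, so I would not dwell on it.
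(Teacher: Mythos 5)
Your plan correctly identifies the two ingredients — the pointwise bound $\ulambda^\gamma \leq \lambda^\gamma\{\bu\}(t,x) \leq \olambda^\gamma$, and some form of comparison against the linear barriers $X^\gamma_v(0) \pm \blambda t$ — but the route you propose for combining them does not close. The obstruction is localisation in $v$: after the change of variable via Lemma~\ref{lem:CDFm1}, the weak formulation becomes a single identity \emph{integrated over $v \in (0,1)$}, and there is no admissible way to isolate an individual trajectory $(X^\gamma_v(t))_{t\geq 0}$ from it. Concentrating in $v$ would require replacing the Lebesgue weight $\dd v$ by $\beta'(v)\dd v$ with $\beta$ localising near $v_0$, which is exactly the renormalisation property of Proposition~\ref{prop:vitesses}; as the paper emphasises, a probabilistic solution is not known to be renormalised in general, and deducing renormalisation would prove far more than the proposition asks. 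Consequently, the ``routine one-dimensional ODE comparison'' you invoke in quantile variables is not available: there is no ODE, and not even a one-sided differential inequality, attached to a single $v$. Showing e.g.\ that $\int_0^1 (X^\gamma_v(t)-X^\gamma_v(0)-\olambda^\gamma t)_+\,\dd v = 0$ would require feeding a nonlinear function of $X^\gamma_v$ into the $v$-integrated identity, which the linear weak formulation does not support.

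The paper's proof avoids this by staying in the \emph{spatial} variable as long as possible. The decisive observation is that $\dd_x u^\gamma(t,\cdot)$ is a \emph{nonnegative} measure, so multiplying by $\lambda^\gamma\{\bu\}(t,x) - \ulambda^\gamma \geq 0$ preserves the sign. Combined with the weak formulation, this yields the distributional inequalities
\begin{equation*}
\partial_t u^\gamma + \ulambda^\gamma\,\partial_x u^\gamma \leq 0 \leq \partial_t u^\gamma + \olambda^\gamma\,\partial_x u^\gamma,
\end{equation*}
i.e.\ monotonicity in $t$ of the shifted CDFs $u^\gamma(t,x+\ulambda^\gamma t)$ and $u^\gamma(t,x+\olambda^\gamma t)$. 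After upgrading these to pointwise-in-$x$ inequalities at Lebesgue points in $t$ (this is the content of Step~2 of the paper's proof, and is where the $\Ls^1_{\mathrm{loc}}$ continuity hypothesis and some care over the exceptional set $\mathcal{T}$ enter), one gets $u^\gamma(t_2, y+\ulambda^\gamma(t_2-t_1)) \leq u^\gamma(t_1, y) \leq u^\gamma(t_2, y+\olambda^\gamma(t_2-t_1))$ for all $y \in \R$, which is a statement about CDFs. The passage to quantiles is then not a Lagrangian argument at all: one plugs $y = X^\gamma_v(t_1)$, uses the elementary assertion~\eqref{it:pseudoinv:2} of Lemma~\ref{lem:pseudoinv}, and reads off $X^\gamma_v(t_1) + \ulambda^\gamma(t_2-t_1) \leq X^\gamma_v(t_2) \leq X^\gamma_v(t_1)+\olambda^\gamma(t_2-t_1)$ \emph{pointwise in $v$}, with no localisation required. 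This ``compare CDFs pointwise in $x$, then transfer to quantiles via monotonicity of the pseudo-inverse'' step is the key mechanism missing from your plan, and it is not interchangeable with a direct comparison in quantile variables — the monotone rearrangement is order-preserving for CDF inequalities holding for all $x$, but an $L^1$-in-$v$ relation gives no such leverage.

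A final, smaller point: the construction of the modified trajectory $\tilde{X}^\gamma_v$ in the paper, and the argument that the exceptional set of $v$'s is at most countable, are genuine technical steps needed to upgrade ``a.e.\ pair $(t_1,t_2)$'' to a statement valid for all $t$; you gesture at the $\Ls^1_{\mathrm{loc}}$ continuity hypothesis but do not describe how it is actually used to remove the bad set of times.
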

\begin{proof}
  Let us fix $\gamma \in \{1, \ldots, d\}$. The proof of~\eqref{eq:liptraj} is detailed in the two steps below. 

  \sk
  \noindent {\em Step~1: using intermediate functions $\uu^{\gamma}$ and $\ou^{\gamma}$.} From the definition of $u^{\gamma}$, we note that $\partial_x u^{\gamma}$ is a nonnegative measure, and then $u^{\gamma}$ satisfies
  \begin{equation*}
    \partial_t u^{\gamma} + \ulambda^{\gamma} \partial_x u^{\gamma} \leq 0 \leq \partial_t u^{\gamma} + \olambda^{\gamma} \partial_x u^{\gamma}
  \end{equation*}
  in the distributional sense on $(0,+\infty)\times\R$. This means that
  \begin{equation*}
    \uu^{\gamma}(t,x) := u^{\gamma}(t,x+\ulambda^{\gamma} t) \qquad \text{and} \qquad \ou^{\gamma}(t,x) := u^{\gamma}(t,x+\olambda^{\gamma} t)
  \end{equation*}
  satisfy
  \begin{equation}\label{eq::3}
    \partial_t \uu^{\gamma} \leq 0 \leq \partial_t \ou^{\gamma}
  \end{equation}
  in the distributional sense on $(0,+\infty)\times\R$. We claim that this implies the existence of a Borel subset $\mathcal{T}$ of $(0,+\infty)$ with zero Lebesgue measure such that, for all $x \in \R$, for all $t_1, t_2 \in (0,+\infty) \setminus \mathcal{T}$ with $t_1 \leq t_2$,
  \begin{equation}\label{eq::5}
    \uu^{\gamma}(t_2,x) - \uu^{\gamma}(t_1,x) \leq 0 \leq \ou^{\gamma}(t_2,x) - \ou^{\gamma}(t_1,x).
  \end{equation}
  The proof of this claim is postponed to Step~2 below.
  
  We deduce that for all $y\in\R$,
  \begin{equation*}
    u^{\gamma}(t_2, y+\ulambda^{\gamma}(t_2-t_1)) \leq u^{\gamma}(t_1,y) \leq u^{\gamma}(t_2,y+\olambda^{\gamma}(t_2-t_1)).
  \end{equation*}
  Fixing $v \in (0,1)$, then choosing $y=X^{\gamma}_v(t_1)$ in the right-hand inequality and $y=X^{\gamma}_v(t_2)-\ulambda^{\gamma}(t_2-t_1)$ in the left-hand inequality, we deduce from Assertions~\eqref{it:pseudoinv:1} and~\eqref{it:pseudoinv:2} in Lemma~\ref{lem:pseudoinv} that
  \begin{equation*}
    X^{\gamma}_v(t_1) + \ulambda^{\gamma}(t_2-t_1) \leq X^{\gamma}_v(t_2) \leq X^{\gamma}_v(t_1) + \olambda^{\gamma}(t_2-t_1), 
  \end{equation*}
  which holds for all $t_1 \leq t_2$ in $(0,+\infty) \setminus \mathcal{T}$.
  
  For all $v \in (0,1)$, we deduce the existence of $t \mapsto \tilde{X}^{\gamma}_v(t)$ which coincides with $X^{\gamma}_v(t)$ on $(0,+\infty) \setminus \mathcal{T}$ and such that 
  \begin{equation}\label{eq:conty}
    \forall 0 \leq t_1 \leq t_2, \qquad \tilde{X}^{\gamma}_v(t_1) + \ulambda^{\gamma}(t_2-t_1) \leq \tilde{X}^{\gamma}_v(t_2) \leq \tilde{X}^{\gamma}_v(t_1)+ \olambda^{\gamma}(t_2-t_1).
  \end{equation}
  The continuity of $t \mapsto \tilde{X}^{\gamma}_v(t)$ for all $v\in(0,1)$ combined with Lemma~\ref{lem:cvCDF} ensure that the CDF of the image of $\Unif$ by $v \mapsto \tilde{X}^{\gamma}_v(t)$, which coincides with $u^{\gamma}(t,\cdot)$ on $(0,+\infty)\setminus\mathcal{T}$, is continuous in $\Ls^1_{\mathrm{loc}}(\R)$ as a function of $t \in [0,+\infty)$. Since $t \mapsto u^{\gamma}(t,\cdot)$ is also continuous in $\Ls^1_{\mathrm{loc}}(\R)$, we deduce that 
  \begin{equation}\label{eq:ucdfy}
   \forall (t,x) \in [0,+\infty)\times\R, \qquad u^{\gamma}(t,x) = \int_{v=0}^1 \ind{\tilde{X}^{\gamma}_v(t) \leq x}\dd v.
  \end{equation}
  From Assertion~\eqref{it:pseudoinv:0} in Lemma~\ref{lem:pseudoinv} and Lemma~\ref{lem:cvCDF}, $t \mapsto X^{\gamma}_v(t)$ is continuous on $[0,+\infty)$ as soon as $v$ is not in 
  \begin{equation*}
    {\mathcal V} := \{v\in(0,1) : \exists t_1 \geq 0,\;\exists x<y\in\R,\;u^{\gamma}(t_1,x)=u^{\gamma} (t_1,y)=v\}.
  \end{equation*}

  Let $v\in{\mathcal V}$ and $t_1\geq 0$, $x,y\in\R$ be such that $x<y$ and $u^{\gamma}(t_1,x)=u^{\gamma}(t_1,y)=v$. The monotonicity of $w\mapsto \tilde{X}^{\gamma}_w(t_1)$ and~\eqref{eq:ucdfy} ensure that
  \begin{equation*}
    \forall w \in (0,v), \quad \tilde{X}^{\gamma}_w(t_1)\leq x \qquad \text{and} \qquad \forall w \in (v,1), \quad \tilde{X}^{\gamma}_w(t_1)>y.
  \end{equation*}
  Now, by~\eqref{eq:conty}, for $t_2\geq t_1$, $\tilde{X}^{\gamma}_w(t_2)\leq x+\olambda^{\gamma}(t_2-t_1)$ when $w\in (0,v)$ and $\tilde{X}^{\gamma}_w(t_2)> y+\ulambda^{\gamma}(t_2-t_1)$ when $w\in (v,1)$. For $t_2\in (t_1,t_1+(y-x)/(\olambda^{\gamma}-\ulambda^{\gamma}))$, $x + \olambda^{\gamma}(t_2-t_1) < y + \ulambda^{\gamma}(t_2-t_1)$ and, by~\eqref{eq:ucdfy}, 
  \begin{equation*}
    u^{\gamma}(t_2,x+\olambda^{\gamma}(t_2-t_1)) = u^{\gamma}(t_2,y+\ulambda^{\gamma}(t_2-t_1))=v.
  \end{equation*}
  Hence 
  \begin{equation*}
    {\mathcal V}=\{v\in(0,1):\exists t_2\in{\mathbb Q}_+,\;\exists x<y\in\R,\;u^{\gamma}(t_2,x)=u^{\gamma}(t_2,y)=v\},
  \end{equation*}
  and ${\mathcal V}$ is countable as a countable union of countable sets. Since $t\mapsto X^{\gamma}_v(t)$ and $t\mapsto \tilde{X}^{\gamma}_v(t)$ coincide for $v\not\in{\mathcal V}$, and for all $t \geq 0$, $v\mapsto X^{\gamma}_v(t)$ is nondecreasing, the conclusion follows from~\eqref{eq:conty}.
  
  \sk
  \noindent {\em Step 2: proof of~\eqref{eq::5}.} The proof of~\eqref{eq::5} should be standard, but we do not know any reference, so we propose a proof below.
  
  Let $R>0$. Let us consider a $\Cs^{\infty}$ function $\psi : \R \to [0,+\infty)$ with $\mbox{supp}\ \psi \subset [-R,R]$ and let $\chi_\epsilon\geq 0$ be a nonnegative smooth approximation of the indicator function $\chi(t)=\ind{t \in [t_1,t_2]}$ with compact support in $(0,+\infty)$, where $0<t_1<t_2$ are Lebesgue points of the function $u^{\gamma}\in \Ls^\infty((0,+\infty), \Ls^1([-R,R])$. Let us define the function 
  \begin{equation*}
    \Phi_\epsilon(t,x)=\psi(x) \chi_\epsilon(t)\geq 0.
  \end{equation*}
  Taking the distributional bracket of inequality~\eqref{eq::3} with the test function $\Phi_\epsilon$, and integrating by parts in the sense of distributions, we get
  \begin{equation*}
    -\int_{t=0}^{\infty} \int_{x \in \R} \uu^{\gamma}(t,x) \psi(x)\partial_t \chi_\epsilon(t) \dd x \dd t \le 0 \le -\int_{t=0}^{\infty} \int_{x \in \R} \ou^{\gamma}(t,x) \psi(x)\partial_t \chi_\epsilon(t) \dd x \dd t.
  \end{equation*}
  Passing to the limit as $\epsilon$ goes to zero, we obtain
  \begin{equation*}
    \int_{x \in \R} \uu^{\gamma}(t_2,x) \psi(x)\dd x - \int_{x \in \R} \uu^{\gamma}(t_1,x) \psi(x)\dd x \leq 0 \leq \int_{x \in \R} \ou^{\gamma}(t_2,x) \psi(x)\dd x - \int_{x \in \R} \ou^{\gamma}(t_1,x) \psi(x)\dd x.
  \end{equation*}
  Since $R$ and $\psi$ are arbitrary, this implies
  \begin{equation}\label{eq::4}
    \uu^{\gamma}(t_2,x) - \uu^{\gamma}(t_1,x) \leq 0 \leq \ou^{\gamma}(t_2,x) - \ou^{\gamma}(t_1,x) \qquad \text{$\dd x$-almost everywhere}.
  \end{equation}
  Because of the right continuity of $u^{\gamma}(t,\cdot)$, we conclude that~\eqref{eq::4} holds true for every $x \in \R$, which shows~\eqref{eq::5}.
\end{proof}

An immediate consequence of Proposition~\ref{prop:traj} is that probabilistic solutions to~\eqref{eq:syst} have a finite speed of propagation.

\begin{cor}[Finite speed of propagation]\label{cor:speedprop}
  Under Assumption~\eqref{ass:C}, let $\bu$ be a probabilistic solution to~\eqref{eq:syst} satisfying the assumptions of Proposition~\ref{prop:traj}. For all $\gamma \in \{1, \ldots, d\}$, for all $\tau, t \geq 0$:
  \begin{enumerate}[label=(\roman*), ref=\roman*]
    \item\label{it:speedprop:1} for all $a \in \R$, $u^{\gamma}(\tau,a) \leq u^{\gamma}(\tau+t, a+\olambda^{\gamma}t)$,
    \item\label{it:speedprop:2} for all $b \in \R$, $u^{\gamma}(\tau,b^-) \geq u^{\gamma}(\tau+t, (b+\ulambda^{\gamma}t)^-)$.
  \end{enumerate}
\end{cor}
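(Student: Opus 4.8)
The statement to prove is Corollary~\ref{cor:speedprop}, the finite speed of propagation, which should follow almost immediately from the Lipschitz-continuity of trajectories established in Proposition~\ref{prop:traj}. The plan is to use the representation~\eqref{eq:ucdfy} of $u^\gamma$ in terms of the continuous trajectories $\tilde X^\gamma_v$, together with the two-sided estimate~\eqref{eq:conty} on the growth of each trajectory.

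First I would fix $\gamma$, $\tau$, $t\geq 0$, and work with the continuous representatives $\tilde X^\gamma_v$ from the proof of Proposition~\ref{prop:traj}, which satisfy $\tilde X^\gamma_v(\tau+t)\leq \tilde X^\gamma_v(\tau)+\olambda^\gamma t$ and $\tilde X^\gamma_v(\tau+t)\geq \tilde X^\gamma_v(\tau)+\ulambda^\gamma t$ for all $v\in(0,1)$, and such that $u^\gamma(s,x)=\int_0^1\ind{\tilde X^\gamma_v(s)\leq x}\dd v$ for every $(s,x)$. For point~\eqref{it:speedprop:1}: for each $v$ with $\tilde X^\gamma_v(\tau)\leq a$, the upper bound in~\eqref{eq:conty} gives $\tilde X^\gamma_v(\tau+t)\leq a+\olambda^\gamma t$, so $\{v:\tilde X^\gamma_v(\tau)\leq a\}\subseteq\{v:\tilde X^\gamma_v(\tau+t)\leq a+\olambda^\gamma t\}$; integrating the indicator over $v\in(0,1)$ yields $u^\gamma(\tau,a)\leq u^\gamma(\tau+t,a+\olambda^\gamma t)$. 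For point~\eqref{it:speedprop:2} one argues symmetrically with left limits: from $u^\gamma(s,x^-)=\int_0^1\ind{\tilde X^\gamma_v(s)<x}\dd v$ (which follows from~\eqref{eq:ucdfy} by monotone convergence, using continuity of each $\tilde X^\gamma_v$), and the lower bound $\tilde X^\gamma_v(\tau+t)\geq\tilde X^\gamma_v(\tau)+\ulambda^\gamma t$, one gets $\{v:\tilde X^\gamma_v(\tau+t)<b+\ulambda^\gamma t\}\subseteq\{v:\tilde X^\gamma_v(\tau)<b\}$, hence $u^\gamma(\tau+t,(b+\ulambda^\gamma t)^-)\leq u^\gamma(\tau,b^-)$.

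The only mild subtlety — and the step I would be most careful about — is the justification that the left-limit formula $u^\gamma(s,x^-)=\int_0^1\ind{\tilde X^\gamma_v(s)<x}\dd v$ holds, which requires knowing that the map $v\mapsto\tilde X^\gamma_v(s)$ has no plateau that contributes mass exactly at $x$ without crossing it; this is handled precisely as in the last part of the proof of Proposition~\ref{prop:traj}, where the exceptional set $\mathcal V$ of levels $v$ through which $u^\gamma(s,\cdot)$ has a flat piece was shown to be countable, so it is Lebesgue-negligible and does not affect the integrals. Once that is in place, both inequalities are immediate consequences of set inclusions between sublevel sets of $v\mapsto\tilde X^\gamma_v$, and no further computation is needed; in particular Proposition~\ref{prop:traj} already supplies the hypothesis that $t\mapsto\bu(t,\cdot)$ is continuous in $\Ls^1_{\mathrm{loc}}(\R)^d$, which is what makes the continuous representatives available.
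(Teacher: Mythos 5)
Your proof is correct and takes essentially the same route as the paper's: both reduce the claim to the two-sided Lipschitz estimate on the trajectories from Proposition~\ref{prop:traj}, the only bookkeeping difference being that you transfer this to $u^{\gamma}$ via set inclusions between the sets $\{v : \tilde X^{\gamma}_v(s)\leq x\}$ and the representation~\eqref{eq:ucdfy}, whereas the paper fixes a single level $v$ and argues via Lemma~\ref{lem:pseudoinv}\eqref{it:pseudoinv:2} (with an $\epsilon$-approximation for point~\eqref{it:speedprop:2}). One small remark: the worry you raise about plateaus of $v\mapsto\tilde X^{\gamma}_v(s)$ when justifying $u^{\gamma}(s,x^-)=\int_0^1\ind{\tilde X^{\gamma}_v(s)<x}\dd v$ is unnecessary — this identity follows from~\eqref{eq:ucdfy} directly by monotone convergence, since $\ind{\tilde X^{\gamma}_v(s)\leq y}\uparrow\ind{\tilde X^{\gamma}_v(s)<x}$ as $y\uparrow x$ for every $v$, without invoking the countability of the exceptional set $\mathcal V$ or the continuity in $t$ of $\tilde X^{\gamma}_v$.
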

\begin{proof}
  Let $v=u^{\gamma}(\tau,a)$. By~\eqref{it:pseudoinv:2} in Lemma~\ref{lem:pseudoinv}, $X^{\gamma}_v(\tau) = u^{\gamma}(\tau, \cdot)^{-1}(v) \leq a$, so that Proposition~\ref{prop:traj} yields
  \begin{equation*}
    u^{\gamma}(\tau+t,\cdot)^{-1}(v) = X^{\gamma}_v(\tau+t) \leq X^{\gamma}_v(\tau) + \olambda^{\gamma}t \leq a + \olambda^{\gamma}t,
  \end{equation*}
  therefore by~\eqref{it:pseudoinv:2} in Lemma~\ref{lem:pseudoinv} again, $u^{\gamma}(\tau,a) = v \leq u^{\gamma}(\tau+t, a + \olambda^{\gamma}t)$, whence~\eqref{it:speedprop:1}.
  
  Let us now fix $\epsilon > 0$ and $v>u^{\gamma}(\tau,b-\epsilon)$. By~\eqref{it:pseudoinv:2} in Lemma~\ref{lem:pseudoinv}, $X^{\gamma}_v(\tau) > b-\epsilon$, and by Proposition~\ref{prop:traj},
  \begin{equation*}
    u^{\gamma}(\tau+t,\cdot)^{-1}(v) = X^{\gamma}_v(\tau+t) \geq X^{\gamma}_v(\tau) + \ulambda^{\gamma}t > b + \ulambda^{\gamma}t - \epsilon,  
  \end{equation*}
  so that, by~\eqref{it:pseudoinv:2} in Lemma~\ref{lem:pseudoinv} again, $v > u^{\gamma}(\tau+t, b + \ulambda^{\gamma}t - \epsilon)$. Since $v$ is arbitrarily close to $u^{\gamma}(\tau,b-\epsilon)$, we deduce that $u^{\gamma}(\tau,b-\epsilon) \geq u^{\gamma}(\tau+t, b + \ulambda^{\gamma}t - \epsilon)$, and obtain~\eqref{it:speedprop:2} by taking the limit of this inequality when $\epsilon$ vanishes.
\end{proof}

The proof of Theorem~\ref{theo:existence}, and in particular Proposition~\ref{prop:tightness}, shows that, for the probabilistic solutions $\bu$ obtained there, there exists a probability measure $\bar{\upmu} \in \Ms$ such that 
\begin{equation}\label{eq:reptraj}
  \forall \gamma \in \{1, \ldots, d\}, \quad \forall (t,x) \in [0,+\infty) \times \R, \qquad u^{\gamma}(t,x) = H*\bar{\upmu}^{\gamma}_t(x).
\end{equation}
It is therefore natural to wonder if, for any probabilistic solution $\bu$, there exists $\upmu \in \Ms$ such that~\eqref{eq:reptraj} holds. In other words, does there exist a stochastic process $(\bbX^1(t), \ldots, \bbX^d(t))_{t \geq 0}$ with continuous sample-paths in $\R^d$, such that for all $\gamma \in \{1, \ldots, d\}$, for all $t \geq 0$, the function $u^{\gamma}(t, \cdot)$ is the CDF of the random variable $\bbX^{\gamma}(t)$? Proposition~\ref{prop:traj} provides a constructive positive answer to this question.

\begin{cor}[Probabilistic representation of probabilistic solutions]\label{cor:upmu}
  Under Assumption~\eqref{ass:C}, let $\bu$ be a probabilistic solution to~\eqref{eq:syst} satisfying the assumptions of Proposition~\ref{prop:traj}. Let $\bbv$ be a uniform random variable on $(0,1)$, and let us define the stochastic process $(\bbX^1(t), \ldots, \bbX^d(t))_{t \geq 0}$ by
  \begin{equation*}
    \forall t \geq 0, \quad \forall \gamma \in \{1, \ldots, d\}, \qquad \bbX^{\gamma}(t) := X^{\gamma}_{\bbv}(t).
  \end{equation*}
  Then the sample-paths of $(\bbX^1(t), \ldots, \bbX^d(t))_{t \geq 0}$ are almost surely continuous, and the law $\upmu \in \Ms$ of this process satisfies~\eqref{eq:reptraj}.
\end{cor}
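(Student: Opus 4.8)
The plan is to establish three things in turn: that for $\Unif$-almost every $v\in(0,1)$ the map $t\mapsto\bX_v(t)=(X^1_v(t),\ldots,X^d_v(t))$ is a continuous $\R^d$-valued path, so that $(\bbX^1(t),\ldots,\bbX^d(t))_{t\geq 0}$ has almost surely continuous sample-paths; that this process is a bona fide measurable random element of $\Cs([0,+\infty),\R^d)$, so that its law $\upmu\in\Ms$ is well defined; and finally that the marginal $\upmu^\gamma_t$ has $u^\gamma(t,\cdot)$ as cumulative distribution function, which is precisely~\eqref{eq:reptraj}.

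For the first point I would simply invoke Proposition~\ref{prop:traj}: for each $\gamma\in\{1,\ldots,d\}$ there is a $\Unif$-null set $N^\gamma\subset(0,1)$ such that $t\mapsto X^\gamma_v(t)$ is Lipschitz continuous on $[0,+\infty)$ whenever $v\notin N^\gamma$. Setting $N:=\bigcup_{\gamma=1}^d N^\gamma$, which is a finite union of $\Unif$-null sets and hence still $\Unif$-null, the path $t\mapsto\bX_v(t)$ is continuous in $\R^d$ for every $v\in(0,1)\setminus N$. Restricting to the full-probability event $\{\bbv\notin N\}$ (or, equivalently, redefining the trajectory on the complementary null event to be, say, the constant null path), it follows that the sample-paths of $(\bbX^1(t),\ldots,\bbX^d(t))_{t\geq 0}$ are almost surely continuous.

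For the second point, I would note that for each fixed $t\geq 0$ and each $\gamma$, the map $v\mapsto X^\gamma_v(t)=u^\gamma(t,\cdot)^{-1}(v)$ is nondecreasing on $(0,1)$, hence Borel measurable. Since the Borel $\sigma$-field on $\Cs([0,+\infty),\R^d)$ (with the topology of locally uniform convergence) is generated by the coordinate projections $\pi_t^\gamma$, the map $v\mapsto(\bbX(t))_{t\geq 0}$ is measurable, and its image measure $\upmu\in\Ms$ is well defined. Then, by construction, $\upmu^\gamma_t=\upmu\circ(\pi_t^\gamma)^{-1}$ is the law of the random variable $\bbX^\gamma(t)=X^\gamma_{\bbv}(t)=u^\gamma(t,\cdot)^{-1}(\bbv)$, a statement in distribution that is unaffected by any modification on the $\Unif$-null set $N$.

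For the last point I would use the reformulation of Lemma~\ref{lem:CDFm1} recalled immediately after it, namely that if $F$ is the CDF of $m\in\Ps(\R)$ then $m=\Unif\circ(F^{-1})^{-1}$. Applying this with $F=u^\gamma(t,\cdot)$, which is a CDF on the real line because $\bu$ is a probabilistic solution, the probability measure with CDF $u^\gamma(t,\cdot)$ equals $\Unif\circ(u^\gamma(t,\cdot)^{-1})^{-1}$, which is exactly the law of $u^\gamma(t,\cdot)^{-1}(\bbv)=\bbX^\gamma(t)$, i.e. $\upmu^\gamma_t$. Hence $H*\upmu^\gamma_t=u^\gamma(t,\cdot)$ for all $\gamma$ and all $(t,x)$, which is~\eqref{eq:reptraj}, and the proof is complete. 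I do not expect a serious obstacle here: the content is essentially bookkeeping, and the only points requiring a little care are combining the finitely many $\Unif$-null exceptional sets from Proposition~\ref{prop:traj} without losing full measure, and checking measurability of the $\Cs$-valued map through its coordinate projections.
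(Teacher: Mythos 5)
Your proof is correct and follows the same route as the paper: continuity of almost every sample-path from Proposition~\ref{prop:traj} (via the Lipschitz estimate~\eqref{eq:liptraj}), and identification of the marginals via the change-of-variable formula of Lemma~\ref{lem:CDFm1} in the form $m=\Unif\circ(F^{-1})^{-1}$. The paper states this in a single sentence; your version usefully spells out the combination of the finitely many $\Unif$-null exceptional sets and the measurability of the $\Cs([0,+\infty),\R^d)$-valued map, but these are bookkeeping details rather than a different argument.
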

The fact that the sample-paths of $(\bbX^1(t), \ldots, \bbX^d(t))_{t \geq 0}$ are almost surely continuous is a straightforward consequence of~\eqref{eq:liptraj}, and it follows from the change of variable formula in Lemma~\ref{lem:CDFm1} that the image $\upmu \in \Ms$ of the Lebesgue measure $\Unif$ on $(0,1)$ by $v \mapsto (X^1_v(t), \ldots, X^d_v(t))_{t\geq 0}$ satisfies \eqref{eq:reptraj}.

\begin{rk}\label{rk:contL1loc}
  The condition in Proposition~\ref{prop:traj} that $t \mapsto \bu(t,\cdot)$ be continuous in $\Ls^1_{\mathrm{loc}}(\R)^d$ is automatically satisfied if there exists $\bar{\upmu} \in \Ms$ such that~\eqref{eq:reptraj} holds. Indeed, in this case the Dominated Convergence Theorem implies that the mapping $t \mapsto \bar{\upmu}^{\gamma}_t$ is weakly continuous in $\Ps(\R)$, and by Lemma~\ref{lem:cvCDF}, for all $t_0 \geq 0$, $u^{\gamma}(t,x)$ converges to $u^{\gamma}(t_0,x)$, $\dd x$-almost everywhere. Since these functions are uniformly bounded, then the convergence holds in $\Ls^1_{\mathrm{loc}}(\R)^d$.
\end{rk}

\begin{rk}\label{rk:unicitebmu}
  Let $\bu$ be a probabilistic solution to~\eqref{eq:syst} obtained in Theorem~\ref{theo:existence} as the limit of the empirical CDFs of the MSPD along some subsequence $(\bu[\x(n_{\ell})])_{\ell \geq 1}$. We {\em a priori} have two probabilistic representations for $\bu$: by the probability measure $\bar{\upmu}$ defined in Proposition~\ref{prop:tightness} as the weak limit of $\upmu[\x(n_{\ell})]$ in $\Ms$, and by the probability measure $\upmu$ provided by Corollary~\ref{cor:upmu}. Let us check that these two probability measures actually coincide. For any continuous and bounded function $f : (\R^d)^k \to \R$ and any $0 \leq t_1 \leq t_2 \leq \cdots \leq t_k$, we have, for all $n \geq 1$,
  \begin{equation*}
    \begin{split}
      \int_{(\R^d)^k} f \dd \upmu_{t_1,\ldots,t_k}[\x(n)] = \int_{v=0}^1 f\big(& u^1[\x(n)](t_1,\cdot)^{-1}(v), \ldots, u^d[\x(n)](t_1,\cdot)^{-1}(v);\\
                                                                                 & \vdots\\
                                                                                 & u^1[\x(n)](t_k,\cdot)^{-1}(v), \ldots, u^d[\x(n)](t_k,\cdot)^{-1}(v)\big)\dd v,
    \end{split}
  \end{equation*}
  where $\upmu_{t_1,\ldots,t_k}[\x(n)]$ denotes the finite-dimensional marginal distribution of the measure $\upmu[\x(n)]$ at times $t_1,\ldots,t_k$. By an easy adaptation of~\cite[Lemma 3.5]{jourdain:characteristics}, this equality is preserved by weak convergence in $\Ms$, so that
  \begin{equation*}
    \begin{split}
      \int_{(\R^d)^k} f \dd \bar{\upmu}_{t_1,\ldots,t_k} = \int_{v=0}^1 f\big(& u^1(t_1,\cdot)^{-1}(v), \ldots, u^d(t_1,\cdot)^{-1}(v);\\
                                                                           & \vdots\\
                                                                           & u^1(t_k,\cdot)^{-1}(v), \ldots, u^d(t_k,\cdot)^{-1}(v)\big)\dd v,
    \end{split}
  \end{equation*}
  Therefore $\bar{\upmu}$ has the same finite-dimensional marginals as $\upmu$. Since a probability measure in $\Ms$ is determined by its finite-dimensional marginals, $\bar{\upmu}=\upmu$.
\end{rk}


\subsection{Renormalised solutions and identification of the velocity}\label{ss:veltraj} Given a probabilistic solution $\bu$ to the system~\eqref{eq:syst} satisfying the assumptions of Proposition~\ref{prop:traj}, we now want to provide a dynamical description, similar to~\eqref{eq:MSPD:diff}, of the evolution of the trajectory $(\bX_v(t))_{t \geq 0}$ defined in~\eqref{eq:defbX}. To this aim, we first need to introduce the notion of a {\em renormalised solution to~\eqref{eq:syst} in the $\gamma$-th coordinate}, which is adapted from DiPerna and Lions~\cite{diperna}.

\begin{defi}[Renormalised solution to~\eqref{eq:syst}]
  Under Assumption~\eqref{ass:C}, a probabilistic solution $\bu$ to the system~\eqref{eq:syst} is said to be a {\em renormalised solution to~\eqref{eq:syst} in the $\gamma$-th coordinate} if, for all $\Cs^1$ increasing functions $\beta : [0,1] \to \R$ such that $\beta(0)=0$ and $\beta(1)=1$, for all test functions $\varphi \in \Cs^{1,0}_{\mathrm{c}}([0,+\infty)\times\R, \R)$,
  \begin{equation*}
    \begin{aligned}
      & \int_{t=0}^{+\infty} \int_{x \in \R} \partial_t\varphi(t,x) \beta(u^{\gamma}(t,x)) \dd x\dd t + \int_{x \in \R} \varphi(0,x) \beta(u_0^{\gamma}(x)) \dd x\\
      & \qquad = \int_{t=0}^{+\infty} \int_{x \in \R} \varphi(t,x) \lambda^{\gamma}\{\bu\}(t,x) \dd_x (\beta \circ u^{\gamma})(t,x) \dd t,
    \end{aligned}
  \end{equation*}
  where $\dd_x (\beta \circ u^{\gamma})(t,x)$ refers to the probability measure with CDF $\beta(u^{\gamma}(t, \cdot))$.
\end{defi}

\sk
Recall that, if $\bu$ is a probabilistic solution to~\eqref{eq:syst} satisfying the assumptions of Proposition~\ref{prop:traj}, then with the notations of Subsection~\ref{ss:reptraj}, $\dd v$-almost everywhere in $(0,1)$, the process $(\bX_v(t))_{t \geq 0}$ is Lipschitz continuous and, for all $\gamma \in \{1, \ldots, d\}$, $\ulambda^{\gamma} \leq \dot{X}^{\gamma}_v(t) \leq \olambda^{\gamma}$, $\dd t$-almost everywhere. For trajectories associated with renormalised solutions to~\eqref{eq:syst}, this description is strengthened as follows. 

\begin{prop}[Trajectories associated with renormalised solutions]\label{prop:vitesses}
  Under Assumption~\eqref{ass:C}, let $\bu = (u^1, \ldots, u^d)$ be a probabilistic solution to~\eqref{eq:syst} satisfying the assumptions of Proposition~\ref{prop:traj}. Then, for all $\gamma \in \{1, \ldots, d\}$, $\bu$ is a renormalised solution to~\eqref{eq:syst} in the $\gamma$-th coordinate if and only if, $\dd v$-almost everywhere in $(0,1)$, the process $(X^{\gamma}_v(t))_{t \geq 0}$ defined in~\eqref{eq:defbX} is Lipschitz continuous and
  \begin{equation}\label{eq:Xpoint}
    \dot{X}^{\gamma}_v(t) = \lambda^{\gamma}\{\bu\}(t, X^{\gamma}_v(t)) \qquad \text{$\dd t$-almost everywhere.}
  \end{equation}
\end{prop}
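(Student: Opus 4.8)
The plan is to rewrite the condition that $\bu$ be a renormalised solution in the $\gamma$-th coordinate as a family of one-dimensional identities carried by the trajectories $t\mapsto X^{\gamma}_v(t)$, and then to recognise these identities as the integrated form of~\eqref{eq:Xpoint}. Fix $\gamma$. By Proposition~\ref{prop:traj} and Lemma~\ref{lem:cvCDF}, for $\dd v$-almost every $v$ the path $t\mapsto X^{\gamma}_v(t)$ is Lipschitz continuous, and for every $t\geq 0$ the function $u^{\gamma}(t,\cdot)$ is the CDF of the image of the Lebesgue measure $\Unif$ on $(0,1)$ by $v\mapsto X^{\gamma}_v(t)$ (with $u^{\gamma}(0,\cdot)=u^{\gamma}_0$ under the continuity assumption of Proposition~\ref{prop:traj}). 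For an admissible renormaliser $\beta$, Lemma~\ref{lem:pseudoinv} gives that the pseudo-inverse of the CDF $\beta\circ u^{\gamma}(t,\cdot)$ is $v\mapsto u^{\gamma}(t,\cdot)^{-1}(\beta^{-1}(v))=X^{\gamma}_{\beta^{-1}(v)}(t)$, where $\beta^{-1}$ denotes the pseudo-inverse of $\beta$ viewed as a CDF on $\R$, and the change of variable $v=\beta(w)$ turns $\dd v$ into $\beta'(w)\dd w$.

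The first step is the reformulation. Fixing $\varphi\in\Cs^{1,0}_{\mathrm{c}}([0,+\infty)\times\R,\R)$, I apply the change of variable formula of Lemma~\ref{lem:CDFm1} to each of the three integral terms in the definition of a renormalised solution and then substitute $v=\beta(w)$; the renormalised identity attached to the pair $(\beta,\varphi)$ becomes exactly
\begin{equation*}
  \int_{w=0}^1 \beta'(w)\,H_\varphi(w)\,\dd w=0,
\end{equation*}
where
\begin{equation*}
  H_\varphi(w):=\int_{t=0}^{+\infty}\int_{x\in\R}\partial_t\varphi(t,x)\ind{X^{\gamma}_w(t)\leq x}\,\dd x\,\dd t+\int_{x\in\R}\varphi(0,x)\ind{X^{\gamma}_w(0)\leq x}\,\dd x-\int_{t=0}^{+\infty}\varphi(t,X^{\gamma}_w(t))\,\lambda^{\gamma}\{\bu\}(t,X^{\gamma}_w(t))\,\dd t.
\end{equation*}
The Fubini manipulations are legitimate because $\lambda^{\gamma}\{\bu\}$ is bounded (by $\ConstBound{\infty}$), $\varphi$ has compact support, and $(t,w)\mapsto X^{\gamma}_w(t)$ is jointly measurable, as already used in the remark after Definition~\ref{defi:sol}; in particular $H_\varphi\in\Ls^\infty(0,1)$. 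Since the derivatives of admissible renormalisers are rich enough to separate bounded functions — for any continuous $\sigma$ on $[0,1]$ with $\int_0^1\sigma(s)\,\dd s=0$ and any $C>\|\sigma\|_\infty$, both the constant $1$ and $1+\sigma/C$ arise as such derivatives — the displayed identity holding for all admissible $\beta$ is equivalent to $H_\varphi(w)=0$ for $\dd w$-almost every $w$.

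The second step identifies $H_\varphi(w)$ along Lipschitz trajectories and concludes. Let $w$ be such that $\xi:=X^{\gamma}_w(\cdot)$ is Lipschitz (true for a.e. $w$ by Proposition~\ref{prop:traj}). Writing $\Psi(t,y):=\int_{x=y}^{+\infty}\varphi(t,x)\,\dd x$, the map $t\mapsto\Psi(t,\xi(t))$ is Lipschitz with $\dd t$-a.e. derivative $\int_{x=\xi(t)}^{+\infty}\partial_t\varphi(t,x)\,\dd x-\varphi(t,\xi(t))\dot\xi(t)$, so integrating over $[0,+\infty)$ and using the compact support of $\varphi$ yields
\begin{equation*}
  H_\varphi(w)=\int_{t=0}^{+\infty}\varphi(t,X^{\gamma}_w(t))\left(\dot{X}^{\gamma}_w(t)-\lambda^{\gamma}\{\bu\}(t,X^{\gamma}_w(t))\right)\dd t\qquad\text{for a.e. }w.
\end{equation*}
If~\eqref{eq:Xpoint} holds for $\dd v$-a.e. $v$, then $H_\varphi(w)=0$ for a.e. $w$ and every $\varphi$, and Step~1 shows $\bu$ is renormalised in the $\gamma$-th coordinate. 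Conversely, assuming $\bu$ renormalised, I apply Step~1 to a countable family of tensor test functions $g(t)h(x)$, where $g$ runs over a countable subset of $\Cs^1_{\mathrm{c}}([0,+\infty))$ that is uniformly dense in $\{g:\mathrm{supp}\,g\subset[0,T]\}$ for every $T$, and $h$ runs over a sequence of $\Cs^1_{\mathrm{c}}(\R)$ cutoffs equal to $1$ on larger and larger intervals. On a common set of full measure of $w$'s, contained in the set where $\xi=X^{\gamma}_w(\cdot)$ is Lipschitz, all corresponding $H_\varphi(w)$ vanish; for such a $w$, taking $h\equiv1$ on the range of $\xi$ over $[0,T]$ and using Step~2 together with the density of the $g$'s gives $\int_0^{+\infty}g(t)\bigl(\dot X^{\gamma}_w(t)-\lambda^{\gamma}\{\bu\}(t,X^{\gamma}_w(t))\bigr)\,\dd t=0$ for every $g\in\Cs^1_{\mathrm{c}}([0,+\infty))$, whence~\eqref{eq:Xpoint} by the fundamental lemma of the calculus of variations. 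The Lipschitz continuity of the trajectories for $\dd v$-a.e. $v$ is supplied in both directions by Proposition~\ref{prop:traj}.

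The argument is conceptually light; the hard part is bookkeeping. The main obstacles are, in Step~1, justifying the measurability, integrability and Fubini manipulations behind the reformulation (which rely on the measurability of $(t,w)\mapsto X^{\gamma}_w(t)$ and the boundedness of $\lambda^{\gamma}\{\bu\}$) and the density of renormaliser derivatives; and, in the converse part of Step~2, choosing \emph{once and for all} a countable family of test functions rich enough to recover~\eqref{eq:Xpoint} for $\dd v$-almost every $v$ and $\dd t$-almost every $t$ simultaneously, despite the fact that the natural localising cutoff $h$ depends on the (Lipschitz, hence locally bounded) trajectory $X^{\gamma}_v$ — which is why the tensor structure $g(t)h(x)$ and the growing cutoffs $h$ are needed.
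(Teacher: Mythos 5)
Your proof is correct, and it takes a genuinely different route from the paper's in the \emph{necessity} direction. The paper fixes $v_0$, chooses a specific family $\beta_\epsilon$ of renormalisers converging pointwise to $\ind{v\geq v_0}$, and passes to the limit in the renormalised identity; the measure $\dd_x(\beta_\epsilon\circ u^\gamma)(t,\cdot)$ then concentrates at $X^\gamma_{v_0}(t)$, and the paper must handle delicately the possible discontinuity of $\lambda^\gamma\{\bu\}(t,\cdot)$ at that point, splitting into three cases and controlling countable exceptional sets. You avoid this entirely by pulling the full renormalised identity into the quantile variable \emph{before} specialising $\beta$: since Lemma~\ref{lem:CDFm1} and the change of variable $v=\beta(w)$ are exact for bounded measurable integrands, the atomic structure simply dissolves, and the identity becomes $\int_0^1\beta'(w)H_\varphi(w)\,\dd w=0$ with $H_\varphi\in\Ls^\infty(0,1)$. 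The density observation on $\beta'$ (that $1$ and $1+\sigma/C$ both arise, so $\int\sigma H_\varphi=0$ for all continuous $\sigma$) then gives $H_\varphi=0$ a.e.\ directly, and the Lipschitz chain rule along each trajectory identifies $H_\varphi(w)$ with $\int\varphi(t,X^\gamma_w(t))(\dot X^\gamma_w(t)-\lambda^\gamma\{\bu\}(t,X^\gamma_w(t)))\,\dd t$, which is the same computation the paper uses for sufficiency. Your argument is arguably cleaner, since it treats both implications symmetrically through the single object $H_\varphi$ and entirely sidesteps the concentration/continuity analysis; the cost is that you must establish the measurability and Fubini bookkeeping for $H_\varphi$ up front, and in the converse direction exhibit a countable test-function family (tensor products $g(t)h(x)$ with growing cutoffs $h$, to accommodate the $w$-dependence of the trajectory's range) to extract a single full-measure set of $w$'s — both of which you correctly flag and handle. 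One small inaccuracy worth fixing: Lemma~\ref{lem:CDFm1} applies as stated only to the flux term (an integral against $\dd_x(\beta\circ u^\gamma)$); the first two terms of the renormalised identity involve $\beta(u^\gamma)$ under $\dd x$, and there you should instead use the layer-cake identity $\beta(u^\gamma(t,x))=\int_0^1\ind{X^\gamma_w(t)\leq x}\beta'(w)\,\dd w$ (via Lemma~\ref{lem:pseudoinv}(ii)) followed by Fubini — which is what your formula for $H_\varphi$ implicitly does, but the attribution to Lemma~\ref{lem:CDFm1} is not quite right.
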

\begin{proof}[Proof of necessity]
  Let us first fix $\gamma \in \{1, \ldots, d\}$ and assume that $\bu$ is a renormalised solution in the $\gamma$-th coordinate. Let us also fix $v_0 \in (0,1)$. Let us prove that, for all functions $\psi \in \Cs^{1,1}_{\mathrm{c}}([0,+\infty) \times \R, \R)$, for all $t \geq 0$,
  \begin{equation*}
    \psi(0,X^{\gamma}_{v_0}(0)) + \int_{t=0}^{+\infty}\left(\partial_t \psi(t, X^{\gamma}_{v_0}(t)) + \lambda^{\gamma}\{\bu\}(t, X^{\gamma}_{v_0}(t)) \partial_x \psi(t, X^{\gamma}_{v_0}(t))\right)\dd t = 0,
  \end{equation*}
  so that, for all $0 \leq t_1 \leq t_2$, taking smooth and compactly supported approximations $\psi(t,x)$ of $x \ind{t_1 \leq t \leq t_2}$ yields
  \begin{equation*}
    X^{\gamma}_{v_0}(t_2) - X^{\gamma}_{v_0}(t_1) = \int_{t=t_1}^{t_2} \lambda^{\gamma}\{\bu\}(t, X^{\gamma}_{v_0}(t)) \dd t.
  \end{equation*}
  For such a function $\psi$, let $\varphi := -\partial_x \psi$. For all $\epsilon > 0$, let $\beta_{\epsilon} : [0,1] \to [0,1]$ be an increasing $\Cs^1$ function, such that $\beta_{\epsilon}(0)=0$, $\beta_{\epsilon}(1)=1$ and, for all $v \in (0,1)$,
  \begin{equation}\label{eq:limbeta}
    \lim_{\epsilon \dto 0} \beta_{\epsilon}(v) = \ind{v \geq v_0}.
  \end{equation}
  
  Since $\bu$ is a renormalised solution, we have, for all $\epsilon > 0$,
  \begin{equation*}
    \begin{aligned}
      & \int_{t=0}^{+\infty} \int_{x \in \R} \partial_t \varphi(t,x) \beta_{\epsilon}(u^{\gamma}(t,x)) \dd x \dd t + \int_{x \in \R} \varphi(0,x) \beta_{\epsilon}(u^{\gamma}_0(x))\dd x\\
      & \qquad = \int_{t=0}^{+\infty} \int_{x \in \R} \varphi(t,x) \lambda^{\gamma}\{\bu\}(t,x) \dd_x(\beta_{\epsilon} \circ u^{\gamma})(t,x) \dd t.
    \end{aligned}
  \end{equation*}
  On account of~\eqref{eq:limbeta}, the Dominated Convergence Theorem gives
  \begin{equation*}
    \begin{aligned}
      \lim_{\epsilon \dto 0} \int_{t=0}^{+\infty} \int_{x \in \R} \partial_t \varphi(t,x) \beta_{\epsilon}(u^{\gamma}(t,x)) \dd x \dd t & = \int_{t=0}^{+\infty} \int_{x \in \R} \partial_t \varphi(t,x) \ind{u^{\gamma}(t,x)) \geq v_0} \dd x \dd t\\
      & = \int_{t=0}^{+\infty} \int_{x=X^{\gamma}_{v_0}(t)}^{+\infty} \partial_t \varphi(t,x)\dd x \dd t\\
      & = \int_{t=0}^{+\infty} \partial_t \psi(t, X^{\gamma}_{v_0}(t)) \dd t ;
    \end{aligned}
  \end{equation*}
  likewise,
  \begin{equation*}
    \lim_{\epsilon \dto 0} \int_{x \in \R} \varphi(0,x) \beta_{\epsilon}(u^{\gamma}_0(x))\dd x = \psi(0, X^{\gamma}_{v_0}(0)).
  \end{equation*}
  However, passing to the limit in the right-hand side is more delicate as, for all $t \geq 0$, the probability measure with CDF $\beta_{\epsilon}(u^{\gamma}(t, \cdot))$ converges weakly to the Dirac distribution in $X^{\gamma}_{v_0}(t)$, and the function $\lambda^{\gamma}\{\bu\}(t,\cdot)$ may be discontinuous at this point. 
  
  To handle this issue, we first fix $t \geq 0$ and remark that the function $\lambda^{\gamma}\{\bu\}(t, \cdot)$ is continuous outside of the countable set
  \begin{equation*}
    \mathcal{X} := \{x \in \R: \exists \gamma' \in \{1, \ldots, d\}, \Delta_x u^{\gamma'}(t,x) > 0\}.
  \end{equation*}
  This fact is obtained by writing 
  \begin{equation*}
    \lambda^{\gamma}\{\bu\}(t,x) = \int_{\theta=0}^1 \lambda^{\gamma}\left(u^1(t,x), \ldots, (1-\theta)u^{\gamma}(t, x^-) + \theta u^{\gamma}(t, x), \ldots, u^d(t,x)\right)\dd \theta
  \end{equation*}
  and noting that, for all $\theta \in [0,1]$, the integrand is continuous on $\R \setminus \mathcal{X}$.
  
  We can now assert that $v_0$ is in exactly one of the three following cases:
  \begin{enumerate}
    \item\label{it:pfvitesses:1} $X^{\gamma}_{v_0}(t) \not\in \mathcal{X}$,
    \item\label{it:pfvitesses:2} $X^{\gamma}_{v_0}(t) \in \mathcal{X}$ and $\Delta_x u^{\gamma}(t, X^{\gamma}_{v_0}(t)) > 0$,
    \item\label{it:pfvitesses:3} $X^{\gamma}_{v_0}(t) \in \mathcal{X}$ and $\Delta_x u^{\gamma}(t, X^{\gamma}_{v_0}(t)) = 0$.
  \end{enumerate}
  In case~\eqref{it:pfvitesses:1}, we deduce from the discussion above that $\lambda^{\gamma}\{\bu\}(t, \cdot)$ is continuous at $X^{\gamma}_{v_0}(t)$, and therefore by~\cite[Exercise~2.10~(a)]{billingsley}, we have
  \begin{equation*}
    \lim_{\epsilon \dto 0} \int_{x \in \R} \varphi(t,x) \lambda^{\gamma}\{\bu\}(t,x) \dd_x(\beta_{\epsilon} \circ u^{\gamma})(t,x) = \varphi(t,X^{\gamma}_{v_0}(t)) \lambda^{\gamma}\{\bu\}(t,X^{\gamma}_{v_0}(t)).
  \end{equation*}
  In case~\eqref{it:pfvitesses:2}, we also have $\Delta_x (\beta_{\epsilon} \circ u^{\gamma})(t,X^{\gamma}_{v_0}(t)) > 0$ and 
  \begin{equation*}
    \begin{aligned}
      & \int_{x \in \R} \varphi(t,x) \lambda^{\gamma}\{\bu\}(t,x) \dd_x(\beta_{\epsilon} \circ u^{\gamma})(t,x)\\
      & \qquad = \int_{x \not= X^{\gamma}_{v_0}(t)} \varphi(t,x) \lambda^{\gamma}\{\bu\}(t,x) \dd_x(\beta_{\epsilon} \circ u^{\gamma})(t,x)\\
      & \qquad \quad + \varphi(t, X^{\gamma}_{v_0}(t)) \lambda^{\gamma}\{\bu\}(t,X^{\gamma}_{v_0}(t))\left(\beta_{\epsilon}(u^{\gamma}(t, X^{\gamma}_{v_0}(t))) - \beta_{\epsilon}(u^{\gamma}(t, X^{\gamma}_{v_0}(t)^-))\right).
    \end{aligned}
  \end{equation*}
  By~\eqref{eq:limbeta}, if $u^{\gamma}(t, X^{\gamma}_{v_0}(t)^-) < v_0$, then $\beta_{\epsilon}(u^{\gamma}(t, X^{\gamma}_{v_0}(t))) - \beta_{\epsilon}(u^{\gamma}(t, X^{\gamma}_{v_0}(t)^-))$ converges to $1$ when $\epsilon$ goes to $0$, while the integral over $\R \setminus \{X^{\gamma}_{v_0}(t)\}$ vanishes due to the boundedness of $\varphi$ and $\lambda^{\gamma}\{\bu\}$. On the other hand, the set $\mathcal{V}_1(t)$ of values of $v_0$ such that $u^{\gamma}(t, X^{\gamma}_{v_0}(t)^-) = v_0$, is countable. We finally prove that the set $\mathcal{V}_2(t)$ of values of $v_0$ corresponding to case~\eqref{it:pfvitesses:3} is also countable. Indeed, in the latter case, $X^{\gamma}_{v_0}(t)$ belongs to the countable set $\mathcal{X}$. Assuming that there exists $v_0' \not= v_0$ such that $X^{\gamma}_{v_0'}(t) = X^{\gamma}_{v_0}(t)$ implies that $\Delta_x u^{\gamma}(t, X^{\gamma}_{v_0}(t)) > 0$ and therefore is a contradiction with the fact that $v_0$ is in case~\eqref{it:pfvitesses:3}. As a consequence, one can associate each $x \in \mathcal{X}$ with at most one $v_0$ in case~\eqref{it:pfvitesses:3} such that $x=X^{\gamma}_{v_0}(t)$, and therefore the set $\mathcal{V}_2(t)$ is countable.
  
  As a conclusion, for all $t \geq 0$, we have constructed a countable set $\mathcal{V}(t) := \mathcal{V}_1(t) \cup \mathcal{V}_2(t)$ such that, for $v_0 \in (0,1) \setminus \mathcal{V}(t)$,
  \begin{equation*}
    \lim_{\epsilon \dto 0} \int_{x \in \R} \varphi(t,x) \lambda^{\gamma}\{\bu\}(t,x) \dd_x(\beta_{\epsilon} \circ u^{\gamma})(t,x) = \varphi(t,X^{\gamma}_{v_0}(t)) \lambda^{\gamma}\{\bu\}(t,X^{\gamma}_{v_0}(t)).
  \end{equation*}
  By the Fubini Theorem, there exists a negligible set $\mathcal{V} \subset (0,1)$ such that, for all $v_0 \not\in \mathcal{V}$, we have $v_0 \not\in \mathcal{V}(t)$, $\dd t$-almost everywhere. As a consequence, for $v_0 \in (0,1) \setminus \mathcal{V}$, the Dominated Convergence Theorem yields
  \begin{equation*}
    \lim_{\epsilon \dto 0} \int_{t=0}^{+\infty} \int_{x \in \R} \varphi(t,x) \lambda^{\gamma}\{\bu\}(t,x) \dd_x(\beta_{\epsilon} \circ u^{\gamma})(t,x)\dd t  = \int_{t=0}^{+\infty} \varphi(t,X^{\gamma}_{v_0}(t)) \lambda^{\gamma}\{\bu\}(t,X^{\gamma}_{v_0}(t)) \dd t,
  \end{equation*}
  which completes the proof.
  
  \sk
  \noindent {\em Proof of sufficiency.} We assume that, for all $v \in (0,1)$, the process $(X^{\gamma}_v(t))_{t \geq 0}$ is Lipschitz continuous and satisfies~\eqref{eq:Xpoint}. Let $\beta : [0,1] \to \R$ be a $\Cs^1$ increasing function such that $\beta(0)=0$ and $\beta(1)=1$, and let $\varphi \in \Cs^{1,0}_{\mathrm{c}}([0,+\infty)\times\R, \R)$. Let us define
  \begin{equation*}
    \psi(t,x) := \int_{y=x}^{+\infty} \varphi(t,y)\dd y.
  \end{equation*}
  For all $v \in (0,1)$, for all $T \geq 0$,
  \begin{equation*}
    \psi(T, X^{\gamma}_v(T)) = \psi(0, X^{\gamma}_v(0)) + \int_{t=0}^T \left(\partial_t \psi(t, X^{\gamma}_v(t)) + \partial_x \psi(t, X^{\gamma}_v(t)) \lambda^{\gamma}\{\bu\}(t, X^{\gamma}_v(t)) \right) \dd t.
  \end{equation*}
  Taking $T$ large enough to cancel the left-hand side, multiplying by $\beta'(v)$, integrating over $(0,1)$ and performing the change of variable $w = \beta(v)$, we obtain
  \begin{equation*}
    \begin{aligned}
      0 & = \int_{w=0}^1 \psi\left(0, X^{\gamma}_{\beta^{-1}(w)}(0)\right) \dd w\\
        & \quad + \int_{w=0}^1\int_{t=0}^{+\infty} \left(\partial_t \psi\left(t, X^{\gamma}_{\beta^{-1}(w)}(t)\right) + \partial_x \psi\left(t, X^{\gamma}_{\beta^{-1}(w)}(t)\right) \lambda^{\gamma}\{\bu\}(t, X^{\gamma}_{\beta^{-1}(w)}(t)) \right) \dd t \dd w\\
        & = \int_{x \in \R} \psi(0, x) \dd_x (\beta \circ u^{\gamma})(0,x)+ \int_{t=0}^{+\infty}\int_{x \in \R} \left(\partial_t \psi(t, x) + \partial_x \psi(t, x) \lambda^{\gamma}\{\bu\}(t, x) \right) \dd_x (\beta \circ u^{\gamma})(t,x)\dd t,
    \end{aligned}
  \end{equation*}
  thanks to Lemma~\ref{lem:CDFm1}.
  
  By the Fubini Theorem,
  \begin{equation*}
    \int_{x \in \R} \psi(0, x) \dd_x (\beta \circ u^{\gamma})(0,x) = \int_{y \in \R} \varphi(t,y) \beta(u^{\gamma}_0(y)) \dd y
  \end{equation*}
  and similarly,
  \begin{equation*}
    \int_{t=0}^{+\infty}\int_{x \in \R} \partial_t \psi(t, x) \dd_x (\beta \circ u^{\gamma})(t,x)\dd t = \int_{t=0}^{+\infty}\int_{y \in \R} \partial_t \varphi(t, y) \beta(u^{\gamma}(t,y)) \dd y\dd t.
  \end{equation*}
  On the other hand, it is straightforward that
  \begin{equation*}
    \begin{aligned}
      & \int_{t=0}^{+\infty}\int_{x \in \R} \partial_x \psi(t, x) \lambda^{\gamma}\{\bu\}(t, x) \dd_x (\beta \circ u^{\gamma})(t,x)\dd t\\
      & \qquad = - \int_{t=0}^{+\infty}\int_{x \in \R} \varphi(t, x) \lambda^{\gamma}\{\bu\}(t, x) \dd_x (\beta \circ u^{\gamma})(t,x)\dd t,
    \end{aligned}
  \end{equation*}
  which concludes the proof.  
\end{proof}

Combining~\eqref{eq:MSPD:diff} with Proposition~\ref{prop:vitesses}, we see that, for all $\x \in \Dnd$, the vector of empirical CDFs $\bu[\x]$ of the MSPD started at $\x$ is a renormalised solution to~\eqref{eq:syst} in all its coordinates. Note that it is also easy to give a direct proof of this fact, by replacing the weight $1/n$ of the particle $\gamma:k$ with $\beta(k/n)-\beta((k-1)/n)$ in the proof of Proposition~\ref{prop:MSPDsol} --- which actually amounts to mimicking the proof of sufficiency above. 

As a consequence, if the set of renormalised solutions enjoyed a closedness property of the same nature as Proposition~\ref{prop:closedness}, then one would expect the approximation procedure described in Section~\ref{s:existence} to imply that the probabilistic solutions constructed in Theorem~\ref{theo:existence} are also renormalised solutions in all their coordinates, and therefore that the corresponding trajectories $(\bX_v(t))_{t \geq 0}$ satisfy the characteristic equation~\eqref{eq:Xpoint}. However, it seems to us that the set of renormalised solutions does not enjoy such a closedness property, and therefore we do not know, in general, whether probabilistic solutions obtained by Theorem~\ref{theo:existence} are renormalised solutions. The following lemma describes a situation in which this is actually the case.

\begin{lem}[Renormalised solutions obtained from Theorem~\ref{theo:existence}]\label{lem:renormdtppcont}
  Under Assumptions~\eqref{ass:C} and~\eqref{ass:USH}, let $\bu$ be a probabilistic solution to~\eqref{eq:syst} obtained by Theorem~\ref{theo:existence}. For all $\gamma \in \{1, \ldots, d\}$, if $\dd t$-almost everywhere, the function $u^{\gamma}(t,\cdot)$ is continuous on the real line, then $\bu$ is a renormalised solution in the $\gamma$-th coordinate. 
\end{lem}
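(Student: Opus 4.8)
The plan is to pass to the limit $\ell\to+\infty$ in the renormalisation identity satisfied by the approximating MSPD. Write $\bu$ as the weak limit of $\upmu[\x(n_\ell)]$ furnished by Theorem~\ref{theo:existence}, so that $u^{\gamma'}(t,x)=H*\bar{\upmu}^{\gamma'}_t(x)$. As recalled right after Proposition~\ref{prop:vitesses}, for every $\ell$ the vector of empirical CDFs $\bu[\x(n_\ell)]$ is a renormalised solution to~\eqref{eq:syst} in all its coordinates; so, fixing a $\Cs^1$ increasing $\beta:[0,1]\to\R$ with $\beta(0)=0$, $\beta(1)=1$ and a test function $\varphi\in\Cs^{1,0}_{\mathrm{c}}([0,+\infty)\times\R,\R)$, the renormalisation identity holds with $\bu$ replaced by $\bu[\x(n_\ell)]$. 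The whole argument consists in identifying the limits of the two sides of this identity.

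The left-hand side is routine. By Corollary~\ref{cor:cvCDFs}, $u^{\gamma'}[\x(n_\ell)](t,x)\to u^{\gamma'}(t,x)$ at every $x$ with $\Delta_x u^{\gamma'}(t,x)=0$; since by hypothesis $u^{\gamma}(t,\cdot)$ is continuous for $\dd t$-a.e.\ $t$, this holds for all $x$ at $\dd t$-a.e.\ $t$, and the initial traces converge $\dd x$-a.e.\ by Lemma~\ref{lem:cvCDF}. As $\beta$ is bounded and continuous and $\partial_t\varphi$ is bounded with compact support, bounded convergence gives the convergence of $\int\!\int\partial_t\varphi\,\beta(u^{\gamma}[\x(n_\ell)])\dd x\dd t$ and of $\int\varphi(0,\cdot)\beta(u^{\gamma}_0[\x(n_\ell)])\dd x$ to their counterparts for $\bu$.

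The substance is the right-hand side, and this is where the continuity assumption enters. Fix $t$ with $u^{\gamma}(t,\cdot)$ continuous (a.e.\ $t$). Then $\beta\circ u^{\gamma}(t,\cdot)$ is a continuous CDF, hence $\beta\circ u^{\gamma}[\x(n_\ell)](t,\cdot)\to\beta\circ u^{\gamma}(t,\cdot)$ at every point, and by Lemma~\ref{lem:cvCDF} the pseudo-inverses converge: $q_\ell(w):=u^{\gamma}[\x(n_\ell)](t,\cdot)^{-1}(\beta^{-1}(w))\to q(w):=u^{\gamma}(t,\cdot)^{-1}(\beta^{-1}(w))$ for $\dd w$-a.e.\ $w\in(0,1)$. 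Using the change of variable formula of Lemma~\ref{lem:CDFm1}, the inner integral on the right-hand side equals $\int_0^1\varphi(t,q_\ell(w))\,\lambda^{\gamma}\{\bu[\x(n_\ell)]\}(t,q_\ell(w))\,\dd w$. The key claim is that, for $\dd w$-a.e.\ $w$, $\lambda^{\gamma}\{\bu[\x(n_\ell)]\}(t,q_\ell(w))\to\lambda^{\gamma}(\bu(t,q(w)))$, the right side being $\lambda^{\gamma}\{\bu\}(t,q(w))$ since $u^{\gamma}(t,\cdot)$ is continuous. To see it, note that the pushforward of the Lebesgue measure on $(0,1)$ by $q$ is the atomless probability measure with CDF $\beta\circ u^{\gamma}(t,\cdot)$, so for $\dd w$-a.e.\ $w$ the point $x:=q(w)$ avoids the (countably many) atoms of $u^{\gamma'}(t,\cdot)$ for $\gamma'\neq\gamma$; hence every $u^{\gamma'}(t,\cdot)$ is continuous at $x$. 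Fix such a $w$ and put $x_\ell:=q_\ell(w)\to x$. A monotonicity squeeze — comparing $u^{\gamma'}[\x(n_\ell)](t,\cdot)$ at $x_\ell^-$ and $x_\ell$ with its values at continuity points $x\pm\eta$ of $u^{\gamma'}(t,\cdot)$ and invoking Corollary~\ref{cor:cvCDFs} — gives $u^{\gamma'}[\x(n_\ell)](t,x_\ell^-)\to u^{\gamma'}(t,x)$ and $u^{\gamma'}[\x(n_\ell)](t,x_\ell)\to u^{\gamma'}(t,x)$ for every $\gamma'$. Plugging this into the defining formula
\begin{equation*}
  \lambda^{\gamma}\{\bu[\x(n_\ell)]\}(t,x_\ell)=\int_{\theta=0}^1\lambda^{\gamma}\bigl(u^1[\x(n_\ell)](t,x_\ell),\ldots,(1-\theta)u^{\gamma}[\x(n_\ell)](t,x_\ell^-)+\theta u^{\gamma}[\x(n_\ell)](t,x_\ell),\ldots,u^d[\x(n_\ell)](t,x_\ell)\bigr)\dd\theta
\end{equation*}
and using the uniform continuity of $\lambda^{\gamma}$ on $[0,1]^d$ (Assumption~\eqref{ass:C}), the integrand converges to $\lambda^{\gamma}(\bu(t,x))$ uniformly in $\theta$: no case split according to whether $x_\ell$ carries a type-$\gamma$ cluster is needed, since the convex combination in the $\gamma$-th slot is squeezed between $u^{\gamma}[\x(n_\ell)](t,x_\ell^-)$ and $u^{\gamma}[\x(n_\ell)](t,x_\ell)$, both tending to $u^{\gamma}(t,x)$. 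Since $|\varphi\,\lambda^{\gamma}\{\cdot\}|\le\|\varphi\|_\infty\,\|\lambda^{\gamma}\|_\infty$, bounded convergence in $w$ yields the convergence of the inner integral; and as this $t$-integrand is bounded by $t\mapsto\sup_x|\varphi(t,x)|\,\|\lambda^{\gamma}\|_\infty$, which is integrable because $\varphi$ has compact time support, a further dominated convergence in $t$ gives the limit of the full right-hand side. Matching the two limits shows that $\bu$ satisfies the renormalisation identity in the $\gamma$-th coordinate.

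The main obstacle is precisely the claimed convergence $\lambda^{\gamma}\{\bu[\x(n_\ell)]\}(t,q_\ell(w))\to\lambda^{\gamma}(\bu(t,q(w)))$: one must control the value of $\lambda^{\gamma}\{\bu[\x(n_\ell)]\}$ at a point that may lie on an entire cluster of type-$\gamma$ particles, and it is exactly the continuity of $u^{\gamma}(t,\cdot)$ that forces the $u^{\gamma}$-mass of such a cluster to vanish as $\ell\to+\infty$ (through the squeeze at $x_\ell^-$ and $x_\ell$), collapsing the $\theta$-average onto the single value $\lambda^{\gamma}(\bu(t,x))$. The rest is bookkeeping: working at $t$ outside the $\dd t$-null set where continuity fails, and at $w$ outside the $\dd w$-null set where either $q_\ell(w)\not\to q(w)$ or $q(w)$ hits an atom of some $u^{\gamma'}(t,\cdot)$. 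Note that, contrary to what one might expect, this route does not invoke Proposition~\ref{prop:vitesses} or the $\Ls^1_{\mathrm{loc}}$-continuity of $t\mapsto\bu(t,\cdot)$; it verifies the definition of a renormalised solution directly.
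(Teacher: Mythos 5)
Your proof is correct and follows essentially the same route as the paper's. The one cosmetic difference is that, where the paper invokes Lemma~\ref{lem:FnGn} twice (once with $F=u^{\gamma}(t,\cdot)$, $G=u^{\gamma'}(t,\cdot)$ for $\gamma'\neq\gamma$, and once with $F=G=u^{\gamma}(t,\cdot)$) to get the convergence of $u^{\gamma'}[\x(n_\ell)]$ at $q_\ell(w)^{\pm}$, you reprove that fact in situ with the monotonicity squeeze around $q(w)\pm\eta$ at non-atoms of each $u^{\gamma'}(t,\cdot)$ — which is precisely the content of that lemma's proof. Your observation that, for $\dd w$-a.e.\ $w$, the point $q(w)$ avoids all the atoms of the $u^{\gamma'}(t,\cdot)$ because the pushforward of $\dd w$ by $q$ is the atomless law with CDF $\beta\circ u^{\gamma}(t,\cdot)$ is the same mechanism by which the paper secures the almost-everywhere statement in Lemma~\ref{lem:FnGn}, packaged there via the criterion $\Delta F\Delta G\equiv 0$. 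Everything else (the $(\beta\circ F)^{-1}=F^{-1}\circ\beta^{-1}$ identity, the change of variable, the squeeze of the $\theta$-average in the $\gamma$-th slot between $u^{\gamma}[\x(n_\ell)](t,x_\ell^{-})$ and $u^{\gamma}[\x(n_\ell)](t,x_\ell)$, and the two layers of dominated convergence in $w$ and $t$) matches the paper's argument; and your closing remark that neither Proposition~\ref{prop:vitesses} nor $\Ls^1_{\mathrm{loc}}$-continuity is used is accurate for the paper as well.
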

Monotonicity conditions on the function $\lambda^{\gamma}$ ensuring that, $\dd t$-almost everywhere, the function $u^{\gamma}(t,\cdot)$ is continuous on the real line, will be discussed in Section~\ref{s:rar}.
\begin{proof}
  Let $\bu = (u^1, \ldots, u^d)$ be a probabilistic solution to~\eqref{eq:syst} obtained by Theorem~\ref{theo:existence}, so that there exists a sequence $(\x(n_{\ell}))_{\ell \geq 1}$ of initial configurations such that the sequence of empirical measures $\upmu[\x(n_{\ell})]$ converges weakly, when $\ell$ grows to infinity, to some probability measure $\bar{\upmu} \in \Ms$ such that $u^{\gamma}(t,x) = H*\bar{\upmu}^{\gamma}_t(x)$. In the sequel of the proof we drop the index $\ell$ and assume for convenience that $\upmu[\x(n)]$ converges weakly to $\bar{\upmu}$ when $n$ grows to infinity. Recall that we denote by $\bu[\x(n)]$ the vector of empirical CDFs of the MSPD started at $\x(n)$. We furthermore assume that $\gamma \in \{1, \ldots, d\}$ is such that, $\dd t$-almost everywhere, the function $u^{\gamma}(t,\cdot)$ is continuous on the real line.
  
  Given a $\Cs^1$ increasing function $\beta : [0,1] \to \R$ such that $\beta(0)=0$ and $\beta(1)=1$ and a test function $\varphi \in \Cs^{1,0}_{\mathrm{c}}([0,+\infty)\times\R, \R)$, the discussion above yields
  \begin{equation*}
    \begin{aligned}
      & \int_{t=0}^{+\infty} \int_{x \in \R} \partial_t\varphi(t,x) \beta(u^{\gamma}[\x(n)](t,x)) \dd x\dd t + \int_{x \in \R} \varphi(0,x) \beta(u^{\gamma}[\x(n)](0,x)) \dd x\\
      & \qquad = \int_{t=0}^{+\infty} \int_{x \in \R} \varphi(t,x) \lambda^{\gamma}\{\bu[\x(n)]\}(t,x) \dd_x (\beta \circ u^{\gamma}[\x(n)])(t,x) \dd t,
    \end{aligned}
  \end{equation*}
  and to prove Lemma~\ref{lem:renormdtppcont}, we have to take the limit of this equality when $n$ grows to infinity. First, since by Corollary~\ref{cor:cvCDFs}, $u^{\gamma}[\x(n)](t,x)$ converges $\dd x$-almost everywhere to $u^{\gamma}(t,x)$, for all $t \geq 0$, the Dominated Convergence Theorem yields
  \begin{equation*}
    \begin{aligned}
      & \lim_{n \to +\infty} \int_{t=0}^{+\infty} \int_{x \in \R} \partial_t\varphi(t,x) \beta(u^{\gamma}[\x(n)](t,x)) \dd x\dd t + \int_{x \in \R} \varphi(0,x) \beta(u^{\gamma}[\x(n)](0,x)) \dd x\\
      & \qquad = \int_{t=0}^{+\infty} \int_{x \in \R} \partial_t\varphi(t,x) \beta(u^{\gamma}(t,x)) \dd x\dd t + \int_{x \in \R} \varphi(0,x) \beta(u^{\gamma}(0,x)) \dd x.
    \end{aligned}
  \end{equation*}
  The function $\beta$ being continuous and increasing, it admits a continuous and increasing inverse $\beta^{-1}$ and for any CDF $F$ on the real line, the CDF $\beta(F)$ is such that, for all $v\in(0,1)$, $(\beta(F))^{-1}(v)=F^{-1}(\beta^{-1}(v))$. Therefore, by Lemma~\ref{lem:CDFm1}, for any bounded and measurable function $f$ on the real line,
  \begin{equation*}
    \int_{x\in\R}f(x)\beta(F(x))\dd x=\int_{w=0}^1f(F^{-1}(\beta^{-1}(w)))\dd w=\int_{v=0}^1f(F^{-1}(v))\beta'(v)\dd v.
  \end{equation*}
  Therefore, to conclude the proof, it is enough to check that for any $t \geq 0$ such that $u^{\gamma}(t,\cdot)$ is continuous,
  \begin{equation*}
    \begin{aligned}
      & \lim_{n \to +\infty} \int_{v=0}^1 \varphi\left(t,u^{\gamma}[\x(n)](t,\cdot)^{-1}(v)\right) \lambda^{\gamma}\{\bu[\x(n)]\}\left(t,u^{\gamma}[\x(n)](t,\cdot)^{-1}(v)\right) \beta'(v)\dd v\\
      & \qquad = \int_{v=0}^1 \varphi\left(t,u^{\gamma}(t,\cdot)^{-1}(v)\right) \lambda^{\gamma}\{\bu\}\left(t,u^{\gamma}(t,\cdot)^{-1}(v)\right) \beta'(v)\dd v.
    \end{aligned}
  \end{equation*}
  Owing to Lemma~\ref{lem:cvCDF}, $\varphi(t,u^{\gamma}[\x(n)](t,\cdot)^{-1}(v))$ converges to $\varphi(t,u^{\gamma}(t,\cdot)^{-1}(v))$, $\dd v$-almost everywhere. Therefore, by the Dominated Convergence Theorem, it now suffices to show that, $\dd v$-almost everywhere, $\lambda^{\gamma}\{\bu[\x(n)]\}(t,u^{\gamma}[\x(n)](t,\cdot)^{-1}(v))$ converges to $\lambda^{\gamma}\{\bu\}(t,u^{\gamma}(t,\cdot)^{-1}(v))$. Since $u^{\gamma}(t,\cdot)$ is continuous, then Lemma~\ref{lem:FnGn} already yields, for all $\gamma' \not= \gamma$, 
  \begin{equation*}
    \lim_{n \to +\infty} u^{\gamma'}[\x(n)]\left(t, u^{\gamma}[\x(n)](t,\cdot)^{-1}(v)\right) = u^{\gamma'}\left(t, u^{\gamma}(t,\cdot)^{-1}(v)\right), \qquad \text{$\dd v$-almost everywhere.}
  \end{equation*}
  Besides, applying Lemma~\ref{lem:FnGn} with $F$ and $G$ both equal to the continuous function $u^{\gamma}(t,\cdot)$, one obtains that, $\dd v$-almost everywhere,
  \begin{equation*}
    \begin{aligned}
      & \lim_{n \to +\infty} u^{\gamma}[\x(n)]\left(t, u^{\gamma}[\x(n)](t,\cdot)^{-1}(v)^-\right) = u^{\gamma}\left(t, u^{\gamma}(t,\cdot)^{-1}(v)\right),\\
      & \lim_{n \to +\infty} u^{\gamma}[\x(n)]\left(t, u^{\gamma}[\x(n)](t,\cdot)^{-1}(v)\right) = u^{\gamma}\left(t, u^{\gamma}(t,\cdot)^{-1}(v)\right).
    \end{aligned}
  \end{equation*}
  As a consequence, we can now use the Dominated Convergence Theorem to pass to the limit in the definition~\eqref{eq:dlambda} of $\lambda^{\gamma}\{\bu[\x(n)]\}(t,u^{\gamma}[\x(n)](t,\cdot)^{-1}(v))$, and thereby complete the proof.
\end{proof}


\section{Continuity of rarefaction coordinates}\label{s:rar}

In this section, we discuss the continuity of probabilistic solutions to the system~\eqref{eq:syst} obtained by Theorem~\ref{theo:existence}, under the following diagonal monotonicity conditions on the function $\lambda^{\gamma}$: we shall say that a coordinate $\gamma \in \{1, \ldots, d\}$ is
\begin{itemize}
  \item a {\em rarefaction coordinate} if, for all $(u^1, \ldots, u^{\gamma-1}, u^{\gamma+1}, \ldots, u^d) \in [0,1]^{d-1}$, for all $\uu, \ou \in [0,1]$ with $\uu \leq \ou$, 
  \begin{equation*}
   \lambda^{\gamma}(u^1, \ldots, u^{\gamma-1}, \ou, u^{\gamma+1}, \ldots, u^d) \geq \lambda^{\gamma}(u^1, \ldots, u^{\gamma-1}, \uu, u^{\gamma+1}, \ldots, u^d),
  \end{equation*}
  
  \item a {\em strong rarefaction coordinate} if there exists a positive constant $c > 0$ such that, for all $(u^1, \ldots, u^{\gamma-1}, u^{\gamma+1}, \ldots, u^d) \in [0,1]^{d-1}$, for all $\uu, \ou \in [0,1]$ with $\uu \leq \ou$, 
  \begin{equation}\label{eq:src}
   \lambda^{\gamma}(u^1, \ldots, u^{\gamma-1}, \ou, u^{\gamma+1}, \ldots, u^d) - \lambda^{\gamma}(u^1, \ldots, u^{\gamma-1}, \uu, u^{\gamma+1}, \ldots, u^d) \geq c(\ou-\uu).
  \end{equation}
  Notice that this condition implies that for all $(u^1, \ldots, u^{\gamma-1}, u^{\gamma+1}, \ldots, u^d) \in [0,1]^{d-1}$, for all $0\leq \uu<\ou\leq \uv<\ov\leq 1$,
  \begin{equation}\label{eq:src2}
    \begin{aligned}
      & \frac{1}{\ov-\uv}\int_{w=\uv}^{\ov} \lambda^{\gamma}(u^1, \ldots, u^{\gamma-1},w, u^{\gamma+1}, \ldots, u^d)\dd w-\frac{1}{\ou-\uu}\int_{z=\uu}^{\ou} \lambda^{\gamma}(u^1, \ldots, u^{\gamma-1},z, u^{\gamma+1}, \ldots, u^d)\dd z\\
      & \qquad \geq \frac{1}{(\ov-\uv)(\ou-\uu)}\int_{w=\uv}^{\ov} \int_{z=\uu}^{\ou} c(w-z)\dd z\dd w = \frac{c}{2}\left(\ov+\uv-\ou-\uu\right).
    \end{aligned}
  \end{equation}
\end{itemize}
In Subsection~\ref{ss:rar}, we address rarefaction coordinates and obtain a control on the modulus of continuity of our solutions in terms of the initial data, which follows from a uniform estimate on the MSPD. In particular, we show that, if $\gamma$ is a rarefaction coordinate and $u^{\gamma}_0$ is continuous, then $u^{\gamma}$ is continuous on $[0,+\infty) \times \R$. In Subsection~\ref{ss:strongrar}, we prove that, if $\gamma \in \{1, \ldots, d\}$ is a strong rarefaction coordinate, then $u^{\gamma}$ is continuous on $(0,+\infty)\times\R$ even when $u^{\gamma}_0$ fails to be continuous. 


\subsection{Continuity of rarefaction coordinates}\label{ss:rar} For rarefaction coordinates, we first obtain the following uniform estimate on the MSPD.

\begin{prop}[Discrete estimate for rarefaction coordinates]\label{prop:modulus:discr}
  Under Assumptions~\eqref{ass:LC} and~\eqref{ass:USH}, let $\gamma \in \{1, \ldots, d\}$ be a rarefaction coordinate. Then, for all $n \geq 2$, for all $\x \in \Dnd$, for all $k \in \{1, \ldots, n-1\}$,
  \begin{equation*}
    \inf_{t \geq 0} (\Phi_{k+1}^{\gamma}(\x;t) - \Phi_k^{\gamma}(\x;t)) \geq \frac{1}{\ConstRar} (x_{k+1}^{\gamma}-x_k^{\gamma}),
  \end{equation*}
  where
  \begin{equation}\label{eq:ConstRar}
    \ConstRar := \exp\left((d-1)\frac{\ConstLip}{\ConstUSH}\right) \geq 1.
  \end{equation}
\end{prop}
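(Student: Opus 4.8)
The plan is to track the distance $\Phi_{k+1}^{\gamma}(\x;t) - \Phi_k^{\gamma}(\x;t)$ between two consecutive particles of type $\gamma$ and show that it can only decrease by a controlled multiplicative factor. The key observation is that this distance is nonincreasing as long as the two particles belong to the same cluster or interact only through the sticky dynamics of type $\gamma$, and can strictly decrease only when a particle of another type $\gamma' \neq \gamma$ crosses between them, thereby changing the value of $\omega_{\gamma:k}^{\gamma'}$ or $\omega_{\gamma:k+1}^{\gamma'}$ in the velocity formula~\eqref{eq:vitesses}. First I would fix $\x$ and $k$, and consider the times at which the quantity $u^{\gamma'}[\Phi(\x;\cdot)]$ evaluated at the positions of $\gamma:k$ and $\gamma:k+1$ differs --- equivalently, the times at which some particle $\gamma':k'$ with $\gamma' \neq \gamma$ lies strictly between $\Phi_k^{\gamma}$ and $\Phi_{k+1}^{\gamma}$. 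Between such crossing events, by the Local Sticky Particle Dynamics (Lemma~\ref{lem:locintmspd}\eqref{it:locintmspd:2}) applied to $K = \{k, k+1\}$ and the monotonicity-in-$T$ contraction for the scalar sticky dynamics (Proposition~\ref{prop:contractspd}\eqref{it:contractspd:2} with $p = \infty$, or rather the fact that a gap between consecutive particles in the SPD is nonincreasing), the gap does not decrease.

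The heart of the argument is to quantify the worst-case decrease at a single crossing event. When a particle $\gamma':k'$ (say $\gamma' > \gamma$; the case $\gamma' < \gamma$ is symmetric) enters the interval $(\Phi_k^{\gamma}, \Phi_{k+1}^{\gamma})$ at time $t^*$, the initial velocity of $\gamma:k+1$ drops by an amount controlled by $\ConstLip \cdot (1/n)$ via Assumption~\eqref{ass:LC} and~\eqref{eq:vitesses}, since $\omega_{\gamma:k+1}^{\gamma'}$ changes by $1/n$ while $\omega_{\gamma:k}^{\gamma'}$ is unchanged --- actually, since $\gamma$ is a \emph{rarefaction} coordinate, the relevant inequality goes the right way and I would use the rarefaction property to show the velocities do not reorder in a harmful direction. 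But the crossing particle $\gamma':k'$ sweeps across the gap in time at least $(\Phi_{k+1}^{\gamma}(t^*) - \Phi_k^{\gamma}(t^*))/\ConstUSH$ by Assumption~\eqref{ass:USH} (the relative speed between types $\gamma$ and $\gamma'$ is at least $\ConstUSH$), so the total velocity deficit integrated against the interaction is bounded, and the gap at the end of that sweep is at least $\exp(-\ConstLip/\ConstUSH)$ times the gap at the start. Since there are at most $d-1$ \emph{distinct other types} that can contribute --- and within each type, particles cross one at a time but their cumulative contribution telescopes through the $\omega$-terms which range in $[0,1]$ --- iterating over the types yields the factor $\ConstRar = \exp((d-1)\ConstLip/\ConstUSH)$. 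I would set this up as a Gronwall-type differential inequality: writing $g(t) := \Phi_{k+1}^{\gamma}(\x;t) - \Phi_k^{\gamma}(\x;t)$, I'd show $\dot g(t) \geq -h(t) g(t)$ $\dd t$-a.e., where $h(t)$ is supported on crossing intervals and $\int_0^{\infty} h(t)\,\dd t \leq (d-1)\ConstLip/\ConstUSH$, giving $g(t) \geq g(0)\exp(-\int_0^t h)$.

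The main obstacle I anticipate is the bookkeeping of simultaneous or nested crossing events and the passage from the discrete change in $\omega_{\gamma:k+1}^{\gamma'}$ (which jumps by $1/n$ at each individual particle crossing) to the clean continuous estimate $\int h \leq (d-1)\ConstLip/\ConstUSH$ uniform in $n$. The point is that, for a fixed other type $\gamma'$, the quantity $\omega_{\gamma:k+1}^{\gamma'} - \omega_{\gamma:k}^{\gamma'}$ equals $1/n$ times the number of type-$\gamma'$ particles currently in the gap $(\Phi_k^{\gamma}, \Phi_{k+1}^{\gamma})$, and each such particle spends time at most $g/\ConstUSH$ divided appropriately in that gap; summing the velocity deficits over all type-$\gamma'$ particles present and all times, the total is $\leq \ConstLip/\ConstUSH$ per type regardless of $n$. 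Making this rigorous requires carefully splitting time according to the (finitely many) collision times $\tinter_{\alpha:i,\beta:j}(\x)$, applying the Local SPD reduction on each subinterval, and checking that the rarefaction hypothesis on $\lambda^{\gamma}$ indeed prevents the within-type sticky collisions from ever decreasing the gap; I would also need to handle the case where $\gamma:k$ and $\gamma:k+1$ themselves stick together, where the gap is zero and the claimed inequality is trivial.
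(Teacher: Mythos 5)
Your overall plan is essentially the paper's: the rarefaction hypothesis bounds the velocity deficit $v_{k+1}^{\gamma}-v_k^{\gamma}$ from below by $-\ConstLip/n$ times the number of other-type particles currently between the two, and Assumption~\eqref{ass:USH} controls the duration of each crossing. The paper then carries out the integration as a \emph{discrete} Gronwall over the finitely many exit times $T^+(\gamma':k')$: ordering them as $T_0=t_2\geq T_1\geq\cdots\geq T_M\geq T_{M+1}=0$, it derives $g(T_m)\leq(1+\frac{\ConstLip}{n\ConstUSH})^{m-1}g(t_2)$ with $g(t):=\Phi_{k+1}^{\gamma}(\x;t)-\Phi_k^{\gamma}(\x;t)$, which at $m=M+1$ and $M\leq n(d-1)$ gives exactly $g(0)\leq\ConstRar\, g(t_2)$. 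That argument only needs the one-sided estimate $t-T^-(\gamma':k')\leq g(t)/\ConstUSH$, applied at the single time $t=T^+(\gamma':k')\wedge t_2$.

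Your continuous Gronwall $\dot g\geq -h(t)g(t)$ with $\int_0^\infty h\leq(d-1)\ConstLip/\ConstUSH$ can be made to work, but the bound on $\int h$ is where the real content lies and your sketch has the key inequality backwards. You write that the crossing particle ``sweeps across the gap in time \emph{at least} $g/\ConstUSH$'': since \eqref{ass:USH} is a \emph{lower} bound on the relative speed, it gives an \emph{upper} bound $T^+-T^-\leq g/\ConstUSH$ on the sweep time, and the whole argument hinges on that upper bound. More importantly, the natural $h(t)=\frac{\ConstLip}{n}N(t)/g(t)$ (with $N(t)$ the number of intruders) has $\int_{T^-}^{T^+}\dd t/g(t)$ which \emph{diverges} if you only use the one-sided estimate $g(t)\geq\ConstUSH(t-T^-)$, because of the endpoint $t=T^-$. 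To close the Gronwall you need the two-sided bound
\begin{equation*}
  \forall t\in[T^-(\gamma':k'),T^+(\gamma':k')],\qquad g(t)\geq\ConstUSH\bigl(T^+(\gamma':k')-T^-(\gamma':k')\bigr),
\end{equation*}
obtained by applying Lemma~\ref{lem:tinter} to \emph{both} pairs $(\gamma:k,\gamma':k')$ and $(\gamma:(k+1),\gamma':k')$ and adding; this gives $\int_{T^-}^{T^+}\dd t/g(t)\leq 1/\ConstUSH$ and hence $\int_0^\infty h\leq\frac{\ConstLip}{n}\cdot n(d-1)\cdot\frac{1}{\ConstUSH}$ after summing over the $n(d-1)$ possible intruders. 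Once that bound is in place your Gronwall route is valid and recovers the same constant $\ConstRar$; the paper's discrete iteration is an alternative that sidesteps the divergence issue entirely by never integrating $1/g$.
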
 
\begin{proof}
  Let us fix a rarefaction coordinate $\gamma \in \{1, \ldots, d\}$, and $n \geq 2$, $\x \in \Dnd$, and finally $k \in \{1, \ldots, n-1\}$. For all $\gamma':k' \in \Part$ such that $\gamma' \not= \gamma$, we denote by $[T^-(\gamma':k'), T^+(\gamma':k')]$ the time interval on which the particle $\gamma':k'$ lies between the particles $\gamma:k$ and $\gamma:(k+1)$. More precisely, if $\gamma'<\gamma$, then
  \begin{equation*}
    T^-(\gamma':k') := \begin{cases}
      \tinter_{\gamma':k', \gamma:k}(\x) & \text{if $(\gamma':k', \gamma:k) \in \Rb(\x)$,}\\
      0 & \text{otherwise,}
    \end{cases}
  \end{equation*}
  and
  \begin{equation*}
    T^+(\gamma':k') := \begin{cases}
      \tinter_{\gamma':k', \gamma:(k+1)}(\x) & \text{if $(\gamma':k', \gamma:(k+1)) \in \Rb(\x)$,}\\
      0 & \text{otherwise;}
    \end{cases}
  \end{equation*}
  while for $\gamma'>\gamma$,
  \begin{equation*}
    T^-(\gamma':k') := \begin{cases}
      \tinter_{\gamma:(k+1), \gamma':k'}(\x) & \text{if $(\gamma:(k+1), \gamma':k') \in \Rb(\x)$,}\\
      0 & \text{otherwise,}
    \end{cases}
  \end{equation*}
  and
  \begin{equation*}
    T^+(\gamma':k') := \begin{cases}
      \tinter_{\gamma:k, \gamma':k'}(\x) & \text{if $(\gamma:k, \gamma':k') \in \Rb(\x)$,}\\
      0 & \text{otherwise;}
    \end{cases}
  \end{equation*}
  so that we finally have, for all $t \geq 0$, 
  \begin{equation}\label{eq:TplusTmoins}
    T^-(\gamma':k') \leq t < T^+(\gamma':k') \qquad \text{if and only if} \qquad \Phi_k^{\gamma}(\x;t) \leq \Phi_{k'}^{\gamma'}(\x;t) < \Phi_{k+1}^{\gamma}(\x;t).
  \end{equation}
  
  We first prove the following estimate: for all $\gamma':k' \in \Part$ such that $\gamma' \not= \gamma$,
  \begin{equation}\label{eq:pfmodulus:diffT}
    \forall t \in [T^-(\gamma':k'), T^+(\gamma':k')], \qquad t - T^-(\gamma':k') \leq \frac{1}{\ConstUSH}\left(\Phi_{k+1}^{\gamma}(\x;t) - \Phi_k^{\gamma}(\x;t)\right).
  \end{equation}
  If $T^+(\gamma':k') = 0$ then the inequality is trivial. If $T^+(\gamma':k') > 0$, then assuming that $\gamma'<\gamma$ and using~\eqref{eq:TplusTmoins}, we obtain, for all $t \in [T^-(\gamma':k'), T^+(\gamma':k')]$,
  \begin{equation*}
    \begin{aligned}
      \Phi_{k+1}^{\gamma}(\x;t) & \geq \Phi_{k'}^{\gamma'}(\x; t)\\
      & = \Phi_{k'}^{\gamma'}(\x; T^-(\gamma':k')) + \int_{s=T^-(\gamma':k')}^t v_{k'}^{\gamma'}(\x;s)\dd s\\
      & \geq \Phi_k^{\gamma}(\x; T^-(\gamma':k')) + \int_{s=T^-(\gamma':k')}^t v_{k'}^{\gamma'}(\x;s)\dd s\\
      & = \Phi_k^{\gamma}(\x; t) - \int_{s=T^-(\gamma':k')}^t v_k^{\gamma}(\x;s)\dd s + \int_{s=T^-(\gamma':k')}^t v_{k'}^{\gamma'}(\x;s)\dd s,
    \end{aligned}
  \end{equation*}
  so that
  \begin{equation*}
    \Phi_{k+1}^{\gamma}(\x;t) - \Phi_k^{\gamma}(\x; t) \geq \int_{s=T^-(\gamma':k')}^t \left(v_{k'}^{\gamma'}(\x;s)-v_k^{\gamma}(\x;s)\right)\dd s \geq \ConstUSH(t-T^-(\gamma':k')),
  \end{equation*}
  which yields~\eqref{eq:pfmodulus:diffT}. The case $\gamma'>\gamma$ works similarly.
  
  Let us now fix $0 \leq t_1 \leq t_2$. Certainly,
  \begin{equation}\label{eq:pfmodulus:1}
    \Phi_{k+1}^{\gamma}(\x;t_2) - \Phi_k^{\gamma}(\x;t_2) = \Phi_{k+1}^{\gamma}(\x;t_1) - \Phi_k^{\gamma}(\x;t_1) + \int_{s=t_1}^{t_2} (v_{k+1}^{\gamma}(\x;s) - v_k^{\gamma}(\x;s))\dd s.
  \end{equation}
  For all $s \in [t_1, t_2]$, either $\Phi_k^{\gamma}(\x;s) = \Phi_{k+1}^{\gamma}(\x;s)$, in which case $v_k^{\gamma}(\x;s) = v_{k+1}^{\gamma}(\x;s)$; or there exist $\uk \leq k$ and $\ok \geq k+1$ such that $\clu_k^{\gamma}(\x;s) = \gamma:\uk\cdots k$, $\clu_{k+1}^{\gamma}(\x;s) = \gamma:(k+1)\cdots\ok$, and thanks to the fact that $\gamma$ is a rarefaction coordinate, we have
  \begin{equation*}
    \begin{aligned}
      v_k^{\gamma}(\x;s) & = \int_{\theta=0}^1 \lambda^{\gamma}\left(\omega^1_{\gamma:k}(\Phi(\x;s)), \ldots, (1-\theta)\frac{\uk-1}{n} + \theta \frac{k}{n}, \ldots, \omega^d_{\gamma:k}(\Phi(\x;s))\right)\dd \theta\\
      & \leq \lambda^{\gamma}\left(\omega^1_{\gamma:k}(\Phi(\x;s)), \ldots, \frac{k}{n}, \ldots, \omega^d_{\gamma:k}(\Phi(\x;s))\right),
    \end{aligned}
  \end{equation*}
  as well as
  \begin{equation*}
    \begin{aligned}
      v_{k+1}^{\gamma}(\x;s) & = \int_{\theta=0}^1 \lambda^{\gamma}\left(\omega^1_{\gamma:(k+1)}(\Phi(\x;s)), \ldots, (1-\theta)\frac{k}{n} + \theta \frac{\ok}{n}, \ldots, \omega^d_{\gamma:(k+1)}(\Phi(\x;s))\right)\dd \theta\\
      & \geq \lambda^{\gamma}\left(\omega^1_{\gamma:(k+1)}(\Phi(\x;s)), \ldots, \frac{k}{n}, \ldots, \omega^d_{\gamma:(k+1)}(\Phi(\x;s))\right).
    \end{aligned}
  \end{equation*}
  In both cases, we deduce that
  \begin{equation*}
    \begin{aligned}
      v_{k+1}^{\gamma}(\x;s) - v_k^{\gamma}(\x;s) & \geq \lambda^{\gamma}\left(\omega^1_{\gamma:(k+1)}(\Phi(\x;s)), \ldots, \frac{k}{n}, \ldots, \omega^d_{\gamma:(k+1)}(\Phi(\x;s))\right)\\
      & \quad - \lambda^{\gamma}\left(\omega^1_{\gamma:k}(\Phi(\x;s)), \ldots, \frac{k}{n}, \ldots, \omega^d_{\gamma:k}(\Phi(\x;s))\right)\\
      & \geq -\ConstLip \sum_{\gamma' \not= \gamma} \left|\omega^{\gamma'}_{\gamma:(k+1)}(\Phi(\x;s))-\omega^{\gamma'}_{\gamma:k}(\Phi(\x;s))\right|
    \end{aligned}
  \end{equation*}
  owing to Assumption~\eqref{ass:LC}. Besides, it follows from~\eqref{eq:TplusTmoins} that, for all $\gamma' \not= \gamma$,
  \begin{equation*}
    \left|\omega^{\gamma'}_{\gamma:(k+1)}(\Phi(\x;s))-\omega^{\gamma'}_{\gamma:k}(\Phi(\x;s))\right| = \frac{1}{n} \sum_{k'=1}^n \ind{T^-(\gamma':k') \leq s < T^+(\gamma':k')},
  \end{equation*}
  which we plug into~\eqref{eq:pfmodulus:1} in order to get
  \begin{equation}\label{eq:pfmodulus:2}
    \begin{aligned}
      & \left(\Phi_{k+1}^{\gamma}(\x;t_1) - \Phi_k^{\gamma}(\x;t_1)\right) - \left(\Phi_{k+1}^{\gamma}(\x;t_2) - \Phi_k^{\gamma}(\x;t_2)\right)\\
      & \qquad \leq \frac{\ConstLip}{n} \sum_{\gamma':k' \in \Part, \gamma' \not= \gamma} \int_{s=t_1}^{t_2} \ind{T^-(\gamma':k') \leq s < T^+(\gamma':k')}\dd s\\
      & \qquad \leq \frac{\ConstLip}{n} \sum_{\substack{\gamma':k' \in \Part, \gamma' \not= \gamma\\ T^-(\gamma':k') < t_2, T^+(\gamma':k') > t_1}} \left( T^+(\gamma':k') \wedge t_2 - T^-(\gamma':k') \right)\\
      & \qquad \leq \frac{\ConstLip}{n\ConstUSH} \sum_{\substack{\gamma':k' \in \Part, \gamma' \not= \gamma\\ T^-(\gamma':k') < t_2, T^+(\gamma':k') > t_1}} \left( \Phi_{k+1}^{\gamma}(\x;T^+(\gamma':k') \wedge t_2) - \Phi_k^{\gamma}(\x;T^+(\gamma':k') \wedge t_2) \right),
    \end{aligned}
  \end{equation}
  where the last inequality follows from~\eqref{eq:pfmodulus:diffT}.
  
  Let $M \in \{0, \ldots, n(d-1)\}$ refer to the number of particles $\gamma':k' \in \Part$ such that $\gamma' \not= \gamma$ and $T^-(\gamma':k') < t_2$. Let $T_1 \geq T_2 \geq \cdots \geq T_M$ refer to the nonincreasing reordering of the corresponding quantities $T^+(\gamma':k') \wedge t_2$, and let us define $T_0 := t_2 \geq T_1$ and $T_{M+1} := 0 \leq T_M$. For all $m \in \{1, \ldots, M+1\}$, applying~\eqref{eq:pfmodulus:2} with $t_1 = T_m$ yields
  \begin{equation*}
    \begin{aligned}
      & \Phi_{k+1}^{\gamma}(\x;T_m) - \Phi_k^{\gamma}(\x;T_m) \\
      & \qquad \leq \Phi_{k+1}^{\gamma}(\x;T_0) - \Phi_k^{\gamma}(\x;T_0) + \frac{\ConstLip}{n\ConstUSH} \sum_{m' : T_{m'} > T_m} \left( \Phi_{k+1}^{\gamma}(\x;T_{m'}) - \Phi_k^{\gamma}(\x;T_{m'}) \right)\\
      & \qquad \leq \Phi_{k+1}^{\gamma}(\x;T_0) - \Phi_k^{\gamma}(\x;T_0) + \frac{\ConstLip}{n\ConstUSH} \sum_{m'=1}^{m-1} \left( \Phi_{k+1}^{\gamma}(\x;T_{m'}) - \Phi_k^{\gamma}(\x;T_{m'}) \right),
    \end{aligned}
  \end{equation*}
  which yields, for all $m \in \{1, \ldots, M+1\}$,
  \begin{equation*}
    \Phi_{k+1}^{\gamma}(\x;T_m) - \Phi_k^{\gamma}(\x;T_m) \leq \left(1 + \frac{\ConstLip}{n\ConstUSH}\right)^{m-1} \left(\Phi_{k+1}^{\gamma}(\x;T_0) - \Phi_k^{\gamma}(\x;T_0)\right).
  \end{equation*}
  In particular, for $m=M+1$,
  \begin{equation*}
    \begin{aligned}
      x_{k+1}^{\gamma} - x_k^{\gamma} = \Phi_{k+1}^{\gamma}(\x;0) - \Phi_k^{\gamma}(\x;0) & \leq \left(1 + \frac{\ConstLip}{n\ConstUSH}\right)^M \left(\Phi_{k+1}^{\gamma}(\x;t_2) - \Phi_k^{\gamma}(\x;t_2)\right)\\
      & \leq \ConstRar \left(\Phi_{k+1}^{\gamma}(\x;t_2) - \Phi_k^{\gamma}(\x;t_2)\right).
    \end{aligned}
  \end{equation*}
  Since $t_2 \geq 0$ is arbitrary, the proof is completed.
\end{proof}

Let us recall that, given a bounded function $F : \R \to \R$, the {\em modulus of continuity} $\omega_F$ of $F$ is defined by
\begin{equation*}
  \forall \delta > 0, \qquad \omega_F(\delta) := \sup_{x,y \in \R : |x-y| \leq \delta} |F(x)-F(y)|,
\end{equation*}
see~\cite[p.~80]{billingsley}. In particular, if $F$ is the CDF of the probability measure $m$ on $\R$, then
\begin{equation*}
  \omega_F(\delta) = \sup_{x \in \R} F(x+\delta)-F(x) = \sup_{x \in \R} m((x,x+\delta]).
\end{equation*}
Proposition~\ref{prop:modulus:discr} yields the following control of the modulus of continuity for rarefaction coordinates.

\begin{cor}[Control of the modulus of continuity for rarefaction coordinates]\label{cor:modulus}
  Under the assumptions of Proposition~\ref{prop:modulus:discr}, let $\gamma \in \{1, \ldots, d\}$ be a rarefaction coordinate. Let $\bu$ be a probabilistic solution to the system~\eqref{eq:syst} obtained by Theorem~\ref{theo:existence}, and let $(\bX_v(t))_{t \geq 0}$, $v \in (0,1)$ be the trajectories associated with $\bu$ defined by \eqref{eq:defbX}.
  \begin{enumerate}[label=(\roman*), ref=\roman*]
    \item\label{it:modulus:1} For all $s\geq 0$ and all $\uv, \ov \in (0,1)$ such that $\uv \leq \ov$,
    \begin{equation*}
      \inf_{t \geq s} \left(X^{\gamma}_{\ov}(t) - X^{\gamma}_{\uv}(t)\right) \geq \frac{1}{\ConstRar}\left(X^{\gamma}_{\ov}(s) - X^{\gamma}_{\uv}(s)\right),
    \end{equation*}
    where we recall the definition~\eqref{eq:ConstRar} of $\ConstRar$.
    
    \item\label{it:modulus:2} If for some $s\geq 0$, $u^{\gamma}(s, \cdot)$ is continuous on $\R$, then $u^{\gamma}$ is continuous on $[s,+\infty) \times \R$.
    
    \item\label{it:modulus:3} For all $\delta > 0$, for all $t \geq s\geq 0$, $\omega_{u^{\gamma}(t,\cdot)}(\delta) \leq \omega_{u^{\gamma}(s,\cdot)}(\ConstRar \delta)$.
  \end{enumerate}
\end{cor}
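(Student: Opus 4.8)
The plan is to obtain the three assertions by passing to the large-scale limit in the discrete estimate of Proposition~\ref{prop:modulus:discr}, using the convergence of pseudo-inverses along the approximating sequence.

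\textbf{Step 1: upgrading the discrete estimate.} First I would apply Proposition~\ref{prop:modulus:discr} not to $\x$ but to the configuration $\Phi(\x;s)\in\Dnd$, and use the flow property of the MSPD (Proposition~\ref{prop:mspd}) to rewrite $\Phi_k^{\gamma}(\Phi(\x;s);r)=\Phi_k^{\gamma}(\x;s+r)$. This yields, for every $n\geq 2$, every $\x\in\Dnd$, every $s\geq 0$ and every $k$,
\[
  \inf_{t\geq s}\bigl(\Phi_{k+1}^{\gamma}(\x;t)-\Phi_k^{\gamma}(\x;t)\bigr)\geq \tfrac{1}{\ConstRar}\bigl(\Phi_{k+1}^{\gamma}(\x;s)-\Phi_k^{\gamma}(\x;s)\bigr),
\]
and, summing over consecutive indices and bounding the infimum of a sum from below by the sum of the infima, the same estimate holds with $k,k+1$ replaced by any $k<k'$. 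Since for the empirical CDFs one has $u^{\gamma}[\x](r,\cdot)^{-1}(v)=\Phi_{\lceil nv\rceil}^{\gamma}(\x;r)$, this is exactly \eqref{it:modulus:1} at the discrete level, for quantiles $v<v'$ of the form $k/n<k'/n$.

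\textbf{Step 2: proof of \eqref{it:modulus:1}.} Let $\bu$ be obtained in Theorem~\ref{theo:existence} as the limit of $\upmu[\x(n_{\ell})]$, so that $u^{\gamma}(r,\cdot)=H*\bar{\upmu}^{\gamma}_r$. Let $\mathcal V\subset(0,1)$ be the countable set exhibited in the proof of Proposition~\ref{prop:traj}, outside of which $u^{\gamma}(r,\cdot)^{-1}$ is continuous for \emph{every} $r\geq 0$. For $v<v'$ both outside $\mathcal V$, one has $\lceil n_{\ell}v\rceil<\lceil n_{\ell}v'\rceil$ for $\ell$ large; applying Step~1 along $\x(n_{\ell})$, fixing $t\geq s$ and letting $\ell\to\infty$, Corollary~\ref{cor:cvCDFs} and Lemma~\ref{lem:cvCDF} give $u^{\gamma}[\x(n_{\ell})](r,\cdot)^{-1}(v)\to X^{\gamma}_v(r)$ for $r\in\{s,t\}$, whence $X^{\gamma}_{v'}(t)-X^{\gamma}_{v}(t)\geq \ConstRar^{-1}\bigl(X^{\gamma}_{v'}(s)-X^{\gamma}_{v}(s)\bigr)$. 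Taking the infimum over $t\geq s$ proves \eqref{it:modulus:1} for $v,v'\notin\mathcal V$, and the general case $\uv\leq\ov$ follows by approximating $\uv,\ov$ from below by points outside $\mathcal V$, using that $v\mapsto X^{\gamma}_v(r)$ is nondecreasing and left continuous (Lemma~\ref{lem:pseudoinv}\eqref{it:pseudoinv:0}).

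\textbf{Step 3: proof of \eqref{it:modulus:2}.} If $u^{\gamma}(t,\cdot)$ had an atom at some $x_0$ for some $t\geq s$, then $X^{\gamma}_v(t)=x_0$ for all $v$ in the nondegenerate interval $(u^{\gamma}(t,x_0^-),u^{\gamma}(t,x_0)]$; by \eqref{it:modulus:1}, $v\mapsto X^{\gamma}_v(s)$ would be constant on that interval, which forces $u^{\gamma}(s,\cdot)$ to have an atom, contradicting its continuity. Hence $u^{\gamma}(t,\cdot)$ is continuous for every $t\geq s$. For the joint continuity on $[s,+\infty)\times\R$, I would invoke Remark~\ref{rk:contL1loc} to legitimate the use of Proposition~\ref{prop:traj}, in particular of the representation \eqref{eq:ucdfy} with trajectories $t\mapsto\tilde X^{\gamma}_v(t)$ that are continuous on $[0,+\infty)$. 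Then, for $(t_n,x_n)\to(t_0,x_0)$ with $t_n\geq s$, one has $\ind{\tilde X^{\gamma}_v(t_n)\leq x_n}\to\ind{\tilde X^{\gamma}_v(t_0)\leq x_0}$ whenever $\tilde X^{\gamma}_v(t_0)\neq x_0$, and the set of exceptional $v$ has Lebesgue measure $\Delta_x u^{\gamma}(t_0,x_0)=0$; dominated convergence in \eqref{eq:ucdfy} gives $u^{\gamma}(t_n,x_n)\to u^{\gamma}(t_0,x_0)$.

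\textbf{Step 4: proof of \eqref{it:modulus:3}.} Fix $t\geq s\geq 0$ and $0<\delta'<\delta$. For $x\in\R$, the set $J:=\{v\in(0,1):x<X^{\gamma}_v(t)\leq x+\delta'\}$ is an interval whose Lebesgue measure is $u^{\gamma}(t,x+\delta')-u^{\gamma}(t,x)$, since $\bar{\upmu}^{\gamma}_t=\Unif\circ(X^{\gamma}_{\cdot}(t))^{-1}$ by Lemma~\ref{lem:CDFm1}. For $v_1<v_2$ in the interior of $J$ one has $X^{\gamma}_{v_2}(t)-X^{\gamma}_{v_1}(t)<\delta'$, hence $X^{\gamma}_{v_2}(s)-X^{\gamma}_{v_1}(s)\leq\ConstRar\delta'$ by \eqref{it:modulus:1}; consequently the interior of $J$ is contained in a set of the form $\{v:y<X^{\gamma}_v(s)\leq y+\ConstRar\delta\}$, enlarging the window from length $\ConstRar\delta'$ to $\ConstRar\delta$ leaving room to absorb the two endpoints. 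Therefore $u^{\gamma}(t,x+\delta')-u^{\gamma}(t,x)\leq\omega_{u^{\gamma}(s,\cdot)}(\ConstRar\delta)$; taking the supremum over $x$ gives $\omega_{u^{\gamma}(t,\cdot)}(\delta')\leq\omega_{u^{\gamma}(s,\cdot)}(\ConstRar\delta)$ for every $\delta'<\delta$, and letting $\delta'\uparrow\delta$ concludes, since $\delta'\mapsto\omega_{u^{\gamma}(t,\cdot)}(\delta')$ is left continuous (which follows from the right continuity of the CDF $u^{\gamma}(t,\cdot)$).

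\textbf{Main obstacle.} The delicate parts are the transfer of the estimate to \emph{all} pairs $\uv\leq\ov$ --- not just to a full-measure set --- which relies on the countability of the bad set $\mathcal V$ from Proposition~\ref{prop:traj} together with the left continuity of pseudo-inverses, and, in Step~4, the bookkeeping of half-open intervals, which is what makes the detour through $\delta'<\delta$ necessary.
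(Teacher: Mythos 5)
Your proposal is correct and follows the same overall strategy as the paper: extend the discrete estimate of Proposition~\ref{prop:modulus:discr} to nonconsecutive indices and to a starting time $s>0$ via the flow property, pass to the large-scale limit using the a.e.\ convergence of pseudo-inverses (Lemma~\ref{lem:cvCDF}), and then upgrade from a full-measure set of pairs $(\uv,\ov)$ to all pairs by the left continuity of $v\mapsto X_v^{\gamma}(\cdot)$. Your identification of the countable bad set with the set $\mathcal V$ from the proof of Proposition~\ref{prop:traj} is legitimate, and saves re-establishing countability.

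The differences are cosmetic but worth pointing out. For~\eqref{it:modulus:2}, the paper argues positively ($F$ continuous iff $F^{-1}$ increasing; increasingness of $v\mapsto X^{\gamma}_v(s)$ propagates to time $t$ by~\eqref{it:modulus:1}), whereas you argue contrapositively via atoms; these are the same statement. For the joint continuity the paper just invokes Dini's Theorem, which is shorter than your dominated-convergence detour through~\eqref{eq:ucdfy}, but both are sound. For~\eqref{it:modulus:3}, the paper works directly at window size $\delta$: it picks $\uv,\ov$ with $u^{\gamma}(t,x)<\uv\leq\ov=u^{\gamma}(t,x+\delta)$, gets $X^{\gamma}_{\uv}(t)>x$, $X^{\gamma}_{\ov}(t)\leq x+\delta$ from Lemma~\ref{lem:pseudoinv}\eqref{it:pseudoinv:2}, applies~\eqref{it:modulus:1}, then bounds $u^{\gamma}(t,x+\delta)-\uv$ by the modulus of continuity at time $s$ and lets $\uv\downarrow u^{\gamma}(t,x)$. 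This avoids your $\delta'<\delta$ detour and the endpoint bookkeeping it requires. Your version nevertheless closes correctly, using that $\delta\mapsto\omega_{u^{\gamma}(t,\cdot)}(\delta)$ is left continuous (a consequence of the right continuity of the CDF, as you note). Both are valid proofs of~\eqref{it:modulus:3}.
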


\begin{proof}
  In the proof, for notational simplicity, we do not consider subsequences and assume that $\upmu[\x(n)]$ converges weakly to $\bar{\upmu}$ such that $u^{\gamma}(t,x) = H*\bar{\upmu}^{\gamma}_t(x)$ when $n$ grows to infinity.
  
  \sk
  \noindent {\em Proof of~\eqref{it:modulus:1}.} Let us fix $\uv, \ov \in (0,1)$ with $\uv \leq \ov$, and let $n$ be large enough to ensure that $\lfloor n \uv \rfloor \geq 1$. By Propositions~\ref{prop:mspd} and \ref{prop:modulus:discr}, we have, for all $t \geq s\geq 0$,
  \begin{equation*}
    \Phi^{\gamma}_{\lfloor n \ov \rfloor}(\x(n);t) - \Phi^{\gamma}_{\lfloor n \uv \rfloor}(\x(n);t) \geq \frac{1}{\ConstRar} \left(\Phi^{\gamma}_{\lfloor n \ov \rfloor}(\x(n);s) - \Phi^{\gamma}_{\lfloor n \uv \rfloor}(\x(n);s)\right),
  \end{equation*}
  that is to say
  \begin{equation*}
    \begin{aligned}
      & u^{\gamma}[\x(n)](t,\cdot)^{-1}\left(\frac{\lfloor n \ov \rfloor}{n}\right) - u^{\gamma}[\x(n)](t,\cdot)^{-1}\left(\frac{\lfloor n \uv \rfloor}{n}\right)\\
      & \qquad \geq \frac{1}{\ConstRar} \left(u^{\gamma}[\x(n)](s,\cdot)^{-1}\left(\frac{\lfloor n \ov \rfloor}{n}\right) - u^{\gamma}[\x(n)](s,\cdot)^{-1}\left(\frac{\lfloor n \uv \rfloor}{n}\right)\right).
    \end{aligned}
  \end{equation*}
  Let us fix $t \geq s \geq 0$, $v \in (0,1)$ and $\epsilon > 0$ such that both $u^{\gamma}(s,\cdot)^{-1}$ and $u^{\gamma}(t,\cdot)^{-1}$ are continuous at $v$ and $v-\epsilon$. Then for $n$ large enough,
  \begin{equation*}
    v - \epsilon \leq \frac{\lfloor n v \rfloor}{n} \leq v,
  \end{equation*}
  so that by Lemma~\ref{lem:cvCDF} and the monotonicity of $u^{\gamma}(s,\cdot)^{-1}$,
  \begin{equation*}
    \begin{aligned}
      u^{\gamma}(s,\cdot)^{-1}(v - \epsilon) & \leq \liminf_{n \to +\infty} u^{\gamma}[\x(n)](s,\cdot)^{-1}\left(\frac{\lfloor n v \rfloor}{n}\right)\\
      & \leq \limsup_{n \to +\infty} u^{\gamma}[\x(n)](s,\cdot)^{-1}\left(\frac{\lfloor n v \rfloor}{n}\right) \leq u^{\gamma}(s,\cdot)^{-1}(v),
    \end{aligned}
  \end{equation*}
  and the same inequality holds at time $t$. Letting $\epsilon$ vanish but keeping $u^{\gamma}(s,\cdot)^{-1}$ and $u^{\gamma}(t,\cdot)^{-1}$ continuous at $v-\epsilon$, we deduce that
  \begin{equation*}
    \begin{aligned}
      & \lim_{n \to +\infty} u^{\gamma}[\x(n)](s,\cdot)^{-1}\left(\frac{\lfloor n v \rfloor}{n}\right) = u^{\gamma}(s,\cdot)^{-1}(v) = X^{\gamma}_v(s),\\
      & \lim_{n \to +\infty} u^{\gamma}[\x(n)](t,\cdot)^{-1}\left(\frac{\lfloor n v \rfloor}{n}\right) = u^{\gamma}(t,\cdot)^{-1}(v) = X^{\gamma}_v(t).
    \end{aligned}
  \end{equation*}
  We deduce that $\dd\uv\dd\ov$-almost everywhere on $\{\uv \leq \ov\}$,
  \begin{equation*}
    X^{\gamma}_{\ov}(t) - X^{\gamma}_{\uv}(t) \geq \frac{1}{\ConstRar}\left(X^{\gamma}_{\ov}(s) - X^{\gamma}_{\uv}(s)\right),
  \end{equation*}
  and since $v \mapsto (X^{\gamma}_v(s),X^{\gamma}_v(t))$ is left continuous, this inequality actually holds for all $\uv,\ov \in (0,1)$ with $\uv \leq \ov$.
  
  \sk
  \noindent {\em Proof of~\eqref{it:modulus:2}.} It follows from the definition of the pseudo-inverse of a CDF $F$ that $F$ is continuous if and only if $F^{-1}$ is increasing. As a consequence, if $u^{\gamma}(s,\cdot)$ is continuous, then $v \mapsto X^{\gamma}_v(s)$ is increasing, so that by~\eqref{it:modulus:1}, $v \mapsto X^{\gamma}_v(t)$ is increasing, and therefore $u^{\gamma}(t, \cdot)$ is continuous on $\R$, for all $t \geq s$. By the Dini Theorem, we conclude that $u^{\gamma}$ is continuous on $[s,+\infty) \times \R$.

  \sk
  \noindent {\em Proof of~\eqref{it:modulus:3}.} Let us fix $\delta > 0$ and $t \geq s\geq 0$. For all $x \in \R$ such that $u^{\gamma}(t,x) < u^{\gamma}(t,x+\delta)$, let $\uv, \ov \in (0,1)$ such that $u^{\gamma}(t,x) < \uv \leq \ov =u^{\gamma}(t, x+\delta)$. By~\eqref{it:pseudoinv:2} in Lemma~\ref{lem:pseudoinv}, $X^{\gamma}_{\uv}(t) > x$ and $X^{\gamma}_{\ov}(t) \leq x +\delta$, which, by~\eqref{it:modulus:1}, implies
  \begin{equation*}
    X^{\gamma}_{\ov}(s) - X^{\gamma}_{\uv}(s) < \ConstRar \delta,
  \end{equation*}
  and therefore $u^{\gamma}(s,X^{\gamma}_{\ov}(s)) - u^{\gamma}(s,X^{\gamma}_{\uv}(s)^-) \leq \omega_{u^{\gamma}(s,\cdot)}(\ConstRar \delta)$ so that, by~\eqref{it:pseudoinv:1} in Lemma~\ref{lem:pseudoinv},
  \begin{equation*}
    u^{\gamma}(t, x+\delta) -\uv  \leq \omega_{u^{\gamma}(s,\cdot)}(\ConstRar \delta).
  \end{equation*}
  Taking $\uv$ arbitrarily close to $u^{\gamma}(t,x)$, we deduce that
  \begin{equation*}
    u^{\gamma}(t, x+\delta) - u^{\gamma}(t, x) \leq \omega_{u^{\gamma}(s,\cdot)}(\ConstRar \delta),
  \end{equation*}
  which finally yields
  \begin{equation*}
    \omega_{u^{\gamma}(t, \cdot)}(\delta) \leq \omega_{u^{\gamma}(s,\cdot)}(\ConstRar \delta)
  \end{equation*}
  since $x$ is arbitrary.
\end{proof}


\subsection{Strong rarefaction coordinates}\label{ss:strongrar} We now address strong rarefaction coordinates. A key point in the proof of Proposition~\ref{prop:strongrar} below is the remark that, if $\gamma \in \{1, \ldots, d\}$ is a strong rarefaction coordinate, then, for all $n \geq 2$, for all $\x \in \Dnd$, for all $k \in \{1, \ldots, n-1\}$, the particles $\gamma:k$ and $\gamma:(k+1)$ never meet at positive times in the MSPD started at $\x$. Indeed, these particles have distinct positions just after the initial time and if there existed $t > 0$ such that $\Phi^{\gamma}_k(\x;t) = \Phi^{\gamma}_{k+1}(\x;t)$, then this would imply that there is a particle $\gamma':k'$ of another type colliding with $\gamma:k$ and $\gamma:(k+1)$ at the same time, and such that 
\begin{equation*}
  \Phi^{\gamma}_k(\x;s) < \Phi^{\gamma'}_{k'}(\x;s) < \Phi^{\gamma}_{k+1}(\x;s)
\end{equation*}
shortly before the collision. This is a contradiction with Assumption~\eqref{ass:USH}.

\begin{prop}[Continuity of strong rarefaction coordinates]\label{prop:strongrar}
  Under Assumptions~\eqref{ass:LC} and~\eqref{ass:USH}, let $\gamma \in \{1, \ldots, d\}$ be a strong rarefaction coordinate. Let $\bu$ be a probabilistic solution to~\eqref{eq:syst} obtained by Theorem~\ref{theo:existence}. Then $u^{\gamma}$ is continuous on $(0,+\infty) \times \R$, and if $u^{\gamma}_0$ is continuous on $\R$, then $u^{\gamma}$ is actually continuous on $[0,+\infty) \times \R$.
\end{prop}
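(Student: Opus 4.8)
The second assertion is essentially free: a strong rarefaction coordinate is in particular a rarefaction coordinate, so point~\eqref{it:modulus:2} of Corollary~\ref{cor:modulus} applied with $s=0$ already gives that $u^\gamma$ is continuous on $[0,+\infty)\times\R$ whenever $u^\gamma_0$ is continuous. For the same reason, applying point~\eqref{it:modulus:2} of Corollary~\ref{cor:modulus} with an arbitrarily small $s>0$, the continuity of $u^\gamma$ on $(0,+\infty)\times\R$ will follow as soon as I prove that $u^\gamma(t_0,\cdot)$ has no atom, for every fixed $t_0>0$. I would prove this by contradiction.

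So assume $\Delta_x u^\gamma(t_0,x_0)=\delta>0$ for some $t_0>0$ and $x_0\in\R$; set $a:=u^\gamma(t_0,x_0^-)$ and $b:=a+\delta$, and fix $v_1<v_2$ in $(a,b)$, chosen outside the exceptional null/countable sets relevant below, so that $X^\gamma_{v_1}(t_0)=X^\gamma_{v_2}(t_0)=x_0$. The first step is to propagate this atom backwards in time: since $\gamma$ is a rarefaction coordinate, point~\eqref{it:modulus:1} of Corollary~\ref{cor:modulus} gives $0=X^\gamma_{v_2}(t_0)-X^\gamma_{v_1}(t_0)\geq\ConstRar^{-1}\bigl(X^\gamma_{v_2}(s)-X^\gamma_{v_1}(s)\bigr)$ for all $s\in[0,t_0]$, hence $X^\gamma_{v_1}(s)=X^\gamma_{v_2}(s)=:\xi(s)$ throughout $[0,t_0]$. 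In particular $\xi(s)$ is an atom of $u^\gamma(s,\cdot)$ of mass at least $v_2-v_1$ for every $s\in[0,t_0]$, and $u^\gamma_0$ has an atom at $\xi(0)$.

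Next I would establish a uniform discrete lower bound on the spreading of the corresponding type-$\gamma$ particles. Work along a sequence $\x(n)$ of MSPD initial configurations with $\upmu[\x(n)]\to\bar\upmu$ and $u^\gamma(t,x)=H*\bar\upmu^\gamma_t(x)$ (as in the proof of Corollary~\ref{cor:modulus}), and put $k:=\lfloor nv_1\rfloor$, $K:=\lfloor nv_2\rfloor$. Here I use the key observation recalled before the statement: for a strong rarefaction coordinate, consecutive type-$\gamma$ particles never meet at positive times, so for a.e.\ $s>0$ each type-$\gamma$ particle is alone in its type-$\gamma$ cluster and, as in Subsection~\ref{ss:MSPDsol}, $v^\gamma_j(\x;s)=\tlambda^\gamma_j(\Phi(\x;s))$, the average of $\lambda^\gamma$ over $w\in[(j-1)/n,j/n]$ with the other coordinates frozen at $\omega^{\gamma'}_{\gamma:j}(\Phi(\x;s))$. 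Writing $v^\gamma_K-v^\gamma_k$ by first swapping the frozen coordinates of the $K$-block from $\omega_{\gamma:K}$ to $\omega_{\gamma:k}$ (the error being controlled by Assumption~\eqref{ass:LC}) and then comparing the two $w$-averages with the strong rarefaction inequality~\eqref{eq:src2}, I obtain, for a.e.\ $s>0$,
\begin{equation*}
  v^\gamma_K(\x;s)-v^\gamma_k(\x;s)\geq\frac{c(K-k)}{n}-\ConstLip\sum_{\gamma'\neq\gamma}\left|\omega^{\gamma'}_{\gamma:K}(\Phi(\x;s))-\omega^{\gamma'}_{\gamma:k}(\Phi(\x;s))\right|,
\end{equation*}
and hence, integrating in time and using $\Phi^\gamma_j(\x;t)=x^\gamma_j+\int_0^t v^\gamma_j(\x;s)\dd s$,
\begin{equation*}
  \Phi^\gamma_K(\x(n);t_0)-\Phi^\gamma_k(\x(n);t_0)\geq\bigl(x^\gamma_K(n)-x^\gamma_k(n)\bigr)+\frac{c(K-k)}{n}t_0-\ConstLip\sum_{\gamma'\neq\gamma}\int_0^{t_0}\left|\omega^{\gamma'}_{\gamma:K}(\Phi(\x(n);s))-\omega^{\gamma'}_{\gamma:k}(\Phi(\x(n);s))\right|\dd s.
\end{equation*}

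Finally I would let $n\to+\infty$. As in the proof of Corollary~\ref{cor:modulus}, $\Phi^\gamma_k(\x(n);s)\to X^\gamma_{v_1}(s)$ and $\Phi^\gamma_K(\x(n);s)\to X^\gamma_{v_2}(s)$ (for $v_1,v_2$ outside the relevant null sets), so the left-hand side tends to $X^\gamma_{v_2}(t_0)-X^\gamma_{v_1}(t_0)=0$, the term $x^\gamma_K(n)-x^\gamma_k(n)$ tends to $X^\gamma_{v_2}(0)-X^\gamma_{v_1}(0)=0$, and $\frac{c(K-k)}{n}t_0\to c(v_2-v_1)t_0>0$. The main obstacle, and the only delicate point, is to show that the interference integral tends to $0$: note that $\omega^{\gamma'}_{\gamma:K}(\Phi(\x(n);s))-\omega^{\gamma'}_{\gamma:k}(\Phi(\x(n);s))$ is the mass that the $\gamma'$-marginal of $\upmu[\x(n)]$ at time $s$ puts on $(\Phi^\gamma_k(\x(n);s),\Phi^\gamma_K(\x(n);s)]$; since for $s\in(0,t_0]$ both endpoints converge to the single point $\xi(s)$, a left/right-limit (Portmanteau-type) argument bounds the $\limsup$ in $n$ of this quantity by $\Delta_x u^{\gamma'}(s,\xi(s))$. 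But $\xi(s)$ is an atom of $u^\gamma(s,\cdot)$, and the distinct-atoms property~\eqref{eq:mutgammutgamp} --- valid for probabilistic solutions obtained from Theorem~\ref{theo:existence}, by Proposition~\ref{prop:tightness} --- then forces $\Delta_x u^{\gamma'}(s,\xi(s))=0$ for a.e.\ $s\in(0,t_0]$, so by dominated convergence the integral vanishes in the limit. Passing to the limit in the displayed inequality then gives $0\geq c(v_2-v_1)t_0>0$, a contradiction. Hence $u^\gamma(t_0,\cdot)$ has no atom, which completes the proof.
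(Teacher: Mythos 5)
Your proof is correct, and while it uses the same key ingredients as the paper (the MSPD approximation, the observation that consecutive type-$\gamma$ particles do not meet at positive times, the strong rarefaction inequality~\eqref{eq:src2}, the Lipschitz bound on the interference term, the distinct-atoms property of Proposition~\ref{prop:tightness}, and Corollary~\ref{cor:modulus}), you organize the argument quite differently from the paper. The paper reduces to proving continuity of $u^\gamma(t,\cdot)$ for $\dd t$-almost every $t$, fixes such a \emph{good} time $t$ where the distinct-atoms property holds, and then works on a short interval $[s_0,t]$ with $t-s_0$ small: the smallness of the interference term is obtained \emph{pointwise in $n$} by combining a Portmanteau estimate at time $t$ with the finite-speed-of-propagation estimate (Corollary~\ref{cor:speedprop}) to transport the bound backward. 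You instead argue at every fixed $t_0>0$, first propagate the putative atom all the way back to $[0,t_0]$ via point~\eqref{it:modulus:1} of Corollary~\ref{cor:modulus}, and then control the interference \emph{asymptotically in $n$}: for almost every $s\in(0,t_0]$ the distinct-atoms property forces the limsup of the interference integrand to vanish, and dominated convergence kills the integral. Your version avoids the explicit $\eta$-chase and the use of Corollary~\ref{cor:speedprop}, at the price of the backward atom-propagation step (which is harmless here since strong rarefaction implies rarefaction) and of establishing the continuity of $v\mapsto u^\gamma(s,\cdot)^{-1}(v)$ at $v_1,v_2$ for \emph{every} $s\in[0,t_0]$ — which, as you implicitly note, follows because the atom at $\xi(s)$ covers $(a,b]\supsetneq[v_1,v_2]$ at every $s$, so that $v_1,v_2$ remain strictly interior to the flat interval of $u^\gamma(s,\cdot)^{-1}$ at all times. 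Both approaches then close by integrating the discrete velocity gap of order $c(v_2-v_1)$ to contradict the vanishing of $X^{\gamma,n}_{\ov}(t)-X^{\gamma,n}_{\uv}(t)$.
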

\begin{proof}
  In the proof, for notational simplicity, we do not consider subsequences and assume that $\upmu[\x(n)]$ converges weakly to $\bar{\upmu}$ such that $u^{\gamma}(t,x) = H*\bar{\upmu}^{\gamma}_t(x)$ when $n$ grows to infinity.
  
  Let $\gamma \in \{1, \ldots, d\}$ be a strong rarefaction coordinate and $c$ denote the constant in~\eqref{eq:src}. By the Dini Theorem, it is clear that Proposition~\ref{prop:strongrar} follows if we show that, for all $t>0$, $u^{\gamma}(t,\cdot)$ is continuous on the real line. The point~\eqref{it:modulus:2} in Corollary~\ref{cor:modulus} ensures that it is enough to prove that $u^{\gamma}(t,\cdot)$ is continuous $\dd t$-almost everywhere. Let us check this continuity property by using the MSPD. To this aim, we recall that, by Proposition~\ref{prop:tightness}, $\dd t$-almost everywhere, for all $\gamma' \in \{1, \ldots, d\}$, the jumps of $u^{\gamma}(t,\cdot)$ and $u^{\gamma'}(t,\cdot)$ occur at distinct positions, and fix such a $t > 0$. Let us assume that $u^{\gamma}(t,\cdot)$ is discontinuous, \ie that there exist $\uv, \ov \in (0,1)$, with $\uv < \ov$, such that
  \begin{equation*}
    X^{\gamma}_{\uv}(t) = X^{\gamma}_{\ov}(t) =: y.
  \end{equation*}
  By the choice of $t$, there exists $\eta > 0$ such that
  \begin{equation*}
    \sum_{\gamma' \not= \gamma} |u^{\gamma'}(t,y+\eta) - u^{\gamma'}(t,(y-\eta)^-)| \leq \frac{c(\uv-\ov)}{6\ConstLip},
  \end{equation*}
  and by the Portmanteau Theorem~\cite[Theorem~2.1, p.~16]{billingsley}, there exists $n_1 \geq 1$ such that, for all $n \geq n_1$,
  \begin{equation*}
    \sum_{\gamma' \not= \gamma} |u^{\gamma'}[\x(n)](t,y+\eta) - u^{\gamma'}[\x(n)](t,(y-\eta)^-)| \leq \frac{c(\uv-\ov)}{3\ConstLip}.
  \end{equation*}
  On the other hand, the left continuous function $v \mapsto X^{\gamma}_v(t)$ is constant, and therefore continuous, on $[\uv,\ov)$. Up to replacing $\ov$ with $(\uv+\ov)/2$, we may assume that $v \mapsto X^{\gamma}_v(t)$ is continuous on $[\uv,\ov]$. Defining, for all $v \in (0,1)$, for all $s \geq 0$,
  \begin{equation*}
    X^{\gamma,n}_v(s) := u^{\gamma}[\x(n)](t,\cdot)^{-1}(v),
  \end{equation*}
  we deduce from Lemma~\ref{lem:cvCDF} that
  \begin{equation*}
    \lim_{n \to +\infty} X^{\gamma,n}_{\uv}(t) = \lim_{n \to +\infty} X^{\gamma,n}_{\ov}(t) = y,
  \end{equation*}
  so that there exists $n_2 \geq 1$ such that, for all $n \geq n_2$,
  \begin{equation*}
    y-\frac{\eta}{2} \leq X^{\gamma,n}_{\uv}(t) \leq X^{\gamma,n}_{\ov}(t) \leq y + \frac{\eta}{2}.
  \end{equation*}
  As a consequence, we deduce from Corollary~\ref{cor:speedprop} that
  \begin{equation*}
    \begin{aligned}
      & \sum_{\gamma' \not= \gamma} |u^{\gamma'}[\x(n)](s,X^{\gamma,n}_{\ov}(s)) - u^{\gamma'}[\x(n)](s,X^{\gamma,n}_{\uv}(s))|\\
      & \qquad \leq \sum_{\gamma' \not= \gamma} |u^{\gamma'}[\x(n)](t,y+\eta) - u^{\gamma'}[\x(n)](t,(y-\eta)^-)| \leq \frac{c(\uv-\ov)}{3\ConstLip},
    \end{aligned}
  \end{equation*}
  as soon as $n \geq n_1 \vee n_2$ and $s \leq t$ is such that $t-s \leq \eta / (4\ConstBound{\infty})$. Now if $n$ is large enough to ensure that $\ov-\uv > 1/n$ (say $n \geq n_3$), then the processes $(X^{\gamma,n}_{\uv}(s))_{s \geq 0}$ and $(X^{\gamma,n}_{\ov}(s))_{s \geq 0}$ describe the motion of two distinct particles in the MSPD started at $\x(n)$. In particular, according to the discussion at the beginning of this subsection, for all $s>0$, we have $X^{\gamma,n}_{\uv}(s)<X^{\gamma,n}_{\ov}(s)$ so that $u^{\gamma}[\x(n)](s,X^{\gamma,n}_{\ov}(s)^-)\geq u^{\gamma}[\x(n)](s,X^{\gamma,n}_{\uv}(s))$.
  
  For all $s \in [0,t]$ such that $t-s \leq \eta / (4\ConstBound{\infty})$ and $n \geq n_1 \vee n_2 \vee n_3$, we now recall the definition~\eqref{eq:dlambda} of $\lambda^{\gamma}\{\bu[\x(n)]\}(s,X^{\gamma,n}_v(s))$ and~\eqref{eq:src2} to write
  \begin{equation*}
    \begin{aligned}
      & \lambda^{\gamma}\{\bu[\x(n)]\}(s,X^{\gamma,n}_{\ov}(s)) - \lambda^{\gamma}\{\bu[\x(n)]\}(s,X^{\gamma,n}_{\uv}(s))\\
      & \qquad \geq \frac{c}{2} \left(u^{\gamma}[\x(n)](s,X^{\gamma,n}_{\ov}(s)^-) - u^{\gamma}[\x(n)](s,X^{\gamma,n}_{\uv}(s)^-)\right)\\
      & \qquad \quad + \frac{c}{2} \left(u^{\gamma}[\x(n)](s,X^{\gamma,n}_{\ov}(s)) - u^{\gamma}[\x(n)](s,X^{\gamma,n}_{\uv}(s))\right)\\
      & \qquad \quad - \ConstLip \sum_{\gamma' \not= \gamma} \left|u^{\gamma'}[\x(n)](s,X^{\gamma,n}_{\ov}(s)) - u^{\gamma'}[\x(n)](s,X^{\gamma,n}_{\uv}(s))\right|.
    \end{aligned}
  \end{equation*}
  The last term of the right-hand side is larger than $-c(\uv-\ov)/3$, while Lemma~\ref{lem:pseudoinv} allows to bound the sum of the first two terms by
  \begin{equation*}
    \begin{aligned}
      & \frac{c}{2} \left(u^{\gamma}[\x(n)](s,X^{\gamma,n}_{\ov}(s)^-) - u^{\gamma}[\x(n)](s,X^{\gamma,n}_{\uv}(s)^-) + u^{\gamma}[\x(n)](s,X^{\gamma,n}_{\ov}(s)) - u^{\gamma}[\x(n)](s,X^{\gamma,n}_{\uv}(s))\right)\\
      & \qquad \geq \frac{c}{2} \left(u^{\gamma}[\x(n)](s,X^{\gamma,n}_{\ov}(s)^-) - \uv + \ov - u^{\gamma}[\x(n)](s,X^{\gamma,n}_{\uv}(s))\right) \geq \frac{c}{2} \left(\ov-\uv\right).
    \end{aligned} 
  \end{equation*}
  As a conclusion,
  \begin{equation*}
    \lambda^{\gamma}\{\bu[\x(n)]\}(s,X^{\gamma,n}_{\ov}(s)) - \lambda^{\gamma}\{\bu[\x(n)]\}(s,X^{\gamma,n}_{\uv}(s)) \geq \frac{c}{6} \left(\ov-\uv\right),
  \end{equation*}
  so that, fixing $s_0 \in [0,t)$ such that $t-s_0 \leq \eta / (4\ConstBound{\infty})$ and using~\eqref{eq:MSPD:diff}, we obtain
  \begin{equation*}
    X^{\gamma,n}_{\ov}(t) - X^{\gamma,n}_{\uv}(t) \geq X^{\gamma,n}_{\ov}(s_0) - X^{\gamma,n}_{\uv}(s_0) + (t-s_0) \frac{c}{6} \left(\ov-\uv\right),
  \end{equation*}
  which is a contradiction with the fact that $\lim_{n \to +\infty} X^{\gamma,n}_{\ov}(t) - X^{\gamma,n}_{\uv}(t) = 0$. As a consequence, $u^{\gamma}(t,\cdot)$ is continuous and the proof is completed.
\end{proof}



\part{Stability estimates and construction of semigroup solutions}\label{part:2}

\section{Uniform \texorpdfstring{$\Ls^p$}{Lp} stability estimates on the MSPD}\label{s:stab}

This section is dedicated to the proof of Theorem~\ref{theo:stabMSPD}. In the scalar case, the latter result immediately follows from Proposition~\ref{prop:contractspd}, with $\ConstStab_p = 1$ for all $p \in [1,+\infty]$, and holds under Assumption~\eqref{ass:C} instead of the stronger Assumption~\eqref{ass:LC}.

Throughout the section, we therefore always implicitely assume that $d \geq 2$. The heart of the proof consists in establishing the following $\Ls^1$ and $\Ls^{\infty}$ stability estimates: for all $\x, \y \in \Dnd$,
\begin{equation}\label{eq:estim}
  \begin{aligned}
    & \sup_{t \geq 0} ||\Phi(\x;t) - \Phi(\y;t)||_1 \leq \ConstStab_1 ||\x-\y||_1,\\
    & \sup_{t \geq 0} ||\Phi(\x;t) - \Phi(\y;t)||_{\infty} \leq \ConstStab_{\infty} ||\x-\y||_{\infty},
  \end{aligned}
\end{equation}
for some constants $\ConstStab_1$ and $\ConstStab_{\infty}$ that do not depend on $n$. 

We shall assume first that $\x$ and $\y$ satisfy the following conditions:
\begin{itemize}
  \item they belong to the set of {\em good configurations}, which is introduced in Subsection~\ref{ss:good} and implies that the topology of the trajectories of the associated MSPD can be encoded by elementary algebraic structures,
  \item they are {\em locally homeomorphic} in the sense that the trajectories of the associated MSPD are described by the same algebraic structures. 
\end{itemize}
Under these conditions, we translate the problem of estimating $||\Phi(\x;t) - \Phi(\y;t)||_1$ and $||\Phi(\x;t) - \Phi(\y;t)||_{\infty}$ into a purely algebraic problem, that we solve in Subsection~\ref{ss:locstab} to obtain a local version of~\eqref{eq:estim}. 

We then extend this result to a global estimate by constructing paths joining arbitrary configurations $\x$ and $\y$ in $\Dnd$ that can be decomposed into small portions, on which our local estimate can be applied and then integrated along the path. This requires a detailed analysis of the geometry of the trajectories of the MSPD, that we carry out in Subsection~\ref{ss:interpolation}. 

We finally derive Theorem~\ref{theo:stabMSPD} from~\eqref{eq:estim} using the boundedness of the velocities for the temporal estimate, and a classical interpolation argument to obtain stability in all the $\Ls^p$ distances.


\subsection{Collisions, self-interactions and good configurations}\label{ss:good} This subsection is dedicated to the introduction of a few notions that shall allow us to describe the trajectories of the MSPD. Following the construction made in Section~\ref{s:mspd}, in the MSPD, the velocity of a particle is likely to be modified by two types of events: collisions with particles or clusters of the same type, to which we shall refer as {\em self-interactions}, and collisions with particles or clusters of a different type, to which we shall refer as {\em collisions}. 


\subsubsection{Collisions and self-interactions}\label{sss:collisions} Let $\x \in \Dnd$, with $\Nb(\x) \geq 1$. Recall that, for all $(\alpha:i, \beta:j) \in \Rb(\x)$, the collision time $\tinter_{\alpha:i, \beta:j}(\x) \in (0,+\infty)$ was defined in~\S\ref{sss:mspdcoll}. We now define the associated {\em space-time point of collision}.

\begin{defi}[Space-time point of collision]
  Let $\x \in \Dnd$ with $\Nb(\x) \geq 1$. For all $(\alpha:i, \beta:j) \in \Rb(\x)$, we denote by 
  \begin{equation*}
    \Xiinter_{\alpha:i, \beta:j}(\x) := \left(\xiinter_{\alpha:i, \beta:j}(\x), \tinter_{\alpha:i, \beta:j}(\x)\right) \in \R \times (0,+\infty)
  \end{equation*}
  the space-time point of collision between the particles $\alpha:i$ and $\beta:j$ in the MSPD started at $\x$, where
  \begin{equation*}
    \xiinter_{\alpha:i, \beta:j}(\x) := \Phi_i^{\alpha}(\x; \tinter_{\alpha:i, \beta:j}(\x)) = \Phi_j^{\beta}(\x; \tinter_{\alpha:i, \beta:j}(\x)) \in \R.
  \end{equation*}
\end{defi}

For all $\x \in \Dnd$, we denote by
\begin{equation*}
  \Ibinter(\x) := \{\Xiinter_{\alpha:i, \beta:j}(\x) : (\alpha:i,\beta:j) \in \Rb(\x)\}
\end{equation*}
the set of space-time points of collisions in the MSPD started at $\x$. Of course, $\Ibinter(\x)$ is the empty set if $\Nb(\x)=0$.

We now define space-time points of self-interactions as the space-time points at which two particles of the same type collide with each other. Our definition relies on the notion of {\em left limit} of a cluster.
\begin{defi}[Left limit of clusters]\label{defi:clutm}
  Let $\x \in \Dnd$ and $\gamma:k \in \Part$. For all $t>0$, let 
  \begin{equation*}
    t_0 := \inf\{s \in [0,t) : \forall r \in [s,t), \Nb(\Phi(\x;r)) = \Nb(\Phi(\x;s))\}.
  \end{equation*}
  Then we define the {\em left limit} in $t$ of the cluster $\clu_k^{\gamma}(\x;t)$ by
  \begin{equation*}
    \clu_k^{\gamma}(\x;t^-) := \bigcup_{s \in [t_0,t)} \clu_k^{\gamma}(\x;s).
  \end{equation*}
\end{defi}

The fact that, at time $t > 0$, two particles $\gamma:k$ and $\gamma:k'$ of different types collide with each other is exactly described by the conditions
\begin{equation*}
  \Phi_k^{\gamma}(\x;t) = \Phi_{k'}^{\gamma}(\x;t) =: \xi \qquad \text{and} \qquad \clu_k^{\gamma}(\x;t^-) \not= \clu_{k'}^{\gamma}(\x;t^-),
\end{equation*}
and we shall say that $(\xi, t)$ is a space-time point of self-interaction for $\gamma:k$ and $\gamma:k'$. Let us underline the fact that, while Assumption~\eqref{ass:USH} ensures that two particles of different types can collide at most once, it is generically possible that two particles of the same type stick together into a cluster, then that this cluster be split by a collision with a cluster of another type, and that the two particles collide again with each other. 

\begin{defi}[Space-time points of self-interactions]
  Let $\x \in \Dnd$. For all $\gamma \in \{1, \ldots, d\}$, for all $k,k' \in \{1, \ldots, n\}$, we define $\Ibself_{\gamma:k, \gamma:k'}(\x)$ as the set of space-time points $(\xi,t)$ such that
  \begin{equation*}
    \Phi_k^{\gamma}(\x;t) = \Phi_{k'}^{\gamma}(\x;t) = \xi \qquad \text{and} \qquad \clu_k^{\gamma}(\x;t^-) \not= \clu_{k'}^{\gamma}(\x;t^-).
  \end{equation*}
\end{defi}

Although the set $\Ibself_{\gamma:k, \gamma:k'}(\x)$ may contain more than one element, the particles $\gamma:k$ and $\gamma:k'$ cannot collide more than once between each collision with particles of other types. Since there is only a finite number of such collisions, it is clear that the set $\Ibself_{\gamma:k, \gamma:k'}(\x)$ always contains a finite number of elements.

We finally define the set of space-time points of self-interactions in the MSPD started at $\x$ as
\begin{equation*}
  \Ibself(\x) := \bigcup_{\gamma=1}^d \bigcup_{k,k'=1}^n \Ibself_{\gamma:k, \gamma:k'}(\x).
\end{equation*}


\subsubsection{Configurations with no collision at initial time}\label{sss:defDrond} We define the subset $\Drnd$ of $\Dnd$ as follows.

\begin{defi}[Configurations with no collision at initial time]\label{defi:Drnd}
  The set $\Drnd$ is the set of configurations $\x \in \Dnd$ such that, for all $(\alpha:i, \beta:j) \in (\Part)^2$ with $\alpha < \beta$, then $x_i^{\alpha} \not= x_j^{\beta}$.
\end{defi}
Certainly, $\Drnd$ is a dense open subset of $\Dnd$. Further properties of the set $\Drnd$ are discussed in Lemma~\ref{lem:Drondnd} in Appendix~\ref{app:proofs}.


\subsubsection{Good configurations}\label{sss:goodconf} We now define the set $\Good$ of {\em good configurations} as follows.
\begin{defi}[Good configurations]\label{defi:BinColl}
  The set of {\em good configurations} $\Good \subset \Dnd$ is defined by $\x \in \Good$ if and only if $\x \in \Drnd$ and either $\Nb(\x)=0$, or $\Nb(\x) \geq 1$ and:
  \begin{enumerate}[label=(\roman*), ref=\roman*]
    \item\label{it:BinColl:1} for all $(\alpha:i, \beta:j), (\alpha':i', \beta':j') \in \Rb(\x)$, $\Xiinter_{\alpha:i, \beta:j}(\x) = \Xiinter_{\alpha':i', \beta':j'}(\x)$ implies $\alpha'=\alpha$ and $\beta'=\beta$,
    \item\label{it:BinColl:2} the sets $\Ibinter(\x)$ and $\Ibself(\x)$ are disjoint.
  \end{enumerate}
\end{defi}
The point~\eqref{it:BinColl:1} expresses the fact that collisions are {\em binary}, \ie they never involve particles of more than two types. The point~\eqref{it:BinColl:2} means that two clusters of the same type cannot collide with each other at the same time as they collide with a cluster of a different type: self-interactions are separated from collisions, see Figure~\ref{fig:good}.

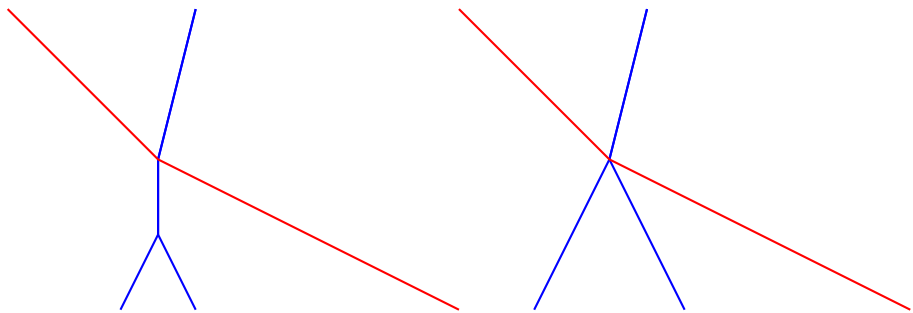
\begin{figure}[ht]
  \begin{pspicture}(12,4)
    \psline[linecolor=blue](1.5,0)(2,1)(2,2)(2.5,4)
    \psline[linecolor=blue](2.5,0)(2,1)(2,2)(2.5,4)
    \psline[linecolor=red](6,0)(2,2)(0,4)
      
    \psline[linecolor=blue](7,0)(8,2)(8.5,4)
    \psline[linecolor=blue](9,0)(8,2)(8.5,4)
    \psline[linecolor=red](12,0)(8,2)(6,4)
      
  \end{pspicture}
  \caption{The left-hand side of the picture shows the trajectory of the MSPD started at a {\em good configuration}, since self-interaction space-time points are separated from collisions. On the contrary, the right-hand side of the picture shows the trajectory of the MSPD started at a configuration that cannot be good, since two distinct clusters of the same type have a self-interaction at the same time as they collide with a cluster of another type.}
  \label{fig:good}
\end{figure}

Subsection~\ref{ss:locstab} provides detailed topological properties of the trajectories of the MSPD started at a good configuration, while Subsection~\ref{ss:interpolation} rather addresses the geometric properties of these trajectories.


\subsection{Local stability estimates}\label{ss:locstab} In this subsection, we establish the estimates~\eqref{eq:estim} for initial configurations $\x$ and $\y$ satisfying particular properties. In order to formulate these properties, we first introduce in~\S\ref{sss:BinColl} some algebraic structures encoding the topology of the trajectory of the MSPD started at good configurations. In particular, we define the {\em collision graph} of a good configuration as the oriented graph describing the order of collisions of each particle in the associated MSPD.

In~\S\ref{sss:locstab}, we say that two good configurations satisfy the Local Homeomorphic condition~{\rm (\hyperref[cond:C]{LHM})} if they have the same collision graph and also satisfy a few more technical properties. For such a choice of inital configurations $\x$ and $\y$, we are able to derive in~\S\ref{sss:prelim} a system of recursive inequations, indexed by the collision graph, on the distances $||\Phi(\x;t)-\Phi(\y;t)||_1$ and $||\Phi(\x;t)-\Phi(\y;t)||_{\infty}$ at the instants of collisions. The transcription of this system into a purely algebraic problem is made in~\S\ref{sss:coupling}, and the latter problem is solved in~\S\ref{sss:totmass}.


\subsubsection{Trajectories of the MSPD started at good configurations}\label{sss:BinColl} We first introduce a few notions to describe the topology of the trajectory of the MSPD started at good configurations.

\sk\paragraph{\bf Collisions} Let $\x \in \Good$, with $\Nb(\x) \geq 1$. We define the equivalence relation $\sim$ on $\Rb(\x)$ by, for all $(\alpha:i, \beta:j), (\alpha':i', \beta':j') \in \Rb(\x)$, 
\begin{equation*}
  (\alpha:i, \beta:j) \sim (\alpha':i', \beta':j') \qquad \text{if and only if} \qquad \Xiinter_{\alpha:i, \beta:j}(\x) = \Xiinter_{\alpha':i', \beta':j'}(\x).
\end{equation*}
Let $\Classe(\x) := \Rb(\x)/\sim$ refer to the set of equivalence classes and $\Mb(\x) \geq 1$ denote the cardinality of $\Classe(\x)$. Each equivalence class $\classe \in \Classe(\x)$ is naturally associated with a space-time point 
\begin{equation*}
  \Xi(\x;\classe) = (\xi(\x;\classe), T(\x;\classe)) \in \R \times (0,+\infty),
\end{equation*}
defined by 
\begin{equation*}
  \Xi(\x;\classe) := \Xiinter_{\alpha:i, \beta:j}(\x) \qquad \text{for any $(\alpha:i, \beta:j) \in \classe$}. 
\end{equation*}

In addition, the point~\eqref{it:BinColl:1} of Definition~\ref{defi:BinColl} implies that, for all $\classe \in \Classe(\x)$, there exist $\alpha, \beta \in \{1, \ldots, d\}$ such that $\alpha < \beta$ and, for all $(\alpha':i', \beta':j') \in \classe$, $\alpha'=\alpha$ and $\beta'=\beta$. Letting
\begin{equation*}
  \begin{aligned}
    & a := \{\alpha:i \in \Part : \exists \beta:j \in \Part, (\alpha:i, \beta:j) \in \classe\},\\
    & b := \{\beta:j \in \Part : \exists \alpha:i \in \Part, (\alpha:i, \beta:j) \in \classe\},
  \end{aligned}
\end{equation*}
it is easily checked that $\classe = a \times b$. Note that, due to the point~\eqref{it:BinColl:2} of Definition~\ref{defi:BinColl}, for all $(\alpha:i, \beta:j) \in a \times b$, $\clu_i^{\alpha}(\x;T(\x;\classe)^-) = a$ and $\clu_j^{\beta}(\x;T(\x;\classe)^-) = b$. However, the clusters $a$ and $b$ can be splitted at the collision if the velocities of the particles after the collision do not satisfy the stability condition~\eqref{eq:stab}, therefore we generally only have $\clu_i^{\alpha}(\x;T(\x;\classe)) \subset a$ and $\clu_j^{\beta}(\x;T(\x;\classe)) \subset b$.

In the sequel, we shall simply refer to the equivalence classes as {\em collisions}, and say that a generical cluster $c$ is {\em involved} in the collision $\classe = a \times b$ if $c = a$ or $c = b$.

If $\x \in \Good$ and $\Nb(\x) = 0$, we simply define $\Mb(\x)=0$.

\sk\paragraph{\bf Collision graph} Let $\x \in \Good$. For all $\gamma:k \in \Part$, we denote by $\Classe_{\gamma:k}(\x)$ the subset of $\Classe(\x)$ composed by the collisions $\classe = a \times b$ such that $\gamma:k \in a \cup b$. Note that $\Classe_{\gamma:k}(\x)$ is empty if the particle $\gamma:k$ does not collide with a particle of another type in the MSPD started at $\x$. Clearly, two distinct collisions $\classe', \classe \in \Classe_{\gamma:k}(\x)$ have distinct instants of collision $T(\x; \classe') \not= T(\x; \classe)$, since two distinct collisions involving the same particle $\gamma:k$ cannot occur at the same time. As a consequence, the increasing order of instants of collisions induces a total order on the set $\Classe_{\gamma:k}(\x)$, to which we shall only refer as the {\em order of collisions}.

For all $\gamma \in \{1, \ldots, d\}$, for all $\classe', \classe \in \Classe(\x)$, we shall write
\begin{equation*}
  \classe' \lto{\gamma} \classe
\end{equation*}
whenever there exists $k \in \{1, \ldots, n\}$ such that $\classe', \classe \in \Classe_{\gamma:k}(\x)$ and $\classe$ is the next element after $\classe'$ for the order of collisions on $\Classe_{\gamma:k}(\x)$. The {\em collision graph} of a good configuration $\x$ is now defined as the oriented graph with set of vertices $\Classe(\x)$, and set of arcs induced by the relations $\classe' \stackrel{\gamma}{\to} \classe$. If $\Nb(\x)=0$ then the collision graph of $\x$ is nothing but the empty graph.

By construction, an arc is naturally associated with at least a type $\gamma \in \{1, \ldots, d\}$, and since Assumption~\eqref{ass:USH} ensures that two particles of distinct type can only collide once, each arc actually has a unique type. Besides, since $\classe' \lto{\gamma} \classe$ implies that $T(\x;\classe') < T(\x; \classe)$, there is no oriented cycle in the collision graph. 

\sk\paragraph{\bf Numbering the collisions} Let us now explain how to number the collisions $\classe \in \Classe(\x)$ in a consistant fashion with the partial order induced by the orientation of the collision graph.

\begin{lem}[Numbering the collisions]\label{lem:numbering}
  Under Assumptions~\eqref{ass:C} and~\eqref{ass:USH}, let $\x \in \Good$, with $M := \Mb(\x) \geq 1$. Then the set of collisions $\Classe(\x)$ can be numbered in such a fashion $\classe_1, \ldots, \classe_M$ that, for all $m', m \in \{1, \ldots, M\}$ satisfying
  \begin{equation*}
    \classe_{m'} \lto{\gamma} \classe_m
  \end{equation*}
  for some $\gamma \in \{1, \ldots, d\}$, then $m' < m$.
\end{lem}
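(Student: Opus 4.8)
The plan is to observe that the collision graph of $\x$ is a finite directed acyclic graph, so it admits a linear extension; moreover such an extension can be produced concretely by ordering the collisions according to their time of occurrence. The key input is the following monotonicity fact, already noted in the discussion preceding the statement: if $\classe' \lto{\gamma} \classe$, then $T(\x;\classe') < T(\x;\classe)$ strictly. To justify this I would unfold the definition of the arc relation: $\classe' \lto{\gamma} \classe$ means that there is a particle $\gamma:k$ with $\classe', \classe \in \Classe_{\gamma:k}(\x)$ and $\classe$ the immediate successor of $\classe'$ for the order of collisions on $\Classe_{\gamma:k}(\x)$; since, by Assumption~\eqref{ass:USH}, two distinct collisions involving the common particle $\gamma:k$ take place at distinct times, and the order of collisions on $\Classe_{\gamma:k}(\x)$ is by definition the increasing order of these times, the inequality $T(\x;\classe') < T(\x;\classe)$ is strict.

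Given this, the construction is immediate. Since $\Classe(\x)$ is a finite set of cardinality $M$, I would enumerate it as $\classe_1, \ldots, \classe_M$ so that $T(\x;\classe_1) \leq T(\x;\classe_2) \leq \cdots \leq T(\x;\classe_M)$, breaking ties between collisions occurring at the same time in an arbitrary but fixed way. Then for any $m', m$ with $\classe_{m'} \lto{\gamma} \classe_m$ for some $\gamma$, the previous step gives $T(\x;\classe_{m'}) < T(\x;\classe_m)$, and since the enumeration is nondecreasing in the collision time and only reorders collisions that share a common time, this forces $m' < m$, which is exactly the assertion of the lemma.

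There is no real obstacle in this argument; the only point that deserves attention is the strictness of $T(\x;\classe') < T(\x;\classe)$ under $\classe' \lto{\gamma} \classe$, which is precisely where Assumption~\eqref{ass:USH} is used (two particles of distinct type collide at most once, whence distinct collisions through a common particle are separated in time). A more combinatorial alternative would be to argue directly from the acyclicity of the collision graph, iteratively extracting a vertex with no incoming arc and assigning it the next available number, but the time-ordering argument above is shorter and makes the role of Assumption~\eqref{ass:USH} transparent.
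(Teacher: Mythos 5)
Your argument is correct, but you take a different route from the paper's formal proof: you order the collisions by nondecreasing collision time and break ties arbitrarily, whereas the paper's proof is the purely combinatorial leaf-extraction argument you mention at the end as an alternative (repeatedly remove a vertex with no incoming arc from the finite acyclic collision graph and number it next). Interestingly, the paper itself includes a remark immediately after the lemma pointing out exactly your time-ordering construction as an effective way to realise the numbering, noting also that collisions sharing a common time cannot involve the same particle, so any tie-breaking works. Both approaches are sound; the time-ordering version is shorter and makes the use of Assumption~\eqref{ass:USH} (distinct collisions through a common particle are separated in time, hence $T(\x;\classe')<T(\x;\classe)$ when $\classe'\lto{\gamma}\classe$) explicit, while the leaf-extraction version is the generic topological-sort argument and uses only acyclicity of the collision graph, which has already been established in the preceding discussion.
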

\begin{proof}
  Let us call {\em leaves} the collisions $\classe \in \Classe(\x)$ such that there is no $\classe' \in \Classe(\x)$ pointing toward $\classe$ in the collision graph. Clearly, $\classe$ is a leaf if and only if, for all $\gamma:k \in \Part$ such that $\classe \in \Classe_{\gamma:k}(\x)$, $\classe$ is the minimal element of $\Classe_{\gamma:k}(\x)$ for the order of collisions. Since there is no oriented cycle in the collision graph, the set of leaves is nonempty, and this property remains true for all nonempty subgraphs of the collision graph obtained by removing a leaf and its adjacent arcs.
  
  We now proceed as follows: we choose one leaf, call it $\classe_1$, remove it from the graph together with all the adjacent arcs, and restart the construction as long as the graph is nonempty. At the $m$-th step, the selected collision $\classe_m$ is minimal, for the order of collisions, among the remaining elements of all the sets $C_{\gamma:k}$ to which it belongs. This ensures that the numbering is consistent with the partial order induced by the orientation of the collision graph.
\end{proof}

\begin{rk}
  An effective way to proceed as in the proof of Lemma~\ref{lem:numbering} is to number the collisions in the increasing order of collision times. If several distinct collisions have the same collision time, then they cannot involve the same particle; therefore, any local ordering of these collisions leads to a numbering satisfying the conclusion of Lemma~\ref{lem:numbering}.
\end{rk}

\sk\paragraph{\bf Last collision time} For all $\gamma:k \in \Part$, we finally define $\Tbmax_{\gamma:k}(\x)$ by 
\begin{equation*}
  \Tbmax_{\gamma:k}(\x) := 0
\end{equation*}
if $\Classe_{\gamma:k}(\x)$ is empty, and
\begin{equation*}
  \Tbmax_{\gamma:k}(\x) := \max_{\classe \in \Classe_{\gamma:k}(\x)} T(\x;\classe)
\end{equation*}
otherwise.


\subsubsection{Statement of the local stability estimates}\label{sss:locstab} Two configurations $\x, \y \in \Dnd$ are said to satisfy the Local Homeomorphic condition~{\rm (\hyperref[cond:C]{LHM})} if: 
\begin{enumerate}[label=(LHM-\arabic*), ref=LHM-\arabic*]
  \item\label{cond:C}\label{cond:C1} $\x, \y \in \Good$ and $\Rb(\x)=\Rb(\y)=:R$,
  \item\label{cond:C2} $\x$ and $\y$ have the same collision graph, which in particular implies $\Classe(\x)=\Classe(\y)=:C$,
  \item\label{cond:C3} for all $\classe \in C$, letting $T^-(\classe) := T(\x;\classe) \wedge T(\y;\classe)$ and $T^+(\classe) := T(\x;\classe) \vee T(\y;\classe)$,
  \begin{enumerate}[label=(\alph*), ref=LHM-3\alph*]
    \item\label{cond:C3a} for all arcs $\classe' \stackrel{\gamma}{\to} \classe$, $T^+(\classe') < T^-(\classe)$,
    \item\label{cond:C3b} if $T^-(\classe) = T(\x;\classe) < T(\y; \classe) = T^+(\classe)$, then for all $(\alpha:i, \beta:j) \in \classe = a \times b$,
    \begin{equation*}
      \forall t \in [T(\x;\classe), T(\y;\classe)], \qquad \left\{\begin{aligned}
        & \clu_i^{\alpha}(\x;t) = \clu_i^{\alpha}(\x;T(\x;\classe)),\\
        & \clu_j^{\beta}(\x;t) = \clu_j^{\beta}(\x;T(\x;\classe)),
      \end{aligned}\right.
    \end{equation*}
    \begin{equation*}
      \forall t \in [T(\x;\classe), T(\y;\classe)), \qquad \left\{\begin{aligned}
        & \clu_i^{\alpha}(\y;t) = \clu_i^{\alpha}(\y;T(\x;\classe)),\\
        & \clu_j^{\beta}(\y;t) = \clu_j^{\beta}(\y;T(\x;\classe)),
      \end{aligned}\right.
    \end{equation*}
    and a symmetric statement holds in the case $T^-(\classe) = T(\y;\classe) < T(\x; \classe) = T^+(\classe)$.
  \end{enumerate}
  The time intervals $[T^-(\classe), T^+(\classe)]$ shall be referred to as {\em collision intervals}.
\end{enumerate}
Condition~\eqref{cond:C3b} only expresses the fact that no self-interaction occurs on collision intervals. 

We are now able to state our local stability estimates.

\begin{prop}[Local stability estimates]\label{prop:locstab}
  Under Assumptions~\eqref{ass:LC} and \eqref{ass:USH}, for all $\x,\y \in \Dnd$ satisfying Condition~{\rm (\hyperref[cond:C]{LHM})},
  \begin{equation*}
    \begin{aligned}
      & \sup_{t \geq 0} || \Phi(\x; t) - \Phi(\y; t) ||_1 \leq \ConstStab_1 ||\x-\y||_1,\\
      & \sup_{t \geq 0} || \Phi(\x; t) - \Phi(\y; t) ||_{\infty} \leq \ConstStab_{\infty} ||\x-\y||_{\infty},
    \end{aligned}
  \end{equation*}
  where $\ConstStab_1$ and $\ConstStab_{\infty}$ are defined in~\eqref{eq:ConstStab}.
\end{prop}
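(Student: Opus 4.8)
The plan is to track the evolution of the pairwise position differences $\Phi_k^{\gamma}(\x;t)-\Phi_k^{\gamma}(\y;t)$ through the sequence of collisions $\classe_1,\ldots,\classe_M$ numbered as in Lemma~\ref{lem:numbering}. Between two consecutive collisions, each system of particles of a fixed type evolves according to a Local Sticky Particle Dynamics (Lemma~\ref{lem:locintmspd}\eqref{it:locintmspd:2}) with \emph{the same} initial velocity vector for $\x$ and $\y$ on the common portion, so Proposition~\ref{prop:contractspd}\eqref{it:contractspd:2} gives that the $\Ls^1$ and $\Ls^\infty$ norms of the differences, restricted to the particles of a given type, do not increase. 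The only way the distance can grow is on the collision intervals $[T^-(\classe),T^+(\classe)]$: on such an interval one configuration has already undergone the collision (its velocities have been updated via $\tblambda$) while the other has not, and the discrepancy in velocities is controlled by Assumption~\eqref{ass:LC} times the discrepancy in the weights $\omega^{\gamma'}$, which in turn is controlled by the $\Ls^1$ distance between the two configurations at that time, while the length $T^+(\classe)-T^-(\classe)$ of the interval is controlled via Lemma~\ref{lem:tinter} (the uniform-strict-hyperbolicity bound $\ConstUSH$) by $||\x-\y||_\infty$ or $||\x-\y||_1$. Condition~\eqref{cond:C3a} guarantees these collision intervals are \emph{pairwise disjoint in time along any given particle's history}, and condition~\eqref{cond:C3b} guarantees no self-interaction muddies the picture inside a collision interval, so the contributions of distinct collisions add up cleanly.

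The key steps, in order, are as follows. First I would set up, for each collision $\classe_m = a\times b$ (with types $\alpha<\beta$), the two scalar quantities measuring the ``error transferred at $\classe_m$'': the $\Ls^1$-type quantity $\delta_1(\classe_m)$ and the $\Ls^\infty$-type quantity $\delta_\infty(\classe_m)$, roughly the jump of $||\Phi(\x;\cdot)-\Phi(\y;\cdot)||$ (restricted to the particles involved) across $[T^-(\classe_m),T^+(\classe_m)]$. Second, using~\eqref{eq:defvmspd}, Lemma~\ref{lem:cluSPD} (or directly the integral representation $\Phi_k^\gamma(\x;t)=x_k^\gamma+\int_0^t v_k^\gamma$), Assumption~\eqref{ass:LC} and the explicit formula~\eqref{eq:vitesses} for $\tlambda_k^\gamma$, I would bound $\delta_\bullet(\classe_m)$ by $\ConstLip$ times the length of the collision interval times the weight-discrepancy, then invoke Lemma~\ref{lem:tinter} to replace the interval length by $\ConstUSH^{-1}$ times the \emph{spatial} distance between the two particles at the collision instant, itself bounded by the ambient $\Ls^1$ (resp. $\Ls^\infty$) distance; this is where the factor $\Ratio=3\ConstLip/\ConstUSH$ enters. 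Third, I would organize these estimates along the collision graph: because of the numbering, the error available at $\classe_m$ is at most $||\x-\y||$ plus the sum of $\delta_\bullet(\classe_{m'})$ over ancestors $\classe_{m'}$ of $\classe_m$ in the graph, and since the graph has out-degree at most $d-1$ in each type and no cycles, a Gronwall-type / geometric-series bookkeeping over the $M$ collisions produces the closed constants $\ConstStab_1$ and $\ConstStab_\infty$ of~\eqref{eq:ConstStab}. Fourth, for $p\in(1,\infty)$ one reads off $\ConstStab_p=\ConstStab_1^{1/p}\ConstStab_\infty^{1-1/p}$ by the standard interpolation $||z||_p^p\le ||z||_1\,||z||_\infty^{p-1}$ applied coordinatewise, but this last point is not needed here since Proposition~\ref{prop:locstab} only asserts the $p=1$ and $p=\infty$ bounds.

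The main obstacle, and the place where real care is required, is the combinatorial bookkeeping in the third step: turning the per-collision recursive inequalities
\[
  \delta_\bullet(\classe_m) \;\le\; \frac{\Ratio}{3}\Big(||\x-\y||_\bullet + \sum_{\classe_{m'}\to\classe_m}\delta_\bullet(\classe_{m'})\cdot(\text{type multiplicities})\Big)
\]
into a uniform-in-$n$, uniform-in-$M$ bound. The subtlety is that a single particle $\gamma:k$ may participate in up to $d-1$ collisions and the errors compound multiplicatively along its collision order, while a collision of type $\beta$ can receive contributions from collisions of all the other $d-1$ types; this is exactly why the constants in~\eqref{eq:ConstStab} are exponential in $d$ and in $\Ratio$. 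One must (i) bound the number of collisions any fixed particle is involved in by $d-1$ (Assumption~\eqref{ass:USH}: each pair of distinct types collides at most once), (ii) control how the $\Ls^1$ error ``spreads'' to the particles not directly involved — here the disjointness~\eqref{cond:C3a} of collision intervals along a particle's history is essential so that distinct collision contributions do not overlap in time and hence add rather than multiply within a given type — and (iii) carefully separate the treatment of the $\Ls^1$ estimate (where one sums over all $dn$ coordinates and the interval lengths are weighted by $\ConstBound{1}$-type quantities) from the $\Ls^\infty$ estimate (where one takes suprema and must propagate a worst-case particle). I expect the $\Ls^1$ estimate to require the more delicate accounting — it is the one whose constant $\ConstStab_1$ carries the double exponential — and the $\Ls^\infty$ estimate then follows by a comparatively short argument using $\ConstStab_1$ as an input, consistent with the formula $\ConstStab_\infty=(1+\Ratio d\,\ConstStab_1)\exp(\Ratio(d-1))$.
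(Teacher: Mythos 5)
Your overall architecture matches the paper's: contract between collisions via Proposition~\ref{prop:contractspd}, bound the growth on collision intervals $[T^-(\classe),T^+(\classe)]$ via Assumption~\eqref{ass:LC} and Lemma~\ref{lem:tinter}, and propagate recursively along the collision graph ordered as in Lemma~\ref{lem:numbering}. However there is a genuine gap in the combinatorial step, and it is exactly the part that carries the weight of $n$-uniformity.

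Your point~(i) claims that a fixed particle $\gamma:k$ is involved in at most $d-1$ collisions ``because each pair of distinct types collides at most once.'' This is false. Assumption~\eqref{ass:USH} guarantees that each pair of \emph{particles} of distinct types crosses at most once, but $\gamma:k$ can cross up to $n$ particles of each of the $d-1$ other types, distributed over up to $n(d-1)$ distinct collisions $\classe\in\Classe_{\gamma:k}(\x)$ (e.g.\ if the oncoming clusters are singletons). If the bound were really $d-1$, the naive geometric series you sketch would close immediately with a constant depending only on $d$ and $\Ratio$; but with the true bound $n(d-1)$, a crude iteration over the $M\le n^2 d(d-1)/2$ collisions would produce an $n$-dependent constant, which is precisely what must be avoided.

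What rescues the estimate in the paper is that the per-collision growth factor is not $\Ratio$ but $\Ratio/n$: the weight $\omega^{\gamma'}_{\gamma:k}$ jumps by at most $|b|/n$ at a collision with a cluster $b$ of size $|b|$, so the velocity discrepancy incurred is $\ConstLip|b|/n$, not $\ConstLip|b|$. Your displayed recursion writes a factor $\Ratio/3$ with unspecified ``type multiplicities,'' which hides (or drops) this $1/n$, and your account of how the contributions compound is additive where the paper's is multiplicative (compare the factor $\bigl(1+\tfrac{\Ratio}{n}|b_m|\bigr)$ in the auxiliary system $e_m$, \S\ref{sss:coupling}). To turn these $O(1/n)$ per-collision factors into an $n$-free constant, the paper does \emph{not} simply sum a geometric series: it introduces an auxiliary system $(e_m)$ coupled to the true differences $d_{\gamma:k}(t)$ (Lemma~\ref{lem:coupling}, whose proof in turn requires Lemma~\ref{lem:Mnu}), and then bounds the total mass $\totmass_M$ by integrating along \emph{type paths} $g\in\Gamma^-_m(\gamma:k)$ with weights $w^-_m(g)$ and histories $H_m(g)$ (Lemmas~\ref{lem:keyestim} and~\ref{lem:L1Linf}). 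The crucial combinatorial facts are Lemma~\ref{lem:propHmg}~(\ref{it:propHmg:2}), $|H_m(g)|\le n(d-1)$ — which, multiplied by $\Ratio/n$, yields the $n$-free factor $\exp(\Ratio(d-1))$ — and $\sum_m |a_m||b_m|\le n^2 d(d-1)/2$, which against the $4\Ratio^2/n^2$ factor gives the double exponential in $\ConstStab_1$. Your proposal does flag ``uniform-in-$n$'' as the obstacle, but the mechanism by which uniformity is actually achieved (the $1/n$ from the discrete weights, paired with a path-weight telescoping rather than a naive Gronwall) is absent, and the $d-1$ bound would have led you to overlook it entirely.
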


The proof of Proposition~\ref{prop:locstab} is detailed in~\S\ref{sss:prelim}, \S\ref{sss:coupling} and~\S\ref{sss:totmass} below. Throughout these paragraphs, we fix $\x, \y \in \Dnd$ satisfying Condition~{\rm (\hyperref[cond:C]{LHM})} and adopt the notations of Condition~{\rm (\hyperref[cond:C]{LHM})} by denoting by $R$ the set $\Rb(\x)=\Rb(\y)$, by $N=\Nb(\x)=\Nb(\y)$ its cardinality, by $C$ the set of collisions $\Classe(\x)=\Classe(\y)$ and by $M=\Mb(\x)=\Mb(\y)$ its cardinality. Besides, Condition~\eqref{cond:C2} ensures that, for all $\gamma:k \in \Part$, the sets $\Classe_{\gamma:k}(\x)$ and $\Classe_{\gamma:k}(\y)$ are the same, with the same order of collisions. These sets are denoted by $C_{\gamma:k}$. We finally denote
\begin{equation*}
  \Tmax_{\gamma:k} := \Tbmax_{\gamma:k}(\x) \vee \Tbmax_{\gamma:k}(\y).
\end{equation*}

For all $t \geq 0$ and $\gamma:k \in \Part$, we define
\begin{equation*}
  d_{\gamma:k}(t) := |\Phi_k^{\gamma}(\x;t) - \Phi_k^{\gamma}(\y;t)|,
\end{equation*}
so that 
\begin{equation*}
  ||\Phi(\x;t) - \Phi(\y;t)||_1 = \frac{1}{n}\sum_{\gamma:k \in \Part} d_{\gamma:k}(t), \qquad ||\Phi(\x;t) - \Phi(\y;t)||_{\infty} = \sup_{\gamma:k \in \Part} d_{\gamma:k}(t).
\end{equation*}

In~\S\ref{sss:prelim} we provide local (in time) estimates on the growth of $d_{\gamma:k}(t)$ inside and outside collision intervals. In~\S\ref{sss:coupling}, we introduce an {\em auxiliary system} that shall allow us to integrate these estimates along the whole sequence of collisions, and we explain how this auxiliary system can be coupled with the family of processes $\{(d_{\gamma:k}(t))_{t \geq 0}, \gamma:k \in \Part\}$. In~\S\ref{sss:totmass}, we obtain a bound on the auxiliary system that is transferred to the original processes $||\Phi(\x;t)-\Phi(\y;t)||_1$ and $||\Phi(\x;t) - \Phi(\y;t)||_{\infty}$ thanks to the coupling argument developed in~\S\ref{sss:coupling}. 


\subsubsection{Preliminary estimates}\label{sss:prelim} Let us first collect the following preliminary estimates on the joint evolution of the family of processes $\{(d_{\gamma:k}(t))_{t \geq 0}, \gamma:k \in \Part\}$.

\begin{lem}[Preliminary estimates]\label{lem:prelim}
  Let the assumptions of Proposition~\ref{prop:locstab} hold.
  \begin{enumerate}[label=(\roman*), ref=\roman*]
    \item\label{it:prelim:1} For all $\classe = a \times b \in C$, for all $t \in [T^-(\classe), T^+(\classe)]$,
    \begin{equation*}
      \begin{aligned}
        & \max_{\alpha:i \in a} d_{\alpha:i}(t) \leq \left(1 + \frac{\Ratio}{n} |b|\right) \frac{1}{|a|}\sum_{\alpha:i \in a} d_{\alpha:i}(T^-(\classe)) + \frac{\Ratio}{n} \sum_{\beta:j \in b} d_{\beta:j}(T^-(\classe)),\\
        & \max_{\beta:j \in b} d_{\beta:j}(t) \leq \left(1 + \frac{\Ratio}{n} |a|\right) \frac{1}{|b|} \sum_{\beta:j \in b} d_{\beta:j}(T^-(\classe)) + \frac{\Ratio}{n} \sum_{\alpha:i \in a} d_{\alpha:i}(T^-(\classe)),
      \end{aligned}
    \end{equation*}
    where we recall that $\Ratio = 3\ConstLip/\ConstUSH$.
    
    \item\label{it:prelim:2} Let $\classe = a \times b \in C$, $c \in \{a, b\}$ and $\gamma:= \type(c)$. For all $\gamma:k \in c$, let us define $t_{\gamma:k}' := T^+(\classe')$ if there exists $\classe' \in C_{\gamma:k}$ such that $\classe' \lto{\gamma} \classe$, and $t_{\gamma:k}' := 0$ otherwise. Then, for all $t \leq T^-(\classe)$,
    \begin{equation*}
      \begin{aligned}
        \sum_{\gamma:k \in c} \ind{t > t_{\gamma:k}'} d_{\gamma:k}(t) & \leq \sum_{\gamma:k \in c} \ind{t > t_{\gamma:k}'} d_{\gamma:k}(t_{\gamma:k}'),\\
        \sup_{\gamma:k \in c} \ind{t > t_{\gamma:k}'} d_{\gamma:k}(t) & \leq \sup_{\gamma:k \in c} \ind{t > t_{\gamma:k}'} d_{\gamma:k}(t_{\gamma:k}').
      \end{aligned}
    \end{equation*}
    \item\label{it:prelim:3} For all $t \geq 0$, for all $\gamma$ in $\{1, \ldots, d\}$,
    \begin{equation*}
      \begin{aligned}
        \sum_{k=1}^n \ind{t > \Tmax_{\gamma:k}} d_{\gamma:k}(t) & \leq \sum_{k=1}^n \ind{t > \Tmax_{\gamma:k}} d_{\gamma:k}(\Tmax_{\gamma:k}),\\
        \sup_{1 \leq k \leq n} \ind{t > \Tmax_{\gamma:k}} d_{\gamma:k}(t) & \leq \sup_{1 \leq k \leq n} \ind{t > \Tmax_{\gamma:k}} d_{\gamma:k}(\Tmax_{\gamma:k}).
      \end{aligned}
    \end{equation*}
  \end{enumerate}
\end{lem}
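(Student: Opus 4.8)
The three estimates in Lemma~\ref{lem:prelim} are of two distinct natures. Part~\eqref{it:prelim:1} is the genuinely new ingredient: it controls the amplification of $d_{\alpha:i}$ and $d_{\beta:j}$ \emph{across} a collision interval $[T^-(\classe),T^+(\classe)]$, where the two clusters $a$ and $b$ meet, partially overlap, and separate again. Parts~\eqref{it:prelim:2} and~\eqref{it:prelim:3} say that \emph{outside} collision intervals, i.e.\ when the particles of a generical cluster $c$ of type $\gamma$ evolve under the Local Sticky Particle Dynamics with a common frozen velocity vector, the $\Ls^1$ and $\Ls^\infty$ norms of the discrepancy restricted to $c$ are nonincreasing. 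The plan is to treat~\eqref{it:prelim:2}--\eqref{it:prelim:3} as essentially immediate consequences of Proposition~\ref{prop:contractspd}, and to spend the real effort on~\eqref{it:prelim:1}.

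\textbf{Parts~\eqref{it:prelim:2} and~\eqref{it:prelim:3}.} For~\eqref{it:prelim:3}: fix $\gamma$ and $t\geq 0$, and let $K:=\{k : \Tmax_{\gamma:k} < t\}$. By definition of $\Tmax_{\gamma:k}$ and Condition~\eqref{cond:C3a}, on the time window after $\max_{k\in K}\Tmax_{\gamma:k}$ none of the particles $\gamma:k$, $k\in K$, collides with a particle of another type, in the MSPD started at either $\x$ or $\y$; moreover the locality of interactions (Lemma~\ref{lem:locintmspd}\eqref{it:locintmspd:2}) shows that $\{\Phi_k^\gamma(\x;s):k\in K\}$ and $\{\Phi_k^\gamma(\y;s):k\in K\}$ both follow the Local Sticky Particle Dynamics on the relevant interval with the \emph{same} initial velocity vector (because the $\omega$-coordinates determining the velocities depend only on $\Rb$, which coincides for $\x$ and $\y$ by~\eqref{cond:C1}, and on which collisions have already happened, which coincides by~\eqref{cond:C2}). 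One must be slightly careful that the particles in $K$ may have had their ``last'' collisions at different times; the standard device is to start the clock at $t_0:=\max_{k\in K}\Tmax_{\gamma:k}$, apply Proposition~\ref{prop:contractspd}\eqref{it:contractspd:2} (the $\rblambda=\rbmu$ case, which gives monotonicity of $\Ls^p$ norms, no amplification) on $[t_0,t]$, and then for the indices $k$ with $\Tmax_{\gamma:k}<t_0$ use monotonicity again on $[\Tmax_{\gamma:k},t_0]$ together with the fact that the sum is over a fixed index set. Actually the clean way is to iterate Proposition~\ref{prop:contractspd}\eqref{it:contractspd:2} over the finitely many collision times in $(\,\cdot\,,t)$: on each maximal subinterval free of collisions of any $\gamma:k$, $k\in K$, the map $t\mapsto\sum_{k\in K}d_{\gamma:k}(t)$ and $t\mapsto\sup_{k\in K}d_{\gamma:k}(t)$ are nonincreasing, and the indicator $\ind{t>\Tmax_{\gamma:k}}$ is constant on each such subinterval. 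Part~\eqref{it:prelim:2} is the same argument localized to the cluster $c=a$ or $c=b$ just before the collision $\classe$: on $[\max_k t'_{\gamma:k},\,T^-(\classe))$ the particles of $c$ (in both systems) follow the Local Sticky Particle Dynamics with identical velocity vectors, since no collision of type crossing $c$ occurs there, so Proposition~\ref{prop:contractspd}\eqref{it:contractspd:2} applies verbatim; the $\ind{t>t'_{\gamma:k}}$ bookkeeping handles the staggered entry times exactly as in~\eqref{it:prelim:3}.

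\textbf{Part~\eqref{it:prelim:1}, the main obstacle.} Here the two clusters interact, so one cannot invoke the $\rblambda=\rbmu$ contraction. The plan: write $a=\{\alpha:\underline i,\dots,\alpha:\overline i\}$ and $b=\{\beta:\underline j,\dots,\beta:\overline j\}$. On $[T^-(\classe),T^+(\classe)]$, Condition~\eqref{cond:C3b} guarantees that in \emph{both} $\x$- and $\y$-dynamics the clusters $a$ and $b$ undergo no self-interaction, so each of the two clusters moves rigidly (as a single block) with its barycentric velocity, determined by the $\omega$-coordinates of its particles; the only thing that changes across the collision is that the $\beta$-block, which was to the right of the $\alpha$-block, crosses it. Consequently the velocity of the $\alpha$-block before the crossing and after the crossing differ only through the contribution of the $|b|/n$ mass of type $\beta$ that moves from one side to the other, and by Assumption~\eqref{ass:LC} the velocity jump of any $\alpha:i\in a$ is at most $\ConstLip|b|/n$ in absolute value, and symmetrically. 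Using Lemma~\ref{lem:tinter} to bound the length of the collision interval, $T^+(\classe)-T^-(\classe)\leq$ (gap just before $T^-(\classe)$)$/\ConstUSH$, and bounding that gap in turn by the discrepancy between the two configurations at time $T^-(\classe)$ — precisely, by $\sum_{\alpha:i\in a}d_{\alpha:i}(T^-(\classe))+\sum_{\beta:j\in b}d_{\beta:j}(T^-(\classe))$ up to averaging, since at time $T^-(\classe)$ the collision has occurred in one system and not yet in the other, so the two blocks are exactly touching in one system and separated by that gap in the other — one gets that $d_{\alpha:i}(t)$ for $t\in[T^-,T^+]$ is at most $d_{\alpha:i}(T^-(\classe))$ plus a term of order $\ConstLip/n$ times (interval length) times $(|b|\cdot$ velocity-jump bound$)$, which after inserting the interval-length bound produces exactly the claimed coefficients $1+\tfrac{\Ratio}{n}|b|$ on the $a$-average and $\tfrac{\Ratio}{n}$ on the $b$-sum, with $\Ratio=3\ConstLip/\ConstUSH$ (the factor $3$ absorbing the several $O(\ConstLip/\ConstUSH)$ contributions: the pre-crossing velocity discrepancy, the gap-to-interval-length conversion, and the rigid-block displacement during the interval). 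The delicate bookkeeping is (a) controlling the gap between blocks at $T^-(\classe)$ purely in terms of the $d_{\gamma:k}(T^-(\classe))$'s — this uses that within a cluster all particles share a position in each system, so the ``gap'' discrepancy is dominated by any single $d_{\alpha:\overline i}(T^-)$ plus $d_{\beta:\underline j}(T^-)$, hence a fortiori by the averages appearing in the statement — and (b) handling the direction of the inequality depending on whether $T^-(\classe)=T(\x;\classe)$ or $=T(\y;\classe)$, which by the symmetry built into~\eqref{cond:C3b} reduces to one case. I expect step~(a), getting the constants to come out exactly as $1+\frac{\Ratio}{n}|b|$ rather than something larger, to be the fussiest point; it is a careful but elementary barycentric estimate, and the rest is routine.
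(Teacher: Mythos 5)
Parts~\eqref{it:prelim:2} and~\eqref{it:prelim:3} of your proposal are essentially the paper's argument: iterate Proposition~\ref{prop:contractspd}\eqref{it:contractspd:2} over the (finitely many) staggered entry times, using locality of interactions (Lemma~\ref{lem:locintmspd}\eqref{it:locintmspd:2}) and Conditions~\eqref{cond:C1}--\eqref{cond:C2} to match initial velocity vectors between the $\x$- and $\y$-dynamics. That part is fine.

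The gap is in Part~\eqref{it:prelim:1}, at the step ``each of the two clusters moves rigidly (as a single block) with its barycentric velocity'' during $[T^-(\classe),T^+(\classe)]$. Assume, as the paper does, $T^-(\classe)=T(\x;\classe)\leq T(\y;\classe)=T^+(\classe)$. In the $\y$-dynamics the collision has not yet occurred, so $a$ and $b$ do indeed move as rigid blocks and $|\Phi^\alpha_i(\y;t)-\xi^\alpha_i(t)|$ reduces to $d_{\alpha:i}(T^-(\classe))$. But in the $\x$-dynamics the collision has already happened at $T^-(\classe)$, and the cluster $a=\clu^\alpha_i(\x;T(\x;\classe)^-)$ may \emph{split} at that moment into several sub-clusters with distinct post-collision velocities (the paper makes this explicit in \S\ref{sss:BinColl}: ``we generally only have $\clu_i^{\alpha}(\x;T(\x;\classe)) \subset a$''). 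Condition~\eqref{cond:C3b} freezes the composition of those sub-clusters on the collision interval — it does not merge them back into one block. Your sketch therefore omits exactly the estimate the paper needs to control the resulting internal spread of $a$: in the paper's notation this is Fact~3, which bounds $|\Phi^\alpha_i(\x;t)-\Phi^\alpha_{i'}(\x;t)|\leq 2(T(\y;\classe)-T(\x;\classe))\ConstLip|b|/n$, and the proof of that fact is genuinely nontrivial — it applies the stability condition through Lemma~\ref{lem:extstab} to the $\y'$-velocities (where the leftmost/rightmost sub-cluster comparison has the right sign) and then transfers to the $\x'$-velocities via the uniform bound $|\tlambda_i^\alpha(\x')-\tlambda_i^\alpha(\y')|\leq\ConstLip|b|/n$ (Fact~2). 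This Fact~3 contributes $2\ConstLip|b|/n$ per unit time, which together with the $\ConstLip|b|/n$ from Fact~2 applied through Proposition~\ref{prop:contractspd}\eqref{it:contractspd:1} gives the coefficient $3\ConstLip|b|/n$, i.e.\ $\Ratio=3\ConstLip/\ConstUSH$. Your informal accounting of where the factor $3$ comes from (``pre-crossing velocity discrepancy, gap-to-interval-length conversion, rigid-block displacement'') therefore does not match the actual decomposition, and the term your sketch silently discards (the non-rigidity of $a$ in the $\x$-dynamics) is precisely the one that requires real work. The bound on the interval length (your step using Lemma~\ref{lem:tinter}) corresponds to the paper's Fact~4 and is fine as you have it.
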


Let us highlight the fact that $t'_{\gamma:k}$ and $\Tmax_{\gamma:k}$ play similar roles in the respective cases~\eqref{it:prelim:2} and~\eqref{it:prelim:3}. 

\begin{proof}[Proof of Lemma~\ref{lem:prelim}]
  We first address~\eqref{it:prelim:1} and fix $\classe = a \times b \in C$. We assume that $T^-(\classe) = T(\x;\classe) \leq T(\y; \classe) = T^+(\classe)$, the opposite case is symmetric. Let us denote $\x' := \Phi(\x;T(\x;\classe))$ and $\y' := \Phi(\y;T(\x;\classe))$. For all $t \in [T(\x;\classe), T(\y;\classe)]$, we first remark that the value of
  \begin{equation*}
    \xi_i^{\alpha}(t) := \Phi_i^{\alpha}(\x;T(\x;\classe)) + (t-T(\x;\classe))v_i^{\alpha}(\y;T(\x;\classe))
  \end{equation*}
  does not depend on the choice of $\alpha:i \in a$. Indeed, $\Phi_i^{\alpha}(\x;T(\x;\classe))$ is the location of the collision $\classe$ in the MSPD started at $\x$, while Conditions~\eqref{cond:C3a} and~\eqref{cond:C3b} ensure that, for all $\alpha:i \in a$, 
  \begin{equation*}
    v_i^{\alpha}(\y;T(\x;\classe)) = \frac{1}{|a|} \sum_{\alpha:i' \in a} \tlambda_{i'}^{\alpha}(\y').
  \end{equation*}
    
  We shall use the following facts, the proofs of which are postponed below.
  
  {\em Fact~1:} the processes $\{\Phi_i^{\alpha}(\x;t) : \alpha:i \in a\}$ and $\{\xi_i^{\alpha}(t) : \alpha:i \in a\}$ follow the Local Sticky Particle Dynamics on $[T(\x;\classe), T(\y;\classe)]$, with respective initial velocity vectors $(\tlambda_i^{\alpha}(\x'))_{\alpha:i \in a}$ and $(\tlambda_i^{\alpha}(\y'))_{\alpha:i \in a}$.
  
  {\em Fact~2:} for all $\alpha:i \in a$,
  \begin{equation*}
    \left|\tlambda_i^{\alpha}(\x') - \tlambda_i^{\alpha}(\y')\right| \leq \frac{\ConstLip}{n}|b|.
  \end{equation*}
  
  {\em Fact~3:} for all $\alpha:i, \alpha:i' \in a$, for all $t \in [T(\x;\classe), T(\y;\classe)]$,
  \begin{equation*}
    |\Phi_i^{\alpha}(\x;t) - \Phi_{i'}^{\alpha}(\x;t)| \leq 2(T(\y;\classe)-T(\x;\classe)) \frac{\ConstLip}{n}|b|.
  \end{equation*}
  
  {\em Fact~4:} the nonnegative quantity $T(\y;\classe)-T(\x;\classe)$ satisfies
  \begin{equation*}
    T(\y;\classe)-T(\x;\classe) \leq \frac{1}{\ConstUSH}\left(\frac{1}{|b|} \sum_{\beta:j \in b} d_{\beta:j}(T(\x;\classe)) + \frac{1}{|a|} \sum_{\alpha:i \in a}d_{\alpha:i}(T(\x;\classe))\right).
  \end{equation*}
  
  Taking these facts for granted, we now fix $\alpha:i \in a$ and write, for all $t \in [T(\x;\classe), T(\y;\classe)]$,
  \begin{equation}\label{eq:pfprelim:01}
    d_{\alpha:i}(t) \leq |\Phi_i^{\alpha}(\x;t)-\xi_i^{\alpha}(t)| + |\Phi_i^{\alpha}(\y;t)-\xi_i^{\alpha}(t)|.
  \end{equation}
  On the one hand, it is clear from Conditions~\eqref{cond:C3a} and~\eqref{cond:C3b} that the value of $\Phi_i^{\alpha}(\y;t)$ does not depend on the choice of $\alpha:i \in a$, and that $\Phi_i^{\alpha}(\y;t)$ and $\xi_i^{\alpha}(t)$ evolve at the same velocity, so that
  \begin{equation}\label{eq:pfprelim:02}
    |\Phi_i^{\alpha}(\y;t)-\xi_i^{\alpha}(t)| = |\Phi_i^{\alpha}(\y;T(\x;\classe))-\xi_i^{\alpha}(T(\x;\classe))| = d_{\alpha:i}(T(\x;\classe)) = \frac{1}{|a|}\sum_{\alpha:i' \in a} d_{\alpha:i'}(T(\x;\classe)).
  \end{equation}
  On the other hand, 
  \begin{equation*}
    |\Phi_i^{\alpha}(\x;t)-\xi_i^{\alpha}(t)| \leq \frac{1}{|a|} \sum_{\alpha:i' \in a} \left\{|\Phi_i^{\alpha}(\x;t)-\Phi_{i'}^{\alpha}(\x;t)| + |\Phi_{i'}^{\alpha}(\x;t)-\xi_{i'}^{\alpha}(t)|\right\},
  \end{equation*}
  and combining Facts~1 and~2 with~\eqref{it:contractspd:1} in Proposition~\ref{prop:contractspd} yields
  \begin{equation*}
    \begin{aligned}
      & \sum_{\alpha:i' \in a} |\Phi_{i'}^{\alpha}(\x;t)-\xi_{i'}^{\alpha}(t)|\\
      & \qquad \leq \sum_{\alpha:i' \in a} |\Phi_{i'}^{\alpha}(\x;T(\x;\classe))-\xi_{i'}^{\alpha}(T(\x;\classe))| + (t-T(\x;\classe))\sum_{\alpha:i' \in a} \left|\tlambda_{i'}^{\alpha}(\x') - \tlambda_{i'}^{\alpha}(\y')\right|\\
      & \qquad \leq \frac{\ConstLip}{n}|a||b|(t-T(\x;\classe)),
    \end{aligned}
  \end{equation*}
  while Fact~3 gives
  \begin{equation*}
    \sum_{\alpha:i' \in a} |\Phi_i^{\alpha}(\x;t)-\Phi_{i'}^{\alpha}(\x;t)| \leq 2\frac{\ConstLip}{n}|a||b|(t-T(\x;\classe)).
  \end{equation*}
  As a consequence of the two previous inequalities,
  \begin{equation*}
    \begin{aligned}
      |\Phi_i^{\alpha}(\x;t)-\xi_i^{\alpha}(t)| & \leq 3\frac{\ConstLip}{n}|b|(t-T(\x;\classe)) \\
      & \leq \frac{\Ratio}{n}\left(\sum_{\beta:j \in b} d_{\beta:j}(T(\x;\classe)) + \frac{|b|}{|a|} \sum_{\alpha:i' \in a}d_{\alpha:i'}(T(\x;\classe))\right),
    \end{aligned}
  \end{equation*}
  where we have used Fact~4 at the second line. Then the conclusion is obtained by plugging this inequality and~\eqref{eq:pfprelim:02} into~\eqref{eq:pfprelim:01}, and the uniform bound on $d_{\beta:j}(t)$, $\beta:j \in b$ in~\eqref{it:prelim:1} follows similarly.
  
  \sk
  We now prove the Facts~1, 2, 3 and~4 used above.
  
  \noindent {\em Proof of Fact~1:} The process $\{\xi_i^{\alpha}(t) : \alpha:i \in a\}$ follows the Local Sticky Particle Dynamics on $[T(\x;\classe), T(\y;\classe)]$, with initial velocity vector $(\tlambda_i^{\alpha}(\y'))_{\alpha:i \in a}$, as a straightforward consequence of its definition. Let us use~\eqref{it:locintmspd:2} in Lemma~\ref{lem:locintmspd} to prove that the process $\{\Phi_i^{\alpha}(\x;t) : \alpha:i \in a\}$ follows the Local Sticky Particle Dynamics on $[T(\x;\classe), T(\y;\classe)]$. By Condition~\eqref{cond:C3b}, for all $\alpha:i \in a$, $\clu_i^{\alpha}(\x;T(\y;\classe)) \subset a$, and the set $\Ttau_{\alpha:i}(\x)$ as is defined in~\eqref{eq:tau} has an empty intersection with $(T(\x;\classe), T(\y;\classe))$. As a consequence, Lemma~\ref{lem:locintmspd} asserts that the process $\{\Phi_i^{\alpha}(\x;t) : \alpha:i \in a\}$ follows the Local Sticky Particle Dynamics on $[T(\x;\classe), T(\y;\classe)]$, with initial velocity vector $(\tlambda_i^{\alpha}(\x'))_{\alpha:i \in a}$.
  
  \noindent {\em Proof of Fact~2:} Let us first check that, for all $\alpha:i \in a$,
  \begin{itemize}
    \item for all $\gamma \not\in \{\alpha, \beta\}$, $\omega^{\gamma}_{\alpha:i}(\x') = \omega^{\gamma}_{\alpha:i}(\y')$,
    \item $|\omega^{\beta}_{\alpha:i}(\x')-\omega^{\beta}_{\alpha:i}(\y')| \leq |b|/n$.
  \end{itemize}
  By the definition of $\omega^{\gamma}_{\alpha:i}(\x')$ and $\omega^{\gamma}_{\alpha:i}(\y')$, the first point above easily follows if we prove that, for all $\gamma:k \in \Part$ such that $\gamma \not\in \{\alpha, \beta\}$ (say $\gamma < \alpha$),
  \begin{equation*}
    x'^{\gamma}_k < x'^{\alpha}_i \qquad \text{if and only if} \qquad y'^{\gamma}_k < y'^{\alpha}_i.
  \end{equation*}
  But let us assume for instance that $x'^{\gamma}_k < x'^{\alpha}_i$ and $y'^{\gamma}_k \geq y'^{\alpha}_i$. Then by Condition~\eqref{cond:C3a}, the collision with $\gamma:k$ comes after $\classe$ in $\Classe_{\alpha:i}(\x)$, while it is either not in $\Classe_{\alpha:i}(\y)$, or it comes before $\classe$. This is a contradiction with Condition~\eqref{cond:C2}. As far as the second point above is concerned, the same argument shows that the particles $\beta:j$ that do not belong to $b$ have the same contribution in 
  \begin{equation*}
    \omega^{\beta}_{\alpha:i}(\x') = \frac{1}{n}\sum_{j=1}^n \ind{x'^{\alpha}_i \geq x'^{\beta}_j}
  \end{equation*}
  and in 
  \begin{equation*}
    \omega^{\beta}_{\alpha:i}(\y') = \frac{1}{n}\sum_{j=1}^n \ind{y'^{\alpha}_i \geq y'^{\beta}_j},
  \end{equation*}
  which is enough to ensure that $|\omega^{\beta}_{\alpha:i}(\x')-\omega^{\beta}_{\alpha:i}(\y')| \leq |b|/n$. As a consequence, it follows from the definition of $\tlambda$ and Assumption~\eqref{ass:LC} that, for all $\alpha:i \in a$,
  \begin{equation*}
    \left|\tlambda_i^{\alpha}(\x') - \tlambda_i^{\alpha}(\y')\right| \leq \frac{\ConstLip}{n}|b|,
  \end{equation*}
  which completes the proof of Fact~2.

  \noindent {\em Proof of Fact~3:} Let us write $a = \alpha:\ui\cdots\oi$ and first remark that, for all $\alpha:i, \alpha:i' \in a$, for all $t \in [T(\x;\classe), T(\y;\classe)]$,
  \begin{equation*}
    |\Phi_i^{\alpha}(\x;t)-\Phi_{i'}^{\alpha}(\x;t)| \leq \Phi_{\oi}^{\alpha}(\x;t) - \Phi_{\ui}^{\alpha}(\x;t),
  \end{equation*}
  and, by Conditions~\eqref{cond:C3a} and~\eqref{cond:C3b},
  \begin{equation*}
    \begin{aligned}
      \Phi_{\oi}^{\alpha}(\x;t) - \Phi_{\ui}^{\alpha}(\x;t) & = (t-T(\x;\classe))\left(v_{\oi}^{\alpha}(\x;T(\x;\classe))-v_{\ui}^{\alpha}(\x;T(\x;\classe))\right)\\
      & \leq (T(\y;\classe)-T(\x;\classe))\left(v_{\oi}^{\alpha}(\x;T(\x;\classe))-v_{\ui}^{\alpha}(\x;T(\x;\classe))\right).
    \end{aligned}
  \end{equation*}
  If $\clu_{\ui}^{\alpha}(\x;T(\x;\classe)) = \clu_{\oi}^{\alpha}(\x;T(\x;\classe))$, then $v_{\oi}^{\alpha}(\x;T(\x;\classe))=v_{\ui}^{\alpha}(\x;T(\x;\classe))$ and Fact~3 is trivial. Otherwise, let us write $\clu_{\ui}^{\alpha}(\x;T(\x;\classe)) = \alpha:\ui\cdots\ui'$ and $\clu_{\oi}^{\alpha}(\x;T(\x;\classe)) = \alpha:\oi'\cdots\oi$, with $\ui \leq \ui' < \oi' \leq \oi$. Then
  \begin{equation*}
    \begin{aligned}
      0 & \leq v_{\oi}^{\alpha}(\x;T(\x;\classe))-v_{\ui}^{\alpha}(\x;T(\x;\classe))\\
      & \leq \frac{1}{\ui'-\ui+1}\sum_{i=\ui}^{\ui'} \tlambda_i^{\alpha}(\x') - \frac{1}{\oi-\oi'+1}\sum_{i=\oi'}^{\oi} \tlambda_i^{\alpha}(\x')\\
      & = \frac{1}{\ui'-\ui+1}\sum_{i=\ui}^{\ui'} \tlambda_i^{\alpha}(\x') - \frac{1}{\ui'-\ui+1}\sum_{i=\ui}^{\ui'} \tlambda_i^{\alpha}(\y')\\
      & \quad + \frac{1}{\ui'-\ui+1}\sum_{i=\ui}^{\ui'} \tlambda_i^{\alpha}(\y') - \frac{1}{\oi-\oi'+1}\sum_{i=\oi'}^{\oi} \tlambda_i^{\alpha}(\y')\\
      & \quad + \frac{1}{\oi-\oi'+1}\sum_{i=\oi'}^{\oi} \tlambda_i^{\alpha}(\y') - \frac{1}{\oi-\oi'+1}\sum_{i=\oi'}^{\oi} \tlambda_i^{\alpha}(\x')\\
      & \leq \frac{1}{\ui'-\ui+1}\sum_{i=\ui}^{\ui'} |\tlambda_i^{\alpha}(\x')-\tlambda_i^{\alpha}(\y')| + \frac{1}{\oi-\oi'+1}\sum_{i=\oi'}^{\oi} |\tlambda_i^{\alpha}(\y') - \tlambda_i^{\alpha}(\x')|,
    \end{aligned}
  \end{equation*}
  where Conditions~\eqref{cond:C3a} and~\eqref{cond:C3b} allow us to use Lemma~\ref{lem:extstab} and get
  \begin{equation*}
    \frac{1}{\ui'-\ui+1}\sum_{i=\ui}^{\ui'} \tlambda_i^{\alpha}(\y') - \frac{1}{\oi-\oi'+1}\sum_{i=\oi'}^{\oi} \tlambda_i^{\alpha}(\y') \leq 0.
  \end{equation*}
  We now deduce from Fact~2 that each sum in the right-hand side above is lower than $2|b|\ConstLip/n$, which completes the proof of Fact~3.
  
  \noindent {\em Proof of Fact~4:} Note that $\Phi_i^{\alpha}(\y;T(\y;\classe)) = \Phi_j^{\beta}(\y; T(\y;\classe))$, which rewrites
  \begin{equation*}
    \Phi_j^{\beta}(\y;T(\x;\classe)) - \Phi_i^{\alpha}(\y;T(\x;\classe)) = \int_{s=T(\x;\classe)}^{T(\y;\classe)} (v_i^{\alpha}(\y;s) - v_j^{\beta}(\y;s))\dd s
  \end{equation*}
  owing to~\eqref{eq:vspd}. On account of~\eqref{eq:typeencadrelambda}, the right-hand side above is larger than $\ConstUSH(T(\y;\classe)-T(\x;\classe))$, so that
  \begin{equation*}
    \begin{aligned}
      T(\y;\classe)-T(\x;\classe) & \leq \frac{1}{\ConstUSH}\left(\Phi_j^{\beta}(\y;T(\x;\classe)) - \Phi_i^{\alpha}(\y;T(\x;\classe))\right)\\
      & = \frac{1}{\ConstUSH}\left(\Phi_j^{\beta}(\y;T(\x;\classe))-\Phi_j^{\beta}(\x;T(\x;\classe))+\Phi_i^{\alpha}(\x;T(\x;\classe))- \Phi_i^{\alpha}(\y;T(\x;\classe))\right)\\
      & \leq \frac{1}{\ConstUSH}\left(|\Phi_j^{\beta}(\y;T(\x;\classe))-\Phi_j^{\beta}(\x;T(\x;\classe))|+|\Phi_i^{\alpha}(\x;T(\x;\classe))- \Phi_i^{\alpha}(\y;T(\x;\classe))|\right)\\
      & = \frac{1}{\ConstUSH}\left(d_{\beta:j}(T(\x;\classe)) + d_{\alpha:i}(T(\x;\classe))\right),
    \end{aligned}
  \end{equation*}
  where we have used the fact that $\Phi_j^{\beta}(\x;T(\x;\classe))=\Phi_i^{\alpha}(\x;T(\x;\classe))$. Taking the sum of both sides on $(\alpha:i, \beta:j) \in a \times b$ and then dividing by $|a||b|$, we obtain
  \begin{equation*}
    T(\y;\classe)-T(\x;\classe) \leq \frac{1}{\ConstUSH}\left(\frac{1}{|b|} \sum_{\beta:j \in b} d_{\beta:j}(T(\x;\classe)) + \frac{1}{|a|} \sum_{\alpha:i \in a}d_{\alpha:i}(T(\x;\classe))\right),
  \end{equation*}
  which completes the proof of Fact~4 and~\eqref{it:prelim:1} at the same time.
  
  \sk
  \noindent {\em Proof of~\eqref{it:prelim:2} and~\eqref{it:prelim:3}.} Let us fix $\classe = a \times b \in C$, $c \in \{a, b\}$ and $\gamma:= \type(c)$. As a preliminary step, let us point out the fact that, for all $\gamma:k \in c$, the quantity $t'_{\gamma:k}$ defined above easily rewrites
  \begin{equation*}
    t'_{\gamma:k} = \max\{(T^-(\classe))^- \wedge \Ttau_{\gamma:k}(\x), (T^-(\classe))^- \wedge \Ttau_{\gamma:k}(\y)\},
  \end{equation*}
  where we recall the definition~\eqref{eq:T-wedgetau} of $T^- \wedge \Ttau_{\gamma:k}(\x)$ and $T^- \wedge \Ttau_{\gamma:k}(\y)$. As a consequence, on the time interval $(t'_{\gamma:k}, T^-(\classe))$, the particle $\gamma:k$ does not collide with any particle of another type, neither in the MSPD started at $\x$ nor in the MSPD started at $\y$.
  
  Let us denote by $t'_1 < \cdots < t'_r$ the ordered elements of the set $\{t'_{\gamma:k}, \gamma:k \in c\}$. For all $l \in \{1, \ldots, r\}$, we denote by $c_l$ the set of particles $\gamma:k$ such that $t'_{\gamma:k} = t'_l$. We also define $t'_{r+1} := T^-(\classe) > t'_r$. Thanks to Condition~\eqref{cond:C3b}, for all $l \in \{1, \ldots, r\}$, the processes
  \begin{equation*}
    \{\Phi_k^{\gamma}(\x;t) : \gamma:k \in c_1 \sqcup \cdots \sqcup c_l\} \qquad \text{and} \qquad \{\Phi_k^{\gamma}(\y;t) : \gamma:k \in c_1 \sqcup \cdots \sqcup c_l\}
  \end{equation*}
  follow the Local Sticky Particle Dynamics on $[t'_l, t'_{l+1}]$, with the same initial velocity vectors. As a consequence,~\eqref{it:contractspd:1} in Proposition~\ref{prop:contractspd} yields, for all $t \in (t'_l, t'_{l+1}]$,
  \begin{equation*}
    \begin{aligned}
      \sum_{\gamma:k \in c} \ind{t > t_{\gamma:k}'} d_{\gamma:k}(t) & = \sum_{\gamma:k \in c_1 \sqcup \cdots \sqcup c_l} d_{\gamma:k}(t)\\
      & \leq \sum_{\gamma:k \in c_1 \sqcup \cdots \sqcup c_l} d_{\gamma:k}(t'_l) = \sum_{\gamma:k \in c_1 \sqcup \cdots \sqcup c_{l-1}} d_{\gamma:k}(t'_l) + \sum_{\gamma:k \in c_l} d_{\gamma:k}(t'_{\gamma:k}),
    \end{aligned}
  \end{equation*}
  therefore we obtain by induction that, for all $t \leq T^-(\classe)$,
  \begin{equation*}
    \sum_{\gamma:k \in c} \ind{t > t_{\gamma:k}'} d_{\gamma:k}(t) \leq \sum_{l=1}^r \sum_{\gamma:k \in c_l} d_{\gamma:k}(t'_{\gamma:k}) = \sum_{\gamma:k \in c} d_{\gamma:k}(t'_{\gamma:k}).
  \end{equation*}
  Applying~\eqref{it:contractspd:2} in Proposition~\ref{prop:contractspd} instead of~\eqref{it:contractspd:1}, we similarly obtain
  \begin{equation*}
    \sup_{\gamma:k \in c} \ind{t > t_{\gamma:k}'} d_{\gamma:k}(t) \leq \sup_{\gamma:k \in c} d_{\gamma:k}(t'_{\gamma:k}).
  \end{equation*}
     
  Finally,~\eqref{it:prelim:3} is obtained by the same arguments as~\eqref{it:prelim:2}: fixing $\gamma \in \{1, \ldots, d\}$ and denoting by $T_1 < \cdots < T_r$ the ordered elements of the set $\{\Tmax_{\gamma:k}, k \in \{1, \ldots, n\}\}$, we obtain that, for all $l \in \{1, \ldots, r\}$, the processes $\{\Phi_k^{\gamma}(\x;t) : \Tmax_{\gamma:k} < T_l\}$ and $\{\Phi_k^{\gamma}(\y;t) : \Tmax_{\gamma:k} < T_l\}$ follow the Local Sticky Particle Dynamics on $[T_l, T_{l+1})$ (where we take the convention that $T_{r+1} = +\infty$), with the same initial velocity vector. The conclusion follows in the same fashion as for~\eqref{it:prelim:2}.
\end{proof}


\subsubsection{Coupling with an auxiliary system}\label{sss:coupling} Let us fix a numbering of the collisions $\classe_1, \ldots, \classe_M$ as is provided by Lemma~\ref{lem:numbering}. Following the estimations of Lemma~\ref{lem:prelim}, for all $m \in \{1, \ldots, M\}$, for all $\gamma:k \in a_m \cup b_m$, $d_{\gamma:k}(T^+(\classe_m))$ is intuitively expected to be bounded by the quantity $e_m(\gamma:k)$ defined as follows: for all $\gamma:k \in \Part$, $e_0(\gamma:k) := d_{\gamma:k}(0)$; while, for all $m \in \{1, \ldots, M\}$,
\begin{itemize}
  \item for all $\alpha:i \in a_m$,
  \begin{equation*}
    e_m(\alpha:i) := \left(1+\frac{\Ratio}{n}|b_m|\right)\frac{1}{|a_m|}\sum_{\alpha:i' \in a_m} e_{m-1}(\alpha:i') + \frac{\Ratio}{n}\sum_{\beta:j \in b_m} e_{m-1}(\beta:j),
  \end{equation*}
  \item for all $\beta:j \in b_m$,
  \begin{equation*}
    e_m(\beta:j) := \left(1+\frac{\Ratio}{n}|a_m|\right)\frac{1}{|b_m|}\sum_{\beta:j' \in b_m} e_{m-1}(\beta:j') + \frac{\Ratio}{n}\sum_{\alpha:i \in a_m} e_{m-1}(\alpha:i),
  \end{equation*}
  \item for all $\gamma:k \not\in a_m \cup b_m$,
  \begin{equation*}
    e_m(\gamma:k) := e_{m-1}(\gamma:k).
  \end{equation*}
\end{itemize}
The sequence of functions $(e_m)_{0 \leq m \leq M}$ on $\Part$ is called the {\em auxiliary system}. Let us note that we have the following monotonicity relation: for all $m \in \{1, \ldots, M\}$,
\begin{equation}\label{eq:EmEmm1:new}
  \sum_{\alpha:i \in a_m} e_m(\alpha:i) \geq \sum_{\alpha:i \in a_m} e_{m-1}(\alpha:i), \qquad \sum_{\beta:j \in b_m} e_m(\beta:j) \geq \sum_{\beta:j \in b_m} e_{m-1}(\beta:j).
\end{equation}
The {\em total mass} of the auxiliary system is the nondecreasing sequence $(\totmass_m)_{0 \leq m \leq M}$ defined by
\begin{equation*}
  \totmass_m := \sum_{\gamma:k \in \Part} e_m(\gamma:k);
\end{equation*}
in particular,
\begin{equation}\label{eq:totmass0}
  ||\x-\y||_1 = \frac{\totmass_0}{n}.
\end{equation}

The coupling between the auxiliary system and the family of processes $\{(d_{\gamma:k}(t))_{t \geq 0}, \gamma:k \in \Part\}$ works as follows.

\begin{lem}[Coupling with the auxiliary system]\label{lem:coupling}
  Let us assume that the conditions of Proposition~\ref{prop:locstab} hold.
  \begin{enumerate}[label=(\roman*), ref=\roman*]
    \item\label{it:coupling:1} For all $m \in \{1, \ldots, M\}$, for all $t \in [T^-(\classe_m), T^+(\classe_m)]$,
    \begin{equation*}
      \begin{aligned}
        & \forall \alpha:i \in a_m, \qquad d_{\alpha:i}(t) \leq e_m(\alpha:i),\\
        & \forall \beta:j \in b_m, \qquad d_{\beta:j}(t) \leq e_m(\beta:j).
      \end{aligned}
    \end{equation*}
    
    \item\label{it:coupling:2} For all $t \geq 0$, 
    \begin{equation*}
      ||\Phi(\x;t)-\Phi(\y;t)||_1 = \frac{1}{n}\sum_{\gamma:k\in\Part} d_{\gamma:k}(t) \leq \frac{\totmass_M}{n}.
    \end{equation*}
    
    \item\label{it:coupling:3} For all $t \geq 0$, 
    \begin{equation*}
      ||\Phi(\x;t)-\Phi(\y;t)||_{\infty} = \sup_{\gamma:k\in\Part} d_{\gamma:k}(t) \leq \sup_{0 \leq m \leq M} \sup_{\gamma:k \in \Part} e_m(\gamma:k).
    \end{equation*}
  \end{enumerate}
\end{lem}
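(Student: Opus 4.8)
The three assertions will be established essentially together, by induction on the numbering $\classe_1,\ldots,\classe_M$ of the collisions provided by Lemma~\ref{lem:numbering}, using the preliminary estimates of Lemma~\ref{lem:prelim}. The key structural fact to exploit is that, because the numbering is consistent with the orientation of the collision graph (i.e.\ $\classe_{m'}\lto{\gamma}\classe_m$ implies $m'<m$), for each particle $\gamma:k$ involved in the collision $\classe_m=a_m\times b_m$, the previous collision $\classe'$ of $\gamma:k$ (if any) carries an index $m'<m$, and Condition~\eqref{cond:C3a} guarantees $T^+(\classe')=t'_{\gamma:k}<T^-(\classe_m)$. Hence Lemma~\ref{lem:prelim}\eqref{it:prelim:2}, applied to $c\in\{a_m,b_m\}$ on the time interval $[t'_{\gamma:k},T^-(\classe_m)]$, lets us propagate a bound at the times $t'_{\gamma:k}$ up to time $T^-(\classe_m)$ \emph{without increase} of the relevant partial sum (or supremum), and then Lemma~\ref{lem:prelim}\eqref{it:prelim:1} lets us propagate across the collision interval $[T^-(\classe_m),T^+(\classe_m)]$ at the price of the multiplicative and additive corrections that are exactly built into the definition of the auxiliary system $(e_m)$.

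\textbf{The induction for~\eqref{it:coupling:1}.} I would prove the following induction hypothesis: for each $m\in\{0,1,\ldots,M\}$ and each $\gamma:k\in\Part$ whose last collision among $\classe_1,\ldots,\classe_m$ is $\classe_{m_0}$ (with $m_0=0$ meaning no such collision, i.e.\ $\gamma:k$ has not yet collided), one has $d_{\gamma:k}(t)\leq e_m(\gamma:k)$ for all $t\in[T^+(\classe_{m_0}),\,T^-(\classe_{m_0+1})]$ — interpreting $T^+(\classe_0)=0$ and, when $m=M$ and $\gamma:k$ has made all its collisions, extending to $t\in[T^+(\classe_{m_0}),+\infty)$. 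The base case $m=0$ is $d_{\gamma:k}(0)=e_0(\gamma:k)$ by definition. For the inductive step from $m-1$ to $m$, fix the collision $\classe_m=a_m\times b_m$ and let $c\in\{a_m,b_m\}$ with $\gamma=\type(c)$. For $\gamma:k\in c$, write $t'_{\gamma:k}=T^+(\classe')$ where $\classe'\lto{\gamma}\classe_m$ (or $0$ if none); by monotonicity of the numbering $\classe'=\classe_{m'}$ with $m'<m$, and by the induction hypothesis $d_{\gamma:k}(t'_{\gamma:k})\leq e_{m-1}(\gamma:k)$. Lemma~\ref{lem:prelim}\eqref{it:prelim:2} then gives, for all $t\in[t'_{\gamma:k},T^-(\classe_m)]$ — more precisely summing over $c$ after introducing the indicators $\ind{t>t'_{\gamma:k}}$ — the bound
\begin{equation*}
  \sum_{\gamma:k\in c}\ind{t>t'_{\gamma:k}}\,d_{\gamma:k}(t)\leq\sum_{\gamma:k\in c}\ind{t>t'_{\gamma:k}}\,d_{\gamma:k}(t'_{\gamma:k})\leq\sum_{\gamma:k\in c}e_{m-1}(\gamma:k),
\end{equation*}
and since just before $T^-(\classe_m)$ every particle of $c$ has already had its previous collision, at $t=T^-(\classe_m)$ this reads $\sum_{\alpha:i\in a_m}d_{\alpha:i}(T^-(\classe_m))\leq\sum_{\alpha:i\in a_m}e_{m-1}(\alpha:i)$ and symmetrically for $b_m$. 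Feeding these two inequalities into Lemma~\ref{lem:prelim}\eqref{it:prelim:1} yields, for all $t\in[T^-(\classe_m),T^+(\classe_m)]$, precisely $d_{\alpha:i}(t)\leq e_m(\alpha:i)$ for $\alpha:i\in a_m$ and $d_{\beta:j}(t)\leq e_m(\beta:j)$ for $\beta:j\in b_m$, which is~\eqref{it:coupling:1}; for particles $\gamma:k\notin a_m\cup b_m$ the relevant interval is unchanged and $e_m(\gamma:k)=e_{m-1}(\gamma:k)$, so the hypothesis persists. This completes the induction.

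\textbf{Deducing~\eqref{it:coupling:2} and~\eqref{it:coupling:3}.} Fix $t\geq 0$ and $\gamma:k\in\Part$. If $t\leq\Tmax_{\gamma:k}$, then $t$ lies in some interval $[T^+(\classe_{m_0}),T^-(\classe_{m_0+1})]$ or in a collision interval $[T^-(\classe_m),T^+(\classe_m)]$ with $\classe_m\in C_{\gamma:k}$; in either case the induction above (for the former, the persistence part of the hypothesis; for the latter, \eqref{it:coupling:1} directly) gives $d_{\gamma:k}(t)\leq e_{m}(\gamma:k)\leq e_M(\gamma:k)$, using the monotonicity $e_{m}(\gamma:k)\leq e_{m+1}(\gamma:k)$, which follows from~\eqref{eq:EmEmm1:new} together with the nonnegativity of all the $e_{m}$ and the fact that outside $a_m\cup b_m$ the value is unchanged. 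If instead $t>\Tmax_{\gamma:k}$ for \emph{every} $\gamma:k$ of the given type $\gamma$, Lemma~\ref{lem:prelim}\eqref{it:prelim:3} gives $\sum_{k}\ind{t>\Tmax_{\gamma:k}}d_{\gamma:k}(t)\leq\sum_k\ind{t>\Tmax_{\gamma:k}}d_{\gamma:k}(\Tmax_{\gamma:k})$ and $\sup_k\ind{t>\Tmax_{\gamma:k}}d_{\gamma:k}(t)\leq\sup_k\ind{t>\Tmax_{\gamma:k}}d_{\gamma:k}(\Tmax_{\gamma:k})$; combining with the previous paragraph applied at the stopping times $\Tmax_{\gamma:k}$ (where $d_{\gamma:k}(\Tmax_{\gamma:k})\leq e_M(\gamma:k)$), and treating each type separately and then summing over $\gamma$, we obtain for all $t\geq 0$ that $\sum_{\gamma:k}d_{\gamma:k}(t)\leq\sum_{\gamma:k}e_M(\gamma:k)=\totmass_M$ and $\sup_{\gamma:k}d_{\gamma:k}(t)\leq\sup_{\gamma:k}e_M(\gamma:k)\leq\sup_{0\leq m\leq M}\sup_{\gamma:k}e_m(\gamma:k)$. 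Dividing the first by $n$ gives~\eqref{it:coupling:2}, and the second is~\eqref{it:coupling:3}.

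\textbf{Expected main obstacle.} The delicate point is the careful bookkeeping of \emph{which} time interval each particle's bound is valid on, and making sure the indicator-weighted sums in Lemma~\ref{lem:prelim}\eqref{it:prelim:2}--\eqref{it:prelim:3} match up exactly at the endpoints $t'_{\gamma:k}$, $T^-(\classe_m)$ and $\Tmax_{\gamma:k}$, so that the inductive hypothesis can be fed in cleanly; Condition~\eqref{cond:C3a} is precisely what prevents the collision intervals of $\classe_{m'}$ and $\classe_m$ from overlapping when $\classe_{m'}\lto{\gamma}\classe_m$, and Condition~\eqref{cond:C3b} is what makes the Local Sticky Particle Dynamics arguments underlying Lemma~\ref{lem:prelim} applicable on those intervals — so the bulk of the work is organizational rather than computational, the quantitative content having already been isolated in Lemma~\ref{lem:prelim}.
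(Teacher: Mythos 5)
Your treatment of part~\eqref{it:coupling:1} follows the same inductive scheme as the paper and is sound. The deduction of~\eqref{it:coupling:2}, however, contains a genuine gap, and it is precisely the gap that motivates the auxiliary machinery (the function $\mu_t$ and Lemma~\ref{lem:Mnu}) that the paper introduces.

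You claim that $e_m(\gamma:k)\leq e_{m+1}(\gamma:k)$ for each fixed $\gamma:k$, attributing this to~\eqref{eq:EmEmm1:new}. This is false. The relation~\eqref{eq:EmEmm1:new} is only a \emph{cluster-sum} monotonicity: $\sum_{\alpha:i\in a_m}e_{m-1}(\alpha:i)\leq\sum_{\alpha:i\in a_m}e_m(\alpha:i)$, and similarly for $b_m$. The update rule for a particle $\alpha:i\in a_m$ replaces $e_{m-1}(\alpha:i)$ by the \emph{average} of $e_{m-1}$ over $a_m$ (times $1+\Ratio|b_m|/n$, plus the $b_m$ correction). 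If $e_{m-1}(\alpha:i)$ happened to be above the average over $a_m$, and $\Ratio/n$ is small, then $e_m(\alpha:i)<e_{m-1}(\alpha:i)$. A concrete instance: $a_m=\{1{:}1,1{:}2\}$, $e_{m-1}(1{:}1)=10$, $e_{m-1}(1{:}2)=0$, $\Ratio/n$ small; then $e_m(1{:}1)\approx 5<10$. Nonnegativity of the $e_m$ and the invariance outside $a_m\cup b_m$ do not repair this. Consequently your claim $d_{\gamma:k}(t)\leq e_m(\gamma:k)\leq e_M(\gamma:k)$ cannot be upgraded to $e_M(\gamma:k)$, and the final step
\begin{equation*}
  \sum_{\gamma:k}d_{\gamma:k}(t)\leq\sum_{\gamma:k}e_M(\gamma:k)=\totmass_M
\end{equation*}
is unjustified. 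What the naive argument \emph{does} give is $\sum_{\gamma:k}d_{\gamma:k}(t)\leq\sum_{\gamma:k}\sup_{0\leq m\leq M}e_m(\gamma:k)$, with the sum and the supremum in the wrong order — the paper itself calls this out explicitly. The paper's remedy is to keep track, for each $\gamma:k$, of the actual index $\mu_t(\gamma:k)$ delivering the bound $d_{\gamma:k}(t)\leq e_{\mu_t(\gamma:k)}(\gamma:k)$, check that $\mu_t$ satisfies the structural conditions~\eqref{cond:pfcoupling:star1} and~\eqref{cond:pfcoupling:star2}, and then prove (Lemma~\ref{lem:Mnu}) by a transport-style argument along the collision graph that $\sum_{\gamma:k}e_{\mu_t(\gamma:k)}(\gamma:k)\leq\totmass_M$ for any such admissible $\mu_t$. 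You would need that lemma, or an equivalent substitute, to close the argument.

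A smaller but related issue: between collisions, Lemma~\ref{lem:prelim}\eqref{it:prelim:2} only gives \emph{cluster-level} control (sum or sup over $c$, with indicators $\ind{t>t'_{\gamma:k'}}$), not the per-particle bound $d_{\gamma:k}(t)\leq e_m(\gamma:k)$ that your ``persistence part of the hypothesis'' appeals to. This does not hurt~\eqref{it:coupling:1}, where you only evaluate at $T^-(\classe_m)$ and aggregate over the cluster, nor~\eqref{it:coupling:3}, where the sup form of Lemma~\ref{lem:prelim}\eqref{it:prelim:2} suffices; but in a corrected argument for~\eqref{it:coupling:2} these cluster-level sum bounds should be propagated directly (as the paper does with $\mu_t$) rather than via a per-particle claim.

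Your deduction of~\eqref{it:coupling:3} is fine, since it only needs $\sup_{\gamma:k}e_{\mu_t(\gamma:k)}(\gamma:k)\leq\sup_{m}\sup_{\gamma:k}e_m(\gamma:k)$, which does not require any monotonicity.
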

\begin{proof}
  The proof of~\eqref{it:coupling:1} works by induction on $m \in \{1, \ldots, M\}$. Let $m \in \{1, \ldots, M\}$ such that, if $m \geq 2$, then for all $m' \in \{1, \ldots, m-1\}$, for all $t \in [T^-(\classe_{m'}), T^+(\classe_{m'})]$, 
  \begin{equation*}
      \begin{aligned}
        & \forall \alpha:i \in a_{m'}, \qquad d_{\alpha:i}(t) \leq e_{m'}(\alpha:i),\\
        & \forall \beta:j \in b_{m'}, \qquad d_{\beta:j}(t) \leq e_{m'}(\beta:j).
      \end{aligned}
  \end{equation*}
  Let us fix $\alpha:i \in a_m$. By~\eqref{it:prelim:1} in Lemma~\ref{lem:prelim}, for all $t \in [T^-(\classe_m), T^+(\classe_m)]$,
  \begin{equation*}
    d_{\alpha:i}(t) \leq \left(1 + \frac{\Ratio}{n} |b_m|\right) \frac{1}{|a_m|}\sum_{\alpha:i' \in a_m} d_{\alpha:i'}(T^-(\classe_m)) + \frac{\Ratio}{n} \sum_{\beta:j \in b_m} d_{\beta:j}(T^-(\classe_m)),
  \end{equation*}
  and by~\eqref{it:prelim:2} in Lemma~\ref{lem:prelim},
  \begin{equation*}
    \sum_{\alpha:i' \in a_m} d_{\alpha:i'}(T^-(\classe_m)) \leq \sum_{\alpha:i' \in a_m} d_{\alpha:i'}(t'_{\alpha:i'}), \qquad \sum_{\beta:j \in b_m} d_{\beta:j}(T^-(\classe_m)) \leq \sum_{\beta:j \in b_m} d_{\beta:j}(t'_{\beta:j}),
  \end{equation*}
  where $t'_{\alpha:i'}$ is $T^+(\classe')$ if there exists $\classe' \in C_{\alpha:i'}$ such that $\classe' \lto{\alpha} \classe_m$ and $0$ otherwise; $t'_{\beta:j}$ in the second inequality is defined similarly.
  
  Let $m_1, \ldots, m_K \leq m-1$ be the indices of all the collisions $\classe'$ such that $\classe' \lto{\alpha} \classe_m$, and for all $k \in \{1, \ldots, K\}$, let us denote by $a'_{m_k}$ the cluster of type $\alpha$ involved in the collision $\classe_{m_k}$. Then
  \begin{equation*}
    \sum_{\alpha:i' \in a_m} d_{\alpha:i'}(t'_{\alpha:i'}) = \sum_{k=1}^K \sum_{\alpha:i' \in a'_{m_k}} d_{\alpha:i'}(T^+(\classe_{m_k})) + \sum_{\alpha:i' \in a_m : t'_{\alpha:i'}=0} d_{\alpha:i'}(0).
  \end{equation*}
  For all $k \in \{1, \ldots, K\}$, $m_k \leq m-1$ so that
  \begin{equation*}
    \sum_{\alpha:i' \in a'_{m_k}} d_{\alpha:i'}(T^+(\classe_{m_k})) \leq \sum_{\alpha:i' \in a'_{m_k}} e_{m_k}(\alpha:i') = \sum_{\alpha:i' \in a'_{m_k}} e_{m_k+1}(\alpha:i') = \cdots = \sum_{\alpha:i' \in a'_{m_k}} e_{m-1}(\alpha:i'),
  \end{equation*}
  while, for all $\alpha:i'$ such that $t'_{\alpha:i'}=0$,
  \begin{equation*}
    d_{\alpha:i'}(0) = e_0(\alpha:i') = e_1(\alpha:i') = \cdots = e_{m-1}(\alpha:i').
  \end{equation*}
  As a conclusion, for all $t \in [T^-(\classe_m), T^+(\classe_m)]$,
  \begin{equation*}
    d_{\alpha:i}(t) \leq \left(1 + \frac{\Ratio}{n} |b_m|\right) \frac{1}{|a_m|}\sum_{\alpha:i' \in a_m} e_{m-1}(\alpha:i') + \frac{\Ratio}{n} \sum_{\beta:j \in b_m} e_{m-1}(\beta:j) = e_m(\alpha:i),
  \end{equation*}
  and the bound on $d_{\beta:j}(t)$, $\beta:j \in b_m$, $t \in [T^-(\classe_m), T^+(\classe_m)]$, follows from the same arguments.
  
  We now address~\eqref{it:coupling:2} and~\eqref{it:coupling:3}. Let us fix $t \geq 0$ and note that, at time $t$, a particle $\gamma:k \in \Part$ is in exactly one of the following cases:
  \begin{enumerate}
    \item\label{it:pf:coupl:1} there exists $\classe \in C_{\gamma:k}$ such that $T^-(\classe) \leq t \leq T^+(\classe)$,
    \item\label{it:pf:coupl:2} $t \leq \Tmax_{\gamma:k}$ and, for all $\classe \in C_{\gamma:k}$, $t \not\in [T^-(\classe), T^+(\classe)]$.
    \item\label{it:pf:coupl:3} $t > \Tmax_{\gamma:k}$.
  \end{enumerate}
  
  If the particle $\gamma:k$ is in case~\eqref{it:pf:coupl:1}, let us note that, by Condition~\eqref{cond:C3a}, there is only one $\classe \in C_{\gamma:k}$ such that $T^-(\classe) \leq t \leq T^+(\classe)$. Let $\mu_t(\gamma:k)$ denote the number of the collision $\classe$. By~\eqref{it:coupling:1}, $d_{\gamma:k}(t) \leq e_{\mu_t(\gamma:k)}(\gamma:k)$. 
  
  In case~\eqref{it:pf:coupl:2}, let us denote by $\classe$ the first collision $\classe \in C_{\gamma:k}$ (for the order of collisions) such that $t < T^-(\classe)$. Let $t'_{\gamma:k}$ be defined as in Lemma~\ref{lem:prelim}. If $t'_{\gamma:k}=0$, then we let $\mu_t(\gamma:k)=0$ so that $d_{\gamma:k}(t'_{\gamma:k}) = e_{\mu_t(\gamma:k)}(\gamma:k)$. Otherwise, there exists $\classe' \in C_{\gamma:k}$ such that $\classe' \lto{\gamma} \classe$, in this case we let $\mu_t(\gamma:k)$ refer to the number of the collision $\classe'$ and~\eqref{it:coupling:1} yields $d_{\gamma:k}(t'_{\gamma:k}) \leq e_{\mu_t(\gamma:k)}(\gamma:k)$. Let us now note that, calling $c$ the generical cluster of type $\gamma$ involved in the collision $\classe$, for all $\gamma:k' \in c$ such that $t'_{\gamma:k'} < t$, then $\gamma:k'$ is also in case~\eqref{it:pf:coupl:2}, and~\eqref{it:prelim:2} in Lemma~\ref{lem:prelim} yields
  \begin{equation*}
    \begin{aligned}
      \sum_{\gamma:k' \in c} \ind{t > t_{\gamma:k'}'} d_{\gamma:k'}(t) & \leq \sum_{\gamma:k' \in c} \ind{t > t_{\gamma:k'}'} e_{\mu_t(\gamma:k')}(\gamma:k'),\\
      \sup_{\gamma:k' \in c} \ind{t > t_{\gamma:k'}'} d_{\gamma:k'}(t) & \leq \sup_{\gamma:k' \in c} \ind{t > t_{\gamma:k'}'} e_{\mu_t(\gamma:k')}(\gamma:k').
    \end{aligned}
  \end{equation*}

  In case~\eqref{it:pf:coupl:3}, if $C_{\gamma:k}$ is empty, we define $\mu_t(\gamma:k)=0$ so that $d_{\gamma:k}(\Tmax_{\gamma:k}) = e_{\mu_t(\gamma:k)}(\gamma:k)$, otherwise, we denote by $\mu_t(\gamma:k)$ the number of the last collision in $C_{\gamma:k}$, and then~\eqref{it:coupling:1} yields $d_{\gamma:k}(\Tmax_{\gamma:k}) \leq e_{\mu_t(\gamma:k)}(\gamma:k)$. Similarly,~\eqref{it:prelim:3} in Lemma~\ref{lem:prelim} yields
  \begin{equation*}
    \begin{aligned}
      \sum_{k'=1}^n \ind{t > \Tmax_{\gamma:k'}} d_{\gamma:k'}(t) & \leq \sum_{k'=1}^n \ind{t > \Tmax_{\gamma:k'}} e_{\mu_t(\gamma:k')}(\gamma:k'),\\
      \sup_{1 \leq k' \leq n} \ind{t > \Tmax_{\gamma:k'}} d_{\gamma:k'}(t) & \leq \sup_{1 \leq k' \leq n} \ind{t > \Tmax_{\gamma:k'}} e_{\mu_t(\gamma:k')}(\gamma:k').
    \end{aligned}
  \end{equation*}
  
  As a consequence, we have constructed a function $\mu_t : \Part \to \{0, \ldots, M\}$ such that
  \begin{equation}\label{eq:pfcoupling:ineq}
    \sum_{\gamma:k \in \Part} d_{\gamma:k}(t) \leq \sum_{\gamma:k \in \Part} e_{\mu_t(\gamma:k)}(\gamma:k), \qquad \sup_{\gamma:k \in \Part} d_{\gamma:k}(t) \leq \sup_{\gamma:k \in \Part} e_{\mu_t(\gamma:k)}(\gamma:k).
  \end{equation}
  The point~\eqref{it:coupling:3} easily follows from the second inequality of~\eqref{eq:pfcoupling:ineq}. We also obtain 
  \begin{equation*}
    ||\Phi(\x;t)-\Phi(\y;t)||_1 \leq \frac{1}{n} \sum_{\gamma:k \in \Part} \sup_{0 \leq m \leq M} e_m(\gamma:k)
  \end{equation*}
  as a straightforward consequence of the first inequality, but the sum and supremum are in the reverse order compared with~\eqref{it:coupling:2}. Hence we need to work a little more on the first inequality.  
  
  Let us define $\bar{\mu}_t(\gamma:k)$ as the index of the first collision in $C_{\gamma:k}$ with number strictly larger than $\mu_t(\gamma:k)$, or $M+1$ if there is no such collision. We now check that the function $\mu_t$ satisfies the following two conditions, which will enable us to conclude thanks to Lemma~\ref{lem:Mnu} below.
  \begin{enumerate}[label=(*\arabic*), ref=*\arabic*]
    \item\label{cond:pfcoupling:star1} for all $\gamma:k \in \Part$ such that $\mu_t(\gamma:k) \in \{1, \ldots, M\}$, then $\classe_{\mu_t(\gamma:k)} \in C_{\gamma:k}$,
    \item\label{cond:pfcoupling:star2} for all $\ell \geq 0$, there is no path
    \begin{equation*}
      \classe_{m_0} \lto{\gamma_1} \classe_{m_1} \lto{\gamma_2} \cdots \lto{\gamma_{\ell}} \classe_{m_{\ell}}
    \end{equation*}
    such that 
    \begin{equation*}
      m_0 \in \{\bar{\mu}_t(\gamma':k'), \gamma':k' \in \Part\}, \qquad m_{\ell} \in \{\mu_t(\gamma:k), \gamma:k \in \Part\}
    \end{equation*}
    in the collision graph, where the case $\ell=0$ is understood as the condition that the two sets above be disjoint.
  \end{enumerate}
  It follows from the construction of $\mu_t$ that the latter satisfies~\eqref{cond:pfcoupling:star1} as well as the property that, for all $\gamma:k \in \Part$,
  \begin{itemize}
    \item if $\mu_t(\gamma:k) \in \{1, \ldots, M\}$, then $T^-(\classe_{\mu_t(\gamma:k)}) \leq t$,
    \item if $\bar{\mu}_t(\gamma:k) \in \{1, \ldots, M\}$, then $T^-(\classe_{\bar{\mu}_t(\gamma:k)}) > t$.
  \end{itemize}
  As a consequence, if there exists a path
  \begin{equation*}
    \classe_{m_0} \lto{\gamma_1} \classe_{m_1} \lto{\gamma_2} \cdots \lto{\gamma_{\ell}} \classe_{m_{\ell}}
  \end{equation*}
  in the collision graph, with $m_0 = \bar{\mu}_t(\gamma':k')$, $m_{\ell} = \mu_t(\gamma:k)$, for some $\gamma':k', \gamma:k \in \Part$, then either $\ell=0$ in which case $t < T^-(\classe_{m_0}) \leq t$ is absurd, or Condition~\eqref{cond:C3a} yields $t < T^-(\classe_{m_0}) < T^-(\classe_{m_{\ell}}) \leq t$, which is also absurd. As a conclusion, there is no such path in the graph, and $\mu_t$ satisfies~\eqref{cond:pfcoupling:star2}.
  
  Following Lemma~\ref{lem:Mnu} below, Conditions~\eqref{cond:pfcoupling:star1} and~\eqref{cond:pfcoupling:star2} imply that
  \begin{equation*}
    \sum_{\gamma:k \in \Part} e_{\mu_t(\gamma:k)}(\gamma:k) \leq \totmass_M,
  \end{equation*}
  which allows us to complete the proof by injecting this inequality into the first part of~\eqref{eq:pfcoupling:ineq}.
\end{proof}
  
The proof of~\eqref{it:coupling:2} in Lemma~\ref{lem:coupling} relies on Lemma~\ref{lem:Mnu} below. Before stating the latter, we first introduce a few notions. For all functions $\mu : \Part \to \{0, \ldots, M\}$, we define $\bar{\mu} : \Part \to \{1, \ldots, M+1\}$ by, for all $\gamma:k \in \Part$,
\begin{equation*}
  \bar{\mu}(\gamma:k) := \min\left(\{\mu(\gamma:k)+1, \ldots, M\} \cap \{m : \classe_m \in C_{\gamma:k}\}\right)
\end{equation*}
if the set in the right-hand side is nonempty, and
\begin{equation*}
  \bar{\mu}(\gamma:k) := M+1
\end{equation*}
otherwise. Note that, for all $\gamma:k \in \Part$, $\bar{\mu}(\gamma:k) > \mu(\gamma:k)$ and
\begin{equation}\label{eq:pfcoupling:A}
  e_{\mu(\gamma:k)}(\gamma:k) = e_{\mu(\gamma:k)+1}(\gamma:k) = \cdots = e_{\bar{\mu}(\gamma:k)-1}(\gamma:k).
\end{equation}

Let us also denote by $\Mnu$ the set of functions $\mu : \Part \to \{0, \ldots, M\}$ satisfying the conditions~\eqref{cond:pfcoupling:star1} and~\eqref{cond:pfcoupling:star2} introduced at the end of the proof of Lemma~\ref{lem:coupling} above. For $\mu \in \Mnu$, combining~\eqref{eq:pfcoupling:A} with~\eqref{eq:EmEmm1:new}, we now remark that the group of particles $\gamma:k$ such that $\bar{\mu}(\gamma:k)$ is minimal satisfies
\begin{equation*}
  \sum e_{\mu(\gamma:k)}(\gamma:k) \leq \sum e_{\bar{\mu}(\gamma:k)}(\gamma:k),
\end{equation*}
so that one obtains an upper bound on $\sum_{\gamma:k \in \Part} e_{\mu(\gamma:k)}(\gamma:k)$ if one replaces $\mu(\gamma:k)$ with $\bar{\mu}(\gamma:k)$ for those particles. Iterating the argument until all the quantities $\bar{\mu}(\gamma:k)$ reach the maximum value $M+1$, we finally obtain the expected upper bound $\totmass_M$. The rigorous formulation of this iterative argument is detailed in Lemma~\ref{lem:Mnu}.
  
\begin{lem}[Property of the set $\Mnu$]\label{lem:Mnu}
  For all functions $\mu : \Part \to \{0, \ldots, M\}$ in the set $\Mnu$ introduced above, we have
  \begin{equation*}
    \sum_{\gamma:k \in \Part} e_{\mu(\gamma:k)}(\gamma:k) \leq \totmass_M.
  \end{equation*}
\end{lem}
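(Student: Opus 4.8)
The plan is to argue by a well-founded induction on the integer $M+1-M_\mu$, where $M_\mu := \min_{\gamma:k \in \Part} \bar\mu(\gamma:k) \in \{1, \ldots, M+1\}$ records how far $\mu$ is from the ``saturated'' situation in which every particle has processed all its collisions. If $M_\mu = M+1$ then $\bar\mu(\gamma:k) = M+1$ for every $\gamma:k \in \Part$, so~\eqref{eq:pfcoupling:A} gives $e_{\mu(\gamma:k)}(\gamma:k) = e_M(\gamma:k)$ for all $\gamma:k$ and hence $\sum_{\gamma:k} e_{\mu(\gamma:k)}(\gamma:k) = \totmass_M$; this is the base case.

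Assume now $m^* := M_\mu \le M$, and set $S := \{\gamma:k \in \Part : \bar\mu(\gamma:k) = m^*\}$, which is nonempty and therefore certifies $m^* \in \{\bar\mu(\gamma:k) : \gamma:k \in \Part\}$. The structural heart of the argument is the identity $S = a_{m^*} \cup b_{m^*}$, i.e.\ $S$ is exactly the set of particles involved in collision $\classe_{m^*}$. The inclusion $S \subseteq a_{m^*}\cup b_{m^*}$ is immediate from the definition of $\bar\mu$ (if $\bar\mu(\gamma:k)=m^*\le M$ then $\classe_{m^*}\in C_{\gamma:k}$). For the converse, take $\gamma:k \in a_{m^*}\cup b_{m^*}$; one rules out $\mu(\gamma:k) \ge m^*$: if $\mu(\gamma:k)=m^*$ then $m^* \in \{\mu(\cdot)\}\cap\{\bar\mu(\cdot)\}$, contradicting~\eqref{cond:pfcoupling:star2} with $\ell=0$; if $\mu(\gamma:k)>m^*$ then, by~\eqref{cond:pfcoupling:star1}, both $\classe_{m^*}$ and $\classe_{\mu(\gamma:k)}$ lie in $C_{\gamma:k}$, and following the order of collisions on $C_{\gamma:k}$ produces a directed path of type $\type(\gamma:k)$ from $\classe_{m^*}$ to $\classe_{\mu(\gamma:k)}$ of length $\ge 1$ (here one uses Lemma~\ref{lem:numbering}: directed arcs strictly increase the collision index, so $m^*<\mu(\gamma:k)$ is compatible with the path going ``forward''), again contradicting~\eqref{cond:pfcoupling:star2}. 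Hence $\mu(\gamma:k)<m^*$, which forces $\bar\mu(\gamma:k)=m^*$ and thus $\gamma:k\in S$.

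Knowing $S = a_{m^*}\sqcup b_{m^*}$ (disjoint since $a_{m^*}$ and $b_{m^*}$ have distinct types), define $\mu' : \Part \to \{0,\ldots,M\}$ by $\mu'(\gamma:k) := m^*$ for $\gamma:k\in S$ and $\mu'(\gamma:k) := \mu(\gamma:k)$ otherwise. For $\gamma:k\in S$ we have $\bar\mu(\gamma:k)=m^*$, so~\eqref{eq:pfcoupling:A} gives $e_{\mu(\gamma:k)}(\gamma:k)=e_{m^*-1}(\gamma:k)$; summing over $S$ and applying the monotonicity relation~\eqref{eq:EmEmm1:new} yields $\sum_{\gamma:k\in S} e_{\mu(\gamma:k)}(\gamma:k) = \sum_{\alpha:i\in a_{m^*}}e_{m^*-1}(\alpha:i)+\sum_{\beta:j\in b_{m^*}}e_{m^*-1}(\beta:j) \le \sum_{\gamma:k\in S} e_{m^*}(\gamma:k) = \sum_{\gamma:k\in S} e_{\mu'(\gamma:k)}(\gamma:k)$. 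Since $\mu'$ agrees with $\mu$ off $S$, this gives $\sum_{\gamma:k} e_{\mu(\gamma:k)}(\gamma:k)\le \sum_{\gamma:k} e_{\mu'(\gamma:k)}(\gamma:k)$.

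It then remains to check $\mu'\in\Mnu$ and $M_{\mu'}>M_\mu$, so that the induction hypothesis applied to $\mu'$ closes the argument. The bound $M_{\mu'}>m^*$ is easy: for $\gamma:k\notin S$, $\bar{\mu'}(\gamma:k)=\bar\mu(\gamma:k)>m^*$ since $\gamma:k$ is not a minimizer of $\bar\mu$; for $\gamma:k\in S$, $\bar{\mu'}(\gamma:k)>m^*=\mu'(\gamma:k)$ by definition of $\bar{\mu'}$. Condition~\eqref{cond:pfcoupling:star1} for $\mu'$ is inherited from $\mu$ off $S$ and holds on $S$ because $\classe_{m^*}\in C_{\gamma:k}$ there. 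The step I expect to be the main obstacle is verifying~\eqref{cond:pfcoupling:star2} for $\mu'$: one supposes a path $\classe_{m_0}\lto{\gamma_1}\cdots\lto{\gamma_\ell}\classe_{m_\ell}$ with $m_0\in\{\bar{\mu'}(\cdot)\}$ and $m_\ell\in\{\mu'(\cdot)\}=\{m^*\}\cup\{\mu(\gamma:k):\gamma:k\notin S\}$, and performs a case analysis according to whether $m_\ell=m^*$ or $m_\ell=\mu(\gamma:k)$ for some $\gamma:k\notin S$, and whether $m_0$ is realized by a particle of $S$ or not. When $m_0=\bar{\mu'}(\gamma':k')$ with $\gamma':k'\in S$, one prepends the arc $\classe_{m^*}\lto{\type(\gamma':k')}\classe_{m_0}$ (legitimate because $\classe_{m_0}$ is the successor of $\classe_{m^*}$ along $C_{\gamma':k'}$) and reduces to a path from $\{\bar\mu(\cdot)\}$ to $\{\mu(\cdot)\}$, contradicting~\eqref{cond:pfcoupling:star2} for $\mu$; in the remaining cases one uses that directed paths strictly increase the collision index together with $m_0\ge M_\mu=m^*$ (and the nonexistence of $\ell=0$ coincidences, which would put $m^*$ back in $\{\bar\mu(\cdot)\}$ forcing the particle into $S$) to reach a contradiction. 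Once $\mu'\in\Mnu$ is established, the induction hypothesis gives $\sum_{\gamma:k}e_{\mu'(\gamma:k)}(\gamma:k)\le\totmass_M$, and combining with the displayed inequality completes the proof.
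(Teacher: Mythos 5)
Your proof is correct and follows essentially the same route as the paper's: your induction on $M+1-\min_{\gamma:k}\bar\mu(\gamma:k)$ is exactly the iterated application of the paper's operator $\tau$, and your key identity $S=a_{m^*}\cup b_{m^*}$ is the paper's relation~\eqref{eq:pfcoupling:B}. One small point to tighten in the final case analysis for~\eqref{cond:pfcoupling:star2}: what you need is $m_0>m^*$ (a consequence of $M_{\mu'}>M_\mu$), not merely $m_0\ge m^*$, in order to conclude that the terminal index $m_\ell$ equals $\mu(\gamma:k)$ for some $\gamma:k\notin S$ rather than $m^*$.
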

\begin{proof}
  For all functions $\mu : \Part \to \{0, \ldots, M\}$, let us define
  \begin{equation*}
    \bar{\mu}_* := \min_{\gamma:k \in \Part} \bar{\mu}(\gamma:k) \in \{1, \ldots, M+1\},
  \end{equation*}
  and let us denote by $\Mnu_*$ the set of functions $\mu \in \Mnu$ such that $\bar{\mu}_* \leq M$. Then we have the following property: for all $\mu \in \Mnu_*$, for all $\gamma:k \in \Part$, 
  \begin{equation}\label{eq:pfcoupling:B}
    \bar{\mu}(\gamma:k) = \bar{\mu}_* \qquad \text{if and only if} \qquad \classe_{\bar{\mu}_*} \in C_{\gamma:k}.
  \end{equation}
  Indeed, the direct implication is a straightforward consequence of the definition of $\bar{\mu}$. The reverse implication stems from the following argument: if $\gamma:k \in \Part$ is such that $\classe_{\bar{\mu}_*} \in C_{\gamma:k}$, then the minimality of $\bar{\mu}_*$ implies that $\bar{\mu}(\gamma:k) \geq \bar{\mu}_*$. Assume that $\bar{\mu}(\gamma:k) > \bar{\mu}_*$, then by~\eqref{cond:pfcoupling:star1} and the definition of $\bar{\mu}$, we have that $\bar{\mu}_* \leq \mu(\gamma:k)$. As a consequence, there exists a path
  \begin{equation*}
    \classe_{\bar{\mu}_*} \lto{\gamma} \cdots \lto{\gamma} \classe_{\mu(\gamma:k)}
  \end{equation*}
  in the collision graph, which is a contradiction with~\eqref{cond:pfcoupling:star2}.
  
  For all $\mu \in \Mnu_*$, we now define $\trans\mu : \Part \to \{0, \ldots, M\}$ by, for all $\gamma:k \in \Part$,
  \begin{equation*}
    \tau\mu(\gamma:k) := \begin{cases}
      \mu(\gamma:k) & \text{if $\bar{\mu}(\gamma:k) > \bar{\mu}_*$,}\\
      \bar{\mu}_* & \text{if $\bar{\mu}(\gamma:k) = \bar{\mu}_*$.}
    \end{cases}
  \end{equation*}
  Let us note that, for all $\gamma:k \in \Part$ such that $\bar{\mu}(\gamma:k) > \bar{\mu}_*$, then $\bar{\trans\mu}(\gamma:k) = \bar{\mu}(\gamma:k)$ and as a consequence,
  \begin{equation}\label{eq:pfcoupling:C}
    \bar{\trans\mu}_* > \bar{\mu}_*.
  \end{equation}
  
  We now prove that $\trans\mu \in \Mnu$. The fact that $\trans\mu$ satisfies~\eqref{cond:pfcoupling:star1} easily follows from~\eqref{eq:pfcoupling:B} combined with the fact that $\mu$ satisfies~\eqref{cond:pfcoupling:star1}. As far as~\eqref{cond:pfcoupling:star2} is concerned, let us assume by contradiction that there exists a path
  \begin{equation*}
    \classe_{m_0} \lto{\gamma_1} \classe_{m_1} \lto{\gamma_2} \cdots \lto{\gamma_{\ell}} \classe_{m_{\ell}}
  \end{equation*}
  in the collision graph, with $m_0 = \bar{\trans\mu}(\gamma':k')$, $m_{\ell} = \trans\mu(\gamma:k)$, for some $\gamma':k', \gamma:k \in \Part$. Then
  \begin{equation*}
    \trans\mu(\gamma:k) = m_{\ell} \geq m_0 = \bar{\trans\mu}(\gamma':k') \geq \bar{\trans\mu}_* > \bar{\mu}_*,
  \end{equation*}
  where the last inequality follows from~\eqref{eq:pfcoupling:C}. As a consequence, we deduce from the construction of $\trans\mu$ that $m_{\ell} = \mu(\gamma:k)$, while $m_0$ is either $\bar{\mu}(\gamma':k')$ or such that $\classe_{\bar{\mu}_*} \lto{\gamma'} \classe_{m_0}$. In both cases, there is a contradiction with the fact that $\mu$ satisfies~\eqref{cond:pfcoupling:star2}.
  
  The introduction of the operator $\trans$ allows to obtain the following key property: for all $\mu \in \Mnu_*$,
  \begin{equation*}
    \sum_{\gamma:k \in \Part} e_{\mu(\gamma:k)}(\gamma:k) \leq \sum_{\gamma:k \in \Part} e_{\trans\mu(\gamma:k)}(\gamma:k).
  \end{equation*}
  To prove this inequality, it suffices to check that
  \begin{equation*}
    \sum_{\substack{\gamma:k \in \Part\\ \bar{\mu}(\gamma:k) = \bar{\mu}_*}} e_{\mu(\gamma:k)}(\gamma:k) \leq \sum_{\substack{\gamma:k \in \Part\\ \bar{\mu}(\gamma:k) = \bar{\mu}_*}} e_{\bar{\mu}_*}(\gamma:k),
  \end{equation*}
  which follows from the sequence of assertions
  \begin{equation*}
    \begin{aligned}
      \sum_{\substack{\gamma:k \in \Part\\ \bar{\mu}(\gamma:k) = \bar{\mu}_*}} e_{\mu(\gamma:k)}(\gamma:k) & = \sum_{\substack{\gamma:k \in \Part\\ \bar{\mu}(\gamma:k) = \bar{\mu}_*}} e_{\bar{\mu}_*-1}(\gamma:k)\\
      & = \sum_{\alpha:i \in a_{\bar{\mu}}} e_{\bar{\mu}_*-1}(\alpha:i) + \sum_{\beta:j \in b_{\bar{\mu}}} e_{\bar{\mu}_*-1}(\beta:j)\\
      & \leq \sum_{\alpha:i \in a_{\bar{\mu}}} e_{\bar{\mu}_*}(\alpha:i) + \sum_{\beta:j \in b_{\bar{\mu}}} e_{\bar{\mu}_*}(\beta:j),
    \end{aligned}
  \end{equation*}
  where we have used~\eqref{eq:pfcoupling:A} at the first line,~\eqref{eq:pfcoupling:B} at the second line, and~\eqref{eq:EmEmm1:new} at the third line.
  
  As a consequence, for all $\mu \in \Mnu$, either $\bar{\mu}_* = M+1$ in which case~\eqref{eq:pfcoupling:A} yields
  \begin{equation*}
    \sum_{\gamma:k \in \Part} e_{\mu(\gamma:k)}(\gamma:k) = \sum_{\gamma:k \in \Part} e_{\bar{\mu}(\gamma:k)-1}(\gamma:k) = \sum_{\gamma:k \in \Part} e_M(\gamma:k) = \totmass_M,
  \end{equation*}
  or $\mu \in \Mnu_*$ and by~\eqref{eq:pfcoupling:C}, the operator $\trans$ can be applied a finite number $r$ of times to obtain $\bar{\tau^r \mu}_* = M+1$, in which case we recover
  \begin{equation*}
    \sum_{\gamma:k \in \Part} e_{\mu(\gamma:k)}(\gamma:k) \leq \sum_{\gamma:k \in \Part} e_{\trans^r\mu(\gamma:k)}(\gamma:k) = \totmass_M,
  \end{equation*}
  which completes the proof.
\end{proof}


\subsubsection{Bounding the total mass}\label{sss:totmass} As a consequence of Lemma~\ref{lem:coupling}, the local stability estimates of Proposition~\ref{prop:locstab} are derived from the following estimation on the total mass of the auxiliary system.

\begin{lem}[Estimation on the total mass]\label{lem:totmass}
  Under the assumptions of Proposition~\ref{prop:locstab}, the total mass of the auxiliary system satisfies
  \begin{equation*}
    \totmass_M \leq \ConstStab_1 \totmass_0,
  \end{equation*}
  where $\ConstStab_1$ is defined by~\eqref{eq:ConstStab}. Besides,
  \begin{equation*}
    \sup_{0 \leq m \leq M} \sup_{\gamma:k \in \Part} e_m(\gamma:k) \leq \ConstStab_{\infty} \sup_{\gamma:k \in \Part} e_0(\gamma:k),
  \end{equation*}
  where $\ConstStab_{\infty}$ is defined by~\eqref{eq:ConstStab}.
\end{lem}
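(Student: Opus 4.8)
The plan is to regard the recursion defining $(e_m)_{0\le m\le M}$ as a linear, order-preserving map on $[0,+\infty)^{\Part}$: each $e_m$ is an explicit nonnegative linear combination of the components of $e_{m-1}$, and $e_0=(d_{\gamma:k}(0))_{\gamma:k\in\Part}$ with $\totmass_0=n\,||\x-\y||_1$ by~\eqref{eq:totmass0}. Hence both estimates reduce, by linearity and monotonicity in $e_0$, to understanding how a single unit of mass placed on one particle spreads through the numbered sequence of collisions $\classe_1,\dots,\classe_M$ (the numbering being the one of Lemma~\ref{lem:numbering}, compatible with the orientation $\lto{\gamma}$ of the collision graph). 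The first computation is the per-collision increment of the total mass: writing $\classe_m=a_m\times b_m$, $A_m:=\sum_{\alpha:i\in a_m}e_{m-1}(\alpha:i)$ and $B_m:=\sum_{\beta:j\in b_m}e_{m-1}(\beta:j)$, the definition of the auxiliary system gives
\[
  \totmass_m-\totmass_{m-1}=\frac{2\Ratio}{n}\bigl(|b_m|\,A_m+|a_m|\,B_m\bigr),
\]
whence, re-summing by particle,
\[
  \totmass_M-\totmass_0=\frac{2\Ratio}{n}\sum_{\gamma:k\in\Part}\ \sum_{m\,:\,\gamma:k\in a_m\cup b_m}|c_m'(\gamma:k)|\,e_{m-1}(\gamma:k),
\]
where $c_m'(\gamma:k)$ is the cluster of the other type in $\classe_m$. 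The basic combinatorial input is that two particles of distinct types collide at most once (Assumption~\eqref{ass:USH}, Lemma~\ref{lem:tinter}), so for fixed $\gamma:k$ and fixed $\gamma'\ne\gamma$ the type-$\gamma'$ clusters met by $\gamma:k$ along its collisions are pairwise disjoint; summing their sizes over the $d-1$ other types gives $\sum_{m\,:\,\gamma:k\in a_m\cup b_m}|c_m'(\gamma:k)|\le n(d-1)$. I would also record the per-cluster monotonicity~\eqref{eq:EmEmm1:new}.

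The heart of the proof, and the step I expect to be the main obstacle, is then to show that the masses $e_{m-1}(\gamma:k)$ above cannot all be large at once, i.e.\ that the mass accessible to $\classe_m$ stays localized: every elementary global bound ($A_m,B_m\le\totmass_{m-1}$, or $|a_m||b_m|\le N$) degrades with $n$ because there may be of order $n^2$ collisions, so one has to exploit that a collision only redistributes mass between its two clusters, whose particles have themselves been involved in at most $n(d-1)$ collisions. I would run an induction over $m=1,\dots,M$ bounding, for $\gamma:k\in a_m\cup b_m$, the quantity $e_m(\gamma:k)$ by one built only from the initial masses of particles that have causally influenced $\gamma:k$ by step $m$; the bookkeeping is of the same nature as that of the set $\Mnu$ and Lemma~\ref{lem:Mnu} and can be carried through with them. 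Following one unit of mass from an arbitrary $\gamma_0:k_0$: along the collision chain of any single particle the mass it carries is inflated by a product of factors $1+\frac{\Ratio}{n}|c_m'|$ whose exponents sum to at most $\Ratio(d-1)$, producing a factor $\rho_0:=\exp(\Ratio(d-1))$; in addition each collision deposits onto the particles of the other cluster copies carrying a fraction $\le\frac{\Ratio}{n}$ of the current mass, and iterating this spawning while summing the geometric contributions, again controlled by $\sum_{m:\,p\text{ involved}}|c_m'|\le n(d-1)$, produces the outer double-exponential factor $\exp(2\Ratio^2 d(d-1)\rho_0)$. Collecting the "stationary" contribution ($1$), the once-moved mass and the re-spawned mass then yields $\totmass_M\le\bigl(1+4\Ratio(d-1)\rho_0\bigr)\exp\bigl(2\Ratio^2 d(d-1)\rho_0\bigr)\totmass_0=\ConstStab_1\totmass_0$ with $\ConstStab_1$ as in~\eqref{eq:ConstStab}.

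For the $\Ls^\infty$ bound I would work with $E_m:=\sup_{\gamma:k\in\Part}e_m(\gamma:k)$ and combine with the first part. At $\classe_m$, for $\alpha:i\in a_m$ one has $e_m(\alpha:i)\le(1+\frac{\Ratio}{n}|b_m|)E_{m-1}+\frac{\Ratio}{n}B_m$ with $B_m\le\totmass_{m-1}\le\ConstStab_1\totmass_0\le\ConstStab_1 d\,n\,E_0$ by the first part, so the "incoming" kick is $\le\Ratio d\,\ConstStab_1 E_0$; using once more that along any particle's own collision chain only the inflating factors $1+\frac{\Ratio}{n}|c_m'|$ compound, with product $\le\rho_0$, and that such kicks are felt boundedly often (at most once per other type, up to the $\rho_0$ inflation), one gets $\sup_{0\le m\le M}E_m\le(1+\Ratio d\,\ConstStab_1)\exp(\Ratio(d-1))\,E_0=\ConstStab_\infty\sup_{\gamma:k\in\Part}e_0(\gamma:k)$. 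Plugging these two bounds back through Lemma~\ref{lem:coupling} into Proposition~\ref{prop:locstab} completes the argument. The decisive difficulty remains the localization estimate of the second paragraph: the $n$-independence of $\ConstStab_1$ only survives a careful collision-graph-ordered induction that keeps the redistributed mass confined to causally bounded sets of particles, and getting the constants to line up with~\eqref{eq:ConstStab} is purely a matter of organizing that induction cleanly.
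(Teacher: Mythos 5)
Your high-level architecture matches the paper's: a ``causal-history'' accounting that expresses $e_m(\gamma:k)$ in terms of initial masses with multiplicative weights, inflation factors bounded by $\exp(\Ratio(d-1))$ via the observation that a given particle meets at most $n(d-1)$ particles of other types, a Gronwall-type iteration for the spawned-mass cascade, and injection of the $\Ls^1$ bound back into the $\Ls^\infty$ one. The paper makes this precise through a dedicated apparatus of backward type paths $\Gamma^-_m(\gamma:k)$ with weights $w^-_m$ and histories $H_m(g)$ (Lemmas~\ref{lem:propHmg}, \ref{lem:keyestim}, \ref{lem:L1Linf}), which is a genuinely different toolkit from the set $\Mnu$ and Lemma~\ref{lem:Mnu} you invoke: those serve the coupling Lemma~\ref{lem:coupling}, not the total-mass estimate.

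There are, however, two concrete gaps. First, the step that makes the induction close without an $n$-dependent blowup is never established: you need that the cumulative mass deposited along any one causal history is bounded by the current total mass, namely $\sum_{(m',\gamma':k')\in H_m(g)} e_{m'-1}(\gamma':k') \leq \totmass_m$. This is not automatic from ``each particle collides at most once with each other'': it rests on a structural convexity property of the history sets (if one particle of a past cluster $c\in\{a_{\bar{m}},b_{\bar{m}}\}$ with $\bar{m}\leq m$ lies in $H_m(g)$, then \emph{all} of $c$ does, see point~\eqref{it:propHmg:3} of Lemma~\ref{lem:propHmg}), which is then combined with the per-cluster monotonicity~\eqref{eq:EmEmm1:new} to inflate each $e_{m'-1}$ to $e_m$ without double-counting. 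You never identify this as the crux, yet without it a na\"ive sum over histories of inflated masses produces an $n$-dependent overcount. Second, your attribution of the outer double-exponential factor to the per-particle bound $\sum_{m:\,p\ \mathrm{involved}}|c'_m|\leq n(d-1)$ is misplaced: in the actual proof that factor comes from the Gronwall product $\prod_m\bigl(1+\tfrac{4\Ratio^2}{n^2}|a_m||b_m|\exp(\Ratio(d-1))\bigr)$, which is controlled by $\sum_m|a_m||b_m|\leq n^2 d(d-1)/2$ (each pair of particles of distinct types collides at most once in total), whereas the $n(d-1)$ count controls the $\exp(\Ratio(d-1))$ inflation and the path-sum bounds $A_M,B_M\leq n(d-1)\totmass_0$ (established through the dual, forward type-path construction $\bar\Gamma$ and its total-weight identity~\eqref{eq:sumbwg}). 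Without separating these two combinatorial counts, the constants in~\eqref{eq:ConstStab} do not line up, and the ``organizing that induction cleanly'' you defer is precisely where the work lies.
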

The conclusion of Proposition~\ref{prop:locstab} easily follows from the combination of Lemmas~\ref{lem:coupling} and~\ref{lem:totmass} with~\eqref{eq:totmass0}.

In order to prove Lemma~\ref{lem:totmass}, let us introduce a few notions and notations. Given a sequence of particles $g = (\gamma:k_1, \ldots, \gamma:k_L) \in (\Part)^L$, with $L \geq 1$, and a particle $\gamma:k_{L+1} \in \Part$, we denote by $g::(\gamma:k_{L+1})$ the sequence $(\gamma:k_1, \ldots, \gamma:k_{L+1}) \in (\Part)^{L+1}$. 

For all $m \in \{0, \ldots, M\}$ and for all $\gamma:k \in \Part$, we first define the set $\Gamma^-_m(\gamma:k)$ of sequences of particles as follows:
\begin{itemize}
  \item for all $\gamma:k \in \Part$, the set $\Gamma^-_0(\gamma:k)$ contains the single element $(\gamma:k)$,
  \item for all $m \in \{1, \ldots, M\}$, 
  \begin{equation*}
    \begin{aligned}
      & \forall \alpha:i \in a_m, \qquad \Gamma^-_m(\alpha:i) := \bigcup_{\alpha:i' \in a_m} \{g'::(\alpha:i), g' \in \Gamma^-_{m-1}(\alpha:i')\},\\
      & \forall \beta:j \in b_m, \qquad \Gamma^-_m(\beta:j) := \bigcup_{\beta:j' \in b_m} \{g'::(\beta:j), g' \in \Gamma^-_{m-1}(\beta:j')\},\\
      & \forall \gamma:k \not\in a_m \cup b_m, \qquad \Gamma^-_m(\gamma:k) := \Gamma^-_{m-1}(\gamma:k)\}.
    \end{aligned}
  \end{equation*} 
\end{itemize}
In other words, $\Gamma^-_m(\gamma:k)$ contains the set of sequences $g = (\gamma:k_0, \ldots, \gamma:k_L)$, such that there exists a sequence of collisions $(\classe_{m_1}, \ldots, \classe_{m_L})$ satisfying:
\begin{itemize}
  \item $\classe_{m_1}$ is the first element of $C_{\gamma:k_0}$,
  \item for all $l \in \{1, \ldots, L-1\}$, $\classe_{m_l}$ and $\classe_{m_{l+1}}$ are consecutive elements of $C_{\gamma:k_l}$,
  \item $\gamma:k_L = \gamma:k$ and $\classe_{m_L}$ is the last element of $C_{\gamma:k}$ with number lower than $m$.
\end{itemize}
Note that the sequence of collisions $(\classe_{m_1}, \ldots, \classe_{m_L})$ is uniquely determined by the conditions above. 

An element $g$ of some set $\Gamma^-_m(\gamma:k)$ shall be called a {\em type path}, as it describes an oriented path in the collision graph with all edges having the same type. For all $g \in \Gamma^-_m(\gamma:k)$, we denote $F(g) := \gamma:k_0$. Besides, for all $m' \in \{0, \ldots, m\}$, we define $c_{m'}(g)$ as follows: if there exists $l \in \{1, \ldots, L\}$ such that $m' = m_l$, then $c_{m'}(g)$ is the generical cluster of type $\gamma$ involved in the collision $\classe_{m_l}$; while otherwise, $c_{m'}(g) = \gamma:k_l$, where $l$ is the largest index in $\{0, \ldots, L\}$ such that $m' > m_l$ (we take the convention that $m_0 = 0$). We finally define the weight of a type path $g \in \Gamma^-_m(\gamma:k)$ by
\begin{equation*}
  w^-_m(g) := \prod_{m'=1}^m \frac{1}{|c_{m'}(g)|}.
\end{equation*}
This quantity has the following interpretation: given $m \in \{1, \ldots, M\}$ and $\gamma:k \in \Part$, let $\classe$ be the last collision in $C_{\gamma:k}$ with number lower than $m$. Select a particle $\gamma:k'$ uniformly at random among the particles of type $\gamma$ involved in the collision $\classe$. If it exists, call $\classe'$ the collision preceding $\classe$ in $C_{\gamma:k'}$, and move from $\classe$ to $\classe'$. This motion is backward with respect to the orientation of the collision graph. Now repeat the random selection and backward motion as long as possible. Then, the sequence $(\gamma:k, \gamma:k', \ldots)$ of selected particles at successive collisions is the reverse of a type path $g \in \Gamma^-_m(\gamma:k)$, and its {\em weight} $w^-_m(g)$ is the probability of selecting this path. In particular, we deduce that, for all $m \in \{0, \ldots, M\}$, for all $\gamma:k \in \Part$,
\begin{equation}\label{eq:sumwg}
  \sum_{g \in \Gamma^-_m(\gamma:k)} w^-_m(g) = 1.
\end{equation}

We now define the {\em history} of a type path $g \in \Gamma^-_m(\gamma:k)$ as follows. In the case $m=0$, we let
\begin{equation*}
  H_0(\gamma:k) := \left(\bigcup_{\gamma' < \gamma} \{(0, \gamma':k'), (\gamma':k',\gamma:k) \not\in R\}\right) \cup \left(\bigcup_{\gamma' > \gamma} \{(0, \gamma':k'), (\gamma:k,\gamma':k') \not\in R\}\right);
\end{equation*}
in other words, $H_0(\gamma:k)$ contains all the pairs $(0,\gamma':k')$ where $\gamma':k'$ is a particle that will never cross the particle $\gamma:k$. Now for all $m \in \{1, \dots, M\}$,
\begin{itemize}
  \item for all $\alpha:i \in a_m$, for all $g \in \Gamma^-_m(\alpha:i)$, 
  \begin{equation*}
    H_m(g) := H_{m-1}(g') \cup \{(m,\beta:j), \beta:j \in b_m\},
  \end{equation*}
  where $g' \in \cup_{\alpha:i' \in a_m}(\alpha:i')$ is such that $g = g'::(\alpha:i)$; 
  \item for all $\beta:j \in b_m$, for all $g \in \Gamma^-_m(\beta:j)$, 
  \begin{equation*}
    H_m(g) := H_{m-1}(g') \cup \{(m,\alpha:i), \alpha:i \in a_m\},
  \end{equation*}
  where $g' \in \cup_{\beta:j' \in b_m} \Gamma^-_{m-1}(\beta:j')$ is such that $g = g'::(\beta:j)$;
  \item for all $\gamma:k \not\in a_m \cup b_m$, then for all $g \in \Gamma^-_m(\gamma:k) = \Gamma^-_{m-1}(\gamma:k)$,
  \begin{equation*}
    H_m(g) = H_{m-1}(g).
  \end{equation*}
\end{itemize}
In other words, $H_m(g)$ records the pairs $(m',\gamma':k')$ such that at the collision $\classe_{m'}$, the particle contained in the path $g$ has crossed the particle $\gamma':k'$.

The sets $H_m(g)$ have the following properties.

\begin{lem}[Properties of $H_m(g)$]\label{lem:propHmg}
  Let $m \in \{0, \ldots, M\}$, $\gamma:k \in \Part$ and $g \in \Gamma^-_m(\gamma:k)$.
  \begin{enumerate}[label=(\roman*), ref=\roman*]
    \item\label{it:propHmg:1} For all $(m',\gamma':k') \in H_m(g)$, we have $\gamma' \not= \gamma$.
    \item\label{it:propHmg:2} For all $(m',\gamma':k'), (m'',\gamma'':k'') \in H_m(g)$ such that $m' \not= m''$, we have $\gamma':k' \not= \gamma'':k''$.
    \item\label{it:propHmg:3} For all $\bar{m} \in \{1, \ldots, m\}$, for all $c \in \{a_{\bar{m}}, b_{\bar{m}}\}$, if there exists $(m',\gamma':k') \in H_m(g)$ such that $m' \leq \bar{m}$ and $\gamma':k' \in c$, then for all $\gamma':k'' \in c$, there exists $m'' \in \{0, \ldots, m\}$ such that $m'' \leq \bar{m}$ and $(m'', \gamma':k'') \in H_m(g)$.
  \end{enumerate}
\end{lem}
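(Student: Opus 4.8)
Here is my plan for proving Lemma~\ref{lem:propHmg}.

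\textbf{Overall approach.} All three assertions are structural facts about how the sets $H_m(g)$ are built up along a type path, so the natural strategy is a single simultaneous induction on $m \in \{0, \ldots, M\}$, treating the three claims together since claim~\eqref{it:propHmg:3} will need the induction hypothesis of claim~\eqref{it:propHmg:2} (and vice versa, because the argument that a particle of type $\gamma'\neq\gamma$ cannot recur uses the ``clustering'' structure of collisions). I would first dispose of the base case $m=0$ directly from the definition of $H_0(\gamma:k)$, then perform the inductive step by splitting according to whether $\gamma:k$ belongs to $a_m$, to $b_m$, or to neither, writing $g = g'::(\gamma:k)$ with $g' \in \Gamma^-_{m-1}(\gamma':k')$ for an appropriate $\gamma':k'$, so that $H_m(g) = H_{m-1}(g') \cup \{(m, \delta:l) : \delta:l \in c'\}$ where $c'$ is the cluster of the \emph{other} type involved in the collision $\classe_m$ (and $H_m(g) = H_{m-1}(g)$ in the trivial case).

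\textbf{Claims \eqref{it:propHmg:1} and \eqref{it:propHmg:2}.} For~\eqref{it:propHmg:1}: at $m=0$ every pair $(0,\gamma':k')$ in $H_0(\gamma:k)$ has $\gamma' \neq \gamma$ by inspection of the two unions (they range over $\gamma' < \gamma$ and $\gamma' > \gamma$). In the inductive step, the newly added pairs $(m, \delta:l)$ have $\delta = \type(\text{other cluster}) \neq \gamma$ since $\classe_m = a_m \times b_m$ involves two distinct types $\alpha < \beta$ and $\gamma$ is the type of $g$, so $\delta$ is the one of $\alpha,\beta$ that is not $\gamma$; the old pairs are handled by the induction hypothesis (note the type of $g'$ equals the type of $g$, namely $\gamma$). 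For~\eqref{it:propHmg:2}: the only way two pairs with distinct first coordinates could share a second coordinate $\gamma':k'$ is if $\gamma':k'$ were ``crossed twice'' by the tracked particle; but between the two crossing times $m'$ and $m''$ the particle of the path and the particle $\gamma':k'$ are on opposite sides (Assumption~\eqref{ass:USH} forbids two particles of different types from colliding twice, Lemma~\ref{lem:ttinter}), so a pair added at step $m$ (a member of the other cluster $c'$, none of whose particles has yet crossed $\gamma:k$ because they are being crossed \emph{at} $\classe_m$) cannot already appear in $H_{m-1}(g')$. I would make this precise by observing that $(m', \gamma':k') \in H_{m-1}(g')$ records a past crossing, which by Lemma~\ref{lem:ttinter} and the monotone separation it gives is incompatible with $\gamma':k'$ being in the cluster colliding with the path particle at the later time $\classe_m$.

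\textbf{Claim \eqref{it:propHmg:3} and the main obstacle.} This is the delicate one. It says: once the tracked particle has crossed \emph{one} member $\gamma':k'$ of a cluster $c \in \{a_{\bar m}, b_{\bar m}\}$ by time $\bar m$, it has crossed \emph{every} member of $c$ by time $\bar m$. The intuition is that the particles forming the cluster $c$ at collision $\classe_{\bar m}$ were already grouped together (occupying consecutive ranks and, just before $\bar m$, the same position, cf.\ the discussion of $\clu_i^\alpha(\x; T(\x;\classe)^-) = a$ in~\S\ref{sss:BinColl}), so the tracked particle, moving monotonically relative to type-$\gamma'$ particles (again Lemma~\ref{lem:ttinter}: a pair crosses at most once, and after crossing stays separated with speed $\geq \ConstUSH$), cannot have passed $\gamma':k'$ without having passed all the others in $c$ by the same time. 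I expect the bookkeeping here to be the crux: one must carefully track, through the recursive definition of $H_m(g)$, that the ``crossed set'' of the path is always a down-set with respect to the ordering of type-$\gamma'$ particles on the appropriate side, and that the clusters $a_{\bar m}, b_{\bar m}$ are intervals for that ordering. I would argue by induction on $m \geq \bar m$: for $m = \bar m$ the pairs $\{(\bar m, \delta:l) : \delta:l \in c'\}$ added all at once are exactly an interval-worth, giving the conclusion for $c'$; combined with~\eqref{it:propHmg:2} (no double-counting) and the induction hypothesis applied to $g'$ at level $m-1$ for the other cases, the down-set property propagates. The honest difficulty is verifying that the monotone-separation lemma genuinely forces the crossed set to be an interval (order-convex) among same-type particles — this requires invoking that ranks are preserved between collisions within a single Sticky Particle system and that Assumption~\eqref{ass:USH} makes inter-type orderings change monotonically, which is where I would spend the most care.
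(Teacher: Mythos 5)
The paper deliberately gives no formal proof of this lemma (``instead of detailing its proof, we rather give a formal explanation''), arguing instead through the continuous space-time path $G$ associated with the type path $g$ and reading $H_m(g)$ as the set of particles that have crossed $G$ at some collision or lie in $G$'s virtual past. Your plan captures the same ideas --- type-disjointness for (i), one-time crossings via Assumption~\eqref{ass:USH} for (ii), the cluster-at-a-single-point fact for (iii) --- but proposes to formalise them by induction on $m$ with a ``crossed set is a down-set'' invariant for (iii). That invariant is stronger than needed and would cost extra bookkeeping: since $\x \in \Good$, point~\eqref{it:BinColl:2} of Definition~\ref{defi:BinColl} gives $\clu_{k'}^{\gamma'}(\cdot;T(\cdot;\classe_{\bar{m}})^-) = c$, so all particles of $c$ occupy one space-time position just before the collision and are therefore on the same side of the continuous path $G$; no interval or ordering bookkeeping is required beyond that. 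Note also that passing from ``crossed before $T(\cdot;\classe_{\bar{m}})$'' to the conclusion ``$m'' \le \bar{m}$'' requires the numbering consistency of Lemma~\ref{lem:numbering} (both $\classe_{m''}$ and $\classe_{\bar{m}}$ belong to $C_{\gamma':k''}$, so the chain between them in the collision graph forces $m'' < \bar{m}$) --- this is exactly the ``properties of the numbering of the collisions'' the paper alludes to, and it deserves to be made explicit in your write-up. Finally, the interdependence you posit between (ii) and (iii) in a simultaneous induction looks spurious: (ii) follows because $G$, supported throughout by type-$\gamma$ trajectories, can coincide at most once with any fixed type-$\gamma'$ trajectory by continuity and~\eqref{ass:USH}, with no appeal to (iii), and conversely (iii) uses only the single-position fact and one-time crossing, not the no-double-counting claim; treating the three claims separately would be cleaner.
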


Despite its seemingly technical formulation, Lemma~\ref{lem:propHmg} is quite intuitive, and instead of detailing its proof, we rather give a formal explanation of each result. Coming back to the MSPD started at $\x$ (or indifferently $\y$), one can associate $g = (\gamma:k_0, \ldots, \gamma:k_L)$ with the continuous path $(G(t))_{t \geq 0}$ starting from $x_{k_0}^{\gamma}$, then joining the space-time points of collisions $\Xi(\x;\classe_{m_l})$ and $\Xi(\x;\classe_{m_{l+1}})$ following the trajectory of the particle $\gamma:k_l$, and such that $G(t) = \Phi_{k_L}^{\gamma}(\x;t)$ for $t \geq T(\x;\classe_{m_L})$.

Then $H_m(g)$ is the set of pairs $(m',\gamma':k')$ such that the particle $\gamma':k'$ have crossed the path $G$ at the collision $\classe_{m'}$, with $m'$ lower than $m$, or in the virtual past for $m'=0$. The point~\eqref{it:propHmg:1} is therefore obvious. The point~\eqref{it:propHmg:2} means that, along the path $G$, one cannot cross the same particle twice; since $G$ remains supported by the trajectories of particles of the same type $\gamma$, this is a straightforward consequence of Assumption~\eqref{ass:USH}. Finally, the point~\eqref{it:propHmg:3} expresses the fact that if two particles of the same type are involved in a collision $\classe_{\bar{m}}$ such that $\bar{m} \leq m$, which is not located along the path $G$, and at least one of these particles has crossed $G$ (possibly in the virtual past), then the other one has necessarily crossed $G$ too. This is a consequence of the continuity of $G$ combined with the properties of the numbering of the collisions.

\sk
The proof of Lemma~\ref{lem:totmass} relies on the intermediary Lemmas~\ref{lem:keyestim} and~\ref{lem:L1Linf}.

\begin{lem}[Integration along paths]\label{lem:keyestim}
  Under the assumptions of Lemma~\ref{lem:totmass}, for all $m \in \{0, \ldots, M\}$, for all $\gamma:k \in \Part$,
  \begin{equation}\label{eq:Emc}
    e_m(\gamma:k) \leq \sum_{g \in \Gamma^-_m(\gamma:k)} \left(1 + \frac{\Ratio}{n}\right)^{|H_m(g)|} w^-_m(g)\left\{e_0(F(g)) + \frac{\Ratio}{n} \sum_{(m',\gamma':k') \in H_m(g)} e_{m'-1}(\gamma':k')\right\},
  \end{equation}
  where we take the convention that, for all $\gamma':k' \in \Part$, $e_{-1}(\gamma':k') = 0$.
\end{lem}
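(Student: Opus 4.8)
The plan is to prove the estimate \eqref{eq:Emc} by induction on $m \in \{0, \ldots, M\}$, unfolding the recursive definition of the auxiliary system $(e_m)_{0 \leq m \leq M}$ one collision at a time and keeping track of the accumulated multiplicative factors $(1 + \Ratio/n)$ and the additive contributions $(\Ratio/n) e_{m'-1}(\gamma':k')$ coming from the cluster of the other type at each collision. The set $\Gamma^-_m(\gamma:k)$ of type paths is precisely designed to record the branching structure arising from the barycentric averages in the definition of $e_m(\alpha:i)$ over $\alpha:i' \in a_m$, while the weights $w^-_m(g)$ record the normalising factors $1/|c_{m'}(g)|$ appearing in these averages, and the histories $H_m(g)$ record the sequence of ``other-type'' clusters encountered along $g$.

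First I would treat the base case $m=0$: then $\Gamma^-_0(\gamma:k) = \{(\gamma:k)\}$, $w^-_0((\gamma:k))=1$ (empty product), $F((\gamma:k)) = \gamma:k$, and $|H_0(\gamma:k)|$ is the number of particles that never cross $\gamma:k$, so the right-hand side of \eqref{eq:Emc} equals $(1+\Ratio/n)^{|H_0(\gamma:k)|}\left(e_0(\gamma:k) + \frac{\Ratio}{n}\sum_{(0,\gamma':k')\in H_0(\gamma:k)} e_{-1}(\gamma':k')\right) = (1+\Ratio/n)^{|H_0(\gamma:k)|} e_0(\gamma:k) \geq e_0(\gamma:k)$ by the convention $e_{-1}\equiv 0$ and the fact that $\Ratio/n \geq 0$. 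Then for the inductive step, fix $m \geq 1$; the only nontrivial case is $\gamma:k \in a_m \cup b_m$, say $\gamma:k = \alpha:i \in a_m$ (the case $\beta:j \in b_m$ is symmetric). By definition,
\begin{equation*}
  e_m(\alpha:i) = \left(1 + \frac{\Ratio}{n}|b_m|\right)\frac{1}{|a_m|}\sum_{\alpha:i' \in a_m} e_{m-1}(\alpha:i') + \frac{\Ratio}{n}\sum_{\beta:j \in b_m} e_{m-1}(\beta:j).
\end{equation*}
I would apply the induction hypothesis to each $e_{m-1}(\alpha:i')$, and then use the bijection $\Gamma^-_m(\alpha:i) = \bigcup_{\alpha:i'\in a_m}\{g'::(\alpha:i) : g' \in \Gamma^-_{m-1}(\alpha:i')\}$ together with the update rules $w^-_m(g'::(\alpha:i)) = w^-_{m-1}(g')/|a_m|$, $H_m(g'::(\alpha:i)) = H_{m-1}(g') \cup \{(m,\beta:j):\beta:j\in b_m\}$ (a disjoint union, so $|H_m(g'::(\alpha:i))| = |H_{m-1}(g')| + |b_m|$), and $F(g'::(\alpha:i)) = F(g')$. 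The term $\bigl(1 + \frac{\Ratio}{n}|b_m|\bigr)$ has to be absorbed into the growth factor; here the key elementary inequality is $1 + \frac{\Ratio}{n}|b_m| \leq \bigl(1+\frac{\Ratio}{n}\bigr)^{|b_m|}$, which upgrades the single factor into $|b_m|$ factors matching the $|b_m|$ new elements added to the history. The additive term $\frac{\Ratio}{n}\sum_{\beta:j\in b_m} e_{m-1}(\beta:j)$ must be distributed over the type paths: since $\sum_{g'\in\Gamma^-_{m-1}(\alpha:i')} w^-_{m-1}(g') = 1$ by \eqref{eq:sumwg}, one can insert the convex combination $\sum_{\alpha:i'\in a_m}\frac{1}{|a_m|}\sum_{g'\in\Gamma^-_{m-1}(\alpha:i')}w^-_{m-1}(g') = 1$ in front of it, thereby attaching to each extended path $g = g'::(\alpha:i)$ the extra terms $\frac{\Ratio}{n}\sum_{\beta:j\in b_m} e_{m-1}(\beta:j) = \frac{\Ratio}{n}\sum_{(m,\beta:j)\in H_m(g)} e_{m-1}(\gamma':k')$ (with $m' = m$, so $e_{m'-1} = e_{m-1}$), which is exactly the contribution of the newly added history elements in the sum over $H_m(g)$ on the right-hand side of \eqref{eq:Emc}. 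Recombining, the right-hand side for $e_{m-1}$ summed with weights over $\alpha:i'$ and augmented by these new terms gives precisely the right-hand side of \eqref{eq:Emc} for $e_m(\alpha:i)$, completing the induction.

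The main obstacle I expect is bookkeeping rather than a genuine difficulty: one must be careful that the history update is a genuinely disjoint union (so that $|b_m|$ elements are really added and the exponent arithmetic is exact), which is where Lemma~\ref{lem:propHmg}\eqref{it:propHmg:2} implicitly enters — a particle already crossed cannot be recrossed, so $b_m$ is disjoint from $H_{m-1}(g')$ viewed as a set of particles at earlier collision indices. One also needs the monotonicity $\Ratio/n > 0$ to freely bound $1 + \frac{\Ratio}{n}|b_m|$ from above and to discard the nonnegative terms with $e_{-1}$. No other subtlety arises: the weights are handled by \eqref{eq:sumwg}, the branching by the defining union of $\Gamma^-_m$, and the case $\gamma:k \not\in a_m\cup b_m$ is immediate since all three of $\Gamma^-_m$, $w^-_m$, $H_m$ are unchanged from step $m-1$ and $e_m(\gamma:k) = e_{m-1}(\gamma:k)$.
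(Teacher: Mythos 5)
Your proof is correct and follows essentially the same route as the paper's: induction on $m$, the bijection $g = g'::(\alpha:i)$ between $\Gamma^-_m(\alpha:i)$ and $\bigsqcup_{\alpha:i'\in a_m}\Gamma^-_{m-1}(\alpha:i')$ with the update rules for $w^-_m$, $H_m$, $F$, and the elementary inequality $1+kx\leq(1+x)^k$ to convert the factor $1+\tfrac{\Ratio}{n}|b_m|$ into $|b_m|$ new growth factors. Two small points of hygiene: when you ``insert the convex combination $\sum_{\alpha:i'}\tfrac{1}{|a_m|}\sum_{g'} w^-_{m-1}(g')=1$'' in front of the additive term $\tfrac{\Ratio}{n}\sum_{\beta:j\in b_m}e_{m-1}(\beta:j)$, the resulting expression lacks the factor $\bigl(1+\tfrac{\Ratio}{n}\bigr)^{|H_m(g)|}$ that appears in the target bound \eqref{eq:Emc}; the last step should therefore be stated as an \emph{inequality}, not ``gives precisely,'' justified by $\bigl(1+\tfrac{\Ratio}{n}\bigr)^{|H_m(g)|}\geq 1$ (the paper does exactly this, by first noting $1\leq\sum_{\alpha:i'}\sum_{g'}\bigl(1+\tfrac{\Ratio}{n}\bigr)^{|b_m|+|H_{m-1}(g')|}\tfrac{1}{|a_m|}w^-_{m-1}(g')$). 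Also, the disjointness of the history update $H_m(g)=H_{m-1}(g')\cup\{(m,\beta:j):\beta:j\in b_m\}$ is automatic as a union of sets of \emph{pairs}, because elements of $H_{m-1}(g')$ have first coordinate $\leq m-1$; you do not actually need Lemma~\ref{lem:propHmg}\eqref{it:propHmg:2} here (that lemma concerns distinctness of the particle coordinates and is used later, in the proof of Lemma~\ref{lem:L1Linf}).
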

\begin{proof}
  The proof works by induction on $m \in \{0, \ldots, M\}$. For $m=0$, the inequality is trivial. Now let $m \in \{1, \ldots, M\}$ such that~\eqref{eq:Emc} holds true up to $m-1$. Then for all $\gamma:k \not\in a_m \cup b_m$,
  \begin{equation*}
    \begin{aligned}
      & e_m(\gamma:k) = e_{m-1}(\gamma:k)\\
      & \leq \sum_{g \in \Gamma^-_{m-1}(\gamma:k)} \left(1 + \frac{\Ratio}{n}\right)^{|H_{m-1}(g)|} w^-_{m-1}(g) \left\{e_0(F(g)) + \frac{\Ratio}{n} \sum_{(m',\gamma':k') \in H_{m-1}(g)} e_{m'-1}(\gamma':k')\right\}\\
      & = \sum_{g \in \Gamma^-_m(\gamma:k)} \left(1 + \frac{\Ratio}{n}\right)^{|H_m(g)|} w^-_m(g) \left\{e_0(F(g)) + \frac{\Ratio}{n} \sum_{(m',\gamma':k') \in H_m(g)} e_{m'-1}(\gamma':k')\right\},
    \end{aligned}
  \end{equation*}
  as $\Gamma^-_{m-1}(\gamma:k) = \Gamma^-_m(\gamma:k)$ and, for all $g \in \Gamma^-_{m-1}(\gamma:k)$, we have $H_{m-1}(g) = H_m(g)$ and $w^-_{m-1}(g) = w^-_m(g)$. Now for all $\alpha:i \in a_m$,
  \begin{equation*}
    \begin{aligned}
      e_m(\alpha:i) & = \left(1+\frac{\Ratio}{n}|b_m|\right) \frac{1}{|a_m|} \sum_{\alpha:i' \in a_m} e_{m-1}(\alpha:i') + \frac{\Ratio}{n} \sum_{\beta:j \in b_m} e_{m-1}(\beta:j)\\
      & \leq \sum_{\alpha:i' \in a_m} \sum_{g' \in \Gamma^-_{m-1}(\alpha:i')} \left(1+\frac{\Ratio}{n}\right)^{|b_m| + |H_{m-1}(g')|} \\
      & \quad \times \frac{1}{|a_m|}w^-_{m-1}(g')\left\{e_0(F(g')) + \frac{\Ratio}{n} \sum_{(m',\gamma':k') \in H_{m-1}(g')} e_{m'-1}(\gamma':k')\right\}\\
      & \quad + \frac{\Ratio}{n} \sum_{\beta:j \in b_m} e_{m-1}(\beta:j),
    \end{aligned}
  \end{equation*}
  where we have used the elementary inequality
  \begin{equation}\label{eq:elementary}
    \forall x \geq 0, \quad \forall k \geq 1, \qquad 1+kx \leq (1+x)^k.
  \end{equation}
  Let us remark that each type path $g \in \Gamma^-_m(\alpha:i)$ writes $g'::(\alpha:i)$ with $g' \in \sqcup_{\alpha:i' \in a_m} \Gamma^-_{m-1}(\alpha:i')$, and that $|H_m(g)| = |H_{m-1}(g')| + |b_m|$, $w^-_m(g) = w^-_{m-1}(g') / |a_m|$, and $F(g)=F(g')$. We deduce that
  \begin{equation*}
    \begin{aligned}
      & \sum_{\alpha:i' \in a_m} \sum_{g' \in \Gamma^-_{m-1}(\alpha:i')} \left(1+\frac{\Ratio}{n}\right)^{|b_m| + |H_{m-1}(g')|} \times \frac{1}{|a_m|}w^-_{m-1}(g')e_0(F(g'))\\
      & \qquad = \sum_{g \in \Gamma^-_m(\alpha:i)} \left(1+\frac{\Ratio}{n}\right)^{|H_m(g)|} w^-_m(g)e_0(F(g)),
    \end{aligned}
  \end{equation*}
  while, for all $\alpha:i' \in a_m$,~\eqref{eq:sumwg} yields
  \begin{equation*}
    1 = \sum_{g' \in \Gamma^-_{m-1}(\alpha:i')} w^-_{m-1}(g') \leq \sum_{g' \in \Gamma^-_{m-1}(\alpha:i')} w^-_{m-1}(g') \left(1+\frac{\Ratio}{n}\right)^{|b_m| + |H_{m-1}(g')|}
  \end{equation*}
  so that
  \begin{equation*}
    1 \leq \sum_{\alpha:i' \in a_m} \sum_{g' \in \Gamma^-_{m-1}(\alpha:i')}\left(1+\frac{\Ratio}{n}\right)^{|b_m| + |H_{m-1}(g')|} \frac{1}{|a_m|}w^-_{m-1}(g')  
  \end{equation*}
  and therefore
  \begin{equation*}
    \begin{aligned}
      & \sum_{\alpha:i' \in a_m} \sum_{g' \in \Gamma^-_{m-1}(\alpha:i')} \left(1+\frac{\Ratio}{n}\right)^{|b_m| + |H_{m-1}(g')|} \frac{1}{|a_m|}w^-_{m-1}(g')\frac{\Ratio}{n} \sum_{(m',\gamma':k') \in H_{m-1}(g')} e_{m'-1}(\gamma':k')\\
      & \quad + \frac{\Ratio}{n} \sum_{\beta:j \in b_m} e_{m-1}(\beta:j)\\
      & \leq \sum_{g \in \Gamma^-_m(\alpha:i)} \left(1+\frac{\Ratio}{n}\right)^{|H_m(g)|} w^-_m(g) \frac{\Ratio}{n} \sum_{(m',\gamma':k') \in H_m(g)} e_{m'-1}(\gamma':k'),
    \end{aligned}
  \end{equation*}
  which completes the proof.
\end{proof}

\begin{lem}[The $\Ls^{\infty}-\Ls^1$ estimate]\label{lem:L1Linf}
  Under the assumptions of Lemma~\ref{lem:totmass}, we have the $\Ls^{\infty}-\Ls^1$ estimate: for all $m \in \{0, \ldots, M\}$, for all $\gamma:k \in \Part$,
  \begin{equation}\label{eq:Emc2}
    e_m(\gamma:k) \leq \exp(\Ratio(d-1))\left\{\sum_{k'=1}^n e_0(\gamma:k')\sum_{g \in \Gamma^-_m(\gamma:k)} \ind{F(g)=\gamma:k'}w^-_m(g) + \frac{\Ratio}{n}\totmass_m\right\}.
  \end{equation}
\end{lem}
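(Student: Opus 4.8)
The plan is to derive the $\Ls^\infty$--$\Ls^1$ estimate \eqref{eq:Emc2} directly from the integration-along-paths bound \eqref{eq:Emc} of Lemma~\ref{lem:keyestim}, by controlling the combinatorial quantities appearing in it. Two ingredients feed into \eqref{eq:Emc}: the exponential prefactor $(1+\Ratio/n)^{|H_m(g)|}$ and the ``history sum'' $\frac{\Ratio}{n}\sum_{(m',\gamma':k')\in H_m(g)} e_{m'-1}(\gamma':k')$. For the first, I would use Lemma~\ref{lem:propHmg}\eqref{it:propHmg:1} and~\eqref{it:propHmg:2}: all pairs in $H_m(g)$ have a type $\gamma'\neq\gamma$, and for each fixed type $\gamma'$ the second coordinates $\gamma':k'$ are pairwise distinct. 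Hence $|H_m(g)| \leq n(d-1)$, so that, using $1+\Ratio/n \leq \exp(\Ratio/n)$,
\begin{equation*}
  \left(1+\frac{\Ratio}{n}\right)^{|H_m(g)|} \leq \exp\left(\frac{\Ratio}{n}\, n(d-1)\right) = \exp(\Ratio(d-1)).
\end{equation*}
This pulls the uniform constant $\exp(\Ratio(d-1))$ out of the sum over $g$ in \eqref{eq:Emc}.

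Next I would handle the two remaining terms inside the braces. The term $e_0(F(g))$ contributes, after summing over $g\in\Gamma^-_m(\gamma:k)$ and regrouping according to the starting particle $F(g)=\gamma:k'$, exactly $\sum_{k'=1}^n e_0(\gamma:k')\sum_{g\in\Gamma^-_m(\gamma:k)}\ind{F(g)=\gamma:k'}w^-_m(g)$, which is the first term of \eqref{eq:Emc2}. For the history term, I would bound it by dropping the weight structure: since for each $g$ the pairs $(m',\gamma':k')\in H_m(g)$ have distinct particles $\gamma':k'$ across all types (again by Lemma~\ref{lem:propHmg}\eqref{it:propHmg:2}, applied type by type), and $e_{m'-1}(\gamma':k') \leq e_{m}(\gamma':k')$ is not quite what is needed --- rather one uses monotonicity of the indices so that $e_{m'-1}(\gamma':k') \leq \totmass_m$ termwise is too crude. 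The cleaner route: $\sum_{(m',\gamma':k')\in H_m(g)} e_{m'-1}(\gamma':k') \leq \sum_{\gamma':k'\in\Part} \sup_{0\le m'\le m} e_{m'}(\gamma':k')$ is still not $\totmass_m$. I would instead observe that since the particles in $H_m(g)$ are distinct, and since for a fixed particle $\gamma':k'$ only finitely many indices $m'$ occur, and crucially each index $m'$ corresponds to $\classe_{m'}\in C_{\gamma':k'}$, one can use \eqref{eq:pfcoupling:A}-type monotonicity ($e_{m'-1}(\gamma':k')\le e_{\tilde m}(\gamma':k')$ for appropriate $\tilde m\le m$) together with $\sum_{\gamma':k'}e_m(\gamma':k')=\totmass_m$ and the nondecrease of $(\totmass_m)$ to get
\begin{equation*}
  \frac{\Ratio}{n}\sum_{(m',\gamma':k')\in H_m(g)} e_{m'-1}(\gamma':k') \leq \frac{\Ratio}{n}\,\totmass_m,
\end{equation*}
the point being that each particle contributes at most $e_{m}(\gamma':k')\le$ (its value at a relevant intermediate index) and the total over distinct particles is at most $\totmass_m$ since $e$ is a nonnegative function on $\Part$ whose sum is $\totmass_m$. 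Combined with $\sum_{g\in\Gamma^-_m(\gamma:k)}w^-_m(g)=1$ from \eqref{eq:sumwg}, this yields exactly \eqref{eq:Emc2}.

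I expect the main obstacle to be making the history-term bound fully rigorous: one must verify that when the pairs $(m',\gamma':k')$ range over $H_m(g)$ with distinct particles, the quantities $e_{m'-1}(\gamma':k')$ can each be absorbed into the single particle's contribution to $\totmass_m$ without double-counting, which requires carefully using that $e_{m'-1}(\gamma':k')\leq e_{\hat m}(\gamma':k')$ for any $\hat m$ such that no collision in $C_{\gamma':k'}$ is numbered in $\{m',\dots,\hat m\}$ --- a monotonicity statement that follows from the defining recursion of the auxiliary system and the relation \eqref{eq:EmEmm1:new}. Once that termwise bound is in place, summing over the (distinct) particles gives $\totmass_m$ because $\sum_{\gamma':k'\in\Part}e_m(\gamma':k')=\totmass_m$ and $m'\le m$. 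The rest is bookkeeping: substitute the three bounds into \eqref{eq:Emc}, factor out $\exp(\Ratio(d-1))$, and identify the two surviving terms with the right-hand side of \eqref{eq:Emc2}. An induction on $m$ is not even needed here since \eqref{eq:Emc} already does the inductive work; this lemma is purely a simplification of that bound.
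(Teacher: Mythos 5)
Your treatment of the exponential prefactor (via $|H_m(g)|\le n(d-1)$ from Lemma~\ref{lem:propHmg}\eqref{it:propHmg:1}--\eqref{it:propHmg:2}) and of the $e_0(F(g))$ term (regrouping by foot and using \eqref{eq:sumwg}) matches the paper's argument and is correct. The gap is in the history-term bound $\frac{\Ratio}{n}\sum_{(m',\gamma':k')\in H_m(g)} e_{m'-1}(\gamma':k') \le \frac{\Ratio}{n}\totmass_m$, and it is a real one, not just a detail. Your plan is a termwise monotonicity argument: bound each $e_{m'-1}(\gamma':k')$ by $e_{\tilde m}(\gamma':k')$ for a suitable $\tilde m\le m$, then appeal to distinctness of the particles and $\sum_{\gamma':k'}e_m(\gamma':k')=\totmass_m$. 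For this to close, you would need $\tilde m=m$ for every pair; but $m\mapsto e_m(\gamma':k')$ is \emph{not} nondecreasing termwise. By construction, if $\gamma':k'\in a_{m'}$ then $e_{m'}(\gamma':k')$ is (up to the inflation factor) the \emph{average} of $e_{m'-1}$ over $a_{m'}$ plus a small cross-term; a particle whose $e$-value is above the cluster average sees its value drop at that collision. Moreover, for every $(m',\gamma':k')\in H_m(g)$ the collision $\classe_{m'}$ \emph{does} belong to $C_{\gamma':k'}$, so the no-collision window on which you invoke the \eqref{eq:pfcoupling:A}-type constancy is empty when you try to push past $m'-1$ -- the only valid termwise choice is $\tilde m=m'-1$, which is tautological.

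The missing idea is that monotonicity holds for \emph{cluster sums}, not individual values: this is precisely \eqref{eq:EmEmm1:new}. To exploit it you must group the terms of $H_m(g)$ by the cluster at step $\bar m$ and run an induction on $\bar m\in\{0,\ldots,m\}$, proving
\begin{equation*}
  \sum_{\substack{(m',\gamma':k')\in H_m(g)\\ m'\le\bar m}} e_{m'-1}(\gamma':k') \;\le\; \sum_{\substack{(m',\gamma':k')\in H_m(g)\\ m'\le\bar m}} e_{\bar m}(\gamma':k').
\end{equation*}
The inductive step requires knowing that whenever one particle of a cluster $c\in\{a_{\bar m},b_{\bar m}\}$ appears in $H_m(g)$ with index $m'\le\bar m$, then \emph{all} particles of $c$ do, so the cluster sum is present in full and \eqref{eq:EmEmm1:new} can be applied; that is exactly what Lemma~\ref{lem:propHmg}\eqref{it:propHmg:3} provides, and it is the one ingredient your proposal does not use. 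Once the displayed inequality is proved for $\bar m=m$, distinctness of particles (\eqref{it:propHmg:2}) gives $\sum_{(m',\gamma':k')\in H_m(g)} e_{m}(\gamma':k')\le\totmass_m$ and the lemma follows. So the high-level shape of your argument is right, and your last paragraph correctly senses where the difficulty is, but the mechanism you propose (termwise bounds plus a count over distinct particles) cannot work: you need the $\bar m$-induction with cluster grouping via \eqref{it:propHmg:3}.
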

\begin{proof}
  Let us note that, for all $m \in \{0, \ldots, M\}$, for all $\gamma:k \in \Part$, the points~\eqref{it:propHmg:1} and~\eqref{it:propHmg:2} of Lemma~\ref{lem:propHmg} yield, for all $g \in \Gamma^-_m(\gamma:k)$,
  \begin{equation*}
    |H_m(g)| \leq n(d-1)
  \end{equation*}
  and therefore
  \begin{equation*}
    \left(1+\frac{\Ratio}{n}\right)^{|H_m(g)|} \leq \exp(\Ratio(d-1)).
  \end{equation*}
  Furthermore, we rewrite
  \begin{equation*}
    \sum_{g \in \Gamma^-_m(\gamma:k)} w^-_m(g) e_0(F(g)) = \sum_{k'=1}^n e_0(\gamma:k') \sum_{g \in \Gamma^-_m(\gamma:k)} \ind{F(g)=\gamma:k'}w^-_m(g).
  \end{equation*}
  We shall now prove that, for all $g \in \Gamma^-_m(\gamma:k)$,
  \begin{equation}\label{eq:sumHm}
    \sum_{(m',\gamma':k') \in H_m(g)} e_{m'-1}(\gamma':k') \leq \totmass_m,
  \end{equation}
  which leads to the expected $\Ls^{\infty}-\Ls^1$ estimate~\eqref{eq:Emc2} when combined with~\eqref{eq:Emc}.
  
  Let us fix $m \in \{0, \ldots, M\}$, $\gamma:k \in \Part$ and $g \in \Gamma^-_m(\gamma:k)$. We first prove by induction on $\bar{m} \in \{0, \ldots, m\}$ that
  \begin{equation}\label{eq:pftotmass0}
    \sum_{\substack{(m',\gamma':k') \in H_m(g)\\ m' \leq \bar{m}}} e_{m'-1}(\gamma':k') \leq \sum_{\substack{(m',\gamma':k') \in H_m(g)\\ m' \leq \bar{m}}} e_{\bar{m}}(\gamma':k').
  \end{equation}
  The case $\bar{m}=0$ follows from the convention that $e_{-1}(\gamma':k')=0$, see Lemma~\ref{lem:keyestim}. Now let $\bar{m} \in \{1, \ldots, M\}$ such that the inequality above holds true for $\bar{m}-1$. Then
  \begin{equation*}
    \begin{aligned}
      \sum_{\substack{(m',\gamma':k') \in H_m(g)\\ m' \leq \bar{m}}} e_{m'-1}(\gamma':k') & = \sum_{\substack{(m',\gamma':k') \in H_m(g)\\ m' \leq \bar{m}-1}} e_{m'-1}(\gamma':k') + \sum_{\substack{(m',\gamma':k') \in H_m(g)\\ m' = \bar{m}}} e_{m'-1}(\gamma':k')\\
      & \leq \sum_{\substack{(m',\gamma':k') \in H_m(g)\\ m' \leq \bar{m}}} e_{\bar{m}-1}(\gamma':k'),
    \end{aligned}
  \end{equation*}
  and we just have to check that
  \begin{equation}\label{eq:pftotmass1}
    \sum_{\substack{(m',\gamma':k') \in H_m(g)\\ m' \leq \bar{m}}} e_{\bar{m}-1}(\gamma':k') \leq \sum_{\substack{(m',\gamma':k') \in H_m(g)\\ m' \leq \bar{m}}} e_{\bar{m}}(\gamma':k').
  \end{equation}
  To this aim, we note that, for all $\gamma':k' \in \Part$:
  \begin{itemize}
    \item either $\gamma':k' \not\in a_{\bar{m}} \cup b_{\bar{m}}$, in which case $e_{\bar{m}-1}(\gamma':k') = e_{\bar{m}}(\gamma':k')$,
    \item or there exists $c \in \{a_{\bar{m}}, b_{\bar{m}}\}$ such that $\gamma':k' \in c$, in which case the point~\eqref{it:propHmg:3} of Lemma~\ref{lem:propHmg} ensures that all the quantities $e_{\bar{m}-1}(\gamma':k'')$, for $\gamma':k'' \in c$, appear in the sum at the left-hand side of the inequality~\eqref{eq:pftotmass1}. But by~\eqref{eq:EmEmm1:new},
    \begin{equation*}
      \sum_{\gamma':k'' \in c} e_{\bar{m}-1}(\gamma':k'') \leq \sum_{\gamma':k'' \in c} e_{\bar{m}}(\gamma':k'').
    \end{equation*}
  \end{itemize}
  The inequality~\eqref{eq:pftotmass1} follows immediately, and the proof of~\eqref{eq:pftotmass0} is completed. Applying the latter inequality with $\bar{m}=m$ and using the point~\eqref{it:propHmg:2} of Lemma~\ref{lem:propHmg}, we conclude that
  \begin{equation*}
    \sum_{(m',\gamma':k') \in H_m(g)} e_{m'-1}(\gamma':k') \leq \sum_{(m',\gamma':k') \in H_m(g)} e_{m}(\gamma':k') \leq \totmass_m,
  \end{equation*}
  and thereby obtain~\eqref{eq:sumHm}.
\end{proof}

We are now ready to complete the proof of Lemma~\ref{lem:totmass}. We first address the $\Ls^1$ estimate.
  
\begin{proof}[Derivation of the $\Ls^1$ estimate in Lemma~\ref{lem:totmass}] 
  We use our $\Ls^{\infty}-\Ls^1$ estimate~\eqref{eq:Emc2} to obtain a bound on $\totmass_M$. By the definition of the auxiliary system, for all $m \in \{1, \ldots, M\}$,
  \begin{equation*}
    \begin{aligned}
      \totmass_m & = \totmass_{m-1} + \frac{2\Ratio}{n}\left(|b_m| \sum_{\alpha:i \in a_m} e_{m-1}(\alpha:i) + |a_m| \sum_{\beta:j \in b_m} e_{m-1}(\beta:j)\right)\\
      & \leq \left(1 + \frac{4\Ratio^2}{n^2} |a_m||b_m| \exp(\Ratio(d-1))\right) \totmass_{m-1}\\
      & \quad + \frac{2\Ratio}{n}\exp(\Ratio(d-1)) |b_m| \sum_{i'=1}^n e_0(\alpha:i') \sum_{\alpha:i \in a_m} \sum_{g \in \Gamma^-_{m-1}(\alpha:i)} \ind{F(g)=\alpha:i'} w^-_{m-1}(g)\\
      & \quad + \frac{2\Ratio}{n}\exp(\Ratio(d-1)) |a_m| \sum_{j'=1}^n e_0(\beta:j') \sum_{\beta:j \in b_m} \sum_{g \in \Gamma^-_{m-1}(\beta:j)} \ind{F(g)=\beta:j'} w^-_{m-1}(g),
    \end{aligned}
  \end{equation*}
  where we have used~\eqref{eq:Emc2} for the inequality. Using the elementary inequality~\eqref{eq:elementary} again, we obtain
  \begin{equation*}
    \totmass_M \leq \left(1 + \frac{4\Ratio^2}{n^2}\exp(\Ratio(d-1))\right)^{\sum_{m=1}^M |a_m||b_m|} \left\{\totmass_0 + \frac{2\Ratio}{n}\exp(\Ratio(d-1)) (A_M+B_M)\right\},
  \end{equation*}
  where
  \begin{equation*}
    \begin{aligned}
      A_M & := \sum_{m=1}^M |b_m| \sum_{i'=1}^n e_0(\alpha:i') \sum_{\alpha:i \in a_m} \sum_{g \in \Gamma^-_{m-1}(\alpha:i)} \ind{F(g)=\alpha:i'} w^-_{m-1}(g),\\
      B_M & := \sum_{m=1}^M |a_m| \sum_{j'=1}^n e_0(\beta:j') \sum_{\beta:j \in b_m} \sum_{g \in \Gamma^-_{m-1}(\beta:j)} \ind{F(g)=\beta:j'} w^-_{m-1}(g).
    \end{aligned}
  \end{equation*}

  For all $m \in \{1, \ldots, M\}$, $a_m \times b_m$ is a subset of $R$ with cardinality $|a_m||b_m|$, and for $m' < m$, the subsets $a_{m'} \times b_{m'} = \classe_{m'}$ and $a_m \times b_m = \classe_m$ are disjoint. As a consequence, for all $m \in \{1, \ldots, M\}$,
  \begin{equation*}
    \sum_{m=1}^M |a_m||b_m| \leq |R| \leq |\{(\alpha:i, \beta:j) \in (\Part)^2: \alpha < \beta\}| = n^2 \frac{d(d-1)}{2},
  \end{equation*}
  therefore
  \begin{equation*}
    \left(1+\frac{4\Ratio^2}{n^2}\exp\left(\Ratio(d-1)\right)\right)^{\sum_{m=1}^M|a_m||b_m|} \leq \exp\left(2\Ratio^2d(d-1)\exp\left(\Ratio(d-1)\right)\right).
  \end{equation*}
  It now remains to obtain estimates on the quantities $A_M$ and $B_M$. To this aim, we rewrite
  \begin{equation*}
    A_M = \sum_{\alpha:i' \in \Part} e_0(\alpha:i') I_{\alpha:i'},
  \end{equation*}
  where, for all $\alpha:i' \in \Part$,
  \begin{equation*}
    I_{\alpha:i'} := \sum_{m=1}^M |b_m| \sum_{\alpha:i \in a_m} \sum_{g \in \Gamma^-_{m-1}(\alpha:i)} \ind{F(g)=\alpha:i'} w^-_{m-1}(g).
  \end{equation*}
  
  Let us fix $\alpha:i' \in \Part$ and obtain an estimate on $I_{\alpha:i'}$. We first note that, for all $m \in \{1, \ldots, M\}$, for all $\alpha:\bar{i} \in a_m$, the mapping $g \mapsto g::(\alpha:\bar{i})$ establishes a one-to-one correspondance between the sets
  \begin{equation*}
    \bigsqcup_{\alpha:i \in a_m} \{g \in \Gamma^-_{m-1}(\alpha:i) : F(g) = \alpha:i'\}
  \end{equation*}
  and
  \begin{equation*}
    \{g \in \Gamma^-_m(\alpha:\bar{i}) : F(g) = \alpha:i'\},
  \end{equation*}
  and that, in addition, for all $g$ in the first set above,
  \begin{equation*}
    w^-_m(g::(\alpha:\bar{i})) = \frac{1}{|a_m|} w^-_{m-1}(g),
  \end{equation*}
  so that
  \begin{equation*}
    \begin{aligned}
      \sum_{\alpha:i \in a_m} \sum_{g \in \Gamma^-_{m-1}(\alpha:i)} \ind{F(g)=\alpha:i'} w^-_{m-1}(g) & =  \sum_{g \in \Gamma^-_m(\alpha:\bar{i})} \ind{F(g)=\alpha:i'} |a_m|w^-_m(g)\\
      & = \sum_{\alpha:\bar{i} \in a_m} \sum_{g \in \Gamma^-_m(\alpha:\bar{i})} \ind{F(g)=\alpha:i'} w^-_m(g).
    \end{aligned}
  \end{equation*}
  As a consequence, $I_{\alpha:i'}$ rewrites
  \begin{equation*}
    I_{\alpha:i'} = \sum_{m=1}^M |b_m| \sum_{\alpha:i \in a_m} \sum_{g \in \Gamma^-_m(\alpha:i)} \ind{F(g)=\alpha:i'} w^-_m(g).
  \end{equation*}
  
  We now define the set $\bar{\Gamma}(\alpha:i')$ by
  \begin{equation*}
    \bar{\Gamma}(\alpha:i') := \bigsqcup_{i=1}^n \{\bar{g} \in \Gamma^-_M(\alpha:i) : F(\bar{g})=\alpha:i'\}.
  \end{equation*}
  A type path $\bar{g} = (\alpha:i_0, \ldots, \alpha:i_{\bar{L}}) \in \bar{\Gamma}(\alpha:i')$ is associated with a sequence of collisions $(\classe_{m_1}, \ldots, \classe_{m_{\bar{L}}})$ having the property that $\classe_{m_{\bar{L}}}$ is the last element of $C_{\alpha:i_{\bar{L}}}$. The total weight $\bar{w}(\bar{g}) := w^-_M(\bar{g})$ of the type path $\bar{g}$ has the following interpretation: start from the particle $\alpha:i'$ and move to the first collision $\classe_{m_1}$ in $C_{\alpha:i'}$ if it exists. This motion is forward with respect to the orientation of the collision graph. Now select a particle uniformly at random among the particles of type $\alpha$ involved in the collision $\classe_{m_1}$, and repeat the motion forward and random selection as long as possible. Then $\bar{w}(\bar{g})$ is the probability of selecting the type path $\bar{g}$; therefore,
  \begin{equation}\label{eq:sumbwg}
    \sum_{\bar{g} \in \bar{\Gamma}(\alpha:i')} \bar{w}(\bar{g}) = 1.
  \end{equation}
  Besides, we have the identity, for all $m \in \{1, \ldots, M\}$, for all $\alpha:i \in a_m$,
  \begin{equation*}
    \sum_{g \in \Gamma^-_m(\alpha:i)} \ind{F(g)=\alpha:i'} w^-_m(g) = \frac{1}{|a_m|}\sum_{\bar{g} \in \bar{\Gamma}(\alpha:i')} \ind{\bar{g} \in \mathcal{A}_m} \bar{w}(\bar{g})
  \end{equation*}
  where $\mathcal{A}_m$ is the set of type paths $\bar{g} = (\alpha:i_0, \ldots, \alpha:i_{\bar{L}})$, associated with the sequence of collisions $(\classe_{m_1}, \ldots, \classe_{m_{\bar{L}}})$, such that there exists $L \in \{1, \ldots, \bar{L}\}$ such that $m = m_L$ and $\alpha:i_L \in a_m$. We deduce that
  \begin{equation*}
    I_{\alpha:i'} = \sum_{\bar{g} \in \bar{\Gamma}(\alpha:i')} \bar{w}(\bar{g}) \sum_{m=1}^M |b_m| \sum_{\alpha:i \in a_m}  \ind{\bar{g} \in \mathcal{A}_m} \leq \sum_{\bar{g} \in \bar{\Gamma}(\alpha:i')} \bar{w}(\bar{g}) |H_M(\bar{g})| \leq n(d-1),
  \end{equation*}
  where we have used~\eqref{it:propHmg:2} of Lemma~\ref{lem:propHmg} as well as~\eqref{eq:sumbwg} in the last inequality. We conclude that
  \begin{equation*}
    A_M \leq n(d-1) \sum_{\alpha:i' \in \Part} e_0(\alpha:i') = n(d-1)\totmass_0
  \end{equation*}
  and, similarly, 
  \begin{equation*}
    B_M \leq n(d-1) \totmass_0.
  \end{equation*}
  As a consequence,
  \begin{equation*}
    \totmass_M \leq \totmass_0 \left(1 + 4\Ratio(d-1)\exp(\Ratio(d-1))\right)\exp\left(2\Ratio^2d(d-1)\exp\left(\Ratio(d-1)\right)\right),
  \end{equation*}
  which is the $\Ls^1$ estimate $\totmass_M \leq \ConstStab_1 \totmass_0$ where $\ConstStab_1$ is given by~\eqref{eq:ConstStab}.
\end{proof}

\begin{proof}[Derivation of the $\Ls^{\infty}$ estimate in Lemma~\ref{lem:totmass}] 
  Injecting the $\Ls^1$ estimate above into~\eqref{eq:Emc2}, we obtain, for all $m \in \{0, \ldots, M\}$, for all $\gamma:k \in \Part$,
  \begin{equation*}
    \begin{aligned}
      e_m(\gamma:k) & \leq \exp(\Ratio(d-1))\left\{\sum_{k'=1}^n e_0(\gamma:k') \sum_{g \in \Gamma^-_m(\gamma:k)} \ind{F(g)=\gamma:k'} w^-_m(g) + \frac{\Ratio}{n} \ConstStab_1\totmass_0\right\}\\
      & \leq \exp(\Ratio(d-1)) \left\{\sum_{g \in \Gamma^-_m(\gamma:k)} w^-_m(g)+d\Ratio \ConstStab_1\right\} \sup_{\gamma':k' \in \Part} e_0(\gamma':k')\\
      & = \exp(\Ratio(d-1)) \left\{1+d\Ratio \ConstStab_1\right\} \sup_{\gamma':k' \in \Part} e_0(\gamma':k'),
    \end{aligned}
  \end{equation*}
  thanks to~\eqref{eq:sumwg}, whence the $\Ls^{\infty}$ estimate 
  \begin{equation*}
    \sup_{0 \leq m \leq M} \sup_{\gamma:k \in \Part} e_m(\gamma:k) \leq \ConstStab_{\infty} \sup_{\gamma:k \in \Part} e_0(\gamma:k)
  \end{equation*}
  with $\ConstStab_{\infty}$ given by~\eqref{eq:ConstStab}.
\end{proof}


\subsection{From local to global stability estimates}\label{ss:interpolation} In this subsection, we explain how to remove Condition~{\rm (\hyperref[cond:C]{LHM})} from Proposition~\ref{prop:locstab}; namely, we prove the following result.

\begin{prop}[Global stability estimate]\label{prop:globstab}
  Under Assumptions~\eqref{ass:LC} and \eqref{ass:USH}, for all $\x, \y \in \Dnd$,
  \begin{equation*}
    \begin{aligned}
      & \sup_{t \geq 0} ||\Phi(\x;t)-\Phi(\y;t)||_1 \leq \ConstStab_1 ||\x-\y||_1,\\
      & \sup_{t \geq 0} ||\Phi(\x;t)-\Phi(\y;t)||_{\infty} \leq \ConstStab_{\infty} ||\x-\y||_{\infty},
    \end{aligned}
  \end{equation*}
  where $\ConstStab_1$ and $\ConstStab_{\infty}$ are given in Proposition~\ref{prop:locstab}.
\end{prop}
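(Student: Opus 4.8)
The idea is to upgrade the local estimate of Proposition~\ref{prop:locstab} to an arbitrary pair $\x,\y\in\Dnd$ by connecting them with a continuous path $(\z(\sigma))_{\sigma\in[0,1]}$ in $\Dnd$, subdividing $[0,1]$ into finitely many subintervals on whose endpoints two consecutive configurations satisfy Condition~{\rm (\hyperref[cond:C]{LHM})}, and then telescoping the local estimate along the subdivision. Since both $\ConstStab_1$ and $\ConstStab_\infty$ are multiplicative constants (not merely additive), a naive concatenation over $N$ subintervals would produce $\ConstStab_1^N$, which is useless; the path must therefore be chosen so that the local estimate is applied only in its \emph{infinitesimal} form, i.e. one takes the limit as the mesh of the subdivision goes to zero. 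Concretely I would choose the straight segment $\z(\sigma):=(1-\sigma)\x+\sigma\y$, which indeed stays in the convex set $\Dnd$, and set $h(\sigma):=\sup_{t\ge 0}\|\Phi(\z(\sigma);t)-\Phi(\x;t)\|_1$ (and likewise for $\|\cdot\|_\infty$). The goal is to show $h$ is Lipschitz with $h'(\sigma)\le \ConstStab_1\|\x-\y\|_1$ for a.e.\ $\sigma$, whence $h(1)\le\ConstStab_1\|\x-\y\|_1$.

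\textbf{Key steps.} First I would establish the structural fact that, for all but finitely many $\sigma\in[0,1]$, the configuration $\z(\sigma)$ is a \emph{good configuration} in the sense of Definition~\ref{defi:BinColl}: the set of $\sigma$ for which $\z(\sigma)\notin\Drnd$, or for which the binarity condition~\eqref{it:BinColl:1} or the separation condition~\eqref{it:BinColl:2} fails, is contained in the zero set of finitely many nonconstant real-analytic (in fact piecewise-polynomial) functions of $\sigma$, hence is finite; this uses that collision times $\tinter_{\alpha:i,\beta:j}$ and self-interaction times depend on $\sigma$ in a piecewise-smooth and ``generically transverse'' way, and relies on the detailed geometry of MSPD trajectories developed in Subsection~\ref{ss:interpolation} (in particular properties of $\Drnd$ recorded in Lemma~\ref{lem:Drondnd}). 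Second, for $\sigma$ a good parameter, I would show that there is $\epsilon(\sigma)>0$ such that whenever $|\sigma'-\sigma|<\epsilon(\sigma)$ and $\z(\sigma')$ is also good, the pair $\z(\sigma),\z(\sigma')$ satisfies Condition~{\rm (\hyperref[cond:C]{LHM})}: the collision graph is locally constant in $\sigma$ (again because collision orders change only at the finite exceptional set), Condition~\eqref{cond:C1}--\eqref{cond:C2} hold for $\sigma'$ close enough, and Conditions~\eqref{cond:C3a}--\eqref{cond:C3b} hold because the collision intervals $[T^-(\classe),T^+(\classe)]$ shrink to points as $\sigma'\to\sigma$, so they cannot overlap with adjacent collision times nor with self-interaction times (which stay at positive distance). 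Third, applying Proposition~\ref{prop:locstab} to each such pair and using the flow/continuity estimates of Proposition~\ref{prop:continuity} to control the contribution of the finitely many exceptional parameters, I would obtain, for every subdivision $0=\sigma_0<\sigma_1<\cdots<\sigma_N=1$ whose points avoid the exceptional set and are close enough pairwise,
\begin{equation*}
  \sup_{t\ge0}\|\Phi(\z(\sigma_{i+1});t)-\Phi(\z(\sigma_i);t)\|_1 \le \ConstStab_1\,\|\z(\sigma_{i+1})-\z(\sigma_i)\|_1 = \ConstStab_1\,(\sigma_{i+1}-\sigma_i)\,\|\x-\y\|_1,
\end{equation*}
and summing over $i$ (using the triangle inequality for $\sup_{t}\|\cdot\|_1$) gives the result; the $\|\cdot\|_\infty$ case is identical with $\ConstStab_\infty$. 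A cleaner packaging is to note $h$ is continuous (by Proposition~\ref{prop:continuity}), locally Lipschitz with constant $\le\ConstStab_1\|\x-\y\|_1$ away from the exceptional set, hence globally Lipschitz with that constant, so $h(1)\le\ConstStab_1\|\x-\y\|_1$.

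\textbf{Main obstacle.} The delicate part is the first step: proving that the segment $\sigma\mapsto\z(\sigma)$ meets the complement of $\Good$ in only a finite (or at least negligible and ``crossable'') set, and that across such a parameter the two one-sided limiting configurations can still be chained through Proposition~\ref{prop:locstab} or absorbed by the continuity estimate of Proposition~\ref{prop:continuity}. In degenerate situations the straight segment might lie \emph{inside} a bad stratum on a whole subinterval (e.g.\ if $\x$ and $\y$ share a coincidence $x_i^\alpha=x_j^\beta$ that persists along the segment), so one may need to first perturb $\x$ and $\y$ slightly into $\Drnd$, prove the estimate there, and pass to the limit using Proposition~\ref{prop:continuity} together with the density of $\Drnd$ in $\Dnd$; alternatively one replaces the straight segment by a generic polygonal or analytic path for which transversality to every bad stratum is automatic. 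Handling the precise combinatorics of how the collision graph degenerates at an exceptional parameter --- and checking that Condition~{\rm (\hyperref[cond:C]{LHM})}, especially~\eqref{cond:C3b}, survives chaining --- is where the bulk of the genuine work lies; everything else is bookkeeping with the triangle inequality and the already-established local estimate.
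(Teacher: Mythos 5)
Your high-level strategy — interpolate along a path in $\Dnd$, apply Proposition~\ref{prop:locstab} on small pieces, and telescope with the triangle inequality so that the multiplicative constant is applied only once — is exactly the paper's idea. But there are two genuine gaps that you identify as "obstacles" without actually resolving, and the paper's resolution requires a technical device you don't have.

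First, your structural claim that the straight segment $\sigma\mapsto\z(\sigma)$ is in $\Good$ away from a finite exceptional set does not hold in general, and you correctly suspect this in your last paragraph: nothing prevents the straight segment from lying inside a degenerate stratum on a whole subinterval, and more seriously, without further hypotheses the set $\Good$ need not even be dense. The paper introduces an explicit \emph{nondegeneracy condition}~\eqref{cond:ND} on the discrete velocities (roughly: two adjacent same-type clusters never share a common velocity), proves density of $\Good$ only under~\eqref{cond:ND} (Lemma~\ref{lem:gooddense}), proves the global estimate under~\eqref{cond:ND} (Lemma~\ref{lem:globstab:ND}), and then removes~\eqref{cond:ND} via a separate approximation argument replacing $\blambda$ by a nondegenerate perturbation $\blambda^{[q]}$ and passing to the limit $q\to\infty$ (Lemma~\ref{lem:approxND}). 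Your proposal never invokes any such condition, so the density argument you lean on is unsupported. You do suggest perturbing $\x,\y$ into $\Drnd$, but that handles only $\Drnd$-density, not $\Good$-density, which is the relevant one.

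Second, even under~\eqref{cond:ND}, the mechanism for crossing a bad parameter is not absorbed by continuity (Proposition~\ref{prop:continuity}) in the way you suggest: continuity gives no quantitative control with the sharp constant $\ConstStab$. What the paper does instead is the "radial blow-up of singularities" machinery (Lemmas~\ref{lem:radial}, \ref{lem:CondCloc:G}, \ref{lem:CondCloc}): near a configuration $\x$ with singularities, any good configuration $\y$ close enough is locally homothetic to $\x$, and there is a geometric rate $\rho_*\in(0,1)$ such that the successive pairs $(1-\rho_*^{m-1})\x+\rho_*^{m-1}\y$, $(1-\rho_*^m)\x+\rho_*^m\y$ all satisfy Condition~{\rm (\hyperref[cond:C]{LHM})}. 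Summing the local estimates over this geometric subdivision gives the sharp factor $\ConstStab$ on the segment from $\x$ to $\y$ because $\sum_m(\rho_*^{m-1}-\rho_*^m)=1$, and then one patches finitely many such segments across a finite subcover by balls $B_1(\z_l,\kappa(\z_l))$ whose existence follows from Lemma~\ref{lem:radial}, plus an $\epsilon$-truncation to avoid the $\Drnd$-endpoints. Without the homothety/geometric-sequence argument your "locally Lipschitz away from the exceptional set, hence globally Lipschitz" step has a hole: you need to show the one-sided limits at an exceptional $\sigma$ can be connected with the correct constant, and continuity alone does not do this.

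So your proposal captures the skeleton (interpolation plus triangle inequality plus local estimate) and even diagnoses the difficulties correctly, but the two pillars that make it work — the nondegeneracy assumption with its approximation step, and the radial blow-up/geometric-subdivision mechanism for crossing bad configurations — are missing.
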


The subsection is organised as follows. Proposition~\ref{prop:globstab} is derived from the local stability estimates of Proposition~\ref{prop:locstab} by integrating the latter along a continuous path joining arbitrary initial configurations, that can be decomposed into small portions on which Proposition~\ref{prop:locstab} applies. Geometrical tools allowing the construction of such a path are introduced in~\S\ref{sss:radial}, and the global interpolation procedure is described in~\S\ref{sss:interpolation}. The whole argument relies on the nondegeneracy condition~\eqref{cond:ND} introduced in~\S\ref{sss:ND}, and an approximation procedure of degenerate characteristic fields by nondegenerate ones is detailed in~\S\ref{sss:approxND}.


\subsubsection{The nondegeneracy condition}\label{sss:ND} Let us introduce the following {\em nondegeneracy} condition on the functions $\lambda^1, \ldots, \lambda^d$.
\begin{enumerate}[label=(ND), ref=ND]
  \item\label{cond:ND} For all $\x \in \Dnd$, for all $\gamma \in \{1, \ldots, d\}$, for all $\uk < \ok$ in $\{1, \ldots, n\}$ such that
  \begin{equation*}
    \forall \gamma' \not= \gamma, \quad \forall k \in \{\uk, \ldots, \ok\}, \qquad \omega_{\gamma:k}^{\gamma'}(\x) = \omega_{\gamma:\uk}^{\gamma'}(\x),
  \end{equation*}
  we have
  \begin{equation*}
    \forall k \in \{\uk, \ldots, \ok-1\}, \qquad \frac{1}{k-\uk+1} \sum_{k'=\uk}^k \tlambda_{k'}^{\gamma}(\x) \not= \frac{1}{\ok-k} \sum_{k'=k+1}^{\ok} \tlambda_{k'}^{\gamma}(\x).
  \end{equation*}
\end{enumerate}

This condition expresses the fact that two clusters of the same type with no particle between them cannot have the same velocity. Note that the condition is written for a fixed value of $n$ and therefore only depends on the finite number of values of $\tlambda_k^{\gamma}(\x)$, $\x \in \Dnd$ and $\gamma:k \in \Part$. We will use the following consequence of Condition~\eqref{cond:ND}.

\begin{lem}[Continuity of the composition of clusters]\label{lem:sGNLclu}
  Under Assumptions~\eqref{ass:C} and~\eqref{ass:USH}, and Condition~\eqref{cond:ND}, for all $\x \in \Dnd$, for all $t \in (0, t^*(\x))$ such that 
  \begin{equation*}
    \forall \gamma:k \in \Part, \qquad \clu_k^{\gamma}(\x;t^-) = \clu_k^{\gamma}(\x;t),
  \end{equation*}
  there exists $\eta > 0$ such that, for all $\y \in \barB_1(\x,\eta)$,
  \begin{equation*}
    \forall \gamma:k \in \Part, \qquad \clu_k^{\gamma}(\y;t) = \clu_k^{\gamma}(\x;t).
  \end{equation*}  
\end{lem}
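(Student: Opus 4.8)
The plan is to exploit the fact that, for a fixed $n$, everything reduces to a finite-dimensional problem up to the time $t^*(\x)$, and to propagate continuity of the cluster structure from the initial time through the (finitely many) self-interaction times in $(0,t)$. First I would observe that, since $t < t^*(\x)$, on the interval $[0,t]$ the MSPD started at $\x$ coincides with the Typewise Sticky Particle Dynamics $\tPhi[\tblambda(\x)]$, and the same holds for $\y$ in a neighbourhood: indeed $\x \mapsto \tblambda(\x)$ is piecewise constant (it depends only on $\Rb(\x)$ through the $\omega^{\gamma'}_{\gamma:k}$, up to the integral in~\eqref{eq:vitesses}), so there is $\eta_0 > 0$ such that for all $\y \in \barB_1(\x,\eta_0)$ we have $\Rb(\y) \supseteq \Rb(\x)$ and, more precisely, the relevant ordering relations defining the $\omega$'s near particle $\gamma:k$ are preserved whenever $x^{\gamma'}_{k'} \neq x^{\gamma}_k$; combined with $t^*(\x) > 0$ and the lower bound $\ConstUSH$ on the velocity gaps from Lemma~\ref{lem:ttinter}, one gets $t^*(\y) > t$ and $\tblambda(\y)|_{\text{relevant coordinates}} = \tblambda(\x)$ for $\y$ close enough. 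Hence on $[0,t]$ both $\Phi(\x;\cdot)$ and $\Phi(\y;\cdot)$ are Typewise Sticky Particle Dynamics with the \emph{same} velocity vectors, and each type $\gamma$ evolves independently; it therefore suffices to prove the statement for a single ordinary Sticky Particle Dynamics $\phi[\rblambda]$ in $\Dn$.

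For a single Sticky Particle Dynamics, the key point is that the cluster $\clu_k[\rblambda](\rx;t)$ is determined by a finite sequence of collision and splitting events, and the hypothesis $\clu_k^{\gamma}(\x;t^-) = \clu_k^{\gamma}(\x;t)$ means that $t$ is not itself a self-interaction time (no cluster is being formed or split exactly at $t$). I would argue by backward induction on the finitely many self-interaction times $0 =: t_0 < t_1 < \cdots < t_r < t$ of the system started at $\rx^{\gamma}$. At each such time $t_\ell$, the clusters present on $(t_{\ell-1}, t_\ell)$ are fixed sets of consecutive indices; the composition on $(t_\ell, t_{\ell+1})$ is obtained by first merging all clusters that are in contact at $t_\ell$, then refining the merged blocks according to the stability condition~\eqref{eq:stab}. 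Condition~\eqref{cond:ND} is exactly what guarantees that the merging/splitting pattern is \emph{stable under perturbation}: two consecutive clusters with no particle between them have velocities that are strictly ordered, so a small change in positions cannot change which ones collide before $t$, and a strict inequality in~\eqref{eq:stab} at the splitting step (again ensured by~\eqref{cond:ND}, applied after the relevant $\omega$'s have been identified) cannot be reversed. Thus by Proposition~\ref{prop:continuity} (or rather the uniform continuity of $\Phi(\cdot;s)$ on $[0,t]$ that underlies it) one can choose $\eta$ small enough that, for $\y \in \barB_1(\x,\eta)$: the collision graph up to time $t$ is identical, all collision times in $(0,t)$ stay strictly inside the same gaps, and all cluster gaps $\Phi^{\gamma}_{k+1} - \Phi^{\gamma}_k$ that are positive at a given $s\in[0,t]$ remain positive for $\y$. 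Propagating this through the events $t_1, \dots, t_r$ yields $\clu_k^{\gamma}(\y;t) = \clu_k^{\gamma}(\x;t)$.

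Concretely, I would phrase the perturbation argument quantitatively: let $\delta := \min$ over all $s \in \{t_1,\dots,t_r,t\}$ and all pairs $(k,k')$ of same type with $\Phi^{\gamma}_k(\x;s) \neq \Phi^{\gamma}_{k'}(\x;s)$ of the (finite, positive) separation, and let $\epsilon$ be the minimal gap $\min_\ell (t_{\ell+1} - t_\ell) \wedge (t - t_r)$ between consecutive events. Using the Lipschitz-in-time bound~\eqref{eq:typeencadrelambda} together with the continuity estimate of Proposition~\ref{prop:continuity}, choose $\eta$ so that $\sup_{s\in[0,t]} ||\Phi(\x;s) - \Phi(\y;s)||_1 < \delta/3$ and, by~\eqref{cond:ND}, so that the velocity of each cluster and of each candidate sub-cluster for $\y$ is within $\ConstUSH \epsilon / 4$ (say) of the corresponding one for $\x$; then no spurious collision occurs in $[0,t]$, each splitting at a $t_\ell$ happens the same way (the strict inequalities in~\eqref{eq:stab} being preserved), and in particular the cluster containing $\gamma:k$ just before and at time $t$ is the same set of indices for $\y$ as for $\x$. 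The main obstacle I anticipate is the bookkeeping of the splitting step: showing that when a merged block is re-partitioned by the stability condition, Condition~\eqref{cond:ND} really does apply (one must check that within the merged block the external $\omega^{\gamma'}_{\gamma:k}$'s are constant, so that the hypothesis of~\eqref{cond:ND} is met) and hence gives a strict inequality that survives the perturbation. This is where the precise formulation of~\eqref{cond:ND} — restricted to configurations where the transverse ranks are constant along the block — is used crucially, and getting that hypothesis verified at every splitting event of the MSPD (where the $\omega$'s are indeed locally constant on $(0,t^*(\x))$ by definition of the Typewise dynamics) is the delicate bit. Once that is in place the rest is a finite induction and a routine $\eta$-chase.
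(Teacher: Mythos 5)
Your opening reduction is correct and matches the paper's setup: on $[0,t]$ with $t < t^*(\x)$ the MSPD coincides with the Typewise SPD, $\Rb(\y)=\Rb(\x)$ for $\y$ close enough forces $\tblambda(\y)=\tblambda(\x)$, and the problem becomes typewise. But your model of the single-type dynamics is wrong on a crucial point: for $t>0$, clusters in the Sticky Particle Dynamics \emph{only grow}. Remark~\ref{rk:cluSPD}~\eqref{it:rkcluSPD:3} gives $\clu_k[\rblambda](\rx;s) \subset \clu_k[\rblambda](\rx;t)$ for $s \leq t$, and Remark~\ref{rk:cluSPD}~\eqref{it:rkcluSPD:1} says the block at positive time automatically satisfies the stability condition~\eqref{eq:stab}; there is no ``merging then refining'' at a positive self-interaction time — the stability-based partitioning happens only at $t=0$ (Definition~\ref{defi:icluSPD}). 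This matters because your backward induction is organised around those nonexistent splitting events, and because it aims at a stronger (and in general false) claim than you need: for $\y$ near $\x$, coordinates that coincide in $\x$ can separate in $\y$, producing \emph{extra} self-interaction events at small positive times, so the self-interaction schedule and intermediate cluster structure for $\y$ need not match those of $\x$. Your assertion that ``the collision graph up to time $t$ is identical'' is therefore not correct, and the induction on $t_1<\cdots<t_r$ does not go through as stated.

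The paper's proof sidesteps intermediate times entirely. Choose $t'<t$ so that the clusters are already frozen on $[t',t]$, set $\x':=\Phi(\x;t')$, and work with $\y'$ near $\x'$. Then: (i) a uniform separation $\delta$ between distinct clusters of $\x$ on $[t',t]$ forbids spurious merging for $\y'$; (ii) Condition~\eqref{cond:ND}, applied to $\x'$ and the cluster $\gamma:\uk\cdots\ok$ of $\x$ at time $t$, makes the stability inequality strict, yielding a velocity gap $\theta>0$ between left and right sub-blocks which is \emph{the same} for $\y'$ because $\Rb(\y')=\Rb(\x')$ pins the velocities; (iii) if the endpoints of the block have not met by time $s-t'$, the initial gap must exceed $\theta(s-t')$, so taking $\eta'$ small forces the merge before $t-t'$; finally transfer from $\y'$ near $\x'$ to $\y$ near $\x$ via Proposition~\ref{prop:continuity}. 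The point you flag as delicate — verifying the hypothesis of~\eqref{cond:ND} — is real, but for a different reason than any ``splitting step'': one must check that $\omega^{\gamma'}_{\gamma:k}(\x')$ is constant across $k\in\{\uk,\ldots,\ok\}$, not merely constant in time. This holds because on $[0,t^*(\x))$ the relative order of $\gamma:k$ and any $\gamma':k'$ of a different type cannot change, and at time $t<t^*(\x)$ the whole cluster occupies a single point at which, by strict hyperbolicity, no particle of another type can be; hence each $\gamma':k'$ lies on the same side of \emph{all} of $\gamma:\uk,\ldots,\gamma:\ok$ at time $t'$.
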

\begin{proof}
  Let $\x \in \Dnd$ and $t \in (0, t^*(\x))$ satisfying the properties above. Let us first fix $t' \in (0,t)$ such that, for all $s \in [t',t]$, for all $\gamma:k \in \Part$, $\clu_k^{\gamma}(\x;s)=\clu_k^{\gamma}(\x;t)$; in other words, there is no self-interaction in the MSPD started at $\x$ on the time interval $[t',t]$. We shall denote $\x' := \Phi(\x;t')$.
  
  Let us fix $\delta > 0$ small enough to ensure that, for all $\gamma:k$ and $\gamma':k'$ such that $\clu_k^{\gamma}(\x;t) \not= \clu_{k'}^{\gamma'}(\x;t)$, 
  \begin{equation*}
    \forall s \in [t',t], \qquad [\Phi_k^{\gamma}(\x;s)-\delta,\Phi_k^{\gamma}(\x;s)+\delta] \cap [\Phi_{k'}^{\gamma'}(\x;s)-\delta,\Phi_{k'}^{\gamma'}(\x;s)+\delta] = \emptyset;
  \end{equation*}
  
  On the other hand, by Lemma~\ref{lem:Drondnd}, one can choose $\eta'$ small enough to ensure that, for all $\y' \in \barB_1(\x',\eta')$, then $\y' \in \Drnd$, $\Rb(\y') = \Rb(\x')$ and $t^*(\y') > t'-t$. By Lemma~\ref{lem:contracttPhi} combined with the flow property of Proposition~\ref{prop:mspd}, these conditions imply that, for all $\y' \in \barB_1(\x',\eta')$,
  \begin{equation*}
    \forall s \in [t',t], \qquad ||\Phi(\y';s-t')-\Phi(\x;s)||_1 \leq ||\y' - \x'||_1 \leq \eta'.
  \end{equation*}
  We now want to fix $\eta'$ small enough to satisfy the conditions above, and such that, for all $\gamma:k \in \Part$, if $\y' \in \barB_1(\x',\eta')$, then $\clu_k^{\gamma}(\y',t-t')=\clu_k^{\gamma}(\x;t)$. 
  
  We first require that $\eta' \leq \delta/n$, so that if $\y' \in \barB_1(\x',\eta')$, then for all $\gamma:k \in \Part$, for all $s \in [t',t]$, 
  \begin{equation*}
    |\Phi_k^{\gamma}(\y';s-t')-\Phi_k^{\gamma}(\x;s)| \leq n||\Phi(\y';s-t')-\Phi(\x;s)||_1 \leq \delta,
  \end{equation*}
  and therefore $\Phi_k^{\gamma}(\y';s-t')=\Phi_{k'}^{\gamma'}(\y';s-t')$ only if $\clu_k^{\gamma}(\x;t)=\clu_{k'}^{\gamma'}(\x;t)$.
  
  Let us now fix $\gamma:k \in \Part$. If $\clu_k^{\gamma}(\x;t) = \gamma:k$, then for all $\y' \in \barB_1(\x',\eta')$, the assertion above implies that $\clu_k^{\gamma}(\y;s-t') = \gamma:k$ for all $s \in [t',t]$. On the contrary, if $\clu_k^{\gamma}(\x;t) = \gamma:\uk\cdots\ok$ with $\uk < \ok$, then the stability condition~\eqref{eq:stab} combined with Condition~\eqref{cond:ND} yield, for all $\uk \leq k < \ok$,
  \begin{equation}\label{eq:pf:sGNLclu:1}
    \frac{1}{k-\uk+1} \sum_{k'=\uk}^k \tlambda_{k'}^{\gamma}(\x') > \frac{1}{\uk-k} \sum_{k'=k+1}^{\ok} \tlambda_{k'}^{\gamma}(\x'),
  \end{equation}
  and the same inequality holds if one replaces $\x'$ with $\y'$ since $\Rb(\x') = \Rb(\y')$. By the same arguments as above, we have $y'^{\gamma}_{\ok} - y'^{\gamma}_{\uk} \leq 2n \eta' \leq 2 \delta$. Let us write, for all $s \in [t',t]$,
  \begin{equation*}
    \begin{aligned}
      & \Phi_{\uk}^{\gamma}(\y'; s-t') = y'^{\gamma}_{\uk} + \int_{r=0}^{s-t'} v_{\uk}^{\gamma}(\y';r)\dd r,\\
      & \Phi_{\ok}^{\gamma}(\y'; s-t') = y'^{\gamma}_{\ok} + \int_{r=0}^{s-t'} v_{\ok}^{\gamma}(\y';r)\dd r.
    \end{aligned}
  \end{equation*}
  Let us fix $s \in [t',t]$. If $\Phi_{\uk}^{\gamma}(\y'; s-t') = \Phi_{\ok}^{\gamma}(\y'; s-t')$, then $\clu_k^{\gamma}(\y';s-t') = \gamma:\uk\cdots\ok$ and this remains the case up to time $t-t'$. Otherwise, we have, for all $r \in [0,s-t']$, $\Phi_{\uk}^{\gamma}(\y';r) < \Phi_{\ok}^{\gamma}(\y';r)$ and therefore
  \begin{equation*}
    \clu_{\uk}^{\gamma}(\y';r) = \gamma:\uk\cdots\uk', \qquad \clu_{\ok}^{\gamma}(\y';r) = \gamma:\ok'\cdots\ok,
  \end{equation*}
  for some $\uk \leq \uk' < \ok' \leq \ok$. Arguing as in the proof of Lemma~\ref{lem:extstab}, but where the stability condition~\eqref{eq:stab} is replaced with the stronger condition~\eqref{eq:pf:sGNLclu:1}, we get $v_{\uk}^{\gamma}(\y';r) > v_{\ok}^{\gamma}(\y';r)$. Since $\uk'$ and $\ok'$ can only take a finite number of values, we deduce that there exists $\theta > 0$ such that, for all $r \in [0,s-t']$, $v_{\uk}^{\gamma}(\y';r) - v_{\ok}^{\gamma}(\y';r) \geq \theta$. As a consequence, if $\Phi_{\uk}^{\gamma}(\y'; s-t') < \Phi_{\ok}^{\gamma}(\y'; s-t')$ then we necessarily have
  \begin{equation*}
    \theta(s-t') \leq \int_{r=0}^{s-t'} \left(v_{\uk}^{\gamma}(\y';r) - v_{\ok}^{\gamma}(\y';r)\right)\dd r < y'^{\gamma}_{\ok} - y'^{\gamma}_{\uk} \leq 2n\eta'.
  \end{equation*}
  By contraposition, we deduce that if we choose $\eta' < \theta(t-t')/(2n)$, then the self-interaction between the particles $\gamma:\uk$ and $\gamma:\ok$ in the MSPD started at $\y'$ occurs before the time $t-t'$, which implies $\clu_k^{\gamma}(\y';t-t') = \gamma:\uk\cdots\ok$.
  
  Taking the minimum of such admissible $\eta'$ on all the particles $\gamma:k \in \Part$, we conclude that, for all $\y \in \Dnd$ such that $\Phi(\y;t') \in \barB_1(\x',\eta')$, we have $\clu_k^{\gamma}(\y;t) = \clu_k^{\gamma}(\x;t)$, for all $\gamma:k \in \Part$. By Proposition~\ref{prop:continuity}, there exists $\eta > 0$ such that, for all $\y \in \barB_1(\x,\eta)$, $\Phi(\y;t') \in \barB_1(\Phi(\x;t'),\eta')$; which completes the proof.
\end{proof}

The nondegeneracy condition~\eqref{cond:ND} implies that the set of good configurations is dense in $\Dnd$.

\begin{lem}[Density of $\Good$]\label{lem:gooddense}
  Under Assumptions~\eqref{ass:C} and~\eqref{ass:USH}, and Condition~\eqref{cond:ND}, the set $\Good$ is dense in $\Dnd$.
\end{lem}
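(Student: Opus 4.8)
The goal is to show that an arbitrary configuration $\x \in \Dnd$ can be approximated by good configurations. Fix $\x \in \Dnd$ and $\epsilon > 0$; I want to produce $\y \in \Good$ with $||\x-\y||_1 \leq \epsilon$. The strategy has two stages: first perturb $\x$ into the dense open set $\Drnd$ of configurations with no collision at the initial time, then perturb further within $\Drnd$ to destroy the two pathologies forbidden by Definition~\ref{defi:BinColl}, namely non-binary collisions (condition~\eqref{it:BinColl:1}) and the coincidence of a self-interaction with a collision (condition~\eqref{it:BinColl:2}). Since $\Drnd$ is a dense open subset of $\Dnd$ (noted after Definition~\ref{defi:Drnd}, with further properties in Lemma~\ref{lem:Drondnd}), the first stage is immediate: pick $\x_0 \in \Drnd$ with $||\x-\x_0||_1 \leq \epsilon/2$, and it then suffices to work inside $\barB_1(\x_0, \epsilon/2) \cap \Drnd$.

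\textbf{Key steps.} Working from $\x_0 \in \Drnd$, the plan is to show that the set of configurations in a small ball around $\x_0$ that fail to be good is ``thin'' in the sense that its complement is dense. For condition~\eqref{it:BinColl:1}: the space-time point of collision $\Xiinter_{\alpha:i,\beta:j}(\y)$ depends continuously on $\y$ on the region where $\Rb(\y)$ and the cluster structure are locally constant, and in fact is piecewise-affine in $\y$ there (collision locations are intersections of affine trajectory segments, whose slopes are the finitely many values $\tlambda^\gamma_k$). A coincidence $\Xiinter_{\alpha:i,\beta:j}(\y) = \Xiinter_{\alpha':i',\beta':j'}(\y)$ with $\{\alpha,\beta\} \neq \{\alpha',\beta'\}$ therefore confines $\y$ to a finite union of proper affine subspaces of $\Dnd$ (locally), which is nowhere dense; the same holds for condition~\eqref{it:BinColl:2}, where one asks $\Ibinter(\y) \cap \Ibself(\y) \neq \emptyset$. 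Crucially, Condition~\eqref{cond:ND} is what guarantees that self-interaction times themselves vary continuously (indeed the cluster composition is locally constant, by Lemma~\ref{lem:sGNLclu}), so that both $\Ibinter$ and $\Ibself$ are, near a generic point, finite sets of points moving continuously — hence a generic small perturbation separates them. One then concludes that $\Good \cap \barB_1(\x_0,\epsilon/2)$ is dense in $\barB_1(\x_0,\epsilon/2)$, and in particular nonempty, producing the desired $\y$.

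A cleaner way to organise the argument is via induction on the number $\Nb(\x_0)$ of pairs in $\Rb(\x_0)$, or on the number of collisions, using the flow decomposition $\Phi(\x;t) = \Phi(\x^*; t-t^*(\x))$ from Definition~\ref{defi:mspd}: one first perturbs so that the first collision (or the first batch of simultaneous collisions) is binary and disjoint from all self-interactions happening at that time — this is a condition on $\tblambda(\x_0)$ and the initial geometry only, hence stable and generically satisfiable using Lemma~\ref{lem:sGNLclu} to control the cluster compositions — and then invokes continuity (Proposition~\ref{prop:continuity}) together with Lemma~\ref{lem:Drondnd} to push the perturbation of the post-collision configuration $\x_0^*$ back to a perturbation of $\x_0$, applying the induction hypothesis to $\x_0^*$. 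The base case $\Nb(\x_0)=0$ is trivial since then $\Mb(\x_0)=0$ and $\x_0 \in \Good$ automatically once $\x_0 \in \Drnd$.

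\textbf{Main obstacle.} The delicate point is the bookkeeping needed to make the perturbations at different collisions compatible: perturbing $\x_0$ to fix the first collision changes all subsequent collision and self-interaction times, so one must argue that the ``generic'' conditions can be achieved simultaneously for all collisions. This is where the finiteness of the set of possible velocity vectors $\{\tblambda(\y) : \y \in \Dnd\}$ (as observed in~\S\ref{sss:ND}) and Lemma~\ref{lem:sGNLclu} (local constancy of cluster compositions, hence of the combinatorial type of the whole trajectory on a neighbourhood) do the real work: on each cell where the combinatorial type is fixed, all the relevant space-time points are affine functions of $\y$, the bad sets are finite unions of proper affine subspaces, and a Baire-category / Lebesgue-null argument over the finitely many cells meeting $\barB_1(\x_0,\epsilon/2)$ finishes the proof. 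I expect the write-up to spend most of its effort carefully stating this local-affineness and verifying that the excluded sets are genuinely lower-dimensional, i.e. that one can always perturb \emph{off} each bad subspace while staying in $\Drnd$ and in the given ball.
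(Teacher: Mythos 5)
Your second (inductive) route is close in spirit to the paper's proof, and your identification of the role of Condition~\eqref{cond:ND} through Lemma~\ref{lem:sGNLclu} is on target, but there is a genuine gap precisely in the step you dispatch in one line: ``invoke continuity (Proposition~\ref{prop:continuity}) together with Lemma~\ref{lem:Drondnd} to push the perturbation of the post-collision configuration $\x_0^*$ back to a perturbation of $\x_0$''. Both of those results concern the \emph{forward} flow $\x \mapsto \Phi(\x;t)$, which is continuous but decidedly not a local homeomorphism: it is many-to-one whenever clusters form, so there is no way to transport a small ball around $\Phi(\x_0;t')$ backward to a small ball around $\x_0$ by continuity alone. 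The paper handles this with an explicit \emph{backward frozen dynamics}: a carefully chosen right-inverse of the forward flow in which clusters are frozen (never split going backward) so that exactly one preimage is selected, together with a stability estimate (Property~($*$) in the paper's Step~4) showing that this backward flow is itself continuous provided the frozen-cluster structure is preserved. This construction is the substantive technical content of the proof's inductive step and your proposal does not supply a substitute for it.

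Your first route, the piecewise-affine/Baire argument, is a reasonable heuristic but is not yet a proof. The claim that on each cell of fixed combinatorial type the ``bad'' loci (collision coincidences with $\{\alpha,\beta\}\neq\{\alpha',\beta'\}$, or $\Ibinter\cap\Ibself\neq\emptyset$) are contained in \emph{proper} affine subspaces is exactly what must be established and is where the work lies: one must exhibit, for each such constraint, a direction in which to perturb while staying inside the same cell and inside $\Drnd$. The paper's Steps~1 and~2 do precisely this by hand — shrinking self-interacting groups around their centre of mass to detach self-interactions from the first collision time, and shifting the slower clusters in a staggered way to split a $r$-fold collision into binary ones, with quantitative control via $\ConstBound{1}$ and $\ConstUSH$. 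Without such an explicit construction, the assertion of properness is unjustified. You should also note that the cells of fixed combinatorial type need not be convex or even manageable pieces, and any category argument must contend with their boundaries; the paper's explicit perturbations sidestep that issue.
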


The proof of Lemma~\ref{lem:gooddense} is postponed to Subsection~\ref{app:pf:gooddense} in Appendix~\ref{app:proofs}.


\subsubsection{Radial blow-up of singularities}\label{sss:radial} Given a configuration $\x \in \Dnd$ and a good configuration $\y$ in the neighbourhood of $\x$, we now want to construct a path joining $\x$ to $\y$ that can be decomposed into small portions on which Proposition~\ref{prop:locstab} can be applied. To this aim, we call {\em singularity} a space-time point at which a non binary collision, or both a collision and a self-interaction, occur in the MSPD started at $\x$. Note that a configuration $\y \in \Drnd$ is good if and only there is no singularity in the MSPD started at $\y$. Then we remark that, if $\y \in \Good$ is close enough to $\x$, singularities in the MSPD started at $\x$ are {\em radially blown up} in the MSPD started at $\y$, in the sense that if one shrinks the the trajectory of the MSPD started at $\y$ around the singularity, one obtains the trajectory of the MSPD started at $\x$.

In this paragraph, we first give a proper definition of the notion of {\em locally homothetic configurations} $\x$ and $\y$ corresponding to the description above, then we use the radial blow-up of singularities property to construct paths joining $\x$ to $\y$ with the expected properties. 

For all space-time points $\Xi = (\xi_0, \tau_0) \in \R \times (0,+\infty)$, for all $\dxi \in \R$, $\dtau \in (0,\tau_0)$, we shall denote by
\begin{equation*}
  \Xi^{\dxi, \dtau} := [\xi_0-\dxi, \xi_0+\dxi] \times [\tau_0-\dtau,\tau_0+\dtau] \subset \R \times (0,+\infty)
\end{equation*}
the {\em $(\dxi,\dtau)$-box} around $\Xi$. The open segments $(\xi_0-\dxi,\xi_0+\dxi)\times\{\tau_0-\dtau\}$ and $(\xi_0-\dxi,\xi_0+\dxi)\times\{\tau_0+\dtau\}$ shall be referred to as the {\em horizontal sides} of the box.

\begin{defi}[Proper covering of $\Ibinter(\x)$]
  Let $\x \in \Drnd$, with $\Nb(\x) \geq 1$. A {\em proper covering} of $\Ibinter(\x)$ is a pair $(\dxi,\dtau)$ such that:
  \begin{itemize}
    \item $\dxi > 0$, $\dtau \in (0, t^*(\x))$,
    \item for all $\Xi, \Xi' \in \Ibinter(\x)$ such that $\Xi \not= \Xi'$, then the intersection $\Xi^{\dxi, \dtau} \cap \Xi'^{\dxi, \dtau}$ of the $(\dxi,\dtau)$-boxes around $\Xi$ and $\Xi'$ is empty,
    \item for all $\Xi = (\xi_0, \tau_0) \in \Ibinter(\x)$,
    \begin{itemize}
      \item for all $\gamma:k \in \Part$ such that there exists $t \in [\tau_0-\dtau, \tau_0+\dtau]$ such that $\Phi_k^{\gamma}(\x;t) \in [\xi_0-\dxi, \xi_0+\dxi]$, then
      \begin{equation*}
        \Phi_k^{\gamma}(\x;\tau_0) = \xi_0,
      \end{equation*}
      \ie all the particles passing in the box are involved in the collision associated with $\Xi$,
      \item for all particles $\gamma:k$ in the box, 
      \begin{equation*}
        \Phi_k^{\gamma}(\x;\tau_0-\dtau) \in (\xi_0-\dxi, \xi_0+\dxi) \quad \text{and} \quad \Phi_k^{\gamma}(\x;\tau_0+\dtau) \in (\xi_0-\dxi, \xi_0+\dxi),
      \end{equation*}
      \ie the particle enters and exits the box by the horizontal side; besides,
      \begin{equation*}
        \forall t \in [\tau_0-\dtau, \tau_0), \qquad \clu_k^{\gamma}(\x;t) = \clu_k^{\gamma}(\x;(\tau_0-\dtau)^-)
      \end{equation*}
      and
      \begin{equation*}
        \forall t \in [\tau_0, \tau_0+\dtau], \qquad \clu_k^{\gamma}(\x;t) = \clu_k^{\gamma}(\x;\tau_0),
      \end{equation*}
      \ie self-interactions in the box can only occur at the space-time point $\Xi$.
    \end{itemize}
  \end{itemize}
\end{defi}

Given a proper covering $(\dxi,\dtau)$ of $\Ibinter(\x)$, the set of $(\dxi,\dtau)$-boxes around the points of $\Ibinter(\x)$ is drawn on Figure~\ref{fig:covering}. Examples of boxes around space-time points of collisions, with dimensions that do not define a proper covering, are shown on Figure~\ref{fig:badboxes}.

\begin{figure}[ht]
  \begin{pspicture}(10,5)
    \psline[linecolor=blue](0,0)(5,2)(6.5,3)(10,5)
    \psline[linecolor=blue](2,0)(5,2)(6.5,3)(10,5)
    
    \psline[linecolor=red](4.5,0)(5,1)(5,2)(4.5,4)(4.5,5)
    \psline[linecolor=red](5.5,0)(5,1)(5,2)(4.5,4)(4.25,5)
    
    \psline[linecolor=green](8.5,0)(5,2)(0,4.5)
    \psline[linecolor=green](9.5,0)(6.5,3)(4.5,4)(3.5,5)
    
    \pspolygon[linecolor=black](4.2,1.8)(5.8,1.8)(5.8,2.2)(4.2,2.2)
    \pspolygon[linecolor=black](5.7,2.8)(7.3,2.8)(7.3,3.2)(5.7,3.2)
    \pspolygon[linecolor=black](3.7,3.8)(5.3,3.8)(5.3,4.2)(3.7,4.2)
    
    \psline[linecolor=black]{<->}(5.5,3.8)(5.5,4.2)
    \rput(5.9,4){\textcolor{black}{$2\dtau$}}
    \psline[linecolor=black]{<->}(3.7,4.4)(5.3,4.4)
    \rput(5,4.7){\textcolor{black}{$2\dxi$}}
  \end{pspicture}
  \caption{An example of set of $(\dxi,\dtau)$-boxes around the points of $\Ibinter(\x)$.}
  \label{fig:covering}
\end{figure}
  
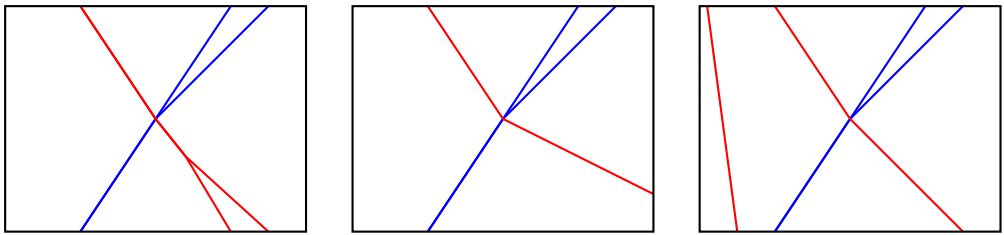
\begin{figure}[ht]
  \begin{pspicture}(4,3)
    \psline[linecolor=blue](1,0)(2,1.5)(3.5,3)
    \psline[linecolor=blue](1,0)(2,1.5)(3,3)
    
    \psline[linecolor=red](3,0)(2.4,1)(2,1.5)(1,3)
    \psline[linecolor=red](3.5,0)(2.4,1)(2,1.5)(1,3)
  
    \pspolygon[linecolor=black](0,0)(4,0)(4,3)(0,3)
  \end{pspicture}
  \hskip 5mm
  \begin{pspicture}(4,3)
    \psline[linecolor=blue](1,0)(2,1.5)(3.5,3)
    \psline[linecolor=blue](1,0)(2,1.5)(3,3)
    
    \psline[linecolor=red](4,.5)(2,1.5)(1,3)
  
    \pspolygon[linecolor=black](0,0)(4,0)(4,3)(0,3)
  \end{pspicture}
  \hskip 5mm
  \begin{pspicture}(4,3)
    \psline[linecolor=blue](1,0)(2,1.5)(3.5,3)
    \psline[linecolor=blue](1,0)(2,1.5)(3,3)
    
    \psline[linecolor=red](3.5,0)(2,1.5)(1,3)
    \psline[linecolor=red](.5,0)(.1,3)
 
    \pspolygon[linecolor=black](0,0)(4,0)(4,3)(0,3)
  \end{pspicture}
  \caption{The box on the left-hand figure contains a self-interaction at a distinct space-time point from the collision. On the central figure, a particle enters the box by a vertical side. The box on the right-hand figure is crossed by a particle that is not involved in the collision.}
  \label{fig:badboxes}
\end{figure}

Let us note that a proper covering of $\Ibinter(\x)$ always exists. Indeed, since the set $\Ibself(\x)$ is finite, one can construct $\dtau \in (0,t^*(\x))$ small enough to ensure that, for all $\Xi=(\xi_0,\tau_0) \in \Ibinter(\x)$, the particles involved in the collision associated with $\Xi$ do not have self-interactions on the time interval $[\tau_0-\dtau, \tau_0+\dtau]$ (except possibly at time $\tau_0$). Besides, since the velocities are bounded by $\ConstBound{\infty}$, given a choice of $\dtau$, any choice of $\dxi$ such that
\begin{equation}\label{eq:dimbox}
  \dxi > \dtau \ConstBound{\infty}
\end{equation}
ensures that particles enter and leave the box by the horizontal sides. Finally, one can shrink $\dtau$ and keep $\dxi$ satisfying~\eqref{eq:dimbox} accordingly to obtain boxes small enough for being disjoint and not being crossed by particles not involved in the corresponding collision.

We can now give a definition of locally homothetic configurations.

\begin{defi}[Locally homothetic configurations]
  Let $\x \in \Drnd$. A configuration $\y \in \Dnd$ is said to be {\em locally homothetic} to $\x$ if $\y \in \Drnd$ and either $\Nb(\x)=\Nb(\y)=0$, or $\Rb(\x)=\Rb(\y)$ and there exists a proper covering $(\dxi,\dtau)$ of $\Ibinter(\x)$ such that, for all $\Xi_0 = (\xi_0, \tau_0) \in \Ibinter(\x)$, 
  \begin{itemize}
    \item for all $\gamma:k \in \Part$ such that $\Phi_k^{\gamma}(\x;\tau_0) = \xi_0$,
    \begin{equation*}
      \begin{aligned}
        & \Phi_k^{\gamma}(\y;\tau_0-\dtau) \in (\xi_0-\dxi, \xi_0+\dxi), \qquad \clu_k^{\gamma}(\y;\tau_0-\dtau) = \clu_k^{\gamma}(\x;\tau_0-\dtau),\\
        & \Phi_k^{\gamma}(\y;\tau_0+\dtau) \in (\xi_0-\dxi, \xi_0+\dxi), \qquad \clu_k^{\gamma}(\y;\tau_0+\dtau) = \clu_k^{\gamma}(\x;\tau_0+\dtau),
      \end{aligned}
    \end{equation*}
    \item for all $(\alpha:i, \beta:j) \in \Rb(\x)$ such that $\Xiinter_{\alpha:i, \beta:j}(\x) = \Xi_0$, the space-time point of collision $\Xiinter_{\alpha:i, \beta:j}(\y)$ belongs to the $(\dxi,\dtau)$-box around $\Xi_0$, and for all $\rho \in [0,1]$,
    \begin{equation}\label{eq:lochom}
      \Xiinter_{\alpha:i, \beta:j}((1-\rho)\x + \rho\y) = (1-\rho)\Xi_0 + \rho \Xiinter_{\alpha:i, \beta:j}(\y),
    \end{equation}
    \item for all $\gamma \in \{1, \ldots, d\}$, for all $k, k' \in \{1, \ldots, n\}$ such that $\Xi_0 \in \Ibself_{\gamma:k, \gamma:k'}(\x)$, the intersection
    \begin{equation*}
      \Xi_0^{(\dxi,\dtau)} \cap \Ibself_{\gamma:k, \gamma:k'}(\y)
    \end{equation*}
    is either empty or contains a unique element $\Xi_{\gamma:k, \gamma:k'}(\y)$; in the latter case, for all $\rho \in (0,1]$, the intersection 
    \begin{equation*}
      \Xi_0^{(\dxi,\dtau)} \cap \Ibself_{\gamma:k, \gamma:k'}((1-\rho)\x+\rho\y)
    \end{equation*}
    contains a unique element $\Xi_{\gamma:k, \gamma:k'}((1-\rho)\x+\rho\y)$ and we have
    \begin{equation}\label{eq:lochom:self}
      \Xi_{\gamma:k, \gamma:k'}((1-\rho)\x+\rho\y) = (1-\rho)\Xi_0 + \rho\Xi_{\gamma:k, \gamma:k'}(\y).
    \end{equation}
  \end{itemize}
  We shall sometimes precise that $\y$ locally homothetic to $\x$ {\em with respect to the proper covering $(\dxi, \dtau)$}.
\end{defi}

Let us remark that if $\Nb(\x)=0$ then any configuration $\y \in \Drnd$ such that $\Nb(\y)=0$ is locally homothetic to $\x$.

\begin{lem}[Radial blow-up of singularities]\label{lem:radial}
  Under Assumptions~\eqref{ass:C} and~\eqref{ass:USH}, and Condition~\eqref{cond:ND}, let $\x \in \Drnd$.
  \begin{enumerate}[label=(\roman*), ref=\roman*]
    \item\label{it:radial:1} If $\Nb(\x)=0$, there exists $\kappa > 0$ such that, for all $\y \in B_1(\x, \kappa)$, $\y \in \Drnd$ and $\Nb(\y)=0$ so that $\y$ is locally homothetic to $\x$.
    \item\label{it:radial:2} If $\Nb(\x) \geq 1$, then for all proper coverings $(\dtau, \dxi)$ of $\Ibinter(\x)$, there exists $\kappa > 0$ such that, for all $\y \in B_1(\x, \kappa)$, $\y$ is locally homothetic to $\y$ with respect to $(\dtau, \dxi)$.
  \end{enumerate}
\end{lem}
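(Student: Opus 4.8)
The proof will be by continuity and compactness, exploiting the structure of the Multitype Sticky Particle Dynamics near singularities. I will treat the two cases separately, although case~\eqref{it:radial:1} is essentially trivial: if $\Nb(\x)=0$ then $\x\in\Drnd$ with all the strict inequalities $x_i^\alpha\neq x_j^\beta$ holding, and by Lemma~\ref{lem:Drondnd} these strict inequalities, together with $\Nb(\y)=0$, persist on a small ball $B_1(\x,\kappa)$; any such $\y$ is in $\Drnd$ with $\Nb(\y)=0$, hence locally homothetic to $\x$ by the definition (which imposes no further condition when $\Ibinter(\x)$ is empty). So the substance is in case~\eqref{it:radial:2}.

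\textbf{Case $\Nb(\x)\geq 1$: the core argument.} Fix a proper covering $(\dxi,\dtau)$ of $\Ibinter(\x)$. The first step is to record that for $\y$ close to $\x$ in $||\cdot||_1$, one has $\y\in\Drnd$ and $\Rb(\y)=\Rb(\x)$ (Lemma~\ref{lem:Drondnd}), and moreover by Proposition~\ref{prop:continuity} the trajectories $\Phi(\y;\cdot)$ are uniformly $\Ls^1$-close to $\Phi(\x;\cdot)$ on $[0,+\infty)$, hence (since each coordinate is $\leq n$ times the $\Ls^1$ norm) uniformly close particle by particle. The key observation is then the \emph{radial} nature of the dynamics: inside each box $\Xi_0^{\dxi,\dtau}$, the configuration $\x$ has all particles involved in the collision passing exactly through $(\xi_0,\tau_0)$, so the piece of trajectory of $\x$ in the box is, after translating $\Xi_0$ to the origin, a union of rays through the origin (constant velocities between self-interactions, and the proper covering forbids self-interactions in the box except at $\Xi_0$). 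For the interpolant $\x_\rho:=(1-\rho)\x+\rho\y$, I want to show that its trajectory inside the box is the homothety of ratio $\rho$ (centered at $\Xi_0$) of the trajectory of $\y$. The mechanism is: the velocities $\tlambda_k^\gamma$ depend only on the combinatorial data $\Rb(\cdot)$ and $\omega_{\gamma:k}^{\gamma'}(\cdot)$, which are locally constant in the interior of the box once the covering is proper; hence within a box the particles of $\x_\rho$ move at the \emph{same} velocities as those of $\y$ (and $\x$). Since the initial data at the lower horizontal side $\{\tau_0-\dtau\}$ interpolate linearly — $\Phi_k^\gamma(\x_\rho;\tau_0-\dtau)$ is, by continuity of the flow and the affine structure of $\chi$-type quantities, between the corresponding positions for $\x$ and $\y$ — and the collision point is determined by intersecting straight lines with fixed slopes through linearly-interpolated starting points, the collision point $\Xiinter_{\alpha:i,\beta:j}(\x_\rho)$ depends affinely on $\rho$, which is exactly~\eqref{eq:lochom}; the analogous statement~\eqref{eq:lochom:self} for self-interactions follows the same way, using that a self-interaction is the meeting point of two straight segments with slopes $v^\gamma_{\uk}$ and $v^\gamma_{\ok}$ that are again locally constant. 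To make this rigorous I will choose $\kappa$ small enough that (a) $\y\in B_1(\x,\kappa)$ forces $\y\in\Drnd$, $\Rb(\y)=\Rb(\x)$; (b) via Proposition~\ref{prop:continuity}, all trajectories of $\y$ (and hence of $\x_\rho$ for every $\rho\in[0,1]$, by a further interpolation/continuity argument — note $\x_\rho$ also lies in $B_1(\x,\kappa)$) stay within the interior of the boxes exactly as $\x$ does, enter and exit by the horizontal sides, and are not crossed by outside particles; (c) using Lemma~\ref{lem:sGNLclu} applied at the times $\tau_0\pm\dtau$, the cluster compositions $\clu_k^\gamma(\x_\rho;\tau_0\pm\dtau)$ coincide with those of $\x$. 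Combining these gives that $\x_\rho$ satisfies all the clauses in the definition of ``locally homothetic,'' and in particular $\y$ itself does.

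\textbf{Main obstacle.} The delicate point is establishing the \emph{exact} affine dependence~\eqref{eq:lochom}–\eqref{eq:lochom:self} rather than merely approximate closeness. This requires arguing that, on the short time window of a box, the whole bundle of particles of $\x_\rho$ involved in a collision evolves rigidly as a homothetic copy of the bundle for $\y$: each particle travels at a velocity that does not depend on $\rho$ (because $\Rb$ and the $\omega$'s are frozen in the box interior), so the bundle is a cone of straight segments whose apex moves affinely with $\rho$ as the feet of the segments move affinely with $\rho$. The subtlety is to verify that the feet — the positions at the \emph{entry} horizontal side $\tau_0-\dtau$ — do move affinely in $\rho$; this is not literally true of $\Phi_k^\gamma(\x_\rho;\tau_0-\dtau)$ a priori, because the flow is nonlinear, but it becomes true once one observes that the particles arrive at the box along segments of the \emph{previous} box (or from the initial data, where $\chi$-interpolation is genuinely affine by Definition~\ref{defi:chi} is not used here — rather one uses that between boxes the velocities are again locally constant). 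So the argument is really an induction over the boxes ordered in time, propagating the affine-in-$\rho$ property from one horizontal side to the next; I expect the bookkeeping of this induction, and a careful choice of a single $\kappa$ that works simultaneously for all $\rho\in[0,1]$ (using that $\{\x_\rho:\rho\in[0,1]\}$ is a compact segment in $\Dnd$ and invoking Proposition~\ref{prop:continuity} and Lemma~\ref{lem:sGNLclu} on it), to be the genuinely technical part, while the geometric idea is straightforward. Since the statement is used only to build interpolation paths, I will present the argument at the level of these geometric facts and the uniform choice of $\kappa$, relegating the elementary segment-intersection computations.
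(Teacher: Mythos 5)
Your proposal follows the same overall scheme as the paper's proof — peel off the first batch of collisions (the first box(es) in time), establish exact affine dependence through that slab via the Intercept Theorem, then recurse on the remaining dynamics using the flow property — and you correctly assemble the supporting ingredients (Lemma~\ref{lem:Drondnd}, Proposition~\ref{prop:continuity}, Lemma~\ref{lem:sGNLclu}, the frozen velocities inside boxes). The paper's formal device is induction on $\Nb(\x)$, which amounts to the same peeling-off.

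The one place where your argument has a real gap is precisely the step you flag as ``the delicate point,'' namely why the positions at the entry side of a box depend \emph{affinely} on $\rho$. Your explanation is that ``between boxes the velocities are again locally constant,'' but this is not the right mechanism: between boxes, particles of the same type can and do undergo self-interactions, so individual particle velocities $v_k^{\gamma}(\x_{\rho};t)$ are \emph{not} constant and individual trajectories are not straight segments. What saves the argument is a different fact, which the paper exploits in its Step~1: since $\Rb$ is frozen on $B_1(\x,\kappa)$, the three processes $\{\Phi_k^{\gamma}(\x;t)\}$, $\{\Phi_k^{\gamma}(\y;t)\}$, $\{\Phi_k^{\gamma}(\x_{\rho};t)\}$ restricted to any cluster $c$ entering the box all follow Local Sticky Particle Dynamics with the \emph{same} initial velocity vector, and therefore their \emph{centres of mass} travel at the same constant velocity $\frac{1}{|c|}\sum_{\gamma:k \in c}\tlambda_k^{\gamma}(\x)$ even though the individual particles stick nonlinearly. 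At the entry side of the box all particles of $c$ share the same position, equal to that centre of mass, which is thus affine in the initial data and therefore in $\rho$. Without this centre-of-mass observation the ``feet are affine in $\rho$'' claim does not follow from constant velocities alone, and the induction from one horizontal side to the next does not close. Everything else in your plan (the Intercept Theorem computation inside a box, the uniform choice of $\kappa$ over the compact segment $\{\x_{\rho}\}$, Lemma~\ref{lem:sGNLclu} to fix cluster compositions at $\tau_0\pm\dtau$) is consistent with the paper's argument and would go through once this point is supplied.
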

\begin{proof}
  The point~\eqref{it:radial:1} is a straightforward consequence of~\eqref{it:Drondnd:1} in Lemma~\ref{lem:Drondnd}. 
  
  The proof of~\eqref{it:radial:2} works by induction on $\Nb(\x) \geq 1$. Let us fix $N \geq 0$ such that the lemma is satisfied for all $\x \in \Drnd$ such that $\Nb(\x) \leq N$. Let $\x \in \Drnd$ with $\Nb(\x) = N+1$; in particular, $t^*(\x) < +\infty$. Let $(\dxi, \dtau)$ be a proper covering of $\Ibinter(\x)$.
  
  Using Lemma~\ref{lem:Drondnd} again, we first obtain $\kappa_1 > 0$ such that, for all $\y \in B_1(\x, \kappa_1)$, $\y \in \Drnd$ and $\Rb(\x) = \Rb(\y)$.
  
  Without loss of generality, let us assume that $\dtau$ is small enough to satisfy
  \begin{equation*}
    t' := t^*(\x) + \dtau < t^*(\x)+t^*(\x^*) - \dtau,
  \end{equation*}
  and take $\dxi$ small enough to satisfy~\eqref{eq:dimbox}, so that $(\dxi, \dtau)$ remains a proper covering of $\Ibinter(\x)$. Then, on the time interval $[0,t^*(\x)+\dtau]$, the only collisions in the MSPD started at $\x$ occur at time $t^*(\x)$, possibly at different locations. Besides, $\Phi(\x;t') \in \Drnd$, $\Nb(\Phi(\x;t')) \leq N$, and if $\Nb(\Phi(\x;t')) \geq 1$, then $(\dxi, \dtau)$ remains a proper covering of $\Ibinter(\Phi(\x;t'))$. As a consequence, there exists $\kappa' > 0$ such that, for all $\y' \in \barB_1(\Phi(\x;t'), \kappa')$, then $\y'$ is locally homothetic to $\Phi(\x;t')$ (with respect to $(\dxi,\dtau)$ if $\Nb(\Phi(\x;t')) \geq 1$). By Proposition~\ref{prop:continuity}, there exists $\kappa_2>0$ such that, for all $\y \in B_1(\x, \kappa_2)$, $\Phi(\y; t') \in B_1(\Phi(\x;t'), \kappa')$.
  
  Combining Proposition~\ref{prop:continuity} and Lemma~\ref{lem:Drondnd}, we obtain $\kappa_3>0$ such that, for all $\y \in \barB_1(\x,\kappa_3)$, 
  \begin{itemize}
    \item $\Phi(\y;t^*(\x)-\dtau) \in \Drnd$ and $\Rb(\Phi(\y;t^*(\x)-\dtau)) = \Rb(\Phi(\x;t^*(\x)-\dtau))$,
    \item $\Phi(\y;t^*(\x)+\dtau) \in \Drnd$ and $\Rb(\Phi(\y;t^*(\x)+\dtau)) = \Rb(\Phi(\x;t^*(\x)+\dtau))$,
  \end{itemize}
  and, for all $\gamma:k \in \Part$,
  \begin{itemize}
    \item if the particle $\gamma:k$ is involved in a collision at the space-time point $(\xi_0,t^*(\x))$ in the MSPD started at $\x$, then for all $t \in [t^*(\x)-\dtau,t^*(\x)+\dtau]$, $\Phi_k^{\gamma}(\y;t) \in (\xi_0-\dxi,\xi_0+\dxi)$,
    \item if the particle $\gamma:k$ is not involved in a collision at time $t^*(\x)$ in the MSPD started at $\x$, then in the MSPD started at $\y$, the particle $\gamma:k$ does not cross any of the $(\dxi,\dtau)$-boxes around points of $\Ibinter(\x)$ on the time interval $[0,t']$.
  \end{itemize}
  These conditions ensure that, for all particles $\gamma:k$ involved in a collision at time $t^*(\x)$ in the MSPD started at $\x$, the corresponding particle enters and exits the $(\dxi, \dtau)$-box around $(\Phi_k^{\gamma}(\x;t^*(\x)), t^*(\x))$ by horizontal sides in the MSPD started at $\y$; besides, all the collision and self-interaction space-time points in which it is involved remain in the box.
  
  Combining Proposition~\ref{prop:continuity}, Lemma~\ref{lem:Drondnd} and Lemma~\ref{lem:sGNLclu}, we finally construct $\kappa_4 > 0$ such that, for all $\y \in \barB_1(\x,\kappa_4)$, for all $\gamma:k \in \Part$,
  \begin{equation*}
    \clu_k^{\gamma}(\y;t^*(\x)-\dtau) = \clu_k^{\gamma}(\x;t^*(\x)-\dtau), \qquad \clu_k^{\gamma}(\y;t^*(\x)+\dtau) = \clu_k^{\gamma}(\x;t^*(\x)+\dtau).
  \end{equation*}
  Note that, on account of Condition~\eqref{cond:ND}, Lemma~\ref{lem:sGNLclu} can be applied since the fact that $(\dxi,\dtau)$ is a proper covering of $\Ibinter(\x)$ implies that, on the time interval $(t^*(\x), t^*(\x)+\dtau]$, there is no self-interaction in the MSPD started at $\x$.
  
  We can now define $\kappa := \min\{\kappa_1, \ldots, \kappa_4\}$ and fix $\y \in B_1(\x,\kappa)$ and $\rho \in [0,1]$. To complete the proof, we have to check that the homothetic relations~\eqref{eq:lochom} and~\eqref{eq:lochom:self} are satisfied for all $\Xi_0 = (\xi_0, \tau_0) \in \Ibinter(\x)$. We address the cases $\tau_0=t^*(\x)$ and $\tau_0 > t^*(\x)$ separately, and shall proceed in three steps. In Step~1, we prove that
  \begin{equation*}
    \Phi((1-\rho)\x + \rho\y; t^*(\x)-\dtau) = (1-\rho)\Phi(\x;t^*(\x)-\dtau) + \rho \Phi(\y;t^*(\x)-\dtau).
  \end{equation*}
  In Step~2, we establish the homothetic relations~\eqref{eq:lochom} and~\eqref{eq:lochom:self} for $\tau_0=t^*(\x)$, and we check that
  \begin{equation}\label{eq:pf:radial:tprime}
    \Phi((1-\rho)\x+\rho\y;t^*(\x)+\dtau) = (1-\rho)\Phi(\x;t^*(\x)+\dtau) + \rho\Phi(\y;t^*(\x)+\dtau).
  \end{equation}
  Finally, we apply an inductive argument to address the case $\tau_0 > t^*(\x)$ in Step~3.  
  
  \sk
  \noindent {\em Step~1.} Since $t^*(\y) > t^*(\x)-\dtau$, then for all $t \in [0, t^*(\x)-\dtau]$, $\Phi(\x;t) = \tPhi[\tblambda(\x)](\x;t)$ and $\Phi(\y;t) = \tPhi[\tblambda(\y)](\y;t)$. Besides, $\Rb(\x) = \Rb(\y)$ so that $\tblambda(\x) = \tblambda(\y)$. Let $\gamma:k \in \Part$ and let us denote 
  \begin{equation*}
    c := \clu_k^{\gamma}(\x; t^*(\x)-\dtau) = \clu_k^{\gamma}(\y; t^*(\x)-\dtau).
  \end{equation*}
  Note that $||\x-((1-\rho)\x+\rho\y)||_1 = \rho||\x-\y||_1 \leq \kappa_4$, therefore $c = \clu_k^{\gamma}((1-\rho)\x+\rho\y;t^*(\x)-\dtau)$.
  
  Let us now remark that the processes $\{\Phi_k^{\gamma}(\x;t) : \gamma:k \in c \}$, $\{\Phi_k^{\gamma}(\y;t) : \gamma:k \in c \}$ and $\{\Phi_k^{\gamma}((1-\rho)\x+\rho\y;t) : \gamma:k \in c \}$ follow the Local Sticky Particle Dynamics on $[0, t^*(\x)-\dtau]$, with the same initial velocity vector. As a consequence, the centre of masses
  \begin{equation*}
    \frac{1}{|c|} \sum_{\gamma:k \in c} \Phi_k^{\gamma}(\x;t), \quad \frac{1}{|c|} \sum_{\gamma:k \in c} \Phi_k^{\gamma}(\y;t), \quad \frac{1}{|c|} \sum_{\gamma:k \in c} \Phi_k^{\gamma}((1-\rho)\x+\rho\y;t),
  \end{equation*}
  travel at the same constant velocity
  \begin{equation*}
    \frac{1}{|c|} \sum_{\gamma:k \in c} \tblambda_k^{\gamma}(\x)
  \end{equation*}
  on $[0, t^*(\x)-\dtau]$. Thus,
  \begin{equation*}
    \begin{aligned}
      & \frac{1}{|c|} \sum_{\gamma:k \in c} \Phi_k^{\gamma}((1-\rho)\x+\rho\y;t^*(\x)-\dtau) = \frac{1}{|c|} \sum_{\gamma:k \in c}\left( (1-\rho)x_k^{\gamma}+\rho y_k^{\gamma} + (t^*(\x)-\dtau) \tblambda_k^{\gamma}(\x)\right)\\
      & \qquad = (1-\rho)\frac{1}{|c|} \sum_{\gamma:k \in c} \Phi_k^{\gamma}(\x;t^*(\x)-\dtau) + \rho \frac{1}{|c|} \sum_{\gamma:k \in c} \Phi_k^{\gamma}(\y;t^*(\x)-\dtau),
    \end{aligned}
  \end{equation*}
  which of courses rewrites, for all $\gamma:k \in c$,
  \begin{equation*}
    \Phi_k^{\gamma}((1-\rho)\x+\rho\y;t^*(\x)-\dtau) = (1-\rho) \Phi_k^{\gamma}(\x;t^*(\x)-\dtau) + \rho \Phi_k^{\gamma}(\y;t^*(\x)-\dtau)
  \end{equation*}
  and completes Step~1.
  
  \sk
  \noindent {\em Step~2.} Let $\gamma:k \in \Part$. If the particle $\gamma:k$ does not collide with a particle of another type between times $t^*(\x)-\dtau$ and $t^*(\x)+\dtau =: t'$ in the MSPD started at $\x$ (or equivalently $\y$ or $(1-\rho)\x+\y$), then the same arguments as in Step~1 using the Local Sticky Particle Dynamics ensure that
  \begin{equation*}
    \Phi_k^{\gamma}((1-\rho)\x+\rho\y;t') = (1-\rho) \Phi_k^{\gamma}(\x;t') + \rho \Phi_k^{\gamma}(\y;t').
  \end{equation*}
  
  Otherwise, there exists a unique space-time point
  \begin{equation*}
    \Xi_0 \in \{\Xiinter_{\alpha:i, \beta:j}(\x) : (\alpha:i, \beta:j) \in \Rb(\x), \tinter_{\alpha:i,\beta:j}(\x) \in [t^*(\x)-\dtau, t^*(\x)+\dtau]\},
  \end{equation*}
  such that all the collisions with particles of another type and all the self-interactions of the particle $\gamma:k$ between times $t^*(\x)-\dtau$ and $t^*(\x)+\dtau$ in the MSPD started at $\x$ occur at the space-time point $\Xi_0$. By the definition of $\kappa$, the particle $\gamma:k$ collides with the same particles of another type and have the same self-interactions in the MSPD started at $\y$, and the corresponding space-time points of collisions and self-interactions belong to the $(\dxi, \dtau)$-box around $\Xi_0$; but of course, they can be distinct. Let us denote by $\Xi_{(1)}, \ldots, \Xi_{(L)}$ the sequence of these distinct space-time points of collisions and self-interactions, ranked by the increasing order of the times of collisions or self-interactions. For all $l \in \{1, \ldots, L\}$, we write $\Xi_{(l)} = (\xi_{(l)}, \tau_{(l)})$, so that 
  \begin{equation*}
    t^*(\x)-\dtau < \tau_{(1)} < \cdots < \tau_{(L)} < t^*(\x)+\dtau.
  \end{equation*}
  For all $l \in \{1, \ldots, L\}$, we finally denote by $S_{l,l+1}$ the space-time segment connecting $\Xi_{(l)}$ to $\Xi_{(l+1)}$, and let $S_{0,1}$ refer to the space-time segment connecting $(\Phi_k^{\gamma}(\y;t^*(\x)-\dtau), t^*(\x)-\dtau)$ to $\Xi_{(1)}$, and $S_{L,L+1}$ refer to the space-time segment connecting $\Xi_{(L)}$ to $(\Phi_k^{\gamma}(\y;t^*(\x)+\dtau), t^*(\x)+\dtau)$.
  
  We now define, for all $l \in \{1, \ldots, L\}$,
  \begin{equation*}
    \Xi'_{(l)} = (\xi'_{(l)}, \tau'_{(l)}) := (1-\rho)\Xi_0 + \rho\Xi_{(l)},
  \end{equation*}
  and similarly denote by $S'_{l,l+1}$ the space-time segment connecting $\Xi'_{(l)}$ to $\Xi'_{(l+1)}$ while $S'_{0,1}$ refers to the space-time segment connecting $((1-\rho)\Phi_k^{\gamma}(\x;t^*(\x)-\dtau)+\rho\Phi_k^{\gamma}(\y;t^*(\x)-\dtau), t^*(\x)-\dtau)$ to $\Xi'_{(1)}$ and $S'_{L,L+1}$ refers to the space-time segment connecting $\Xi'_{(L)}$ to $((1-\rho)\Phi_k^{\gamma}(\x;t^*(\x)+\dtau)+\rho\Phi_k^{\gamma}(\y;t^*(\x)+\dtau), t^*(\x)+\dtau)$.
  
  By the Intercept Theorem, if $\rho \in (0,1]$, then for all $l \in \{0, \ldots, L\}$, the segments $S_{l,l+1}$ and $S'_{l,l+1}$ are parallel. As a consequence, if $\rho \in (0,1]$, then the process $\Phi_k'^{\gamma}$ defined on $[t^*(\x)-\dtau, t^*(\x)+\dtau]$ by
  \begin{equation*}
    \forall l \in \{0, \ldots, L\}, \qquad S'_{l,l+1} = \{(\Phi_k'^{\gamma}(t), t), t \in [\tau'_{(l)}, \tau'_{(l+1)}]\}
  \end{equation*}
  (where $\tau'_{(0)} := t^*(\x)-\dtau$, $\tau'_{(L+1)} := t^*(\x)+\dtau$), has the same slope as the process $\Phi_k^{\gamma}(\y;\cdot)$ on each corresponding linear part, see Figure~\ref{fig:thales}. Besides, if two particles $\gamma:k$ and $\gamma:k'$ are in the same cluster on some linear part in the MSPD started at $\y$, then it is clear that the corresponding trajectories $\Phi_k'^{\gamma}$, $\Phi_{k'}'^{\gamma}$ coincide on the corresponding linear part.  
  
  \begin{figure}[ht]
    \begin{pspicture}(13,5)
      \psline[linecolor=black](0,0)(13,0)
      \psline[linecolor=black](0,4)(13,4)
      
      \rput(0,.5){\textcolor{black}{$t^*(\x)-\dtau$}}
      \rput(0,4.5){\textcolor{black}{$t^*(\x)+\dtau$}}
      
      \rput(1.5,2){$\Xi_0$}
      
      \psline[linecolor=blue](1,0)(2,2)(2.5,4)
      \psline[linecolor=blue](2,0)(2,2)(2.5,4)
      \psline[linecolor=red](6,0)(2,2)(0,4)
      \psline[linecolor=red](6,0)(2,2)(1,4)
      
      \psline[linecolor=blue](11.67,0)(11.67,.67)(11.67,3)(11.92,4)
      \psline[linecolor=blue](10,0)(10.5,1)(11,2)(11.67,3)(11.92,4)
      \psline[linecolor=red](13,0)(11.67,.67)(10.5,1)(7.5,4)
      \psline[linecolor=red](13,0)(11.67,.67)(11,2)(10,4)
      
      \psline[linecolor=black, linestyle=dotted](2,2)(11.67,.67)
      \psline[linecolor=black, linestyle=dotted](2,2)(11.67,3)
      \psline[linecolor=black, linestyle=dotted](2,2)(10.5,1)
      \psline[linecolor=black, linestyle=dotted](2,2)(11,2)
      
      \psline[linecolor=blue, linestyle=dashed](8.45,0)(8.45,1.11)(8.45,2.67)(8.78,4)
      \psline[linecolor=blue, linestyle=dashed](7,0)(7.67,1.33)(8.45,2.67)(8.78,4)
      \psline[linecolor=red, linestyle=dashed](10.67,0)(8.45,1.11)(7.67,1.33)(5,4)
      \psline[linecolor=red, linestyle=dashed](10.67,0)(8.45,1.11)(8.1,2)(7.1,4)
      
    \end{pspicture}
    \caption{The trajectory of the MSPD started at $\x$ is plotted on the left-hand side of the figure, while the trajectory of the MSPD started at $\y$ is plotted on the right-hand side. The trajectory of the process $\Phi'$ is plotted in dashed lines. Each linear part is parallel to the corresponding part in the trajectory of the MSPD started at $\y$. The black lines represent the horizontal sides of the box.}
    \label{fig:thales}
  \end{figure}
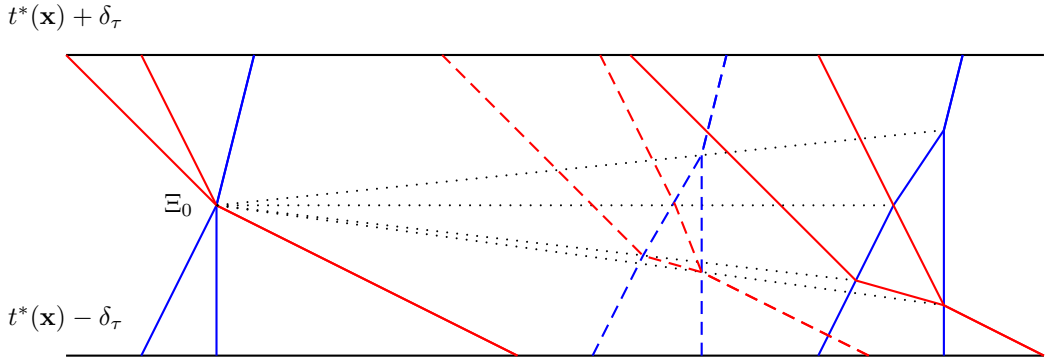
  
  As a conclusion, the processes $\Phi_k'^{\gamma}(t-(t^*(\x)-\dtau))$, $t \in [t^*(\x)-\dtau, t^*(\x)+\dtau]$, for all $\gamma:k$ such that 
  \begin{equation*}
    (\Phi_k^{\gamma}(\x; t^*(\x)), t^*(\x)) = \Xi_0,
  \end{equation*}
  exactly describe the motion of the particles in the MSPD started at $(1-\rho)\Phi(\x;t^*(\x)-\dtau) + \rho)\Phi(\y;t^*(\x)-\dtau)$. Thanks to Step~1, we conclude that
  \begin{equation*}
    \forall t \in [t^*(\x)-\dtau, t^*(\x)+\dtau], \qquad \Phi_k'^{\gamma}(t) = \Phi_k^{\gamma}((1-\rho)\x+\rho\y; t),
  \end{equation*}
  which yields~\eqref{eq:lochom}, \eqref{eq:lochom:self} for all the collision and self-interaction space-time points for the particle $\gamma:k$ on the time interval $[0,t']$; besides,
  \begin{equation*}
    \Phi_k^{\gamma}((1-\rho)\x+\rho\y;t') = \Phi_k'^{\gamma}(t') = (1-\rho)\Phi_k^{\gamma}(\x;t')+\rho\Phi_k^{\gamma}(\y;t').
  \end{equation*}
  This completes the proof of Step~2.
  
  \sk
  \noindent {\em Step~3.} Let $(\alpha:i, \beta:j) \in \Rb(\Phi(\y;t')) = \Rb(\Phi(\x;t'))$, so that
  \begin{equation*}
    \tinter_{\alpha:i, \beta:j}(\x), \tinter_{\alpha:i, \beta:j}(\y), \tinter_{\alpha:i, \beta:j}((1-\rho)\x+\rho\y) > t'.
  \end{equation*}
  Then, by the flow property of the MSPD,
  \begin{equation*}
    \begin{aligned}
      \xiinter_{\alpha:i, \beta:j}((1-\rho)\x + \rho\y) & = \xiinter_{\alpha:i, \beta:j}(\Phi((1-\rho)\x + \rho\y; t'))\\
      & = \xiinter_{\alpha:i, \beta:j}((1-\rho)\Phi(\x; t')+\rho\Phi(\y; t'))\\
      & = (1-\rho)\xiinter_{\alpha:i, \beta:j}(\Phi(\x; t')) + \rho\xiinter_{\alpha:i, \beta:j}(\Phi(\y; t')),
    \end{aligned}
  \end{equation*}
  where we used Step~2 at the second line and the fact that $\Phi(\y; t') \in B_1(\Phi(\x; t'), \kappa')$ at the third line. Using the flow property for the MSPD again, we conclude that the right-hand side above rewrites $(1-\rho)\xiinter_{\alpha:i, \beta:j}(\x) + \rho\xiinter_{\alpha:i, \beta:j}(\y)$. The very same arguments allow to address self-interactions as well, and also yield
  \begin{equation*}
    \begin{aligned}
      \tinter_{\alpha:i, \beta:j}((1-\rho)\x + \rho\y) & = \tinter_{\alpha:i, \beta:j}(\Phi((1-\rho)\x + \rho\y; t')) - t'\\
      & = (1-\rho)\left(\tinter_{\alpha:i, \beta:j}(\Phi(\x; t'))- t'\right)+ \rho\left(\tinter_{\alpha:i, \beta:j}(\Phi(\y; t'))- t'\right)\\
      & = (1-\rho)\tinter_{\alpha:i, \beta:j}(\x) + \rho\tinter_{\alpha:i, \beta:j}(\y),
    \end{aligned}
  \end{equation*}
  which completes the proof.
\end{proof}

We now explain how to construct a path joining a configuration $\x$ to a good configuration $\y$ close to $\x$, along which pairs of configurations satisfy the Local Homeomorphic Condition~{\rm (\hyperref[cond:C]{LHM})}. For the sake of understandability, we first describe the case $\x \in \Good$ in Lemma~\ref{lem:CondCloc:G} below. 
Then, the situation is actually very simple as, for $\y$ close enough to $\x$, the locally homothetic property implies that $\y \in \Good$ and $\x, \y$ satisfy Condition~{\rm (\hyperref[cond:C]{LHM})}. The case of an arbitrary configuration $\x \in \Drnd$ is addressed in Lemma~\ref{lem:CondCloc}.

\begin{lem}[Construction of locally homeomorphic configurations, good case]\label{lem:CondCloc:G}
  Under the assumptions of Lemma~\ref{lem:radial}, let $\x \in \Good$, and if $\Nb(\x) \geq 1$, let $(\dxi, \dtau)$ be a proper covering of $\Ibinter(\x)$. Let $\kappa > 0$ be given by Lemma~\ref{lem:radial}. For all $\y \in \barB_1(\x,\kappa)$, the configuration $\y$ belongs to the set $\Good$ and the configurations $\x$ and $\y$ satisfy Condition~{\rm (\hyperref[cond:C]{LHM})}.
\end{lem}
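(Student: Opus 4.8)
The statement asks us to show that if $\x\in\Good$ and $\y$ is close enough to $\x$ (within the ball $\barB_1(\x,\kappa)$ provided by Lemma~\ref{lem:radial}), then $\y\in\Good$ and the pair $(\x,\y)$ satisfies Condition~{\rm (\hyperref[cond:C]{LHM})}. My approach is to exploit the radial blow-up picture: since $\y$ is locally homothetic to $\x$ with respect to the proper covering $(\dxi,\dtau)$, the trajectory of the MSPD started at $\y$ differs from that started at $\x$ only inside the disjoint $(\dxi,\dtau)$-boxes around the points of $\Ibinter(\x)$, and inside each box the collision/self-interaction structure of $\y$ is a perturbation of the single singularity $\Xi_0$ along the homothety relations~\eqref{eq:lochom}, \eqref{eq:lochom:self}.

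\textbf{Step 1: $\y\in\Drnd$ and $\Rb(\y)=\Rb(\x)$.} This is built into the definition of ``locally homothetic'' (and is part of the conclusion of Lemma~\ref{lem:radial}), so it is immediate. If $\Nb(\x)=0$, then $\Nb(\y)=0$ as well and $\y\in\Good$ trivially with Condition~{\rm (\hyperref[cond:C]{LHM})} satisfied vacuously; so assume $\Nb(\x)=\Nb(\y)\geq 1$.

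\textbf{Step 2: $\y\in\Good$.} I must verify the two points of Definition~\ref{defi:BinColl}. First, since the $(\dxi,\dtau)$-boxes around distinct points of $\Ibinter(\x)$ are disjoint and every particle passing through a box is involved in the associated collision, and since the locally homothetic property forces $\Xiinter_{\alpha:i,\beta:j}(\y)$ to lie in the box around $\Xi_0 = \Xiinter_{\alpha:i,\beta:j}(\x)$, two collisions $(\alpha:i,\beta:j)$ and $(\alpha':i',\beta':j')$ in the MSPD started at $\y$ have the same space-time point only if the corresponding collisions in the MSPD started at $\x$ lie in the same box, hence coincide by~\eqref{it:BinColl:1} applied to $\x\in\Good$; this forces $\alpha'=\alpha$, $\beta'=\beta$, giving~\eqref{it:BinColl:1} for $\y$. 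For~\eqref{it:BinColl:2}, a collision space-time point $\Xiinter_{\alpha:i,\beta:j}(\y)$ and a self-interaction space-time point in $\Ibself_{\gamma:k,\gamma:k'}(\y)$ both lie inside a box around a point of $\Ibinter(\x)$; but the definition of a proper covering guarantees that in the box around $\Xi_0$, self-interactions of particles passing through can only occur at $\Xi_0$ itself, and the homothety relations~\eqref{eq:lochom:self} then pin the self-interaction point of $\y$ and the collision point of $\y$ to be the {\em same} image point $(1-\rho)\Xi_0+\rho\,(\cdot)$ only if the corresponding points for $\x$ coincide; since $\x\in\Good$ they do not, so $\Ibinter(\y)$ and $\Ibself(\y)$ are disjoint. (One also has to rule out a self-interaction point of $\y$ landing outside all boxes and coinciding with a collision point there, but outside the boxes the trajectories of $\x$ and $\y$ have the same cluster structure, and $\x$ is good, so no such coincidence occurs.)

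\textbf{Step 3: Condition~{\rm (\hyperref[cond:C]{LHM})}.} Conditions~\eqref{cond:C1} and~\eqref{cond:C2} follow because $\x$ and $\y$ have the same $\Rb$, the same partition of $\Rb$ into collisions (each collision of $\y$ sits in a box around a unique collision of $\x$, and the converse holds by the locally homothetic relations identifying which particles are involved), and the order of collisions along each $C_{\gamma:k}$ is preserved: collisions inside different boxes keep their relative time order because the boxes are time-separated and contained in $(0,t^*(\x))$-translates that inherit the ordering, while collisions inside the same box involving the same particle cannot occur simultaneously and their order is preserved under the homothety~\eqref{eq:lochom} by continuity in $\rho$. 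For~\eqref{cond:C3}: \eqref{cond:C3a} (the collision intervals are well-separated, $T^+(\classe')<T^-(\classe)$ for $\classe'\stackrel{\gamma}{\to}\classe$) holds because, by taking $\kappa$ smaller if necessary (it is already chosen in Lemma~\ref{lem:radial}), all collision and self-interaction times of $\y$ in a box around $\Xi_0=(\xi_0,\tau_0)$ lie in the open interval $(\tau_0-\dtau,\tau_0+\dtau)$, and consecutive collisions of a given particle lie in distinct boxes whose time-windows $[\tau-\dtau,\tau+\dtau]$ are disjoint; hence the collision intervals $[T^-(\classe),T^+(\classe)]\subset(\tau-\dtau,\tau+\dtau)$ are separated. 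Finally~\eqref{cond:C3b} (no self-interaction on a collision interval) follows from the definition of a proper covering, which forbids self-interactions in the box around $\Xi_0$ except at $\Xi_0$, together with the fact that on the collision interval of $\classe$ the clusters of $\x$ are constant (this is exactly the ``self-interactions in the box can only occur at $\Xi_0$'' clause) and, via the locally homothetic property and Lemma~\ref{lem:sGNLclu}, the clusters of $\y$ are constant on $[\tau_0-\dtau,\tau_0)$ and on $[\tau_0,\tau_0+\dtau]$, so no self-interaction of $\y$ intrudes on $[T(\x;\classe),T(\y;\classe)]$.

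\textbf{Main obstacle.} The delicate point is Step~3, specifically keeping track of the relative ordering of collisions and verifying~\eqref{cond:C3a}–\eqref{cond:C3b} simultaneously for {\em all} particles and {\em all} boxes, rather than box-by-box: one must be careful that a particle visiting several boxes inherits a consistent global order and that the ``collision intervals'' genuinely fit inside the respective time-windows. This is handled by an induction on $\Nb(\x)$ paralleling the proof of Lemma~\ref{lem:radial}, peeling off the first layer of collisions at time $t^*(\x)$ (whose boxes lie in $[t^*(\x)-\dtau,t^*(\x)+\dtau]$) and applying the inductive hypothesis to $\Phi(\x;t^*(\x)+\dtau)$ and $\Phi(\y;t^*(\x)+\dtau)$, which by the homothety relations~\eqref{eq:pf:radial:tprime} established in Lemma~\ref{lem:radial} remains within the corresponding ball so that the inductive step applies.
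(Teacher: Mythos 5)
Your proposal follows essentially the same route as the paper: establish $\y\in\Good$ via the disjointness of the boxes (for binarity) and the matching of clusters at entry and exit of the boxes (for separation of self-interactions from collisions), then verify Conditions~(\hyperref[cond:C1]{LHM-1})--(\hyperref[cond:C3b]{LHM-3}) by noting that collision space-time points of $\y$ stay inside the boxes and that cluster compositions are preserved. Your Steps~1--3 are the paper's proof in mildly different words.

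The one place where you deviate is the ``Main obstacle'' paragraph, where you worry that the global bookkeeping across boxes requires an induction on $\Nb(\x)$. It does not. The paper's direct argument for~\eqref{cond:C3a} is complete once one observes that if $\classe'\lto{\gamma}\classe$, a \emph{single} particle $\gamma:k$ is involved in both collisions; by the definition of a proper covering that particle occupies the $(\dxi,\dtau)$-box around $\Xi(\x;\classe')$ during the whole time interval $[T(\x;\classe')-\dtau,T(\x;\classe')+\dtau]$ (it enters and exits through horizontal sides), and likewise for the box around $\Xi(\x;\classe)$; since the boxes are disjoint, these time intervals cannot overlap, which immediately yields $T(\x;\classe')+\dtau<T(\x;\classe)-\dtau$ and hence $T^+(\classe')<T^-(\classe)$. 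This works for \emph{all} arcs simultaneously, with no recursion, so the induction you sketch (and its appeal to~\eqref{eq:pf:radial:tprime} from the proof of Lemma~\ref{lem:radial}) is superfluous. Similarly, your treatment of~\eqref{it:BinColl:2} in Step~2 is more convoluted than necessary: the clean chain is simply that $\x\in\Good$ plus the proper-covering definition give \emph{no} self-interactions of $\x$ anywhere inside the boxes, and matching clusters at box entry/exit then transfer this absence to $\y$; the parenthetical about self-interactions outside the boxes is correct but the appeal to the homothety relations~\eqref{eq:lochom:self} is vacuous here precisely because $\x$ being good means there are no pairs $(\gamma:k,\gamma:k')$ to which those relations apply inside the boxes.
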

\begin{proof}
  If $\Nb(\x)=0$, then there is nothing to prove. Let us assume that $\Nb(\x) \geq 1$, let $(\dxi, \dtau)$ be a proper covering of $\Ibinter(\x)$ and let $\kappa > 0$ be given by Lemma~\ref{lem:radial}, so that $\y$ is locally homothetic to $\x$ with respect to $(\dxi, \dtau)$. In particular, $\Rb(\x)=\Rb(\y)$ and if $(\alpha:i, \beta:j), (\alpha':i', \beta':j') \in \Rb(\y)$ are such that
  \begin{equation*}
    \Xiinter_{\alpha:i, \beta:j}(\y) = \Xiinter_{\alpha':i', \beta':j'}(\y),
  \end{equation*}
  then it necessarily holds 
  \begin{equation*}
    \Xiinter_{\alpha:i, \beta:j}(\x) = \Xiinter_{\alpha':i', \beta':j'}(\x).
  \end{equation*}
  Since $\x \in \Good$, this implies that $\y \in \Good$. Besides, on account of the definitions of proper coverings and good configurations, in the MSPD started at $\x$, there is no self-interaction in the $(\dxi,\dtau)$-boxes around space-time points of collisions. Since the clusters at entry and exit of these boxes have the same composition in the MSPD started at $\y$, we deduce that self-interactions are separated from collisions in the MSPD started at $\y$. As a consequence, $\y \in \Good$.
  
  We have already checked that $\x$ and $\y$ satisfy Condition~\eqref{cond:C1}. Condition~\eqref{cond:C2}, which asserts that $\x$ and $\y$ have the same collision graph, is an easy consequence of the equality of clusters at entry and exit of boxes. Now if two collisions $\classe'$ and $\classe$ are such that $\classe' \lto{\gamma} \classe$, then the fact that
  \begin{equation*}
    (\Xi(\x;\classe'))^{\dxi, \dtau} \cap (\Xi(\x;\classe))^{\dxi, \dtau} = \emptyset, \quad \Xi(\y;\classe') \in (\Xi(\x;\classe'))^{\dxi, \dtau}, \quad \Xi(\y;\classe) \in (\Xi(\x;\classe))^{\dxi, \dtau},
  \end{equation*}
  implies that
  \begin{equation*}
    T^+(\classe') = T(\x;\classe') \vee T(\y;\classe') \leq T(\x;\classe') + \dtau < T(\x;\classe) - \dtau < T(\x;\classe) \wedge T(\y;\classe) = T^-(\classe), 
  \end{equation*}
  which yields Condition~\eqref{cond:C3a}. Finally, Condition~\eqref{cond:C3b} is also a consequence of the identity of the compositions of of clusters at entry and exit of boxes.
\end{proof}

When $\x$ is not a good configuration, one can obviously not expect Condition~{\rm (\hyperref[cond:C]{LHM})} to hold for $\x$ and $\y$ chosen as in Lemma~\ref{lem:CondCloc:G}. As is plotted on Figure~\ref{fig:geom}, singularities can lead this condition to fail even for the locally homothetic good configurations $\y$ and $(1-\rho)\x + \rho\y$ when $\rho$ is too far from $1$. However, based on the radial blow-up of singularities property described in~\S\ref{sss:radial}, we prove in Lemma~\ref{lem:CondCloc} below that, for $\rho_* < 1$ with $\rho_*$ close to $1$, $\y$ and $(1-\rho_*)\x + \rho_*\y$ actually satisfy the Local Homeomorphic Condition~{\rm (\hyperref[cond:C]{LHM})}. Iterating the argument starting from $(1-\rho_*)\x + \rho_*\y$ instead of $\y$, we obtain that the geometric sequence $(\rho_*^m)_{m \geq 0}$ has the property that, for all $m \geq 1$, the configurations $(1-\rho_*^{m-1})\x + \rho_*^{m-1}\y$ and $(1-\rho_*^m)\x + \rho_*^m\y$ satisfy Condition~{\rm (\hyperref[cond:C]{LHM})}.

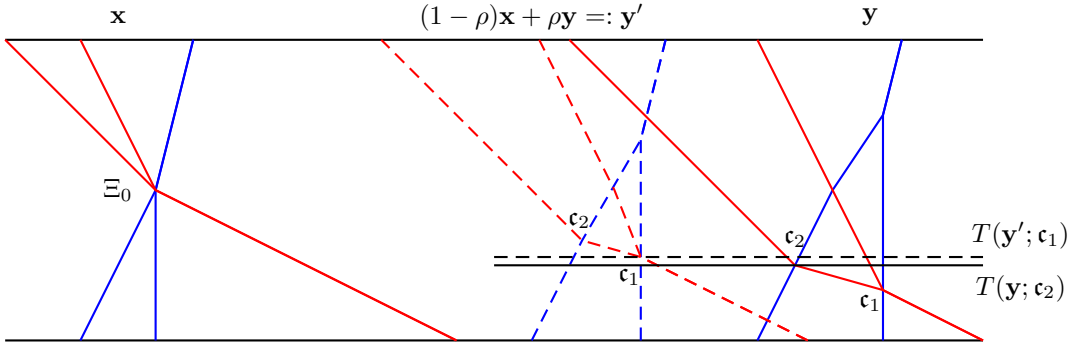
\begin{figure}[ht]
  \begin{pspicture}(14,5)
    \psline[linecolor=black](0,0)(13,0)
    \psline[linecolor=black](0,4)(13,4)
    
      
    \rput(1.5,2){$\Xi_0$}
      
      \psline[linecolor=blue](1,0)(2,2)(2.5,4)
      \psline[linecolor=blue](2,0)(2,2)(2.5,4)
      \psline[linecolor=red](6,0)(2,2)(0,4)
      \psline[linecolor=red](6,0)(2,2)(1,4)
      
      \psline[linecolor=blue](11.67,0)(11.67,.67)(11.67,3)(11.92,4)
      \psline[linecolor=blue](10,0)(10.5,1)(11,2)(11.67,3)(11.92,4)
      \psline[linecolor=red](13,0)(11.67,.67)(10.5,1)(7.5,4)
      \psline[linecolor=red](13,0)(11.67,.67)(11,2)(10,4)
      
      
      \psline[linecolor=blue, linestyle=dashed](8.45,0)(8.45,1.11)(8.45,2.67)(8.78,4)
      \psline[linecolor=blue, linestyle=dashed](7,0)(7.67,1.33)(8.45,2.67)(8.78,4)
      \psline[linecolor=red, linestyle=dashed](10.67,0)(8.45,1.11)(7.67,1.33)(5,4)
      \psline[linecolor=red, linestyle=dashed](10.67,0)(8.45,1.11)(8.1,2)(7.1,4)
    
    \rput(1.5,4.3){$\x$}
    \rput(7,4.3){$(1-\rho)\x + \rho\y =: \y'$}
    \rput(11.5,4.3){$\y$}
    
    \psline[linecolor=black](6.5,1)(13,1)
    \psline[linecolor=black, linestyle=dashed](6.5,1.11)(13,1.11)
    
    \rput(11.5,.5){$\classe_1$}
    \rput(10.5,1.35){$\classe_2$}
    \rput(8.3,.85){$\classe_1$}
    \rput(7.63,1.6){$\classe_2$}
    
    \rput(13.5,.7){\textcolor{black}{$T(\y;\classe_2)$}}
    \rput(13.5,1.4){\textcolor{black}{$T(\y';\classe_1)$}}
      
  \end{pspicture}
  \caption{The configurations $\y$ and $\y':=(1-\rho)\x+\rho\y$ are both good configurations and they are locally homothetic to $\x$. In their collision graph, $\classe_1 \to \classe_2$; however, for the choice of $\rho$ on the figure, $T(\y';\classe_1) > T(\y;\classe_2)$, therefore Condition~\eqref{cond:C3a} is not satisfied by the pair $\y, \y'$.}
  \label{fig:geom}
\end{figure}

\begin{lem}[Construction of locally homeomorphic configurations, bad case]\label{lem:CondCloc}
  Under the assumptions of Lemma~\ref{lem:radial}, let $\x \in \Drnd$, and if $\Nb(\x) \geq 1$, let $(\dxi, \dtau)$ be a proper covering of $\Ibinter(\x)$. Let $\kappa > 0$ be given by Lemma~\ref{lem:radial}. For all $\y \in B_1(\x, \kappa) \cap \Good$, there exists $\rho_* \in (0,1)$ such that, for all $m \geq 1$, the configurations $(1-\rho_*^{m-1})\y + \rho_*^{m-1}\x$ and $(1-\rho_*^m)\y + \rho_*^m\x$ satisfy Condition~{\rm (\hyperref[cond:C]{LHM})}.
\end{lem}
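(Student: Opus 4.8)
The plan is to interpolate linearly between $\x$ and the nearby good configuration $\y$, and to show that along this segment the topology of the MSPD trajectory is preserved except possibly at finitely many ``singular'' parameter values, which we avoid by taking a geometric sequence accumulating at $\y$. More precisely, write $\z_\rho := (1-\rho)\y + \rho\x$ for $\rho \in [0,1]$, so that $\z_0 = \y \in \Good$ and $\z_1 = \x$. Since $\|\z_\rho - \x\|_1 = (1-\rho)\|\y-\x\|_1 < \kappa$ for all $\rho \in [0,1]$, Lemma~\ref{lem:radial}\eqref{it:radial:2} applies to every $\z_\rho$: each $\z_\rho$ is locally homothetic to $\x$ with respect to the same proper covering $(\dxi,\dtau)$. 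In particular $\z_\rho \in \Drnd$ and $\Rb(\z_\rho) = \Rb(\x)$ for all $\rho$, and by the homothety relations~\eqref{eq:lochom} and~\eqref{eq:lochom:self}, inside each $(\dxi,\dtau)$-box around a point $\Xi_0 \in \Ibinter(\x)$ the space-time points of collisions and self-interactions of $\z_\rho$ move affinely in $\rho$: $\Xiinter_{\alpha:i,\beta:j}(\z_\rho) = (1-\rho)\Xi_0 + \rho\,\Xiinter_{\alpha:i,\beta:j}(\x)$ and similarly for the self-interaction points.

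\textbf{Good configurations along the segment.} First I would observe that $\z_\rho \in \Good$ for every $\rho \in [0,1)$. Indeed, the argument of Lemma~\ref{lem:CondCloc:G} applies verbatim to any configuration that is locally homothetic to $\x$ with respect to $(\dxi,\dtau)$: binarity of collisions (point~\eqref{it:BinColl:1}) and separation of self-interactions from collisions (point~\eqref{it:BinColl:2}) both hold because outside the boxes there are no collisions, and inside each box the cluster compositions at entry and exit coincide with those for $\x$, while the box itself is small enough to exclude spurious self-interactions. The only subtlety is that at $\rho = 1$, i.e.\ at $\x$ itself, distinct collision or self-interaction space-time points may merge at $\Xi_0$; but for $\rho < 1$ the affine relations keep them distinct, so $\z_\rho \in \Good$.

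\textbf{Choice of $\rho_*$ and verification of (LHM).} Now fix a numbering of the collisions and let $\classe' \lto{\gamma} \classe$ be an arc of the (common) collision graph of the $\z_\rho$, say $\Xi_0' := \Xi(\x;\classe')$ and $\Xi_0 := \Xi(\x;\classe)$, with $T(\x;\classe') < T(\x;\classe)$ since there is no oriented cycle. Write $T(\z_\rho;\classe') = (1-\rho)\tau_0' + \rho\,T(\x;\classe')$ and $T(\z_\rho;\classe) = (1-\rho)\tau_0 + \rho\,T(\x;\classe)$ where $\tau_0', \tau_0$ are the time components of $\Xi_0', \Xi_0$. For the pair $\z_{\rho_1}, \z_{\rho_2}$ with $\rho_1 < \rho_2$, Condition~\eqref{cond:C3a} requires $T^+(\classe') = T(\z_{\rho_1};\classe') \vee T(\z_{\rho_2};\classe') < T(\z_{\rho_2};\classe') \wedge T(\z_{\rho_1};\classe) = T^-(\classe)$ (using that $\rho \mapsto T(\z_\rho;\cdot)$ is affine, hence monotone, so the sup over $\{\rho_1,\rho_2\}$ is attained at one endpoint and the inf at the other); because $T(\x;\classe') < T(\x;\classe)$ and, by the proper covering property, $\tau_0' + \dtau < \tau_0 - \dtau$ wait --- more carefully: the boxes $(\Xi_0')^{\dxi,\dtau}$ and $(\Xi_0)^{\dxi,\dtau}$ are disjoint so in particular $\tau_0' + \dtau < \tau_0 - \dtau$ or the spatial projections are disjoint; in either case, since $T(\z_\rho;\classe') \in [\tau_0' - \dtau, \tau_0' + \dtau]$ and $T(\z_\rho;\classe) \in [\tau_0 - \dtau, \tau_0 + \dtau]$, the inequality $T(\z_{\rho_1};\classe') < T(\z_{\rho_2};\classe)$ holds automatically whenever the time-intervals of the two boxes are disjoint. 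The genuine obstruction arises only when the two boxes are stacked in time (disjoint spatial projections, overlapping time windows), in which case one needs $\rho_2$ close enough to $\rho_1$ (equivalently, close enough to $1$) so that $T(\z_{\rho_1};\classe')$ and $T(\z_{\rho_2};\classe')$ straddle a narrow band and stay below $T(\z_{\rho_1};\classe)$. Since there are finitely many arcs and finitely many collisions, and for each the constraint is of the form ``$\rho_2 - \rho_1$ small'' uniformly on $[0,1]$, one can pick a single $\rho_* \in (0,1)$ close enough to $1$ such that for every pair $\rho_1 = \rho_*^{m}$, $\rho_2 = \rho_*^{m-1}$ (note $\rho_*^m < \rho_*^{m-1}$, and $\rho_*^{m-1} - \rho_*^{m} = \rho_*^{m-1}(1-\rho_*) \le 1-\rho_*$) Conditions~\eqref{cond:C1}, \eqref{cond:C2}, \eqref{cond:C3a}, \eqref{cond:C3b} all hold; Condition~\eqref{cond:C3b} (no self-interaction on collision intervals) follows from the same box argument as in Lemma~\ref{lem:CondCloc:G}, since the collision intervals $[T^-(\classe), T^+(\classe)]$ are contained in the time-window of the box around $\Xi_0$ and cluster compositions are locked at entry and exit. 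The main obstacle is precisely this last quantitative step: extracting a single $\rho_*$, uniform over the geometric sequence, that keeps every arc's source-box strictly below its target-box in time --- this requires exploiting the affine (Intercept-Theorem) structure of Lemma~\ref{lem:radial} together with the strict time-ordering $T(\x;\classe') < T(\x;\classe)$ and the finiteness of the collision graph. With $\rho_*$ so chosen, each consecutive pair $(1-\rho_*^{m-1})\y + \rho_*^{m-1}\x$ and $(1-\rho_*^m)\y + \rho_*^m\x$ are two good configurations sharing the same collision graph and the separation/non-degeneracy properties, hence satisfy~{\rm (\hyperref[cond:C]{LHM})}, which is the claim. $\qquad\blacksquare$
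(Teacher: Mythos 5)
Your overall strategy is the same as the paper's: interpolate linearly between $\x$ and $\y$, use Lemma~\ref{lem:radial} to get the affine (Intercept-Theorem) motion of collision and self-interaction space-time points along the segment, observe that every intermediate configuration is good, and then choose a geometric sequence along the segment. The parts you do carry out (goodness of interior points, reduction to a per-arc time-separation constraint) are essentially correct. But you stop short of the argument at exactly the point where the lemma is actually nontrivial, and your heuristic for how to finish it is the wrong one.

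\textbf{The gap.} You assert "one can pick a single $\rho_*\in(0,1)$ close enough to $1$ such that for every pair $\rho_1=\rho_*^m$, $\rho_2=\rho_*^{m-1}$ Conditions \ldots all hold" and then immediately flag "the main obstacle is precisely this last quantitative step." That obstacle is the entire content of the lemma, and the bound you offer as evidence, namely $\rho_*^{m-1}-\rho_*^m \le 1-\rho_*$, does not address it. The constraint arising from Condition~\eqref{cond:C3a} in the genuinely bad case is \emph{not} that consecutive interpolation parameters be close in absolute value; it is a constraint on a ratio of time differences. Concretely, when $\Xi_0(\classe')=\Xi_0(\classe)$ and the two $\y$-collision times lie on the same side of $T_0(\classe)$, the condition that the collision intervals do not overlap between $\z$ and $(1-\rho)\x+\rho\z$ reads
\begin{equation*}
  \rho \;>\; \frac{T_0(\classe)-T(\z;\classe)}{T_0(\classe)-T(\z;\classe')},
\end{equation*}
and the reason a \emph{single} $\rho_*$ works for every $m$ is that this ratio is invariant along the segment: by the affine relation $T((1-\rho_*)\x+\rho_*\y;\classe)=(1-\rho_*)T_0(\classe)+\rho_*T(\y;\classe)$ from Lemma~\ref{lem:radial}, one computes
\begin{equation*}
  \frac{T_0(\classe)-T((1-\rho_*)\x+\rho_*\y;\classe)}{T_0(\classe)-T((1-\rho_*)\x+\rho_*\y;\classe')}
  = \frac{\rho_*\bigl(T_0(\classe)-T(\y;\classe)\bigr)}{\rho_*\bigl(T_0(\classe)-T(\y;\classe')\bigr)}
  = \frac{T_0(\classe)-T(\y;\classe)}{T_0(\classe)-T(\y;\classe')},
\end{equation*}
so the same threshold applies at every step of the iteration. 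Without this algebraic scale-invariance your geometric sequence has no reason to satisfy~\eqref{cond:C3a} uniformly in $m$; merely making $1-\rho_*$ small fails because the collision intervals shrink at the same rate as the spacing.

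\textbf{A secondary confusion.} You invoke "the strict time-ordering $T(\x;\classe')<T(\x;\classe)$." In the case that actually needs care, the two collisions share the same limit point $\Xi_0$, hence $T_0(\classe')=T_0(\classe)$ — there is no strict ordering of the limit times. The strict inequality $T(\y;\classe')<T(\y;\classe)$ holds because $\y\in\Good$, and that is the ordering the argument exploits; conflating the two is what makes your sketch feel convincing while missing the actual obstruction. (There is also a sign/label mixup in your stated affine formula: with $\z_\rho=(1-\rho)\y+\rho\x$ it should read $\Xiinter_{\alpha:i,\beta:j}(\z_\rho)=\rho\,\Xi_0+(1-\rho)\Xiinter_{\alpha:i,\beta:j}(\y)$, not $(1-\rho)\Xi_0+\rho\,\Xiinter_{\alpha:i,\beta:j}(\x)$, which would be constant in $\rho$.)
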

\begin{proof}
  Let $\y \in B_1(\x, \kappa) \cap \Good$. For all $\rho \in (0,1]$, it follows from Lemma~\ref{lem:radial} that the collisions locally look alike in the MSPD started at $\y$ and at $(1-\rho)\x+\rho\y$. This implies that $(1-\rho)\x+\rho\y \in \Good$; and, for all $\rho, \rho' \in (0,1]$, $\Rb((1-\rho)\x+\rho\y) = \Rb((1-\rho')\x+\rho'\y)$ and $(1-\rho)\x+\rho\y$, $(1-\rho')\x+\rho'\y$ have the same collision graph, so that they satisfy Conditions~\eqref{cond:C1} and~\eqref{cond:C2}. 
  
  Let us now explain how to construct $\rho_* \in (0,1)$ in such a way that, for all $m \geq 1$, the configurations $(1-\rho_*^{m-1})\y + \rho_*^{m-1}\x$ and $(1-\rho_*^m)\y + \rho_*^m\x$ satisfy Conditions~\eqref{cond:C3a} and~\eqref{cond:C3b}. Let us denote $C := \Classe(\y)$. For all $\classe \in C$, it follows from Lemma~\ref{lem:radial} that there exists a space-time point $\Xi_0(\classe)$ such that
  \begin{equation*}
    \forall \rho \in (0,1], \qquad \Xi((1-\rho)\x+\rho\y;\classe) = (1-\rho)\Xi_0(\classe) + \rho\Xi(\y;\classe), 
  \end{equation*}
  and in particular, the collision times satisfy
  \begin{equation*}
    \forall \rho \in (0,1], \qquad T((1-\rho)\x+\rho\y;\classe) = (1-\rho)T_0(\classe) + \rho T(\y;\classe),     
  \end{equation*}
  where we denote $\Xi_0(\classe) = (\xi_0(\classe), T_0(\classe))$. Therefore, for all $\rho \in (0,1]$, $(1-\rho)\x + \rho\y$ and $\y$ satisfy Condition~\eqref{cond:C3a} as soon as, for all $\classe', \classe \in C$ such that $\classe' \lto{\gamma} \classe$,
  \begin{equation*}
    \left((1-\rho)T_0(\classe') + \rho T(\y;\classe')\right) \vee T(\y;\classe') < \left((1-\rho)T_0(\classe) + \rho T(\y;\classe)\right) \wedge T(\y;\classe),
  \end{equation*}
  which is always the case if $\Xi_0(\classe') \not= \Xi_0(\classe)$ and reduces to
  \begin{equation*}
    \rho > \frac{T_0(\classe)-T(\y;\classe)}{T_0(\classe)-T(\y;\classe')}
  \end{equation*}
  if $\Xi_0(\classe') = \Xi_0(\classe)$ and either $T(\y;\classe') < T(\y;\classe) < T_0(\classe)$ or $T_0(\classe) < T(\y;\classe') < T(\y;\classe)$. We denote by $\rho_{*,1}$ the infimum of the set of $\rho \in (0,1)$ satisfying these conditions; then, for all $\rho > \rho_{*,1}$, $(1-\rho)\x + \rho\y$ and $\y$ satisfy Condition~\eqref{cond:C3a}. Very similar arguments combined with the fact that $\y \in \Good$ allow us to construct $\rho_{*,2} \in (0,1)$ such that, for all $\rho > \rho_{*,2}$, $(1-\rho)\x+\rho\y$ and $\y$ satisfy Condition~\eqref{cond:C3b}.

  As a conclusion, let us define $\rho_*$ to be any number such that 
  \begin{equation*}
    \rho_{*,1} \vee \rho_{*,2} < \rho_* < 1.
  \end{equation*}
  Then we have proved that the pair of configurations $\y$ and $(1-\rho_*)\x+\rho_*\y$ satifies Condition~{\rm (\hyperref[cond:C]{LHM})}. To complete the proof, we apply the same arguments starting from $(1-\rho_*)\x + \rho_*\y$ instead of $\y$. We obtain that, for all $\rho \in (0,1]$, the configurations
  \begin{equation*}
    (1-\rho)\x + \rho((1-\rho_*)\x+\rho_*\y) = (1-\rho\rho_*)\x + \rho\rho_*\y \qquad \text{and} \qquad (1-\rho_*)\x+\rho_*\y
  \end{equation*}
  satisfy Condition~\eqref{cond:C1} and~\eqref{cond:C2}. Besides, Condition~\eqref{cond:C3a} holds as soon as
  \begin{equation*}
    \rho > \frac{T_0(\classe)-T((1-\rho_*)\x+\rho_*\y;\classe)}{T_0(\classe)-T((1-\rho_*)\x+\rho_*\y;\classe')} = \frac{T_0(\classe)-T(\y;\classe)}{T_0(\classe)-T(\y;\classe')}
  \end{equation*}
  for all $\classe', \classe \in C((1-\rho_*)\x+\rho_*\y) = C(\y)$ such that $\classe' \lto{\gamma} \classe$, $\Xi_0(\classe')=\Xi_0(\classe)$ and either 
  \begin{equation*}
    T((1-\rho_*)\x+\rho_*\y;\classe') < T((1-\rho_*)\x+\rho_*\y;\classe) < T_0(\classe),
  \end{equation*}
  which reduces to $T(\y;\classe') < T(\y;\classe) < T_0(\classe)$, or 
  \begin{equation*}
    T_0(\classe) < T((1-\rho_*)\x+\rho_*\y;\classe') < T((1-\rho_*)\x+\rho_*\y;\classe),
  \end{equation*}
  which reduces to $T_0(\classe) < T(\y;\classe') < T(\y;\classe)$. As a consequence, the conditions on $\rho$ are the same as above and taking the infimum over the admissible values of $\rho$ yields the same quantity $\rho_{*,1}$. Likewise, to ensure that $(1-\rho_*^2)\x + \rho_*^2\y$ and $(1-\rho_*)\x+\rho_*\y$ satisfy Condition~\eqref{cond:C3b}, we obtain the same quantity $\rho_{*,2}$ as above, therefore taking $\rho=\rho_*$ again, we conclude that the configurations $(1-\rho_*^2)\x + \rho_*^2\y$ and $(1-\rho_*)\x+\rho_*\y$ satisfy Condition~{\rm (\hyperref[cond:C]{LHM})}. The proof is completed by induction.
\end{proof}


\subsubsection{Interpolation procedure}\label{sss:interpolation} In this paragraph, we describe the interpolation procedure allowing to derive global stability estimates from the local stability estimates of Proposition~\ref{prop:locstab}, under Condition~\eqref{cond:ND}. The latter condition is removed in the next subsection.

\begin{lem}[Global stability estimate under Condition~\eqref{cond:ND}]\label{lem:globstab:ND}
  Under Assumptions~\eqref{ass:LC} and \eqref{ass:USH}, and Condition~\eqref{cond:ND}, for all $\x, \y \in \Dnd$,
  \begin{equation*}
    \begin{aligned}
      & \sup_{t \geq 0} ||\Phi(\x;t)-\Phi(\y;t)||_1 \leq \ConstStab_1 ||\x-\y||_1,\\
      & \sup_{t \geq 0} ||\Phi(\x;t)-\Phi(\y;t)||_{\infty} \leq \ConstStab_{\infty} ||\x-\y||_{\infty},
    \end{aligned}
  \end{equation*}
  where $\ConstStab_1$ and $\ConstStab_{\infty}$ are given in Proposition~\ref{prop:locstab}.
\end{lem}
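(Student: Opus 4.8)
\textbf{Proof plan for Lemma~\ref{lem:globstab:ND}.} The plan is to derive the two global inequalities from the local stability estimates of Proposition~\ref{prop:locstab} by an interpolation argument along a carefully chosen path in $\Dnd$. Fix $\x,\y\in\Dnd$. Since $\Drnd$ is a dense open subset of $\Dnd$ and both sides of the desired inequalities are continuous in $(\x,\y)$ (by Proposition~\ref{prop:continuity} and the boundedness of velocities), it suffices to treat the case $\x,\y\in\Drnd$; the general case then follows by approximation. So assume $\x,\y\in\Drnd$. I would first reduce further to the case where one of the two endpoints is a good configuration: by Lemma~\ref{lem:gooddense}, $\Good$ is dense in $\Dnd$, so I can pick a sequence of good configurations converging to $\x$, prove the estimate with $\x$ replaced by such a good configuration, and pass to the limit. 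Hence without loss of generality assume $\y\in\Good$ (or, symmetrically, that we are estimating with respect to a good endpoint).

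\textbf{Construction of the interpolating path.} With $\y\in\Good$ fixed, apply Lemma~\ref{lem:radial} to $\x$ (using a proper covering $(\dxi,\dtau)$ of $\Ibinter(\x)$ if $\Nb(\x)\geq 1$) to obtain $\kappa>0$. If $\y\in B_1(\x,\kappa)$, Lemma~\ref{lem:CondCloc} produces $\rho_*\in(0,1)$ such that for every $m\geq 1$ the configurations
\begin{equation*}
  \z_{m-1} := (1-\rho_*^{m-1})\y+\rho_*^{m-1}\x, \qquad \z_m := (1-\rho_*^m)\y+\rho_*^m\x
\end{equation*}
satisfy Condition~{\rm (\hyperref[cond:C]{LHM})}. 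Note $\z_0=\y$ and $\z_m\to\x$ as $m\to\infty$, with $\z_m$ lying on the segment $[\y,\x]$, so $||\z_{m-1}-\z_m||_p=(\rho_*^{m-1}-\rho_*^m)||\x-\y||_p$ and these increments sum telescopically to $||\x-\y||_p$. Applying Proposition~\ref{prop:locstab} to each consecutive pair $\z_{m-1},\z_m$ gives, for all $t\geq 0$,
\begin{equation*}
  ||\Phi(\z_{m-1};t)-\Phi(\z_m;t)||_p \leq \ConstStab_p\,||\z_{m-1}-\z_m||_p
\end{equation*}
for $p\in\{1,\infty\}$. For $p=1$, summing over $m\geq 1$ and using the triangle inequality together with $\Phi(\z_m;t)\to\Phi(\x;t)$ (Proposition~\ref{prop:continuity}) yields
\begin{equation*}
  \sup_{t\geq 0}||\Phi(\y;t)-\Phi(\x;t)||_1 \leq \sum_{m\geq 1}\ConstStab_1\,(\rho_*^{m-1}-\rho_*^m)||\x-\y||_1 = \ConstStab_1\,||\x-\y||_1.
\end{equation*}
For $p=\infty$ one argues the same way, replacing the sum by a supremum of partial sums bounded by $\ConstStab_\infty||\x-\y||_\infty$; again one passes to the limit $m\to\infty$ using continuity of $\Phi(\cdot;t)$. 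If $\y\notin B_1(\x,\kappa)$, one chains finitely many intermediate configurations on the segment $[\x,\y]$, each within the appropriate $\kappa$-ball of the next and all but one of them good (using density of $\Good$ together with openness of the relevant neighbourhoods), and concatenates the estimates, the constant $\ConstStab_p$ surviving because the increments along the segment again sum to $||\x-\y||_p$.

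\textbf{Main obstacle.} The genuinely delicate point is not the summation but making sure the path can actually be decomposed into pieces each obeying Condition~{\rm (\hyperref[cond:C]{LHM})}; this is precisely where Lemmas~\ref{lem:radial}, \ref{lem:CondCloc:G} and~\ref{lem:CondCloc} do the heavy lifting, and where the nondegeneracy Condition~\eqref{cond:ND} is essential (it guarantees, via Lemma~\ref{lem:sGNLclu}, that cluster compositions at the entry and exit of the boxes are stable under perturbation, so that the blown-up singularity structure is geometrically consistent along the whole segment). A secondary care point is the two approximation steps: first replacing $(\x,\y)$ by a pair in $\Drnd$, then replacing one endpoint by a good configuration. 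Both require that the quantities $\sup_{t\geq 0}||\Phi(\cdot;t)-\Phi(\cdot;t)||_p$ behave continuously under $\Ls^1$-perturbations of the initial data, which for $p=1$ is Proposition~\ref{prop:continuity}, and for $p=\infty$ follows from the elementary bound $||\,\cdot\,||_\infty\leq n||\,\cdot\,||_1$ combined with that proposition; once the $\kappa$-neighbourhoods are entered, the estimates themselves are uniform. Assembling these ingredients gives both inequalities and completes the proof.
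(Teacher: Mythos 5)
Your plan reproduces the paper's overall strategy — reduce to $\Drnd$ by continuity, interpolate along the segment $[\x,\y]$, apply the local estimate of Proposition~\ref{prop:locstab} via Lemmas~\ref{lem:radial} and~\ref{lem:CondCloc}, and sum a telescoping series — and the geometric sequence $\z_m=(1-\rho_*^m)\y+\rho_*^m\x$ is exactly the construction the paper uses within a single $\kappa$-ball. However, your treatment of the chaining step (``if $\y\notin B_1(\x,\kappa)$'') has two genuine gaps.

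First, the segment $[\x,\y]$ need not remain inside $\Drnd$ even when its endpoints do: along the way, different pairs $(\alpha{:}i,\beta{:}j)$ can momentarily coincide. Lemma~\ref{lem:radial} and the whole ``proper covering'' machinery require the base point to be in $\Drnd$, so one cannot blindly cover the segment with $\kappa$-balls centred at arbitrary points of it. The paper handles this in its Step~1 by locating the finitely many parameters $s_1<\dots<s_K$ where the segment exits $\Drnd$, treating each sub-segment separately, and then shrinking an $\epsilon$-margin around the bad endpoints before passing to the limit. You never address this.

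Second, and more seriously, the good configurations you need as intermediate ``hinges'' (so that each local application of Lemma~\ref{lem:CondCloc} has a good endpoint) generically do \emph{not} lie on the segment: $\Good$ is merely dense, and being good involves a long list of nondegeneracy conditions along the entire MSPD trajectory. So the path you integrate along must leave the segment to visit $\Good$. Once it does, its total $\Ls^p$-length strictly exceeds $\|\x-\y\|_p$, and your claim that ``the increments along the segment again sum to $\|\x-\y\|_p$'' fails. The paper confronts this head-on: the ball centres $\z_l$ lie on the segment, the good points $\z'_{l,l+1}$ are chosen in the overlap of consecutive balls \emph{off} the segment, the total extra length is bounded by a parameter $\eta$, and the estimate is recovered only after letting $\eta\to 0$. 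Without this $\eta$-control your argument yields $\sup_t\|\Phi(\x;t)-\Phi(\y;t)\|_p\le\ConstStab_p\cdot(\text{length of the path})$ with a path length you have not shown equals $\|\x-\y\|_p$ in the limit. Your preliminary reduction to ``one endpoint good'' does not help: the interior hinge points still have to be good, and the detour problem persists. Your phrase ``all but one of them good'' suggests a confusion here — you would need \emph{all interior hinges} good (not ``all but one''), and they cannot be taken on the segment.

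A smaller remark: your preliminary WLOG to $\y\in\Good$ via Lemma~\ref{lem:gooddense} and Proposition~\ref{prop:continuity} is sound but unnecessary; the paper dispenses with it because, once the chaining is done correctly, neither endpoint needs to be good.
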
 
\begin{proof}
  Let us begin by mentioning that the arguments of the proof do not depend on the choice of the distance; in particular, continuity and density results are valid whatever the choice of the distance since these distances are equivalent. Therefore, the notation $||\cdot||$ shall indifferently refer to $||\cdot||_1$ or $||\cdot||_{\infty}$. The corresponding stability constant shall simply be denoted $\ConstStab$.
  
  We first recall that $\Drnd$ is dense in $\Dnd$ and, by Proposition~\ref{prop:continuity}, for all $t \geq 0$, the mapping $(\x,\y) \mapsto ||\Phi(\x; t) - \Phi(\y; t)||$ is continuous on $(\Dnd)^2$. As a consequence, it suffices to prove that, for all $t \geq 0$, for all $(\x,\y) \in \Drnd^2$, $||\Phi(\x; t) - \Phi(\y; t)|| \leq \ConstStab ||\x-\y||$. 
  
  We fix $\x, \y \in \Drnd$ and proceed by interpolation as follows. In Step~1, we split the segment 
  \begin{equation}\label{eq:pf:globalstab:S}
    S := \{(1-s)\x + s\y, s \in [0,1]\}
  \end{equation}
  into a finite number of segments
  \begin{equation}\label{eq:pf:globalstab:Sk}
    S_k := \{(1-s)\x + s\y, s \in [s_k,s_{k+1}]\}, \qquad k \in \{0, \ldots, K\},
  \end{equation}
  where $0 =: s_0 < s_1 < \cdots < s_K < s_{K+1} := 1$ are such that, for all $k \in \{0, \ldots, K\}$, for all $s \in (s_k, s_{k+1})$, $(1-s)\x + s\y \in \Drnd$. In Step~2, for all $k \in \{0, \ldots, K\}$ and $\epsilon>0$ small enough, we define the segment $S_k^{\epsilon}$ by
  \begin{equation}\label{eq:pf:globalstab:Skeps}
    S_k^{\epsilon} := \{(1-s)\x + s\y, s \in [s_k+\epsilon,s_{k+1}-\epsilon]\},
  \end{equation}
  and construct a piecewise linear and continuous path joining the extreme points of $S_k^{\epsilon}$, with length arbitrarily close to the length of $S_k^{\epsilon}$, and allowing to apply Lemma~\ref{lem:CondCloc} on a finite number of linear parts of the path in Step~3. We let $\epsilon$ vanish and complete the interpolation procedure in Step~4.
  
  \sk
  \noindent {\em Step~1.} Let $S$ be defined by~\eqref{eq:pf:globalstab:S}. For all $s \in [0,1]$, $(1-s)\x + s\y \not\in \Drnd$ if and only if there exists $(\alpha:i, \beta:j) \in (\Part)^2$ such that $\alpha < \beta$ and
  \begin{equation*}
    (1-s)x_i^{\alpha} + sy_i^{\alpha} = (1-s)x_j^{\beta} + sy_j^{\beta},
  \end{equation*}
  which rewrites
  \begin{equation*}
    s(x_j^{\beta}-x_i^{\alpha} + y_i^{\alpha}-y_j^{\beta}) = x_j^{\beta}-x_i^{\alpha},
  \end{equation*}
  where we recall that $x_j^{\beta}-x_i^{\alpha} \not=0$ since $\x \in \Drnd$. As a consequence, either $x_j^{\beta}-x_i^{\alpha} + y_i^{\alpha}-y_j^{\beta} \not=0$ in which case there is at most one solution $s \in [0,1]$ to the equation above, or $x_j^{\beta}-x_i^{\alpha} + y_i^{\alpha}-y_j^{\beta} =0$ in which case there is no solution. We deduce that there is a finite number $K \geq 0$ of points $s \in [0,1]$ such that $(1-s)\x + s\y \not\in \Drnd$ and we index these points by their increasing ordering: $0 < s_1 < \cdots < s_K < 1$. For the convenience of notation in the sequel of the proof, we define $s_0 := 0$ and $s_{K+1} := 1$, so that for all $k \in \{0, \ldots, K\}$, for all $s \in (s_k, s_{k+1})$, $(1-s)\x + s\y \in \Drnd$. We finally define the segments $(S_k)_{0 \leq k \leq K}$ as in~\eqref{eq:pf:globalstab:Sk}.
  
  \sk
  \noindent {\em Step~2.} In this step we fix $k \in \{0, \ldots, K\}$ and $\epsilon > 0$ such that $s_k + \epsilon < s_{k+1} - \epsilon$. Then, the segment $S_k^{\epsilon}$ defined by~\eqref{eq:pf:globalstab:Skeps} is a compact subset of $\Drnd$. Its length is worth
  \begin{equation*}
    ||(1-(s_{k+1}-\epsilon))\x + (s_{k+1}-\epsilon)\y - (1-(s_k+\epsilon))\x - (s_k+\epsilon)\y|| = (s_{k+1}-s_k-2\epsilon)||\x-\y||.
  \end{equation*}
  Let us write
  \begin{equation*}
    S_k^{\epsilon} \subset \bigcup_{\z \in S_k^{\epsilon}} B_1(\z, \kappa(\z)),
  \end{equation*}
  where, for all $\z \in S_k^{\epsilon}$, we fix a proper covering of $\Ibinter(\z)$ if $\Nb(\z) \geq 1$ and let $\kappa(\z)$ be given by Lemma~\ref{lem:radial}. Let us extract a finite subcover $B_1(\z_1, \kappa(\z_1)), \ldots, B_1(\z_L, \kappa(\z_L))$ of $S_k^{\epsilon}$ where, for all $l \in \{1, \ldots, L\}$, $\z_l \in S_k^{\epsilon}$ writes $(1-\sigma_l)\x + \sigma_l\y$ with $s_k+\epsilon \leq \sigma_1 < \cdots < \sigma_L \leq s_{k+1}-\epsilon$. We also define $\sigma_0 := s_k+\epsilon$, $\sigma_{L+1} := s_{k+1}-\epsilon$ and $\z_0 := (1-\sigma_0)\x + \sigma_0\y$, $\z_{L+1} := (1-\sigma_{L+1})\x + \sigma_{L+1}\y$. Note that, for all $l \in \{0, \ldots, L\}$, the intersection of $B_1(\z_l, \kappa(\z_l))$ and $B_1(\z_{l+1}, \kappa(\z_{l+1}))$ is nonempty and contains the set 
  \begin{equation*}
    \{(1-s)\x + s\y, s \in (\sigma_l+\kappa(\z_l), \sigma_{l+1}-\kappa(\z_{l+1}))\}.
  \end{equation*}
  
  We finally fix $\eta > 0$ and use the density of the set $\Good$ (see Lemma~\ref{lem:gooddense}) to construct 
  \begin{equation*}
    \z'_{0,1}, \ldots, \z'_{L,L+1} \in \Good
  \end{equation*}
  such that, for all $l \in \{0, \ldots, L\}$, $\z'_{l, l+1} \in B_1(\z_l, \kappa(\z_l)) \cap B_1(\z_{l+1}, \kappa(\z_{l+1}))$, and in addition,
  \begin{equation*}
    \sum_{l=0}^L ||\z_l - \z'_{l,l+1}|| + ||\z'_{l,l+1} - \z_{l+1}|| \leq (s_{k+1}-s_k-2\epsilon)||\x-\y|| + \eta.
  \end{equation*}
  
  The quantities introduced in Step~2 are summarised on Figure~\ref{fig:pf:globalstab:Step2}.
  
  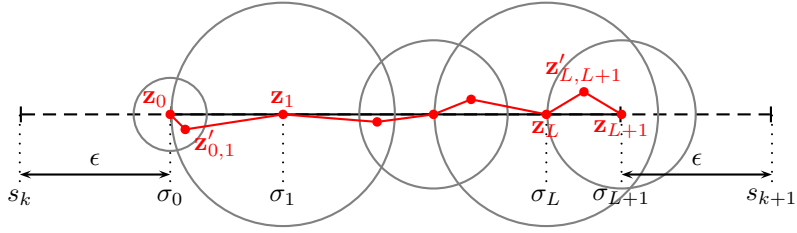
\begin{figure}[ht]
    \begin{pspicture}(10,3)
      \psline[linestyle=dashed]{|-|}(0,1.5)(10,1.5)
      \psline{|-|}(2,1.5)(8,1.5)

      \pscircle[linecolor=gray](2,1.5){.5}
      \pscircle[linecolor=gray](3.5,1.5){1.5}
      \pscircle[linecolor=gray](5.5,1.5){1}
      \pscircle[linecolor=gray](7,1.5){1.5}
      \pscircle[linecolor=gray](8,1.5){1}

      \psline[linestyle=dotted](0,1.5)(0,.6)
      \psline[linestyle=dotted](2,1.5)(2,.6)
      \psline[linestyle=dotted](3.5,1.5)(3.5,.6)
      \psline[linestyle=dotted](7,1.5)(7,.6)
      \psline[linestyle=dotted](8,1.5)(8,.6)
      \psline[linestyle=dotted](10,1.5)(10,.6)

      \psline[linecolor=red]{*-}(2,1.5)(2.2,1.3)
      \psline[linecolor=red]{*-}(2.2,1.3)(3.5,1.5)
      \psline[linecolor=red]{*-}(3.5,1.5)(4.75,1.4)
      \psline[linecolor=red]{*-}(4.75,1.4)(5.5,1.5)
      \psline[linecolor=red]{*-}(5.5,1.5)(6,1.7)
      \psline[linecolor=red]{*-}(6,1.7)(7,1.5)
      \psline[linecolor=red]{*-}(7,1.5)(7.5,1.8)
      \psline[linecolor=red]{*-*}(7.5,1.8)(8,1.5)
      
      \rput(0,.4){$s_k$}
      \rput(2,.4){$\sigma_0$}
      \rput(3.5,.4){$\sigma_1$}
      \rput(7,.4){$\sigma_L$}
      \rput(8,.4){$\sigma_{L+1}$}
      \rput(10,.4){$s_{k+1}$}
      
      \rput(1.8,1.7){\textcolor{red}{$\z_0$}}
      \rput(2.6,1.1){\textcolor{red}{$\z'_{0,1}$}}
      \rput(3.5,1.7){\textcolor{red}{$\z_1$}}
      \rput(7,1.3){\textcolor{red}{$\z_L$}}
      \rput(7.5,2.1){\textcolor{red}{$\z'_{L,L+1}$}}
      \rput(8,1.3){\textcolor{red}{$\z_{L+1}$}}
      
      \psline{<->}(0,.7)(2,.7)
      \rput(1,.9){$\epsilon$}
      \psline{<->}(8,.7)(10,.7)
      \rput(9,.9){$\epsilon$}
      
    \end{pspicture}
    \caption{The segment $S_k$ is drawn in dashed line, while the segment $S_k^{\epsilon}$ is drawn in solid line. Gray circles stand for the open balls $B_1(\z_l, \kappa(\z_l))$. The points $\z'_{0,1}, \ldots, \z'_{L,L+1}$ are chosen in the dense set $\Good$ in order to ensure that the difference between the length of the red path and the length $(s_{k+1}-s_k-2\epsilon)||\x-\y||$ of $S_k^{\epsilon}$ be smaller than $\eta$.}
    \label{fig:pf:globalstab:Step2}
  \end{figure}
  
  \sk
  \noindent {\em Step~3.} As a continuation of Step~2, let us fix $l \in \{0, \ldots, L\}$. We now prove
  \begin{equation*}
    \sup_{t \geq 0} ||\Phi(\z_l;t) - \Phi(\z'_{l,l+1};t)|| \leq \ConstStab ||\z_l-\z'_{l,l+1}||,
  \end{equation*}
  and similar arguments shall also yield
  \begin{equation*}
    \sup_{t \geq 0} ||\Phi(\z_{l+1};t) - \Phi(\z'_{l,l+1};t)|| \leq \ConstStab ||\z_{l+1}-\z'_{l,l+1}||.
  \end{equation*}
  By Step~2, $\z_l \in \Drnd$ and $\z'_{l,l+1} \in B_1(\z_l, \kappa(\z_l)) \cap \Good$. As a consequence, Lemma~\ref{lem:CondCloc} implies that there exists $\rho_* \in (0,1)$ such that, for all $m \geq 1$, $(1-\rho_*^{m-1})\z_l + \rho_*^{m-1}\z'_{l,l+1}$ and $(1-\rho_*^m)\z_l + \rho_*^m\z'_{l,l+1}$ satisfy Condition~{\rm (\hyperref[cond:C]{LHM})}. Therefore, for all $m \geq 1$, Proposition~\ref{prop:locstab} yields, for all $t \geq 0$,
  \begin{equation*}
    ||\Phi((1-\rho_*^m)\z_l + \rho_*^m\z'_{l,l+1}; t) - \Phi((1-\rho_*^{m-1})\z_l + \rho_*^{m-1}\z'_{l,l+1}; t)|| \leq \ConstStab (\rho_*^{m-1}-\rho_*^m) ||\z_l - \z'_{l,l+1}||.
  \end{equation*}
  
  We finally deduce from the triangle inequality that, for all $M \geq 1$,
  \begin{equation*}
    \begin{aligned}
      & ||\Phi((1-\rho_*^M)\z_l+\rho_*^M\z'_{l,l+1};t) - \Phi(\z'_{l,l+1};t)||\\
      & \qquad \leq \sum_{m=1}^M ||\Phi((1-\rho_*^m)\z_l + \rho_*^m\z'_{l,l+1}; t) - \Phi((1-\rho_*^{m-1})\z_l + \rho_*^{m-1}\z'_{l,l+1}; t)||\\
      & \qquad \leq \sum_{m=1}^M \ConstStab (\rho_*^{m-1}-\rho_*^m) ||\z_l - \z'_{l,l+1}|| = \ConstStab (1-\rho_*^M)||\z_l - \z'_{l,l+1}||,
    \end{aligned}
  \end{equation*}
  and use Proposition~\ref{prop:continuity} to conclude that
  \begin{equation*}
    \sup_{t \geq 0} ||\Phi(\z_l;t) - \Phi(\z'_{l,l+1};t)|| \leq \ConstStab ||\z_l - \z'_{l,l+1}||.
  \end{equation*}
  
  \sk
  \noindent {\em Step~4.} We finally complete the interpolation procedure described in the introduction of the proof. First, it follows from Step~3 that
  \begin{equation*}
    \begin{aligned}
      & \sup_{t \geq 0} ||\Phi(\z_0; t) - \Phi(\z_{L+1}; t)|| \leq \sum_{l=0}^L \sup_{t \geq 0} \left(||\Phi(\z_l; t) - \Phi(\z'_{l,l+1}; t)|| +  ||\Phi(\z'_{l,l+1}; t) - \Phi(\z_{l+1}; t)||\right)\\
      & \qquad \leq \ConstStab \sum_{l=0}^L ||\z_l-\z'_{l,l+1}|| +  ||\z'_{l,l+1}-\z_{l+1}||\\
      & \qquad \leq \ConstStab \left((s_{k+1}-s_k-2\epsilon)||\x-\y|| + \eta\right).
    \end{aligned}
  \end{equation*}
  Recalling that $\z_0 = (1-(s_k+\epsilon))\x + (s_k+\epsilon)\y$ and $\z_{L+1} = (1-(s_{k+1}-\epsilon))\x + (s_{k+1}-\epsilon)\y$, and letting $\eta$ vanish, we obtain
  \begin{equation*}
    \sup_{t \geq 0} ||\Phi((1-(s_k+\epsilon))\x + (s_k+\epsilon)\y; t) - \Phi((1-(s_{k+1}-\epsilon))\x + (s_{k+1}-\epsilon)\y; t)|| \leq \ConstStab (s_{k+1}-s_k-2\epsilon)||\x-\y||.
  \end{equation*}
  Taking the limit of both sides when $\epsilon$ vanishes and using Proposition~\ref{prop:continuity}, we finally write
  \begin{equation*}
    \sup_{t \geq 0} ||\Phi((1-s_k)\x + s_k\y; t) - \Phi((1-s_{k+1})\x + s_{k+1}\y; t)|| \leq \ConstStab (s_{k+1}-s_k)||\x-\y||
  \end{equation*}
  and complete the proof thanks to the triangle inequality again.
\end{proof}


\subsubsection{Approximation of degenerate characteristic fields}\label{sss:approxND} We now complete the proof of Proposition~\ref{prop:globstab} by removing Condition~\eqref{cond:ND} from the statement of Lemma~\ref{lem:globstab:ND}. We use the following approximation argument.

\begin{lem}[Nondegenerate approximation of degenerate characteristic fields]\label{lem:approxND}
  Let us assume that the function $\blambda = (\lambda^1, \ldots, \lambda^d)$ satisfies Assumptions~\eqref{ass:USH} and~\eqref{ass:LC}. Then, for all $n \geq 1$, there exists a sequence of functions $\blambda^{[q]} = (\lambda^{[q],1}, \ldots, \lambda^{[q],d})$, $q \geq 1$, satisfying Assumptions~\eqref{ass:USH} and~\eqref{ass:LC} as well as Condition~\eqref{cond:ND}, such that, when $q$ grows to infinity:
  \begin{enumerate}[label=(\roman*), ref=\roman*]
    \item\label{it:approxND:1} for all $\x \in \Dnd$, for all $\gamma:k \in \Part$, $(\tlambda^{[q]})_k^{\gamma}(\x)$ converges to $\tlambda_k^{\gamma}(\x)$,
    \item\label{it:approxND:2} for all $\gamma \in \{1, \ldots, d\}$, $\sup_{\bu \in [0,1]^d} |\lambda^{[q],\gamma}(\bu)|$ converges to $\sup_{\bu \in [0,1]^d} |\lambda^{\gamma}(\bu)|$, the Lipschitz continuity constant $\ConstLip^{[q]}$ of $\blambda^{[q]}$ converges to the Lipschitz continuity constant $\ConstLip$ of $\blambda$ and the uniform strict hyperbolicity constant $\ConstUSH^{[q]}$ of $\blambda^{[q]}$ converges to the uniform strict hyperbolicity constant $\ConstUSH$ of $\blambda$, 
    \item\label{it:approxND:3} for all $\x \in \Dnd$, for all $t \geq 0$, the configuration $\Phi^{[q]}(\x;t)$ at time $t$ of the MSPD started at $\x$ with velocity vectors determined by $\blambda^{[q]}$ converges to the configuration $\Phi(\x;t)$ at time $t$ of the MSPD started at $\x$ with velocity vectors determined by $\blambda$.
  \end{enumerate}
\end{lem}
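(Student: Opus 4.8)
\textbf{Proof plan for Lemma~\ref{lem:approxND}.}

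The plan is to build $\blambda^{[q]}$ by a direct perturbation of $\blambda$ that makes the finitely many relevant averages all distinct, while controlling the structural constants. First I would observe that, for a fixed $n$, Condition~\eqref{cond:ND} only constrains the finite set of real numbers $\{\tlambda_k^{\gamma}(\x) : \x \in \Dnd,\ \gamma:k \in \Part\}$; more precisely, it asks that for every $\x$, every $\gamma$, and every pair $\uk < \ok$ satisfying the flatness hypothesis $\omega^{\gamma'}_{\gamma:k}(\x) = \omega^{\gamma'}_{\gamma:\uk}(\x)$ for all $\gamma'\neq\gamma$ and $k\in\{\uk,\dots,\ok\}$, and every splitting index $k$, the two barycentric averages $\frac{1}{k-\uk+1}\sum_{\uk}^k \tlambda^{\gamma}_{k'}(\x)$ and $\frac{1}{\ok-k}\sum_{k+1}^{\ok}\tlambda^{\gamma}_{k'}(\x)$ differ. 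Since $\Dnd$ is infinite one cannot literally enumerate $\x$, but the quantities $\omega^{\gamma'}_{\gamma:k}(\x)$ and the combinatorial data $(\gamma,\uk,\ok,k)$ take only finitely many values, and each $\tlambda^\gamma_k(\x)$ is an average of $\lambda^\gamma$ over a box of the form $\prod_{\gamma'\neq\gamma}\{\omega^{\gamma'}\}\times[(k-1)/n,k/n]$ with $\omega^{\gamma'}\in\{0,1/n,\dots,1\}$. Hence Condition~\eqref{cond:ND} reduces to finitely many non-equalities among finitely many fixed linear functionals of $\lambda^1,\dots,\lambda^d$ evaluated on a fixed finite family of such boxes.

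Next I would construct the approximants. Fix a small parameter $\varepsilon_q \downarrow 0$. Replace $\lambda^\gamma$ by $\lambda^{[q],\gamma}(\bu) := \lambda^\gamma(\bu) + \varepsilon_q\, p^\gamma(u^\gamma)$, where $p^\gamma$ is a polynomial (or smooth bounded) function of the single variable $u^\gamma$, chosen generically from a finite-dimensional family. The key point is that adding $\varepsilon_q p^\gamma(u^\gamma)$ shifts each average $\tlambda^\gamma_k(\x)$ by $\varepsilon_q\cdot n\int_{(k-1)/n}^{k/n} p^\gamma(w)\,dw$, a quantity that depends only on $\gamma$ and $k$ and not on the $\omega$-coordinates; one then has to choose $p^\gamma$ so that, for every admissible $(\gamma,\uk,\ok,k)$, the difference of the two perturbed averages is a nonzero affine function of $\varepsilon_q$. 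Because there are only finitely many such affine functions and each is not identically zero for a generic choice of $p^\gamma$ (the family of allowed polynomials of degree $\leq n$ separates these averages), one can pick a single $p^\gamma$ making all of them nonzero for all $\varepsilon_q>0$ small enough; discarding at most finitely many bad $\varepsilon_q$ gives the desired sequence. This yields~\eqref{cond:ND}. Assumption~\eqref{ass:LC} is preserved with $\ConstLip^{[q]} := \ConstLip + \varepsilon_q\max_\gamma\|(p^\gamma)'\|_\infty$, and Assumption~\eqref{ass:USH} is preserved for $q$ large since the gaps $\inf\lambda^\gamma-\sup\lambda^{\gamma+1}$ change by at most $2\varepsilon_q\max_\gamma\|p^\gamma\|_\infty$; this gives~\eqref{it:approxND:2}, and $\sup|\lambda^{[q],\gamma}|\to\sup|\lambda^\gamma|$ likewise. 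Claim~\eqref{it:approxND:1} is immediate from the explicit formula~\eqref{eq:vitesses} for $\tlambda$ together with the uniform convergence $\lambda^{[q],\gamma}\to\lambda^\gamma$ and dominated convergence in the $w$-integral.

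Finally, for~\eqref{it:approxND:3} I would argue by induction on $\Nb(\x)$, following the recursive structure of Definition~\ref{defi:mspd}. On the interval $[0,t^*(\x))$ the MSPD coincides with the Typewise Sticky Particle Dynamics $\tPhi[\tblambda(\x)](\x;\cdot)$, which is a continuous function of the velocity vector $\tblambda$ (for instance via the Tanaka-type reflected-equation characterization recalled after Lemma~\ref{lem:cluSPD}, or directly from the explicit $\Ls^1$-stability of the SPD in the velocities, Lemma~\ref{lem:contracttPhi}); hence $\tPhi[\tblambda^{[q]}(\x)](\x;t)\to\tPhi[\tblambda(\x)](\x;t)$ by~\eqref{it:approxND:1}. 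For the collision time, Lemma~\ref{lem:ttinter} gives, under~\eqref{ass:USH}, quantitative separation bounds that are uniform in $q$ for $q$ large, so $t^*(\x)$ is continuous in $\tblambda$ and $\x^{[q],*}\to\x^*$; then $\Nb(\x^*)<\Nb(\x)$ and the induction hypothesis applies on $[t^*(\x),+\infty)$ after the flow restart. A mild technical care is needed when $\x$ is such that $t^*(\x)$ is a degenerate (non-generic) collision configuration, but since we only need pointwise-in-$t$ convergence of $\Phi^{[q]}(\x;t)$, the uniform velocity bound~\eqref{eq:typeencadrelambda} plus continuity of the collision times suffices. \textbf{The main obstacle} I anticipate is the genericity argument in the middle paragraph: verifying rigorously that a \emph{single} perturbation direction $p^\gamma$ can be chosen to simultaneously break \emph{all} the finitely many equalities required by~\eqref{cond:ND}, uniformly over the infinite configuration space $\Dnd$ — this hinges on the reduction of~\eqref{cond:ND} to finitely many linear conditions, which must be set up carefully (in particular recognizing that the perturbation of each $\tlambda^\gamma_k$ depends only on $k$ and $\gamma$, so that the flatness hypothesis in~\eqref{cond:ND} is exactly what makes the perturbed non-equalities well posed).
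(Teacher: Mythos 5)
Your construction of $\blambda^{[q]}$ is in the same spirit as the paper's, but you work harder than necessary on the genericity argument that you flag as the main obstacle. The paper simply takes the fixed perturbation direction $p^\gamma(w) = -w$, i.e.\ $\lambda^{[q],\gamma}(\bu) := \lambda^\gamma(\bu) - (\epsilon_0/q) u^\gamma$, and observes that the resulting shift of $\frac{1}{k-\uk+1}\sum_{k'=\uk}^k \tlambda^\gamma_{k'}(\x)$ is $-\tfrac{\epsilon}{2n}(k+\uk-1)$ while the shift of $\frac{1}{\ok-k}\sum_{k'=k+1}^{\ok}\tlambda^\gamma_{k'}(\x)$ is $-\tfrac{\epsilon}{2n}(\ok+k)$. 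Since $\uk-1 < \ok$ always, these shifts are distinct, so the perturbation changes the \emph{difference} of the two barycentric averages by the nonzero amount $\tfrac{\epsilon}{2n}(\ok-\uk+1)$, which handles both the case where they were originally equal and the case where they were not (for $\epsilon$ small). There are finitely many values of $(\gamma,\uk,\ok,k)$ and of the $\omega$-coordinates, so a single $\epsilon_0$ works; no selection of a generic $p^\gamma$ from a family is needed. Your reduction to "finitely many affine non-equalities" is correct, but the concrete choice bypasses the worry entirely.

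The real gap is in your treatment of point~(\ref{it:approxND:3}). After the first collision you restart the two dynamics from \emph{different} configurations: $\Phi^{[q]}(\x;t)=\Phi^{[q]}(\x^{[q],*};t-t^{*[q]}(\x))$ with $\x^{[q],*}:=\tPhi[\tblambda^{[q]}(\x)](\x;t^{*[q]}(\x))$, while $\Phi(\x;t)=\Phi(\x^*;t-t^*(\x))$. Your inductive hypothesis only gives $\Phi^{[q]}(\x^*;s)\to\Phi(\x^*;s)$ for the \emph{same} base point $\x^*$; it does not compare $\Phi^{[q]}(\x^{[q],*};\cdot)$ with $\Phi^{[q]}(\x^*;\cdot)$. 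To close the triangle inequality one needs a stability estimate for the flow $\Phi^{[q]}$ that is uniform in $q$, and this is precisely where Condition~\eqref{cond:ND} is being exploited: because $\blambda^{[q]}$ satisfies~\eqref{cond:ND}, Lemma~\ref{lem:globstab:ND} applies to $\Phi^{[q]}$ and gives
\begin{equation*}
||\Phi^{[q]}(\x^{[q],*};s)-\Phi^{[q]}(\x^*;s)||_1 \leq \ConstStab^{[q]}_1\,||\x^{[q],*}-\x^*||_1,
\end{equation*}
with $\ConstStab^{[q]}_1$ bounded uniformly in $q$ by~(\ref{it:approxND:2}). Merely invoking the continuity of Proposition~\ref{prop:continuity} does not suffice, since the modulus of continuity there depends on $\blambda^{[q]}$ and hence on $q$ in an uncontrolled way. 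The paper also extracts convergent subsequences of the family of collision times via Arzel\`a--Ascoli and shows that the limit collision time equals $t^*(\x)$; your remark that "$t^*(\x)$ is continuous in $\tblambda$" is in the right direction but needs this more careful identification to make the inductive step rigorous.
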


The conclusion of the proof of Proposition~\ref{prop:globstab} is now straightforward: applying Lemma~\ref{lem:globstab:ND} to the MSPD with velocity vectors determined by $\blambda^{[q]}$, we obtain, for all $\x, \y \in \Dnd$ and for all $t \geq 0$,
  \begin{equation*}
    \begin{aligned}
      & ||\Phi^{[q]}(\x;t)-\Phi^{[q]}(\y;t)||_1 \leq \ConstStab_1^{[q]} ||\x-\y||_1,\\
      & ||\Phi^{[q]}(\x;t)-\Phi^{[q]}(\y;t)||_{\infty} \leq \ConstStab_{\infty}^{[q]} ||\x-\y||_{\infty},
    \end{aligned}
  \end{equation*}
where the meaning of $\ConstStab_1^{[q]}$ and $\ConstStab_{\infty}^{[q]}$ is obvious. Since these stability constants are continuous functions of $\ConstLip^{[q]}$ and $\ConstUSH^{[q]}$, there is no difficulty in taking the limit when $q$ grows to infinity of both inequalities and thus obtaining Proposition~\ref{prop:globstab}.

\begin{proof}[Proof of Lemma~\ref{lem:approxND}]
  The proof is decomposed into two independent parts: in the first part, we construct a particular sequence of functions $\blambda^{[q]}$ satisfying Condition~\eqref{cond:ND} as well as the points~\eqref{it:approxND:1} and~\eqref{it:approxND:2}. In the second part, we prove that any sequence of functions $\blambda^{[q]}$ satisfying the points~\eqref{it:approxND:1} and~\eqref{it:approxND:2} necessarily satisfies the point~\eqref{it:approxND:3}.
  
  \sk
  \noindent {\em Construction of $\blambda^{[q]}$.} Let us fix $\x \in \Part$, $\gamma \in \{1, \ldots, d\}$ and $\uk < \ok$ in $\{1, \ldots, n\}$, such that
  \begin{equation*}
    \forall \gamma' \not= \gamma, \quad \forall k \in \{\uk, \ldots, \ok\}, \qquad \omega_{\gamma:k}^{\gamma'}(\x) = \omega_{\gamma:\uk}^{\gamma'}(\x).
  \end{equation*}  
  Then, for all $k \in \{\uk, \ldots, \ok-1\}$, for all $\epsilon > 0$, we have
  \begin{equation*}
    \begin{aligned}
      & \frac{1}{k-\uk+1} \sum_{k'=\uk}^k n\int_{w=(k'-1)/n}^{k'/n} \left\{\lambda^{\gamma}\left(\omega^1_{\gamma:k'}(\x), \ldots, \omega^{\gamma-1}_{\gamma:k'}(\x), w, \omega^{\gamma+1}_{\gamma:k'}(\x), \ldots, \omega^d_{\gamma:k'}(\x)\right) - \epsilon w\right\}\dd w\\
      & \qquad = \frac{n}{k-\uk+1} \int_{w=(\uk-1)/n}^{k/n} \left\{\lambda^{\gamma}\left(\omega^1_{\gamma:k}(\x), \ldots, \omega^{\gamma-1}_{\gamma:k}(\x), w, \omega^{\gamma+1}_{\gamma:k}(\x), \ldots, \omega^d_{\gamma:k}(\x)\right) - \epsilon w\right\}\dd w\\
      & \qquad = \frac{1}{k-\uk+1} \sum_{k'=\uk}^k \tlambda_{k'}^{\gamma}(\x) - \frac{\epsilon}{2n} \frac{k^2-(\uk-1)^2}{k-\uk+1}\\
      & \qquad = \frac{1}{k-\uk+1} \sum_{k'=\uk}^k \tlambda_{k'}^{\gamma}(\x) - \frac{\epsilon}{2n} (k+\uk-1),
    \end{aligned}
  \end{equation*}
  and similarly
  \begin{equation*}
    \begin{aligned}
      & \frac{1}{\ok-k} \sum_{k'=k+1}^{\ok} n\int_{w=(k'-1)/n}^{k'/n} \left\{\lambda^{\gamma}\left(\omega^1_{\gamma:k'}(\x), \ldots, \omega^{\gamma-1}_{\gamma:k'}(\x), w, \omega^{\gamma+1}_{\gamma:k'}(\x), \ldots, \omega^d_{\gamma:k'}(\x)\right) - \epsilon w\right\}\dd w\\
      & \qquad = \frac{1}{\ok-k} \sum_{k'=k+1}^{\ok} \tlambda_{k'}^{\gamma}(\x) - \frac{\epsilon}{2n} (\ok+k).
    \end{aligned}
  \end{equation*}

  If
  \begin{equation*}
    \frac{1}{k-\uk+1} \sum_{k'=\uk}^k \tlambda_{k'}^{\gamma}(\x) \not= \frac{1}{\ok-k} \sum_{k'=k+1}^{\ok} \tlambda_{k'}^{\gamma}(\x),
  \end{equation*}
  then for $\epsilon$ small enough, we still have
  \begin{equation*}
    \frac{1}{k-\uk+1} \sum_{k'=\uk}^k \tlambda_{k'}^{\gamma}(\x) - \frac{\epsilon}{2n} (k+\uk-1) \not= \frac{1}{\ok-k} \sum_{k'=k+1}^{\ok} \tlambda_{k'}^{\gamma}(\x) - \frac{\epsilon}{2n} (\ok+k).
  \end{equation*}

  On the contrary, if
  \begin{equation*}
    \frac{1}{k-\uk+1} \sum_{k'=\uk}^k \tlambda_{k'}^{\gamma}(\x) = \frac{1}{\ok-k} \sum_{k'=k+1}^{\ok} \tlambda_{k'}^{\gamma}(\x),
  \end{equation*}
  then the fact that $\uk-1\not=\ok$ ensures that we still have
  \begin{equation*}
    \frac{1}{k-\uk+1} \sum_{k'=\uk}^k \tlambda_{k'}^{\gamma}(\x) - \frac{\epsilon}{2n} (k+\uk-1) \not= \frac{1}{\ok-k} \sum_{k'=k+1}^{\ok} \tlambda_{k'}^{\gamma}(\x) - \frac{\epsilon}{2n} (\ok+k).
  \end{equation*}
  
  For all $\gamma' \not= \gamma$, $\omega^{\gamma'}_{\gamma:k}(\x)$ can only take the values $0, 1/n, \ldots, 1$ when $\x$ varies in $\Part$. Taking the minimum of all admissible $\epsilon$ for all these possible values, and all the possible choices of $\gamma \in \{1, \ldots, d\}$ and $\uk \leq k < \ok$ in $\{1, \ldots, n\}$, we obtain $\epsilon_0 > 0$ such that, for all $q \geq 1$, the function $\blambda^{[q]} = (\lambda^{[q],1}, \ldots, \lambda^{[q],d})$ defined by, for all $\gamma \in \{1, \ldots, d\}$,
  \begin{equation*}
    \forall \bu \in [0,1]^d, \qquad \lambda^{[q], \gamma}(\bu) := \lambda^{\gamma}(\bu) - \frac{\epsilon_0}{q} u^{\gamma}
  \end{equation*}
  satisfies Condition~\eqref{cond:ND} and the point~\eqref{it:approxND:1}. Up to decreasing $\epsilon_0$ again, it is easy to prove that the functions $\blambda^{[q]}$ also satisfy Assumptions~\eqref{ass:USH} and~\eqref{ass:LC}, and that the associated constants satisfy the point~\eqref{it:approxND:2}.
  
  \sk
  \noindent {\em Proof of~\eqref{it:approxND:3}.} Let $\blambda^{[q]} = (\lambda^{[q],1}, \ldots, \lambda^{[q],d})$, $q \geq 1$, be a sequence of functions satisfying Assumptions~\eqref{ass:USH} and~\eqref{ass:LC}, Condition~\eqref{cond:ND} as well as the points~\eqref{it:approxND:1} and~\eqref{it:approxND:2}. Let $\Phi^{[q]}$ denote the MSPD flow associated with the velocity vectors determined by $\blambda^{[q]}$. We prove by induction on $\Nb(\x)$ that, for all $\x \in \Dnd$,
  \begin{equation}\label{eq:pf:approxND:iii}
    \forall t \geq 0, \qquad \lim_{q \to +\infty} \Phi^{[q]}(\x;t) = \Phi(\x;t).
  \end{equation}
  
  If $\Nb(\x)=0$, then we have, for all $t \geq 0$, $\Phi^{[q]}(\x;t) = \tPhi[\tblambda^{[q]}(\x)](\x;t)$, which converges to $\tPhi[\tblambda(\x)](\x;t)=\Phi(\x;t)$ on account of Lemma~\ref{lem:contracttPhi} combined with the point~\eqref{it:approxND:1}. Now let $N \geq 0$ such that~\eqref{eq:pf:approxND:iii} holds for all $\x \in \Dnd$ such that $\Nb(\x) \leq N$. Let us fix $\x \in \Dnd$ such that $\Nb(\x) = N+1$, and $T > 0$. For all $q \geq 1$, $\Phi^{[q]}(\x;0) = \x$ and the process $(\Phi^{[q]}(\x;t))_{t \in [0,T]}$ is Lipschitz continuous in $\Dnd$, and by~\eqref{it:approxND:2}, its Lipschitz norm is uniformly bounded with respect to $q$. As a consequence, it follows from the Arzelà-Ascoli Theorem that, along a subsequence that we still index by $q$ for convenience, $(\Phi^{[q]}(\x;t))_{t \in [0,T]}$ converges uniformly to a continuous process $(\varphi(t))_{t \in [0,T]}$ in $\Dnd$. The point~\eqref{it:approxND:3} follows if we identify this limit with $(\Phi(\x;t))_{t \in [0,T]}$.
  
  In this purpose, let us note that the sequence
  \begin{equation*}
    \{((\tinter_{\alpha:i, \beta:j})^{[q]}(\x) \wedge T)_{(\alpha:i, \beta:j) \in \Rb(\x)} : q \geq 1\}
  \end{equation*}
  is bounded in $[0,+\infty)^{\Nb(\x)}$, and therefore, up to extracting a further subsequence, we may assume that, for all $(\alpha:i, \beta:j) \in \Rb(\x)$,
  \begin{equation*}
    \lim_{q \to +\infty} (\tinter_{\alpha:i, \beta:j})^{[q]}(\x) \wedge T = \bar{\tau}_{\alpha:i, \beta:j} \in (0,T],
  \end{equation*}
  where the fact that $\bar{\tau}_{\alpha:i, \beta:j} > 0$ follows from Lemma~\ref{lem:tinter} and the point~\eqref{it:approxND:2}. As a consequence, $t^{*[q]}(\x) \wedge T$ converges to 
  \begin{equation*}
    \bar{\tau} := \min\{\bar{\tau}_{\alpha:i, \beta:j} : (\alpha:i, \beta:j) \in \Rb(\x)\} \in (0,T].
  \end{equation*}
  
  We first remark that, since $(\Phi^{[q]}(\x;t))_{t \in [0,T]}$ converges uniformly to $(\varphi(t))_{t \in [0,T]}$, then we have, for all $(\alpha:i, \beta:j) \in \Rb(\x)$ such that $\bar{\tau}_{\alpha:i, \beta:j} < T$,
  \begin{equation}\label{eq:pf:approxND:phiphi}
    \varphi_{\alpha:i}(\bar{\tau}_{\alpha:i, \beta:j}) = \varphi_{\beta:j}(\bar{\tau}_{\alpha:i, \beta:j}).
  \end{equation}
  We now fix $0 < \eta < \bar{\tau}$. Then, there exists $q_0 \geq 1$ such that, for all $q \geq q_0$, we have $t^{*[q]}(\x) \geq \bar{\tau}-\eta/2$, and then, for all $t \in [0, \bar{\tau}-\eta]$, $\Phi^{[q]}(\x;t) = \tPhi[\tblambda^{[q]}(\x)](\x;t)$ converges to $\tPhi[\tblambda(\x)](\x;t)=\varphi(t)$ thanks to Lemma~\ref{lem:contracttPhi} and the point~\eqref{it:approxND:1} again. Besides, by Lemma~\ref{lem:tinter}, we have, for all $(\alpha:i, \beta:j) \in \Rb(\x)$, for all $t \in [0,\bar{\tau}-\eta]$,
  \begin{equation*}
    \tPhi_j^{\beta}[\tblambda(\x)](\x; t) - \tPhi_i^{\alpha}[\tblambda(\x)](\x; t) \geq \frac{\eta}{2\ConstUSH},
  \end{equation*}
  so that $t^*(\x) \geq \bar{\tau}-\eta$ and therefore, for all $t \in [0,\bar{\tau}-\eta]$, $\tPhi[\tblambda(\x)](\x;t) = \Phi(\x;t)$. Letting $\eta$ vanish, we deduce that:
  \begin{itemize}
    \item for all $t \in [0, \bar{\tau})$, $\varphi(t) = \Phi(\x;t)$,
    \item $t^*(\x) \geq \bar{\tau}$.
  \end{itemize}
  Since both $\varphi$ and the MSPD are continuous, we also have $\varphi(\bar{\tau}) = \Phi(\x;\bar{\tau})$. If $\bar{\tau}=T$, then we have proved that
  \begin{equation*}
    \forall t \in [0,T], \qquad \lim_{q \to +\infty} \Phi^{[q]}(\x;t) = \Phi(\x;t).
  \end{equation*}
  
  Otherwise, there exists $(\alpha:i, \beta:j) \in \Rb(\x)$ such that $\bar{\tau}_{\alpha:i, \beta:j} < T$, and applying~\eqref{eq:pf:approxND:phiphi} to any pair $(\alpha:i, \beta:j)$ such that $\bar{\tau}_{\alpha:i, \beta:j} = \bar{\tau}$, we first obtain $\bar{\tau} = t^*(\x)$. For all $t \in (t^*(\x), T]$, we now write
  \begin{equation*}
    \begin{aligned}
      ||\Phi^{[q]}(\x;t) - \Phi(\x;t)||_1 & = ||\Phi^{[q]}(\Phi^{[q]}(\x;t^*(\x));t-t^*(\x)) - \Phi(\Phi(\x;t^*(\x));t-t^*(\x))||_1\\
      & \leq ||\Phi^{[q]}(\Phi^{[q]}(\x;t^*(\x));t-t^*(\x)) - \Phi^{[q]}(\Phi(\x;t^*(\x));t-t^*(\x))||_1\\
      & \quad + ||\Phi^{[q]}(\Phi(\x;t^*(\x));t-t^*(\x)) - \Phi(\Phi(\x;t^*(\x));t-t^*(\x))||_1.
    \end{aligned}
  \end{equation*}
  On the one hand, $\Nb(\Phi(\x;t^*(\x))) \leq N$, and therefore
  \begin{equation*}
    \lim_{q \to +\infty} \Phi^{[q]}(\Phi(\x;t^*(\x));t-t^*(\x)) = \Phi(\Phi(\x;t^*(\x));t-t^*(\x)).
  \end{equation*}
  On the other hand, $\blambda^{[q]}$ satisfies Condition~\eqref{cond:ND}, so that Lemma~\ref{lem:globstab:ND} yields
  \begin{equation*}
    ||\Phi^{[q]}(\Phi^{[q]}(\x;t^*(\x));t-t^*(\x)) - \Phi^{[q]}(\Phi(\x;t^*(\x));t-t^*(\x))||_1 \leq \ConstStab^{[q]}_1 ||\Phi^{[q]}(\x;t^*(\x)) - \Phi(\x;t^*(\x))||_1.
  \end{equation*}
  By the first part of the argument, $\Phi^{[q]}(\x;t^*(\x))$ converges to $\Phi(\x;t^*(\x))$, while by the point~\eqref{it:approxND:2}, the value of $\ConstStab^{[q]}_1$ is uniformly bounded with respect to $q$. As a conclusion,
  \begin{equation*}
    \lim_{q \to +\infty} ||\Phi^{[q]}(\x;t) - \Phi(\x;t)||_1 = 0,
  \end{equation*}
  so that $\Phi^{[q]}(\x;t)$ converges to $\Phi(\x;t)$, for all $t \in [0,T]$. Since $T$ is arbitrary, the proof is completed.
\end{proof}


\subsection{Proof of Theorem~\ref{theo:stabMSPD}}\label{ss:pfstabMSPD} Theorem~\ref{theo:stabMSPD} is finally obtained by interpolating the $\Ls^1$ and $\Ls^{\infty}$ estimates of Proposition~\ref{prop:globstab} thanks to the Riesz-Thorin Theorem.

\begin{proof}[Proof of Theorem~\ref{theo:stabMSPD}]
  Let us fix $\x, \y \in \Dnd$ and $s,t \geq 0$. Then, for all $p \in [1,+\infty]$,
  \begin{equation*}
    ||\Phi(\x;s)-\Phi(\y;t)||_p \leq ||\Phi(\x;s)-\Phi(\y;s)||_p + ||\Phi(\y;s)-\Phi(\y;t)||_p,
  \end{equation*}
  and by~(\ref{eq:defvmspd}-\ref{eq:typeencadrelambda}), for all $p \in [1,+\infty)$,
  \begin{equation*}
    ||\Phi(\y;s)-\Phi(\y;t)||_p^p = \frac{1}{n} \sum_{\gamma=1}^d\sum_{k=1}^n \left|\int_{r=s}^t v_k^{\gamma}(\y;r)\dd r\right|^p \leq |t-s|^p (\ConstBound{p})^p;
  \end{equation*}
  similarly,
  \begin{equation*}
    ||\Phi(\y;s)-\Phi(\y;t)||_{\infty} \leq |t-s| \ConstBound{\infty}.
  \end{equation*}
  
  It now remains to prove that
  \begin{equation*}
    ||\Phi(\x;s)-\Phi(\y;s)||_p \leq \ConstStab_p ||\x-\y||_p,
  \end{equation*}
  for some $\ConstStab_p$ that depends neither on $n$ nor on $s$. By Proposition~\ref{prop:globstab}, this is already the case for $p \in \{1, +\infty\}$. 
  
  We first extend $\Phi(\cdot;s)$ into a nonlinear operator of the linear space $\R^{d\times n}$ by defining, for all $\bar{\x} \in \R^{d\times n}$,
  \begin{equation*}
    \bar{\Phi}(\bar{\x}) =: \Phi(\pi(\bar{\x});s),
  \end{equation*} 
  where 
  \begin{equation*}
    \pi : \left\{\begin{array}{ccc}
      \R^{d \times n} & \to & \Dnd\\
      (\bar{x}_j^{\gamma})_{1 \leq \gamma \leq d, 1 \leq j \leq n} & \mapsto & (\bar{x}_{(k)}^{\gamma})_{1 \leq \gamma \leq d, 1 \leq k \leq n}
    \end{array}\right.
  \end{equation*}
  and, for all $\gamma \in \{1, \ldots, d\}$, $\bar{x}_{(1)}^{\gamma} \leq \cdots \leq \bar{x}_{(n)}^{\gamma}$ refers to the increasing reordering of $\bar{x}_1^{\gamma}, \ldots, \bar{x}_n^{\gamma}$.
  
  Then, by Proposition~\ref{prop:globstab}, we have, for all $\bar{\x}, \bar{\y} \in \R^{d \times n}$,
  \begin{equation*}
    \begin{aligned}
      & ||\bar{\Phi}(\bar{\x}) - \bar{\Phi}(\bar{\y})||_{\ell^1} \leq \ConstStab_1 ||\pi(\bar{\x})-\pi(\bar{\y})||_{\ell^1} \leq ||\bar{\x}-\bar{\y}||_{\ell^1},\\
      & ||\bar{\Phi}(\bar{\x}) - \bar{\Phi}(\bar{\y})||_{\ell^{\infty}} \leq \ConstStab_{\infty} ||\pi(\bar{\x})-\pi(\bar{\y})||_{\ell^{\infty}} \leq ||\bar{\x}-\bar{\y}||_{\ell^{\infty}},
    \end{aligned}
  \end{equation*}
  where $||\cdot||_{\ell^1}$ and $||\cdot||_{\ell^{\infty}}$ refer to the usual $\ell^1$ and $\ell^{\infty}$ norms on the linear space $\R^{d \times n}$. The second inequality of both lines follows from the observation that, for all $\gamma \in \{1, \ldots, d\}$, if we define
  \begin{equation*}
    m := \frac{1}{n} \sum_{j=1}^n \delta_{\bar{x}_j^{\gamma}}, \quad m' := \frac{1}{n} \sum_{j=1}^n \delta_{\bar{y}_j^{\gamma}} \qquad \in \Ps(\R),
  \end{equation*}
  and
  \begin{equation*}
    \mathfrak{m} := \frac{1}{n} \sum_{j=1}^n \delta_{(\bar{x}_j^{\gamma}, \bar{y}_j^{\gamma})} \in \Ps(\R^2),
  \end{equation*}
  then, with the notations of Definition~\ref{defi:wass}, $\mathfrak{m} <^m_{m'}$ and
  \begin{equation*}
    \int_{(x,x') \in \R^2} |x-x'|^p \mathfrak{m}(\dd x\dd x') = \frac{1}{n} \sum_{j=1}^n |\bar{x}_j^{\gamma}-\bar{y}_j^{\gamma}|^p,
  \end{equation*}
  while Remark~\ref{rk:wassemp} yields
  \begin{equation*}
    \Ws_p(m,m') = \frac{1}{n} \sum_{k=1}^n |\bar{x}_{(k)}^{\gamma}-\bar{y}_{(k)}^{\gamma}|^p,
  \end{equation*}
  with the notations of the definition of $\pi$. The conclusion follows from the minimality of the Wasserstein distance.
  
  We deduce that
  \begin{equation}\label{eq:barPhixy}
    \bar{\Phi}(\bar{\x}) - \bar{\Phi}(\bar{\y}) = \int_{\theta=0}^1 \Diff\bar{\Phi}((1-\theta)\bar{\x} + \theta\bar{\y}) (\bar{\x}-\bar{\y}) \dd \theta,
  \end{equation}
  where the matrix $\Diff\bar{\Phi}(\bar{\z})$ is defined $\dd \bar{\z}$-almost everywhere and satisfies
  \begin{equation*}
    |||\Diff\bar{\Phi}(\bar{\z})|||_{\ell^1} \leq \ConstStab_1, \qquad |||\Diff\bar{\Phi}(\bar{\z})|||_{\ell^{\infty}} \leq \ConstStab_{\infty},
  \end{equation*}
  and $|||\cdot|||_{\ell^p}$ refers to the norm of operators on $(\R^{d \times n}, ||\cdot||_{\ell^p})$. Applying the Riesz-Thorin Theorem~\cite[Theorem~VI.10.11, p.~525]{DunSchI}, we obtain that, $\dd \bar{\z}$-almost everywhere,
  \begin{equation*}
    |||\Diff\bar{\Phi}(\bar{\z})|||_{\ell^p} \leq \ConstStab_p,
  \end{equation*}
  with $\ConstStab_p := \ConstStab_1^{1/p}\ConstStab_{\infty}^{1-1/p}$. Injecting this relation in~\eqref{eq:barPhixy}, we conclude that
  \begin{equation*}
    ||\bar{\Phi}(\bar{\x}) - \bar{\Phi}(\bar{\y})||_{\ell^p} \leq \ConstStab_p ||\bar{\x}-\bar{\y}||_{\ell^p}.
  \end{equation*}
  Taking $\bar{\x} = \x, \bar{\y} = \y$ in $\Dnd$, and $p \in (1,+\infty)$, we rewrite the inequality above as
  \begin{equation*}
    \sum_{\gamma=1}^d \sum_{k=1}^n |\Phi_k^{\gamma}(\x;s) - \Phi_k^{\gamma}(\y;s)|^p \leq (\ConstStab_p)^p \sum_{\gamma=1}^d \sum_{k=1}^n |x^{\gamma} - y_k^{\gamma}|^p,
  \end{equation*}
  and we conclude by dividing both parts of the inequality by $n$ and taking the power $1/p$.
\end{proof}


\section{Construction and identification of stable semigroup solutions}\label{s:sg}

This section is dedicated to the proof of Theorem~\ref{theo:sg}. 

In Subsection~\ref{ss:bS}, we explain how to pass to the large-scale limit in the discrete stability estimates of Theorem~\ref{theo:stabMSPD}, which naturally yields Wasserstein stability estimates on the solutions to~\eqref{eq:syst} obtained by Theorem~\ref{theo:existence}. As a byproduct of these stability estimates, we show that our solutions are semigroups on appropriate classes of vectors of probability measures.

In Subsection~\ref{ss:BB}, we introduce the uniqueness conditions of Bianchini and Bressan for the system~\eqref{eq:syst}, and show that our solutions satisfy these conditions. This allows us to identify all our semigroup solutions, and to finally complete the proof of Theorem~\ref{theo:sg} in Subsection~\ref{ss:pfsg}.


\subsection{Construction of stable semigroup solutions}\label{ss:bS} In this subsection, we combine Theorems~\ref{theo:existence} and~\ref{theo:stabMSPD} to construct stable semigroups solving~\eqref{eq:syst}. The main result of this subsection is Proposition~\ref{prop:cSt}.


\subsubsection{Further properties of the Wasserstein distance}\label{sss:wassfur} The convergence in Wasserstein distance of any order implies the weak convergence on $\Ps(\R)$~\cite[Theorem~6.9]{villani}. The converse is not true, but the Wasserstein distance however enjoys the following lower semicontinuity property.

\begin{lem}[Lower semicontinuity of the Wasserstein distance]\label{lem:lsc}
  Let $(m_n)_{n \geq 1}$ and $(m'_n)_{n \geq 1}$ be two sequences of probability measures on $\R$ converging weakly to the respective limits $m$ and $m'$ in $\Ps(\R)$. Then, for all $p \in [1, +\infty]$,
  \begin{equation*}
    \Ws_p(m, m') \leq \liminf_{n \to +\infty} \Ws_p(m_n, m'_n).
  \end{equation*} 
\end{lem}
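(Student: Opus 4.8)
The statement is the standard lower semicontinuity of Wasserstein distances under weak convergence, specialised to the real line. On $\R$ the clean route is through the explicit optimal coupling formula of Lemma~\ref{lem:coupopt}: it reduces the inequality to an $\Ls^p$ lower semicontinuity statement for the pseudo-inverses. So the plan is: first treat the case $p \in [1,+\infty)$, then pass to $p = +\infty$ by a supremum/monotonicity argument.

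\emph{Step 1: reduction to pseudo-inverses.} Write $F_n := H*m_n$, $F'_n := H*m'_n$, $F := H*m$, $F' := H*m'$. By Lemma~\ref{lem:coupopt}, for $p \in [1,+\infty)$,
\begin{equation*}
  \Ws_p(m_n, m'_n)^p = \int_{v=0}^1 |F_n^{-1}(v) - F_n'^{-1}(v)|^p \dd v, \qquad \Ws_p(m,m')^p = \int_{v=0}^1 |F^{-1}(v) - F'^{-1}(v)|^p \dd v.
\end{equation*}
By Lemma~\ref{lem:cvCDF}, weak convergence of $m_n$ to $m$ implies $F_n^{-1}(v) \to F^{-1}(v)$ at every continuity point $v$ of $F^{-1}$, hence $\dd v$-almost everywhere on $(0,1)$; likewise $F_n'^{-1}(v) \to F'^{-1}(v)$ for $\dd v$-almost every $v$. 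Therefore the integrand $|F_n^{-1}(v) - F_n'^{-1}(v)|^p$ converges $\dd v$-almost everywhere to $|F^{-1}(v) - F'^{-1}(v)|^p$.

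\emph{Step 2: Fatou.} Since the integrands are nonnegative, Fatou's lemma gives
\begin{equation*}
  \int_{v=0}^1 |F^{-1}(v) - F'^{-1}(v)|^p \dd v \leq \liminf_{n \to +\infty} \int_{v=0}^1 |F_n^{-1}(v) - F_n'^{-1}(v)|^p \dd v,
\end{equation*}
that is $\Ws_p(m,m')^p \leq \liminf_{n} \Ws_p(m_n,m'_n)^p$, and taking $p$-th roots (the map $x \mapsto x^{1/p}$ is nondecreasing and continuous on $[0,+\infty]$) yields the claim for $p \in [1,+\infty)$.

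\emph{Step 3: the case $p = +\infty$.} By definition $\Ws_\infty = \lim_{p \to +\infty} \Ws_p$ and $p \mapsto \Ws_p$ is nondecreasing (Hölder), as recalled after Definition~\ref{defi:wass}. For any fixed finite $q \geq 1$ and every $n$ we have $\Ws_q(m_n,m'_n) \leq \Ws_\infty(m_n,m'_n)$, hence $\liminf_n \Ws_q(m_n,m'_n) \leq \liminf_n \Ws_\infty(m_n,m'_n)$; combining with Step~2 applied with $p=q$ gives $\Ws_q(m,m') \leq \liminf_n \Ws_\infty(m_n,m'_n)$ for every finite $q$. Letting $q \to +\infty$ and using monotone convergence of $\Ws_q(m,m')$ to $\Ws_\infty(m,m')$ concludes.

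\emph{Expected obstacle.} There is essentially no hard step here; the only point requiring a little care is the almost-everywhere convergence of the pseudo-inverses, which is exactly the content of Lemma~\ref{lem:cvCDF}, and the fact that one may take $\liminf$ rather than $\lim$ (so no subsequence extraction or uniform bound is needed, Fatou handling everything). One should also note the inequality may be trivially true when the right-hand side is $+\infty$, which is covered automatically since we allow the Wasserstein distances to take the value $+\infty$ and Fatou's lemma is valid for $[0,+\infty]$-valued integrands.
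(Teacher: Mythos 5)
Your proof is correct, and it takes a genuinely different route from the paper's in both cases. For $p\in[1,+\infty)$, the paper simply cites \cite[Remark~6.12]{villani}, which handles general metric spaces via lower semicontinuity of the transport cost with respect to weak convergence of couplings; you instead give a self-contained one-dimensional argument through the explicit optimal coupling formula of Lemma~\ref{lem:coupopt}, the almost-everywhere convergence of pseudo-inverses from Lemma~\ref{lem:cvCDF}, and Fatou. This is more elementary and keeps the proof entirely inside the paper's toolbox, at the cost of being specific to $\R$. For $p=+\infty$, the paper argues directly: $|F^{-1}(v)-G^{-1}(v)|=\lim_n|F_n^{-1}(v)-G_n^{-1}(v)|\leq\liminf_n\sup_{v'}|F_n^{-1}(v')-G_n^{-1}(v')|$ at continuity points, then extends to all $v$ using that $|F^{-1}-G^{-1}|$ is left continuous with right limits, whereas you derive the $\infty$-case from the finite case by letting $q\to\infty$ in $\Ws_q(m,m')\leq\liminf_n\Ws_\infty(m_n,m'_n)$. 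Both are sound; the paper's $p=\infty$ argument is a touch more direct, and your version has the small aesthetic advantage of not needing a separate observation about the regularity of $|F^{-1}-G^{-1}|$.
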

Of course, both terms of the inequality above can be infinite.
\begin{proof}
  For $p \in [1, +\infty)$, the result is proved in~\cite[Remark~6.12]{villani}. If $p=+\infty$, then letting $F_n := H*m_n$, $G_n := H*m_n'$, $F := H*m$, $G := H*m'$, Lemma~\ref{lem:cvCDF} yields, for all continuity points $v$ of $|F^{-1}-G^{-1}|$,
  \begin{equation*}
    \begin{aligned}
      |F^{-1}(v) - G^{-1}(v)| & = \lim_{n \to +\infty} |F_n^{-1}(v) - G_n^{-1}(v)|\\
      & \leq \liminf_{n \to +\infty} \sup_{v' \in (0,1)} |F_n^{-1}(v') - G_n^{-1}(v')| = \liminf_{n \to +\infty} \Ws_{\infty}(m_n, m'_n).
    \end{aligned}
  \end{equation*}
  Since the function $|F^{-1}-G^{-1}|$ is left continuous with right limits, we deduce that the bound above holds for all $v \in (0,1)$, which implies the desired result.
\end{proof}

Throughout this section, the following notion of {\em $\Ws_1$ stability class} plays an important role.

\begin{defi}[$\Ws_1$ stability class]\label{defi:dP}
  For all $\bm^* \in \Ps(\R)^d$, we denote by $\dP_{\bm^*}$ the $\Ws_1$ stability class of $\bm^*$ defined as the set of $\bm \in \Ps(\R)^d$ such that
  \begin{equation*}
    \Ws^{(d)}_1(\bm^*, \bm) < +\infty,
  \end{equation*}
  where we recall the definition~\eqref{eq:Wpd} of the distance $\Ws^{(d)}_1$.
\end{defi}

The topology of $\Ws_1$ stability classes is described by Lemma~\ref{lem:dP}.

\begin{lem}[Properties of $\dP_{\bm^*}$]\label{lem:dP}
  For all $\bm^* \in \Ps(\R)^d$, the set $\dP_{\bm^*}$ is complete and separable for the $\Ws^{(d)}_1$ topology.
\end{lem}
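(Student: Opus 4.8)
The claim is that, for every fixed $\bm^* \in \Ps(\R)^d$, the $\Ws_1$ stability class $\dP_{\bm^*}$, endowed with the metric $\Ws^{(d)}_1$, is a complete separable metric space. The plan is to reduce everything to the one-dimensional case $d=1$ and to classical facts about the Wasserstein space of order $1$ on $\R$, then to handle the product structure and the restriction to a stability class by elementary arguments. First I would recall that $\Ws^{(d)}_1(\bm,\bm') = \sum_{\gamma=1}^d \Ws_1(m^\gamma, m'^\gamma)$ by~\eqref{eq:Wpd}, so that $(\dP_{\bm^*}, \Ws^{(d)}_1)$ is isometric to a subset of the product of the $d$ one-dimensional spaces $(\dP_{m^{*\gamma}}, \Ws_1)$ equipped with the $\ell^1$-type product metric; it therefore suffices to treat the case $d=1$ and then observe that a finite $\ell^1$-product of complete separable metric spaces is complete and separable, and that $\dP_{\bm^*}$ is a closed subset of this product (indeed it is the whole product, since $\Ws^{(d)}_1(\bm^*,\bm) < +\infty$ is equivalent to $\Ws_1(m^{*\gamma}, m^\gamma) < +\infty$ for every $\gamma$).

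\textbf{The one-dimensional case.} Fix $m^* \in \Ps(\R)$ and consider $\dP_{m^*} = \{m \in \Ps(\R) : \Ws_1(m^*, m) < +\infty\}$. For separability, I would recall that the set of finitely supported probability measures with rational atoms and rational weights is countable and $\Ws_1$-dense in the whole Wasserstein space of order $1$; intersecting this countable set with $\dP_{m^*}$ (which is nonempty, as it contains any such measure with bounded support — these have finite $\Ws_1$ distance to $m^*$ only if $m^*$ has a finite first moment, so more carefully one should perturb $m^*$ itself on a large compact set to produce a countable dense family inside $\dP_{m^*}$) and using Lemma~\ref{lem:coupopt} to write $\Ws_1(m,m') = \int_0^1 |F^{-1}(v) - G^{-1}(v)|\,\dd v$ gives a countable dense subset: indeed, for $m \in \dP_{m^*}$ and $\epsilon > 0$, truncating and discretising the pseudo-inverse $F^{-1}$ on a fine partition of $(0,1)$ produces a finitely supported approximant at $\Ws_1$-distance $< \epsilon$ of $m$, and such an approximant lies in $\dP_{m^*}$ by the triangle inequality. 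Rationalising the locations and weights costs another arbitrarily small $\Ws_1$-error. For completeness, let $(m_n)_{n\geq 1}$ be a $\Ws^{(d)}_1$-Cauchy (hence $\Ws_1$-Cauchy) sequence in $\dP_{m^*}$. By Lemma~\ref{lem:coupopt}, writing $F_n := H*m_n$, the functions $F_n^{-1}$ form a Cauchy sequence in $\Ls^1((0,1))$; by completeness of $\Ls^1$ there is $g \in \Ls^1((0,1))$ with $F_n^{-1} \to g$ in $\Ls^1$, and along a subsequence $\dd v$-almost everywhere. The pointwise $\dd v$-almost-everywhere limit of nondecreasing functions is nondecreasing, so after modification on a null set $g$ is the pseudo-inverse of a CDF $F$; let $m$ be the corresponding probability measure. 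Then $\Ws_1(m_n, m) = \|F_n^{-1} - F^{-1}\|_{\Ls^1((0,1))} \to 0$, and $\Ws_1(m^*, m) \leq \Ws_1(m^*, m_n) + \Ws_1(m_n, m) < +\infty$, so $m \in \dP_{m^*}$ and $m_n \to m$ in $\dP_{m^*}$.

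\textbf{Assembling the product and the main obstacle.} Returning to general $d$: the map $\bm \mapsto (m^1, \ldots, m^d)$ identifies $(\dP_{\bm^*}, \Ws^{(d)}_1)$ with the $\ell^1$-product $\prod_{\gamma=1}^d (\dP_{m^{*\gamma}}, \Ws_1)$. A finite $\ell^1$-product of complete metric spaces is complete (a Cauchy sequence in the product is coordinatewise Cauchy, and the coordinatewise limits assemble to a limit in the product because the $\ell^1$ sum of finitely many vanishing quantities vanishes), and a finite product of separable spaces is separable (take the countable set of tuples of elements from the countable dense subsets of the factors). Since $\dP_{\bm^*}$ is exactly this product, the conclusion follows. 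The step I expect to require the most care is the construction of a countable $\Ws_1$-dense subset of $\dP_{m^*}$ that actually lies inside $\dP_{m^*}$: one must check that the natural finitely-supported discretisations of an element $m \in \dP_{m^*}$ remain within finite $\Ws_1$-distance of $m^*$, which is immediate from the triangle inequality once one knows the discretisations approximate $m$ in $\Ws_1$, but the $\Ws_1$-approximation of an arbitrary $m$ (possibly with unbounded support and only a finite first moment relative to $m^*$, not an absolute one) by finitely supported measures needs the explicit pseudo-inverse representation of Lemma~\ref{lem:coupopt} and a truncation-plus-discretisation argument rather than a black-box citation. Everything else is routine, relying on Lemma~\ref{lem:coupopt}, Lemma~\ref{lem:cvCDF}, and completeness of $\Ls^1((0,1))$.
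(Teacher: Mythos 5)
Your reduction to $d=1$ and your completeness argument are correct. The completeness argument is moreover a genuinely different (and in some ways cleaner) route than the paper's: you pass through the isometry $\Ws_1(m,m') = \|F^{-1}-G^{-1}\|_{\Ls^1((0,1))}$ of Lemma~\ref{lem:coupopt}, use completeness of $\Ls^1((0,1))$ applied to the differences $F_n^{-1}-F_1^{-1}$ (be careful: the $F_n^{-1}$ themselves need not lie in $\Ls^1((0,1))$ when $m_n$ has infinite first moment, but the differences do), and identify the a.e.\ limit with the pseudo-inverse of a CDF. The paper instead proves a tightness estimate $m_n(\{|x|>M\})\leq m^*(\{|x|>M/2\}) + \tfrac{2}{M}\Ws_1(m^*,m_n)$, extracts a weakly convergent subsequence, and invokes the lower semicontinuity of $\Ws_1$ under weak convergence (Lemma~\ref{lem:lsc}). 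Your approach avoids Prokhorov and lower semicontinuity at the cost of the mild care about $F_n^{-1}\notin\Ls^1$.

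The separability argument, however, has a genuine gap which you sense but do not resolve. You propose taking the countable family of finitely supported probability measures with rational atoms and weights, intersecting it with $\dP_{m^*}$, and arguing density. But if $m^*$ has infinite first moment, this intersection is \emph{empty}: for any finitely supported $\mu$, the quantile $G^{-1}=(H*\mu)^{-1}$ is bounded, so
\begin{equation*}
  \Ws_1(m^*,\mu) = \int_0^1 \left|(H*m^*)^{-1}(v) - G^{-1}(v)\right|\,\dd v \geq \int_0^1 \left|(H*m^*)^{-1}(v)\right|\,\dd v - \int_0^1 \left|G^{-1}(v)\right|\,\dd v = +\infty.
\end{equation*}
So ``truncating and discretising the pseudo-inverse of $m$ on a fine partition of $(0,1)$ to produce a finitely supported approximant at $\Ws_1$-distance $<\epsilon$'' fails precisely when $m$ (equivalently $m^*$) has infinite first moment: no finitely supported measure is $\Ws_1$-close to such an $m$. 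You hint at the fix (``perturb $m^*$ itself on a large compact set'') but then fall back on the broken discretisation scheme. The paper's construction is the correct implementation of your hint: the countable dense set consists of measures that are the sum of a finite rational linear combination of rational Dirac masses (the ``middle part'') and the image of the Lebesgue measure on $[0,1/M]\cup[1-1/M,1]$ by $(H*m^*)^{-1}$ (the ``glued-on tails of $m^*$''). Keeping $m^*$'s own tails ensures the resulting measure actually lies in $\dP_{m^*}$, and letting $M\to\infty$ with progressively finer rational middle parts gives density. This tail-gluing idea is the missing ingredient in your proposal.
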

\begin{proof}
  Let $\bm^* \in \Ps(\R)^d$ and recall the Definition~\ref{defi:dP} of the $\Ws_1$ stability class $\dP_{\bm^*}$. If $\bm^* = (m^{*,1}, \ldots, m^{*,d})$ is such that
  \begin{equation*}
    \sum_{\gamma=1}^d \int_{x \in \R} |x| m^{*,\gamma}(\dd x) < +\infty,
  \end{equation*}
  then $\dP_{\bm^*}$ is the space of all $\bm \in \Ps(\R)^d$ satisfying the same integrability condition, and Lemma~\ref{lem:dP} follows from~\cite[Theorem~6.18]{villani}.
  
  In the general case, it is clear from the definition of $\dP_{\bm^*}$ that there is no loss of generality in assuming that $d=1$, therefore we now fix $m^* \in \Ps(\R)$ and prove that the set $\dP_{m^*}$ of probability measures $m \in \Ps(\R)$ such that $\Ws_1(m,m^*) < +\infty$ is complete for the $\Ws_1$ topology and contains a countable and dense subset. If $(m_n)_{n\geq 1}$ is a Cauchy sequence in $\dP_{m^*}$ for the $\Ws_1$ topology, then, by the triangle inequality, $\sup_{n\geq 1} \Ws_1(m^*,m_n)<+\infty$. Now for $M>0$, $$m_n(\{x:|x|>M\})\leq m^*(\{x:|x|>M/2\})+\frac{2}{M}\Ws_1(m^*,m_n),$$ so that the sequence $(m_n)_{n\geq 1}$ is tight. One may extract a subsequence converging weakly to $m_\infty$. From the lower semicontinuity of $\Ws_1$ stated in Lemma \ref{lem:lsc}, one easily checks that $m_\infty\in\dP_{m^*}$ and $\Ws_1(m_n,m_\infty)$ tends to $0$ as $n$ grows to infinity.

  Let us address separability. For all integers $M \geq 2$, let us denote by $\dP_{m^*}^{0,M}$ the set of probability measures on $\R$ equal to the sum of the image $m^*_M$ of the Lebesgue measure on $[0,1/M]\cup [1-1/M,1]$ by $(H*m^*)^{-1}$ and a finite linear combination of Dirac masses at rational points with rational coefficients. 
We prove that the countable set
  \begin{equation*}
    \dP_{m^*}^0 := \bigcup_{M \geq 2} \dP_{m^*}^{0,M}
  \end{equation*}
  is dense in $\dP_{m^*}$.  To this aim, we fix $m \in \dP_{m^*}$ and $\epsilon > 0$. For $M$ large enough,
  \begin{equation*}
    \int_{u=0}^{1/M}|(H*m)^{-1}(u)-(H*m^*)^{-1}(u)|\dd u + \int_{u=1-1/M}^1|(H*m)^{-1}(u)-(H*m^*)^{-1}(u)|\dd u \leq \frac{\epsilon}{2}.
  \end{equation*}
  It follows from the proof of \cite[Theorem~6.18]{villani}, that the image $m^M$ of the uniform probability measure on $[1/M,1-1/M]$ by $(H*m)^{-1}$ may be approximated by a finite linear combination $\sum_{j=1}^J a_j\delta_{x_j}$ of Dirac masses at rational points with rational coefficients  so that $\Ws_1(m^M,\sum_{j=1}^J a_j\delta_{x_j})\leq \epsilon/2$. Now 
  \begin{equation*}
    \begin{aligned}
      \Ws_1\left(m,m^*_M+\sum_{j=1}^J \frac{(M-2)a_j}{M}\delta_{x_j}\right) & \leq \int_{u=0}^{1/M}|(H*m)^{-1}(u)-(H*m^*)^{-1}(u)|\dd u\\
      & \quad + \frac{M-2}{M}\Ws_1\left(m^M,\sum_{j=1}^J a_j\delta_{x_j}\right)\\
      & \quad + \int_{u=1-1/M}^1|(H*m)^{-1}(u)-(H*m^*)^{-1}(u)|\dd u\\
      & \leq \epsilon,
    \end{aligned}
  \end{equation*}
which concludes the proof.
\end{proof}

In order to work with a distance on $\Ps(\R)$ that can be compared with the Wasserstein distance of order $1$, but is weaker and only metrises weak convergence, it shall be useful to introduce the following {\em modified Wasserstein distance}.

\begin{defi}[Modified Wasserstein distance]\label{defi:tWs}
  For all $m, m' \in \Ps(\R)$, let us define the {\em modified Wasserstein distance} $\tilde{\Ws}_1(m,m')$ by
  \begin{equation*}
    \tilde{\Ws}_1(m, m') := \inf_{\mathfrak{m} <^m_{m'}} \int_{(x,x') \in \R^2} (|x-x'| \wedge 1) \mathfrak{m}(\dd x\dd x'),
  \end{equation*}
  with the same notations as in the Definition~\ref{defi:wass} of the Wasserstein distance.  
\end{defi}

It is clear that, for all $m,m' \in \Ps(\R)$, $\tilde{\Ws}_1(m,m') \leq \Ws_1(m,m')$. Besides, a sequence $(m_n)_{n \geq 1}$ converges weakly to $m \in \Ps(\R)$ if and only if $\tilde{\Ws}_1(m_n,m)$ converges to $0$; this follows from~\cite[Corollary~6.13]{villani} since the distances $|x-x'|$ and $|x-x'|\wedge 1$ induce the same topology on $\R$.


\subsubsection{The discretisation operator} Recall the Definition~\ref{defi:chi} of the discretisation operator. The convergence properties of this operator are addressed in Lemma~\ref{lem:cvci} for the weak convergence of marginals and Lemma~\ref{lem:cvciwass} for the Wasserstein distance.

\begin{lem}[Weak convergence of the initial discretisation]\label{lem:cvci}
  Let $\bm = (m^1, \ldots, m^d) \in \Ps(\R)^d$. For all $n \geq 1$, let us denote $\x(n) := \chi_n \bm$. Then, for all $\gamma \in \{1, \ldots, d\}$, the sequence of empirical distributions
  \begin{equation*}
    m^{\gamma}_n := \frac{1}{n} \sum_{k=1}^n \delta_{x_k^{\gamma}(n)} \in \Ps(\R)
  \end{equation*}
  converges weakly to the probability measure $m^{\gamma}$.
\end{lem}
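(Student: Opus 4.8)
The plan is to show that the empirical distribution $m_n^\gamma$ of the discretisation $\chi_n\bm$ converges weakly to $m^\gamma$ by comparing it with the image of the uniform probability measure on $(0,1)$ by the pseudo-inverse $(H*m^\gamma)^{-1}$, which by Lemma~\ref{lem:CDFm1} is exactly $m^\gamma$. Fix $\gamma \in \{1,\ldots,d\}$ and write $F := H*m^\gamma$. The key observation is that, by the definition of $\chi_n$, we have
\begin{equation*}
  x_k^\gamma(n) = (n+1)\int_{w=(2k-1)/(2(n+1))}^{(2k+1)/(2(n+1))} F^{-1}(w)\dd w,
\end{equation*}
so that $x_k^\gamma(n)$ is the average of $F^{-1}$ over the interval $I_k^n := \big((2k-1)/(2(n+1)), (2k+1)/(2(n+1))\big)$, whose length is $1/(n+1)$. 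These intervals $I_1^n, \ldots, I_n^n$ are pairwise disjoint and contained in $(0,1)$, and their union covers $(1/(2(n+1)), (2n+1)/(2(n+1)))$, which has Lebesgue measure $n/(n+1)$.

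First I would introduce the ``step-function'' approximation: define $\psi_n : (0,1) \to \R$ by $\psi_n(w) := x_k^\gamma(n)$ for $w \in I_k^n$, $k\in\{1,\ldots,n\}$, and (say) $\psi_n(w) := F^{-1}(1/(2(n+1)))$ resp.\ $F^{-1}((2n+1)/(2(n+1)))$ on the two leftover end intervals. Then $m_n^\gamma$ is close, in the modified Wasserstein distance $\tilde{\Ws}_1$ of Definition~\ref{defi:tWs}, to the image of $\Unif$ by $\psi_n$: indeed the coupling carried by $w \mapsto (\psi_n(w), F^{-1}(w))$ shows that $\tilde\Ws_1$ between the image of $\Unif$ by $\psi_n$ and $m^\gamma = \Unif\circ(F^{-1})^{-1}$ is at most $\int_0^1 (|\psi_n(w)-F^{-1}(w)|\wedge 1)\dd w$, while the relation between the image of $\Unif$ by $\psi_n$ and $m_n^\gamma$ (which is the image of $\Unif$ by $\lfloor (n+1)w \rfloor$-type indexing, i.e.\ they differ only by how mass is distributed among the $n$ atoms/leftover pieces) contributes a term of order $O(1/n)$. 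Since $\tilde\Ws_1$ metrises weak convergence on $\Ps(\R)$ (as recalled after Definition~\ref{defi:tWs}), it then suffices to prove $\int_0^1 (|\psi_n(w)-F^{-1}(w)|\wedge 1)\dd w \to 0$.

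The heart of the matter — and the step I expect to require the most care — is this convergence. Since $F^{-1}$ is nondecreasing and left continuous (Lemma~\ref{lem:pseudoinv}~\eqref{it:pseudoinv:0}), it is continuous $\dd w$-almost everywhere, and for each $w$ that is a continuity point and a Lebesgue point of $F^{-1}$, the averages $x_k^\gamma(n)$ over the shrinking intervals $I_k^n \ni w$ converge to $F^{-1}(w)$; hence $\psi_n(w)\to F^{-1}(w)$ $\dd w$-a.e. The functions $|\psi_n - F^{-1}|\wedge 1$ are bounded by $1$ on the bounded interval $(0,1)$, so the Dominated Convergence Theorem yields $\int_0^1 (|\psi_n(w)-F^{-1}(w)|\wedge 1)\dd w \to 0$. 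One technical point to handle cleanly is that $F^{-1}$ may take infinite... no — $F^{-1}$ is real-valued on $(0,1)$ but need not be integrable; this is precisely why the truncation by $1$ (equivalently, working with $\tilde\Ws_1$ rather than $\Ws_1$) is essential and why one invokes Dominated Convergence with dominating function the constant $1$ rather than trying an $L^1$ argument. Collecting the estimates, $\tilde\Ws_1(m_n^\gamma, m^\gamma) \to 0$, i.e.\ $m_n^\gamma$ converges weakly to $m^\gamma$, which is the claim. As this holds for every $\gamma$, the proof is complete.
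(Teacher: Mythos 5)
Your proof is correct but takes a genuinely different route. The paper sandwiches each $x_k^\gamma(n)$ between the endpoint values $(H*m^\gamma)^{-1}((2k\mp 1)/(2(n+1)))$, notes that the resulting ``lower'' and ``upper'' empirical measures are pushforwards by $(H*m^\gamma)^{-1}$ of discrete Riemann-sum approximations of $\Unif$, applies the Mapping Theorem to deduce their weak convergence to $m^\gamma$, and then squeezes the CDF of $m_n^\gamma$ between theirs via Lemma~\ref{lem:cvCDF}. You instead introduce the step function $\psi_n$ taking the averaged value $x_k^\gamma(n)$ on the interval $I_k^n$, use the $\dd w$-a.e.\ continuity of the nondecreasing map $(H*m^\gamma)^{-1}$ (or equivalently the Lebesgue differentiation theorem) to get $\psi_n \to (H*m^\gamma)^{-1}$ pointwise a.e., and invoke dominated convergence with the constant bound $1$ --- the truncation built into $\tilde{\Ws}_1$ being essential precisely because $(H*m^\gamma)^{-1}$ need not be integrable on $(0,1)$ --- to conclude $\tilde{\Ws}_1$-convergence and hence weak convergence. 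The paper's route is more elementary, avoiding both $\tilde{\Ws}_1$ and a.e.\ differentiation; yours is more robust, in that it would work unchanged for any windowing scheme whose windows shrink to points, not just the equi-spaced one in the definition of $\chi_n$. Both exploit the monotonicity of $(H*m^\gamma)^{-1}$, the paper for a two-sided CDF bound and you for a.e.\ continuity. The $O(1/n)$ bookkeeping between $m_n^\gamma$ and $\Unif\circ\psi_n^{-1}$ (weights $1/n$ versus $1/(n+1)$ plus two end slivers of total mass $1/(n+1)$) is stated tersely but holds: the total-variation discrepancy, which dominates $\tilde{\Ws}_1$, is indeed $O(1/n)$.
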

\begin{proof}
  For all $n \geq 1$, for all $\gamma:k \in \Part$, let us define
  \begin{equation*}
    x_k^{\gamma,-}(n) := (H*m^{\gamma})^{-1} \left(\frac{2k-1}{2(n+1)}\right), \qquad x_k^{\gamma,+}(n) := (H*m^{\gamma})^{-1} \left(\frac{2k+1}{2(n+1)}\right),
  \end{equation*}
  so that $x_k^{\gamma,-}(n) \leq x_k^{\gamma}(n) \leq x_k^{\gamma,+}(n)$. Fix $\gamma \in \{1, \ldots, d\}$ and define the probability measures $m_n^{\gamma,-}$ and $m_n^{\gamma,+}$ on $\R$ by
  \begin{equation*}
    m_n^{\gamma,\pm} := \frac{1}{n} \sum_{k=1}^n \delta_{x_k^{\gamma,\pm}(n)} = \Unif_n^{\pm} \circ (H*m^{\gamma})^{-1},
  \end{equation*}
  where
  \begin{equation*}
    \Unif_n^{\pm} := \frac{1}{n}\sum_{k=1}^n \delta_{(2k \pm 1)/(2(n+1))} \in \Ps([0,1]).
  \end{equation*}
  By an elementary Riemann sum argument, both $\Unif_n^-$ and $\Unif_n^+$ converge weakly to the Lebesgue measure $\Unif$ on $[0,1]$. By the Mapping Theorem~\cite[Theorem~2.7, p.~21]{billingsley}, we deduce that both $m_n^{\gamma,-}$ and $m_n^{\gamma,+}$ converge weakly to $m^{\gamma}$. On the other hand, it follows from the definition of $x^{\gamma,-}_k(n)$ and $x^{\gamma,+}_k(n)$ that for all $x \in \R$,
  \begin{equation*}
    H*m_n^{\gamma,-}(x) \geq H*m_n^{\gamma}(x) \geq H*m_n^{\gamma,+}(x).
  \end{equation*}
  By Lemma~\ref{lem:cvCDF}, we deduce that, for all $x \in \R$ such that $m^{\gamma}(\{x\})=0$, both the left- and right-hand side above converge to $H*m^{\gamma}(x)$, and by the squeeze lemma, so does $H*m_n^{\gamma}(x)$. By Lemma~\ref{lem:cvCDF} again, we conclude that $m^{\gamma}_n$ converges weakly to $m^{\gamma}$.
\end{proof}
Note that a slight generalisation of the proof, based on the second example in~\cite[Example~2.3, p.~18]{billingsley}, actually allows to prove that the sequence of empirical distributions
\begin{equation*}
  \bar{m}_n := \frac{1}{n} \sum_{k=1}^n \delta_{(x_k^1(n), \ldots, x_k^d(n))} \in \Ps(\R^d)
\end{equation*}
converges weakly to the probability measure $\bar{m} \in \Ps(\R^d)$ defined by
\begin{equation*}
  \bar{m} := \Unif \circ \left((H*m^1)^{-1}, \ldots, (H*m^d)^{-1}\right)^{-1},
\end{equation*}
where $\Unif$ refers to the Lebesgue measure on $[0,1]$. Of course, the marginal distributions of $\bar{m}$ are $m^1, \ldots, m^d$.

We now address the convergence in Wasserstein distance of order $1$ of the discretisation operator. If there exists $\gamma \in \{1, \ldots, d\}$ such that the first order moment of $m^{\gamma}$ is infinite, then we cannot expect the $\Ws^{(d)}_1$ distance between the empirical distribution associated with $\chi_n\bm$ and $\bm$ to converge to $0$, as this distance is always infinite. We however have the following finer result.

\begin{lem}[Wasserstein convergence of the initial discretisation]\label{lem:cvciwass}
  Let $\bm, \bm' \in \dP_{\bm^*}$. Then, for all $p \in [1,+\infty]$,
  \begin{equation*}
    \forall n \geq 1, \qquad ||\chi_n\bm - \chi_n\bm'||_p \leq \left(\frac{n+1}{n}\right)^{1/p} \Ws^{(d)}_p(\bm, \bm'),
  \end{equation*}
  where we take the obvious convention that $((n+1)/n)^{1/\infty}=1$, and
  \begin{equation*}
    \lim_{n \to +\infty} ||\chi_n\bm - \chi_n\bm'||_p = \Ws^{(d)}_p(\bm, \bm').
  \end{equation*}
\end{lem}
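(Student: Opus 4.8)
The plan is to reduce everything to the one-dimensional, single-type statement and then read off the multitype $\Ws^{(d)}_p$ statement coordinatewise. Fix $\gamma \in \{1,\ldots,d\}$, write $F := H*m^\gamma$ and $F' := H*m'^\gamma$ for the CDFs of the $\gamma$-th marginals, and recall that by Definition~\ref{defi:chi}, if $\x := \chi_n\bm$ and $\y := \chi_n\bm'$, then
\begin{equation*}
  x_k^\gamma = (n+1)\int_{w=(2k-1)/(2(n+1))}^{(2k+1)/(2(n+1))} F^{-1}(w)\dd w, \qquad y_k^\gamma = (n+1)\int_{w=(2k-1)/(2(n+1))}^{(2k+1)/(2(n+1))} F'^{-1}(w)\dd w.
\end{equation*}
So $x_k^\gamma$ is the average of $F^{-1}$ over the interval $I_k := ((2k-1)/(2(n+1)), (2k+1)/(2(n+1)))$ of length $1/(n+1)$, and likewise $y_k^\gamma$ with $F'^{-1}$. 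Hence $x_k^\gamma - y_k^\gamma = (n+1)\int_{I_k} (F^{-1}(w)-F'^{-1}(w))\dd w$, and by Jensen's inequality applied to the convex function $|\cdot|^p$ against the normalised Lebesgue measure on $I_k$,
\begin{equation*}
  |x_k^\gamma - y_k^\gamma|^p \leq (n+1)\int_{w\in I_k} |F^{-1}(w) - F'^{-1}(w)|^p \dd w.
\end{equation*}

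Summing over $k \in \{1,\ldots,n\}$ and over $\gamma \in \{1,\ldots,d\}$, and noting that the intervals $I_1,\ldots,I_n$ are pairwise disjoint subsets of $(0,1)$, I would get
\begin{equation*}
  \sum_{\gamma:k\in\Part} |x_k^\gamma - y_k^\gamma|^p \leq (n+1) \sum_{\gamma=1}^d \int_{w=0}^1 |F^{-1}_\gamma(w) - F'^{-1}_\gamma(w)|^p \dd w = (n+1) \sum_{\gamma=1}^d \Ws_p(m^\gamma, m'^\gamma)^p,
\end{equation*}
using Lemma~\ref{lem:coupopt} for the last equality; here $F_\gamma, F'_\gamma$ are the $\gamma$-th CDFs. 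Dividing by $n$ and taking the $p$-th root gives exactly $||\chi_n\bm - \chi_n\bm'||_p \leq ((n+1)/n)^{1/p}\Ws^{(d)}_p(\bm,\bm')$ for $p \in [1,+\infty)$ by the definition~\eqref{eq:L1distDnd} of $||\cdot||_p$ and~\eqref{eq:Wpd} of $\Ws^{(d)}_p$. For $p = +\infty$ the same Jensen-type bound degenerates to $|x_k^\gamma - y_k^\gamma| \leq \sup_{w\in I_k}|F^{-1}_\gamma(w)-F'^{-1}_\gamma(w)| \leq \Ws_\infty(m^\gamma,m'^\gamma)$, whence $||\chi_n\bm-\chi_n\bm'||_\infty \leq \Ws^{(d)}_\infty(\bm,\bm')$, matching the convention $((n+1)/n)^{1/\infty}=1$. (The finiteness of the right-hand side, and hence the meaningfulness of these bounds, is guaranteed since $\bm,\bm'\in\dP_{\bm^*}$ at least for $p=1$; for larger $p$ both sides may be $+\infty$ but the inequality still holds.)

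For the convergence statement, the upper bound already gives $\limsup_n ||\chi_n\bm-\chi_n\bm'||_p \leq \Ws^{(d)}_p(\bm,\bm')$, so it remains to prove the matching lower bound $\liminf_n ||\chi_n\bm-\chi_n\bm'||_p \geq \Ws^{(d)}_p(\bm,\bm')$. For this I would use Lemma~\ref{lem:cvci} (and the remark following it): the empirical measures $m^\gamma_n := \frac{1}{n}\sum_k \delta_{x_k^\gamma(n)}$ and $m'^\gamma_n := \frac{1}{n}\sum_k \delta_{y_k^\gamma(n)}$ converge weakly to $m^\gamma$ and $m'^\gamma$ respectively, and by Remark~\ref{rk:wassemp}, $||\chi_n\bm-\chi_n\bm'||_p = \left(\sum_\gamma \Ws_p(m^\gamma_n,m'^\gamma_n)^p\right)^{1/p}$ (resp.\ the sup for $p=\infty$). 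Then the lower semicontinuity of the Wasserstein distance under weak convergence, Lemma~\ref{lem:lsc}, gives $\Ws_p(m^\gamma,m'^\gamma) \leq \liminf_n \Ws_p(m^\gamma_n, m'^\gamma_n)$ for each $\gamma$, and summing (using superadditivity of $\liminf$, or just taking the joint weak limit which holds simultaneously for all $\gamma$) yields $\Ws^{(d)}_p(\bm,\bm') \leq \liminf_n ||\chi_n\bm-\chi_n\bm'||_p$. Combining the two bounds gives $\lim_n ||\chi_n\bm-\chi_n\bm'||_p = \Ws^{(d)}_p(\bm,\bm')$. The only mildly delicate point is bookkeeping the $p=\infty$ case consistently (Jensen degenerates to a sup bound, and $\Ws_\infty$ is defined as a limit, so one should invoke Lemma~\ref{lem:coupopt}'s $\sup$-formula rather than an integral); I do not anticipate a genuine obstacle, as the single-type estimate is essentially a one-line Jensen argument and the convergence is a soft weak-limit/lower-semicontinuity sandwich.
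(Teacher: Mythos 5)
Your proof is correct and follows essentially the same route as the paper's: Jensen's inequality on the disjoint intervals $I_k$ for the upper bound $\bigl(\tfrac{n+1}{n}\bigr)^{1/p}\Ws^{(d)}_p(\bm,\bm')$, and the combination of Lemma~\ref{lem:cvci} (weak convergence of the discretisation) with the lower semicontinuity of Lemma~\ref{lem:lsc} for the matching $\liminf$ bound. The paper presents the $\liminf$ direction first and relegates the $p=\infty$ case to "similar, actually easier", but the argument is the same.
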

\begin{proof}
  Let us fix $\bm=(m^1, \ldots, m^d), \bm'=(m'^1, \ldots, m'^d) \in \dP$ and $\gamma \in \{1, \ldots, d\}$.
  
  On the one hand, recall that by by Remark~\ref{rk:wassemp}, $||\chi_n\bm - \chi_n\bm'||_p$ is the $\Ws^{(d)}_p$ distance between the empirical distributions of $\chi_n \bm$ and $\chi_n \bm'$, therefore by Lemma~\ref{lem:cvci} and Lemma~\ref{lem:lsc}, we deduce that
  \begin{equation*}
    \liminf_{n \to +\infty} ||\chi_n\bm - \chi_n\bm'||_p \geq \Ws^{(d)}_p(\bm, \bm'),
  \end{equation*}
  for all $p \in [1,+\infty]$.
  
  On the other hand, for all $p \in [1,+\infty)$, the Jensen inequality yields
  \begin{equation*}
    \begin{aligned}
      ||\chi_n\bm - \chi_n\bm'||_p^p & = \frac{1}{n} \sum_{\gamma=1}^d \sum_{k=1}^n \left|(n+1)\int_{v=(2k-1)/(2(n+1))}^{(2k+1)/(2(n+1))} \left((H*m^{\gamma})^{-1}(v) - (H*m'^{\gamma})^{-1}(v)\right)\dd v\right|^p\\
      & \leq \frac{n+1}{n} \sum_{\gamma=1}^d \int_{v=1/(2(n+1))}^{1-1/(2(n+1))} \left|(H*m^{\gamma})^{-1}(v) - (H*m'^{\gamma})^{-1}(v)\right|^p\dd v\\
      & \leq \frac{n+1}{n} (\Ws^{(d)}_p(\bm, \bm'))^p,
    \end{aligned}
  \end{equation*}
  therefore
  \begin{equation*}
    ||\chi_n\bm - \chi_n\bm'||_p \leq \left(\frac{n+1}{n}\right)^{1/p} \Ws^{(d)}_p(\bm, \bm'),
  \end{equation*}  
  and consequently
  \begin{equation*}
    \limsup_{n \to +\infty} ||\chi_n\bm - \chi_n\bm'||_p \leq \Ws^{(d)}_p(\bm, \bm'),
  \end{equation*}
  which completes the proof for $p < +\infty$. The case $p=+\infty$ is similar --- actually easier.
\end{proof}


\subsubsection{Construction of the operators $(\bS_t)_{t \geq 0}$}\label{sss:St} By Lemma~\ref{lem:cvci}, for all $\bm \in \Ps(\R)^d$, the sequence of initial configurations $(\chi_n\bm)_{n \geq 1}$ satisfies the assumptions of Theorem~\ref{theo:existence}. Therefore there exists an increasing sequence of integers $(n_{\ell})_{\ell \geq 1}$ along which $\bu[\chi_{n_{\ell}}\bm]$ converges to a probabilistic solution to the system~\eqref{eq:syst} with initial data defined by $u^{\gamma}_0 = H*m^{\gamma}$. The sequence $(n_{\ell})_{\ell \geq 1}$ depends on $\bm$. In the following Proposition~\ref{prop:cSt}, we remove this dependency, which enables us to construct stable semigroup solutions to~\eqref{eq:syst}. 

\begin{prop}[Construction of the operators $(\bS_t)$]\label{prop:cSt}
  Under Assumptions~\eqref{ass:LC} and~\eqref{ass:USH}, let us fix $\bm^* \in \Ps(\R)^d$ and let $\mathcal{N}$ be an unbounded set of positive integers. Then, there exists an increasing sequence $(n_{\ell})_{\ell \geq 1} \subset \mathcal{N}$ such that, for all $\bm \in \dP_{\bm^*}$, the empirical distribution $\upmu[\chi_{n_{\ell}}\bm]$ of the MSPD started at $\chi_{n_{\ell}}\bm$ converges weakly to some probability measure $\bar{\upmu}[\bm] \in \Ms$ when $\ell$ grows to infinity. For all $t \geq 0$, let us denote by 
  \begin{equation}\label{eq:defbS}
    \bS_t\bm = (S^1_t\bm, \ldots, S^d_t\bm) := (\bar{\upmu}^1_t[\bm], \ldots, \bar{\upmu}^d_t[\bm])
  \end{equation}
  the vector of associated marginal distributions. 
  
  The family of operators $\bS_t : \dP_{\bm^*} \to \Ps(\R)^d$ has the following properties.
  \begin{enumerate}[label=(\roman*), ref=\roman*]
    \item\label{it:cSt:1} For all $\bm \in \dP_{\bm^*}$, the vector of CDFs $\bu = (u^1, \ldots, u^d)$ defined by $u^{\gamma}(t,\cdot) = H*(S^{\gamma}_t\bm)$ is a probabilistic solution to the system~\eqref{eq:syst} with initial data $(u^1_0, \ldots, u^d_0)$ defined by $u^{\gamma}_0 = H*m^{\gamma}$. Besides, the measure $\bar{\upmu}[\bm]$ is the image of the Lebesgue measure $\Unif$ on $[0,1]$ by the mapping
    \begin{equation*}
      v \mapsto \left(u^1(t,\cdot)^{-1}(v), \ldots, u^d(t, \cdot)^{-1}(v)\right)_{t \geq 0}.
    \end{equation*}
    \item\label{it:cSt:2} For all $t \geq 0$, for all $\bm \in \dP_{\bm^*}$, $\bS_t\bm \in \dP_{\bm^*}$. Besides, for all $p \in [1,+\infty]$, for all $\bm, \bm' \in \dP_{\bm^*}$, for all $s,t \geq 0$,
    \begin{equation*}
      \Ws^{(d)}_p(\bS_s\bm, \bS_t\bm') \leq \ConstStab_p \Ws^{(d)}_p(\bm, \bm') + |t-s| \ConstBound{p},
    \end{equation*}
    where $\ConstBound{p}$ is defined in~\eqref{eq:ConstBound} and $\ConstStab_p$ is defined in~\eqref{eq:ConstStab}.
    \item\label{it:cSt:3} The family of operators $(\bS_t)_{t \geq 0}$ is a semigroup on $\dP_{\bm^*}$.
  \end{enumerate}
\end{prop}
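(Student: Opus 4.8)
\textbf{Plan of proof of Proposition~\ref{prop:cSt}.}

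The plan is to construct the sequence $(n_\ell)_{\ell\geq 1}$ by a diagonal extraction over a countable dense subset of $\dP_{\bm^*}$ and then to propagate convergence to all of $\dP_{\bm^*}$ using the uniform stability estimates of Theorem~\ref{theo:stabMSPD}, which pass to the limit because they do not depend on $n$. First I would fix a countable set $\{\bm_j:j\geq 1\}$ which is dense in $\dP_{\bm^*}$ for the $\Ws^{(d)}_1$ topology, as provided by Lemma~\ref{lem:dP}. For each $j$, Lemma~\ref{lem:cvci} shows that the initial configurations $(\chi_n\bm_j)_{n\geq 1}$ satisfy the hypotheses of Theorem~\ref{theo:existence}, so along a subsequence $\upmu[\chi_n\bm_j]$ converges weakly in $\Ms$; applying Theorem~\ref{theo:existence} and a diagonal extraction over $j\in\mathcal N$, I obtain a single increasing sequence $(n_\ell)_{\ell\geq 1}\subset\mathcal N$ such that, for every $j$, $\upmu[\chi_{n_\ell}\bm_j]$ converges weakly to some $\bar\upmu[\bm_j]\in\Ms$, and the associated $\bu_j$ is a probabilistic solution with initial data $H*\bm_j$. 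By Remark~\ref{rk:unicitebmu}, $\bar\upmu[\bm_j]$ is the image of $\Unif$ by $v\mapsto(u^1_j(t,\cdot)^{-1}(v),\dots,u^d_j(t,\cdot)^{-1}(v))_{t\geq 0}$.

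Next I would extend the construction from the dense set to all of $\dP_{\bm^*}$. Fix $\bm\in\dP_{\bm^*}$. For each $j$ and each $\ell$, Remark~\ref{rk:wassemp} identifies $\Ws^{(d)}_p$ between empirical distributions of $\chi_{n_\ell}\bm$ and $\chi_{n_\ell}\bm_j$ with $||\chi_{n_\ell}\bm-\chi_{n_\ell}\bm_j||_p$, and Theorem~\ref{theo:stabMSPD} gives, for all $t\geq 0$,
\begin{equation*}
  \Ws^{(d)}_1\big(\upmu_t[\chi_{n_\ell}\bm],\upmu_t[\chi_{n_\ell}\bm_j]\big)\leq\ConstStab_1\,||\chi_{n_\ell}\bm-\chi_{n_\ell}\bm_j||_1\leq\ConstStab_1\,\tfrac{n_\ell+1}{n_\ell}\,\Ws^{(d)}_1(\bm,\bm_j),
\end{equation*}
using Lemma~\ref{lem:cvciwass} at the last step. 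Since on each coordinate $\Ws_1$ dominates the bounded modified Wasserstein distance $\tilde\Ws_1$ of Definition~\ref{defi:tWs}, and $\tilde\Ws_1$ metrises weak convergence, the family $\{\upmu_t[\chi_{n_\ell}\bm]:\ell\}$ is, for each fixed $t$, a Cauchy-in-$j$ perturbation of the weakly convergent families $\{\upmu_t[\chi_{n_\ell}\bm_j]:\ell\}$; a standard $3\varepsilon$ argument (choose $j$ with $\Ws^{(d)}_1(\bm,\bm_j)$ small, use the known convergence for $\bm_j$, and control the two endpoint terms by the displayed estimate) shows that $\upmu_t[\chi_{n_\ell}\bm]$ converges weakly in $\Ps(\R^d)$ as $\ell\to\infty$. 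To upgrade this to convergence of $\upmu[\chi_{n_\ell}\bm]$ in $\Ms$, I would use the analogous estimate on finite-dimensional marginals together with tightness of $(\upmu[\chi_{n_\ell}\bm])_{\ell}$ (which follows exactly as in Proposition~\ref{prop:tightness}, the velocities being bounded by $\ConstBound{\infty}$ uniformly), so that every weak limit point has the same finite-dimensional marginals and hence there is a unique limit $\bar\upmu[\bm]$. Then Proposition~\ref{prop:closedness}, applied along $(n_\ell)$ exactly as in the proof of Theorem~\ref{theo:existence} (the separated-atoms condition holding by the tightness argument), gives that $\bu$ defined by $u^\gamma(t,\cdot)=H*\bar\upmu^\gamma_t[\bm]$ is a probabilistic solution with initial data $H*m^\gamma$; and Remark~\ref{rk:unicitebmu} again identifies $\bar\upmu[\bm]$ with the quantile image measure, proving~\eqref{it:cSt:1}.

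For~\eqref{it:cSt:2}, given $\bm,\bm'\in\dP_{\bm^*}$ and $s,t\geq 0$, I would apply Theorem~\ref{theo:stabMSPD} to get, for each $\ell$,
\begin{equation*}
  \Ws^{(d)}_p\big(\upmu_s[\chi_{n_\ell}\bm],\upmu_t[\chi_{n_\ell}\bm']\big)=||\Phi(\chi_{n_\ell}\bm;s)-\Phi(\chi_{n_\ell}\bm';t)||_p\leq\ConstStab_p\,||\chi_{n_\ell}\bm-\chi_{n_\ell}\bm'||_p+|t-s|\ConstBound{p},
\end{equation*}
then pass to the limit: the left-hand side is lower semicontinuous under weak convergence by Lemma~\ref{lem:lsc}, and $||\chi_{n_\ell}\bm-\chi_{n_\ell}\bm'||_p\to\Ws^{(d)}_p(\bm,\bm')$ by Lemma~\ref{lem:cvciwass}, yielding the claimed bound $\Ws^{(d)}_p(\bS_s\bm,\bS_t\bm')\leq\ConstStab_p\Ws^{(d)}_p(\bm,\bm')+|t-s|\ConstBound{p}$; taking $\bm'=\bm^*$, $p=1$, $s=t$ shows $\bS_t\bm\in\dP_{\bm^*}$. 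Finally~\eqref{it:cSt:3}: the semigroup property $\bS_{s+t}\bm=\bS_s\bS_t\bm$ should follow from the flow property of the MSPD (Proposition~\ref{prop:mspd}), namely $\Phi(\chi_{n_\ell}\bm;s+t)=\Phi(\Phi(\chi_{n_\ell}\bm;t);s)$, combined with the fact that, up to a vanishing error, $\Phi(\chi_{n_\ell}\bm;t)$ is close in $||\cdot||_1$ to $\chi_{n_\ell}(\bS_t\bm)$: one uses Lemma~\ref{lem:cvciwass} to compare $\upmu_t[\chi_{n_\ell}\bm]$ with the empirical distribution of $\chi_{n_\ell}(\bS_t\bm)$ and the uniform stability estimate to transfer this closeness through the remaining evolution over $[t,t+s]$. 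I expect this last point — making the discretisation commute with the flow in the large-scale limit — to be the main obstacle, since the discretisation operator $\chi_{n_\ell}$ does not exactly coincide with the empirical CDF of $\Phi(\chi_{n_\ell}\bm;t)$; the resolution is quantitative, bounding $||\Phi(\chi_{n_\ell}\bm;t)-\chi_{n_\ell}(\bS_t\bm)||_1$ via a triangle inequality through empirical distributions and Lemma~\ref{lem:cvciwass}, and invoking Theorem~\ref{theo:stabMSPD} once more so that the error does not amplify.
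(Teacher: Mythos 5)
Your proposal follows the same overall route as the paper: diagonal extraction over a countable dense subset of $\dP_{\bm^*}$ (Lemma~\ref{lem:dP}), extension to all of $\dP_{\bm^*}$ via the uniform stability estimate of Theorem~\ref{theo:stabMSPD} together with the modified distance $\tilde{\Ws}_1$, identification of the limit via Remark~\ref{rk:unicitebmu} and Proposition~\ref{prop:closedness}, and passing the stability estimates to the limit using Lemma~\ref{lem:lsc} and Lemma~\ref{lem:cvciwass}. Parts~\eqref{it:cSt:1} and~\eqref{it:cSt:2} are handled correctly.

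For part~\eqref{it:cSt:3}, however, there is a genuine gap. You correctly identify that the key difficulty is to show $||\Phi(\chi_{n_\ell}\bm;s)-\chi_{n_\ell}\bS_s\bm||_1 \to 0$, but the tools you invoke for this do not close it. Lemma~\ref{lem:cvciwass} compares $\chi_n\bm$ with $\chi_n\bm'$, not $\Phi(\chi_n\bm;s)$ with $\chi_n\bS_s\bm$; the latter is not the image of anything under $\chi_n$. And the fact that both $\upmu_0[\Phi(\chi_{n_\ell}\bm;s)]$ and $\upmu_0[\chi_{n_\ell}\bS_s\bm]$ converge weakly to $\bS_s\bm$ is not enough, because $\Ws_1$ is only \emph{lower} semicontinuous along weak convergence: from $\Ws_1(\mu,\mu)=0$ and $\mu_\ell,\nu_\ell\to\mu$ one cannot conclude $\Ws_1(\mu_\ell,\nu_\ell)\to 0$. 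The missing ingredient is the uniform $\ell^\infty$ bound
\begin{equation*}
  ||\Phi(\chi_{n_\ell}\bm;s)-\chi_{n_\ell}\bS_s\bm||_{\infty}
  \leq ||\Phi(\chi_{n_\ell}\bm;s)-\chi_{n_\ell}\bm||_{\infty}
   + ||\chi_{n_\ell}\bm-\chi_{n_\ell}\bS_s\bm||_{\infty}
  \leq 2s\,\ConstBound{\infty},
\end{equation*}
which allows one to replace the cost $|x-y|$ by the bounded continuous cost $|x-y|\wedge(2s\ConstBound{\infty})$ in the representation of $||\cdot||_1$ as an integral against the empirical coupling $\Unif\circ((H*\upmu_0^{\gamma}[\Phi(\chi_{n_\ell}\bm;s)])^{-1},(H*\upmu_0^{\gamma}[\chi_{n_\ell}\bS_s\bm])^{-1})^{-1}$. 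This coupling converges weakly to a measure carried by the diagonal, and \emph{because the cost is now bounded and continuous} the integral passes to the limit and vanishes. Without this truncation step the semigroup identity does not follow from what you have written.
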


Following Lemma~\ref{lem:dP}, the space $\dP_{\bm^*}$ metrised by the $\Ws^{(d)}_1$ distance contains a countable and dense subset $\dP_{\bm^*}^0$. By Proposition~\ref{prop:tightness}, for all $\bm \in \dP_{\bm^*}^0$, the family of probability measures $(\upmu[\chi_n\bm])_{n \in \mathcal{N}}$ is tight. By a diagonal extraction procedure, we obtain that there exists an increasing sequence of integers $(n_{\ell})_{\ell \geq 1} \subset \mathcal{N}$ such that, for all $\bm \in \dP_{\bm^*}^0$, $\upmu[\chi_{n_{\ell}}\bm]$ converges weakly to a probability measure $\bar{\upmu}[\bm]$ in $\Ms$. As a consequence, we first define the operator $\bS_t$ on the subset $\dP^0_{\bm^*}$ by~\eqref{eq:defbS}. This operator enjoys the following properties.

\begin{lem}[Stability on $\dP^0_{\bm}$]\label{lem:bmuindP}
  Under the assumptions of Proposition~\ref{prop:cSt},
  \begin{enumerate}[label=(\roman*), ref=\roman*]
    \item\label{it:bmuindP:1} for all $\bm \in \dP^0_{\bm}$, for all $t \geq 0$, $\bS_t\bm \in \dP_{\bm^*}$,
    \item\label{it:bmuindP:2} for all $\bm, \bm' \in \dP^0_{\bm^*}$,
    \begin{equation}\label{eq:lipSt}
      \sup_{t \geq 0} \Ws^{(d)}_1(\bS_t\bm, \bS_t\bm') \leq \ConstStab_1 \Ws^{(d)}_1(\bm, \bm').
    \end{equation}
  \end{enumerate}
\end{lem}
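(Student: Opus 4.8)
The plan is to prove Lemma~\ref{lem:bmuindP} as a direct consequence of the discrete stability estimate of Theorem~\ref{theo:stabMSPD} together with the identity~\eqref{eq:LsWs} relating the $\Ls^p$ distance on $\Dnd$ to the Wasserstein distance between empirical distributions, the lower semicontinuity of the Wasserstein distance (Lemma~\ref{lem:lsc}), and the Wasserstein convergence of the discretisation operator (Lemma~\ref{lem:cvciwass}).

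First I would fix $\bm, \bm' \in \dP^0_{\bm^*}$ and $t \geq 0$. For each $\ell \geq 1$, write $\x(n_\ell) := \chi_{n_\ell}\bm$ and $\y(n_\ell) := \chi_{n_\ell}\bm'$. Theorem~\ref{theo:stabMSPD} applied with $s=t$ gives $||\Phi(\x(n_\ell);t) - \Phi(\y(n_\ell);t)||_1 \leq \ConstStab_1 ||\x(n_\ell) - \y(n_\ell)||_1$. By~\eqref{eq:LsWs} (or equivalently Remark~\ref{rk:wassemp}), the left-hand side equals $\Ws^{(d)}_1$ between the vectors of marginal empirical distributions at time $t$ of the MSPD started at $\x(n_\ell)$ and $\y(n_\ell)$, that is $\Ws^{(d)}_1(\bS^{(n_\ell)}_t\bm, \bS^{(n_\ell)}_t\bm')$ with the obvious notation $\bS^{(n_\ell)}_t\bm := (H * \upmu^1_t[\chi_{n_\ell}\bm], \ldots)$; while by Lemma~\ref{lem:cvciwass}, the right-hand side is bounded by $\bigl(\tfrac{n_\ell+1}{n_\ell}\bigr) \Ws^{(d)}_1(\bm, \bm')$. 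Since $\upmu[\chi_{n_\ell}\bm]$ converges weakly to $\bar{\upmu}[\bm]$ in $\Ms$ and the map $\upmu \mapsto \upmu^\gamma_t$ is weakly continuous (as recalled in~\S\ref{sss:not:probC}), the marginals $S^{\gamma,(n_\ell)}_t\bm = \upmu^\gamma_t[\chi_{n_\ell}\bm]$ converge weakly to $S^\gamma_t\bm = \bar{\upmu}^\gamma_t[\bm]$ in $\Ps(\R)$, and likewise for $\bm'$. Applying Lemma~\ref{lem:lsc} coordinate by coordinate and summing, we obtain
\begin{equation*}
  \Ws^{(d)}_1(\bS_t\bm, \bS_t\bm') \leq \liminf_{\ell \to +\infty} \Ws^{(d)}_1(\bS^{(n_\ell)}_t\bm, \bS^{(n_\ell)}_t\bm') \leq \liminf_{\ell \to +\infty} \frac{n_\ell+1}{n_\ell}\ConstStab_1 \Ws^{(d)}_1(\bm, \bm') = \ConstStab_1 \Ws^{(d)}_1(\bm, \bm'),
\end{equation*}
which is~\eqref{eq:lipSt} since the bound is uniform in $t$.

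To deduce~\eqref{it:bmuindP:1}, I would apply the same reasoning to the pair $\bm$ and $\bm^*$ --- noting that $\bm \in \dP^0_{\bm^*} \subset \dP_{\bm^*}$ so $\Ws^{(d)}_1(\bm, \bm^*) < +\infty$. There is a small subtlety: $\bm^*$ itself may not lie in $\dP^0_{\bm^*}$, so I cannot directly invoke the bound~\eqref{eq:lipSt}. Instead I would pick any sequence $(\bm^*_j)_{j \geq 1} \subset \dP^0_{\bm^*}$ converging to $\bm^*$ in $\Ws^{(d)}_1$ (such a sequence exists by density, Lemma~\ref{lem:dP}), apply~\eqref{eq:lipSt} to the pairs $\bm$ and $\bm^*_j$ to get $\Ws^{(d)}_1(\bS_t\bm, \bS_t\bm^*_j) \leq \ConstStab_1 \Ws^{(d)}_1(\bm, \bm^*_j) \leq \ConstStab_1(\Ws^{(d)}_1(\bm,\bm^*) + \Ws^{(d)}_1(\bm^*,\bm^*_j))$, which stays bounded uniformly in $j$. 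Then, also by Theorem~\ref{theo:stabMSPD} with $\bm = \bm' = \bm^*_j$ and $s = t$, $t = 0$ (or simply the finite-speed-of-propagation bound $||\Phi(\chi_{n_\ell}\bm^*_j; t) - \chi_{n_\ell}\bm^*_j||_p \leq t\ConstBound{p}$ which follows from~\eqref{eq:typeencadrelambda}), $\Ws^{(d)}_1(\bS_t\bm^*_j, \bm^*_j) \leq t\ConstBound{1}$, hence by the triangle inequality $\Ws^{(d)}_1(\bS_t\bm, \bm^*) \leq \Ws^{(d)}_1(\bS_t\bm, \bS_t\bm^*_j) + \Ws^{(d)}_1(\bS_t\bm^*_j, \bm^*_j) + \Ws^{(d)}_1(\bm^*_j, \bm^*)$, which is finite for any fixed $j$. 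Therefore $\bS_t\bm \in \dP_{\bm^*}$.

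The main obstacle is not any deep argument but rather bookkeeping: ensuring that all the weak limits invoked are legitimate (the diagonal extraction fixing $(n_\ell)$ works simultaneously for the countable dense set $\dP^0_{\bm^*}$, and the weak continuity of $\upmu \mapsto \upmu^\gamma_t$ transfers convergence in $\Ms$ to convergence of time-$t$ marginals), and handling the fact that $\bm^*$ may lack a finite first moment so that $\bm^* \notin \dP^0_{\bm^*}$ in general --- which forces the approximation-by-$\bm^*_j$ detour above rather than a one-line application of~\eqref{eq:lipSt}. All other ingredients are quoted verbatim from earlier in the paper.
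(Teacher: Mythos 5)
Your argument for part~\eqref{it:bmuindP:2} is essentially identical to the paper's: Theorem~\ref{theo:stabMSPD} combined with Remark~\ref{rk:wassemp}, then Lemma~\ref{lem:cvciwass} on the initial configurations, then Lemma~\ref{lem:lsc} to pass to the limit. (Whether you use the explicit $\bigl(\tfrac{n_\ell+1}{n_\ell}\bigr)$ prefactor or the limit statement from Lemma~\ref{lem:cvciwass} is immaterial.)

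For part~\eqref{it:bmuindP:1}, your argument is correct but takes an unnecessary detour through an approximating sequence $\bm^*_j$. You correctly identify that you cannot apply~\eqref{eq:lipSt} to the pair $(\bm,\bm^*)$ since $\bm^*\notin\dP^0_{\bm^*}$ in general; but you already possess the tool you need, and you even state it for $\bm^*_j$: the discrete time-Lipschitz bound $\|\Phi(\chi_{n_\ell}\cdot\,;t)-\chi_{n_\ell}\cdot\|_1\leq t\ConstBound{1}$ (from~\eqref{eq:typeencadrelambda}), followed by Lemma~\ref{lem:lsc}. This applies directly to $\bm\in\dP^0_{\bm^*}$ itself and yields $\Ws^{(d)}_1(\bS_t\bm,\bm)\leq t\ConstBound{1}$. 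Combined with $\Ws^{(d)}_1(\bm,\bm^*)<+\infty$ and the triangle inequality, this gives $\bS_t\bm\in\dP_{\bm^*}$ immediately, with no need for the density sequence $\bm^*_j$, no appeal to part~\eqref{it:bmuindP:2}, and no triple triangle inequality. This is exactly what the paper does. Your argument still works (it applies the same bound to $\bm^*_j$ and then triangulates), but the circuit through density is pure overhead and obscures the fact that part~\eqref{it:bmuindP:1} is independent of part~\eqref{it:bmuindP:2}.
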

\begin{proof}[Proof of~\eqref{it:bmuindP:1}.]
  Let $\bm=(m^1, \ldots, m^d) \in \dP_{\bm^*}^0$. Following Definition~\ref{defi:dP}, since $\Ws^{(d)}_1(\bm^*, \bm) < +\infty$, it suffices to check that, for all $t \geq 0$,
  \begin{equation*}
    \sum_{\gamma=1}^d \Ws_1(m^{\gamma}, \bar{\upmu}_t^{\gamma}[\bm]) < +\infty.
  \end{equation*}
  Combining Proposition~\ref{prop:tightness} and Lemma~\ref{lem:lsc}, we have
  \begin{equation*}
    \sum_{\gamma=1}^d \Ws_1(m^{\gamma}, \bar{\upmu}_t^{\gamma}[\bm]) \leq \liminf_{\ell \to +\infty} \sum_{\gamma=1}^d \Ws_1(\upmu_0^{\gamma}[\chi_{n_{\ell}}\bm],\upmu_t^{\gamma}[\chi_{n_{\ell}}\bm]).
  \end{equation*}
  But using Remark~\ref{rk:wassemp} and Theorem \ref{theo:stabMSPD}, we rewrite
  \begin{equation*}
    \sum_{\gamma=1}^d \Ws_1(\upmu_0^{\gamma}[\chi_{n_{\ell}}\bm], \upmu_t^{\gamma}[\chi_{n_{\ell}}\bm]) = ||\chi_{n_{\ell}}\bm-\Phi(\chi_{n_{\ell}}\bm;t)||_1 \leq t \ConstBound{1},
  \end{equation*}
  which completes the proof.

  \sk
  \noindent{\em Proof of~\eqref{it:bmuindP:2}.} Let $\bm, \bm' \in \dP_{\bm^*}^0$. By Proposition~\ref{prop:tightness} and Lemma~\ref{lem:lsc}, we have
  \begin{equation*}
    \Ws^{(d)}_1(\bS_t\bm, \bS_t\bm') \leq \liminf_{\ell \to +\infty} \sum_{\gamma=1}^d \Ws_1(\upmu_t^{\gamma}[\chi_{n_{\ell}}\bm], \upmu_t^{\gamma}[\chi_{n_{\ell}}\bm]).
  \end{equation*}
  By Theorem~\ref{theo:stabMSPD},
  \begin{equation*}
    \sum_{\gamma=1}^d \Ws_1(\upmu_t^{\gamma}[\chi_{n_{\ell}}\bm], \upmu_t^{\gamma}[\chi_{n_{\ell}}\bm]) = ||\Phi(\chi_{n_{\ell}}\bm;t) - \Phi(\chi_{n_{\ell}}\bm';t)||_1 \leq \ConstStab_1 ||\chi_{n_{\ell}}\bm - \chi_{n_{\ell}}\bm'||_1,
  \end{equation*}
  and by Lemma~\ref{lem:cvciwass},
  \begin{equation*}
    \lim_{\ell \to +\infty} ||\chi_{n_{\ell}}\bm - \chi_{n_{\ell}}\bm'||_1 = \Ws^{(d)}_1(\bm, \bm'),
  \end{equation*}
  which completes the proof.
\end{proof}

Since the set $\dP_{\bm^*}^0$ is dense in $\dP_{\bm^*}$ and the latter is complete for the $\Ws^{(d)}_1$ distance, we deduce that the operator $\bS_t$ possesses a unique continuous extension to $\dP_{\bm^*}$, which satisfies the same stability estimate~\eqref{eq:lipSt}, for all $\bm, \bm' \in \dP_{\bm^*}$. We now check that $\bS_t\bm$ coincides with the weak limit, when $\ell$ grows to infinity, of $(\upmu_t^1[\chi_{n_{\ell}}\bm], \ldots, \upmu_t^d[\chi_{n_{\ell}}\bm])$.

\begin{lem}[Identification of $\bS_t$]\label{lem:cvbS}
  Under the assumptions of Proposition~\ref{prop:cSt}, for all $t \geq 0$, let $\bS_t$ be defined on $\dP_{\bm^*}$ as above. Then for all $\bm \in \dP_{\bm^*}$, for all $\gamma \in \{1, \ldots, d\}$, $\upmu_t^{\gamma}[\chi_{n_{\ell}}\bm]$ converges weakly to $S^{\gamma}_t \bm$ in $\Ps(\R)$.
\end{lem}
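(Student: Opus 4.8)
The plan is to transfer, by a density argument, the convergence that holds by construction on $\dP^0_{\bm^*}$ to an arbitrary $\bm\in\dP_{\bm^*}$. The key tools are the uniform discrete stability of Theorem~\ref{theo:stabMSPD}, the Wasserstein convergence of the discretisation operator (Lemma~\ref{lem:cvciwass}), the stability estimate~\eqref{eq:lipSt} satisfied by the continuous extension of $\bS_t$ to $\dP_{\bm^*}$, and the modified Wasserstein distance $\tilde{\Ws}_1$ of Definition~\ref{defi:tWs}, which satisfies $\tilde{\Ws}_1\leq\Ws_1$ and metrises weak convergence on $\Ps(\R)$.

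\textbf{Main step.} Fix $t\geq 0$, $\gamma\in\{1,\ldots,d\}$, $\bm\in\dP_{\bm^*}$ and $\epsilon>0$. By Lemma~\ref{lem:dP}, $\dP^0_{\bm^*}$ is dense in $(\dP_{\bm^*},\Ws^{(d)}_1)$, so I choose $\bm^0\in\dP^0_{\bm^*}$ with $\Ws^{(d)}_1(\bm,\bm^0)\leq\epsilon$. Using the triangle inequality for $\tilde{\Ws}_1$, write, for each $\ell$,
\begin{equation*}
  \tilde{\Ws}_1\!\left(\upmu_t^{\gamma}[\chi_{n_{\ell}}\bm],\,S^{\gamma}_t\bm\right) \leq \tilde{\Ws}_1\!\left(\upmu_t^{\gamma}[\chi_{n_{\ell}}\bm],\,\upmu_t^{\gamma}[\chi_{n_{\ell}}\bm^0]\right) + \tilde{\Ws}_1\!\left(\upmu_t^{\gamma}[\chi_{n_{\ell}}\bm^0],\,S^{\gamma}_t\bm^0\right) + \tilde{\Ws}_1\!\left(S^{\gamma}_t\bm^0,\,S^{\gamma}_t\bm\right).
\end{equation*}
For the first term I bound $\tilde{\Ws}_1\leq\Ws_1\leq\sum_{\gamma'=1}^d\Ws_1(\upmu_t^{\gamma'}[\chi_{n_{\ell}}\bm],\upmu_t^{\gamma'}[\chi_{n_{\ell}}\bm^0])$, which by Remark~\ref{rk:wassemp} equals $\|\Phi(\chi_{n_{\ell}}\bm;t)-\Phi(\chi_{n_{\ell}}\bm^0;t)\|_1$; by Theorem~\ref{theo:stabMSPD} this is at most $\ConstStab_1\|\chi_{n_{\ell}}\bm-\chi_{n_{\ell}}\bm^0\|_1$, and by Lemma~\ref{lem:cvciwass} this tends to $\ConstStab_1\Ws^{(d)}_1(\bm,\bm^0)\leq\ConstStab_1\epsilon$ as $\ell\to+\infty$. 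The second term tends to $0$ as $\ell\to+\infty$: by construction of $\bS_t$ on $\dP^0_{\bm^*}$, $\upmu_t^{\gamma}[\chi_{n_{\ell}}\bm^0]$ converges weakly to $S^{\gamma}_t\bm^0$, and $\tilde{\Ws}_1$ metrises weak convergence. For the third term, $\tilde{\Ws}_1\leq\Ws_1(S^{\gamma}_t\bm^0,S^{\gamma}_t\bm)\leq\Ws^{(d)}_1(\bS_t\bm^0,\bS_t\bm)\leq\ConstStab_1\Ws^{(d)}_1(\bm^0,\bm)\leq\ConstStab_1\epsilon$, where I use the stability estimate~\eqref{eq:lipSt} valid for the extension of $\bS_t$ to $\dP_{\bm^*}$ (and the fact that $\bS_t\bm,\bS_t\bm^0\in\dP_{\bm^*}$, so the distance is finite).

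\textbf{Conclusion.} Combining the three estimates gives $\limsup_{\ell\to+\infty}\tilde{\Ws}_1(\upmu_t^{\gamma}[\chi_{n_{\ell}}\bm],S^{\gamma}_t\bm)\leq 2\ConstStab_1\epsilon$, and since $\epsilon>0$ is arbitrary the limsup is $0$, i.e. $\tilde{\Ws}_1(\upmu_t^{\gamma}[\chi_{n_{\ell}}\bm],S^{\gamma}_t\bm)\to 0$; by the characterisation of $\tilde{\Ws}_1$ this is exactly the claimed weak convergence. The argument is essentially bookkeeping with the triangle inequality; the only point that genuinely requires care is to work with $\tilde{\Ws}_1$ rather than $\Ws_1$ in the last step, since $\Ws_1(\upmu_t^{\gamma}[\chi_{n_{\ell}}\bm],S^{\gamma}_t\bm)$ need not tend to $0$ (the limit $S^{\gamma}_t\bm$ may have infinite first moment when $m^{\gamma}$ does), whereas all the intermediate bounds above go through verbatim for $\tilde{\Ws}_1$ because $\tilde{\Ws}_1\leq\Ws_1$.
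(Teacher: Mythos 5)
Your proof is correct and follows essentially the same route as the paper: density of $\dP^0_{\bm^*}$, the triangle inequality in $\tilde{\Ws}_1$ with the same three-term decomposition, the discrete stability from Theorem~\ref{theo:stabMSPD} combined with Lemma~\ref{lem:cvciwass}, and the continuous extension's stability estimate~\eqref{eq:lipSt}. The only cosmetic difference is that the paper works directly with the sum $\sum_{\gamma=1}^d\tilde{\Ws}_1(\cdot,\cdot)$ while you argue per coordinate; your observation at the end about why $\tilde{\Ws}_1$ rather than $\Ws_1$ is needed is exactly the point.
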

\begin{proof}
  Recall that, if $\bm \in \dP^0_{\bm^*}$, then the result is nothing but the definition of $\bS_t$. Otherwise, we argue as follows. Recalling the Definition~\ref{defi:tWs} of the modified Wasserstein distance $\tilde{\Ws}_1$, we prove that
  \begin{equation}\label{eq:limtWSmu}
    \lim_{\ell \to +\infty} \sum_{\gamma=1}^d \tilde{\Ws}_1(S_t^{\gamma}\bm, \upmu_t^{\gamma}[\chi_{n_{\ell}}\bm]) = 0.
  \end{equation}
  To this aim, let us fix $\epsilon > 0$ and $\bm'=(m'^1, \ldots, m'^d) \in \dP_{\bm^*}^0$ such that $\Ws^{(d)}_1(\bm, \bm') \leq \epsilon$. Then, for all $t \geq 0$, for all $\ell \geq 1$,
  \begin{equation*}
    \begin{aligned}
      & \sum_{\gamma=1}^d \tilde{\Ws}_1(S_t^{\gamma}\bm, \upmu_t^{\gamma}[\chi_{n_{\ell}}\bm])\\
      & \qquad \leq \sum_{\gamma=1}^d \tilde{\Ws}_1(S_t^{\gamma}\bm, S_t^{\gamma}\bm') + \sum_{\gamma=1}^d \tilde{\Ws}_1(S_t^{\gamma}\bm', \upmu_t^{\gamma}[\chi_{n_{\ell}}\bm']) + \sum_{\gamma=1}^d \tilde{\Ws}_1(\upmu_t^{\gamma}[\chi_{n_{\ell}}\bm'], \upmu_t^{\gamma}[\chi_{n_{\ell}}\bm]).
    \end{aligned}
  \end{equation*}
  By the properties of $\tilde{\Ws}_1$ and~\eqref{eq:lipSt},
  \begin{equation*}
    \sum_{\gamma=1}^d \tilde{\Ws}_1(S_t^{\gamma}\bm, S_t^{\gamma}\bm') \leq \sum_{\gamma=1}^d \Ws_1(S_t^{\gamma}\bm, S_t^{\gamma}\bm') \leq \ConstStab_1\sum_{\gamma=1}^d \Ws_1(m^{\gamma}, m'^{\gamma}) \leq \ConstStab_1\epsilon.
  \end{equation*}
  By the properties of $\tilde{\Ws}_1$ and the construction of the sequence $(n_{\ell})_{\ell \geq 1}$,
  \begin{equation*}
    \lim_{\ell \to +\infty} \sum_{\gamma=1}^d \tilde{\Ws}_1(S_t^{\gamma}\bm', \upmu_t^{\gamma}[\chi_{n_{\ell}}\bm']) = 0.
  \end{equation*}
  By the properties of $\tilde{\Ws}_1$ and Theorem~\ref{theo:stabMSPD},
  \begin{equation*}
    \begin{aligned}
      \sum_{\gamma=1}^d \tilde{\Ws}_1(\upmu_t^{\gamma}[\chi_{n_{\ell}}\bm'], \upmu_t^{\gamma}[\chi_{n_{\ell}}\bm]) & \leq \sum_{\gamma=1}^d \Ws_1(\upmu_t^{\gamma}[\chi_{n_{\ell}}\bm'], \upmu_t^{\gamma}[\chi_{n_{\ell}}\bm])\\
      & \leq \ConstStab_1 \sum_{\gamma=1}^d \Ws_1(\upmu_0^{\gamma}[\chi_{n_{\ell}}\bm'], \upmu_0^{\gamma}[\chi_{n_{\ell}}\bm]),
    \end{aligned}
  \end{equation*}
  and it follows from Lemma~\ref{lem:cvciwass} that
  \begin{equation*}
    \lim_{\ell \to +\infty} \sum_{\gamma=1}^d \Ws_1(\upmu_0^{\gamma}[\chi_{n_{\ell}}\bm'], \upmu_0^{\gamma}[\chi_{n_{\ell}}\bm]) = \sum_{\gamma=1}^d \Ws_1(m^{\gamma}, m'^{\gamma}) \leq \epsilon.
  \end{equation*}
  As a consequence, we have
  \begin{equation*}
    \limsup_{\ell \to +\infty} \sum_{\gamma=1}^d \tilde{\Ws}_1(S_t^{\gamma}\bm, \upmu_t^{\gamma}[\chi_{n_{\ell}}\bm]) \leq 2\ConstStab_1 \epsilon,
  \end{equation*}
  and we obtain~\eqref{eq:limtWSmu} by letting $\epsilon$ vanish.
\end{proof}

At this stage, we have constructed a sequence $(n_{\ell})_{\ell \geq 1}$ and a family of operators $\bS_t : \dP_{\bm^*} \to \dP_{\bm^*}$ such that, for all $\bm \in \dP_{\bm^*}$, for all $t \geq 0$, for all $\gamma \in \{1, \ldots, d\}$, 
\begin{equation*}
  S_t^{\gamma}\bm = \lim_{\ell \to +\infty} \upmu^{\gamma}_t[\chi_{n_{\ell}}\bm].
\end{equation*}

We are now ready to complete the proof of Proposition~\ref{prop:cSt}.

\begin{proof}[Proof of~\eqref{it:cSt:1} in Proposition~\ref{prop:cSt}] The first part of the point~\eqref{it:cSt:1} is a corollary of Lemma~\ref{lem:cvbS}: indeed, by the same arguments as in the proof of Theorem~\ref{theo:existence}, the limit of $\upmu^{\gamma}_t[\chi_{n_{\ell}}\bm]$ induces a probabilistic solution to~\eqref{eq:syst}. 

We now show that not only the marginal distributions, but the whole empirical distributions $\upmu[\chi_{n_{\ell}}\bm]$ converge in $\Ms$. By Proposition~\ref{prop:tightness}, the sequence $(\upmu[\chi_{n_{\ell}}\bm])_{\ell \geq 1}$ is tight in $\Ms$; on the other hand, Lemma~\ref{lem:cvbS} shows that the limit $\bar{\upmu}$ of any converging subsequence has marginal distributions given by $\bS_t\bm$. Calling $\bu$ the associated probabilistic solution to~\eqref{eq:syst}, we deduce from Remark~\ref{rk:unicitebmu} that $\bar{\upmu}$ rewrites $\Unif \circ ((u^1(t, \cdot)^{-1}, \ldots, u^d(t, \cdot)^{-1})_{t \geq 0})^{-1}$. This shows that all the converging subsequences of $(\upmu[\chi_{n_{\ell}}\bm])_{\ell \geq 1}$ have the same limit, and therefore implies~\cite[Corollary, p.~59]{billingsley} the whole convergence of $\upmu[\chi_{n_{\ell}}\bm]$ to $\bar{\upmu}[\bm] := \Unif \circ ((u^1(t, \cdot)^{-1}, \ldots, u^d(t, \cdot)^{-1})_{t \geq 0})^{-1}$ in $\Ms$.

\sk
\noindent{\em Proof of~\eqref{it:cSt:2} in Proposition~\ref{prop:cSt}.} It follows from the construction of $\bS_t$ that the latter takes its values in the $\Ws_1$ stability class $\dP_{\bm^*}$. Now let $p \in [1, +\infty]$, $\bm, \bm' \in \dP_{\bm^*}$ and $s, t \geq 0$. By Theorem~\ref{theo:stabMSPD}, for all $\ell \geq 1$,
  \begin{equation*}
    \Ws^{(d)}_p(\upmu_s^{\gamma}[\chi_{n_{\ell}}\bm], \upmu_t^{\gamma}[\chi_{n_{\ell}}\bm']) \leq \ConstStab_p \Ws^{(d)}_p(\upmu_0^{\gamma}[\chi_{n_{\ell}}\bm], \upmu_0^{\gamma}[\chi_{n_{\ell}}\bm']) + |t-s| \ConstBound{p}.
  \end{equation*}  
  We obtain the expected result by applying the lower semicontinuity property of the Wasserstein distance of Lemma~\ref{lem:lsc} and the convergence result of the initial discretisation of Lemma~\ref{lem:cvciwass}.

\sk
\noindent{\em Proof of~\eqref{it:cSt:3} in Proposition~\ref{prop:cSt}.} Let $\bm \in \dP_{\bm^*}$ and let $s,t \geq 0$. To show the semigroup property, we shall prove that
  \begin{equation*}
    \sum_{\gamma=1}^d \tilde{\Ws}_1(S_{t+s}^{\gamma}\bm, S_t^{\gamma}\bS_s\bm) = 0,
  \end{equation*}
  where the modified Wasserstein distance $\tilde{\Ws}_1$ was introduced in Definition~\ref{defi:tWs}. In this purpose, we first remark that, by the flow property for the MSPD, for all $\ell \geq 1$,
  \begin{equation*}
    \forall \gamma \in \{1, \ldots, d\}, \qquad \upmu_{t+s}^{\gamma}[\chi_{n_{\ell}}\bm] = \upmu_t^{\gamma}[\Phi(\chi_{n_{\ell}}\bm;s)]
  \end{equation*}
  therefore we write
  \begin{equation*}
    \sum_{\gamma=1}^d \tilde{\Ws}_1(S_{t+s}^{\gamma}\bm, S_t^{\gamma}\bS_s\bm) \leq \sum_{\gamma=1}^d \tilde{\Ws}_1(S_{t+s}^{\gamma}\bm, \upmu_{t+s}^{\gamma}[\chi_{n_{\ell}}\bm]) + \sum_{\gamma=1}^d \tilde{\Ws}_1(\upmu_t^{\gamma}[\Phi(\chi_{n_{\ell}}\bm;s)], S_t^{\gamma}\bS_s\bm).
  \end{equation*}
  On the one hand, Lemma~\ref{lem:cvbS} yields
  \begin{equation*}
    \lim_{\ell \to +\infty} \sum_{\gamma=1}^d \tilde{\Ws}_1(S_{t+s}^{\gamma}\bm, \upmu_{t+s}^{\gamma}[\chi_{n_{\ell}}\bm]) = 0.
  \end{equation*}
  On the other hand,
  \begin{equation*}
    \begin{aligned}
      & \sum_{\gamma=1}^d \tilde{\Ws}_1(\upmu_t^{\gamma}[\Phi(\chi_{n_{\ell}}\bm;s)], S_t^{\gamma}\bS_s\bm)\\
      & \qquad \leq \sum_{\gamma=1}^d \tilde{\Ws}_1(\upmu_t^{\gamma}[\Phi(\chi_{n_{\ell}}\bm;s)], \upmu_t^{\gamma}[\chi_{n_{\ell}}\bS_s\bm]) + \sum_{\gamma=1}^d \tilde{\Ws}_1(\upmu_t^{\gamma}[\chi_{n_{\ell}}\bS_s\bm], S_t^{\gamma}\bS_s\bm),
    \end{aligned}
  \end{equation*}
  and using Lemma~\ref{lem:cvbS} again, we have
  \begin{equation*}
    \lim_{\ell \to +\infty} \sum_{\gamma=1}^d \tilde{\Ws}_1(\upmu_t^{\gamma}[\chi_{n_{\ell}}\bS_s\bm], S_t^{\gamma}\bS_s\bm) = 0.
  \end{equation*}
  
  It therefore remains to prove that
  \begin{equation*}
    \lim_{\ell \to +\infty} \sum_{\gamma=1}^d \tilde{\Ws}_1(\upmu_t^{\gamma}[\Phi(\chi_{n_{\ell}}\bm;s)], \upmu_t^{\gamma}[\chi_{n_{\ell}}\bS_s\bm]) = 0.
  \end{equation*}
  In this purpose, we use the domination of $\tilde{\Ws}_1$ by $\Ws_1$ and Theorem~\ref{theo:stabMSPD} to obtain
  \begin{equation*}
    \begin{aligned}
      \sum_{\gamma=1}^d \tilde{\Ws}_1(\upmu_t^{\gamma}[\Phi(\chi_{n_{\ell}}\bm;s)], \upmu_t^{\gamma}[\chi_{n_{\ell}}\bS_s\bm]) & \leq \ConstStab_1\sum_{\gamma=1}^d \Ws_1(\upmu_0^{\gamma}[\Phi(\chi_{n_{\ell}}\bm;s)], \upmu_0^{\gamma}[\chi_{n_{\ell}}\bS_s\bm])\\
      & \leq \ConstStab_1 ||\Phi(\chi_{n_{\ell}}\bm;s)-\chi_{n_{\ell}}\bS_s\bm||_1.
    \end{aligned}
  \end{equation*}
  We somehow have to prove that the evolution along the MSPD for a time $s$ asymptotically commutes with the discretisation operation when measured in $\Ws^{(d)}_1$ distance. Let us first note that this is the case for the weak convergence: by Lemma~\ref{lem:cvci}, the empirical distribution of $\chi_{n_{\ell}}\bS_s\bm$ converges weakly to $\bS_s\bm$; while it follows from Lemma~\ref{lem:cvbS} that the empirical distribution of $\Phi(\chi_{n_{\ell}}\bm;s)$ converges weakly to the same limit $\bS_s\bm \in \Ps(\R)^d$.
  
  Let us now remark that by Theorem \ref{theo:stabMSPD}, and Lemma \ref{lem:cvciwass}, 
  \begin{equation*}
    \begin{aligned}
      ||\Phi(\chi_{n_{\ell}}\bm;s)-\chi_{n_{\ell}}\bS_s\bm||_{\infty}& \leq ||\Phi(\chi_{n_{\ell}}\bm;s)-\chi_{n_{\ell}}\bm||_{\infty} + ||\chi_{n_{\ell}}\bm-\chi_{n_{\ell}}\bS_s\bm||_{\infty}\\
      & \leq s \ConstBound{\infty} + \Ws^{(d)}_{\infty}(\bm, \bS_s\bm),
    \end{aligned}
  \end{equation*}
  and it follows from the point~\eqref{it:cSt:2} of Proposition~\ref{prop:cSt} that $\Ws^{(d)}_{\infty}(\bm, \bS_s\bm) \leq s \ConstBound{\infty}$. As a consequence, the right-hand side above is lower than $2s \ConstBound{\infty}$; therefore, letting $\x(n_{\ell}) := \Phi(\chi_{n_{\ell}}\bm;s)$ and $\y(n_{\ell}) := \chi_{n_{\ell}}\bS_s\bm$,
  \begin{equation*}
    \begin{aligned}
      ||\Phi(\chi_{n_{\ell}}\bm;s)-\chi_{n_{\ell}}\bS_s\bm||_1 & = \frac{1}{n_{\ell}} \sum_{\gamma=1}^d \sum_{k=1}^{n_{\ell}} |x_k^{\gamma}(n_{\ell})-y_k^{\gamma}(n_{\ell})|\\
      & = \frac{1}{n_{\ell}} \sum_{\gamma=1}^d \sum_{k=1}^{n_{\ell}} |x_k^{\gamma}(n_{\ell})-y_k^{\gamma}(n_{\ell})| \wedge (2s \ConstBound{\infty})\\
      & = \sum_{\gamma=1}^d \int_{(x,y) \in \R^2} |x-y| \wedge (2s \ConstBound{\infty}) \mathfrak{m}^{\gamma}_{n_{\ell}}(\dd x\dd y),
    \end{aligned}
  \end{equation*}
  where, for all $\gamma \in \{1, \ldots, d\}$, the probability measure $\mathfrak{m}^{\gamma}_{n_{\ell}}$ on $\R^2$ is defined by
  \begin{equation*}
    \mathfrak{m}^{\gamma}_{n_{\ell}}= \frac{1}{n_{\ell}} \sum_{k=1}^{n_{\ell}} \delta_{(x_k^{\gamma}(n_{\ell}), y_k^{\gamma}(n_{\ell}))}.
  \end{equation*}
  This probability measure rewrites $\Unif\circ((H*\upmu_0^{\gamma}[\Phi(\chi_{n_{\ell}}\bm;s)])^{-1},(H*\upmu_0^{\gamma}[\chi_{n_{\ell}}\bS_s\bm])^{-1})^{-1}$. Since both $\upmu_0^{\gamma}[\Phi(\chi_{n_{\ell}}\bm;s)]$ and $\upmu_0^{\gamma}[\chi_{n_{\ell}}\bS_s\bm]$ converge weakly to $S^\gamma_s\bm \in \Ps(\R)$, one deduces from Lemma \ref{lem:cvCDF} that $\mathfrak{m}^{\gamma}_{n_{\ell}}$ converges weakly to $\Unif\circ((H*S^\gamma_s\bm )^{-1},(H*S^\gamma_s\bm )^{-1})^{-1}$, which gives full measure to the diagonal in $\R^2$. Since $(x,y)\mapsto |x-y| \wedge (2s \ConstBound{\infty})$ is continuous and bounded, we conclude that $||\Phi(\chi_{n_{\ell}}\bm;s)-\chi_{n_{\ell}}\bS_s\bm||_1$ tends to $0$.
\end{proof}

The semigroup $(\bS_t)_{t \geq 0}$ still depends on the choice of the sequence $(n_{\ell})_{\ell \geq 1}$. We get rid of this dependency by introducing the uniqueness conditions of Bianchini and Bressan in the next subsection.


\subsection{The Bianchini-Bressan uniqueness conditions}\label{ss:BB} In this subsection, we introduce and adapt the Bianchini-Bressan uniqueness conditions of~\cite{bianbres} to obtain the following lemma.

\begin{lem}[Uniqueness of the semigroup]\label{lem:uniqsg}
  Under Assumptions~\eqref{ass:LC} and~\eqref{ass:USH}, there exists a family of operators $(\bar{\bS}_t)_{t \geq 0}$ on $\Ps(\R)^d$ such that, for all $\bm^* \in \Ps(\R)^d$, any semigroup $(\bS_t)_{t \geq 0}$ on $\dP_{\bm^*}$ obtained by Proposition~\ref{prop:cSt} coincides with the restriction of $(\bar{\bS}_t)_{t \geq 0}$ to $\dP_{\bm^*}$.
\end{lem}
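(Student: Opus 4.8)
\emph{Strategy.} The plan is to show that every semigroup $(\bS_t)_{t\ge 0}$ produced by Proposition~\ref{prop:cSt} is a \emph{viscosity solution} in the sense of Bianchini and Bressan~\cite{bianbres}, suitably transcribed for the diagonal system~\eqref{eq:syst} and for monotonic bounded data. One attaches to a probabilistic solution $\bu$, at every point $(\tau,y)$, two comparison functions: the solution of the linear Cauchy problem obtained by freezing the characteristic speeds at $\bu(\tau,y)$, which controls the behaviour of $\bu$ near points of approximate continuity; and the self-similar solution of the Riemann problem with left and right states $\bu(\tau,y^-)$ and $\bu(\tau,y)$, which controls the behaviour near jumps. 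For a diagonal, uniformly strictly hyperbolic system this Riemann solution is explicit, each coordinate being transported at a speed of the form $\lambda^\gamma\{\cdot\}$ as in~\eqref{eq:dlambda}. The adapted Bianchini--Bressan conditions require that $\bu$ coincide with these two comparison functions up to $o(h)$ errors in $\Ls^1_{\mathrm{loc}}$ as the time increment $h\to 0$. Since, by Proposition~\ref{prop:cSt}~(ii), the semigroup is already Lipschitz in $\Ws^{(d)}_1$ (equivalently, in $\Ls^1$, by~\eqref{eq:L1W1}) and in time, with constants depending only on $d$, $\ConstLip$, $\ConstUSH$, these two local conditions are the only ingredient still needed for the Bianchini--Bressan uniqueness machinery to apply.

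\emph{Verification for the semigroups of Proposition~\ref{prop:cSt}.} This is where the Multitype Sticky Particle Dynamics enters. For a fixed configuration $\chi_n\bm$, identity~\eqref{eq:MSPD:diff} shows that, away from the finitely many collision times, each trajectory moves exactly along the field $\lambda^\gamma\{\bu[\chi_n\bm]\}$; the finite-speed-of-propagation estimate of Corollary~\ref{cor:speedprop} localises the influence of the data; and the description of collisions in the MSPD --- clusters of one type crossing clusters of another at speeds obtained by averaging $\lambda^\gamma$ over the relevant range of the $\gamma$-th coordinate, possibly splitting --- is precisely the diagonal Riemann solver. I would first establish the two comparison estimates at the discrete level, with $o(h)$ bounds uniform in $n$ (using Theorem~\ref{theo:stabMSPD} to absorb the discretisation error and the velocity bound~\eqref{eq:typeencadrelambda}), and then pass to the large-scale limit along the sequence $(n_\ell)_{\ell\ge1}$ of Proposition~\ref{prop:cSt}, invoking Corollary~\ref{cor:cvCDFs}, Lemma~\ref{lem:cvbS} and dominated convergence, exactly as in the proof of the closedness Proposition~\ref{prop:closedness}. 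This yields that the probabilistic solution $\bu$ attached to $(\bS_t)$ satisfies the adapted Bianchini--Bressan conditions.

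\emph{Conclusion and definition of $(\bar{\bS}_t)$.} The Bianchini--Bressan uniqueness theorem~\cite{bianbres} asserts that a function satisfying these local conditions, together with the Lipschitz regularity above, is uniquely determined by its initial datum; equivalently, the value $\bS_t\bm$ depends only on $t$ and $\bm$, and not on the base point $\bm^*$ nor on the subsequence $(n_\ell)$ chosen in Proposition~\ref{prop:cSt}. One then defines, for $\bm\in\Ps(\R)^d$ and $t\ge 0$, $\bar{\bS}_t\bm := \bS_t\bm$ for any semigroup $(\bS_t)$ obtained from Proposition~\ref{prop:cSt} with a base point $\bm^*$ such that $\bm\in\dP_{\bm^*}$ (for instance $\bm^*=\bm$). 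This is unambiguous, since any two such semigroups agree on the intersection of their domains, and $\Ps(\R)^d=\bigcup_{\bm^*}\dP_{\bm^*}$; moreover $\bar{\bS}_t$ restricts to each $(\bS_t)$ on the corresponding $\dP_{\bm^*}$, which is the claim.

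\emph{Main obstacle.} The hard part is the Riemann-solver comparison at jump points. There the freezing procedure must reproduce the convention~\eqref{eq:dlambda} defining $\lambda^\gamma\{\bu\}$, and one has to check that the crossing of clusters in the MSPD realises exactly the entropic (viscosity) Riemann solution of the diagonal system --- including cluster splitting --- and that the accumulation of countably many jumps does not spoil the $o(h)$ bound after summation. Formulating the correct local statement (a form of the Bianchini--Bressan conditions valid for BV, not necessarily small-variation, monotone data) and controlling these limits uniformly in $n$ is the technical core of the argument.
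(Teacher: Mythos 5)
Your overall strategy coincides with the paper's: verify that the semigroup solutions produced by Proposition~\ref{prop:cSt} satisfy suitably adapted Bianchini--Bressan conditions (shock estimate against the Riemann solution, flat estimate against the frozen-coefficient linear flow), then invoke the Bianchini--Bressan uniqueness machinery to conclude that the operator $\bS_t\bm$ depends neither on $\bm^*$ nor on the subsequence, and glue the semigroups on the $\Ws_1$ stability classes into a single $(\bar{\bS}_t)_{t\ge 0}$ on $\Ps(\R)^d$. The final gluing argument you give is exactly the paper's.

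Where your route diverges is in \emph{how} the two Bianchini--Bressan estimates are verified, and this is where I suspect your programme would stall. You propose to prove them ``at the discrete level, with $o(h)$ bounds uniform in $n$,'' and then pass to the limit along $(n_\ell)$. The paper deliberately avoids uniform-in-$n$ $o(h)$ estimates: the flat estimate (Lemma~\ref{lem:flat}) is proved entirely at the continuum level, for \emph{any} probabilistic solution with $t\mapsto\bu(t,\cdot)$ continuous in $\Ls^1_{\mathrm{loc}}(\R)^d$, by a direct computation with $\lambda^\gamma\{\bu\}$ using the Lipschitz continuity of $\blambda$ and the finite-propagation estimate of Corollary~\ref{cor:speedprop}; and the shock estimate (Lemma~\ref{lem:shock}) is reduced, via a \emph{localised} version of the $\Ws_1$ stability (estimate~\eqref{eq:pfshock:1}, obtained from the already-established Theorem~\ref{theo:stabMSPD}), to comparing $\bS_h$ of a ``flattened'' initial datum against the Riemann solution, the identification being achieved through Lemma~\ref{lem:EntropyTSPD} on the large-scale limit of the \emph{Typewise} Sticky Particle Dynamics, not of the full MSPD. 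This manoeuvre sidesteps precisely the difficulty you flag --- the accumulation of collisions as $n\to\infty$ and the need to control the error after summation --- by pushing all the $n\to\infty$ work onto the already-proved contraction estimate and onto the scalar entropy-solution theory. Finally, the Bianchini--Bressan sufficiency proof as cited in the paper also requires a localised $\Ls^1$ stability estimate between two \emph{arbitrary} solutions (Lemma~\ref{lem:locL1stab}, the analogue of~(13.13) in Bianchini--Bressan), in addition to the global $\Ws_1$ Lipschitz bound from Proposition~\ref{prop:cSt}~(\ref{it:cSt:2}); your proposal lists only the global bound as the remaining prerequisite. You should state and prove this localised estimate separately --- it is not a formal consequence of the global one and is proved in the paper by a domain-of-dependence argument on the discrete particle system.
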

The Bianchini-Bressan conditions are introduced in~\S\ref{sss:BB} below, where we also show that the probabilistic solutions associated with any semigroup obtained by Proposition~\ref{prop:cSt} satisfy these conditions. In order to introduce these conditions it is necessary to give a proper meaning to the {\em Riemann problem} associated with the system~\eqref{eq:syst}. This is done in~\S\ref{sss:Riemann}, which relies on the basic properties of entropy solutions to scalar conservation laws that we recall in~\S\ref{sss:kruzkov}. The proof of Lemma~\ref{lem:uniqsg} is finally completed in~\S\ref{sss:pfuniqsg}.


\subsubsection{Entropy solution to the scalar conservation law}\label{sss:kruzkov} We first recall the following result by Kru\v{z}kov, see~\cite[Theorem~2.3.5 and Proposition~2.3.6, pp.~36-37]{serre}.

\begin{prop}[Existence and uniqueness for the scalar conservation law]\label{prop:kruzkov}
  Let $\Lambda : [0,1] \to \R$ be a Lipschitz continuous function, and let $u_0 : \R \to [0,1]$ be a measurable function. There exists a unique weak solution $u : [0,+\infty) \times \R \to [0,1]$ to the scalar conservation law
  \begin{equation}\label{eq:scalarcl2}
    \left\{\begin{aligned}
      & \partial_t u + \partial_x \left(\Lambda(u)\right) = 0,\\
      & u(0,x) = u_0(x),
    \end{aligned}\right.
  \end{equation}
  satisfying the entropy condition that, for all $c \in [0,1]$,
  \begin{equation*}
    \partial_t |u-c| + \partial_x \left(\sgn(u-c)(\Lambda(u)-\Lambda(c))\right) \leq 0
  \end{equation*}
  in the distributional sense, where
  \begin{equation*}
    \sgn(v) := \begin{cases}
      1 & \text{if $v \geq 0$,}\\
      -1 & \text{if $v < 0$.}
    \end{cases}
  \end{equation*}
  Besides, if the initial datum $u_0$ has a nondecreasing version, then for all $t \geq 0$, $u(t,\cdot)$ has a nondecreasing and right continuous version with left limits, and
  \begin{equation*}
    \lim_{x \to \pm\infty} u(t,x) = \lim_{x \to \pm\infty} u_0(x).
  \end{equation*}
%
\end{prop}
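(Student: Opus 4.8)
The first assertion --- existence and uniqueness of the entropy weak solution, together with the Kru\v{z}kov entropy inequalities --- is precisely the Kru\v{z}kov theorem, and I would simply invoke \cite[Theorem~2.3.5]{serre}: since $\Lambda$ is Lipschitz continuous and $u_0$ is a measurable function valued in $[0,1]$, hence in $\Ls^{\infty}(\R)$, the Cauchy problem~\eqref{eq:scalarcl2} admits a unique $u \in \Ls^{\infty}([0,+\infty)\times\R)$ satisfying $\partial_t|u-c| + \partial_x(\sgn(u-c)(\Lambda(u)-\Lambda(c))) \leq 0$ for every $c \in \R$, and in particular for every $c \in [0,1]$; the bounds $0 \leq u \leq 1$ follow from comparison with the constant solutions $c=0$ and $c=1$. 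The same reference also yields the continuity of $t \mapsto u(t,\cdot)$ from $[0,+\infty)$ into $\Ls^1_{\mathrm{loc}}(\R)$. So there is nothing to do for this part beyond quoting Kru\v{z}kov.

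For the second assertion I would rely on two classical by-products of Kru\v{z}kov's doubling-of-variables argument, valid under the sole Lipschitz continuity of $\Lambda$: the $\Ls^1$ comparison principle (if $u_0 \leq v_0$ $\dd x$-almost everywhere, then $u(t,\cdot) \leq v(t,\cdot)$ $\dd x$-almost everywhere, for every $t \geq 0$; see \cite[Proposition~2.3.6]{serre}), and the finite speed of propagation at speed $L := \sup_{v \in [0,1]}|\Lambda'(v)|$, which is itself deduced from comparison applied to spatially truncated data. For the monotonicity, I would use the translation invariance of the equation: if $u$ is the entropy solution with datum $u_0$, then $(t,x) \mapsto u(t,x+h)$ is the entropy solution with datum $u_0(\cdot+h)$. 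Taking $u_0$ nondecreasing and $h > 0$, we have $u_0 \leq u_0(\cdot+h)$ almost everywhere, so the comparison principle gives $u(t,x) \leq u(t,x+h)$ for $\dd x$-almost every $x$, for every $t \geq 0$. Hence, for each fixed $t \geq 0$, $u(t,\cdot)$ has a nondecreasing version; I would then fix $u(t,\cdot)$ to be its unique nondecreasing, right continuous representative (which automatically has left limits), with values in $[0,1]$ by the previous bounds.

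It remains to identify the limits at $\pm\infty$. Fix $t \geq 0$ and $\epsilon > 0$, and write $c_- := \lim_{x\to-\infty}u_0(x)$. Choose $M > 0$ such that $u_0(x) \leq c_- + \epsilon$ for all $x \leq -M$, and let $w$ be the entropy solution with truncated datum $w_0 := u_0 \wedge (c_-+\epsilon)$. Since $w_0$ coincides with $u_0$ on $(-\infty,-M]$, finite speed of propagation gives $w(t,x) = u(t,x)$ for $x \leq -M - Lt$, while comparison with the constants $c_-$ and $c_-+\epsilon$ (using $c_- \leq u_0$ and $w_0 \leq c_-+\epsilon$) gives $c_- \leq w(t,x) \leq c_- + \epsilon$ for all $x$. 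Therefore $c_- \leq u(t,x) \leq c_- + \epsilon$ for $x \leq -M - Lt$; letting $x \to -\infty$ and then $\epsilon \to 0$ yields $\lim_{x\to-\infty}u(t,x) = c_-$. The limit at $+\infty$ is obtained symmetrically.

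The proof is essentially a bookkeeping of classical facts, so I do not expect a genuine obstacle; the only point I would state carefully is the choice of representative --- one must check that the family of nondecreasing right continuous versions $\{u(t,\cdot)\}_{t\geq 0}$ still represents the $\Ls^{\infty}$ function $u$ on $[0,+\infty)\times\R$, which is immediate since each version is obtained by modifying $u(t,\cdot)$ on a $\dd x$-null set in a jointly measurable way --- and the availability of finite speed of propagation under mere Lipschitz continuity of $\Lambda$ (with no genuine nonlinearity assumption), which, again, is a standard consequence of the Kru\v{z}kov comparison principle.
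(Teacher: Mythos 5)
Your proposal is correct and takes essentially the same route as the paper: the paper's ``proof'' of this proposition is simply the citation to \cite[Theorem~2.3.5 and Proposition~2.3.6, pp.~36-37]{serre}, which is exactly the pair of results you invoke (Kru\v{z}kov existence-uniqueness, then the $\Ls^1$ comparison principle), and your translation-invariance plus comparison argument for the nondecreasing version, and finite-speed-plus-comparison argument for the limits at $\pm\infty$, are the expected expansion of that citation. One small slip: since $\Lambda$ is only Lipschitz, write $L := \mathrm{Lip}(\Lambda)$ (or $\mathrm{ess\,sup}\,|\Lambda'|$) rather than $\sup_{v\in[0,1]}|\Lambda'(v)|$, and when passing from ``for each $h>0$, $u(t,x)\le u(t,x+h)$ for $\dd x$-a.e.\ $x$'' to the existence of a single nondecreasing right continuous version, it is cleanest to restrict to rational $h$ first so as to work with a single exceptional null set.
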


The function $u$ given by Proposition~\ref{prop:kruzkov} is called the {\em entropy} solution to the scalar conservation law~\eqref{eq:scalarcl2}. As was shown by Brenier and Grenier~\cite{bregre} and Jourdain~\cite{jourdain:sticky}, it is the appropriate notion of solution to describe the large-scale behaviour of the Sticky Particle Dynamics. The following lemma is a generalisation of these results and will be useful in the sequel of the subsection. Its proof is postponed to Subsection~\ref{ss:pfEntropyTSPD} in Appendix~\ref{app:proofs}. For all $n \geq 1$, for all $\x \in \Dnd$, we denote by $\tilde{\upmu}[\x]$ the empirical distribution of the Typewise Sticky Particle Dynamics
\begin{equation*}
  \tilde{\upmu}[\x] := \frac{1}{n} \sum_{k=1}^n \delta_{(\tPhi^1_k[\tblambda(\x)](\x;t), \ldots, \tPhi^d_k[\tblambda(\x)](\x;t))_{t \geq 0}},
\end{equation*}
where we recall that the notation $\tPhi^{\gamma}_k[\tblambda(\x)](\x;t)$ was introduced in~\S\ref{sss:tspd}.

\begin{lem}[Large-scale behaviour of the Typewise Sticky Particle Dynamics]\label{lem:EntropyTSPD}
  Let Assumptions~\eqref{ass:C} and~\eqref{ass:USH} hold. Let $\bm = (m^1, \ldots, m^d) \in \Ps(\R)^d$, and let $(\x(n))_{n \geq 1}$ be a sequence of initial configurations such that $\x(n) \in \Dnd$ and, for all $\gamma \in \{1, \ldots, d\}$, the empirical distribution
  \begin{equation*}
    \frac{1}{n} \sum_{k=1}^d \delta_{x_k^{\gamma}(n)}
  \end{equation*}
  converges weakly to $m^{\gamma}$. Let us assume in addition that the following property holds.
  \begin{enumerate}[label=($*$), ref=$*$]
    \item\label{ass:star:EntropyTSPD} For all $\gamma, \gamma' \in \{1, \ldots, d\}$ with $\gamma \not= \gamma'$,
    \begin{equation*}
      \lim_{n \to +\infty} u^{\gamma'}_{n,0}((u^{\gamma}_{n,0})^{-1}(v)^-) = u^{\gamma'}_0((u^{\gamma}_0)^{-1}(v)^-), \qquad \lim_{n \to +\infty} u^{\gamma'}_{n,0}((u^{\gamma}_{n,0})^{-1}(v)) = u^{\gamma'}_0((u^{\gamma}_0)^{-1}(v)), 
    \end{equation*}
    $\dd v$-almost everywhere in $(0,1)$, where $u^{\gamma}_{n,0}$ refers to the empirical CDF of $x^{\gamma}_1(n), \ldots, x^{\gamma}_n(n)$.
  \end{enumerate}
  
  Then $\tilde{\upmu}[\x(n)]$ converges weakly, when $n$ grows to infinity, to the probability measure $\tilde{\upmu} \in \Ms$ defined as the image of the Lebesgue measure $\Unif$ on $[0,1]$ by the mapping
  \begin{equation*}
    v \mapsto \left(\tilde{u}^1(t, \cdot)^{-1}(v), \ldots, \tilde{u}^d(t, \cdot)^{-1}(v)\right)_{t \geq 0}
  \end{equation*}
  where, for all $\gamma \in \{1, \ldots, d\}$, the function $\tilde{u}^{\gamma} : [0,+\infty) \times \R \to [0,1]$ is the entropy solution of the scalar conservation law
  \begin{equation}\label{eq:EntropyTSPD:scl}
    \left\{\begin{aligned}
      & \partial_t \tilde{u}^{\gamma} + \partial_x \left(\tilde{\Lambda}^{\gamma}(\tilde{u}^{\gamma})\right) = 0,\\
      & \tilde{u}^{\gamma}(0, \cdot) = H*m^{\gamma} =: u^{\gamma}_0,
    \end{aligned}\right.
  \end{equation}
  with $\tilde{\Lambda}^{\gamma}(u)$ being defined by
  \begin{equation}\label{eq:tLambda}
    \int_{v=0}^u \lambda^{\gamma}\left(u^1_0((u^{\gamma}_0)^{-1}(v)^-), \ldots, u^{\gamma-1}_0((u^{\gamma}_0)^{-1}(v)^-), v, u^{\gamma+1}_0((u^{\gamma}_0)^{-1}(v)), \ldots, u^d_0((u^{\gamma}_0)^{-1}(v))\right) \dd v.
  \end{equation}
\end{lem}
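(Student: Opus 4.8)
\textbf{Proof plan for Lemma~\ref{lem:EntropyTSPD}.}

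The strategy is to reduce the multitype statement to a collection of $d$ independent scalar problems, since in the Typewise Sticky Particle Dynamics the systems of different types evolve without interacting. For a fixed $\gamma$, the process $(\tPhi_1^{\gamma}[\tblambda(\x(n))](\x(n);t), \ldots, \tPhi_n^{\gamma}[\tblambda(\x(n))](\x(n);t))_{t \geq 0}$ is the Sticky Particle Dynamics in $\Dn$ with initial position vector $\rx^{\gamma}(n)$ and initial velocity vector $\tlambda^{\gamma}(\x(n))$ given by~\eqref{eq:vitesses}. The first step is therefore to recognise that the $k$-th initial velocity $\tlambda_k^{\gamma}(\x(n))$ is, by~\eqref{eq:vitesses}, an average over $w \in ((k-1)/n, k/n)$ of $\lambda^{\gamma}$ evaluated with $w$ in the $\gamma$-th slot and the ranks $\omega_{\gamma:k}^{\gamma'}(\x(n))$, $\gamma' \neq \gamma$, in the other slots. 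By definition of these ranks, $\omega_{\gamma:k}^{\gamma'}(\x(n))$ equals $u^{\gamma'}_{n,0}((u^{\gamma}_{n,0})^{-1}(k/n)^-)$ when $\gamma' < \gamma$ and $u^{\gamma'}_{n,0}((u^{\gamma}_{n,0})^{-1}(k/n))$ when $\gamma' > \gamma$ (up to the usual one-sided conventions). Hence $\tlambda_k^{\gamma}(\x(n))$ is, to leading order, $n \int_{(k-1)/n}^{k/n} \Lambda^{\gamma}_n{}'(w)\,\dd w$ for an appropriate Lipschitz antiderivative $\Lambda^{\gamma}_n$ whose derivative involves the discrete rank functions, and Assumption~\eqref{ass:star:EntropyTSPD} is exactly what is needed so that $\Lambda^{\gamma}_n{}' \to \tilde{\Lambda}^{\gamma}{}'$ in an appropriate sense, where $\tilde{\Lambda}^{\gamma}$ is given by~\eqref{eq:tLambda}.

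The second step is to invoke the known large-scale limit theorem for the scalar Sticky Particle Dynamics. The works of Brenier--Grenier~\cite{bregre} and Jourdain~\cite{jourdain:sticky} assert that if the empirical measure of the initial positions converges weakly to $m^{\gamma}$ and the initial velocities are obtained by a discretisation of a Lipschitz flux $\tilde{\Lambda}^{\gamma}$ in the manner above, then the empirical distribution of the Sticky Particle Dynamics converges weakly in the path space to the image of $\Unif$ by $v \mapsto (\tilde{u}^{\gamma}(t,\cdot)^{-1}(v))_{t \geq 0}$, where $\tilde{u}^{\gamma}$ is the entropy solution of~\eqref{eq:EntropyTSPD:scl}. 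The delicate point here is that the flux $\tilde{\Lambda}^{\gamma}$ itself depends on the quantile structure of the initial data $u^1_0, \ldots, u^d_0$ through~\eqref{eq:tLambda}, and that the discrete fluxes $\Lambda^{\gamma}_n$ are only close to $\tilde{\Lambda}^{\gamma}$ in an averaged sense along the quantiles $k/n$, not uniformly. I would handle this by a stability-in-the-flux argument: one shows that replacing $\Lambda^{\gamma}_n$ by $\tilde{\Lambda}^{\gamma}$ in the discrete dynamics perturbs the trajectories by an amount controlled, via~\eqref{it:contractspd:1} in Proposition~\ref{prop:contractspd}, by $\frac{1}{n}\sum_k |\tlambda_k^{\gamma}(\x(n)) - n\int_{(k-1)/n}^{k/n}\tilde{\Lambda}^{\gamma}{}'(w)\,\dd w|$, which tends to $0$ by~\eqref{ass:star:EntropyTSPD} and the Dominated Convergence Theorem, using the boundedness of $\lambda^{\gamma}$ from Assumption~\eqref{ass:C}. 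Combined with the tightness already available from~\eqref{eq:typeencadrelambda} and the Arzel\`a--Ascoli argument as in the proof of Proposition~\ref{prop:tightness}, this pins down the limit.

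The third and final step is to assemble the $d$ independent scalar limits into a joint limit in $\Ms$. Since the $d$ subsystems of the Typewise Sticky Particle Dynamics are coupled only through their common source of randomness --- namely the uniform label $v$ used in the identification $\tilde{\upmu} = \Unif \circ (v \mapsto (\tilde{u}^1(t,\cdot)^{-1}(v), \ldots, \tilde{u}^d(t,\cdot)^{-1}(v))_{t\geq 0})^{-1}$ --- I would argue as in Remark~\ref{rk:unicitebmu}: the finite-dimensional marginals of $\tilde{\upmu}[\x(n)]$ at times $t_1 \leq \cdots \leq t_k$ are integrals over $v \in (0,1)$ of test functions evaluated at the $d$-tuples $(u^{\gamma}[\x(n)](t_i,\cdot)^{-1}(v))_{\gamma,i}$, and each such pseudo-inverse converges $\dd v$-almost everywhere by Lemma~\ref{lem:cvCDF} combined with the scalar convergence from Step~2. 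The Dominated Convergence Theorem then identifies the limiting finite-dimensional marginals with those of $\tilde{\upmu}$, and since a probability measure on the path space is determined by its finite-dimensional marginals, together with the tightness this yields the full weak convergence $\tilde{\upmu}[\x(n)] \to \tilde{\upmu}$ in $\Ms$. I expect the main obstacle to be the careful bookkeeping in Step~1 and Step~2 showing that the discrete velocity vectors genuinely correspond to a discretisation of the flux $\tilde{\Lambda}^{\gamma}$ in the precise sense required by the scalar limit theorem, and in particular reconciling the one-sided limits in~\eqref{ass:star:EntropyTSPD} with the left/right conventions built into~\eqref{eq:vitesses} and~\eqref{eq:tLambda}.
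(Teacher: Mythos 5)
Your proposal is correct in outline but takes a genuinely different route from the paper at the crucial identification step. The paper does \emph{not} pass through the known scalar convergence theorem: after establishing tightness and extracting a converging subsequence, it re-derives the Kru\v{z}kov entropy inequality for the discrete empirical CDFs $\tilde u^{\gamma}_n$ directly. Concretely, it expands the entropy integrand against the particle trajectories, performs an Abel-type regrouping by clusters, and uses the stability condition of the Sticky Particle Dynamics (in the form of Lemma~\ref{lem:extstab}) to show that the non-entropic part of the expression is nonpositive; Assumption~\eqref{ass:star:EntropyTSPD} then enters to control a residual term $\frac{1}{n}I_n(t)$ coming from the discrepancy between the discrete flux (built from $u^{\gamma'}_{n,0}$) and $\tilde\Lambda^{\gamma}$ (built from $u^{\gamma'}_0$), and Proposition~\ref{prop:kruzkov} closes the identification. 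Your proposal instead factors this into two independent pieces: (a) the flux-stability bound, where you correctly observe that $\omega^{\gamma'}_{\gamma:k}(\x(n)) = u^{\gamma'}_{n,0}\bigl((u^{\gamma}_{n,0})^{-1}(w)^{\mp}\bigr)$ for $w\in((k-1)/n,k/n]$ (one-sided according to $\gamma' \lessgtr \gamma$), so that Assumption~\eqref{ass:star:EntropyTSPD} plus dominated convergence and Proposition~\ref{prop:contractspd}\eqref{it:contractspd:1} make the trajectories with velocities $\tlambda^{\gamma}(\x(n))$ $\Ls^1$-close to those driven by the genuine flux discretisation $n(\tilde\Lambda^{\gamma}(k/n)-\tilde\Lambda^{\gamma}((k-1)/n))$; and (b) the black-box invocation of the Brenier--Grenier / Jourdain scalar limit. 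Both routes then assemble the $d$ scalar marginals into a path-space limit in $\Ms$ via finite-dimensional marginals and Remark~\ref{rk:unicitebmu}, and both ultimately lean on the same mechanism (Assumption~\eqref{ass:star:EntropyTSPD} neutralising the flux mismatch); the paper's in-line Kru\v{z}kov derivation has the virtue of being self-contained and of not requiring a precise statement of the scalar theorem in exactly the form you need (path-space convergence of the empirical measure for an arbitrary $m^{\gamma}\in\Ps(\R)$ and an arbitrary Lipschitz flux of the rank, not of the position), whereas your version is more modular and a little shorter once that scalar statement is carefully formulated and referenced. The one point you should make precise is exactly this last one: verify that the scalar result you cite is indeed stated for rank-based flux discretisations and general probability-measure initial data, since that is what the paper is implicitly re-proving from scratch.
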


We note that, by Lemma~\ref{lem:FnGn}, a sufficient condition for the hypothesis~\eqref{ass:star:EntropyTSPD} to hold with any sequence of initial configurations $(\x(n))_{n \geq 1}$ approximating $\bm$ is that, for all $\gamma, \gamma' \in \{1, \ldots, d\}$ with $\gamma \not= \gamma'$, the measures $m^{\gamma}$ and $m^{\gamma'}$ have distinct atoms. 


\subsubsection{The Riemann problem}\label{sss:Riemann} Let us now fix $\xi \in \R$ and $(u^1_-, u^1_+), \ldots, (u^d_-, u^d_+) \in [0,1]^2$, with $u^{\gamma}_- \leq u^{\gamma}_+$ for all $\gamma \in \{1, \ldots, d\}$. The Riemann problem for the system~\eqref{eq:syst} is the problem
\begin{equation}\label{eq:riemann}
  \forall \gamma \in \{1, \ldots, d\}, \qquad \left\{\begin{aligned}
    & \partial_t u^{\gamma} + \lambda^{\gamma}\{\bu\} \partial_x u^{\gamma} = 0,\\
    & u^{\gamma}(0,x) = u^{\gamma}_-\ind{x < \xi} + u^{\gamma}_+\ind{x \geq \xi}.
  \end{aligned}\right.
\end{equation}
Unless $u^{\gamma}_-=0$ and $u^{\gamma}_+=1$, the initial data of this problem are not CDFs, and therefore this system does not {\em a priori} enter the scope of our approach. One can however circumvent this difficulty by formally adding the missing masses $u^{\gamma}_-$ and $1-u^{\gamma}_+$ at the respective points $-\infty$ and $+\infty$. Then by Proposition~\ref{prop:traj}, the trajectories $(X_v^{\gamma}(t))_{t \geq 0}$ associated with any probabilistic solution to the Riemann problem~\eqref{eq:riemann} are expected to satisfy
\begin{equation*}
  \forall v \in (u^{\gamma}_-,u^{\gamma}_+), \quad \forall t \geq 0, \qquad \xi + t \inf_{\bu \in [0,1]^d} \lambda^{\gamma}(\bu) \leq X_v^{\gamma}(t) \leq \xi + t \sup_{\bu \in [0,1]^d} \lambda^{\gamma}(\bu),
\end{equation*}
for coordinates $\gamma$ such that $u^{\gamma}_- < u^{\gamma}_+$. Under Assumption~\eqref{ass:USH}, we deduce that these trajectories evolve in separated space-time cones for positive times, so that the Riemann problem~\eqref{eq:riemann} is actually uncoupled into $d$ scalar problems. Since we noted in Remark~\ref{rk:scalarcl} that, in the scalar case, our definition of probabilistic solutions is consistent with the conservative equation~\eqref{eq:scalarcl}, this motivates the following definition.

\begin{defi}[Solution to the Riemann problem]\label{defi:riemann}
  Under Assumptions~\eqref{ass:C} and~\eqref{ass:USH}, the solution to the Riemann problem~\eqref{eq:riemann} is the function $\bu = (u^1, \ldots, u^d) : [0,+\infty) \times \R \to [0,1]^d$ such that, for all $\gamma \in \{1, \ldots, d\}$, $u^{\gamma}$ is the unique entropy solution to the scalar conservation law
  \begin{equation*}
    \left\{\begin{aligned}
      & \partial_t u^{\gamma} + \partial_x \left(\Lambda^{\gamma}(u^{\gamma})\right) = 0,\\
      & u^{\gamma}(0,x) = u^{\gamma}_-\ind{x < \xi} + u^{\gamma}_+\ind{x \geq \xi},
    \end{aligned}\right.
  \end{equation*}
  where $\Lambda^{\gamma}$ is defined on $[u^{\gamma}_-, u^{\gamma}_+]$ by
  \begin{equation}\label{eq:LambdaRiemann}
    \Lambda^{\gamma}(u) := \int_{v=u^{\gamma}_-}^u \lambda^{\gamma}(u^1_-, \ldots, u^{\gamma-1}_-, v, u^{\gamma+1}_+, \ldots, u^d_+)\dd v.
  \end{equation}
\end{defi}

According to Proposition~\ref{prop:kruzkov}, we shall always implicitly assume that, for all $\gamma \in \{1, \ldots, d\}$, for all $t \geq 0$, the function $u^{\gamma}(t,\cdot)$ is nondecreasing, right continous with left limits.


\subsubsection{The Bianchini-Bressan conditions}\label{sss:BB} Given a function $\bu = (u^1, \ldots, u^d) : [0,+\infty) \times \R \to [0,1]^d$ such that, for all $\gamma \in \{1, \ldots, d\}$, for all $t \geq 0$, $u^{\gamma}(t, \cdot)$ is a CDF on the real line, we define the {\em total variation} of $\bu(t,\cdot)$ on the interval $(a,b) \subset \R$ by
\begin{equation*}
  \TV\{\bu(t,\cdot); (a,b)\} := \sum_{\gamma=1}^d \left(u^{\gamma}(t,b^-) - u^{\gamma}(t,a)\right).
\end{equation*}

For all $(\tau,\xi) \in [0,+\infty) \times \R$, we also denote:
\begin{itemize}
  \item $\bU^{\sharp}_{\bu;\tau,\xi} = (U^{\sharp,1}_{\bu;\tau,\xi}, \ldots, U^{\sharp,d}_{\bu;\tau,\xi})$ the solution, in the sense of Definition~\ref{defi:riemann}, to the Riemann problem~\eqref{eq:riemann} with $u^{\gamma}_- := u^{\gamma}(\tau, \xi^-)$ and $u^{\gamma}_+ := u^{\gamma}(\tau, \xi)$,
  \item $\bU^{\flat}_{\bu;\tau,\xi} = (U^{\flat,1}_{\bu;\tau,\xi}, \ldots, U^{\flat,d}_{\bu;\tau,\xi})$ the solution to the linear problem with constant coefficients
  \begin{equation*}
    \forall \gamma \in \{1, \ldots, d\}, \qquad \left\{\begin{aligned}
      & \partial_t v^{\gamma} + \lambda^{\gamma}\{\bu\}(\tau,\xi) \partial_x v^{\gamma} = 0,\\
      & v^{\gamma}(0,x) = u^{\gamma}(\tau,x).
    \end{aligned}\right.
  \end{equation*}
\end{itemize}
Note that, by the method of characteristics, we have, for all $\gamma \in \{1, \ldots, d\}$,
\begin{equation*}
  \forall t \geq 0, \qquad U^{\flat,\gamma}_{\bu;\tau,\xi}(t,x) = u^{\gamma}\left(\tau, x - \lambda^{\gamma}\{\bu\}(\tau,\xi)t\right).
\end{equation*}

The following definition is adapted from~\cite[Definition~15.1, p.~307]{bianbres}.

\begin{defi}[Bianchini-Bressan conditions]\label{defi:bianbres}
  Let Assumptions~\eqref{ass:C} and~\eqref{ass:USH} hold. Let $\bu = (u^1, \ldots, u^d) : [0,+\infty) \times \R \to [0,1]^d$ such that, for all $\gamma \in \{1, \ldots, d\}$, for all $t \geq 0$, $u^{\gamma}(t, \cdot)$ is a CDF on the real line, and the mapping $t \mapsto \bu(t,\cdot)$ is continuous in $\Ls^1_{\mathrm{loc}}(\R)^d$. The function $\bu$ is said to satisfy the {\em Bianchini-Bressan conditions} for the system~\eqref{eq:syst} if it satisfies the following estimates.
  \begin{enumerate}[label=(\roman*), ref=\roman*]
    \item Shock estimate: $\dd \tau$-almost everywhere on $[0,+\infty)$, for all $\xi \in \R$, for all $\beta' > 0$,
    \begin{equation*}
      \lim_{h \dto 0} \frac{1}{h} \int_{x=\xi-\beta'h}^{\xi+\beta'h} \sum_{\gamma=1}^d |u^{\gamma}(\tau+h,x) - U^{\sharp,\gamma}_{\bu;\tau,\xi}(h,x)| \dd x = 0.
    \end{equation*}
    \item Flat estimate: there exist $C > 0$ and $\beta > 0$ such that, for all $(\tau, \xi) \in [0,+\infty) \times \R$, for all $a,b \in \R$ such that $a < \xi < b$,
    \begin{equation*}
      \limsup_{h \dto 0} \frac{1}{h} \int_{x=a+\beta h}^{b-\beta h} \sum_{\gamma=1}^d |u^{\gamma}(\tau+h,x) - U^{\flat,\gamma}_{\bu;\tau,\xi}(h,x)| \dd x \leq C \left(\TV\{\bu(\tau, \cdot); (a,b)\}\right)^2.
    \end{equation*}
  \end{enumerate}
\end{defi}

In order to use the uniqueness result by Bianchini and Bressan~\cite[Section~15]{bianbres} in our setting, we first check that the probabilistic solutions to~\eqref{eq:syst} obtained in Proposition~\ref{prop:cSt} satisfy the Bianchini-Bressan conditions. This is done in Lemmas~\ref{lem:flat} and~\ref{lem:shock}.

\begin{lem}[Flat estimate]\label{lem:flat}
  Under Assumption~\eqref{ass:LC}, any probabilistic solution $\bu$ to~\eqref{eq:syst}, such that $t \mapsto \bu(t,\cdot)$ is continuous in $\Ls^1_{\mathrm{loc}}(\R)^d$, satisfies the flat estimate of Definition~\ref{defi:bianbres}.
\end{lem}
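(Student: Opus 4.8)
The plan is to control the left-hand side of the flat estimate by exploiting the fact that probabilistic solutions have finite speed of propagation (Corollary~\ref{cor:speedprop}) together with Lipschitz continuity of the characteristic fields (Assumption~\eqref{ass:LC}). First I would fix $(\tau,\xi)\in[0,+\infty)\times\R$ and $a<\xi<b$, and introduce the interval $I_h := (a+\beta h, b-\beta h)$ for a constant $\beta$ to be chosen later, larger than $\ConstBound{\infty}$. The linear profile $U^{\flat,\gamma}_{\bu;\tau,\xi}(h,x)=u^{\gamma}(\tau, x-\lambda^{\gamma}\{\bu\}(\tau,\xi)h)$ transports the datum $u^{\gamma}(\tau,\cdot)$ rigidly at the single velocity $\lambda^{\gamma}\{\bu\}(\tau,\xi)$, whereas the true solution $u^{\gamma}(\tau+h,\cdot)$ is, by Proposition~\ref{prop:traj} and Corollary~\ref{cor:speedprop}, squeezed between rigid translates of $u^{\gamma}(\tau,\cdot)$ at the extreme velocities $\ulambda^{\gamma}$ and $\olambda^{\gamma}$. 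The key quantitative input is that, as soon as $x\in I_h$ and $h$ is small, all the relevant quantiles $X^{\gamma}_v(\tau)$ for $v$ in the ``active'' range lie inside $(a,b)$, so the discrepancy between any two of these transport velocities is governed by how much the relevant argument of $\lambda^{\gamma}$ can vary over $(a,b)$ --- and that variation is bounded, via Assumption~\eqref{ass:LC}, by $\ConstLip$ times $\TV\{\bu(\tau,\cdot);(a,b)\}$.

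Concretely, the main step is the pointwise bound: for $\dd t$-a.e.\ $\tau$, for $x\in I_h$,
\begin{equation*}
  \sum_{\gamma=1}^d |u^{\gamma}(\tau+h,x)-U^{\flat,\gamma}_{\bu;\tau,\xi}(h,x)| \leq \sum_{\gamma=1}^d \left(u^{\gamma}(\tau, x-\ulambda^{\gamma}h) - u^{\gamma}(\tau, x-\olambda^{\gamma}h)\right) \vee \big(\text{analogous term}\big),
\end{equation*}
which I would then integrate in $x$ over $I_h$. Integrating $u^{\gamma}(\tau,x-\ulambda^{\gamma}h)-u^{\gamma}(\tau,x-\olambda^{\gamma}h)$ over an interval of length $|b-a|-2\beta h$ produces, by Fubini, a term of order $h(\olambda^{\gamma}-\ulambda^{\gamma})$ times the mass of $\partial_x u^{\gamma}(\tau,\cdot)$ over a slightly enlarged interval contained in $(a,b)$, i.e.\ of order $h\,\TV\{\bu(\tau,\cdot);(a,b)\}$. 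This by itself only gives a linear-in-TV bound, so the subtler point is that one must replace the extreme velocities $\ulambda^{\gamma},\olambda^{\gamma}$ by the true local velocity: using finite speed of propagation with the \emph{localised} velocity range $[\inf,\sup]$ of $\lambda^{\gamma}$ over the values $\{u^{\gamma'}(\tau,y): y\in(a,b), \gamma'\neq\gamma\}\times[u^{\gamma}(\tau,a),u^{\gamma}(\tau,b)]$, which by Assumption~\eqref{ass:LC} has diameter at most $\ConstLip\,\TV\{\bu(\tau,\cdot);(a,b)\}$, one gains a second factor of $\TV\{\bu(\tau,\cdot);(a,b)\}$, giving the desired $C\,(\TV\{\bu(\tau,\cdot);(a,b)\})^2$.

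I would choose $\beta$ so that $I_h$ stays strictly inside the cone of influence of $(a,b)$: since velocities are bounded by $\ConstBound{\infty}$, any $\beta > \ConstBound{\infty}$ ensures that for small $h$ the only part of the datum $u^{\gamma}(\tau,\cdot)$ affecting $u^{\gamma}(\tau+h,x)$ for $x\in I_h$ lies in $(a,b)$; moreover, for the comparison with $U^{\flat}$, since $|\lambda^{\gamma}\{\bu\}(\tau,\xi)|\leq\ConstBound{\infty}$ too, the translated arguments $x-\lambda^{\gamma}\{\bu\}(\tau,\xi)h$ also stay in $(a,b)$. The restriction to $\dd\tau$-a.e.\ $\tau$ enters because I want $\lambda^{\gamma}\{\bu\}(\tau,\cdot)$ to coincide with the actual velocity driving the trajectories; for the solutions of Proposition~\ref{prop:cSt}, which are limits of the MSPD, one can alternatively pass the corresponding discrete flat estimate for $\bu[\chi_n\bm]$ to the limit, but the cleaner route here is the direct continuous argument via Corollary~\ref{cor:speedprop}. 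The constants $C$ and $\beta$ come out depending only on $d$, $\ConstLip$ and $\ConstBound{\infty}$, hence are uniform as required. The main obstacle is the bookkeeping that upgrades the naive linear-in-TV estimate to the quadratic one: one has to carefully track which arguments of $\lambda^{\gamma}$ are frozen at $\xi$ versus varying over $(a,b)$, and combine finite speed of propagation (to confine the quantiles) with the Lipschitz bound (to quantify the velocity spread) so that both factors of $\TV\{\bu(\tau,\cdot);(a,b)\}$ genuinely appear.
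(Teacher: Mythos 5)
Your first step (sandwiching $u^{\gamma}(\tau+h,\cdot)$ between rigid translates of $u^{\gamma}(\tau,\cdot)$ at the extreme speeds $\ulambda^{\gamma},\olambda^{\gamma}$ via Corollary~\ref{cor:speedprop}) and your diagnosis that this only gives a linear-in-TV bound are both correct. The gap is in the proposed upgrade: you want to replace $\ulambda^{\gamma},\olambda^{\gamma}$ by a velocity window of width $\ConstLip\,\TV\{\bu(\tau,\cdot);(a,b)\}$ via a ``localised finite speed of propagation,'' but for a general probabilistic solution no such localisation is available. Proposition~\ref{prop:traj} and Corollary~\ref{cor:speedprop} only assert $\ulambda^{\gamma}\leq\dot X^{\gamma}_v\leq\olambda^{\gamma}$; pinning $\dot X^{\gamma}_v(t)$ to the local value $\lambda^{\gamma}\{\bu\}(t,X^{\gamma}_v(t))$ is exactly the content of Proposition~\ref{prop:vitesses}, which requires $\bu$ to be a renormalised solution, and the paper explicitly leaves open whether probabilistic solutions are renormalised in general. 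So the Lagrangian/trajectory route cannot deliver the second factor of $\TV$ under the stated hypotheses.

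The paper avoids this by working in Eulerian form, directly on the weak formulation. Setting $w^{\gamma}=u^{\gamma}(\tau+\cdot,\cdot)-U^{\flat,\gamma}$ and $\bar w^{\gamma}(h,y)=w^{\gamma}(h,y+\hat\lambda^{\gamma}h)$, the PDE gives $\partial_h\bar w^{\gamma}=(\hat\lambda^{\gamma}-\lambda^{\gamma}\{\bu\}(\tau+h,\cdot))\,\dd_x u^{\gamma}(\tau+h,\cdot)$ with $\bar w^{\gamma}(0,\cdot)=0$. One then integrates $|\partial_h\bar w^{\gamma}|$ over the truncated cone. The \emph{global} finite speed of propagation (your step 1) is used only to confine the argument of $\lambda^{\gamma}\{\bu\}$ to $(a,b)$; Assumption~\eqref{ass:LC} then bounds $|\hat\lambda^{\gamma}-\lambda^{\gamma}\{\bu\}|\leq\ConstLip\,\TV$, and the remaining integral of $\dd_x u^{\gamma}$ over the cone is again $\leq\TV$ by the same confinement. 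Both factors of $\TV$ come from the global cone estimate plus the Lipschitz bound on the velocity coefficient appearing in the nonconservative product — not from any localised trajectory-speed bound. If you reformulate your argument around $\partial_h\bar w^{\gamma}$ rather than around a refined sandwich for the quantiles, your ingredients (finite speed of propagation, Lipschitz continuity of $\blambda$, Fubini over the cone) become exactly what is needed, and the proof closes.
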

\begin{proof}
  Let $\bu$ be a probabilistic solution to~\eqref{eq:syst} such that $t \mapsto \bu(t,\cdot)$ is continuous in $\Ls^1_{\mathrm{loc}}(\R)^d$, and let us fix $(\tau,\xi) \in [0,+\infty) \times \R$, $a,b \in \R$ such that $a < \xi < b$. For all $\gamma \in \{1, \ldots, d\}$, for all $(h,x) \in [0,+\infty) \times \R$, let us define
  \begin{equation*}
    w^{\gamma}(h,x) := u^{\gamma}(\tau+h,x) - U^{\flat, \gamma}_{\bu;\tau,\xi}(h,x),
  \end{equation*}
  and let us also denote $\hat{\lambda}^{\gamma} := \lambda^{\gamma}\{\bu\}(\tau,\xi)$. Then we have, for $h > 0$ small enough,
  \begin{equation*}
    \frac{1}{h} \int_{x=a+\beta h}^{b-\beta h} \sum_{\gamma=1}^d |u^{\gamma}(\tau+h,x) - U^{\flat,\gamma}_{\bu;\tau,\xi}(h,x)| \dd x = \sum_{\gamma=1}^d \frac{1}{h} \int_{x=a+\beta h}^{b-\beta h}  |w^{\gamma}(h,x)| \dd x.
  \end{equation*}
  Besides, in the distributional sense,
  \begin{equation*}
    \begin{aligned}
      \partial_h w^{\gamma}(h,x) & = -\lambda^{\gamma}\{\bu\}(\tau+h,x)\dd_x u^{\gamma}(\tau+h,x) + \hat{\lambda}^{\gamma} \partial_x U^{\flat, \gamma}_{\bu;\tau,\xi}(h,x)\\
      & = \left(\hat{\lambda}^{\gamma} - \lambda^{\gamma}\{\bu\}(\tau+h,x)\right)\dd_x u^{\gamma}(\tau+h,x) - \hat{\lambda}^{\gamma} \partial_x w^{\gamma}(h,x).
    \end{aligned}
  \end{equation*}
  As a consequence, letting $\bar{w}^{\gamma}(h,y) := w^{\gamma}(h, y + \hat{\lambda}^{\gamma} h)$, we obtain
  \begin{equation*}
    \partial_h \bar{w}^{\gamma}(h,y) = \left(\hat{\lambda}^{\gamma} - \lambda^{\gamma}\{\bu\}(\tau+h,y + \hat{\lambda}^{\gamma} h)\right)\dd_x u^{\gamma}(\tau+h,y + \hat{\lambda}^{\gamma} h),
  \end{equation*}
  while $\bar{w}^{\gamma}(0,y)=0$. We deduce that, for any $\beta > 0$ and for $h>0$ small enough,
  \begin{equation*}
    \begin{aligned}
      & \frac{1}{h} \int_{x=a+\beta h}^{b-\beta h}  |w^{\gamma}(h,x)| \dd x = \frac{1}{h} \int_{y=a+(\beta-\hat{\lambda}^{\gamma}) h}^{b-(\beta+\hat{\lambda}^{\gamma}) h}  |\bar{w}^{\gamma}(h,y)| \dd y\\
      & \qquad \leq \frac{1}{h} \int_{y=a+(\beta-\hat{\lambda}^{\gamma}) h}^{b-(\beta+\hat{\lambda}^{\gamma}) h} \int_{h'=0}^h |\partial_h \bar{w}^{\gamma}(h',y)| \dd h' \dd y\\
      & \qquad = \frac{1}{h} \int_{h'=0}^h \int_{y=a+(\beta-\hat{\lambda}^{\gamma}) h}^{b-(\beta+\hat{\lambda}^{\gamma}) h} \left|\hat{\lambda}^{\gamma} - \lambda^{\gamma}\{\bu\}(\tau+h',y + \hat{\lambda}^{\gamma} h')\right|\dd_x u^{\gamma}(\tau+h',y + \hat{\lambda}^{\gamma} h')\dd h'.
    \end{aligned}
  \end{equation*}
  For the sake of clarity, we kept the last computation at the formal level, but it can be made rigorous by using suitable test functions.
  
  We now estimate the integral term in the right-hand side above. To this aim, we fix $\beta > \ConstBound{\infty}$, so that $\beta - \hat{\lambda}^{\gamma} > 0$ and $\beta + \hat{\lambda}^{\gamma} > 0$. Then, for all $0 \leq h' \leq h$, for all $y \in [a+(\beta-\hat{\lambda}^{\gamma}) h, b-(\beta+\hat{\lambda}^{\gamma}) h]$, letting $x := y + \hat{\lambda}^{\gamma} h'$ yields
  \begin{equation*}
    \begin{aligned}
      & \left|\hat{\lambda}^{\gamma} - \lambda^{\gamma}\{\bu\}(\tau+h',y + \hat{\lambda}^{\gamma} h')\right| = \left|\lambda^{\gamma}\{\bu\}(\tau,\xi) - \lambda^{\gamma}\{\bu\}(\tau+h',x)\right|\\
      & \qquad \leq \int_{\theta=0}^1 \left| \lambda^{\gamma}\left(u^1(\tau,\xi), \ldots, (1-\theta)u^{\gamma}(\tau,\xi^-) + \theta u^{\gamma}(\tau,\xi), \ldots, u^d(\tau,\xi)\right)\right.\\
      & \qquad\quad  - \left. \lambda^{\gamma}\left(u^1(\tau+h',x), \ldots, (1-\theta)u^{\gamma}(\tau+h',x^-) + \theta u^{\gamma}(\tau+h',x), \ldots, u^d(\tau+h',x)\right)\right|\dd \theta\\
      & \qquad\leq \ConstLip \sum_{\gamma' \not= \gamma} \left|u^{\gamma'}(\tau,\xi) - u^{\gamma'}(\tau+h',x)\right|\\
      & \qquad\quad + \frac{\ConstLip}{2}\left(\left|u^{\gamma}(\tau,\xi^-) - u^{\gamma}(\tau+h',x^-)\right| + \left|u^{\gamma}(\tau,\xi) - u^{\gamma}(\tau+h',x)\right|\right).
    \end{aligned}
  \end{equation*}
  We now prove that, for all $\gamma' \in \{1, \ldots, d\}$, 
  \begin{equation}\label{eq:pfflat:1}
    u^{\gamma'}(\tau, a) \leq u^{\gamma'}(\tau+h', x^-) \qquad \text{and} \qquad u^{\gamma'}(\tau+h', x) \leq u^{\gamma'}(\tau, b^-),
  \end{equation}
  which, on account of the estimation above, implies that
  \begin{equation*}
    \left|\hat{\lambda}^{\gamma} - \lambda^{\gamma}\{\bu\}(\tau+h',y + \hat{\lambda}^{\gamma} h')\right| \leq \ConstLip \TV\{\bu(\tau,\cdot); (a,b)\}.
  \end{equation*}
  By Corollary~\ref{cor:speedprop}, we already have $u^{\gamma'}(\tau,a) \leq u^{\gamma'}(\tau+h', a + \bar{\lambda}^{\gamma'} h')$ and $u^{\gamma'}(\tau, b^-) \geq u^{\gamma'}(\tau+h', (b+\ulambda^{\gamma'}h')^-)$. To obtain~\eqref{eq:pfflat:1}, it now suffices to show that $a + \olambda^{\gamma'} h' < x < b+\ulambda^{\gamma'}h'$. Recalling the definition of $x = y + \hat{\lambda}^{\gamma} h'$ with $a+(\beta-\hat{\lambda}^{\gamma}) h \leq y \leq b-(\beta+\hat{\lambda}^{\gamma}) h$, we deduce that the conclusion follows from the inequalities
  \begin{equation*}
    \olambda^{\gamma'} h' + \hat{\lambda}^{\gamma}(h-h') < \beta h \qquad \text{and} \qquad -\ulambda^{\gamma'} h' - \hat{\lambda}^{\gamma}(h-h') < \beta h,
  \end{equation*}
  which are due to the facts that $\beta > \ConstBound{\infty}$ and $0 \leq h' \leq h$, with $h>0$. As a consequence,
  \begin{equation*}
    \frac{1}{h} \int_{x=a+\beta h}^{b-\beta h}  |w^{\gamma}(h,x)| \dd x \leq \ConstLip \TV\{\bu(\tau,\cdot); (a,b)\} \frac{1}{h} \int_{h'=0}^h \int_{y=a+(\beta-\hat{\lambda}^{\gamma}) h}^{b-(\beta+\hat{\lambda}^{\gamma}) h} \dd_x u^{\gamma}(\tau+h',y + \hat{\lambda}^{\gamma} h')\dd h'.
  \end{equation*}
  But for $0 \leq h' \leq h$, 
  \begin{equation*}
    \begin{aligned}
      \int_{y=a+(\beta-\hat{\lambda}^{\gamma}) h}^{b-(\beta+\hat{\lambda}^{\gamma}) h} \dd_x u^{\gamma}(\tau+h',y + \hat{\lambda}^{\gamma} h') & = u^{\gamma}(\tau+h', b-(\beta+\hat{\lambda}^{\gamma}) h + \hat{\lambda}^{\gamma} h')\\
      & \quad - u^{\gamma}(\tau+h',a+(\beta-\hat{\lambda}^{\gamma}) h + \hat{\lambda}^{\gamma} h')\\
      & \leq u^{\gamma}(\tau,b^-) - u^{\gamma}(\tau,a),
    \end{aligned} 
  \end{equation*}
  by the same arguments as for the proof of~\eqref{eq:pfflat:1}. We finally deduce that
  \begin{equation*}
    \sum_{\gamma=1}^d \frac{1}{h} \int_{x=a+\beta h}^{b-\beta h}  |w^{\gamma}(h,x)| \dd x \leq \ConstLip \left(\TV\{\bu(\tau,\cdot); (a,b)\}\right)^2,
  \end{equation*}
  which completes the proof.
\end{proof}

\begin{lem}[Shock estimate]\label{lem:shock}
  Under Assumptions~\eqref{ass:LC} and~\eqref{ass:USH}, let $\bm^* \in \Ps(\R)^d$ and let $(\bS_t)_{t \geq 0}$ be a semigroup on $\dP_{\bm^*}$ obtained by Proposition~\ref{prop:cSt}. Then, for all $\bm \in \dP_{\bm^*}$, the function $\bu = (u^1, \ldots, u^d) : [0,+\infty) \times \R \to [0,1]^d$ defined by $u^{\gamma}(t,x) := (H*S^{\gamma}_t\bm)(x)$ satisfies the Bianchini-Bressan conditions of Definition~\ref{defi:bianbres}.
\end{lem}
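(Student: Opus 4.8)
The plan is to exploit the fact that near any space-time point $(\tau,\xi)$, the discrete approximation $\bu[\chi_{n_\ell}\bm]$ behaves, in the rescaled variables, like the Typewise Sticky Particle Dynamics. Indeed, the shock estimate is a purely local statement: fixing $\tau$ (outside a negligible set) and $\xi$, and looking at the evolution on $[\tau,\tau+h]$ near $\xi$ as $h\downarrow 0$, the relevant data are the left and right limits $u^\gamma_\pm := u^\gamma(\tau,\xi^\mp)$ and $u^\gamma(\tau,\xi)$. Because of Assumption~\eqref{ass:USH}, the jump data of distinct coordinates are generically separated in space for positive times (Proposition~\ref{prop:tightness}), so after a parabolic-type rescaling $x \mapsto \xi + hx$, $t\mapsto \tau + ht$, the MSPD started at $\chi_{n_\ell}\bm$ near $(\tau,\xi)$ is well approximated by $d$ \emph{uncoupled} Sticky Particle systems, one per type. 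This is precisely the content of Lemma~\ref{lem:EntropyTSPD}: the Typewise Sticky Particle Dynamics converges to the vector of entropy solutions of the decoupled scalar conservation laws~\eqref{eq:EntropyTSPD:scl} with fluxes~\eqref{eq:tLambda}. When evaluated at the Riemann data $u^\gamma_\pm$, the flux~\eqref{eq:tLambda} reduces exactly to~\eqref{eq:LambdaRiemann}, so the limit is $\bU^\sharp_{\bu;\tau,\xi}$, the solution to the Riemann problem in the sense of Definition~\ref{defi:riemann}.

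First I would establish the good set of times: $\dd\tau$-almost everywhere, for every $\gamma$ the measure $S^\gamma_\tau\bm$ has atoms disjoint from those of $S^{\gamma'}_\tau\bm$ for $\gamma'\neq\gamma$ (this follows from Proposition~\ref{prop:tightness} and the identification of $\bS$ in Proposition~\ref{prop:cSt}), and moreover $\tau$ is a Lebesgue point of the $\Ls^1_{\mathrm{loc}}$-continuous map $t\mapsto\bu(t,\cdot)$. Fix such a $\tau$ and a point $\xi\in\R$ and a speed $\beta'>0$. I would then use the semigroup property $\bS_{\tau+h}\bm = \bS_h\bS_\tau\bm$ to reduce to the question of the short-time behaviour of $\bS_h$ applied to the measure $\bm_\tau := \bS_\tau\bm$, whose CDF is $\bu(\tau,\cdot)$. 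Next, writing $\bu(\tau+h,\cdot) = H*\bS_h\bm_\tau$ and recalling that $\bS_h\bm_\tau$ is the weak limit (in $\Ms$, hence in marginals) of $\upmu_h[\chi_{n_\ell}\bm_\tau]$, I would compare the MSPD started at $\chi_{n_\ell}\bm_\tau$ with the Typewise Sticky Particle Dynamics started at the same configuration, on the time window $[0,h]$: as long as no inter-type collision among the particles \emph{near} $\xi$ has occurred, the two coincide there, and the separation of jump data combined with the uniform finite speed~\eqref{eq:typeencadrelambda} and Lemma~\ref{lem:tinter} guarantees that collisions between the relevant blocks of different types occur only after a time of order at least $\mathrm{const}\cdot(\text{spatial gap})/\ConstUSH$, so in the rescaled window $x\in[\xi-\beta' h,\xi+\beta' h]$ they do not interfere for $h$ small. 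Applying Lemma~\ref{lem:EntropyTSPD} to the rescaled data, I conclude that
\begin{equation*}
  \frac{1}{h}\int_{x=\xi-\beta' h}^{\xi+\beta' h}\sum_{\gamma=1}^d |u^\gamma(\tau+h,x) - U^{\sharp,\gamma}_{\bu;\tau,\xi}(h,x)|\,\dd x \longrightarrow 0
\end{equation*}
as $h\downarrow 0$, which is exactly the shock estimate. The flat estimate was already proved in Lemma~\ref{lem:flat}, and $t\mapsto\bu(t,\cdot)$ is continuous in $\Ls^1_{\mathrm{loc}}(\R)^d$ by the $\Ws^{(d)}_1$-continuity in Proposition~\ref{prop:cSt}\eqref{it:cSt:2} together with Remark~\ref{rk:contL1loc}, so both conditions of Definition~\ref{defi:bianbres} hold.

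The main obstacle I anticipate is making the ``rescaling plus decoupling'' argument rigorous: one must control uniformly in $n_\ell$ (not just in the $n\to\infty$ limit) the fact that the particles near $\xi$ that belong to different types do not collide within the rescaled box, and that particles far from $\xi$ never enter it during $[0,h]$ — both quantified by the finite propagation speed $\ConstBound{\infty}$ and the strict hyperbolicity gap $\ConstUSH$. A clean way is: for $\tau$ as above, pick $\epsilon>0$ with $\beta' h < \epsilon$ small so that on $[\xi-\epsilon,\xi+\epsilon]$ at time $\tau$ only jumps of a single coordinate $\gamma_0$ (or no jump) can accumulate, then note that after passing to the limit the $\gamma$-components with $\gamma\neq\gamma_0$ contribute zero to the rescaled integral because $U^{\sharp,\gamma}_{\bu;\tau,\xi}(h,\cdot)$ is constant $=u^\gamma(\tau,\xi)$ on the $h$-cone and $u^\gamma(\tau+h,\cdot)\to u^\gamma(\tau,\cdot)$ in $\Ls^1_{\mathrm{loc}}$ at the Lebesgue point $\tau$; for $\gamma=\gamma_0$ the one-dimensional Kru\v zkov theory (Proposition~\ref{prop:kruzkov}) together with Lemma~\ref{lem:EntropyTSPD} applied to the single coordinate gives the result. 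I would then combine these two cases; the bookkeeping of which particles lie in which rescaled region, and the interchange of the $n_\ell\to\infty$ limit with the $h\downarrow 0$ limit (handled by a diagonal argument using the already-established $\Ws_1$ stability, Theorem~\ref{theo:stabMSPD}), constitute the technical heart of the proof.
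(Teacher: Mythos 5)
Your plan correctly identifies the main ingredients — Lemma~\ref{lem:EntropyTSPD}, the MSPD/Typewise-SPD coincidence in a short cone, the $\Ws_1$-Lipschitz stability, and the strict-hyperbolicity decoupling — but it misses the key structural step of the paper's proof, and as written it would not close.

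The paper does not apply Lemma~\ref{lem:EntropyTSPD} to (the discretisation of) $\bm_\tau$ directly. Instead it first establishes a \emph{localised} $\Ls^1$ stability estimate against the Riemann solution,
\begin{equation*}
  \int_{a+\ConstBound{\infty}h}^{b-\ConstBound{\infty}h}\sum_\gamma |u^\gamma(\tau+h,x)-U^{\sharp,\gamma}_{\bu;\tau,\xi}(h,x)|\,\dd x
  \leq \ConstStab_1 \int_a^b \sum_\gamma |u^\gamma(\tau,x)-U^{\sharp,\gamma}_{\bu;\tau,\xi}(0,x)|\,\dd x,
\end{equation*}
whose proof \emph{replaces} $\bm_\tau$ by a Riemann-profile datum $\bar\bm_\tau$ that agrees with $\bm_\tau$ outside $(a,b)$ and equals the step $u^\gamma(\tau,\xi^-)\ind{x<\xi}+u^\gamma(\tau,\xi)\ind{x\geq\xi}$ inside. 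The $\Ws_1$-Lipschitz estimate of Proposition~\ref{prop:cSt}\eqref{it:cSt:2} (not a ``diagonal argument to interchange limits'') then reduces the problem to showing that $H*S^\gamma_h\bar\bm_\tau$ \emph{coincides} with $U^{\sharp,\gamma}$ on the inner cone, which is where Lemma~\ref{lem:EntropyTSPD} is invoked. Two things make this replacement essential, and both are gaps in your proposal.

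First, if you apply Lemma~\ref{lem:EntropyTSPD} to $\bm_\tau$ you obtain the entropy solution with initial datum $u^\gamma(\tau,\cdot)$ and flux $\tilde\Lambda^\gamma$ built from the whole profile, not the Riemann solution $U^{\sharp,\gamma}$. You would still need a quantitative ``tangent Riemann problem'' argument to compare the two as $h\downarrow 0$, and you would have to verify that the flux $\tilde\Lambda^\gamma$ agrees with $\Lambda^\gamma$ from Definition~\ref{defi:riemann} on the relevant range — the paper does the latter for the modified datum $\bar u^\gamma_0$, where it holds trivially because $(\bar u^\gamma_0)^{-1}$ is constant equal to $\xi$ on $(u^\gamma(\tau,\xi^-),u^\gamma(\tau,\xi))$. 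Second, your MSPD $=$ Typewise-SPD coincidence in the cone is not justified for the discretisation of $\bm_\tau$: the good-set condition on $\tau$ only separates the \emph{atoms} of different types, but the non-atomic parts of $m^{\gamma'}_\tau$, $\gamma'\neq\gamma_0$, can still supply discretised particles arbitrarily close to $\xi$ that undergo inter-type collisions with the $\gamma_0$-block inside the cone $[\xi-\beta'h,\xi+\beta'h]\times[0,h]$. For the modified datum $\bar\bm_\tau$, all discretised particles of every type whose positions land in $(a,b)$ are collapsed to the single point $\xi$; Assumption~\eqref{ass:USH} then pushes them apart into disjoint space-time cones instantaneously, so MSPD $=$ Typewise SPD holds exactly on the inner cone (this is~\eqref{eq:pfshock:HH}) and the verification of hypothesis~\eqref{ass:star:EntropyTSPD} of Lemma~\ref{lem:EntropyTSPD} becomes elementary. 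Without this replacement step, your argument leaves a genuine hole that your phrase ``constitutes the technical heart of the proof'' acknowledges but does not fill.
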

\begin{proof}
  Let $\bm^* \in \Ps(\R)^d$ and let $(\bS_t)_{t \geq 0}$ be defined on $\dP_{\bm^*}$ by $S^{\gamma}_t\bm = \lim_{\ell \to +\infty} \upmu[\chi_{n_{\ell}}\bm]$, where the sequence $(n_{\ell})_{\ell \geq 1}$ is extracted in Proposition~\ref{prop:cSt}. Let us fix $\bm \in \dP_{\bm^*}$ and write $u^{\gamma}(t, \cdot) = H*S^{\gamma}_t\bm$. By Proposition~\ref{prop:cSt} and Remark~\ref{rk:contL1loc}, the function $t \mapsto \bu(t, \cdot)$ is continuous in $\Ls^1_{\mathrm{loc}}(\R)^d$; besides, $\bu$ is a probabilistic solution to~\eqref{eq:syst}, therefore Lemma~\ref{lem:flat} implies that it satisfies the flat estimates. 
  
  Hence, it only remains to check that $\bu$ satisfies the shock estimate. Following the lines of the proof of the necessity part in~\cite[Lemma~15.2, p.~308]{bianbres}, the crucial argument is the following {\em localised} stability estimate with respect to the solution of the Riemann problem: $\dd\tau$-almost everywhere on $[0,+\infty)$, for all $\xi \in \R$, for all $a,b \in \R$ with $a < \xi < b$, we have, for $h$ small enough,
  \begin{equation}\label{eq:pfshock:1}
    \int_{x=a+\ConstBound{\infty} h}^{b-\ConstBound{\infty} h} \sum_{\gamma=1}^d |u^{\gamma}(\tau+h,x) - U^{\sharp,\gamma}_{\bu;\tau,\xi}(h,x)| \dd x \leq \ConstStab_1 \int_{x=a}^b \sum_{\gamma=1}^d |u^{\gamma}(\tau,x) - U^{\sharp,\gamma}_{\bu;\tau,\xi}(0,x)| \dd x.
  \end{equation}
  The proof of~\eqref{eq:pfshock:1} is postponed below. Taking this estimate for granted, let us fix $(\tau,\xi)$ as above and let and $\beta' > 0$. Applying~\eqref{eq:pfshock:1} with $a=\xi-(\ConstBound{\infty}+\beta')h$, $b=\xi+(\ConstBound{\infty}+\beta')h$ and $h>0$ small enough, we obtain
  \begin{equation*}
    \begin{aligned}
      \int_{x=\xi-\beta' h}^{\xi+\beta' h} \sum_{\gamma=1}^d |u^{\gamma}(\tau+h,x) - U^{\sharp,\gamma}_{\bu;\tau,\xi}(h,x)| \dd x & \leq \ConstStab_1 \int_{x=\xi-(\ConstBound{\infty}+\beta')h}^{\xi} \sum_{\gamma=1}^d |u^{\gamma}(\tau,x) - u^{\gamma}(\tau,\xi^-)| \dd x\\
      & \quad + \ConstStab_1 \int_{x=\xi}^{\xi+(\ConstBound{\infty}+\beta')h} \sum_{\gamma=1}^d |u^{\gamma}(\tau,x) - u^{\gamma}(\tau,\xi)| \dd x.
    \end{aligned}
  \end{equation*}
  For all $\gamma \in \{1, \ldots, d\}$, the monotonicity of the function $u^{\gamma}(\tau,\cdot)$ yields
  \begin{equation*}
    \begin{aligned}
      & \int_{x=\xi-(\ConstBound{\infty}+\beta')h}^{\xi} |u^{\gamma}(\tau,x) - u^{\gamma}(\tau,\xi^-)| \dd x \leq (\ConstBound{\infty}+\beta')h \left\{u^{\gamma}(\tau,\xi^-)-u^{\gamma}(\tau,\xi-(\ConstBound{\infty}+\beta')h)\right\},\\
      & \int_{x=\xi}^{\xi+(\ConstBound{\infty}+\beta')h} |u^{\gamma}(\tau,x) - u^{\gamma}(\tau,\xi^-)| \dd x \leq (\ConstBound{\infty}+\beta')h \left\{u^{\gamma}(\tau,\xi+(\ConstBound{\infty}+\beta')h)-u^{\gamma}(\tau,\xi)\right\},
    \end{aligned}
  \end{equation*}
  and, since the function $u^{\gamma}(\tau,\cdot)$ is right continuous and has left limits, the braced terms vanish with $h$. As a conclusion, 
  \begin{equation*}
    \lim_{h \dto 0} \frac{1}{h} \int_{x=\xi-\beta' h}^{\xi+\beta' h} \sum_{\gamma=1}^d |u^{\gamma}(\tau+h,x) - U^{\sharp,\gamma}_{\bu;\tau,\xi}(h,x)| \dd x = 0,
  \end{equation*}
  \ie $\bu$ satisfies the shock estimate.
  
  \sk
  \noindent {\em Proof of~\eqref{eq:pfshock:1}.} By Proposition~\ref{prop:tightness}, $\dd\tau$-almost everywhere, we have 
  \begin{equation*}
    \forall x \in \R, \qquad \Delta_x u^{\gamma}(\tau,x)\Delta_x u^{\gamma'}(\tau,x) = 0,
  \end{equation*}
  for all $\gamma, \gamma' \in \{1, \ldots, d\}$ such that $\gamma \not= \gamma'$. Let us fix $\tau \geq 0$ such that the condition above is satisfied, and $a,b, \xi \in \R$ with $a < \xi < b$. We first define $\bar{\bu}_0 = (\bar{u}^1_0, \ldots, \bar{u}^d_0)$ by
  \begin{equation*}
    \bar{u}^{\gamma}_0(x) := \begin{cases}
      u^{\gamma}(\tau, x) & \text{if $x < a$,}\\
      u^{\gamma}(\tau,\xi^-) & \text{if $a \leq x < \xi$,}\\ 
      u^{\gamma}(\tau,\xi) & \text{if $\xi \leq x < b$,}\\ 
      u^{\gamma}(\tau,x) & \text{if $b \leq x$.} 
    \end{cases}
  \end{equation*}
  We also define the vectors $\bm_{\tau}$ and $\bar{\bm}_{\tau}$ in $\Ps(\R)^d$ by $u^{\gamma}(\tau,\cdot) = H*m^{\gamma}_{\tau}$ and $\bar{u}^{\gamma}_0 = H*\bar{m}^{\gamma}_{\tau}$. It is straightforward that $\Ws^{(d)}_1(\bm_{\tau}, \bar{\bm}_{\tau}) < +\infty$, and by Lemma~\ref{lem:bmuindP}, we have $\bm_{\tau}, \bar{\bm}_{\tau} \in \dP_{\bm^*}$. As a consequence, we deduce from Proposition~\ref{prop:cSt} that, for $h>0$ small enough,
  \begin{equation*}
    \begin{aligned}
      \int_{x=a+\ConstBound{\infty} h}^{b-\ConstBound{\infty} h} \sum_{\gamma=1}^d |u^{\gamma}(\tau+h,x) - (H*S^{\gamma}_h\bar{\bm}_{\tau})(x)|\dd x & \leq \Ws^{(d)}_1(\bS_h\bm_{\tau}, \bS_h\bar{\bm}_{\tau})\\
      & \leq \ConstStab_1 \Ws^{(d)}_1(\bm_{\tau}, \bar{\bm}_{\tau})\\
      & = \ConstStab_1 \sum_{\gamma=1}^d \int_{x \in \R} |u^{\gamma}(\tau,x) - \bar{u}^{\gamma}_0(x)|\dd x\\
      & = \ConstStab_1 \int_{x=a}^b \sum_{\gamma=1}^d|u^{\gamma}(\tau,x) - U^{\sharp, \gamma}_{\bu;\tau,\xi}(0,x)|\dd x,
    \end{aligned}
  \end{equation*}
  where we have used~\eqref{eq:L1W1} to identify the $\Ws_1$ distance between probability measures with the $\Ls^1$ distance of their CDFs. Hence, to complete the proof of~\eqref{eq:pfshock:1} it remains to prove that, for small times $h$, $U^{\sharp, \gamma}_{\bu;\tau,\xi}$ coincides with $H*S^{\gamma}_h\bar{\bm}_{\tau}$ on $(a+\ConstBound{\infty} h, b-\ConstBound{\infty} h)$.
  
  For all $n \geq 1$, we denote $\x(n) = \chi_n\bar{\bm}_{\tau}$, so that
  \begin{equation*}
    x_k^{\gamma}(n) = (n+1) \int_{w=(2k-1)/(2(n+1))}^{(2k+1)/(2(n+1))} (\bar{u}^{\gamma}_0)^{-1}(w) \dd w.
  \end{equation*}
  Let us define $\y(n) \in \Dnd$ by
  \begin{equation*}
    y_k^{\gamma}(n) := \begin{cases}
      x_k^{\gamma}(n) & \text{if $x_k^{\gamma}(n) \not\in (a,b)$,}\\
      \xi & \text{if $x_k^{\gamma}(n) \in (a,b)$.}\\
    \end{cases}
  \end{equation*}
  We first show that, for all $h \geq 0$, $S^{\gamma}_h\bar{\bm}_{\tau}$ is the weak limit, in $\Ps(\R)$, of $\upmu^{\gamma}_h[\y(n_{\ell})]$, where we recall that sequence $(n_{\ell})_{\ell \geq 1}$ is given by Proposition~\ref{prop:cSt}. To this aim, we note that, for all $\gamma \in \{1, \ldots, d\}$, since $\bar{u}^{\gamma}_0$ is flat on $(a,\xi)$ and $(\xi,b)$, then $x_k^{\gamma}(n) \in (a, \xi)$ only if $2(k-1)/(2(n+1)) < u^{\gamma}(\tau,\xi^-) < 2(k+1)/(2(n+1))$, so that there is at most one such $x_k^{\gamma}(n)$. Similarly, there is at most one $x_k^{\gamma}(n)$ in $(\xi,b)$. Hence,
  \begin{equation*}
    ||\x(n)-\y(n)||_1 \leq \frac{d(b-a)}{n}.
  \end{equation*}
  As a consequence, and recalling the Definition~\ref{defi:tWs} of the modified Wasserstein distance $\tilde{\Ws}_1$,
  \begin{equation*}
    \begin{aligned}
      \sum_{\gamma=1}^d \tilde{\Ws}_1(S^{\gamma}_h\bar{\bm}_{\tau}, \upmu^{\gamma}_h[\y(n_{\ell})]) & \leq \sum_{\gamma=1}^d \left(\tilde{\Ws}_1(S^{\gamma}_h\bar{\bm}_{\tau}, \upmu^{\gamma}_h[\x(n_{\ell})]) + \tilde{\Ws}_1(\upmu^{\gamma}_h[\x(n_{\ell})], \upmu^{\gamma}_h[\y(n_{\ell})])\right)\\
      & \leq \sum_{\gamma=1}^d \left(\tilde{\Ws}_1(S^{\gamma}_h\bar{\bm}_{\tau}, \upmu^{\gamma}_h[\x(n_{\ell})]) + \Ws_1(\upmu^{\gamma}_h[\x(n_{\ell})], \upmu^{\gamma}_h[\y(n_{\ell})])\right)\\
      & \leq \sum_{\gamma=1}^d \tilde{\Ws}_1(S^{\gamma}_h\bar{\bm}_{\tau}, \upmu^{\gamma}_h[\x(n_{\ell})]) + \ConstStab_1||\x(n_{\ell})-\y(n_{\ell})||_1,
    \end{aligned}
  \end{equation*}
  and the right-hand side above vanishes when $\ell$ grows to infinity thanks to Lemma~\ref{lem:cvbS}.
  
  We now set 
  \begin{equation*}
    h_{\max} := \frac{(\xi-a) \wedge (b-\xi)}{2\ConstBound{\infty}},
  \end{equation*}
  and show that, for all $h \in [0,h_{\max})$, for all $x \in (a+\ConstBound{\infty} h, b-\ConstBound{\infty} h)$,
  \begin{equation}\label{eq:pfshock:HH}
    H*\upmu^{\gamma}_h[\y(n)](x) = H*\tilde{\upmu}^{\gamma}_h[\y(n)](x),
  \end{equation}
  where $\tilde{\upmu}[\y(n)]$ refers to the empirical distribution of the Typewise Sticky Particle Dynamics started at $\y(n)$. This follows from the observation that, for all $\gamma:k \in \Part$, both $\Phi_k^{\gamma}(\y(n);h)$ and $\tPhi_k^{\gamma}[\tblambda(\y(n))](\y(n);h)$ remain between $y_k^{\gamma} - \ConstBound{\infty} h$ and $y_k^{\gamma} + \ConstBound{\infty} h$, so that:
  \begin{itemize}
    \item if $y_k^{\gamma}(n) \leq a$, then $\Phi_k^{\gamma}(\y(n);h) \leq x$ and $\tPhi_k^{\gamma}[\tblambda(\y(n))](\y(n);h) \leq x$,
    \item if $y_k^{\gamma}(n) > b$, then $\Phi_k^{\gamma}(\y(n);h) > x$ and $\tPhi_k^{\gamma}[\tblambda(\y(n))](\y(n);h) > x$.
  \end{itemize}
  Finally, if $y_k^{\gamma}(n) = \xi$, then the definition of $h_{\max}$ ensures that, on the time interval $[0,h_{\max})$, the particle $\gamma:k$ cannot collide with another particle having an initial position which is not $\xi$. Besides by Assumption~\eqref{ass:USH}, this particle evolves in a space-time cone that is disjoint of the space-time cones in which particles of other types evolve, so that $\Phi_k^{\gamma}(\y(n);h) = \tPhi_k^{\gamma}[\tblambda(\y(n))](\y(n);h)$, which yields~\eqref{eq:pfshock:HH}.
  
  The next step is to use Lemma~\ref{lem:EntropyTSPD} to describe the limit, when $\ell$ grows to infinity, of $\tilde{\upmu}[\y(n_{\ell})]$. We already know, from the argument above, that the empirical distribution of $y^{\gamma}_1(n), \ldots, y^{\gamma}_n(n)$ converges weakly to $\bar{m}^{\gamma}_{\tau}$. We now have to check that the corresponding CDFs satisfy the hypothesis~\eqref{ass:star:EntropyTSPD} there. 
  
  We denote by $\bar{u}^{\gamma}_{n,0}$ the empirical CDF of $y^{\gamma}_1(n), \ldots, y^{\gamma}_n(n)$, and fix $\gamma, \gamma' \in \{1, \ldots, d\}$ such that $\gamma \not= \gamma'$. Recall that $\tau$ is chosen so that the measures $m^{\gamma}_{\tau}$ and $m^{\gamma'}_{\tau}$ have distinct atoms, and that $\bar{m}^{\gamma}_{\tau}$, $\bar{m}^{\gamma'}_{\tau}$ are obtained from $m^{\gamma}_{\tau}$, $m^{\gamma'}_{\tau}$ by moving all the mass on $[a, \xi)$ to $a$, and all the mass on $[\xi, b)$ to $\xi$. As a consequence, the measures $\bar{m}^{\gamma}_{\tau}$ and $\bar{m}^{\gamma'}_{\tau}$ have distinct atoms, except possibly in $a$, $\xi$ and $b$. Following the proof of Lemma~\ref{lem:FnGn},~\eqref{ass:star:EntropyTSPD} is already satisfied $\dd v$-almost everywhere on $(0,1) \setminus (u^{\gamma}(\tau,a^-), u^{\gamma}(\tau, b))$.
  
  Now for all $v \in (u^{\gamma}(\tau,a^-), u^{\gamma}(\tau, \xi^-))$, $(\bar{u}^{\gamma}_{n,0})^{-1}(v) = a = (\bar{u}^{\gamma}_0)^{-1}(v)$ for $n$ large enough. As a consequence,
  \begin{equation*}
    \bar{u}^{\gamma'}_{n,0}((\bar{u}^{\gamma}_{n,0})^{-1}(v)) = \bar{u}^{\gamma'}_{n,0}(a) = \frac{1}{n} \sum_{k=1}^n \ind{y_k^{\gamma'}(n) \leq a}.
  \end{equation*}
  We recall that, by the definition of $\y(n)$, for all $k \in \{1, \ldots, n\}$,
  \begin{itemize}
    \item if $(2k+1)/(2(n+1)) \leq \bar{u}^{\gamma'}_0(a)$, then $y^{\gamma'}_k(n) \leq a$,
    \item if $(2k-1)/(2(n+1)) \geq \bar{u}^{\gamma'}_0(a)$, then $y^{\gamma'}_k(n) \geq \xi > a$,    
  \end{itemize}
  from which we deduce that
  \begin{equation*}
    \left|\bar{u}^{\gamma'}_{n,0}(a) - \bar{u}^{\gamma'}_0(a)\right| \leq \frac{3}{2n},
  \end{equation*}
  therefore
  \begin{equation*}
    \lim_{\ell \to +\infty} \bar{u}^{\gamma'}_{n_{\ell},0}\left((\bar{u}^{\gamma}_{n_{\ell},0})^{-1}(v)\right) = \bar{u}^{\gamma'}_0\left((\bar{u}^{\gamma}_0)^{-1}(v)\right).
  \end{equation*}
  Similarly, 
  \begin{equation*}
    \lim_{\ell \to +\infty} \bar{u}^{\gamma'}_{n_{\ell},0}\left((\bar{u}^{\gamma}_{n_{\ell},0})^{-1}(v)^-\right) = \bar{u}^{\gamma'}_0\left((\bar{u}^{\gamma}_0)^{-1}(v)^-\right)
  \end{equation*}
  is obtained by noting that
  \begin{itemize}
    \item if $(2k+1)/(2(n+1)) \leq \bar{u}^{\gamma'}_0(a^-)$, then $y^{\gamma'}_k(n) < a$,
    \item if $(2k-1)/(2(n+1)) \geq \bar{u}^{\gamma'}_0(a^-)$, then $y^{\gamma'}_k(n) \geq a$.    
  \end{itemize}
  We use the same arguments to address the cases $v \in (u^{\gamma}(\tau,\xi^-), u^{\gamma}(\tau, \xi))$ and $v \in (u^{\gamma}(\tau,\xi), u^{\gamma}(\tau, b))$, which finally shows that the condition~\eqref{ass:star:EntropyTSPD} is satisfied. We can now apply Lemma~\ref{lem:EntropyTSPD} and deduce that $\tilde{\upmu}[\y(n_{\ell})]$ converges weakly to some probability measure $\tilde{\upmu}$ in $\Ms$, such that $\tilde{u}^{\gamma}(t,\cdot) := H*\tilde{\upmu}^{\gamma}_t$ is the entropy solution of the scalar conservation law~\eqref{eq:EntropyTSPD:scl} with initial datum $\bar{u}^{\gamma}_0$. Besides, by~\eqref{eq:pfshock:HH}, we have
  \begin{equation*}
    \forall h < h_{\max}, \quad \forall x \in (a+\ConstBound{\infty} h, b-\ConstBound{\infty} h), \qquad (H*S^{\gamma}_h\bar{\bm}_{\tau})(x) = \tilde{u}^{\gamma}(h,x).
  \end{equation*}
  
  To complete the proof, we finally show that $\tilde{u}^{\gamma}(h,x) = U^{\sharp, \gamma}_{\bu;\tau,\xi}(h,x)$ for $(h,x)$ as above. To this aim, we recall that these functions are the entropy solutions of scalar conservation laws with respective flux functions $\tilde{\Lambda}^{\gamma}$ and $\Lambda^{\gamma}$, which are respectively defined by~\eqref{eq:tLambda} and~\eqref{eq:LambdaRiemann}, and respective initial data $\bar{u}^{\gamma}$ and $u^{\gamma}(\tau, \xi^-) \ind{x < \xi} + u^{\gamma}(\tau, \xi) \ind{x \geq \xi}$. 
  
  By Proposition~\ref{prop:kruzkov}, $U^{\sharp, \gamma}_{\bu;\tau,\xi}(t,x) \in [u^{\gamma}(\tau, \xi^-), u^{\gamma}(\tau, \xi)]$ for all $(t,x) \in [0,+\infty) \times \R$. On the other hand, it is straightforward to check that, on $[u^{\gamma}(\tau, \xi^-), u^{\gamma}(\tau, \xi)]$, the flux functions $\tilde{\Lambda}^{\gamma}$ and $\Lambda^{\gamma}$ only differ by a constant function. In other words, $U^{\sharp, \gamma}_{\bu;\tau,\xi}$ is also the entropy solution of the conservation law with the same flux function $\tilde{\Lambda}^{\gamma}$ as $\tilde{u}^{\gamma}$, but with different initial data. The initial data however coincide on the interval $(a,b)$, which, by~\cite[Proposition~2.3.6, p.~37]{serre}, is enough to ensure that $\tilde{u}^{\gamma}(h,x) = U^{\sharp, \gamma}_{\bu;\tau,\xi}(h,x)$ as long as $h < h_{\max}$ and $x \in (a+\ConstBound{\infty} h, b-\ConstBound{\infty} h)$. This completes the proof.
\end{proof}


\subsubsection{Proof of Lemma~\ref{lem:uniqsg}}\label{sss:pfuniqsg} The proof of the uniqueness result by Bianchini and Bressan~\cite[Sufficiency part in Lemma~15.2, p.~309]{bianbres} is readily adapted to our setting, and we do not reproduce it. We only highlight the fact that it relies on the $\Ls^1$ Lipschitz continuity result~\cite[(15.8) p.~309]{bianbres}, which by~\eqref{eq:L1W1} is our $\Ws_1$ stability estimate of~\eqref{it:cSt:2} in Proposition~\ref{prop:cSt}, and the localised stability estimate~\cite[(13.13) p.~294]{bianbres}, which must be replaced with the following generalisation of our estimate~\eqref{eq:pfshock:1}, the proof of which is postponed below.

\begin{lem}[Localised $\Ls^1$ stability estimate]\label{lem:locL1stab}
  Under the assumptions of Proposition~\ref{prop:cSt}, let $\bm^* \in \Ps(\R)^d$, and let $(\bS_t)_{t \geq 0}$ be a semigroup obtained by Proposition~\ref{prop:cSt}. For all $\bm, \bm' \in \dP_{\bm^*}$, for all $a, b \in \R$ with $a < b$, for $t \geq 0$ small enough,
  \begin{equation*}
    \int_{x=a+\beta t}^{b-\beta t} \sum_{\gamma=1}^d |u^{\gamma}(t,x)-v^{\gamma}(t,x)|\dd x \leq L \int_{x=a}^b \sum_{\gamma=1}^d |u^{\gamma}(0,x)-v^{\gamma}(0,x)|\dd x,
  \end{equation*}
  with $\beta = \ConstBound{\infty}$ and $L=\ConstStab_1$, where $u^{\gamma}(t,x) := (H*S^{\gamma}_t\bm)(x)$ and $v^{\gamma}(t,x) := (H*S^{\gamma}_t\bm')(x)$.
\end{lem}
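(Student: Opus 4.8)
The strategy is to mimic the proof of the shock estimate~\eqref{eq:pfshock:1} in Lemma~\ref{lem:shock}, using the finite speed of propagation of probabilistic solutions together with the global stability estimate of Proposition~\ref{prop:cSt}~\eqref{it:cSt:2}. The key geometric idea is that, since all particles move with velocities bounded in absolute value by $\ConstBound{\infty}$, modifying the initial data $\bm$ and $\bm'$ outside of the interval $(a,b)$ does not affect the evolution inside the shrinking cone $(a+\ConstBound{\infty}t, b-\ConstBound{\infty}t)$ for small times $t$.

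First I would fix $\bm = (m^1,\ldots,m^d)$ and $\bm' = (m'^1,\ldots,m'^d)$ in $\dP_{\bm^*}$, and write $u^\gamma(0,\cdot) = H*m^\gamma$, $v^\gamma(0,\cdot) = H*m'^\gamma$. I would then define truncated initial data $\bar{\bm} = (\bar m^1,\ldots,\bar m^d)$ and $\bar{\bm}' = (\bar m'^1,\ldots,\bar m'^d)$ by flattening the CDFs outside $(a,b)$: for instance, set $\bar u^\gamma_0(x) := u^\gamma(0,x)$ for $x \notin (a,b)$, and freeze $\bar u^\gamma_0$ to be constant (equal to $u^\gamma(0,a)$ on part of $(a,b)$ and to $u^\gamma(0,b^-)$ elsewhere) in such a way that $\bar u^\gamma_0(x) = v^\gamma(0,x) =: \bar v^\gamma_0(x)$ for $x$ in a neighbourhood of $a$ and of $b$ --- concretely, one arranges $\bar u^\gamma_0$ and $\bar v^\gamma_0$ to coincide outside $(a,b)$ and to put all the ``interior mass'' at a single interior point. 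The point is that $\bar{\bm},\bar{\bm}' \in \dP_{\bm^*}$ by Lemma~\ref{lem:bmuindP}~\eqref{it:bmuindP:1} (their $\Ws_1^{(d)}$ distance to $\bm,\bm'$ is finite since only finitely much mass is moved a finite distance), and
$$\sum_{\gamma=1}^d \int_{x\in\R}|\bar u^\gamma_0(x)-\bar v^\gamma_0(x)|\dd x = \int_{x=a}^b \sum_{\gamma=1}^d |u^\gamma(0,x)-v^\gamma(0,x)|\dd x,$$
because the integrands agree with $|u^\gamma(0,x)-v^\gamma(0,x)|$ on $(a,b)$ and vanish outside. Applying Proposition~\ref{prop:cSt}~\eqref{it:cSt:2} with $p=1$, $s=t$, and using~\eqref{eq:L1W1} to rewrite $\Ws_1$ as an $\Ls^1$ distance of CDFs, I obtain
$$\sum_{\gamma=1}^d\int_{x\in\R}|(H*S^\gamma_t\bar{\bm})(x)-(H*S^\gamma_t\bar{\bm}')(x)|\dd x \leq \ConstStab_1\int_{x=a}^b\sum_{\gamma=1}^d|u^\gamma(0,x)-v^\gamma(0,x)|\dd x.$$

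The remaining step --- which is the main technical obstacle --- is to show that, for $t$ small enough (say $t < ((\xi-a)\wedge(b-\xi))/(2\ConstBound{\infty})$ for a suitable interior point $\xi$, or more simply $t < (b-a)/(2\ConstBound{\infty})$), the semigroup solutions started at $\bar{\bm}$ and $\bm$ coincide on the cone $(a+\ConstBound{\infty}t, b-\ConstBound{\infty}t)$, that is $H*S^\gamma_t\bar{\bm} = u^\gamma(t,\cdot)$ there, and similarly $H*S^\gamma_t\bar{\bm}' = v^\gamma(t,\cdot)$. This is precisely the finite-speed-of-propagation argument already carried out in the proof of~\eqref{eq:pfshock:1}: approximating by the MSPD, one writes $\x(n) := \chi_n\bm$ and $\bar\x(n) := \chi_n\bar{\bm}$, notes that $\upmu_t^\gamma[\x(n_\ell)]$ converges weakly to $S^\gamma_t\bm$ (Lemma~\ref{lem:cvbS}) and likewise $\upmu_t^\gamma[\bar\x(n_\ell)]$ converges to $S^\gamma_t\bar{\bm}$, and then checks particle by particle that those particles of $\x(n)$ with initial position outside $(a,b)$ have the same position as the corresponding particles of $\bar\x(n)$ at time $t$ (they differ in initial position by at most $O(1/n)$, and their trajectories stay outside the shrinking cone), while particles with initial position inside $(a,b)$ stay outside the cone $(a+\ConstBound{\infty}t,b-\ConstBound{\infty}t)$; hence $H*\upmu_t^\gamma[\x(n)](x) = H*\upmu_t^\gamma[\bar\x(n)](x)$ for $x$ in the cone. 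Passing to the limit along $(n_\ell)$ using Lemma~\ref{lem:cvCDF} at continuity points (which are dense) and right-continuity of CDFs yields $u^\gamma(t,x) = (H*S^\gamma_t\bar{\bm})(x)$ for all $x$ in the open cone, and symmetrically for $v^\gamma$. Combining this with the displayed inequality above, restricting the left-hand integral from $\R$ to $(a+\ConstBound{\infty}t,b-\ConstBound{\infty}t)$ and bounding it from below, gives
$$\int_{x=a+\ConstBound{\infty}t}^{b-\ConstBound{\infty}t}\sum_{\gamma=1}^d|u^\gamma(t,x)-v^\gamma(t,x)|\dd x \leq \ConstStab_1\int_{x=a}^b\sum_{\gamma=1}^d|u^\gamma(0,x)-v^\gamma(0,x)|\dd x,$$
which is the claim with $\beta=\ConstBound{\infty}$ and $L=\ConstStab_1$. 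The only delicate bookkeeping is ensuring the truncated data $\bar{\bm},\bar{\bm}'$ can be chosen to coincide with $\bm,\bm'$ near $a$ and $b$ \emph{and} with each other outside $(a,b)$ simultaneously while keeping the $\Ls^1$ identity on $(a,b)$ exact; this is handled exactly as the construction of $\bar{\bu}_0$ in the proof of~\eqref{eq:pfshock:1}, splitting the interior mass at a single point $\xi\in(a,b)$, so I would simply refer to that construction rather than repeat it.
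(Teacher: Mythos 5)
Your overall plan --- truncate the initial data outside $(a,b)$, apply the global $\Ws_1^{(d)}$ stability estimate of Proposition~\ref{prop:cSt}~\eqref{it:cSt:2}, then use finite speed of propagation to identify the truncated and original solutions inside the shrinking cone --- is exactly the paper's strategy. However, the truncation you describe is the wrong one, and taken literally it would make the identity
\begin{equation*}
  \sum_{\gamma=1}^d \int_{x\in\R}|\bar u^\gamma_0(x)-\bar v^\gamma_0(x)|\dd x = \int_{x=a}^b \sum_{\gamma=1}^d |u^\gamma(0,x)-v^\gamma(0,x)|\dd x
\end{equation*}
false. You propose to ``freeze $\bar u^\gamma_0$ to be constant'' on $(a,b)$ (a step at an interior point $\xi$), and in the last sentence you explicitly refer back to ``the construction of $\bar{\bu}_0$ in the proof of~\eqref{eq:pfshock:1}.'' But that construction belongs to the \emph{shock} estimate: there one compares $\bu$ against the solution of a Riemann problem, and the purpose of flattening on $(a,b)$ is to reproduce a step-shaped initial datum. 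Here the goal is the opposite: you must \emph{keep} $u^\gamma_0$ and $v^\gamma_0$ unchanged on $(a,b)$ so that the integrand on $(a,b)$ remains $|u^\gamma_0 - v^\gamma_0|$; if you flatten both to steps inside $(a,b)$, the difference of two step functions has nothing to do with $|u^\gamma_0 - v^\gamma_0|$ and the claimed $\Ls^1$ equality breaks down. Your own sentence ``the integrands agree with $|u^\gamma(0,x)-v^\gamma(0,x)|$ on $(a,b)$'' is inconsistent with the construction you describe two sentences earlier.

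The correct truncation is the ``glue outside, keep inside'' construction: take $\bar u^\gamma_0 = \bar v^\gamma_0 := u^\gamma_0 \wedge v^\gamma_0$ on $(-\infty,a)$, $\bar u^\gamma_0 := u^\gamma_0$ and $\bar v^\gamma_0 := v^\gamma_0$ on $[a,b)$, and $\bar u^\gamma_0 = \bar v^\gamma_0 := u^\gamma_0 \vee v^\gamma_0$ on $[b,+\infty)$. The $\wedge$ on the left and $\vee$ on the right are not cosmetic: they are what guarantees monotonicity of $\bar u^\gamma_0$ and $\bar v^\gamma_0$ across $a$ and $b$ (e.g.\ naively setting both to $u^\gamma_0$ on $(-\infty,a)$ could push $\bar v^\gamma_0(a^-)$ above $v^\gamma_0(a)$), while ensuring $\bar u^\gamma_0 = \bar v^\gamma_0$ outside $(a,b)$ so that the $\Ws_1^{(d)}$ distance between $\bar{\bm}$ and $\bar{\bm}'$ reduces to an integral over $(a,b)$. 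Once you use this truncation, the rest of your argument (membership in $\dP_{\bm^*}$ by finiteness of the $\Ls^1$ distance to $\bm,\bm'$; global $\Ws_1$ contraction; MSPD-based finite speed of propagation to show $H*S^\gamma_t\bar{\bm} = u^\gamma(t,\cdot)$ on the cone $(a+\ConstBound{\infty}t, b-\ConstBound{\infty}t)$ for $t < (b-a)/(2\ConstBound{\infty})$) goes through as you sketch, and matches the paper.
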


Then we deduce that, for all $\bm^* \in \Ps(\R)^d$, for all semigroups $(\bS^{(1)}_t)_{t \geq 0}$, $(\bS^{(2)}_t)_{t \geq 0}$ defined on $\dP_{\bm^*}$ and obtained by Proposition~\ref{prop:cSt}, we have, for all $t \geq 0$,
\begin{equation*}
  \bS^{(1)}_t = \bS^{(2)}_t \qquad \text{on $\dP_{\bm^*}$.}
\end{equation*}
As a consequence, for all $t \geq 0$, the operator $\bar{\bS}_t$ defined on $\Ps(\R)^d$ by
\begin{equation*}
  \forall \bm^* \in \Ps(\R)^d, \qquad \bar{\bS}_t\bm^* := \bS_t\bm^*,
\end{equation*}
where $(\bS_t)_{t \geq 0}$ is any semigroup on $\dP_{\bm^*}$ obtained by Proposition~\ref{prop:cSt}, is uniquely defined. In order to emphasise the fact that $\bS_t$ is defined on $\dP_{\bm^*}$, we denote $\bS^{(\bm^*)}_t$. To show that $\bar{\bS}_t$ coincides with $\bS^{(\bm^*)}_t$ on $\dP_{\bm^*}$, we let $\bm' \in \dP_{\bm^*}$. Then, by definition,
\begin{equation*}
  \bar{\bS}_t\bm' = \bS^{(\bm')}_t \bm',
\end{equation*}
where $(\bS^{(\bm')}_t)_{t \geq 0}$ is any semigroup on $\dP_{\bm'}$ obtained by Proposition~\ref{prop:cSt}. But since $\dP_{\bm'} = \dP_{\bm^*}$, we deduce from the uniqueness result of Bianchini and Bressan that $\bS^{(\bm')}_t = \bS^{(\bm^*)}_t$, so that 
\begin{equation*}
  \bar{\bS}_t\bm' = \bS^{(\bm^*)}_t \bm'.
\end{equation*}
This completes the proof of Lemma~\ref{lem:uniqsg}.

\begin{proof}[Proof of Lemma~\ref{lem:locL1stab}]
  The proof is quite similar to the proof of~\eqref{eq:pfshock:1} in Lemma~\ref{lem:shock}. Let $\bm^* \in \Ps(\R)^d$ and let $(\bS_t)_{t \geq 0}$ be a semigroup obtained by Proposition~\ref{prop:cSt}. Let us fix $\bm, \bm' \in \dP_{\bm^*}$, $a, b \in \R$ with $a < b$, and define $u_0^{\gamma} := H*m^{\gamma}$, $v_0^{\gamma} := H*m'^{\gamma}$, $u^{\gamma}(t,x) := (H*S^{\gamma}_t\bm)(x)$ and $v^{\gamma}(t,x) := (H*S^{\gamma}_t\bm')(x)$.
  
  For all $\gamma \in \{1, \ldots, d\}$, we define the CDFs $\bar{u}^{\gamma}_0$ and $\bar{v}^{\gamma}_0$ by
  \begin{equation*}
    \bar{u}^{\gamma}_0(x) := \begin{cases}
      u_0^{\gamma}(x) \wedge v_0^{\gamma}(x) & \text{if $x < a$},\\
      u_0^{\gamma}(x) & \text{if $a \leq x < b$},\\
      u_0^{\gamma}(x) \vee v_0^{\gamma}(x) & \text{if $x \geq b$},
    \end{cases} \qquad \bar{v}^{\gamma}_0(x) := \begin{cases}
      u_0^{\gamma}(x) \wedge v_0^{\gamma}(x) & \text{if $x < a$},\\
      v_0^{\gamma}(x) & \text{if $a \leq x < b$},\\
      u_0^{\gamma}(x) \vee v_0^{\gamma}(x) & \text{if $x \geq b$},
    \end{cases}
  \end{equation*}
  and let $\bar{\bm}, \bar{\bm}' \in \Ps(\R)^d$ be such that $\bar{u}^{\gamma}_0 = H*\bar{m}^{\gamma}$, $\bar{v}^{\gamma}_0 = H*\bar{m}'^{\gamma}$. It follows from the choice of their tails that $\bar{\bm}$ and $\bar{\bm}'$ belong to the $\Ws_1$ stability class $\dP_{\bm^*}$, which allows us to define $\bar{\bu}$, $\bar{\bv}$ by
  \begin{equation*}
    \bar{u}^{\gamma}(t,x) := (H*S^{\gamma}_t\bar{\bm})(x), \qquad \bar{v}^{\gamma}(t,x) := (H*S^{\gamma}_t\bar{\bm}')(x).
  \end{equation*}
  Combining~\eqref{eq:L1W1} and the point~\eqref{it:cSt:2} of Proposition~\ref{prop:cSt}, we write, for $t \geq 0$ small enough,
  \begin{equation*}
    \begin{aligned}
      \int_{x=a+\ConstBound{\infty} t}^{b-\ConstBound{\infty} t} \sum_{\gamma=1}^d |\bar{u}^{\gamma}(t,x)-\bar{v}^{\gamma}(t,x)| \dd x & \leq \sum_{\gamma=1}^d \int_{x \in \R} |\bar{u}^{\gamma}(t,x)-\bar{v}^{\gamma}(t,x)| \dd x\\
      & = \Ws_1^{(d)}(\bS_t\bar{\bm}, \bS_t\bar{\bm}')\\
      & \leq \ConstStab_1 \Ws_1^{(d)}(\bar{\bm}, \bar{\bm}')\\
      & = \ConstStab_1\int_{x=a}^b \sum_{\gamma=1}^d |u_0^{\gamma}(x)-v_0^{\gamma}(x)|\dd x.
    \end{aligned}
  \end{equation*}
  We now define $t_{\max} := (b-a)/(2\ConstBound{\infty}) > 0$ and introduce the space-time set
  \begin{equation*}
    \Delta := \{(t,x) \in [0,+\infty) \times \R : t < t_{\max}, a+\ConstBound{\infty}t < x < b-\ConstBound{\infty}t\}.
  \end{equation*}
  We shall check below that, for $(t,x) \in \Delta$, $\bar{u}^{\gamma}(t,x) = u^{\gamma}(t,x)$. By the same arguments, $\bar{v}^{\gamma}(t,x) = v^{\gamma}(t,x)$, and the proof of Lemma~\ref{lem:locL1stab} is completed.  
  
  Our argument depends on whether $u^{\gamma}_0(a) = u^{\gamma}_0(b^-)$ or $u^{\gamma}_0(a) < u^{\gamma}_0(b^-)$. In the first case, applying Corollary~\ref{cor:speedprop} to both $\bu$ and $\bar{\bu}$ yields $u^{\gamma}(t,x) = u^{\gamma}_0(a) = \bar{u}^{\gamma}_0(a) = \bar{u}^{\gamma}(t,x)$ for all $(t,x) \in \Delta$. In the second case, we let $\x(n) := \chi_n\bm$ and $\bar{\x}(n) := \chi_n\bar{\bm}$. We first note that, by the Definition~\ref{defi:chi} of the discretisation operator, for all $k \in \{1, \ldots, n\}$,
  \begin{itemize}
    \item if $(2k+1)/(2(n+1)) \leq u^{\gamma}_0(a)$, then $x_k^{\gamma}(n) \leq a$ and $\bar{x}_k^{\gamma}(n) \leq a$,
    \item if $(2k-1)/(2(n+1)) \geq u^{\gamma}_0(b)$, then $x_k^{\gamma}(n) \geq b$ and $\bar{x}_k^{\gamma}(n) \geq b$,
    \item if $u^{\gamma}_0(a) < (2k-1)/(2(n+1))$ and $(2k+1)/(2(n+1)) < u^{\gamma}_0(b^-)$, then $a \leq x^{\gamma}_k(n) = \bar{x}^{\gamma}_k(n) \leq b$. 
  \end{itemize}
  In all the cases above, we define $y_k^{\gamma}(n) := x_k^{\gamma}(n)$ and $\bar{y}_k^{\gamma}(n) := \bar{x}_k^{\gamma}(n)$. Let us now assume that $n$ is large enough to ensure that $b-a\geq 1/(n+1)$. 
  \begin{itemize}
    \item If $(2k-1)/(2(n+1)) \leq u^{\gamma}_0(a) < (2k+1)/(2(n+1))$, then $x_k^{\gamma}(n) \leq b$ and $\bar{x}_k^{\gamma}(n) \leq b$, and we let $y_k^{\gamma}(n) := x_k^{\gamma}(n) \wedge a$, $\bar{y}_k^{\gamma}(n) := \bar{x}_k^{\gamma}(n) \wedge a$.
    \item If $(2k-1)/(2(n+1)) < u^{\gamma}_0(b^-) \leq (2k+1)/(2(n+1))$, then $x_k^{\gamma}(n) \geq a$ and $\bar{x}_k^{\gamma}(n) \geq a$, and we let $y_k^{\gamma}(n) := x_k^{\gamma}(n) \vee b$, $\bar{y}_k^{\gamma}(n) := \bar{x}_k^{\gamma}(n) \vee b$.
  \end{itemize}
  There is no difficulty in checking that $\y(n)$ and $\bar{\y}(n)$ are thereby uniquely defined in $\Dnd$, and that
  \begin{equation*}
    ||\x(n) - \y(n)||_1 \leq \frac{2d(b-a)}{n}, \qquad ||\bar{\x}(n) - \bar{\y}(n)||_1 \leq \frac{2d(b-a)}{n},
  \end{equation*}
  so that, following the same arguments as in the proof of Lemma~\ref{lem:shock},
  \begin{equation*}
    S^{\gamma}_t \bm = \lim_{\ell \to +\infty} \upmu^{\gamma}_t[\y(n_{\ell})], \qquad S^{\gamma}_t \bar{\bm} = \lim_{\ell \to +\infty} \upmu^{\gamma}_t[\bar{\y}(n_{\ell})].
  \end{equation*}
  According to the discussion above, for $n$ large enough, for all $\gamma:k \in \Part$, 
  \begin{itemize}
    \item if $y^{\gamma}_k(n) \leq a$ or $y^{\gamma}_k(n) \geq b$, then $\bar{y}^{\gamma}_k(n)$ satisfies the same inequality and, by the definition of $\Delta$, both $\Phi_k^{\gamma}(\y(n);t)$ and $\Phi_k^{\gamma}(\bar{\y}(n);t)$ never intersect $\Delta$;
    \item if $a < y^{\gamma}_k(n) < b$, then $\bar{y}^{\gamma}_k(n)=y^{\gamma}_k(n)$ and, as long as $\Phi_k^{\gamma}(\y(n);t)$ does not reach the boundary of $\Delta$, we have $\Phi_k^{\gamma}(\y(n);t)=\Phi_k^{\gamma}(\bar{\y}(n);t)$, while once the boundary of $\Delta$ is attained, neither $\Phi_k^{\gamma}(\y(n);t)$ nor $\Phi_k^{\gamma}(\bar{\y}(n);t)$ can reenter $\Delta$.
  \end{itemize}
  We deduce that, in all cases, if $(t,x) \in \Delta$, then $\Phi_k^{\gamma}(\y(n);t)$ and $\Phi_k^{\gamma}(\bar{\y}(n);t)$ have the same contribution in $H*\upmu^{\gamma}_t[\y(n)](x)$ and $H*\upmu^{\gamma}_t[\bar{\y}(n)](x)$. Taking the limit of this equality when $n$ grows to infinity, we conclude that $(H*S^{\gamma}_t\bm)(x) = (H*S^{\gamma}_t\bar{\bm})(x)$, for all $(t,x) \in \Delta$, which completes the proof.
\end{proof}


\subsection{Proof of Theorem~\ref{theo:sg}}\label{ss:pfsg} In this last subsection, we combine the results of Proposition~\ref{prop:cSt} and Lemma~\ref{lem:uniqsg} to complete the proof of Theorem~\ref{theo:sg}.

\begin{proof}[Proof of Theorem~\ref{theo:sg}]
  Let $(\bar{\bS}_t)_{t \geq 0}$ be given by Lemma~\ref{lem:uniqsg}. By~\eqref{it:cSt:3} in Proposition~\ref{prop:cSt}, it is immediate that $(\bar{\bS}_t)_{t \geq 0}$ satisfies the semigroup property~\eqref{it:sg:sg} in Theorem~\ref{theo:sg}. Besides, for all $\bm, \bm' \in \Ps(\R)^d$, either $\Ws^{(d)}_1(\bm, \bm') < +\infty$, in which case~\eqref{it:cSt:2} in Proposition~\ref{prop:cSt} yields the stability estimate~\eqref{it:sg:stable} of Theorem~\ref{theo:sg}, or $\Ws^{(d)}_1(\bm, \bm') = +\infty$ and in this case, the right-hand side of the stability estimate~\eqref{it:sg:stable} of Theorem~\ref{theo:sg} is infinite and therefore there is nothing to prove.
  
  We now fix $\bm \in \Ps(\R)^d$ and define $\bu : [0,+\infty) \times \R \to [0,1]^d$ by $u^{\gamma}(t,x) := (H*\bar{S}^{\gamma}_t\bm)(x)$. By Proposition~\ref{prop:cSt}, $\bu$ is a probabilistic solution to~\eqref{eq:syst} with initial data $(u^1_0, \ldots, u^d_0)$ defined by $u^{\gamma}_0 = H*m^{\gamma}$. Besides, by Lemma~\ref{lem:cvci} and Proposition~\ref{prop:tightness}, the sequence of empirical distributions $\upmu[\chi_n\bm]$ is tight in $\Ms$. By Proposition~\ref{prop:cSt} and Lemma~\ref{lem:uniqsg}, all the converging subsequences have the same limit
  \begin{equation*}
    \bar{\upmu}[\bm] := \Unif \circ \left(u^1(t, \cdot)^{-1}, \ldots, u^d(t, \cdot)^{-1}\right)_{t \geq 0}^{-1},
  \end{equation*} 
  so that $\upmu[\chi_n\bm]$ converges weakly to $\bar{\upmu}[\bm]$ in $\Ms$.
\end{proof}


\part*{Appendix and references}

\appendix

\section{Proofs of technical results}\label{app:proofs}


\subsection{Proofs of Propositions~\ref{prop:mspd} and~\ref{prop:continuity}}\label{app:pf:cont} This subsection contains the proofs of Propositions~\ref{prop:mspd} and~\ref{prop:continuity}, which were stated in~\S\ref{sss:mspdprop} and describe some continuity properties of the trajectories of the MSPD.

\begin{proof}[Proof of Proposition~\ref{prop:mspd}]
  We prove by induction on $\Nb(\x)$ that
  \begin{enumerate}[label=(\roman*), ref=\roman*]
    \item\label{it:contsp} the process $(\Phi(\x;t))_{t \geq 0}$ has continuous trajectories in $\Dnd$,
    \item\label{it:flow} for all $s,t \geq 0$, $\Phi(\x;s+t) = \Phi(\Phi(\x;s);t)$.
  \end{enumerate}
  
  Let $\x \in \Dnd$ such that $\Nb(\x) = 0$. Then, by Definition~\ref{defi:mspd}, 
  \begin{equation*}
    \forall t \geq 0, \qquad \Phi(\x;t) = \tPhi[\tblambda(\x)](\x;t),
  \end{equation*}
  and~\eqref{it:contsp} follows from the continuity of the trajectories of $(\tPhi[\tblambda(\x)](\x;t))_{t \geq 0}$. Now, for all $s,t \geq 0$, $\Phi(\x;s+t) = \tPhi[\tblambda(\x)](\x;s+t)$ and $\Phi(\x;s) = \tPhi[\tblambda(\x)](\x;s) =: \x'$. By Corollary~\ref{cor:ttinter}, $\Nb(\x')=0$ and $\tblambda(\x') = \tblambda(\x)$. Hence,
  \begin{equation*}
    \Phi(\Phi(\x;s);t) = \Phi(\x';t) = \tPhi[\tblambda(\x')](\x';t) = \tPhi[\tblambda(\x)](\tPhi[\tblambda(\x)](\x;s);t),
  \end{equation*}
  and the flow property for $(\tPhi[\tblambda(\x)](\cdot;t))_{t \geq 0}$ yields 
  \begin{equation*}
    \tPhi[\tblambda(\x)](\tPhi[\tblambda(\x)](\x;s);t) = \tPhi[\tblambda(\x)](\x;s+t) = \Phi(\x;s+t),
  \end{equation*}
  which results in~\eqref{it:flow}.
  
  Now let $N \geq 0$ such that, for all $\x \in \Dnd$ with $\Nb(\x) \leq N$,~\eqref{it:contsp} and~\eqref{it:flow} are satisfied. Let $\x \in \Dnd$ with $\Nb(\x) = N+1$. In particular, $\Nb(\x) \geq 1$ so that $t^*(\x) < +\infty$, and for all $t \in [0,t^*(\x))$, $\Phi(\x;t) = \tPhi[\tblambda(\x)](\x;t)$. As a consequence, the function $t \mapsto \Phi(\x;t)$ is continuous on $[0, t^*(\x))$. On the other hand, since $\Nb(\x^*) < \Nb(\x) = N+1$, the function $t \mapsto \Phi(\x;t)$ is continuous on $[t^*(\x),+\infty)$. Therefore it remains to prove that the function $t \mapsto \Phi(\x;t)$ is left continuous at the point $t^*(\x)$, where, by definition, it takes the value
  \begin{equation*}
    \Phi(\x; t^*(\x)) = \Phi(\x^*; t^*(\x) - t^*(\x)) = \x^*,
  \end{equation*}
  and we recall that, by definition, $\x^* = \tPhi[\tblambda(\x)](\x;t^*(\x))$. As a consequence, the continuity of the trajectories of $(\tPhi[\tblambda(\x)](\x;t))_{t \geq 0}$ yields
  \begin{equation*}
    \lim_{t \uto t^*(\x)} \Phi(\x;t) = \lim_{t \uto t^*(\x)} \tPhi[\tblambda(\x)](\x;t) = \x^* = \Phi(\x; t^*(\x)),
  \end{equation*}
  which is the expected result.
    
  We finally address~\eqref{it:flow}. Let $s,t \geq 0$. 
  
  {\em Case $s \geq t^*(\x)$.} Then $s+t \geq t^*(\x)$, so that, by Definition~\ref{defi:mspd}, 
  \begin{equation*}
    \Phi(\x; s+t) = \Phi(\x^*; s+t-t^*(\x)) = \Phi(\x^*; s'+t),
  \end{equation*}
  where $s' := s-t^*(\x) \geq 0$. Since, by Corollary~\ref{cor:ttinter}, $\Nb(\x^*) < \Nb(\x)$, then the flow property for $(\Phi(\x^*;t))_{t \geq 0}$ yields $\Phi(\x^*; s'+t) = \Phi(\Phi(\x^*;s'); t) = \Phi(\Phi(\x^*;s-t^*(\x)); t)$ and, using Definition~\ref{defi:mspd} again, $\Phi(\x^*;s-t^*(\x)) = \Phi(\x;s)$. As a conclusion, $\Phi(\x; s+t) = \Phi(\Phi(\x;s); t)$.
  
  {\em Case $s < t^*(\x)$.} Then we write $\x' := \Phi(\x;s) = \tPhi[\tblambda(\x)](\x;s)$, and recall that, by Corollary~\ref{cor:ttinter}, $\tblambda(\x')=\tblambda(\x)$ and $t^*(\x') = t^*(\x) - s$. By Definition~\ref{defi:mspd},
  \begin{equation*}
    \Phi(\x';t) = \left\{\begin{aligned}
      & \tPhi[\tblambda(\x')](\x';t) & \text{if $t < t^*(\x')$},\\
      & \Phi(\x'^*; t-t^*(\x')) & \text{if $t \geq t^*(\x')$}.
    \end{aligned}\right.
  \end{equation*}
  
  If $t < t^*(\x') = t^*(\x) - s$, then combining the flow property for $(\tPhi[\tblambda(\x)](\cdot;t))_{t \geq 0}$ with the equality $\tblambda(\x')=\tblambda(\x)$, we obtain
  \begin{equation*}
    \tPhi[\tblambda(\x')](\x';t) = \tPhi[\tblambda(\x)](\tPhi[\tblambda(\x)](\x;s);t) = \tPhi[\tblambda(\x)](\x;s+t), 
  \end{equation*}
  and, since $s+t < t^*(\x)$, the right-hand side above is worth $\Phi(\x;s+t)$. 
  
  If $t \geq t^*(\x') = t^*(\x) - s$, then by Corollary~\ref{cor:ttinter}, $\x'^* = \x^*$, therefore it is straightforward that
  \begin{equation*}
    \Phi(\x';t) = \Phi(\x'^*; t-t^*(\x')) = \Phi(\x^*; s+t-t^*(\x)) = \Phi(\x;s+t). 
  \end{equation*}
  
  In both cases, we conclude that $\Phi(\Phi(\x;s);t) = \Phi(\x';t) = \Phi(\x;s+t)$, which is~\eqref{it:flow}.
\end{proof}

Before detailing the proof of Proposition~\ref{prop:continuity}, we first define
\begin{equation*}
  \bar{t}(\x) := \inf\{t \geq 0 : \Nb(\Phi(\x;t)) = 0\}.
\end{equation*}
Certainly, if $\Nb(\x)=0$ then $\bar{t}(\x) = 0$, otherwise $\bar{t}(\x) > 0$, and an upper bound on $\bar{t}(\x)$ can be derived as follows.

\begin{lem}[Bound on $\bar{t}(\x)$]\label{lem:bart}
  Under Assumptions~\eqref{ass:C} and~\eqref{ass:USH}, for all $\x \in \Dnd$ such that $\Nb(\x) > 0$, 
  \begin{equation}\label{eq:bart}
    \bar{t}(\x) \leq \frac{1}{\ConstUSH} \sup\{x_j^{\beta}-x_i^{\alpha}, (\alpha:i, \beta:j) \in \Rb(\x)\} < +\infty.
  \end{equation}
\end{lem}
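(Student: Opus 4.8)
The plan is to prove the bound on $\bar{t}(\x)$ by relating it to the collision times $\tinter_{\alpha:i,\beta:j}(\x)$ introduced in~\S\ref{sss:mspdcoll}, and then controlling these collision times from above using the uniform strict hyperbolicity assumption~\eqref{ass:USH}. The key observation is that, once all pairs in $\Rb(\x)$ have collided, there can be no further collisions, so $\Nb$ has dropped to $0$. More precisely, I claim that
\begin{equation*}
  \bar{t}(\x) \leq \max\{\tinter_{\alpha:i, \beta:j}(\x) : (\alpha:i, \beta:j) \in \Rb(\x)\}.
\end{equation*}
To justify this, recall that for each configuration, the set $\Rb$ records the pairs of particles of distinct types that will collide at a positive time, and Corollary~\ref{cor:ttinter} together with the construction of the MSPD shows that $\Rb(\Phi(\x;t))$ is obtained from $\Rb(\x)$ by removing pairs as they collide. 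Thus if $t$ exceeds every $\tinter_{\alpha:i,\beta:j}(\x)$, then $\Rb(\Phi(\x;t)) = \emptyset$, i.e. $\Nb(\Phi(\x;t)) = 0$, which gives the displayed inequality.

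Next I would bound each $\tinter_{\alpha:i,\beta:j}(\x)$ for $(\alpha:i,\beta:j) \in \Rb(\x)$. Fix such a pair, so $\alpha < \beta$ and $x_i^{\alpha} < x_j^{\beta}$. By Lemma~\ref{lem:tinter}, for all $s \in [0, \tinter_{\alpha:i,\beta:j}(\x)]$ we have
\begin{equation*}
  \Phi_j^{\beta}(\x;s) - \Phi_i^{\alpha}(\x;s) \geq \ConstUSH \left(\tinter_{\alpha:i,\beta:j}(\x) - s\right).
\end{equation*}
Evaluating this at $s = 0$ yields $x_j^{\beta} - x_i^{\alpha} \geq \ConstUSH\, \tinter_{\alpha:i,\beta:j}(\x)$, hence
\begin{equation*}
  \tinter_{\alpha:i, \beta:j}(\x) \leq \frac{x_j^{\beta}-x_i^{\alpha}}{\ConstUSH} \leq \frac{1}{\ConstUSH}\sup\{x_j^{\beta}-x_i^{\alpha} : (\alpha:i, \beta:j) \in \Rb(\x)\}.
\end{equation*}
Taking the maximum over all pairs in $\Rb(\x)$ and combining with the first step gives~\eqref{eq:bart}. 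The finiteness of the supremum is immediate since $\Rb(\x)$ is a finite set (its cardinality being $\Nb(\x) < +\infty$) and is nonempty by hypothesis.

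I do not anticipate a genuine obstacle here; the statement is essentially a direct consequence of Lemma~\ref{lem:tinter} and Corollary~\ref{cor:ttinter}. The only point requiring a little care is the first step: one must argue cleanly that after all scheduled collisions have occurred, $\Nb$ has indeed reached $0$. This follows by induction on $\Nb(\x)$ using the recursive structure of Definition~\ref{defi:mspd} — at time $t^*(\x)$ the dynamics restarts from $\x^*$ with $\Nb(\x^*) < \Nb(\x)$, and the collision times of the remaining pairs in $\Rb(\x^*) \subsetneq \Rb(\x)$, measured from the origin, coincide with the original $\tinter_{\alpha:i,\beta:j}(\x)$ by the flow property of Proposition~\ref{prop:mspd} and the definition of $\tinter$. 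One then chases the induction hypothesis through to conclude that $\Nb(\Phi(\x;t)) = 0$ as soon as $t$ exceeds the largest collision time, which is exactly what is needed.
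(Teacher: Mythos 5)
Your proof is correct and rests on exactly the same mechanism as the paper's: the uniform strict hyperbolicity assumption forces any pair $(\alpha:i,\beta:j)$ with $\alpha<\beta$ to close the gap $x_j^\beta-x_i^\alpha$ at rate at least $\ConstUSH$. The paper's own argument, however, is slightly more direct. It does not introduce the intermediate quantity $\max\{\tinter_{\alpha:i,\beta:j}(\x)\}$ nor invoke Lemma~\ref{lem:tinter}; instead it observes that $t\geq\bar{t}(\x)$ if and only if $\Phi_i^\alpha(\x;t)\geq\Phi_j^\beta(\x;t)$ for all pairs $(\alpha:i,\beta:j)$ with $\alpha<\beta$, rewrites this as
\begin{equation*}
  \int_{s=0}^t\bigl(v_i^\alpha(\x;s)-v_j^\beta(\x;s)\bigr)\,\dd s \geq x_j^\beta-x_i^\alpha,
\end{equation*}
and notes that the left-hand side is bounded below by $t\ConstUSH$ via~\eqref{eq:typeencadrelambda} and~\eqref{ass:USH}, so that $t\geq(x_j^\beta-x_i^\alpha)/\ConstUSH$ is a sufficient condition. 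This sidesteps the induction you gesture at in your final paragraph (which is needed to justify that your first claimed inequality propagates cleanly through the recursive MSPD construction), since the sufficient-condition test is applied pointwise in $t$ and covers both pairs in $\Rb(\x)$ and pairs already ordered at time $0$ in a single stroke. Your route buys nothing extra here and incurs the small overhead of having to check that the collision-time picture fully accounts for the $\Nb$ count; the paper's inequality-based route is cleaner for this particular lemma.
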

\begin{proof}
  Let $\x \in \Dnd$. For all $t \geq 0$, we have $t \geq \bar{t}(\x)$ if and only if $\Nb(\Phi(\x;t)) = 0$, which is equivalent to the fact that, for all $(\alpha:i, \beta:j) \in (\Part)^2$ with $\alpha < \beta$,
  \begin{equation*}
    \Phi_i^{\alpha}(\x;t) \geq \Phi_j^{\beta}(\x;t),
  \end{equation*}
  that is to say,
  \begin{equation*}
    \int_{s=0}^t (v_i^{\alpha}(\x;s) - v_j^{\beta}(\x;s))\dd s \geq x_j^{\beta} - x_i^{\alpha}.
  \end{equation*}
  Recall that, by~\eqref{eq:typeencadrelambda} and Assumption~\eqref{ass:USH}, since the left-hand side above is larger than $t \ConstUSH$, then a sufficient condition for this inequality to hold is that $t \geq (x_j^{\beta}-x_i^{\alpha})/\ConstUSH$, which yields the bound~\eqref{eq:bart}.
\end{proof}

Recall the Definition~\ref{defi:Drnd} of the dense open set $\Drnd \subset \Dnd$.

\begin{lem}[Properties of $\Drnd$]\label{lem:Drondnd}
  The set $\Drnd$ has the following properties.
  \begin{enumerate}[label=(\roman*), ref=\roman*]
    \item\label{it:Drondnd:1} For all $\x \in \Drnd$, there exists $\eta > 0$ such that, for all $\y \in \barB_1(\x,\eta)$, we have $\y \in \Drnd$ and $\Rb(\x)=\Rb(\y)$.
    \item\label{it:Drondnd:2} The function $t^*$ defined in~\eqref{eq:tstar} is continuous on the set $\{\x \in \Drnd : \Nb(\x) \geq 1\}$.
  \end{enumerate}
\end{lem}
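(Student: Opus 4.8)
The plan is to prove the two assertions separately; both are soft and rely only on the finite-dimensionality of $\Dnd$ and on estimates already available for the Typewise Sticky Particle Dynamics.

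For~\eqref{it:Drondnd:1}, I would use that $\Rb$ and membership in $\Drnd$ only involve the finitely many pairs $(\alpha:i,\beta:j)\in(\Part)^2$ with $\alpha<\beta$. Setting
\[
  \delta := \min\{|x_i^{\alpha}-x_j^{\beta}| : (\alpha:i,\beta:j)\in(\Part)^2,\ \alpha<\beta\},
\]
one has $\delta>0$ precisely because $\x\in\Drnd$. Since $\|\x-\y\|_1 = \frac1n\sum_{\gamma:k\in\Part}|x_k^{\gamma}-y_k^{\gamma}|$, any $\y\in\barB_1(\x,\eta)$ satisfies $|x_k^{\gamma}-y_k^{\gamma}|\leq n\eta$ for each $\gamma:k$, hence $|(x_i^{\alpha}-x_j^{\beta})-(y_i^{\alpha}-y_j^{\beta})|\leq 2n\eta$ for all such pairs. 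Taking $\eta:=\delta/(4n)$ forces $y_i^{\alpha}-y_j^{\beta}$ to keep the same nonzero sign as $x_i^{\alpha}-x_j^{\beta}$, which yields both $\y\in\Drnd$ and $\Rb(\y)=\Rb(\x)$.

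For~\eqref{it:Drondnd:2}, I would fix $\x\in\Drnd$ with $\Nb(\x)\geq1$ and take $\eta>0$ as in~\eqref{it:Drondnd:1}, so that $\Rb(\y)=\Rb(\x)=:R\neq\emptyset$ on $\barB_1(\x,\eta)$. A first observation is that, by~\eqref{eq:vitesses}, each weight $\omega^{\gamma'}_{\gamma:k}(\cdot)$, and therefore the whole velocity vector $\tblambda(\cdot)$, depends on its argument only through $\Rb(\cdot)$; hence $\tblambda(\y)=\tblambda(\x)=:\bblambda$ for all $\y$ in this ball. By~\eqref{eq:tstar}, $t^*(\y)=\min\{\ttinter_{\alpha:i,\beta:j}(\y):(\alpha:i,\beta:j)\in R\}$ there, so it is enough to prove that each map $\y\mapsto\ttinter_{\alpha:i,\beta:j}(\y)$ is continuous at $\x$. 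Fixing such a pair and writing $g_{\y}(t):=\tPhi_j^{\beta}[\bblambda](\y;t)-\tPhi_i^{\alpha}[\bblambda](\y;t)$, Lemma~\ref{lem:ttinter} guarantees that the continuous function $g_{\y}$ has a unique, positive zero, namely $\ttinter_{\alpha:i,\beta:j}(\y)$.

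The final step is a quantitative implicit-function argument. Fix $\epsilon\in(0,\ttinter_{\alpha:i,\beta:j}(\x))$. Lemma~\ref{lem:ttinter}\eqref{it:ttinter:2} applied to $\x$ gives $g_{\x}(\ttinter_{\alpha:i,\beta:j}(\x)-\epsilon)\geq\ConstUSH\epsilon$ and $g_{\x}(\ttinter_{\alpha:i,\beta:j}(\x)+\epsilon)\leq-\ConstUSH\epsilon$. On the other hand, applying Lemma~\ref{lem:contracttPhi} with $s=0$ and equal velocity vectors yields $\|\tPhi[\bblambda](\y;t)-\tPhi[\bblambda](\x;t)\|_1\leq\|\y-\x\|_1$ for all $t\geq0$, whence $|g_{\y}(t)-g_{\x}(t)|\leq 2n\|\y-\x\|_1$ uniformly in $t$. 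Consequently, once $\|\y-\x\|_1 < \eta\wedge\frac{\ConstUSH\epsilon}{2n}$, the function $g_{\y}$ is still positive at $\ttinter_{\alpha:i,\beta:j}(\x)-\epsilon$ and still negative at $\ttinter_{\alpha:i,\beta:j}(\x)+\epsilon$, so its unique zero $\ttinter_{\alpha:i,\beta:j}(\y)$ lies in $(\ttinter_{\alpha:i,\beta:j}(\x)-\epsilon,\ttinter_{\alpha:i,\beta:j}(\x)+\epsilon)$. This establishes continuity of each $\ttinter_{\alpha:i,\beta:j}$ at $\x$, hence of the finite minimum $t^*$. I do not anticipate a genuine obstacle: the only slightly delicate point is combining the $\Ls^1$-contraction of the Typewise dynamics with the transversality estimate of Lemma~\ref{lem:ttinter} to localise the zero, and even that is routine.
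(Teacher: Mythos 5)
Your proof is correct; both parts hold up. Part~\eqref{it:Drondnd:1} is essentially identical to the paper's argument (the paper takes $\eta=\delta/(3n)$ where you take $\delta/(4n)$, an immaterial difference). Part~\eqref{it:Drondnd:2} is where the two proofs genuinely diverge in technique, even though they rely on exactly the same two ingredients: the $\Ls^1$-contraction of Lemma~\ref{lem:contracttPhi} with a common velocity vector, and the uniqueness-plus-transversality content of Lemma~\ref{lem:ttinter}\eqref{it:ttinter:2}. The paper runs a sequential compactness argument: it takes $\y_k\to\x$, bounds the collision times $\tau_k$ via Lemma~\ref{lem:bart}, extracts a convergent subsequence, and identifies the limit $\tau$ as the zero of $g(\x,\cdot)$, hence as $\ttinter_{\alpha:i,\beta:j}(\x)$. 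You instead read off the quantitative lower bound $|g_\x(\ttinter_{\alpha:i,\beta:j}(\x)\pm\epsilon)|\geq\ConstUSH\epsilon$ from Lemma~\ref{lem:ttinter} and use the uniform-in-time bound $|g_\y(t)-g_\x(t)|\leq 2n\|\y-\x\|_1$ from Lemma~\ref{lem:contracttPhi} to trap the (unique) zero of $g_\y$ in the interval $(\ttinter_{\alpha:i,\beta:j}(\x)-\epsilon,\ttinter_{\alpha:i,\beta:j}(\x)+\epsilon)$. Your route is more direct and purely local: it avoids the boundedness lemma and the subsequence extraction, and it actually yields an explicit modulus of continuity (the collision time moves by at most $2n\|\y-\x\|_1/\ConstUSH$). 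The paper's route is softer and does not require the transversality estimates to quantify the zero localisation, but it gives no quantitative information. Either is acceptable here; your version is arguably cleaner.
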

\begin{proof}
  Let $\x \in \Drnd$. Let 
  \begin{equation*}
    \eta := \frac{1}{3n} \min\{|x_i^{\alpha} - x_j^{\beta}|, (\alpha:i, \beta:j) \in (\Part)^2, \alpha < \beta\} > 0.
  \end{equation*}
  Let $\y \in \barB_1(\x, \eta)$. Then, in particular, for all $\alpha:i \in \Part$, $|x_i^{\alpha}-y_i^{\alpha}| \leq n\eta$. Let $(\alpha:i, \beta:j) \in (\Part)^2$ with $\alpha < \beta$.
  
  If $(\alpha:i, \beta:j) \in \Rb(\x)$, then $x_j^{\beta} - x_i^{\alpha} \geq 3n\eta$. Since $|x_j^{\beta} - y_j^{\beta}| \leq n\eta$ and $|x_i^{\alpha} - y_i^{\alpha}| \leq n\eta$, we deduce that $y_j^{\beta} - y_i^{\alpha} \geq n\eta$ so that $y_j^{\beta} > y_i^{\alpha}$ and $(\alpha:i, \beta:j) \in \Rb(\y)$.
  
  Likewise, if $(\alpha:i, \beta:j) \not\in \Rb(\x)$, then $x_i^{\alpha} - x_j^{\beta} \geq 3n\eta$ and $y_i^{\alpha} - y_j^{\beta} \geq n\eta$ so that $(\alpha:i, \beta:j) \not\in \Rb(\y)$. 
  
  As a conclusion, $\Rb(\x)=\Rb(\y)$ and $\y \in \Drnd$.
  
  We now prove that the function $t^*$ is continuous on the set $\{\x \in \Drnd : \Nb(\x) \geq 1\}$. Let us fix a configuration $\x$ in this set. Let $(\y_k)_{k \geq 1}$ be a sequence converging to $\x$ in $\Dnd$. By the first part of the lemma, there is no loss of generality in assuming that, for all $k \geq 1$, $||\x-\y_k||_1 \leq \eta$, where $\eta$ is defined in the first part of the proof, so that $\Rb(\y_k) = \Rb(\x)$. This allows us to write
  \begin{equation*}
    t^*(\y_k) = \min\{\ttinter_{\alpha:i, \beta:j}(\y_k), (\alpha:i, \beta:j) \in \Rb(\x)\}.
  \end{equation*}
  Let us fix $(\alpha:i, \beta:j) \in \Rb(\x)$. We denote $\tau_k := \ttinter_{\alpha:i, \beta:j}(\y_k)$ and prove that $\lim_{k \to +\infty} \tau_k = \ttinter_{\alpha:i, \beta:j}(\x)$. On the one hand, the sequence $(\tau_k)_{k \geq 1}$ is bounded. Indeed, combining Lemma~\ref{lem:bart} with the fact that $\Rb(\y_k)=\Rb(\x)$ and $||\x-\y_k||_1 \leq \eta$, we obtain
  \begin{equation*}
    \tau_k \leq \frac{1}{\ConstUSH}(|x_i^{\alpha}-x_j^{\beta}|+n\eta).
  \end{equation*}
  On the other hand, let $\tau \geq 0$ refer to the limit of a converging subsequence of $(\tau_k)_{k \geq 1}$, that we still index by $k$ for convenience. For all $\y \in \Dnd$ and $t \geq 0$, let 
  \begin{equation*}
    g(\y, t) := \tPhi_j^{\beta}[\tblambda(\x)](\y;t) - \tPhi_i^{\alpha}[\tblambda(\x)](\y;t),
  \end{equation*} 
  so that, for all $\y \in \barB_1(\x, \eta)$, $\ttinter_{\alpha:i, \beta:j}(\y)=t$ if and only if $g(\y,t)=0$. In particular, for all $k \geq 1$, $g(\y_k, \tau_k) = 0$, therefore
  \begin{equation*}
    |g(\x, \tau)| = |g(\x,\tau) - g(\y_k, \tau_k)| \leq |g(\x,\tau)-g(\x,\tau_k)| + |g(\x,\tau_k)-g(\y_k,\tau_k)|.
  \end{equation*}
  By the continuity of the trajectories of the flow $(\tPhi[\tblambda(\x)](\cdot;t))_{t \geq 0}$, $|g(\x,\tau)-g(\x,\tau_k)|$ vanishes when $k$ grows to infinity. Furthermore, Lemma~\ref{lem:contracttPhi} yields
  \begin{equation*}
    \begin{aligned}
      \frac{1}{n}|g(\x,\tau_k)-g(\y_k,\tau_k)| & \leq \frac{1}{n}|\tPhi_i^{\alpha}[\tblambda(\x)](\x;\tau_k) - \tPhi_i^{\alpha}[\tblambda(\x)](\y_k;\tau_k)|\\
      & \qquad + \frac{1}{n}|\tPhi_j^{\beta}[\tblambda(\x)](\x;\tau_k) - \tPhi_j^{\beta}[\tblambda(\x)](\y_k;\tau_k)|\\
      & \leq ||\tPhi[\tblambda(\x)](\x;\tau_k) - \tPhi[\tblambda(\x)](\y_k;\tau_k)||_1\\
      & \leq ||\x - \y_k||_1,
    \end{aligned}
  \end{equation*}
  and the right-hand side also vanishes when $k$ grows to infinity. As a conclusion, $g(\x, \tau)=0$ so that $\tau=\ttinter_{\alpha:i, \beta:j}(\x)$.
  
  Thus, for all $(\alpha:i, \beta:j) \in \Rb(\x)$, the function $\ttinter_{\alpha:i, \beta:j}$ is continuous at $\x$, and we complete the proof by recalling that the minimum of a finite number of continuous functions remains a continuous function.
\end{proof}

For initial configurations $\x \not\in \Drnd$, Lemma~\ref{lem:Drondnd} can be completed by the following lemma.

\begin{lem}[Estimates on the collision times]\label{lem:Ddroitnd}
  Under Assumptions~\eqref{ass:C} and~\eqref{ass:USH}, for all $\x \in \Dnd$, let
  \begin{equation*}
    \Rb'(\x) := \{(\alpha:i,\beta:j) \in (\Part)^2 : \alpha < \beta, x_i^{\alpha}=y_j^{\beta}\},
  \end{equation*}
  and let us define $\eta' > 0$ by
  \begin{equation*}
    \eta' := \frac{1}{3n}\min\{|x_i^{\alpha} - x_j^{\beta}| : (\alpha:i,\beta:j) \in (\Part)^2, \alpha < \beta, (\alpha:i,\beta:j) \not\in \Rb'(\x)\},
  \end{equation*}
  where we take the convention that $\eta' = +\infty$ whenever the minimum above is taken over an empty set. Then, for all $y \in \Dnd$ such that $||\x-\y||_1 \leq \eta' \ConstUSH/\ConstBound{1}$,
  \begin{equation*}
    \inf\{t \geq 0 : \Rb(\Phi(\y;t)) = \Rb(\x)\} \leq \frac{n}{\ConstUSH} ||\x-\y||_1,
  \end{equation*}
  while
  \begin{equation*}
    \sup\{t \geq 0 : \Rb(\Phi(\y;t)) = \Rb(\x)\} \geq \frac{2n\eta'}{\ConstBound{1}}.
  \end{equation*}  
\end{lem}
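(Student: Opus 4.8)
The idea is to control, for each pair $(\alpha:i,\beta:j)$ with $\alpha<\beta$, the sign of $\Phi_i^{\alpha}(\y;t)-\Phi_j^{\beta}(\y;t)$ from the sign of $x_i^{\alpha}-x_j^{\beta}$, by integrating the two-sided velocity estimate. Set $\delta:=||\x-\y||_1$. Since $(\alpha,i)$ and $(\beta,j)$ are distinct labels, $|x_i^{\alpha}-y_i^{\alpha}|+|x_j^{\beta}-y_j^{\beta}|$ is a partial sum of $n||\x-\y||_1$, so $|(x_j^{\beta}-x_i^{\alpha})-(y_j^{\beta}-y_i^{\alpha})|\le n\delta$. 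The remark following Assumption~\eqref{ass:USH} gives $\ConstUSH\le\ConstBound{1}$, so the hypothesis yields $\delta\le\eta'\ConstUSH/\ConstBound{1}\le\eta'$. Integrating~\eqref{eq:typeencadrelambda}, and using Assumption~\eqref{ass:USH} together with the definition of $\ConstBound{1}$ to bound the relative velocity $v_i^{\alpha}(\y;s)-v_j^{\beta}(\y;s)$ between $\ConstUSH$ and $\ConstBound{1}$, one gets, for every $t\ge0$,
\[
  (y_i^{\alpha}-y_j^{\beta})+\ConstUSH t\ \le\ \Phi_i^{\alpha}(\y;t)-\Phi_j^{\beta}(\y;t)\ \le\ (y_i^{\alpha}-y_j^{\beta})+\ConstBound{1}t.
\]

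Next I would split the pairs $(\alpha:i,\beta:j)$ with $\alpha<\beta$ into three families according to the position of $x_i^{\alpha}$ relative to $x_j^{\beta}$. (i) If $(\alpha:i,\beta:j)\in\Rb(\x)$, then it is not in $\Rb'(\x)$, so $x_j^{\beta}-x_i^{\alpha}\ge 3n\eta'$, whence $y_j^{\beta}-y_i^{\alpha}\ge 3n\eta'-n\delta\ge 2n\eta'>0$; the upper bound above then gives $\Phi_i^{\alpha}(\y;t)<\Phi_j^{\beta}(\y;t)$, i.e. $(\alpha:i,\beta:j)\in\Rb(\Phi(\y;t))$, for all $t<2n\eta'/\ConstBound{1}$. (ii) If $(\alpha:i,\beta:j)\notin\Rb(\x)\cup\Rb'(\x)$, then $x_i^{\alpha}-x_j^{\beta}\ge 3n\eta'$, so $y_i^{\alpha}-y_j^{\beta}\ge 2n\eta'>0$, and the lower bound gives $\Phi_i^{\alpha}(\y;t)\ge\Phi_j^{\beta}(\y;t)$, i.e. $(\alpha:i,\beta:j)\notin\Rb(\Phi(\y;t))$, for all $t\ge0$. (iii) If $(\alpha:i,\beta:j)\in\Rb'(\x)$, then $x_i^{\alpha}=x_j^{\beta}$, so $|y_i^{\alpha}-y_j^{\beta}|\le n\delta$; when $y_i^{\alpha}\ge y_j^{\beta}$ the lower bound gives $\Phi_i^{\alpha}(\y;t)\ge\Phi_j^{\beta}(\y;t)$ for all $t\ge0$, and when $y_i^{\alpha}<y_j^{\beta}$ the same lower bound gives $\Phi_i^{\alpha}(\y;t)\ge\Phi_j^{\beta}(\y;t)$ as soon as $t\ge(y_j^{\beta}-y_i^{\alpha})/\ConstUSH$, hence for all $t\ge n\delta/\ConstUSH$; in either case $(\alpha:i,\beta:j)\notin\Rb(\Phi(\y;t))$ for $t\ge n\delta/\ConstUSH$. (One may alternatively invoke Lemma~\ref{lem:tinter}, which encodes the same monotonicity of the gaps; note that each such gap is nondecreasing, so a pair leaves the ``ordered'' family at most once and never re-enters it.)

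Combining (i)–(iii), for every $t$ in the interval $[\,n\delta/\ConstUSH,\ 2n\eta'/\ConstBound{1}\,)$ — which is nonempty precisely because $\delta\le\eta'\ConstUSH/\ConstBound{1}<2\eta'\ConstUSH/\ConstBound{1}$ — a pair $(\alpha:i,\beta:j)$ lies in $\Rb(\Phi(\y;t))$ if and only if it lies in $\Rb(\x)$, i.e. $\Rb(\Phi(\y;t))=\Rb(\x)$ throughout this interval. This gives at once $\inf\{t\ge0:\Rb(\Phi(\y;t))=\Rb(\x)\}\le n\delta/\ConstUSH=\frac{n}{\ConstUSH}||\x-\y||_1$. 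For the second inequality, cases (i)–(iii) show that every pair of $\Rb(\y)\setminus\Rb(\x)$ (which is contained in $\Rb'(\x)$) has left by time $n\delta/\ConstUSH$ and never returns, while every pair of $\Rb(\x)$ remains in $\Rb(\Phi(\y;t))$ for all $t<2n\eta'/\ConstBound{1}$; hence $\Rb(\Phi(\y;t))=\Rb(\x)$ for all $t<2n\eta'/\ConstBound{1}$, so $\sup\{t\ge0:\Rb(\Phi(\y;t))=\Rb(\x)\}\ge 2n\eta'/\ConstBound{1}$. The degenerate case $\eta'=+\infty$ (no pair outside $\Rb'(\x)$, so $\Rb(\x)=\emptyset$) is covered by the same reasoning: the hypothesis on $||\x-\y||_1$ is vacuous, all pairs of $\Rb(\y)$ disappear by time $n\delta/\ConstUSH$, and $\Rb(\Phi(\y;t))=\emptyset=\Rb(\x)$ for all $t\ge n\delta/\ConstUSH$.

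The only genuine point requiring care is the arithmetic of the constants: the tolerance $\eta'$ must carry the factor $1/(3n)$, and the comparison radius must be $\eta'\ConstUSH/\ConstBound{1}$ rather than $\eta'$, so that the barrier $n\delta/\ConstUSH$ for exiting pairs sits strictly below the barrier $2n\eta'/\ConstBound{1}$ for surviving pairs; granting this, everything follows from the velocity estimate~\eqref{eq:typeencadrelambda} and Assumption~\eqref{ass:USH} alone, with no appeal to the finer structure of the dynamics.
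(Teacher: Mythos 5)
Your proof is correct and is essentially the paper's own argument: both integrate the two-sided velocity bound $\ConstUSH \le v_i^{\alpha}-v_j^{\beta} \le \ConstBound{1}$ to control gaps, split pairs into the three families according to $\Rb(\x)$ and $\Rb'(\x)$, and derive the same time barriers $n||\x-\y||_1/\ConstUSH$ and $2n\eta'/\ConstBound{1}$. The only cosmetic difference is that the paper phrases these bounds through the collision time $\tinter_{\alpha:i,\beta:j}$, while you keep the gap inequalities explicit.
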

\begin{proof}
  Let $\y \in \Dnd$ such that $||\x-\y||_1 \leq \eta' \ConstUSH/\ConstBound{1}$. Recall that $\ConstUSH \leq \ConstBound{1}$, so that $||\x-\y||_1 \leq \eta'$, which implies that 
  \begin{equation*}
    \Rb(\y) \subset \Rb(\x) \cup \Rb'(\x).
  \end{equation*}
  Let $(\alpha:i, \beta:j) \in \Rb(\y)$.
  \begin{itemize}
    \item If $(\alpha:i, \beta:j) \in \Rb'(\x)$, then by Assumption~\eqref{ass:USH},
    \begin{equation*}
      \tinter_{\alpha:i, \beta:j}(\y) \leq \frac{1}{\ConstUSH}(y_j^{\beta}-y_i^{\alpha}) = \frac{1}{\ConstUSH}(y_j^{\beta}-x_j^{\beta}+x_i^{\alpha}-y_i^{\alpha}) \leq \frac{n}{\ConstUSH}||\x-\y||_1.
    \end{equation*}
    \item If $(\alpha:i, \beta:j) \in \Rb(\x)$, then by the boundedness of the velocities,
    \begin{equation*}
      \tinter_{\alpha:i, \beta:j}(\y) \geq \frac{1}{\ConstBound{1}}(y_j^{\beta}-y_i^{\alpha}) \geq \frac{1}{\ConstBound{1}}(|x_j^{\beta}-x_i^{\alpha}|-n\eta') \geq \frac{2n\eta'}{\ConstBound{1}}.
    \end{equation*}
  \end{itemize}
  Since the choice of $\y$ ensures that $||\x-\y||_1/\ConstUSH < 2\eta'/\ConstBound{1}$, we conclude that, on the time interval $[n||\x-\y||_1/\ConstUSH, 2n\eta'/\ConstBound{1}]$, we have $\Rb(\Phi(\y;t)) = \Rb(\x)$.
\end{proof}

We are now ready to prove Proposition~\ref{prop:continuity}.
\begin{proof}[Proof of Proposition~\ref{prop:continuity}]
  The proof works by induction on $\Nb(\x)$. 
  
  Let us first fix $\epsilon > 0$ and $\x \in \Dnd$ such that $\Nb(\x)=0$. Let $\delta > 0$, and let $\y \in \barB_1(\x, \delta)$. Then, in particular, for all $\gamma:k \in \Part$, $|x_k^{\gamma} - y_k^{\gamma}| \leq n\delta$. We shall study $||\Phi(\x;t) - \Phi(\y;t)||_1$ on the intervals $[0, \bar{t}(\y))$ and $[\bar{t}(\y),+\infty)$ separately.
  
  If $\bar{t}(\y)=0$ then the interval $[0,\bar{t}(\y))$ is empty. If $\bar{t}(\y) > 0$, that is to say $\Nb(\y) \geq 1$, then we let $t \in [0,\bar{t}(\y))$, and we have
  \begin{equation*}
    ||\Phi(\x;t) - \Phi(\y;t)||_1 \leq ||\x-\y||_1 + \frac{1}{n}\sum_{\gamma=1}^d \sum_{k=1}^n \int_{s=0}^t |v_k^{\gamma}(\x;s) - v_k^{\gamma}(\y;s)| \dd s \leq \delta + 2\ConstBound{1} t.
  \end{equation*}
  Following Lemma~\ref{lem:bart},
  \begin{equation*}
    \bar{t}(\y) \leq \frac{1}{\ConstUSH} \sup\{y_j^{\beta}-y_i^{\alpha}, (\alpha:i, \beta:j) \in \Rb(\y)\}.
  \end{equation*}
  and, for all $(\alpha:i, \beta:j) \in \Rb(\y)$,
  \begin{equation*}
    y_j^{\beta}-y_i^{\alpha} = y_j^{\beta}-x_j^{\beta} + x_j^{\beta}-x_i^{\alpha} + x_i^{\alpha}-y_i^{\alpha} \leq |x_j^{\beta}-y_j^{\beta}| + |x_i^{\alpha}-y_i^{\alpha}| \leq n\delta,
  \end{equation*}
  where we have used the fact that $\Nb(\x)=0$ so that $x_j^{\beta} \leq x_i^{\alpha}$. As a consequence, 
  \begin{equation*}
    \sup_{t \in [0, \bar{t}(\y))} ||\Phi(\x;t) - \Phi(\y;t)||_1 \leq \left(1 + \frac{2n}{\ConstUSH}\ConstBound{1}\right)\delta.
  \end{equation*}
  
  We now study $||\Phi(\x;t) - \Phi(\y;t)||_1$ for $t \geq \bar{t}(\y)$. Letting $\x':=\Phi(\x;\bar{t}(\y))$, $\y':=\Phi(\y;\bar{t}(\y))$ and using Proposition~\ref{prop:mspd}, this amounts to studying $||\Phi(\x';t) - \Phi(\y';t)||_1$ for $t \geq 0$. By the definition of $\bar{t}$, $\Nb(\x')=\Nb(\y')=0$, so that $\tblambda(\x')=\tblambda(\y')$. Hence, for all $t \geq 0$, Lemma~\ref{lem:contracttPhi} yields
  \begin{equation*}
    ||\Phi(\x';t) - \Phi(\y';t)||_1 = ||\tPhi[\tblambda(\x')](\x';t) - \tPhi[\tblambda(\x')](\y';t)||_1 \leq ||\x'-\y'||_1.
  \end{equation*}
  Using the bound obtained on $||\x'-\y'||_1$ above, we finally deduce that
  \begin{equation*}
    \sup_{t \geq 0} ||\Phi(\x;t) - \Phi(\y;t)||_1 \leq \left(1 + \frac{2n}{\ConstUSH}\ConstBound{1}\right)\delta,
  \end{equation*}
  so that the conclusion follows from taking $\delta$ small enough for the inequality
  \begin{equation*}
    \left(1 + \frac{2n}{\ConstUSH}\ConstBound{1}\right)\delta \leq \epsilon
  \end{equation*}  
  to hold.
  
  We now let $N \geq 0$ such that, for all $\x \in \Dnd$ such that $\Nb(\x) \leq N$, the conclusion of Proposition~\ref{prop:continuity} holds. Let us fix $\epsilon > 0$ and $\x \in \Dnd$, such that $\Nb(\x) = N+1$. We are willing to construct $\delta > 0$ such that, for all $\y \in \barB_1(\x,\delta)$,
  \begin{equation*}
    \sup_{t \geq 0} ||\Phi(\x;t) - \Phi(\y;t)||_1 \leq \epsilon.
  \end{equation*}
  
  First, by Corollary~\ref{cor:ttinter}, $\Nb(\x^*) \leq N$, therefore there exists $\delta^* > 0$ such that, for all $\y \in \Dnd$, if $||\x^* - \Phi(\y;t^*(\x))||_1 \leq \delta^*$, then
  \begin{equation*}
    \sup_{t \geq 0} ||\Phi(\x^*;t)-\Phi(\Phi(\y;t^*(\x));t)||_1 \leq \epsilon,
  \end{equation*}
  that is to say, thanks to the flow property stated in Proposition~\ref{prop:mspd},
  \begin{equation*}
    \sup_{t \geq t^*(\x)} ||\Phi(\x;t)-\Phi(\y;t)||_1 \leq \epsilon.
  \end{equation*}
  
  We now prove that there exists $\delta > 0$ such that, for all $\y \in \barB_1(\x,\delta)$, $\sup_{t \in [0,t^*(\x)]} ||\Phi(\x;t)-\Phi(\y;t)||_1 \leq \epsilon$ and $||\x^* - \Phi(\y;t^*(\x))||_1 \leq \delta^*$; which we shall actually do at once by constructing $\delta > 0$ such that, for all $\y \in \barB_1(\x,\delta)$,
  \begin{equation*}
    \sup_{t \in [0,t^*(\x)]} ||\Phi(\x;t)-\Phi(\y;t)||_1 \leq \epsilon \wedge \delta^*.
  \end{equation*}
 
  To this aim, we first assume that $\x \in \Drnd$. Then, by~\eqref{it:Drondnd:1} in Lemma~\ref{lem:Drondnd}, there exists $\eta > 0$ such that, for all $\y \in \barB_1(\x,\eta)$, $\Rb(\x)=\Rb(\y)$, and therefore $\tblambda(\x) = \tblambda(\y) =: \bblambda$. As a consequence, for all $t \in [0, t^*(\x) \wedge t^*(\y)]$, 
  \begin{equation*}
    \Phi(\x;t) = \tPhi[\bblambda](\x;t), \qquad \Phi(\y;t) = \tPhi[\bblambda](\y;t),
  \end{equation*}
  so that Lemma~\ref{lem:contracttPhi} yields
  \begin{equation*}
    \forall t \in [0, t^*(\x) \wedge t^*(\y)], \qquad ||\Phi(\x;t) - \Phi(\y;t)||_1 \leq ||\x-\y||_1.
  \end{equation*}
  Letting $\x' := \Phi(\x;t^*(\x) \wedge t^*(\y))$, $\y' := \Phi(\y;t^*(\x) \wedge t^*(\y))$, one still has the trivial bound, for all $t \in [t^*(\x) \wedge t^*(\y), t^*(\x)]$,
  \begin{equation*}
    \begin{aligned}
      ||\Phi(\x;t) - \Phi(\y;t)||_1 & \leq ||\x'-\y'||_1 + 2\ConstBound{1} (t-t^*(\x) \wedge t^*(\y))\\
      & \leq ||\x-\y||_1 + 2\ConstBound{1}|t^*(\x) - t^*(\y)|.
    \end{aligned}
  \end{equation*}
  As a conclusion, for $\y \in \Dnd$ such that $||\x-\y||_1 \leq \eta$,
  \begin{equation*}
    \sup_{t \in [0,t^*(\x)]} ||\Phi(\x;t)-\Phi(\y;t)||_1 \leq ||\x-\y||_1 + 2\ConstBound{1}|t^*(\x) - t^*(\y)|.
  \end{equation*}
  By Lemma~\ref{lem:Drondnd}, there exists $\delta > 0$ such that, for all $\y \in \Dnd$ such that $||\x-\y||_1 \leq \delta$, the right-hand side above is lower than $\epsilon \wedge \delta^*$. This completes the proof of the case $\x \in \Drnd$.
  
  Without assuming that $\x \in \Drnd$, we proceed as follows. Let $\eta' > 0$ be given by Lemma~\ref{lem:Ddroitnd}. Let us note that, since $\Nb(\x) \geq 1$, then $\eta' < +\infty$. Besides, the proof of Lemma~\ref{lem:Ddroitnd} shows that $t^*(\x) \geq 3n\eta'/\ConstBound{1}$. Let us denote
  \begin{equation*}
    t' := \frac{2n\eta'}{\ConstBound{1}} \in (0, t^*(\x)).
  \end{equation*}
  Then $\Phi(\x;t') \in \Drnd$, and $\Rb(\Phi(\x;t'))=\Rb(\x)$. As a consequence, using the argument above, we obtain that there exists $\delta' > 0$ such that, for all $\y \in \Dnd$ such that $\Phi(\y;t') \in \barB_1(\Phi(\x;t'),\delta')$, then
  \begin{equation*}
    \sup_{t \in [t',t^*(\x)]} ||\Phi(\x;t)-\Phi(\y;t)||_1 \leq \epsilon \wedge \delta^*.
  \end{equation*}
  Now, for all $\y \in \Dnd$ such that $||\x-\y||_1 \leq n\eta' \ConstUSH/\ConstBound{1}$, then
  \begin{equation*}
    t'' := \inf\{t \geq 0 : \Rb(\Phi(\y;t)) = \Rb(\x)\} \leq t',
  \end{equation*}
  and
  \begin{equation*}
    \sup_{t \in [0,t'']} ||\Phi(\x;t)-\Phi(\y;t)||_1 \leq ||\x-\y||_1 + 2\ConstBound{1} t'' \leq \left(1 + 2n \frac{\ConstBound{1}}{\ConstUSH}\right)||\x-\y||_1,
  \end{equation*}
  where the bound on $t''$ follows from Lemma~\ref{lem:Ddroitnd}. On the other hand, using Lemma~\ref{lem:Ddroitnd} again, we obtain that, on the time interval $[t'',t']$, $\Rb(\Phi(\y;t))=\Rb(\x)=\Rb(\Phi(\x;t))$, therefore by Lemma~\ref{lem:contracttPhi},
  \begin{equation*}
    \sup_{t \in [t'',t']} ||\Phi(\x;t)-\Phi(\y;t)||_1 \leq ||\Phi(\x;t'')-\Phi(\y;t'')||_1 \leq \left(1 + 2n \frac{\ConstBound{1}}{\ConstUSH}\right)||\x-\y||_1.
  \end{equation*}
  As a consequence, letting
  \begin{equation*}
    \delta := \min\left(n\eta' \frac{\ConstUSH}{\ConstBound{1}}, \frac{\epsilon \wedge \delta'}{1 + 2n \ConstBound{1}/\ConstUSH}\right),
  \end{equation*}
  we conclude that, for all $\y \in \barB_1(\x,\delta)$,
  \begin{equation*}
    ||\Phi(\x;t')-\Phi(\y;t')||_1 \leq \delta',
  \end{equation*}
  while
  \begin{equation*}
    \sup_{t \in [0,t']} ||\Phi(\x;t)-\Phi(\y;t)||_1 \leq \epsilon,
  \end{equation*}
  which completes the proof.
\end{proof}


\subsection{Proof of Proposition~\ref{prop:closedness}}\label{app:pf:closedness} Before proving Proposition~\ref{prop:closedness}, we state and prove the technical Lemmas~\ref{lem:ellsol} and~\ref{lem:FnGn}.

\begin{lem}[An extended change of variable formula]\label{lem:ellsol}
  Let $\ell : [0,1] \times \R \to \R$ be a measurable and bounded function, and $F$ be a CDF on the real line. Then
  \begin{equation}\label{eq:ellsol}
    \int_{x \in \R} \int_{\theta=0}^1 \ell((1-\theta)F(x^-)+\theta F(x), x) \dd \theta \dd F(x) = \int_{v=0}^1 \ell(v, F^{-1}(v)) \dd v.
  \end{equation}
\end{lem}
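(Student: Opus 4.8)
The plan is to reduce the identity~\eqref{eq:ellsol} to the classical change of variable formula of Lemma~\ref{lem:CDFm1}, by recognising the left-hand side as an integral against the image measure $\Unif \circ (F^{-1})^{-1} = m$ of a suitably chosen function. First I would decompose the set of atoms of $m$, which is at most countable, say $\{a_1, a_2, \ldots\}$, from its complement. On the complement, $F(x^-) = F(x)$ so the inner integral over $\theta$ collapses: $\int_{\theta=0}^1 \ell((1-\theta)F(x^-)+\theta F(x),x)\dd\theta = \ell(F(x),x)$, and the change of variable formula of Lemma~\ref{lem:CDFm1} applied to $x \mapsto \ell(F(x),x)$ (which is bounded and measurable) handles that part after checking that the corresponding range of $v$-values is the complement of the (countable, hence negligible) set $\{v : \exists\, x<y,\ F(x)=F(y)=v\}$ together with the levels $F(a_i)$.

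Next I would treat the atoms. For a single atom $a$ with $F(a^-) = v_- < v_+ = F(a)$, the contribution to the left-hand side is
\begin{equation*}
  \Delta F(a) \int_{\theta=0}^1 \ell((1-\theta)v_- + \theta v_+, a)\dd\theta = (v_+ - v_-)\int_{\theta=0}^1 \ell((1-\theta)v_-+\theta v_+, a)\dd\theta = \int_{v=v_-}^{v_+} \ell(v, a)\dd v,
\end{equation*}
using the affine substitution $v = (1-\theta)v_- + \theta v_+$. On the right-hand side, by Assertion~\eqref{it:pseudoinv:2} in Lemma~\ref{lem:pseudoinv}, one has $F^{-1}(v) = a$ precisely for $v \in (v_-, v_+]$ (more carefully, $F^{-1}(v)\le a \iff v\le F(a)=v_+$ and $F^{-1}(v) > a' \text{ for } a'<a \iff v > F(a')$, which pins $F^{-1}(v)=a$ on $(F(a^-),F(a)] = (v_-,v_+]$); hence $\int_{v=v_-}^{v_+}\ell(v,F^{-1}(v))\dd v = \int_{v=v_-}^{v_+}\ell(v,a)\dd v$, matching the atom's contribution. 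Summing over the countably many atoms (dominated convergence is licit since $\ell$ is bounded and the intervals $(F(a_i^-),F(a_i)]$ are disjoint subintervals of $(0,1)$) and adding the continuous part, the two sides of~\eqref{eq:ellsol} agree.

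The only mildly delicate point — and the one I would write out with care rather than the routine substitutions — is the bookkeeping of which $v \in (0,1)$ are hit by $F^{-1}$ and with what multiplicity: one must check that $(0,1)$ is partitioned (up to a Lebesgue-null set) into the disjoint intervals $(F(a_i^-), F(a_i)]$ over atoms $a_i$ and a remaining set on which $v \mapsto F^{-1}(v)$ is a continuous point of $F^{-1}$ with $F(F^{-1}(v)) = v$, so that no mass is double-counted and none is lost. This follows from Assertions~\eqref{it:pseudoinv:0}, \eqref{it:pseudoinv:1} and~\eqref{it:pseudoinv:2} of Lemma~\ref{lem:pseudoinv} together with the countability of the atom set, but it is the combinatorial heart of the argument; once it is in place, the rest is the elementary affine change of variable in $\theta$ and an application of Lemma~\ref{lem:CDFm1}.
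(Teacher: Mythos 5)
Your proof is correct and follows essentially the same strategy as the paper: split $\dd F$ into its atomic part and its continuous part, collapse the $\theta$-integral on the continuous part and use Lemma~\ref{lem:CDFm1} together with the fact that $F(F^{-1}(v))=v$ there, and convert the $\theta$-integral over each atom into a $\dd v$-integral over the interval $(F(a^-),F(a)]$ on which $F^{-1}\equiv a$. The only presentational difference is that the paper treats the atomic part by first applying Lemma~\ref{lem:CDFm1} to $\ind{\Delta F(x)>0}\cdot(\cdots)$ and then performing a Fubini interchange, whereas you sum directly over the countably many atoms — the two are interchangeable bookkeeping.
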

\begin{proof}
  Let us split the integral in the left-hand side of~\eqref{eq:ellsol} in two parts, depending on whether $\Delta F(x) = 0$ or $\Delta F(x) > 0$. On the one hand, using Lemma~\ref{lem:CDFm1},
  \begin{equation*}
    \begin{aligned}
      & \int_{x \in \R} \int_{\theta=0}^1 \ind{\Delta F(x)=0} \ell((1-\theta)F(x^-)+\theta F(x), x) \dd \theta \dd F(x) = \int_{x \in \R} \ind{\Delta F(x)=0} \ell(F(x), x) \dd F(x)\\
      & \qquad = \int_{v=0}^1 \ind{\Delta F(F^{-1}(v))=0} \ell(F(F^{-1}(v)), F^{-1}(v)) \dd v,
    \end{aligned} 
  \end{equation*}
  and it follows from~\eqref{it:pseudoinv:1} in Lemma~\ref{lem:pseudoinv} that, if $\Delta F(F^{-1}(v))=0$, then $F(F^{-1}(v))=v$. As a consequence,
  \begin{equation*}
    \int_{x \in \R} \int_{\theta=0}^1 \ind{\Delta F(x)=0} \ell((1-\theta)F(x^-)+\theta F(x), x) \dd \theta \dd F(x) = \int_{v=0}^1 \ind{\Delta F(F^{-1}(v))=0} \ell(v, F^{-1}(v)) \dd v.
  \end{equation*}
  
  On the other hand, 
  \begin{equation*}
    \begin{aligned}
      & \int_{x \in \R} \int_{\theta=0}^1 \ind{\Delta F(x)>0} \ell((1-\theta)F(x^-)+\theta F(x), x) \dd \theta \dd F(x) \\
      & \qquad = \int_{v=0}^1\ind{\Delta F(F^{-1}(v))>0}  \int_{\theta=0}^1 \ell((1-\theta)F(F^{-1}(v)^-)+\theta F(F^{-1}(v)), F^{-1}(v)) \dd v \dd \theta\\
      & \qquad = \int_{v=0}^1\ind{\Delta F(F^{-1}(v))>0} \frac{1}{\Delta F(F^{-1}(v))} \int_{w=F(F^{-1}(v)^-)}^{F(F^{-1}(v))} \ell(w,F^{-1}(v)) \dd w \dd v\\
      & \qquad = \int_{v=0}^1 \int_{w=0}^1 \ind{\Delta F(F^{-1}(v))>0, F(F^{-1}(v)^-) < w \leq F(F^{-1}(v))} \frac{\ell(w,F^{-1}(v))}{\Delta F(F^{-1}(v))} \dd w \dd v.
    \end{aligned} 
  \end{equation*}
  The key observation here is that, if $v \in (0,1)$ is such that $\Delta F(F^{-1}(v)) > 0$, then, for all $w$ such that
  \begin{equation*}
    F(F^{-1}(v)^-) < w \leq F(F^{-1}(v)),
  \end{equation*}
  one has $F^{-1}(w) = F^{-1}(v)$. As a consequence, the right-hand side above rewrites
  \begin{equation*}
    \begin{aligned}
      & \int_{v=0}^1 \int_{w=0}^1 \ind{\Delta F(F^{-1}(v))>0, F(F^{-1}(v)^-) < w \leq F(F^{-1}(v))} \frac{\ell(w,F^{-1}(v))}{\Delta F(F^{-1}(v))} \dd w \dd v\\
      & \qquad = \int_{v=0}^1 \int_{w=0}^1 \ind{\Delta F(F^{-1}(w))>0, F(F^{-1}(v)^-) < w \leq F(F^{-1}(v))} \frac{\ell(w,F^{-1}(w))}{\Delta F(F^{-1}(w))} \dd w \dd v\\
      & \qquad = \int_{w=0}^1 \ind{\Delta F(F^{-1}(w))>0}\frac{\ell(w,F^{-1}(w))}{\Delta F(F^{-1}(w))} \int_{v=0}^1 \ind{F(F^{-1}(v)^-) < w \leq F(F^{-1}(v))} \dd v \dd w.
    \end{aligned} 
  \end{equation*}
  We now complete the proof by checking that, $\dd w$-almost everywhere, if $\Delta F(F^{-1}(w))>0$ then
  \begin{equation*}
    \int_{v=0}^1 \ind{F(F^{-1}(v)^-) < w \leq F(F^{-1}(v))} \dd v = \Delta F(F^{-1}(w)).
  \end{equation*}
  To this aim, we note that for all $w \in (0,1)$ such that $\Delta F(F^{-1}(w))>0$,
  \begin{equation*}
    \begin{aligned}
      \int_{v=0}^1 \ind{F(F^{-1}(v)^-) < w \leq F(F^{-1}(v))} \dd v & = \int_{x \in \R} \ind{F(x^-) < w \leq F(x)} \dd F(x)\\
      & = \sum_{x : \Delta F(x) > 0} \ind{F(x^-) < w \leq F(x)} \Delta F(x),
    \end{aligned}
  \end{equation*}
  where we have used Lemma~\ref{lem:CDFm1} at the first line. 
  
  Recall that, by~\eqref{it:pseudoinv:1} in Lemma~\ref{lem:pseudoinv}, $F(F^{-1}(w)^-) \leq w \leq F(F^{-1}(w))$. As a consequence, if $w$ is not taken from the countable set of values of $F(x^-)$ when $x$ is an atom of $\dd F$, then the sum above contains exactly one positive term, which corresponds to $x=F^{-1}(w)$ and therefore writes $\Delta F(F^{-1}(w))$.
\end{proof}

\begin{lem}[Convergence of composed CDFs]\label{lem:FnGn}
  Let $(F_n)_{n \geq 1}$ and $(G_n)_{n \geq 1}$ be two sequences of CDFs on $\R$ and $F$ and $G$ be two CDFs on $\R$, such that:
  \begin{itemize}
    \item for all $x \in \R$ such that $\Delta F(x)=0$, $\lim_{n \to +\infty} F_n(x) = F(x)$,
    \item for all $x \in \R$ such that $\Delta G(x)=0$, $\lim_{n \to +\infty} G_n(x) = G(x)$,
    \item for all $x \in \R$, $\Delta F(x) \Delta G(x) = 0$.
  \end{itemize}
  Then, $\dd v$-almost everywhere,
  \begin{equation*}
    \lim_{n \to +\infty} G_n(F_n^{-1}(v)) = G(F^{-1}(v)) \qquad \text{and} \qquad \lim_{n \to +\infty} G_n(F_n^{-1}(v)^-) = G(F^{-1}(v)^-).
  \end{equation*}
\end{lem}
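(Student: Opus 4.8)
The plan is to reduce everything to weak convergence of CDFs evaluated along a moving sequence of points. First I would invoke Lemma~\ref{lem:cvCDF}: the first hypothesis means precisely that the probability measure with CDF $F_n$ converges weakly to the one with CDF $F$, and hence $F_n^{-1}(v) \to F^{-1}(v)$ at every continuity point $v$ of $F^{-1}$, that is, $\dd v$-almost everywhere in $(0,1)$. Fix such a $v$ and write $x := F^{-1}(v)$, $x_n := F_n^{-1}(v)$, so that $x_n \to x$. The goal is then to show that $G_n(x_n) \to G(x)$ and $G_n(x_n^-) \to G(x^-)$.

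\textbf{Key step (expected main obstacle).} I would next check that, for $\dd v$-almost every $v$, the CDF $G$ is continuous at the point $x = F^{-1}(v)$; this is where the assumption $\Delta F(x)\Delta G(x) = 0$ for all $x$ is used. There are two cases. If $x$ is an atom of $F$, then it cannot be an atom of $G$, so $G$ is continuous at $x$. If $x$ is not an atom of $F$, then by Assertion~\eqref{it:pseudoinv:1} of Lemma~\ref{lem:pseudoinv} we have $F(x^-) \leq v \leq F(x)$, and since $F(x^-) = F(x)$ this forces $v = F(x)$; consequently the set of $v \in (0,1)$ for which $F^{-1}(v)$ is an atom of $G$ without being an atom of $F$ is contained in the countable set $\{F(a) : a \text{ is an atom of } G\}$, which is Lebesgue-negligible. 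Combining the two cases with the $\dd v$-almost sure convergence $F_n^{-1}(v) \to F^{-1}(v)$, we obtain that $\dd v$-almost everywhere, $x_n \to x$ and $G$ is continuous at $x$ (so that in particular $G(x^-) = G(x)$).

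\textbf{Conclusion via an elementary squeeze.} It remains to establish the following fact: if $(G_n)_{n \geq 1}$ are CDFs converging to a CDF $G$ at every continuity point of $G$, if $x_n \to x$, and if $G$ is continuous at $x$, then $G_n(x_n) \to G(x)$ and $G_n(x_n^-) \to G(x)$. Given $\epsilon > 0$, I would choose continuity points $a < x < b$ of $G$ (they form a dense set) with $G(x) - G(a) < \epsilon$ and $G(b) - G(x) < \epsilon$; since $x_n \to x$ we have $a < x_n < b$ for $n$ large, whence by monotonicity $G_n(a) \leq G_n(x_n^-) \leq G_n(x_n) \leq G_n(b)$. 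Letting $n \to +\infty$ and using $G_n(a) \to G(a)$, $G_n(b) \to G(b)$, both $\liminf$ and $\limsup$ of $G_n(x_n)$ and of $G_n(x_n^-)$ are trapped in $[G(x) - \epsilon, G(x) + \epsilon]$, and letting $\epsilon \downarrow 0$ proves the fact. Applying it with $x = F^{-1}(v)$ and $x_n = F_n^{-1}(v)$, for $\dd v$-almost every $v$, yields $G_n(F_n^{-1}(v)) \to G(F^{-1}(v))$ and $G_n(F_n^{-1}(v)^-) \to G(F^{-1}(v)) = G(F^{-1}(v)^-)$, which is the claim.
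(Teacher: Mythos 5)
Your proof is correct. The first two steps — invoking Lemma~\ref{lem:cvCDF} to get $F_n^{-1}(v) \to F^{-1}(v)$ for $\dd v$-almost every $v$, and showing that $\Delta G(F^{-1}(v)) = 0$ for $\dd v$-almost every $v$ via the incompatibility of atoms of $F$ and $G$ — match the paper's argument (the paper phrases the negligibility by noting the interval $\{v : F^{-1}(v) = x\}$ is at most a singleton when $x$ is an atom of $G$; your counting of $v = F(a)$ over atoms $a$ of $G$ is the same content from the other end). Where you differ is in the final convergence step: you use an elementary squeeze, bracketing $x_n = F_n^{-1}(v)$ between continuity points $a < x < b$ of $G$ chosen so that $G(b) - G(a) < 2\epsilon$, whereas the paper first proves $G_n(F^{-1}(v)) \to G(F^{-1}(v))$ directly, then controls the difference $|G_n(F_n^{-1}(v)) - G_n(F^{-1}(v))|$ by a Portmanteau-plus-dominated-convergence estimate on a small interval around $F^{-1}(v)$, and finally handles $G_n(F_n^{-1}(v)^-)$ by a separate comparison with $G(F^{-1}(v)-\epsilon)$. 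Both routes are valid; yours is more self-contained, avoids the appeal to weak-convergence machinery, and treats $G_n(x_n)$ and $G_n(x_n^-)$ in a single stroke.
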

\begin{proof}
  By Lemma~\ref{lem:cvCDF}, $F_n^{-1}(v)$ converges to $F^{-1}(v)$, $\dd v$-almost everywhere in $(0,1)$. We now check that, for all $x \in \R$ such that $\Delta G(x) > 0$, the set $\{v \in (0,1) : F^{-1}(v)=x\}$ is negligible with respect to the Lebesgue measure on $(0,1)$. Since the function $F^{-1}$ is nondecreasing, this set is an interval, and if there exist $\uv < \ov$ such that $F^{-1}(\uv)=F^{-1}(\ov)=x$, then $F(x^-) \leq \uv < \ov \leq F(x)$, which is a contradiction with the fact that $\Delta F(x) \Delta G(x) = 0$.
  
  As a consequence, $\dd v$-almost everywhere, $F_n^{-1}(v)$ converges to $F^{-1}(v)$ and $\Delta G(F^{-1}(v)) = 0$. Let us fix $v \in (0,1)$ satisfying these two conditions, and write $G_n(F_n^{-1}(v)) = G_n(F^{-1}(v)) + G_n(F_n^{-1}(v)) - G_n(F^{-1}(v))$. On the one hand,
  \begin{equation*}
    \lim_{n \to +\infty} G_n(F^{-1}(v)) = G(F^{-1}(v)),
  \end{equation*}
  since $\Delta G(F^{-1}(v)) = 0$. On the other hand, by the Dominated Convergence Theorem, for all $\epsilon > 0$, there exists $\delta > 0$ such that
  \begin{equation*}
    \int_{x \in \R} \ind{F^{-1}(v) - \delta \leq x \leq F^{-1}(v) + \delta} \dd G(x) \leq \epsilon.
  \end{equation*}
  Besides, for $n$ large enough, $F_n^{-1}(v) \in (F^{-1}(v) - \delta, F^{-1}(v) + \delta)$, so that
  \begin{equation*}
    |G_n(F_n^{-1}(v)) - G_n(F^{-1}(v))| \leq \int_{x \in \R} \ind{F^{-1}(v) - \delta \leq x \leq F^{-1}(v) + \delta} \dd G_n(x).
  \end{equation*}
  We now deduce from the characterisation of weak convergence on closed sets in the Portmanteau Theorem~\cite[Theorem~2.1, p.~16]{billingsley} that
  \begin{equation*}
    \limsup_{n \to +\infty} |G_n(F_n^{-1}(v)) - G_n(F^{-1}(v))| \leq \epsilon,
  \end{equation*}
  which completes the proof of the first assertion. 
  
  To prove the second assertion, we follow the same arguments and first show that $G_n(F^{-1}(v)^-)$ converges to $G(F^{-1}(v)^-)$ --- which, in fact, is $G(F^{-1}(v))$. To this aim, we take $\epsilon$ small and such that $\Delta G(F^{-1}(v)-\epsilon) = 0$, so that
  \begin{equation*}
    G(F^{-1}(v)-\epsilon) \leq \liminf_{n \to +\infty} G_n(F^{-1}(v)^-) \leq \limsup_{n \to +\infty} G_n(F^{-1}(v)^-) \leq G_n(F^{-1}(v)),
  \end{equation*}
  and, since $\epsilon$ can be chosen arbitrarily small, the result follows from the fact that $\Delta G(F^{-1}(v)) = 0$. The sequel of the proof is identical to the first case.
\end{proof}

We are now ready to prove Proposition~\ref{prop:closedness}.

\begin{proof}[Proof of Proposition~\ref{prop:closedness}]
  Let $(\bu_n)_{n \geq 1}$ and $\bu$ satisfy the assumptions of Proposition~\ref{prop:closedness}. Let us fix $\bvarphi = (\varphi^1, \ldots, \varphi^d) \in \Cs^{1,0}_{\mathrm{c}}([0,+\infty)\times\R, \R^d)$ and $\gamma \in \{1, \ldots, d\}$. For all $t \geq 0$, the set of points $x \in \R$ such that $\Delta_x u^{\gamma}(t,x) > 0$ is at most countable, therefore $\dd x$-almost everywhere, $u_n^{\gamma}(t,x)$ converges to $u^{\gamma}(t,x)$. By the Dominated Convergence Theorem, we deduce that
  \begin{equation*}
    \begin{aligned}
      & \lim_{n \to +\infty} \int_{t=0}^{+\infty} \int_{x \in \R} \partial_t\varphi^{\gamma}(t,x) u_n^{\gamma}(t,x) \dd x\dd t + \int_{x \in \R} \varphi^{\gamma}(0,x) u_{0,n}^{\gamma}(x) \dd x\\
      & \qquad = \int_{t=0}^{+\infty} \int_{x \in \R} \partial_t\varphi^{\gamma}(t,x) u^{\gamma}(t,x) \dd x\dd t + \int_{x \in \R} \varphi^{\gamma}(0,x) u_0^{\gamma}(x) \dd x.
    \end{aligned}
  \end{equation*}
  The main difficulty of the proof actually lies in checking that
  \begin{equation}\label{eq:realdiff}
    \lim_{n \to +\infty} \int_{t=0}^{+\infty} \int_{x \in \R} \varphi^{\gamma}(t,x) \lambda^{\gamma}\{\bu_n\}(t,x) \dd_x u_n^{\gamma}(t,x) \dd t = \int_{t=0}^{+\infty} \int_{x \in \R} \varphi^{\gamma}(t,x) \lambda^{\gamma}\{\bu\}(t,x) \dd_x u^{\gamma}(t,x) \dd t.
  \end{equation}
  
  In the scalar case,~\eqref{eq:ippscal} yields, for all $t \geq 0$,
  \begin{equation*}
    \int_{x \in \R} \varphi(t,x) \lambda\{u_n\}(t,x) \dd_x u_n(t,x) = - \int_{x \in \R} \partial_x\varphi(t,x) \Lambda(u_n(t,x))\dd x
  \end{equation*}
  and, similarly,
  \begin{equation*}
    \int_{x \in \R} \varphi(t,x) \lambda\{u\}(t,x) \dd_x u(t,x) = - \int_{x \in \R} \partial_x\varphi(t,x) \Lambda(u(t,x))\dd x,
  \end{equation*}
  so that the limit~\eqref{eq:realdiff} is easy to obtain, at least for test functions having a continuous partial derivative $\partial_x\varphi$.

  In the general case, Lemma~\ref{lem:ellsol} above allows us to rewrite~\eqref{eq:realdiff} under the following equivalent form:
  \begin{equation*}
    \begin{aligned}
      & \lim_{n \to +\infty} \int_{t=0}^{+\infty} \int_{v=0}^1 \varphi^{\gamma}\left(t,u_n^{\gamma}(t,\cdot)^{-1}(v)\right) \lambda^{\gamma}\left(u_n^1(t, u^{\gamma}_n(t,\cdot)^{-1}(v)), \ldots, v, \ldots, u_n^d(t, u^{\gamma}_n(t,\cdot)^{-1}(v))\right) \dd v \dd t\\
      & \qquad = \int_{t=0}^{+\infty} \int_{v=0}^1 \varphi^{\gamma}\left(t,u^{\gamma}(t,\cdot)^{-1}(v)\right) \lambda^{\gamma}\left(u^1(t, u^{\gamma}(t,\cdot)^{-1}(v)), \ldots, v, \ldots, u^d(t, u^{\gamma}(t,\cdot)^{-1}(v))\right) \dd v \dd t.
    \end{aligned}
  \end{equation*}
  By the Dominated Convergence Theorem and thanks to the continuity of the functions $\varphi^{\gamma}(t,\cdot)$ and $\lambda^{\gamma}$, this identity follows if we first prove that, $\dd t$-almost everywhere, $\dd v$-almost everywhere, for all $\gamma, \gamma' \in \{1, \ldots, d\}$ with $\gamma\not=\gamma'$, 
  \begin{equation*}
    \lim_{n \to +\infty} u_n^{\gamma}(t,\cdot)^{-1}(v) = u^{\gamma}(t,\cdot)^{-1}(v), \qquad \lim_{n \to +\infty} u_n^{\gamma'}\left(t, u_n^{\gamma}(t,\cdot)^{-1}(v)\right) = u^{\gamma'}\left(t, u^{\gamma}(t,\cdot)^{-1}(v)\right).
  \end{equation*}
  These equalities are obtained by applying Lemma~\ref{lem:FnGn} above at all times $t$ such that
  \begin{equation*}
    \forall x \in \R, \qquad \Delta_x u_n^{\gamma}(t,x)\Delta_x u_n^{\gamma'}(t,x)=0.
  \end{equation*}
  On account of Condition~\eqref{eq:mutgammutgamp}, this is the case $\dd t$-almost everywhere, which completes the proof.
\end{proof}


\subsection{Proof of Lemma~\ref{lem:gooddense}}\label{app:pf:gooddense} We now detail the proof of Lemma~\ref{lem:gooddense}, which asserts that the set of good configurations $\Good$ is dense in $\Dnd$ under Assumptions~\eqref{ass:C} and~\eqref{ass:USH}, and Condition~\eqref{cond:ND}.

\begin{proof}[Proof of Lemma~\ref{lem:gooddense}]
  Let us begin the proof by recalling the chain of inclusions
  \begin{equation*}
    \Good \subset \Drnd \subset \Dnd,
  \end{equation*}
  and that $\Drnd$ is dense in $\Dnd$. As a consequence, it suffices to prove that, for all $\x \in \Drnd$, for all $\epsilon > 0$, there exists $\y \in \Good$ such that $||\x-\y||_1 \leq \epsilon$. The reader will not be surprised that the proof works by induction on $\Nb(\x)$.
  
  If $\x \in \Drnd$ and $\Nb(\x)=0$, then $\x \in \Good$ and there is nothing to prove. Now let $N \geq 0$ such that any $\x \in \Drnd$ with $\Nb(\x) \leq N$ belongs to the closure of $\Good$. Let $\x \in \Drnd$ with $\Nb(\x)=N+1$; in particular, $t^*(\x) < +\infty$. Let us fix 
  \begin{equation*}
    t^*(\x) < t' < t'' < t^*(\x) + t^*(\x^*),
  \end{equation*}
  such that, in the MSPD started at $\x$, there is no self-interaction on the time interval $(t^*(\x), t'')$, see Figure~\ref{fig:pf:gooddense:1}. 

  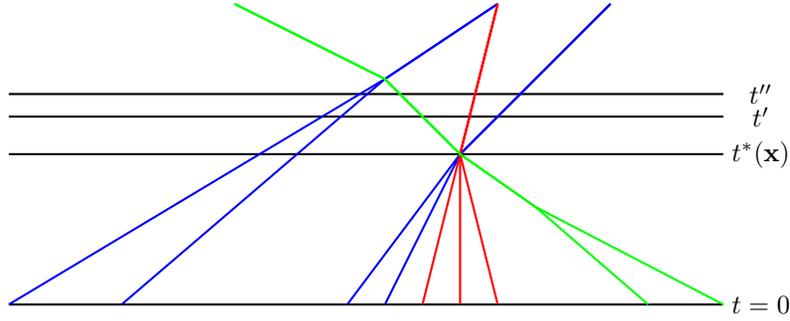
\begin{figure}[ht]
    \begin{pspicture}(10,4)
      \psline[linecolor=black](0,0)(9.5,0)
      \psline[linecolor=black](0,2)(9.5,2)
      \psline[linecolor=black](0,2.5)(9.5,2.5)
      \psline[linecolor=black](0,2.8)(9.5,2.8)
      
      \psline[linecolor=blue](0,0)(5,3)(6.5,4)
      \psline[linecolor=blue](1.5,0)(5,3)(6.5,4)
      \psline[linecolor=blue](4.5,0)(6,2)(8,4)
      \psline[linecolor=blue](5,0)(6,2)(8,4)
      
      \psline[linecolor=red](5.5,0)(6,2)(6.5,4)
      \psline[linecolor=red](6,0)(6,2)(6.5,4)
      \psline[linecolor=red](6.5,0)(6,2)(6.5,4)
      
      \psline[linecolor=green](8.5,0)(7,1.3)(6,2)(5,3)(3,4)
      \psline[linecolor=green](9.5,0)(7,1.3)(6,2)(5,3)(3,4)
      
      \rput(10,0){\textcolor{black}{$t=0$}}
      \rput(10,2){\textcolor{black}{$t^*(\x)$}}
      \rput(10,2.5){\textcolor{black}{$t'$}}
      \rput(10,2.8){\textcolor{black}{$t''$}}
      
    \end{pspicture}
    \caption{The choices of $t'$ and $t''$ to ensure that, on the time interval $(t^*(\x), t'']$, there is neither a self-interaction nor a collision in the MSPD started at $\x$.}
    \label{fig:pf:gooddense:1}
  \end{figure}
  
  We shall prove in Step~1 below that, for all $\epsilon > 0$, there exists $\x' \in \barB_1(\x,\epsilon)$ and $s' \in (0,t^*(\x))$ such that:
  \begin{itemize}
    \item in the MSPD started at $\x'$, there is no self-interaction on the time interval $[s',t^*(\x)]$,
    \item for all $t \geq t^*(\x)$, $\Phi(\x;t)=\Phi(\x';t)$.
  \end{itemize}
  As a consequence, we shall assume, without loss of generality, that $\x$ satisfies the following property: there exists $s' \in (0,t^*(\x))$ such that, in the MSPD started at $\x$, there is no self-interaction on the time interval $[s',t^*(\x)]$, see Figure~\ref{fig:pf:gooddense:2}. 
  
  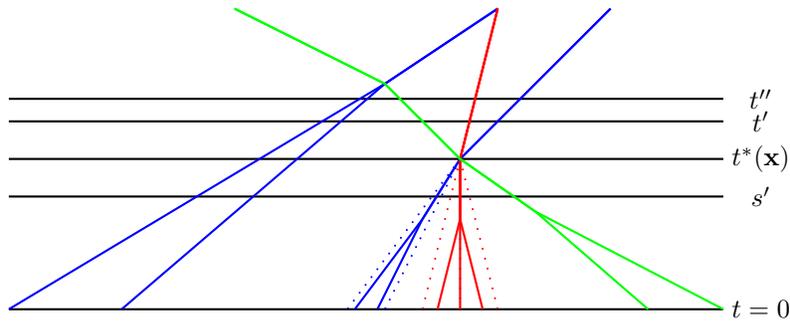
\begin{figure}[ht]
    \begin{pspicture}(10,4)
      \psline[linecolor=black](0,0)(9.5,0)
      \psline[linecolor=black](0,1.5)(9.5,1.5)
      \psline[linecolor=black](0,2)(9.5,2)
      \psline[linecolor=black](0,2.5)(9.5,2.5)
      \psline[linecolor=black](0,2.8)(9.5,2.8)
      
      \psline[linecolor=blue](0,0)(5,3)(6.5,4)
      \psline[linecolor=blue](1.5,0)(5,3)(6.5,4)
      \psline[linecolor=blue,linestyle=dotted](4.5,0)(6,2)
      \psline[linecolor=blue,linestyle=dotted](5,0)(6,2)
      \psline[linecolor=blue](4.6,0)(5.5,1.2)(6,2)(8,4)
      \psline[linecolor=blue](4.9,0)(5.5,1.2)(6,2)(8,4)
      
      \psline[linecolor=red,linestyle=dotted](5.5,0)(6,2)
      \psline[linecolor=red,linestyle=dotted](6,0)(6,2)
      \psline[linecolor=red,linestyle=dotted](6.5,0)(6,2)
      \psline[linecolor=red](5.7,0)(6,1.2)(6,2)(6.5,4)
      \psline[linecolor=red](6,0)(6,1.2)(6,2)(6.5,4)
      \psline[linecolor=red](6.3,0)(6,1.2)(6,2)(6.5,4)
      
      \psline[linecolor=green](8.5,0)(7,1.3)(6,2)(5,3)(3,4)
      \psline[linecolor=green](9.5,0)(7,1.3)(6,2)(5,3)(3,4)
      
      \rput(10,0){\textcolor{black}{$t=0$}}
      \rput(10,1.5){\textcolor{black}{$s'$}}
      \rput(10,2){\textcolor{black}{$t^*(\x)$}}
      \rput(10,2.5){\textcolor{black}{$t'$}}
      \rput(10,2.8){\textcolor{black}{$t''$}}
      
    \end{pspicture}
    \caption{The shrinking of particles having a self-interaction at time $t^*(\x)$ allows to select $s' < t^*(\x)$ such that there is no self-interaction on the time interval $[s',t'']$.}
    \label{fig:pf:gooddense:2}
  \end{figure}
  
  Then, arguing as in the proof of Lemma~\ref{lem:radial}, there exists $\epsilon_0 > 0$ such that, for all $\y \in \barB_1(\x,\epsilon_0)$,
  \begin{enumerate}
    \item $\y \in \Drnd$ and $\Rb(\y)=\Rb(\x)$,
    \item $\Phi(\y;s') \in \Drnd$ and $\Rb(\Phi(\y;s'))=\Rb(\Phi(\x;s'))$, which implies that, in the MSPD started at $\y$, there is no collision on the time interval $[0,s']$,
    \item for all $\gamma:k \in \Part$, $\clu_k^{\gamma}(\y;s') = \clu_k^{\gamma}(\x;s')$,
    \item $\Phi(\y;t') \in \Drnd$ and $\Rb(\Phi(\y;t')) = \Rb(\Phi(\x;t'))$, which implies that 
    \begin{equation*}
      \{(\alpha:i,\beta:j) \in \Rb(\x) : \tinter_{\alpha:i,\beta:j}(\x) \in [t^*(\x),t']\} = \{(\alpha:i,\beta:j) \in \Rb(\y) : \tinter_{\alpha:i,\beta:j}(\y) \in (s',t')\}
    \end{equation*}
    \ie the collisions in the MSPD started at $\x$ on the time interval $[t^*(\x),t']$ (or, equivalently, $(s',t')$) involve the same pairs of particles as the collisions in the MSPD started at $\y$ on the time interval $(s',t')$,
    \item $\Phi(\y;t'') \in \Drnd$ and $\Rb(\Phi(\y;t'')) = \Rb(\Phi(\x;t''))$, which implies that, in the MSPD started at $\y$, there is no collision on the time interval $[t',t'']$,
    \item for all $\gamma:k \in \Part$, $\clu_k^{\gamma}(\y;t'') = \clu_k^{\gamma}(\x;t'')$, which implies that, in the MSPD started at $\y$, there is no self-interaction on the time interval $[s',t'']$.
  \end{enumerate}
  
  Let us fix $\epsilon \in (0, \epsilon_0]$. The sequel of the proof is as follows: in Step~2, we construct $\y_0 \in \barB_1(\x,\epsilon/2)$ such that, in the MSPD started at $\y_0$, the collisions on the time interval $[0,t'']$ (or, equivalently, $(s',t')$), are binary. Of course, $||\x-\y_0||_1 \leq \epsilon_0$, therefore $\y_0$ satisfies all the conditions above; in particular, in the MSPD started at $\y_0$, the self-interactions are separated from collisions on the time interval $[0,t'']$. In Step~3, we show that there exists $\eta \in (0,\epsilon/2]$ such that, for all $\y \in \barB_1(\y_0,\eta)$, the collisions on the time interval $[0,t'']$ in the MSPD started at $\y$ remain binary. In Step~4, we construct $\eta' > 0$ such that, for all $\y' \in \barB_1(\Phi(\y_0;t'),\eta')$, there exists $\y \in \barB_1(\y_0,\eta)$ such that $\Phi(\y';t''-t') = \Phi(\y;t'')$.
  
  Taking the result of these four steps for granted, let us explain how to complete the proof. By construction, the collisions in the time interval $[0,t']$ in the MSPD started at $\y_0$ are binary and separated from self-interactions. Besides, $\Nb(\Phi(\y_0;t')) = \Nb(\Phi(\x;t')) \leq N$, therefore there exists $\y' \in \barB_1(\Phi(\y_0;t'),\eta')$ such that $\y' \in \Good$. Let $\y \in \barB_1(\y_0, \eta)$ be given by Step~4. Then, on the one hand,
  \begin{equation*}
    ||\x-\y||_1 \leq ||\x-\y_0||_1+||\y_0-\y||_1 \leq \frac{\epsilon}{2} + \eta \leq \epsilon,
  \end{equation*}
  while, on the other hand, 
  \begin{itemize}
    \item since $\y \in \barB_1(\y_0,\eta)$, the collisions are binary and separated from self-interactions on the time interval $[0,t'']$ in the MSPD started at $\y$,
    \item since $\y' \in \Good$, the collisions are binary and separated from self-interactions in the MSPD started at $\Phi(\y';t''-t')=\Phi(\y;t'')$.
  \end{itemize}
  As a consequence of the flow property for the MSPD, $\y \in \Good$ and the proof is completed.
  
  Let us now give a detailed proof of Steps~1 to~4.
    
  \sk
  \noindent {\em Step~1.} We separate self-interactions from collisions by shrinking groups of particles involved in self-interactions at time $t^*(\x)$ around their centre of mass, as is depicted on Figure~\ref{fig:pf:gooddense:2}. Let us fix $\epsilon > 0$ and assume that there exist $\gamma \in \{1, \ldots, d\}$ and $\uk < \ok$ such that 
  \begin{equation*}
    \clu_{\uk}^{\gamma}(\x;t^*(\x)) = \clu_{\ok}^{\gamma}(\x;t^*(\x)) = \gamma:\uk\cdots\ok, \qquad \clu_{\uk}^{\gamma}(\x;t^*(\x)^-) \not= \clu_{\ok}^{\gamma}(\x;t^*(\x)^-),
  \end{equation*}
  that is to say, a self-interaction occurs at time $t^*(\x)$ between the particles $\gamma:\uk, \ldots, \gamma:\ok$. Let us define
  \begin{equation*}
    \xi := \frac{1}{\ok-\uk+1} \sum_{k=\uk}^{\ok} x_k^{\gamma},
  \end{equation*}
  and denote by $\x^{\rho}$ the configuration in $\Dnd$ such that, for all $\gamma':k' \in \Part$,
  \begin{equation*}
    (x^{\rho})_{k'}^{\gamma'} := \left\{\begin{aligned}
      & x_{k'}^{\gamma'} & \text{if $\gamma':k' \not\in \gamma:\uk\cdots\ok$},\\
      & (1-\rho)\xi + \rho x_{k'}^{\gamma'} & \text{otherwise},
    \end{aligned}\right.
  \end{equation*}
  for all $\rho \in [0,1]$. Then, it is easily seen that, for all $\gamma':k' \not\in \gamma:\uk\cdots\ok$,
  \begin{equation*}
    \forall t \in [0,t'], \qquad \Phi_{k'}^{\gamma'}(\x^{\rho};t) = \Phi_{k'}^{\gamma'}(\x;t).
  \end{equation*}
  Besides, we claim that
  \begin{enumerate}
    \item $\inf\{t \geq 0 : \Phi_{\uk}^{\gamma}(\x^{\rho};t) = \Phi_{\ok}^{\gamma}(\x^{\rho};t)\} = \rho t^*(\x)$,
    \item for all $k \in \{\uk, \ldots, \ok\}$, for all $t \geq \rho t^*(\x)$, $\Phi_k^{\gamma}(\x^{\rho};t) = \Phi_k^{\gamma}(\x^0;t)$,
    \item for all $k \in \{\uk, \ldots, \ok\}$, for all $t \geq t^*(\x)$, $\Phi_k^{\gamma}(\x^{\rho};t) = \Phi_k^{\gamma}(\x;t)$.
  \end{enumerate}
  The first point is obtained by elementary geometry if there is no self-interaction between times $0$ and $t^*(\x)$. Otherwise, let $c_1, \ldots, c_r$ denote the distinct elements of the set
  \begin{equation*}
    \{\clu_k^{\gamma}(\x;t^*(\x)^-), k \in \{\uk, \ldots, \ok\}\}.
  \end{equation*}
  Let us recall that in the proof of Lemma~\ref{lem:radial}, we made the observation that, in the Local Sticky Particle Dynamics, the centre of mass travels at constant velocity whatever the composition of the clusters. Applying this remark to each generical cluster $c_i$, we write
  \begin{equation*}
    t^*(\x) = \inf\{t \geq 0 : \Phi_{\uk}^{\gamma}(\tilde{\x};t) = \Phi_{\ok}^{\gamma}(\tilde{\x};t)\},
  \end{equation*}
  where $\tilde{\x}$ is derived from $\x$ by the following procedure: for all $i \in \{1, \ldots, r\}$, for all $\gamma:k \in c_i$, replace the coordinate $x_k^{\gamma}$ in $\x$ with
  \begin{equation*}
    \tilde{x}_k^{\gamma} := \frac{1}{|c_i|} \sum_{\gamma:k' \in c_i} x_{k'}^{\gamma}.
  \end{equation*}
  Then, in the MSPD started at $\tilde{\x}$, the particles $\gamma:\uk, \ldots, \gamma:\ok$ do not have self-interactions between times $0$ and $t^*(\x)$, so that the argument above yields
  \begin{equation*}
    \inf\{t \geq 0 : \Phi_{\uk}^{\gamma}(\tilde{\x}^{\rho};t) = \Phi_{\ok}^{\gamma}(\tilde{\x}^{\rho};t)\} = \rho \inf\{t \geq 0 : \Phi_{\uk}^{\gamma}(\tilde{\x};t) = \Phi_{\ok}^{\gamma}(\tilde{\x};t)\} = \rho t^*(\x),
  \end{equation*}
  where $\tilde{\x}^{\rho}$ is derived from $\tilde{\x}$ in the same fashion as $\x^{\rho}$ is derived from $\x$. To complete the argument, we now have to check that
  \begin{equation*}
    \inf\{t \geq 0 : \Phi_{\uk}^{\gamma}(\x^{\rho};t) = \Phi_{\ok}^{\gamma}(\x^{\rho};t)\} = \inf\{t \geq 0 : \Phi_{\uk}^{\gamma}(\tilde{\x}^{\rho};t) = \Phi_{\ok}^{\gamma}(\tilde{\x}^{\rho};t)\}.
  \end{equation*}
  This follows from the fact that the operations mapping $\x$ to $\tilde{\x}$ and $\x$ to $\x^{\rho}$ are commutative; therefore the equality above is obtained by the same geometric arguments as in the case $\rho=1$.
  
  The second and third points above easily follow.
  
  Finally, the configuration $\x^{\rho}$ satisfies
  \begin{equation*}
    ||\x-\x^{\rho}||_1 = \frac{1-\rho}{n} \sum_{k=\uk}^{\ok} |\xi-x_k^{\gamma}|,
  \end{equation*}
  so that for $\rho$ close enough to $1$, $||\x-\x^{\rho}||_1 \leq \epsilon$ while the self-interactions between the particles $\gamma:\uk, \ldots, \gamma:\ok$ in the MSPD started at $\x^{\rho}$ occur before $\rho t^*(\x) < t^*(\x)$, without modifying neither the trajectories of the other particles on $[0,t^*(\x)]$, nor the trajectories of all the particles after $t^*(\x)$ with respect to the MSPD started at $\x$. Applying the argument to the finite number of groups of particles having a self-interaction at time $t^*(\x)$, we conclude that there exists $\x' \in \barB_1(\x, \epsilon)$ and $s' \in (0,t^*(\x))$ such that, in the MSPD started at $\x'$, there is no self-interaction in the time interval $[s',t^*(\x)]$.
  
  \sk
  \noindent {\em Step~2.} We now blow up the non-binary collisions by shifting the initial positions, as is described on Figure~\ref{fig:pf:gooddense:3}. Let us assume that there exist
  \begin{equation*}
    \gamma_1 < \cdots < \gamma_r, \qquad r \geq 3,
  \end{equation*}
  such that, in the MSPD started at $\x$, a collision occurs at the space-time point $(\xi^*,t^*(\x))$ between clusters of type $\gamma_1, \ldots, \gamma_r$. For all $i \in \{1, \ldots, r\}$, let us denote by $c_i$ the cluster of type $\gamma_i$ involved in the collision. For $\theta > 0$, let us define the configuration $\x^{\theta,1}$ as follows: for all $\gamma:k \in \Part$,
  \begin{equation*}
    (x^{\theta,1})_k^{\gamma} := \left\{\begin{aligned}
      & x_k^{\gamma} & \text{if $\gamma:k \not\in c_3 \cup \cdots \cup c_r$},\\
      & x_k^{\gamma} + \theta & \text{if $\gamma:k \in c_3 \cup \cdots \cup c_r$}.
    \end{aligned}\right.
  \end{equation*}
  Note that
  \begin{equation*}
    ||\x-\x^{\theta,1}||_1 \leq \frac{\theta}{n} (|c_3|+\cdots+|c_r|),
  \end{equation*}
  so that $\theta$ can be chosen small enough to ensure that $\x^{\theta,1} \in \barB_1(\x,\epsilon_0)$, and therefore satisfies all the conditions stated in the introduction of the proof. In particular, on the time interval $[s',t']$, the collisions in the MSPD started at $\x^{\theta,1}$ remain the same as in the MSPD started at $\x$.
  
  Then, it is straightforwardly checked that, in the MSPD started at $\x^{\theta,1}$,
  \begin{itemize}
    \item there is a binary collision between $c_1$ and $c_2$ at the space-time point $(\xi^*, t^*(\x))$,
    \item there is a collision between $c_3, \ldots, c_r$ at the space-time point $(\xi^*+\theta, t^*(\x))$,
    \item if $\tau_{i,j}$ refers to the instant of collision between the clusters $c_i$ and $c_j$, then
    \begin{equation*}
      \forall j \in \{3, \ldots, r\}, \qquad t^*(\x) < \tau_{1,j} < \tau_{2,j}.
    \end{equation*}
  \end{itemize}
  More precisely, the boundedness of the velocities yields
  \begin{equation*}
    \tau_{1,j} \geq t^*(\x) + \frac{\theta}{2\ConstBound{1}},
  \end{equation*}
  while Assumption~\eqref{ass:USH} yields
  \begin{equation*}
    \tau_{2,j} \leq t^*(\x) + \frac{\theta}{\ConstUSH}.
  \end{equation*}
  
  Let us now define the configuration $\x^{\theta,2}$ by, for all $\gamma:k \in \Part$,
  \begin{equation*}
    (x^{\theta,2})_k^{\gamma} := \left\{\begin{aligned}
      & (x^{\theta,1})_k^{\gamma} & \text{if $\gamma:k \not\in c_4 \cup \cdots \cup c_r$},\\
      & (x^{\theta,1})_k^{\gamma} + \theta \left(\frac{2\ConstBound{1}}{\ConstUSH}-1\right) & \text{if $\gamma:k \in c_4 \cup \cdots \cup c_r$}.
    \end{aligned}\right.
  \end{equation*}
  Then, the same arguments as above ensure that, for $\theta$ small enough, in the MSPD started at $\x^{\theta,2}$,
  \begin{itemize}
    \item there is a binary collision between $c_1$ and $c_2$ at time $t^*(\x)$,
    \item there are binary collisions between $c_3$ and $c_1$, then between $c_3$ and $c_2$, at respective times $\tau_{1,3}$ and $\tau_{2,3}$ such that
    \begin{equation*}
      t^*(\x) < \tau_{1,3} < \tau_{2,3} \leq t^*(\x) + \frac{\theta}{\ConstUSH},
    \end{equation*}
    \item all the collisions between clusters $c_1, c_2, c_3$ on the one hand and $c_4, \ldots, c_r$ on the other hand occur after the time $t^*(\x) + \theta/\ConstUSH$.
  \end{itemize}
  
  Iterating the argument, we finally construct a configuration $\x^{\theta, r-2}$ such that
  \begin{equation*}
    ||\x-\x^{\theta,r-2}||_1 \leq C\theta
  \end{equation*}
  for some constant $C$ depending only on $\ConstBound{1}$, $\ConstUSH$, $n$ and $d$, and, for $\theta$ small enough, in the MSPD started at $\x^{\theta, r-2}$, if $\tau_{i,j}$ refers to the instant of collision between $c_i$ and $c_j$, then, for all $j \in \{3, \ldots, r\}$,
  \begin{equation*}
    \tau_{j-2,j-1} \leq \tau_{1,j} < \tau_{2,j} < \cdots < \tau_{j-1,j}.
  \end{equation*}
  We complete Step~2 by applying the argument to blow-up all the non-binary collisions, and finally take $\theta$ small enough for the resulting configuration $\y_0$ to be such that $||\x-\y_0||_1 \leq \epsilon/2$.

  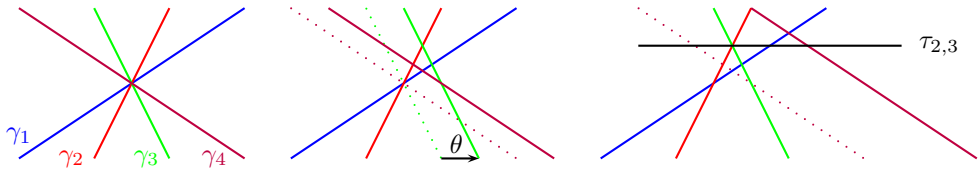
\begin{figure}[ht]
    \begin{pspicture}(3,2)
      \psline[linecolor=blue](0,0)(3,2)
      \psline[linecolor=red](1,0)(2,2)
      \psline[linecolor=green](2,0)(1,2)
      \psline[linecolor=purple](3,0)(0,2)
      \rput(0,.3){\textcolor{blue}{$\gamma_1$}}
      \rput(.7,0){\textcolor{red}{$\gamma_2$}}
      \rput(1.7,0){\textcolor{green}{$\gamma_3$}}
      \rput(2.6,0){\textcolor{purple}{$\gamma_4$}}
    \end{pspicture}
    \hskip 5mm
    \begin{pspicture}(3.5,2)
      \psline[linecolor=blue](0,0)(3,2)
      \psline[linecolor=red](1,0)(2,2)
      \psline[linecolor=green,linestyle=dotted](2,0)(1,2)
      \psline[linecolor=purple,linestyle=dotted](3,0)(0,2)
      \psline[linecolor=green](2.5,0)(1.5,2)
      \psline[linecolor=purple](3.5,0)(.5,2)
      \psline[linecolor=black]{->}(2,0)(2.5,0)
      \rput(2.2,.2){\textcolor{black}{$\theta$}}
    \end{pspicture}
    \hskip 5mm
    \begin{pspicture}(5,2)
      \psline[linecolor=blue](0,0)(3,2)
      \psline[linecolor=red](1,0)(2,2)
      \psline[linecolor=green](2.5,0)(1.5,2)
      \psline[linecolor=purple,linestyle=dotted](3.5,0)(.5,2)
      \psline[linecolor=purple](5,0)(2,2)
      \psline[linecolor=black](.5,1.5)(4,1.5)
      \rput(4.5,1.5){\textcolor{black}{$\tau_{2,3}$}}
    \end{pspicture}
    \caption{Blowing up the non-binary collisions: the left-hand figure represents a collision involving four clusters $\gamma_1, \ldots, \gamma_4$. In the central figure, the clusters $\gamma_3$ and $\gamma_4$ are shifted on the right of a distance $\theta$. In the right-hand figure, the cluster $\gamma_4$ is shifted on the right in order to ensure that its first collision with one of the three other clusters occurs after $\tau_{2,3}$, therefore after all the collisions between the clusters $\gamma_1$, $\gamma_2$ and $\gamma_3$. The minimal shift distance remains proportional to $\theta$.}
    \label{fig:pf:gooddense:3}
  \end{figure}
  
  \sk
  \noindent {\em Step~3.} We begin by noting that, for all $\eta \in (0,\epsilon/2]$, for all $\y \in \barB_1(\y_0,\eta)$, $||\x-\y||_1 \leq \epsilon_0$, therefore $\y$ satisfies all the conditions stated in the introduction of the proof. In particular, in the MSPD started at $\y$, there is no self-interaction on the time interval $[s',t'']$, while all the collisions occuring on the time interval $[0,t'']$ actually occur on the time interval $(s',t')$, and they involve the same pairs of clusters than in the MSPD started at $\y_0$. By Step~2, it is known that the corresponding collisions are binary in the MSPD started at $\y_0$. Let $R$ refer to the subset of $\Rb(\y_0)$ defined by
  \begin{equation*}
    R := \{(\alpha:i,\beta:j) \in \Rb(\y_0) : \tinter_{\alpha:i,\beta:j}(\y_0) \in (s',t')\} = \Rb(\y_0) \setminus \Rb(\Phi(\y_0;t')).
  \end{equation*}
  By Proposition~\ref{prop:continuity}, one can construct $\eta \in (0,\epsilon/2]$ such that, for all $\y \in \barB_1(\y_0, \eta)$, for all $(\alpha:i, \beta:j)$ and $(\alpha':i',\beta':j') \in R$, if $\Xiinter_{\alpha:i,\beta:j}(\y_0) \not= \Xiinter_{\alpha':i',\beta':j'}(\y_0)$, then $\Xiinter_{\alpha:i,\beta:j}(\y) \not= \Xiinter_{\alpha':i',\beta':j'}(\y)$. This implies that, in the MSPD started at $\y$, the collisions on $[0,t']$ are binary.

  \sk
  \noindent {\em Step~4.} By Condition~\eqref{cond:ND} and Lemma~\ref{lem:sGNLclu}, there exists $\eta'_1>0$ such that, for all $\y' \in \barB_1(\Phi(\y_0;t'),\eta'_1)$, for all $\gamma:k \in \Part$,
  \begin{equation*}
    \clu_k^{\gamma}(\y';t''-t') = \clu_k^{\gamma}(\y_0;t'').
  \end{equation*}
  Besides, by Lemma~\ref{lem:Drondnd}, there exists $\eta'_2>0$ such that, for all $\y' \in \barB_1(\Phi(\y_0;t'),\eta'_2)$, $\Rb(\y')=\Rb(\Phi(\y_0;t'))$ and $t^*(\y') > t''-t'$, which implies
  \begin{equation}\label{eq:pf:gooddense:etap2}
    ||\Phi(\y';t''-t')-\Phi(\y_0;t'')||_1 \leq ||\y'-\Phi(\y_0;t')||_1,
  \end{equation}
  thanks to Lemma~\ref{lem:contracttPhi}.
  
  For $\y' \in \barB_1(\Phi(\y_0;t'),\eta'_1 \wedge \eta'_2)$ and $\y'' := \Phi(\y';t''-t')$, we are willing to construct $\y$, close to $\y_0$, such that $\Phi(\y;t'') = \y''$. It is therefore necessary to describe how to follow the MSPD flow {\em backward}, and we shall construct a process $(\Psi(\y'';s))_{s \in [0,t''-s']}$ such that
  \begin{equation*}
    \forall s \in [0, t'' - s'], \qquad \Phi(\Psi(\y'';s);t''-s) = \y''.
  \end{equation*}  
  Of course, there is generically not a unique fashion to do so; since clusters containing several particles in $\y''$ could split at any time $s \geq 0$. In order to ensure that $\Psi(\y'';s)$ remains as close as possible to $\Phi(\y_0;t''-s)$, we define the backward dynamics $(\Psi(\y'';s))_{s \in [0,t''-s']}$ so that clusters never split. 
  
  Let us carry this task out by defining the {\em backward frozen dynamics} independently of the setting of the proof. Let $\z \in \Dnd$, and let $c_1, \ldots, c_L$ refer to the partition of $\Part$ into generical clusters such that, for all $\gamma:k \in \Part$, the generical cluster $c_l$ containing $\gamma:k$ is the largest set of particles of type $\gamma$ having the same position as $\gamma:k$ in the configuration $\z$. The generical cluster $c_l$ shall be called the {\em frozen cluster} of the particle $\gamma:k$.
  
  For all $s \geq 0$, we define the process $(\Psi(\z;s))_{s \geq 0}$ as follows. For all $l \in \{1, \ldots, L\}$, the initial velocity of all the particles in the frozen cluster $c_l$ is set to 
  \begin{equation}\label{eq:pf:gooddense:frozen}
    - \frac{1}{|c_l|} \sum_{\gamma:k \in c_l} \tlambda_k^{\gamma}(\z).
  \end{equation}
  Then, frozen clusters travel at constant velocity. When two frozen clusters of the same type collide, they stick together and form a frozen cluster with velocity determined by conservation of mass and momentum. When clusters of different types collide, say at time $s^*$, they remain formed and the new velocity of each cluster is given by~\eqref{eq:pf:gooddense:frozen}, where $\z$ is replaced with $\Psi(\z;s^*)$ instead.
  
  This backward frozen dynamics is generally {\em not} the MSPD with reverse characteristic fields $-\blambda$; since, in the latter dynamics, frozen clusters of a type $\gamma$ such that $-\partial_{\gamma}\lambda^{\gamma}>0$ would instantaneously split. However, it can be interpreted as a variant of the MSPD, where the initial velocity of the particle $\gamma:k$ in the frozen cluster $c_l$ is defined by~\eqref{eq:pf:gooddense:frozen} instead of $-\tlambda_k^{\gamma}(\z)$. This ensures that frozen clusters do not split, and stick together at collisions with frozen clusters of the same type --- which we shall refer to as {\em self-interactions} for the backward frozen dynamics. 
  
  As a consequence, the proof of Proposition~\ref{prop:continuity} can be slightly adapted to yield the following statement: if $\z \in \Dnd$ and $s \geq 0$ are such that, in the backward frozen dynamics started at $\z$, there is no self-interaction on the time interval $[0,s]$, then for all $\epsilon > 0$, there exists $\delta > 0$ such that, for all $\z' \in \barB_1(\z,\delta)$ having the property that the frozen clusters are the same in the configurations $\z$ and $\z'$, we have:
  \begin{itemize}
    \item there is no self-interaction in the backward frozen dynamics started at $\z'$ on the time interval $[0,s]$, which implies that the frozen clusters are the same in the configurations $\Psi(\z;r)$ and $\Psi(\z';r)$ for all $r \in [0,s]$,
    \item the following continuity property holds:
    \begin{equation*}
      \sup_{r \in [0,s]} ||\Psi(\z;r)-\Psi(\z';r)||_1 \leq \epsilon.
    \end{equation*}
  \end{itemize}
  We shall refer to these two points as Property~($*$).
  
  Let us now come back to the construction of $\y$ , close to $\y_0$, and such that $\Phi(\y;t'')=\y''$. Let $\y_0'' := \Phi(\y_0;t'')$. Since there is no self-interaction in the MSPD started at $\y_0$ on the time interval $[s',t'']$, it is straightforwardly checked that, for all $s \in [0,t''-s']$,
  \begin{equation*}
    \Psi(\y_0'';s) = \Phi(\y_0;t''-s).
  \end{equation*}
  Let $\epsilon > 0$ to be precised below. Let $\delta > 0$ associated to $\epsilon$ by Property~($*$), and let us define
  \begin{equation*}
    \eta' := \eta'_1 \wedge \eta'_2 \wedge \delta.
  \end{equation*}
  Let us now fix $\y' \in \barB_1(\Phi(\y_0;t'),\eta')$ and denote $\y'':=\Phi(\y';t''-t')$. Then the fact that $\eta' \leq \eta'_1 \wedge \eta'_2$ implies that the frozen clusters are the same in the configurations $\y''$ and $\y_0''$, and~\eqref{eq:pf:gooddense:etap2} combined with $\eta' \leq \delta'$ yield $\y'' \in \barB_1(\y_0'',\delta)$. As a consequence, Property~($*$) ensures that: one the hand, there is no self-interaction in the backward frozen dynamics started at $\y''$ on the time interval $[0,t''-s']$, therefore
  \begin{equation*}
    \forall s \in [0,t''-s'], \qquad \Phi(\Psi(\y'';s);t''-s) = \y'';
  \end{equation*}
  on the other hand,
  \begin{equation*}
    \sup_{s \in [0,t''-s']} ||\Psi(\y'';s)-\Psi(\y_0'';s)||_1 \leq \epsilon,
  \end{equation*}
  which in particular implies that
  \begin{equation*}
    ||\Psi(\y'';t''-s') - \Phi(\y_0;s')||_1 \leq \epsilon.
  \end{equation*}
  Besides, the frozen clusters are the same in the configurations $\Psi(\y'';t''-s')$ and $\Phi(\y_0;s')$.
  
  Recall the construction of $\eta>0$ carried out in Step~3. To complete the proof, it remains to fix a value of $\epsilon$ ensuring that, for all $\z \in \barB_1(\Phi(\y_0;s'), \epsilon)$ having the same frozen clusters as $\Phi(\y_0;s')$, one can construct a configuration $\y \in \barB_1(\y_0,\eta)$ such that $\Phi(\y;s') = \z$, and apply the result to $\z = \Psi(\y'';t''-s')$. In other words, we now have to take self-interactions into account, which was not the case for the backward frozen dynamics. On the other hand, since $s' < t^*(\y_0)$, we do not have to care about collisions between clusters of different types, therefore the problem can be addressed cluster by cluster. This enables us to use the following trick: for all frozen clusters $c$ in $\Phi(\y_0;s')$, for all $\gamma:k \in c$, let us define
  \begin{equation*}
    y_k^{\gamma} := (y_0)_k^{\gamma} + h_c,
  \end{equation*}
  where 
  \begin{equation*}
    h_c := z_k^{\gamma}-\Phi_k^{\gamma}(\y_0;s')
  \end{equation*}
  does not depend on the choice of $\gamma:k$ in $c$. Then 
  \begin{equation*}
    ||\y-\y_0||_1 \leq \epsilon,
  \end{equation*}
  and by Proposition~\ref{prop:continuity}, $\epsilon$ can be chosen small enough to prevent particles belonging to different frozen clusters in $\Phi(\y_0;s')$ from colliding in the MSPD started at $\y$. Under this condition, it is easily checked that, for all $s \in [0,s']$, for all frozen clusters $c$ in $\Phi(\y_0;s')$,
  \begin{equation*}
    \forall \gamma:k \in c, \qquad \Phi_k^{\gamma}(\y;s) = \Phi_k^{\gamma}(\y_0;s) + h_c.
  \end{equation*}
  In particular, $\Phi(\y;s')=\z$ and we complete the proof by taking $\epsilon \leq \eta$.
\end{proof}


\subsection{Proof of Lemma~\ref{lem:EntropyTSPD}}\label{ss:pfEntropyTSPD} This subsection contains the proof of Lemma~\ref{lem:EntropyTSPD}, which generalises, in an adequate way in view of Lemma~\ref{lem:shock}, the convergence results of the Sticky Particle Dynamics to the entropy solution of the corresponding scalar conservation law of~\cite{bregre, jourdain:sticky}.

\begin{proof}[Proof of Lemma~\ref{lem:EntropyTSPD}]
  We first note that, on account of Assumption~\eqref{ass:C}, Proposition~\ref{prop:kruzkov} ensures that the functions $\tilde{u}^1, \ldots, \tilde{u}^d$ introduced in Lemma~\ref{lem:EntropyTSPD} are well defined and, for all $t \geq 0$, $\tilde{u}^{\gamma}(t, \cdot)$ is a CDF on the real line. On the other hand, using the same arguments as in the proof of Proposition~\ref{prop:tightness}, there is no difficulty in checking that the sequence $(\tilde{\upmu}[\x(n)])_{n \geq 1}$ is tight in $\Ms$. Calling $\tilde{\upmu}_{\infty}$ the limit of a converging subsequence and defining $\tilde{u}^{\gamma}_{\infty}(t,x) := H*(\tilde{\upmu}_{\infty})^{\gamma}_t(x)$, we shall show below that $\tilde{u}^{\gamma}_{\infty}(t,x) = \tilde{u}^{\gamma}(t,x)$, for all $(t,x) \in [0,+\infty) \times \R$. By the same arguments as in Remark~\ref{rk:unicitebmu}, this allows to identify $\tilde{\upmu}_{\infty}$ with $\tilde{\upmu}$ and thereby completes the proof. 
  
  Let us fix $n \geq 1$ and $\gamma \in \{1, \ldots, d\}$. For simplicity, we will denote $x_k(t) := \tPhi^{\gamma}_k[\tblambda(\x(n))](\x(n);t)$. Let us define
  \begin{equation*}
    \tilde{u}^{\gamma}_n(t,x) := H*\tilde{\upmu}^{\gamma}_t[\x(n)](x) = \frac{1}{n} \sum_{k=1}^n \ind{x_k(t) \leq x},
  \end{equation*}
  and recall that, by Lemma~\ref{lem:cvCDF}, $\tilde{u}^{\gamma}_n(t,x)$ converges to $\tilde{u}^{\gamma}_{\infty}(t,x)$ for all $x \in \R$ such that $\Delta_x \tilde{u}^{\gamma}_{\infty}(t,x)=0$. We also fix $c \in [0,1]$, $\varphi \in \Cs^{1,1}_{\mathrm{c}}([0,+\infty)\times\R, \R)$ such that $\varphi(t,x) \geq 0$ for all $(t,x) \in [0,+\infty)\times\R$, and define the quantity
  \begin{equation*}
    \begin{aligned}
      & A_n := \int_{x \in \R} |\tilde{u}^{\gamma}_n(0,x)-c|\varphi(0,x)\dd x\\
      & + \int_{t=0}^{+\infty} \int_{x \in \R} \left(|\tilde{u}^{\gamma}_n(t,x) - c|\partial_t \varphi(t,x) + \sgn(\tilde{u}^{\gamma}_n(t,x)-c) \left(\tilde{\Lambda}^{\gamma}(\tilde{u}^{\gamma}_n(t,x)) - \tilde{\Lambda}^{\gamma}(c)\right) \partial_x \varphi(t,x)\right) \dd x\dd t.
    \end{aligned}
  \end{equation*}
  Since, for all $t \geq 0$, $\tilde{u}^{\gamma}_n(t,x)$ converges $\dd x$-almost everywhere to $\tilde{u}^{\gamma}_{\infty}(t,x)$, and the function $u \mapsto \sgn(u-c)(\tilde{\Lambda}^{\gamma}(u)-\tilde{\Lambda}^{\gamma}(c))$ is continuous, the Dominated Convergence Theorem shows that $A_n$ converges to $A_{\infty}$ defined as $A_n$ but with $\tilde{u}^{\gamma}_{\infty}$ instead of $\tilde{u}^{\gamma}_n$. We will prove below that $\liminf_{n \to +\infty} A_n \geq 0$, which implies that the distribution $\partial_t|\tilde{u}^{\gamma}_{\infty} - c| + \partial_x (\sgn(\tilde{u}^{\gamma}_{\infty} - c)(\tilde{\Lambda}^{\gamma}(\tilde{u}^{\gamma}_{\infty}) - \tilde{\Lambda}^{\gamma}(c)))$ is nonpositive, so that Proposition~\ref{prop:kruzkov} ensures that $\tilde{u}^{\gamma}_{\infty} = \tilde{u}^{\gamma}$.
  
  Let us denote by $k_0$ the unique integer in $\{0, \ldots, n\}$ such that $(k_0-1)/n < c \leq k_0/n$. We also take the convention to define $x_0(t) = -\infty$ and $x_{n+1}(t) = +\infty$. Then, letting
  \begin{equation*}
    \psi(t,x) := \int_{y=x}^{+\infty} \varphi(t,y)\dd y,
  \end{equation*}
  we have
  \begin{equation*}
    \begin{aligned}
      & \int_{x \in \R} |\tilde{u}^{\gamma}_n(0,x)-c|\varphi(0,x)\dd x = \sum_{k=0}^n \int_{x=x_k(0)}^{x_{k+1}(0)} \left|\frac{k}{n} - c\right| \varphi(0,x)\dd x\\
      & \qquad = \sum_{k=0}^{k_0-1} \left(c-\frac{k}{n}\right) \left(\psi(0,x_k(0))-\psi(0,x_{k+1}(0))\right)+ \sum_{k=k_0}^n \left(\frac{k}{n}-c\right) \left(\psi(0,x_k(0))-\psi(0,x_{k+1}(0))\right)\\
      & \qquad = -\frac{1}{n} \sum_{k=1}^{k_0} \psi(0,x_k(0)) + \frac{1}{n} \sum_{k=k_0+1}^n \psi(0,x_k(0)) + 2\left(\frac{k_0}{n}-c\right)\psi(0,x_{k_0}(0));
    \end{aligned} 
  \end{equation*}
  and similarly, for all $t \geq 0$,
  \begin{equation*}
    \begin{aligned}
      & \int_{x \in \R} |\tilde{u}^{\gamma}_n(t,x) - c|\partial_t \varphi(t,x) \dd x\dd t\\
      & \qquad = -\frac{1}{n} \sum_{k=1}^{k_0} \partial_t\psi(t,x_k(t)) + \frac{1}{n} \sum_{k=k_0+1}^n \partial_t\psi(t,x_k(t)) + 2\left(\frac{k_0}{n}-c\right)\partial_t\psi(t,x_{k_0}(t)),
    \end{aligned}
  \end{equation*}
  while
  \begin{equation*}
    \begin{aligned}
      & \int_{x \in \R} \sgn(\tilde{u}^{\gamma}_n(t,x)-c) \left(\tilde{\Lambda}^{\gamma}(\tilde{u}^{\gamma}_n(t,x)) - \tilde{\Lambda}^{\gamma}(c)\right) \partial_x \varphi(t,x) \dd x\dd t\\
      & \qquad = -\sum_{k=1}^{k_0} \left(\tilde{\Lambda}^{\gamma}\left(\frac{k}{n}\right) - \tilde{\Lambda}^{\gamma}\left(\frac{k-1}{n}\right)\right)\partial_x\psi(t,x_k(t))\\
      & \qquad \quad + \sum_{k=k_0+1}^n \left(\tilde{\Lambda}^{\gamma}\left(\frac{k}{n}\right) - \tilde{\Lambda}^{\gamma}\left(\frac{k-1}{n}\right)\right)\partial_x\psi(t,x_k(t))+ 2\left(\tilde{\Lambda}^{\gamma}\left(\frac{k_0}{n}\right) - \tilde{\Lambda}^{\gamma}(c)\right)\partial_x\psi(t,x_{k_0}(t)).
    \end{aligned}
  \end{equation*}
  
  As a consequence, we rewrite
  \begin{equation*}
    A_n = - \frac{1}{n}\sum_{k=1}^{k_0} A_{k,n} + \frac{1}{n}\sum_{k=k_0+1}^n A_{k,n} + \frac{2}{n} A'_{k_0,n},
  \end{equation*}
  where
  \begin{equation*}
    A_{k,n} := \psi(0,x_k(0)) + \int_{t=0}^{+\infty} \left(\partial_t\psi(t,x_k(t)) + n\left(\tilde{\Lambda}^{\gamma}\left(\frac{k}{n}\right) - \tilde{\Lambda}^{\gamma}\left(\frac{k-1}{n}\right)\right)\partial_x\psi(t,x_k(t))\right) \dd t,
  \end{equation*}
  and
  \begin{equation*}
    \begin{aligned}
      A'_{k_0,n} & := (k_0-nc)\psi(0,x_{k_0}(0))\\
      & \quad + \int_{t=0}^{+\infty} \left((k_0-nc)\partial_t\psi(t,x_{k_0}(t)) + n\left(\tilde{\Lambda}^{\gamma}\left(\frac{k_0}{n}\right) - \tilde{\Lambda}^{\gamma}(c)\right)\partial_x\psi(t,x_{k_0}(t))\right) \dd t.
    \end{aligned}
  \end{equation*}
  On the other hand, we recall that, following~\eqref{eq:vspd}, for all $k \in \{1, \ldots, n\}$, for all $T>0$,
  \begin{equation*}
    \psi(T, x_k(T)) = \psi(0, x_k(0)) + \int_{t=0}^T \left(\partial_t \psi(t,x_k(t)) + v_k(t) \partial_x \psi(t,x_k(t))\right)\dd t,
  \end{equation*}
  where $v_k(t) := v_k[\tlambda^{\gamma}(\x(n))](\rx^{\gamma}(n);t)$ with the notations of Subsection~\ref{ss:mspd}. Taking $T$ large enough for the left-hand side above to vanish for all $k$, we obtain
  \begin{equation*}
    A'_{k_0,n} = \int_{t=0}^{+\infty} \left(n\left(\tilde{\Lambda}^{\gamma}\left(\frac{k_0}{n}\right) - \tilde{\Lambda}^{\gamma}(c)\right) - (k_0-nc)v_{k_0}(t)\right)\partial_x\psi(t,x_{k_0}(t))\dd t.
  \end{equation*}  
  We first note that, by the definition of $\tilde{\Lambda}^{\gamma}$, $v_k(t)$ and since $|k_0-nc| \leq 1$, we have
  \begin{equation*}
    |A'_{k_0,n}| \leq 2 \sup_{\bu \in [0,1]^d} |\lambda^{\gamma}(\bu)| \int_{t=0}^{+\infty} \varphi(t,x_{k_0}(t))\dd t,
  \end{equation*}
  so that
  \begin{equation*}
    \lim_{n \to +\infty} \frac{2}{n} A'_{k_0,n} = 0.
  \end{equation*}
  
  Let us now write 
  \begin{equation*}
    -\frac{1}{n}\sum_{k=1}^{k_0} A_{k,n} + \frac{1}{n}\sum_{k=k_0+1}^n A_{k,n} = \frac{1}{n}\int_{t=0}^{+\infty}\left(-\sum_{k=1}^{k_0} a_{k,n}(t) +\sum_{k=k_0+1}^n a_{k,n}(t)\right)\partial_x \psi(t, x_k(t))\dd t
  \end{equation*}
  with
  \begin{equation*}
    a_{k,n}(t) := n \left(\tilde{\Lambda}^{\gamma}\left(\frac{k}{n}\right)-\tilde{\Lambda}^{\gamma}\left(\frac{k-1}{n}\right)\right) - v_k(t),
  \end{equation*}
  and denote by $\gamma:\uk_0 \cdots \ok_0$ the cluster of the particle $\gamma:k_0$ at time $t \geq 0$. Grouping particles by clusters, we have
  \begin{equation*}
    \begin{aligned}
      & - \sum_{k=1}^{\uk_0-1} a_{k,n}(t) = -n \left(\tilde{\Lambda}^{\gamma}\left(\frac{\uk_0-1}{n}\right)-\tilde{\Lambda}^{\gamma}(0)\right) + \sum_{k=1}^{\uk_0-1} \tilde{\lambda}^{\gamma}_k(\x(n)),\\
      & \sum_{k=\uk_0+1}^n a_{k,n}(t) = n \left(\tilde{\Lambda}^{\gamma}(1)-\tilde{\Lambda}^{\gamma}\left(\frac{\ok_0}{n}\right)\right) - \sum_{k=\uk_0+1}^n \tilde{\lambda}^{\gamma}_k(\x(n)),
    \end{aligned}
  \end{equation*}
  while
  \begin{equation*}
    \begin{aligned}
      - \sum_{k=\uk_0}^{k_0} a_{k,n}(t) + \sum_{k=k_0+1}^{\ok_0} a_{k,n}(t) & = -n\left(\tilde{\Lambda}^{\gamma}\left(\frac{k_0}{n}\right)-\tilde{\Lambda}^{\gamma}\left(\frac{\uk_0-1}{n}\right)\right) + \sum_{k=\uk_0}^{k_0} \tilde{\lambda}^{\gamma}_k(\x(n))\\
      & \quad + n\left(\tilde{\Lambda}^{\gamma}\left(\frac{\ok_0}{n}\right)-\tilde{\Lambda}^{\gamma}\left(\frac{k_0}{n}\right)\right) - \sum_{k=k_0+1}^{\ok_0} \tilde{\lambda}^{\gamma}_k(\x(n))\\
      & \quad + \sum_{k=\uk_0}^{k_0} \left(v_{k_0}(t)-\tilde{\lambda}^{\gamma}_k(\x(n))\right) - \sum_{k=k_0+1}^{\ok_0} \left(v_{k_0}(t) - \tilde{\lambda}^{\gamma}_k(\x(n))\right).
    \end{aligned} 
  \end{equation*}
  The key point of the proof is now that, by the stability condition as is stated in Lemma~\ref{lem:extstab}, we have
  \begin{equation*}
    \sum_{k=\uk_0}^{k_0} \left(v_{k_0}(t)-\tilde{\lambda}^{\gamma}_k(\x(n))\right) \leq 0, \qquad  - \sum_{k=k_0+1}^{\ok_0} \left(v_{k_0}(t) - \tilde{\lambda}^{\gamma}_k(\x(n))\right) \leq 0.
  \end{equation*}
  As a consequence, 
  \begin{equation*}
    \begin{aligned}
      -\sum_{k=1}^{k_0} a_{k,n}(t) +\sum_{k=k_0+1}^n a_{k,n}(t) \leq I_n(t) & := -n\left(\tilde{\Lambda}^{\gamma}\left(\frac{k_0}{n}\right)-\tilde{\Lambda}^{\gamma}(0)\right) + \sum_{k=1}^{k_0} \tilde{\lambda}^{\gamma}_k(\x(n))\\
      & \quad + n\left(\tilde{\Lambda}^{\gamma}(1)-\tilde{\Lambda}^{\gamma}\left(\frac{k_0}{n}\right)\right) + \sum_{k=k_0+1}^n \tilde{\lambda}^{\gamma}_k(\x(n)).
    \end{aligned}
  \end{equation*}
  Recalling the definition of $\tilde{\Lambda}^{\gamma}$ on the one hand, and rewriting
  \begin{equation*}
    \begin{aligned}
      \tilde{\lambda}^{\gamma}_k(\x(n)) & = n \int_{v=(k-1)/n}^{k/n} \lambda^{\gamma}\left(\frac{1}{n} \sum_{k'=1}^n \ind{x^1_{k'}(n) < x^{\gamma}_k(n)}, \ldots, v, \ldots, \frac{1}{n} \sum_{k'=1}^n \ind{x^d_{k'}(n) \leq x^{\gamma}_k(n)}\right)\dd v\\
      & = n \int_{v=(k-1)/n}^{k/n} \lambda^{\gamma}\left(u^1_{n,0}((u^{\gamma}_{n,0})^{-1}(v)^-), \ldots, v, \ldots, u^1_{n,0}((u^{\gamma}_{n,0})^{-1}(v))\right)\dd v
    \end{aligned}
  \end{equation*}
  with $u^{\gamma}_{n,0}$ the empirical CDF of $x^{\gamma}_1(n), \ldots, x^{\gamma}_n(n)$ on the other hand, we get the estimation
  \begin{equation*} 
    \begin{split}
      |I_n(t)| \leq n \int_{v=0}^1 & \left|\lambda^{\gamma}\left(u^1_0((u^{\gamma}_0)^{-1}(v)^-), \ldots, v, \ldots, u^d_0((u^{\gamma}_0)^{-1}(v))\right)\right.\\
      & \left.-\lambda^{\gamma}\left(u^1_{n,0}((u^{\gamma}_{n,0})^{-1}(v)^-), \ldots, v, \ldots, u^d_{n,0}((u^{\gamma}_{n,0})^{-1}(v))\right)\right|\dd v.
    \end{split}
  \end{equation*}
  Combining the assumption~\eqref{ass:star:EntropyTSPD} made in the statement of Lemma~\ref{lem:EntropyTSPD} with the continuity of $\lambda^{\gamma}$, we deduce that the integrand above converges to $0$, $\dd v$-almost everywhere. By the Dominated Convergence Theorem, this implies that, for all $t \geq 0$,
  \begin{equation*}
    \limsup_{n \to +\infty} \frac{1}{n}\left(-\sum_{k=1}^{k_0} a_{k,n}(t) +\sum_{k=k_0+1}^n a_{k,n}(t)\right) \leq \lim_{n \to +\infty} \frac{1}{n} I_n(t) = 0.
  \end{equation*}
  Since $\partial_x \psi(t,x) = -\phi(t,x) \leq 0$, we finally deduce from Fatou's Lemma that
  \begin{equation*}
    \liminf_{n \to +\infty} \left(-\frac{1}{n} \sum_{k=1}^{k_0} A_{k,n} + \frac{1}{n} \sum_{k=k_0+1}^n A_{k,n}\right) \geq 0,
  \end{equation*}
  which, together with the uniform boundedness of $|A'_{k_0,n}|$, completes the proof.
\end{proof}

In the scalar case, the proof is shortened as the function $\tilde{\Lambda}$ is nothing but the antiderivative $\Lambda$ of $\lambda$. As a consequence, $n(\Lambda(k/n)-\Lambda((k-1)/n))$ exactly coincides with the velocity $\tilde{\lambda}_k$ of the $k$-th particle, so that the quantity $I_n(t)$ in the proof above is already $0$ for $n$ fixed. This implies
\begin{equation*}
  - \frac{1}{n} \sum_{k=1}^{k_0} A_{k,n} + \frac{1}{n} \sum_{k=k_0+1}^n A_{k,n} \geq 0,
\end{equation*}
from which we can observe that if $c=k_0/n$, then $A'_{k_0,n}=0$ so that the Kru\v{z}kov entropy inequality is satisfied by the discrete solution $u_n(t,x)$. In particular, taking $c=0$ and $c=1$, we deduce that $u_n$ is a weak solution to the scalar conservation law~\eqref{eq:scalarcl2}. Following~\cite[Lemma~3.3]{boujam99}, if the flux function $\Lambda$ is concave, then $u_n$ actually coincides with the entropy solution to the scalar conservation law with discrete initial datum $u_n(0,x)$.


\section{Index of notations}\label{app:notations}

The following table contains most of the notations used in this article. The last column refers to the number of the subsection or of the paragraph in which the corresponding notation first appears.

\begin{center}
  \begin{pspicture}(15,1)
    \rput(.5,.5){\LARGE\Rightscissors}
    \psline[linestyle=dashed]{-}(1,.5)(15,.5)
  \end{pspicture}
\end{center}

\begin{supertabular}{p{3cm} p{10cm} l}
  \hline
  
  Symbol & Meaning & \\
  
  \hline
  
  $d$ & Size of the system & \ref{ss:hypsys}\\
  $\bu = (u^1, \ldots, u^d)$ & Vector of conserved quantities & \ref{ss:hypsys}\\
  $\blambda = (\lambda^1, \ldots, \lambda^d)$ & Charactersitic fields & \ref{ss:hypsys}\\
  $\bu_0 = (u^1_0, \ldots, u^d_0)$ & Initial data & \ref{ss:hypsys}\\

  $\bm = (m^1, \ldots, m^d)$ & Vector of probability measures corresponding to initial data & \ref{ss:diagsys}\\
  $H*\cdot$ & Convolution with the Heaviside function & \ref{ss:diagsys}\\
  $\Lambda$ & Flux function in the scalar case & \ref{ss:diagsys}\\
  
  $\Ps(E)$ & Set of probability measures on $E$ & \ref{sss:not:prob}\\
  $\Ms$ & Set of probability measures on $\Cs([0,+\infty), \R^d)$ & \ref{sss:not:probC}\\
  $\upmu^{\gamma}_t$ & Marginal distribution of $\upmu \in \Ms$ & \ref{sss:not:probC}\\
  $\pi^{\gamma}_t$ & Projection operator & \ref{sss:not:probC}\\

  \hline
  
  $\ConstBound{p}$ & Boundedness constant on $\blambda$ & \ref{ss:ass}\\
  $\ConstLip$ & Lipschitz constant on $\blambda$ & \ref{ss:ass}\\
  $\ConstUSH$ & Uniform strict hyperbolicity constant on $\blambda$ & \ref{ss:ass}\\
  
  $\gamma:k$ & Generical label of a particle in the MSPD & \ref{ss:mspd:intro}\\
  $\Part$ & Set of indices $\gamma:k$ & \ref{ss:mspd:intro}\\
  $\Dn$ & Configuration space for the Sticky Particle Dynamics & \ref{ss:mspd:intro}\\
  $\Dnd$ & Configuration space for the MSPD & \ref{ss:mspd:intro}\\
  $\x, \y, \z$ & Generical configurations for the MSPD & \ref{ss:mspd:intro}\\
  $\Phi(\x;t)$ & Flow of the MSPD & \ref{ss:mspd:intro}\\
  
  $\Delta F(x)$ & Jump of the CDF $F$ at the point $x$ & \ref{ss:CDF}\\
  $F^{-1}$ & Pseudo-inverse of the CDF $F$ & \ref{ss:CDF}\\
  $\Unif$ & Lebesgue measure on $[0,1]$ & \ref{ss:CDF}\\
  $\Delta u(t,x)$ & Jump of the CDF $u(t,\cdot)$ at the point $x$ & \ref{ss:CDF}\\
  $u(t,\cdot)^{-1}$ & Pseudo-inverse of the CDF $u(t,\cdot)$ & \ref{ss:CDF}\\
  
  $\lambda^{\gamma}\{\bu\}(t,x)$ & Velocity function in the definition of probabilistic solution & \ref{sss:probsol}\\
  $\bvarphi = (\varphi^1, \ldots, \varphi^d)$ & Test function in the definition of probabilistic solution & \ref{sss:probsol}\\
  $\upmu[\x]$ & Empirical distribution of the MSPD started at $\x$ & \ref{sss:pfexist}\\
  $\bu[\x]$ & Vector of empirical CDFs of the MSPD started at $\x$ & \ref{sss:pfexist}\\
  $\x(n)$ & Sequence of initial configurations approximating $\bu_0$ & \ref{sss:pfexist}\\
  $\bar{\upmu}$ & Limit of $\upmu[\x(n)]$ & \ref{sss:pfexist}\\
  \multicolumn{2}{l}{$\bX_v = (X^1_v(t), \ldots, X^d_v(t))_{t \geq 0}$ Trajectories} & \ref{sss:introtraj}\\
  $(\bbX(t))_{t \geq 0}$ & Probabilistic representation of solutions & \ref{sss:introtraj}\\
  
  $||\x-\y||_p$ & Normalised $\Ls^p$ distance on $\Dnd$ & \ref{ss:discrstab}\\
  $\ConstStab_p$ & Stability constant & \ref{ss:discrstab}\\
  $\Ratio$ & $3\ConstLip/\ConstUSH$ & \ref{ss:discrstab}\\
  
  $\Ws_p(m,m')$ & Wasserstein distance between $m$ and $m'$ & \ref{sss:wass}\\
  $\mathfrak{m}$ & Coupling of $m$ and $m'$ & \ref{sss:wass}\\
  $\Ws^{(d)}_p(\bm, \bm')$ & Distance on the Cartesian product $\Ps(\R)^d$ & \ref{sss:wass}\\
  $\chi_n$ & Discretisation operator & \ref{sss:sg}\\
  $(\bar{\bS}_t)_{t \geq 0}$ & Semigroup of solutions & \ref{sss:sg}\\

  \hline
  
  $\rblambda = (\barlambda_1, \ldots, \barlambda_n)$ & Initial velocity vector for the Sticky Particle Dynamics & \ref{sss:spd}\\
  $(\phi[\rblambda](\rx;t))_{t \geq 0}$ & Flow of the Sticky Particle Dynamics & \ref{sss:spd}\\
  $\clu_k[\rblambda](\rx;t)$ & Cluster of the $k$-th particle in the Sticky Particle Dynamics & \ref{sss:spd}\\
  $v_k[\rblambda](\rx;t)$ & Velocity of the $k$-th particle in the Sticky Particle Dynamics & \ref{sss:spd}\\
  $\gamma_k[\rblambda](\rx;t)$ & Coordinates of the reflection term at the boundary of $\Dn$ & \ref{sss:spd}\\
  $D_K$ & Configuration space for the Local Sticky Particle Dynamics & \ref{sss:locspd}\\
  
  $\Rb(\x)$ & Pair of colliding particles in the MSPD & \ref{ss:mspd}\\
  $\Nb(\x)$ & Cardinality of $\Rb(\x)$ & \ref{ss:mspd}\\
  $\omega^{\gamma'}_{\gamma:k}(\x)$ & Rank of $\gamma:k$ in the system of type $\gamma'$ & \ref{ss:mspd}\\
  $\tlambda^{\gamma}_k(\x)$ & Initial velocity of $\gamma:k$ in the MSPD & \ref{ss:mspd}\\
  $\tblambda(\x)$ & Array of initial velocities in the MSPD & \ref{ss:mspd}\\
  
  $\bblambda(\x)$ & Array of initial velocities in the Typewise SPD & \ref{sss:tspd}\\
  $(\tPhi[\bblambda](\x;t))_{t \geq 0}$ & Flow of the Typewise SPD & \ref{sss:tspd}\\
  $\ttinter_{\alpha:i, \beta:j}(\x)$ & Collision time between $\alpha:i$ and $\beta:j$ in the Typewise SPD & \ref{sss:tspd}\\
  $t^*(\x)$ & First collision time in the Typewise SPD & \ref{sss:tspd}\\
  $\x^*$ & Configuration at the first collision time & \ref{sss:tspd}\\
  
  $v^{\gamma}_k(\x;t)$ & Velocity of $\gamma:k$ in the MSPD & \ref{sss:mspd}\\
  $c = \gamma : \uk \cdots \ok$ & Generical cluster & \ref{sss:mspd}\\
  $\type(c)$ & Type of the generical cluster $c$ & \ref{sss:mspd}\\
  $|c|$ & Cardinality of the generical cluster $c$ & \ref{sss:mspd}\\
  $\clu^{\gamma}_k(\x;t)$ & Velocity of $\gamma:k$ in the MSPD & \ref{sss:mspd}\\

  $B_p(\x,\delta)$, $\barB_p(\x,\delta)$ & Open and closed balls in $\Dnd$ & \ref{sss:mspdprop}\\
  
  $\tinter_{\alpha:i, \beta:j}(\x)$ & Collision time between $\alpha:i$ and $\beta:j$ in the MSPD & \ref{sss:mspdcoll}\\

  $\Ttau_{\gamma:k}(\x)$ & Collision times of $\gamma:k$ in the MSPD & \ref{sss:mspdloc}\\
  $T^- \wedge \Ttau_{\gamma:k}(\x)$ & Largest collision times of $\gamma:k$ on $[0,T)$ & \ref{sss:mspdloc}\\
  
  \hline
  
  $\ulambda^{\gamma}$, $\olambda^{\gamma}$ & Bounds on the characteristic field $\lambda^{\gamma}$ & \ref{ss:reptraj}\\
  $\beta$ & A $\Cs^1$ increasing bijection $[0,1] \to [0,1]$ & \ref{ss:veltraj}\\
  
  \hline
  
  $\rho$ & Spreading constant for rarefaction coordinates & \ref{ss:rar}\\
  $\omega_F(\delta)$ & Modulus of continuity of $F$ & \ref{ss:rar}\\
  
  \hline
  
  \multicolumn{2}{l}{$\Xiinter_{\alpha:i,\beta:j}(\x) = (\xiinter_{\alpha:i,\beta:j}(\x), \tinter_{\alpha:i,\beta:j}(\x))$ Space-time point of collision} & \ref{sss:collisions}\\
  $\Ibinter(\x)$ & Set of space-time points of collision & \ref{sss:collisions}\\
  $\clu^{\gamma}_k(\x;t^-)$ & Left limit of a cluster & \ref{sss:collisions}\\
  $\Ibself_{\gamma:k, \gamma:k'}(\x)$ & Set of space-time points of self-interactions & \ref{sss:collisions}\\
  $\Ibself(\x)$ & Set of all space-time points of self-interactions & \ref{sss:collisions}\\
  
  $\Drnd$ & Configurations with no collisions at initial time & \ref{sss:defDrond}\\
  
  $\Good$ & Good configurations & \ref{sss:goodconf}\\
  
  $\sim$ & Equivalence relation on $\Rb(\x)$ & \ref{sss:BinColl}\\
  $\Classe(\x)$ & Equivalence classes, or collisions & \ref{sss:BinColl}\\
  $\Mb(\x)$ & Number of collisions & \ref{sss:BinColl}\\
  $\classe = a \times b$ & Generical collision & \ref{sss:BinColl}\\
  \multicolumn{2}{l}{$\Xi(\x;\classe) = (\xi(\x;\classe), T(\x;\classe))$ Space-time point of collision} & \ref{sss:BinColl}\\
  $\Classe_{\gamma:k}(\x)$ & Ordered set of collisions involving $\gamma:k$ & \ref{sss:BinColl}\\
  $\Tbmax_{\gamma:k}(\x)$ & Last collision time of $\gamma:k$ & \ref{sss:BinColl}\\
  
  $d_{\gamma:k}(t)$ & Distance between $\Phi^{\gamma}_k(\x;t)$ and $\Phi^{\gamma}_k(\y;t)$ & \ref{sss:locstab}\\
  
  $(e_m)_{0 \leq m \leq M}$ & Auxiliary system & \ref{sss:coupling}\\
  $(\totmass_m)_{0 \leq m \leq M}$ & Total mass of the auxiliary system & \ref{sss:coupling}\\
  $\bar{\mu}$ & Forward shift of a function $\mu : \Part \to \{0, \ldots, M\}$ & \ref{sss:coupling}\\
  $\Mnu$ & Space of specific functions $\mu : \Part \to \{0, \ldots, M\}$ & \ref{sss:coupling}\\
  
  $\Gamma^-_m(\gamma:k)$ & Set of type paths & \ref{sss:totmass}\\
  $F(g)$ & Foot of a type path $g$ & \ref{sss:totmass}\\
  $c_{m'}(g)$ & Cluster in a type path & \ref{sss:totmass}\\
  $w^-_m(g)$ & Weight of the type path $g$ & \ref{sss:totmass}\\
  $H_m(g)$ & History of the type path $g$ & \ref{sss:totmass}\\
  
  $\Xi^{\dxi, \dtau}$ & $(\dxi, \dtau)$-box around the space-time point $\Xi$ & \ref{sss:radial}\\
  $\rho^*$ & Shrinking factor allowing to keep Condition~{\rm (\hyperref[cond:C]{LHM})} & \ref{sss:radial}\\
  
  \hline
  
  $\dP_{\bm^*}$ & $\Ws_1$ stability class & \ref{sss:wassfur}\\
  $\tilde{\Ws}_1(m,m')$ & Modified $\Ws_1$ distance & \ref{sss:wassfur}\\
  
  $\bar{\upmu}[\bm]$ & Limit of $\upmu[\chi_n\bm]$ & \ref{sss:St}\\
  $(\bS_t)_{t \geq 0}$ & Semigroup on $\dP_{\bm^*}$ & \ref{sss:St}\\
  
  $\Lambda^{\gamma}$ & Flux function in the Riemann problem & \ref{sss:Riemann}\\
  
  $\bU^{\sharp}_{\bu;\tau,\xi}$ & Solution to the Riemann problem & \ref{sss:BB}\\
  $\bU^{\flat}_{\bu;\tau,\xi}$ & Solution to the linearised problem & \ref{sss:BB}\\
  
  \hline
\end{supertabular}


\subsection*{Acknowledgements} We warmly thank our colleague Régis Monneau (CERMICS), who plainly deserves to sign this paper for his important contribution.



\end{document}